\documentclass[reqno, 10pt, centertags]{amsart}
\usepackage{amsmath,amsthm,amscd,amssymb,latexsym,upref,stmaryrd}
\usepackage{mathrsfs}

\topmargin = -1.4cm
\textheight = 21.2cm
\oddsidemargin = -0.7cm
\evensidemargin = -0.7cm
\textwidth = 18cm \headheight = 2cm \headsep = 5mm

\numberwithin{equation}{section}
\newtheorem{theorem}{Theorem}[section]
\newtheorem{proposition}[theorem]{Proposition}
\newtheorem{corollary}[theorem]{Corollary}
\newtheorem{lemma}[theorem]{Lemma}
\newtheorem{remark}[theorem]{Remark}
\newtheorem{definition}[theorem]{Definition}
\newtheorem{example}[theorem]{Example}

\DeclareMathOperator{\col}{col}

\DeclareMathOperator{\card}{card}

\DeclareMathOperator{\diag}{diag}
\DeclareMathOperator{\diam}{diam}
\DeclareMathOperator{\codiag}{codiag}
\DeclareMathOperator{\dom}{dom}

\DeclareMathOperator{\range}{ran}

\DeclareMathOperator{\const}{const}
\DeclareMathOperator*{\esssup}{ess\,sup}

\renewcommand{\le}{\leqslant}
\renewcommand{\ge}{\geqslant}
\renewcommand{\Im}{{\rm Im\,}}
\renewcommand{\Re}{{\rm Re\,}}
\newcommand{\ol}{\overline}
\newcommand{\wt}{\widetilde}
\newcommand{\wh}{\widehat}
\renewcommand{\(}{\left(}
\renewcommand{\)}{\right)}

\newcommand{\eps}{\varepsilon}
\renewcommand{\a}{\alpha}
\renewcommand{\b}{\beta}
\newcommand{\g}{\gamma}
\renewcommand{\l}{\lambda}
\renewcommand{\L}{\Lambda}

\def\cB{\mathcal{B}}

\def\cE{\mathcal{E}}
\def\cF{\mathcal{F}}
\def\cG{\mathcal{G}}

\def\cI{\mathcal{I}}
\def\cJ{\mathcal{J}}
\def\cK{\mathcal{K}}
\def\cL{\mathcal{L}}
\def\cN{\mathcal{N}}
\def\cR{\mathcal{R}}
\def\cS{\mathcal{S}}

\def\cX{\mathcal{X}}

\def\sF{\mathscr{F}}
\def\sR{\mathscr{R}}

\def\fB{\mathfrak{B}}
\def\fC{\mathfrak{C}}
\def\fF{\mathfrak{F}}
\def\fG{\mathfrak{G}}
\def\fH{\mathfrak{H}}
\def\fI{\mathfrak{I}}

\def\bC{\mathbb{C}}
\def\bD{\mathbb{D}}
\def\bN{\mathbb{N}}
\def\bQ{\mathbb{Q}}
\def\bR{\mathbb{R}}
\def\bT{\mathbb{T}}
\def\bU{\mathbb{U}}
\def\bZ{\mathbb{Z}}

\newcommand{\VectorSpace}[2]{{#1}({#2}; \bC^2)}
\newcommand{\MatrixSpace}[2]{{#1}({#2}; \bC^{2 \times 2})}
\newcommand{\LL}[1]{\MatrixSpace{L^{#1}}{[0,1]}}
\newcommand{\LLV}[1]{\VectorSpace{L^{#1}}{[0,1]}}
\newcommand{\CC}[1]{\MatrixSpace{C^{#1}}{[0,1]}}
\newcommand{\XX}[1]{\MatrixSpace{X_{#1}}{\Omega}}
\newcommand{\XXZ}[1]{\MatrixSpace{X_{#1}^{0}}{\Omega}}

\newcommand{\bigabs}[1]{\bigl|{#1}\bigr|}
\newcommand{\abs}[1]{\left|{#1}\right|}
\newcommand{\ceil}[1]{\left\lceil{#1}\right\rceil}
\newcommand{\norm}[1]{\left\|{#1}\right\|}
\newcommand{\bignorm}[1]{\bigl\|{#1}\bigr\|}

\begin{document}

\sloppy

\title[Stability of spectral characteristics of BVP for $2 \times 2$ Dirac type
systems]{Stability of spectral characteristics and Bari basis property \\
of boundary value problems for $2 \times 2$ Dirac type systems}

\author{Anton~A.~Lunyov}
\address{
Facebook, Inc.,
1 Hacker Way, Menlo Park,
California, 94025,
United States of America}
\email{A.A.Lunyov@gmail.com}

\author{Mark~M.~Malamud}
\address{
Peoples Friendship University of Russia (RUDN University),
6 Miklukho-Maklaya St.
Moscow, 117198,
Russian Federation}
\email{malamud3m@gmail.com}

\subjclass[2010]{47E05, 34L40, 34L10, 34L15}
\date{}
\keywords{Dirac type systems; spectral stability; regular and strictly regular
boundary conditions; transformation operators; Bari basis property}

\begin{abstract}
The paper is concerned with the stability property under perturbation $Q \to
\widetilde{Q}$ of different spectral characteristics of a boundary value problem
associated in $L^2([0,1]; \mathbb{C}^2)$ with the following $2 \times 2$ Dirac
type equation
\begin{equation} \label{eq:Ly.abstract}
 L_U(Q) y = -i B^{-1} y' + Q(x) y = \lambda y , \quad
 B = \begin{pmatrix} b_1 & 0 \\ 0 & b_2 \end{pmatrix}, \quad b_1 < 0 < b_2,\quad
 y = {\rm col}(y_1, y_2),
\end{equation}
with a potential matrix $Q \in L^p([0,1]; \mathbb{C}^{2 \times 2})$ and subject
to the regular boundary conditions $Uy :=\{U_1, U_2\}y=0$. If $b_2 = -b_1 =1$
this equation is equivalent to one dimensional Dirac equation. Our approach to
the spectral stability relies on the existence of the triangular transformation
operators for system~\eqref{eq:Ly.abstract} with $Q \in L^1$, which was
established in our previous works. The starting point of our investigation is
the Lipshitz property of the mapping $Q \to K_Q^{\pm}$, where $K_Q^{\pm}$ are
the kernels of transformation operators for system~\eqref{eq:Ly.abstract}.
Namely, we prove the following uniform estimate:
\begin{equation} \label{eq:K-wtK<Q-wtQ_Abstract}
 \|K_Q^{\pm} - K_{\widetilde{Q}}^{\pm}\|_{X_{\infty,p}^2}
 + \|K_Q^{\pm} - K_{\widetilde{Q}}^{\pm}\|_{X_{1,p}^2}
 \le C \cdot \|Q - \widetilde{Q}\|_p, \qquad
 Q, \widetilde{Q}\in \mathbb{U}_{p,r}^{2 \times 2}, \quad p \in [1, \infty],
\end{equation}
on balls $\mathbb{U}_{p,r}^{2 \times 2}$ in $L^p([0,1];
\mathbb{C}^{2 \times 2})$. It is new even for $\widetilde{Q} = 0$. Here
$X_{\infty,p}^2$, $X_{1,p}^2$ are the special Banach
spaces naturally arising in such problems. We also obtained similar estimates
for Fourier transforms of $K_Q^{\pm}$. Both of these estimates are of
independent interest and play a crucial role in the proofs of all spectral
stability results discussed in the paper. For instance, as an immediate
consequence of these estimates we get the Lipshitz property of the mapping
$Q \to \Phi_Q(\cdot, \lambda)$, where $\Phi_Q(x, \lambda)$ is the fundamental
matrix of the system~\eqref{eq:Ly.abstract}.

Assuming the spectrum $\Lambda_{Q} = \{\lambda_{Q,n}\}_{n \in \mathbb{Z}}$ of
$L_U(Q)$ to be asymptotically simple, denote by $F_Q = \{f_{Q,n}\}_{|n| > N}$ a
sequence of corresponding normalized eigenfunctions, $L_U(Q) f_{Q,n} =
\lambda_{Q,n}f_{Q,n}$. Assuming \emph{boundary conditions (BC) to be strictly
regular}, we show that the mapping $Q \to \Lambda_Q - \Lambda_0$ sends
$L^p([0,1]; \mathbb{C}^{2 \times 2})$ either into $l^{p'}$ or into the weighted
$l^p$-space $l^p(\{(1+|n|)^{p-2}\})$; we also establish its Lipshitz property on
compacts in $L^p([0,1]; \mathbb{C}^{2 \times 2})$, $p \in [1, 2]$. The proof of
the second estimate involves as an important ingredient inequality that
generalizes classical Hardy-Littlewood inequality for Fourier coefficients. It
is also shown that the mapping $Q \to F_Q - F_0$ sends
$L^p([0,1]; \mathbb{C}^{2 \times 2})$ into the space $l^{p'}(\mathbb{Z};
C([0,1]; \bC^2)$ of sequences of continuous vector-functions, and has the
Lipshitz property on compacts in $L^p([0,1]; \mathbb{C}^{2 \times 2})$,
$p \in [1, 2]$.

Certain modifications of these spectral stability results are also proved for
balls $\mathbb{U}_{p,r}^{2 \times 2}$ in $L^p([0,1]; \mathbb{C}^{2 \times 2})$,
$p \in [1, 2]$.

Note also that the proof of the Lipshitz property of the mapping $Q \to
F_Q-F_0$ involves the deep Carleson-Hunt theorem for maximal
Fourier transform, while the proof of this property for the mapping $Q \to
\Lambda_Q - \Lambda_0$ relies on the estimates of the classical Fourier
transform and is elementary in character.

In the case of $Q \in L^2$ we establish a criterion for the system of root
vectors of $L_U(Q)$ \emph{to form a Bari basis in} $L^2([0,1]; \mathbb{C}^2)$.
Under a simple additional assumption this system forms a Bari basis if and only
if BC are self-adjoint.
\end{abstract}

\maketitle{}

\renewcommand{\contentsname}{Contents}
\tableofcontents

\section{Introduction} \label{sec:Intro}
In this paper we continue our investigation~\cite{LunMal16JMAA} of the following
first order system of differential equations
\begin{equation} \label{eq:systemIntro_1}
 \cL y = -i B^{-1} y' + Q(x) y=\l y, \qquad y = \col(y_1,y_2),
 \qquad x \in [0,1],
\end{equation}
where
\begin{equation} \label{eq:BQ}
 B = \begin{pmatrix} b_1 & 0 \\ 0 & b_2 \end{pmatrix},
 \quad b_1 < 0 < b_2 \quad \text{and}\quad
 Q = \begin{pmatrix} 0 & Q_{12} \\ Q_{21} & 0 \end{pmatrix} \in \LL{1}.
\end{equation}

If $B = \(\begin{smallmatrix} -1 & 0 \\ 0 & 1 \end{smallmatrix}\)$
system~\eqref{eq:systemIntro_1} is equivalent to the Dirac system (see the classical
monographs~\cite{LevSar88},~\cite{Mar77}).

Let us associate linearly independent boundary conditions (BC)
\begin{equation} \label{eq:BC.intro}
 U_j(y) := a_{j 1}y_1(0) + a_{j 2}y_2(0) + a_{j 3}y_1(1) + a_{j 4}y_2(1)= 0,
 \quad j \in \{1,2\},
\end{equation}
with system~\eqref{eq:systemIntro_1}, and denote as $L(Q) := L_U(Q)$ an
operator, associated in $\LLV{2}$ with the boundary value problem
(BVP)~\eqref{eq:systemIntro_1}--\eqref{eq:BC.intro}. It is defined by differential
expression $\cL$ on the domain
\begin{equation}
 \dom(L_U(Q)) = \{f \in AC([0,1]; \bC^2) :\ \cL f \in \LLV{2}, \
 U_1(f) = U_2(f) = 0\}.
\end{equation}

The completeness property of the root vectors system of BVP for general
$n \times n$ system of the form~\eqref{eq:systemIntro_1} with a nonsingular
diagonal $n\times n$ matrix $B$ with complex entries and a potential matrix
$Q(\cdot)$ of the form
\begin{equation} \label{1.2}
 B = \diag(b_1, b_2, \ldots, b_n) \in \bC^{n\times n} \quad \text{and}\quad
 Q(\cdot) =: (q_{jk}(\cdot))_{j,k=1}^n \in L^1([0,1]; \bC^{n\times n}).
\end{equation}
was established in~\cite{MalOri12} for a wide class of BVPs, although for
$2 \times 2$ Dirac system with $Q\in C([0,1]; \bC^{2 \times 2})$ it was proved
earlier in~\cite[Chapter 1.3]{Mar77}. In~\cite{MalOri12,LunMal14JST} the authors
also found completeness conditions for non-regular and even degenerate BCs. In
these papers it was also established the Riesz basis property of the root
vectors systems (with and without parentheses) for different classes of BVPs for
$n \times n$ system with arbitrary $B$~\eqref{1.2} and $Q \in L^{\infty}([0,1];
\bC^{n\times n})$. Note also that BVP for $2m \times 2m$ Dirac equation
($B=\diag(-I_m, I_m)$) were investigated in~\cite{MykPuy13} (Bari-Markus
property for Dirichlet BVP with $Q \in L^2([0,1]; \bC^{2m \times 2m})$ and
in~\cite{KurAbd18, KurGad20} (Bessel and Riesz basis properties on abstract
level).

The Riesz basis property in $\LLV{2}$ of
BVP~\eqref{eq:systemIntro_1}--\eqref{eq:BC.intro}, i.e. of the operator
$L_U(Q)$, for $2 \times 2$ Dirac system with various assumptions on the
potential matrix $Q$ was investigated in numerous papers (see~\cite{TroYam01,
TroYam02, Mit03, Mit04, HasOri09, DjaMit10, Bask11, DjaMit12UncDir,
DjaMit12TrigDir, DjaMit12Crit, DjaMit13CritDir, LunMal14IEOT, LunMal14Dokl,
SavShk14, LunMal16JMAA} and references therein). At that time the most strong
result was obtained by P.~Djakov and B.~Mityagin~\cite{DjaMit10, DjaMit12UncDir}
and A.~Baskakov, A.~Derbushev, A.~Shcherbakov~\cite{Bask11} who proved under the
assumption $Q \in \LL{2}$ that the root vectors system of the
BVP~\eqref{eq:systemIntro_1}--\eqref{eq:BC.intro} \emph{with strictly regular
BCs} forms a Riesz basis and forms \emph{a Riesz basis with parentheses}
whenever BC are only regular. Note however that the methods of these papers are
substantially rely on $L^2$-technique (like Parseval equality, Hilbert-Schmidt
operators, etc.) and cannot be applied to $L^1$-potentials.

Later the case $Q \in \LL{1}$ was treated independently and with different
methods by the authors~\cite{LunMal14Dokl,LunMal16JMAA}, on the one hand, and by
A.M.~Savchuk and A.A.~Shkalikov~\cite{SavShk14}, on the other hand. Namely, it
was proved that BVP~\eqref{eq:systemIntro_1}--\eqref{eq:BC.intro} with
$Q \in \LL{1}$ and \emph{strictly regular boundary conditions} has the Riesz
basis property while BVP with only regular BC has \emph{the property of Riesz
basis with parentheses}.

Recall in this connection that BC~\eqref{eq:BC.intro} are \emph{regular}, if and
only if they are \emph{equivalent} to the following conditions
\begin{equation} \label{eq:BC.Reg_B_Con_Intro}
 \wh{U}_{1}(y) = y_1(0) + b y_2(0) + a y_1(1) = 0,\quad
 \wh{U}_{2}(y) = d y_2(0) + c y_1(1) + y_2(1) = 0,
\end{equation}
with certain $a,b,c,d \in \bC$ satisfying $ad-bc \ne 0$. Recall also that
regular BC~\eqref{eq:BC.intro} are called \textbf{strictly regular}, if the
sequence $\l_0 = \{\l_n^0\}_{n \in \bZ}$ of the eigenvalues of the unperturbed
($Q=0$) BVP~\eqref{eq:systemIntro_1}--\eqref{eq:BC.intro} (of the operator
$L_U(0)$), is asymptotically separated. In particular, the eigenvalues
$\{\l_n^0\}_{|n| > n_0}$ are geometrically and algebraically simple.

It is well known that \emph{non-degenerate separated} BC are always
\emph{strictly regular}. Moreover, conditions~\eqref{eq:BC.Reg_B_Con_Intro}
\emph{are strictly regular for Dirac operator if and only if} $(a-d)^2 \ne
-4bc$. In particular, antiperiodic (periodic) BC \emph{are regular but not
strictly regular} for Dirac system while they \emph{may be strictly regular for
Dirac type system with certain} $b_1 \not= - b_2.$

To describe our approach to Riesz basis property used
in~\cite{LunMal14Dokl,LunMal16JMAA} let us denote by $e_{\pm}(\cdot,\l)$
solution to system~\eqref{eq:systemIntro_1} satisfying the initial conditions
$e_\pm(0,\l)=\binom{1}{\pm1}$. Our investigation
in~\cite{LunMal14Dokl,LunMal16JMAA} substantially relies on the following
representation of $e_{\pm}(\cdot,\l)$ by means of triangular transformation
operators:
\begin{equation} \label{eq:e=(I+K)e0_Intro}
 e_{\pm}(x,\l) = (I+K_Q^{\pm})e^0_{\pm}(x,\l)
 = e^0_{\pm}(x,\l) + \int^x_0 K_Q^{\pm}(x,t) e^0_{\pm}(t,\l)dt,
 \qquad \text{where} \qquad
 e^0_{\pm}(x,\l)=\binom{e^{ib_1\l x}}{\pm e^{ib_2\l x}},
\end{equation}
and the kernels $K^{\pm}= K_Q^{\pm} = \bigl(K^{\pm}_{jk}\bigr)^2_{j,k=1} \in
\XXZ{1,1} \cap \XXZ{\infty,1}$
(see~\eqref{eq:X1p.norm.def},~\eqref{eq:Xinfp.norm.def} for definitions of these
spaces).

In turn, representation~\eqref{eq:e=(I+K)e0_Intro} immediately leads to the
following key formula for the characteristic determinant $\Delta(\cdot)$ of the problem~\eqref{eq:systemIntro_1}--\eqref{eq:BC.intro}:
\begin{align}
\label{eq:Delta=Delta0+_Intro}
 \Delta_Q(\l) &= \Delta_0(\l) + \int^1_0 g_{1,Q}(t) e^{i b_1 \l t} dt
 + \int^1_0 g_{2,Q}(t) e^{i b_2 \l t} dt,
\end{align}
Here $\Delta_0(\cdot)$ is the characteristic determinant of
problem~\eqref{eq:systemIntro_1}--\eqref{eq:BC.intro} with $Q_0=0$ and
$g_{l,0}(\cdot)\in L^1[0,1], \ l \in \{1,2\}$.

Formula~\eqref{eq:Delta=Delta0+_Intro} yields estimate of the difference
$\Delta_Q(\l) - \Delta_0(\l)$ from above. Combining this estimate with the
classical estimate of $\Delta_0(\cdot)$ from below and applying the Rouch\'e
theorem one arrives at the asymptotic formula
\begin{equation} \label{eq:l.n=l.n0+o(1)_Intro_1}
 \l_n = \l_n^0 + o(1), \quad\text{as}\quad n \to \infty, \quad n \in \bZ,
\end{equation}
relating the eigenvalues $\L = \{\l_n\}_{n \in \bZ}$ and $\L_0 =
\{\l_n^0\}_{n \in \bZ}$ of the operators $L_U(Q)$ and $L_U(0)$ (with
\emph{regular BC}), respectively, (see~\cite{LunMal14Dokl, LunMal16JMAA} for
details and also~\cite{SavShk14} where formula~\eqref{eq:l.n=l.n0+o(1)_Intro_1}
was obtained by using different method). Note also that
representation~\eqref{eq:Delta=Delta0+_Intro} for the determinant
$\Delta_Q(\cdot)$ was substantially used in a recent papers by
A.~Makin~\cite{Mak19},~\cite{Mak20}.

In~\cite{LunMal14Dokl, LunMal16JMAA} we also applied
representation~\eqref{eq:e=(I+K)e0_Intro} to obtain asymptotic formulas for
solutions to equation~\eqref{eq:systemIntro_1} as well as eigenfunctions of the
problem~\eqref{eq:systemIntro_1}--\eqref{eq:BC.intro}.

In the present paper we continue our preceding investigation
from~\cite{LunMal14Dokl,LunMal16JMAA} of
BVPs~\eqref{eq:systemIntro_1}--\eqref{eq:BC.intro} and transformation operators
for systems~\eqref{eq:systemIntro_1}. In the first chapter we prove
\emph{Lipshitz property} of the mappings $Q \to K^{\pm}$ on the balls
\begin{equation}
 \bU_{p,r}^{2 \times 2} := \left\{F \in \LL{p} :
 \|F\|_p := \|F\|_{\LL{p}} \le r\right\}, \quad r > 0,
\end{equation}
in $\LL{p}$. Namely, our first main result reads as follows: there is a constant
$C = C(B, p, r)$, not dependent on $Q, \wt{Q}\in \bU_{p,r}^{2 \times 2}$, such
that for any $p \in [1, \infty)$ the following uniform estimate holds
\begin{equation} \label{eq:K-wtK<Q-wtQ_Intro}
 \|K_Q^{\pm} - K_{\wt{Q}}^{\pm}\|_{\XX{\infty,p}}
 + \|K_Q^{\pm} - K_{\wt{Q}}^{\pm}\|_{\XX{1,p}}
 \le C \cdot \|Q - \wt{Q}\|_p, \qquad Q, \wt{Q}\in \bU_{p,r}^{2 \times 2}.
\end{equation}
Here ${K}_{\wt{Q}}^{\pm}$ are the kernels from
representation~\eqref{eq:e=(I+K)e0_Intro} for $\wt{e}$ being a solution
of~\eqref{eq:systemIntro_1} with $\wt{Q}$ in place of ${Q}$ and the spaces
$X_{\infty,p}$, $X_{1,p}$ are introduced in~\eqref{eq:X1p.norm.def},
\eqref{eq:Xinfp.norm.def}.

Combining uniform estimate~\eqref{eq:K-wtK<Q-wtQ_Intro} with
representation~\eqref{eq:Delta=Delta0+_Intro} we obtain the following statement
meaning the \emph{Lipshitz property} of the map $Q\to g_{l,Q}$ on the
$L^p$-balls and playing a crucial role in our approach to subsequent estimates.
\begin{lemma} \label{lem:dif-ce_of_determ-s_Four_repres-n}
Let $Q, \wt{Q}\in \bU_{p,r}^{2 \times 2}$ with $p \in [1, \infty]$. Then the
difference of characteristic determinants $\Delta_{Q}(\l) = \Delta_{Q,U}(\l)$
and $\Delta_{\wt{Q}}(\l)= \Delta_{\wt{Q},U}(\l)$ of the
problem~\eqref{eq:systemIntro_1}--\eqref{eq:BC.intro} admits the following
representation
\begin{equation} \label{eq:Delta-wt.Delta_Intro}
 \Delta_{Q}(\l) - \Delta_{\wt{Q}}(\l) = \int^1_0 g_1(t) e^{i b_1 \l t} dt
 + \int^1_0 g_2(t) e^{i b_2 \l t} dt.
\end{equation}
with $g_j := g_{j,Q} - g_{j,\wt{Q}} \in L^p[0,1]$, $j \in \{1, 2\}$.
Moreover, there is a constant $C = C(p, r, B) > 0$ such that
\begin{equation} \label{eq:whg1+whg2<C.whQ_Intro}
 \|g_1\|_p + \|g_2\|_p = \|g_{1,Q} - g_{1,\wt{Q}}\|_p
 + \|g_{2,Q} - g_{2,\wt{Q}}\|_p \le C \cdot \|Q-\wt{Q}\|_p.
\end{equation}
\end{lemma}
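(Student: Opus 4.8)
The plan is to derive the representation \eqref{eq:Delta-wt.Delta_Intro} and the estimate \eqref{eq:whg1+whg2<C.whQ_Intro} directly from formula \eqref{eq:Delta=Delta0+_Intro} for each of $Q$ and $\wt Q$ and from the uniform Lipshitz estimate \eqref{eq:K-wtK<Q-wtQ_Intro}. First I would recall how the functions $g_{1,Q}, g_{2,Q}$ arise: substituting the transformation-operator representation \eqref{eq:e=(I+K)e0_Intro} for $e_\pm(\cdot,\l)$ into the determinant $\Delta_Q(\l)$, which is a fixed bilinear expression in the boundary values $e_\pm(0,\l)$, $e_\pm(1,\l)$ built from the coefficients $a_{jk}$ of the BC \eqref{eq:BC.intro}, one collects the terms carrying the exponentials $e^{ib_1\l t}$ and $e^{ib_2\l t}$. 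Concretely, after an interchange of the order of integration the $t$-integrals in $\int_0^1 K^\pm_Q(1,t)e^0_\pm(t,\l)\,dt$ produce, for each fixed row index, a linear combination (with constant coefficients depending only on $B$ and the $a_{jk}$) of the entries $K^\pm_{Q,j1}(1,\cdot)$ and $K^\pm_{Q,j2}(1,\cdot)$; these combinations are precisely $g_{1,Q}(\cdot)$ and $g_{2,Q}(\cdot)$. Since $\Delta_0(\l)$ does not depend on $Q$, subtracting the two copies of \eqref{eq:Delta=Delta0+_Intro} gives \eqref{eq:Delta-wt.Delta_Intro} with $g_j = g_{j,Q}-g_{j,\wt Q}$ equal to the same constant linear combination of the differences $K^\pm_{Q,jk}(1,\cdot) - K^\pm_{\wt Q,jk}(1,\cdot)$.

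Second, to get \eqref{eq:whg1+whg2<C.whQ_Intro} I would estimate the $L^p[0,1]$-norm of $g_j$ by the $L^p$-norm of the trace $K^\pm_Q(1,\cdot)-K^\pm_{\wt Q}(1,\cdot)$, the constant absorbing the (finitely many, $B$- and $U$-dependent) coefficients. The key point is that the ``$1$-norm'' part $\|\cdot\|_{X_{1,p}}$ of the space appearing in \eqref{eq:K-wtK<Q-wtQ_Intro} controls exactly such boundary traces: by the definition of $X_{1,p}$ (via \eqref{eq:X1p.norm.def}) one has, for a kernel $K(x,t)$ supported in $0\le t\le x\le 1$, a bound of the form $\|K(1,\cdot)\|_{L^p[0,1]} \le \|K\|_{X_{1,p}}$ (or at worst with a fixed constant). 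Hence
\begin{equation*}
 \|g_j\|_p \le C_1 \bignorm{K^\pm_Q(1,\cdot)-K^\pm_{\wt Q}(1,\cdot)}_{L^p[0,1]}
 \le C_1 \bignorm{K^\pm_Q - K^\pm_{\wt Q}}_{X_{1,p}}
 \le C_1 C \cdot \|Q-\wt Q\|_p,
\end{equation*}
where the last inequality is \eqref{eq:K-wtK<Q-wtQ_Intro}; summing over $j\in\{1,2\}$ yields \eqref{eq:whg1+whg2<C.whQ_Intro} with $C(p,r,B)$ independent of $Q,\wt Q\in\bU^{2\times2}_{p,r}$. That $g_j\in L^p[0,1]$ (not merely $L^1$) is part of the same inequality, since the right-hand side is finite for $Q,\wt Q\in L^p$.

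The main obstacle is bookkeeping rather than analysis: one must verify that the passage from the determinant $\Delta_Q$ to the pair $(g_{1,Q}, g_{2,Q})$ is given by a \emph{$Q$-independent} bounded linear map applied to the traces $K^\pm_Q(1,\cdot)$, so that it commutes with the difference $Q\mapsto Q-\wt Q$ and the constants genuinely do not depend on the potentials. This is essentially a matter of carefully re-running the computation of \eqref{eq:Delta=Delta0+_Intro} from \cite{LunMal16JMAA} keeping track of linearity in $K^\pm$, plus checking the elementary trace inequality $\|K(1,\cdot)\|_{L^p[0,1]}\le C\|K\|_{X_{1,p}}$ from the definition of the space. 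A minor extra point is the endpoint $p=\infty$: the spaces and the estimate \eqref{eq:K-wtK<Q-wtQ_Intro} are stated for $p\in[1,\infty)$ in \eqref{eq:K-wtK<Q-wtQ_Intro} but the lemma includes $p=\infty$, so for that case one uses the $L^\infty$-version of \eqref{eq:K-wtK<Q-wtQ_Abstract} (valid since $L^\infty\subset L^1$ and the $X$-spaces are defined for all $p$) together with the same trace inequality; no new idea is needed.
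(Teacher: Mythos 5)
Your route is the same as the paper's: Lemma~\ref{lem:phi.jk=e+int} writes $\varphi_{jk}(1,\l)$ as $\delta_{jk}e^{ib_k\l}$ plus Fourier-type integrals of the traces $K_{jl,k}(1,\cdot)$; inserting this into \eqref{eq:Delta} gives \eqref{eq:Delta=Delta0+}--\eqref{eq:gl=J.R}, so that $g_{l,Q}$ is a fixed linear combination of those traces with coefficients $J_{32},J_{42},J_{13},J_{14}$ depending only on the boundary conditions, not on $Q$; subtracting the two representations yields \eqref{eq:Delta-wt.Delta_Intro}; and the uniform bound comes from the Lipshitz estimate of Theorem~\ref{th:K-wtK<Q-wtQ} combined with the trace Lemma~\ref{lem:trace.X}. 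This is exactly how Lemmas~\ref{lem:Delta=Delta0+} and~\ref{lem:Delta-wt.Delta} are proved in the text.

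One step is wrong as written, though immediately repairable. You assert that the trace bound $\|K(1,\cdot)\|_{L^p[0,1]}\le C\,\|K\|_{X_{1,p}}$ follows from the definition \eqref{eq:X1p.norm.def}. It does not: the $X_{1,p}$-norm takes the essential supremum in $t$ and integrates in $x$, so it controls traces in the \emph{first} variable, $K(\cdot,a)$ (the map $i_{a,1}$ of Lemma~\ref{lem:trace.X}), not the restriction to $x=1$. A kernel concentrated in a thin horizontal strip near $x=1$ has small $X_{1,p}$-norm but arbitrarily large $\|K(1,\cdot)\|_{L^p}$, so no constant works and the map $K\mapsto K(1,\cdot)$ is not even continuous on $X_{1,p}^0(\Omega)$. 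The trace you need, $t\mapsto K(1,t)$, is governed by the other component: by \eqref{eq:Xinfp.norm.def} and the map $i_{1,\infty}$ of Lemma~\ref{lem:trace.X} one has $\|K(1,\cdot)\|_{L^p[0,1]}\le\|K\|_{X_{\infty,p}}$ on $X_{\infty,p}^0(\Omega)$. Since \eqref{eq:K-wtK<Q-wtQ_Intro} bounds both $\|K^{\pm}_Q-K^{\pm}_{\wt{Q}}\|_{X_{\infty,p}}$ and $\|K^{\pm}_Q-K^{\pm}_{\wt{Q}}\|_{X_{1,p}}$, replacing $X_{1,p}$ by $X_{\infty,p}$ in your displayed chain restores the argument, and then it coincides with the paper's estimate \eqref{eq:whKp<C.whQp}. (Concerning $p=\infty$: your remark is no more complete than the paper's own treatment, since Theorem~\ref{th:K-wtK<Q-wtQ} and Lemma~\ref{lem:Delta-wt.Delta} are proved there only for $p\in[1,\infty)$.)
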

To demonstrate our first applications of estimate~\eqref{eq:K-wtK<Q-wtQ_Intro}
and Lemma~\ref{lem:dif-ce_of_determ-s_Four_repres-n} let us denote by $\L_Q :=
\L_{U,Q} = \{\l_{Q,n}\}_{n \in \bZ}$ the spectrum of the operator $L_U(Q)$
assuming it to be asymptotically simple.

As an immediate application of Lemma~\ref{lem:dif-ce_of_determ-s_Four_repres-n}
we complete formula~\eqref{eq:l.n=l.n0+o(1)_Intro_1} by establishing
$c_0$-Lipshitz property on compacts: \emph{for each compact} $\cK (\subset
\bU_{1,r}^{2 \times 2})$ \emph{and any} $\eps > 0$ \emph{there exists}
$N_{\eps} > 0$, \emph{not dependent on} $Q\in \cK$, \emph{such that}
\begin{equation} \label{eq:ln-ln0<eps_Intro}
 \sup_{|n| > N_{\eps}}\bigabs{\l_{Q,n} - \l_n^0} \le \eps, \qquad Q \in \cK.
\end{equation}

In the case of Dirac system ($-b_1= b_2=1$) this result was established earlier
in~\cite[Theorem 3]{Sad16}.

Starting with Lemma~\ref{lem:dif-ce_of_determ-s_Four_repres-n} and assuming
\emph{boundary conditions~\eqref{eq:BC.intro} to be strictly regular} we establish the
Lipshitz property of the mapping $Q \to \L_Q$ in different norms: there exists an
enumeration of the spectra $\{\l_{Q,n}\}_{n \in\bZ}$ and $\{\l_{\wt{Q},n}\}_{n \in \bZ}$
of the operators $L_U(Q)$ and $L_U({\wt{Q}})$, and a set $\cI_{Q,\wt{Q}} \subset \bZ$,
such that with certain constants $C = C(B,p,r), N = N(B,p,r) > 0$, not dependent on $Q,
\wt{Q} \in \bU_{p,r}^{2 \times 2}$ with $p\in (1,2]$, the following estimates hold:
\begin{align}
\label{eq:unif_estim_I(q,wtQ).intro}
 \card\(\bZ \setminus \cI_{Q,\wt{Q}}\) & \le N, \\
\label{eq:sum.ln-wtln.intro}
 \sum_{n \in \cI_{Q, \wt{Q}}} \bigabs{\l_{Q,n} - \l_{\wt{Q},n}}^{p'}
 & \le C \cdot \|Q - \wt{Q}\|_p^{p'}, \qquad 1/p' + 1/p = 1, \\
\label{eq:weight.ln-wtln<Q-wtQ.intro}
 \sum_{n \in \cI_{Q, \wt{Q}}} \(1+|n|\)^{p-2}
 \bigabs{\l_{Q,n} - \l_{\wt{Q},n}}^p & \le C \cdot \|Q - \wt{Q}\|_p^{p}.
\end{align}

On a compact set $\cK$ in $L^p([0,1]; \bC^{2 \times 2})$ subsets $\cI_{Q,\wt{Q}}
\subset \bZ$ can be chosen independent of the pair $\{Q,\wt{Q}\}$ and in view
of~\eqref{eq:unif_estim_I(q,wtQ).intro} the summation
in~\eqref{eq:sum.ln-wtln.intro}--\eqref{eq:weight.ln-wtln<Q-wtQ.intro} takes the
form: $\sum_{|n|\ge N_1}$. Here $N_1 (\in \bN)$ does not depend on $Q, \wt{Q}
\in \cK$.

Relation~\eqref{eq:ln-ln0<eps_Intro} is also valid for \emph{regular BC} and
extends Theorem 3 from~\cite{Sad16} to the case of Dirac type system
$(b_1 \ne -b_2)$. When $\wt{Q} = 0$
estimates~\eqref{eq:sum.ln-wtln.intro}--\eqref{eq:weight.ln-wtln<Q-wtQ.intro}
give (uniform on balls) $l^p$-estimates of the remainder in the asymptotic
formula~\eqref{eq:l.n=l.n0+o(1)_Intro_1} for the eigenvalues of the
\emph{regular} problem~\eqref{eq:BC.intro} for Dirac type system.
For Dirac operator $(b_1=-b_2=1)$ estimate~\eqref{eq:sum.ln-wtln.intro} with
$\wt{Q}=0$ generalizes the corresponding result obtained firstly
in~\cite[Theorems 4.3, 4.5]{SavShk14} for compact $\cK=\{Q,0\}$ consisting of
two entries. For Dirac operator $(-b_1=b_2=1)$
estimate~\eqref{eq:sum.ln-wtln.intro} with $\wt{Q}=0$ generalizes the
corresponding result obtained firstly in~\cite[Theorems 4.3, 4.5]{SavShk14} with
a constant $C$ that depends on $Q$ (i.e. for two points compact $\cK = \{Q,0\}$)
and later on in~\cite{SavSad18} for arbitrary compacts $\cK$ in $\LL{1}$.
Note in this connection, that in the very recent papers, A.~Gomilko and
L.~Rzepnicki~\cite{GomRze20} and A.~Gomilko~\cite{Rzep20} obtained new, sharp
asymptotic formulas for eigenfunctions of Sturm–Liouville operators with
singular potentials, and for eigenvalues and eigenfunctions of Dirichlet BVP for
Dirac system with $Q \in \LL{p}$, $1 \le p < 2$, respectively.

Weighted estimate~\eqref{eq:weight.ln-wtln<Q-wtQ.intro} is new even for
Dirac system with $Q\in \bU_{p,r}^{2 \times 2}$ and $\wt{Q}=0$ and even for
trivial compact $\cK=\{Q,0\}$.

\textbf{(II)} In Sections~\ref{sec:stability.eigenvalue}
and~\ref{sec:eigenfunction.stabil} we investigate Fourier transform of
transformation operators kernels $K^{\pm}_Q$ from
representations~\eqref{eq:e=(I+K)e0_Intro}. Our main result in this direction
plays a key role in the sequel and reads as follows. Its proof relies on a new
estimation technique for underlying integral equation on the kernels of the
transformation operators.
\begin{theorem} \label{th:int.wh.K<int.wh.Q.intro}
Let $Q, \wt{Q} \in \bU_{1,r}^{2 \times 2}$ for some $r > 0$ and let $K^{\pm}_Q$
and $K_{\wt{Q}}^{\pm}$ be the kernels from integral
representation~\eqref{eq:e=(I+K)e0}. Then there is a constant $C = C(r, B)> 0$,
not dependent on $Q$, $\wt{Q}$, such that the following uniform estimate holds:
\begin{multline} \label{eq:Kjk<sup.Q.sup_Intro}
 \sum_{j,k=1}^2 \abs{\int_0^x \Bigl(K^{\pm}_Q -
 K^{\pm}_{\wt{Q}}\Bigr)_{jk}(x, t) e^{i b_k \l t} \,dt}
 \le C e^{2 (b_2-b_1) |\Im \l| x} \cdot \sum_{j \ne k}
 \(\sup_{s \in [0,x]} \abs{\int_0^s \(Q_{jk}(t) - \wt{Q}_{jk}(t)\)
 e^{i (b_k-b_j) \l t} dt} \right. \\
 \left. + \,\|Q - \wt{Q}\|_1 \sup_{s \in [0,x]}
 \abs{\int_0^s \wt{Q}_{jk}(t) e^{i (b_k-b_j) \l t} dt}\),
 \qquad x \in [0,1], \quad \l \in \bC.
\end{multline}
\end{theorem}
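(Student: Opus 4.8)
The plan is to convert integral representation~\eqref{eq:e=(I+K)e0} into a system of integral equations for the ``oscillatory'' quantities
\[
 P^{\pm}_{jk}(x,\l) := \int_0^x K^{\pm}_{Q,jk}(x,t)\, e^{i b_k \l t}\,dt,
\]
and an analogous system for $\wt P^{\pm}_{jk}$ built from $\wt Q$, and then estimate the difference $R^{\pm}_{jk} := P^{\pm}_{jk} - \wt P^{\pm}_{jk}$. First I would recall (from our previous works, cited in the excerpt) that the kernels $K^{\pm}_Q$ satisfy a Volterra-type integral equation whose kernel is built linearly from the entries $Q_{jk}$, $j \ne k$, with the characteristic directions $b_1, b_2$ appearing through the exponentials $e^{ib_k\l t}$. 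Substituting $e^{ib_k\l t}$ inside that equation and integrating, one obtains a closed system for the $P^{\pm}_{jk}$ in which the ``input'' data are exactly the oscillatory integrals $M_{jk}(s,\l) := \int_0^s Q_{jk}(t)\,e^{i(b_k-b_j)\l t}\,dt$ (off-diagonal $j\ne k$), together with nonoscillatory factors involving only $\|Q\|_1$. This is the ``new estimation technique'' alluded to: rather than estimating $K^{\pm}_Q$ pointwise and then integrating against $e^{ib_k\l t}$ (which loses the cancellation), one keeps the exponential inside from the start.

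Next I would set up the difference equations. Writing $Q = \wt Q + (Q - \wt Q)$ and subtracting the two systems, the equation for $R^{\pm}_{jk}$ splits into (i) a term driven by $Q - \wt Q$ paired with exponentials, i.e. with input $\int_0^s (Q_{jk}-\wt Q_{jk})(t)\,e^{i(b_k-b_j)\l t}\,dt$, which produces the first group of terms on the right of~\eqref{eq:Kjk<sup.Q.sup_Intro}; (ii) a term driven by $\wt Q$ but multiplied by $K^{\pm}_Q - K^{\pm}_{\wt Q}$, whose size is controlled by $\|Q-\wt Q\|_1$ via the already-established Lipschitz estimate~\eqref{eq:K-wtK<Q-wtQ_Intro} (with $p=1$), paired with $\int_0^s \wt Q_{jk}(t)\,e^{i(b_k-b_j)\l t}\,dt$ — this is the second group of terms; and (iii) a ``Volterra remainder'' involving $R^{\pm}$ itself under an integral sign with an $L^1$ kernel bounded by a multiple of $\|Q\|_1 \le r$ and an exponential weight no worse than $e^{2(b_2-b_1)|\Im\l|x}$. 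Taking $\Psi(x) := \sum_{j,k} \sup_{s\le x} e^{-2(b_2-b_1)|\Im\l|s}\,\abs{R^{\pm}_{jk}(s,\l)}$ and collecting terms, one arrives at a scalar Gronwall inequality $\Psi(x) \le A(x) + c(r,B)\int_0^x \Psi(s)\,ds$, where $A(x)$ is precisely the right-hand side of~\eqref{eq:Kjk<sup.Q.sup_Intro} divided by the exponential factor; Gronwall then yields the claim with $C = C(r,B)$ after absorbing $e^{c(r,B)}$ into the constant.

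The main bookkeeping difficulty, and where I would spend the most care, is step (iii): extracting a \emph{uniform} exponential weight $e^{2(b_2-b_1)|\Im\l|x}$ that dominates every cross-term $e^{i(b_m-b_\ell)\l t}$ appearing after one substitutes the $P$'s back into the Volterra equation. Because the equation for $K^{\pm}_{jk}$ mixes different characteristic speeds, iterating it generates exponentials with arguments that are sums of the elementary phases $b_k - b_j$; one must check that all such phases have imaginary part bounded in absolute value by $2(b_2-b_1)|\Im\l|$ (here the worst case is $(b_2-b_1)+(b_2-b_1)$, hence the factor $2$), and that the non-oscillatory $\|Q\|_1$-factors only contribute bounded constants. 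A secondary subtlety is the correct invocation of~\eqref{eq:K-wtK<Q-wtQ_Intro} at $p=1$ inside term (ii): one needs the $X_{1,1}$- and $X_{\infty,1}$-norms of $K^{\pm}_Q - K^{\pm}_{\wt Q}$ to control the relevant $L^1_t$-integral of $\abs{K^{\pm}_Q - K^{\pm}_{\wt Q}}(x,\cdot)$ against the bounded function $\abs{\wt Q_{jk}}$ uniformly in $x$, which follows from the definition of those spaces. Once these weight and norm matters are settled, the rest is the routine Gronwall argument sketched above.
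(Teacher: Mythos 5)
Your overall strategy --- keep the exponential inside, derive a closed system of integral inequalities for the oscillatory integrals, split the difference of products as $Q K_Q - \wt{Q} K_{\wt{Q}} = \wh{Q} K_Q + \wt{Q}\,\wh{K}$ (with $\wh{Q} = Q - \wt{Q}$, $\wh{K} = K^{\pm}_Q - K^{\pm}_{\wt{Q}}$), and close with Gr\"onwall --- is the same in spirit as the paper's (Lemmas~\ref{lem:int.Rjk.f},~\ref{lem:int.wh.Rjk.f} and Theorems~\ref{th:int.K<int.Q},~\ref{th:int.wh.K<int.wh.Q}, carried out for the factors $R$, $P^{\pm}$ of the representation~\eqref{2.51op} rather than for $K^{\pm}$ directly). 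But your treatment of the cross term (ii) contains a genuine gap. In the term $\wt{Q}\,\wh{K}$ the oscillatory variable is attached to $\wh{K}$, not to $\wt{Q}$: after the change of variables the term is, schematically, $\int \wt{Q}_{kj}(\xi)\bigl(\int \wh{K}_{jk}(\xi,\eta)\,e^{i b_k \l \eta}\,d\eta\bigr)d\xi$. If, as you propose, you bound $\wh{K}$ through the norm estimate~\eqref{eq:K-wtK<Q-wtQ_Intro} with $p=1$, the inner integral loses all cancellation and you are left with a bound of the order $\|\wh{Q}\|_1 \|\wt{Q}\|_1 e^{2(b_2-b_1)|\Im\l|x}$; there is no mechanism by which the factor $\sup_{s}\bigl|\int_0^s \wt{Q}_{jk}(t) e^{i(b_k-b_j)\l t}dt\bigr|$ can appear, since $\wt{Q}$'s own argument never meets the exponential. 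Such a bound is strictly weaker than~\eqref{eq:Kjk<sup.Q.sup_Intro} (its right-hand side tends to $0$ as $\Re\l \to \infty$ on the real axis by Riemann--Lebesgue, while $\|\wh{Q}\|_1\|\wt{Q}\|_1$ does not), so the claimed estimate would not be proved. Moreover, even this crude pairing is not available with the norms you quote: $\wt{Q}_{jk}$ is merely in $L^1$, not bounded, and $\wh{K}(x,\cdot)$ is controlled only in $L^1_t$ (the $X_{\infty,1}$-norm), so the product $|\wh{K}(x,\cdot)|\,|\wt{Q}_{jk}(\cdot)|$ admits no usable H\"older pairing.

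The missing idea is that the term $\|Q-\wt{Q}\|_1\,\sF_j[\wt{Q}]$ must be produced the other way round: in the cross terms one puts the $L^1$-norm on $\wh{Q}$ and lets the oscillation fall on the kernels built from $\wt{Q}$, whose maximal Fourier transforms must first be shown to be bounded by $C e^{2(b_2-b_1)|\Im\l|x}\,\sF_k[\wt{Q}](x,\l)$; that is, one needs the single-potential version of the theorem as a separate a priori ingredient (in the paper: Corollary~\ref{cor:int.R<int.Q}, Lemma~\ref{lem:P.Fourier.Q} and Theorem~\ref{th:int.K<int.Q}, applied with potential $\wt{Q}$), proved by the same Gr\"onwall-with-oscillation device; the terms in which $Q$ (not $\wh{Q}$) multiplies the unknown hatted Fourier transforms are then the only ones absorbed by Gr\"onwall. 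Your sketch gestures at the single-potential system in the first paragraph but never uses it where it is indispensable. A further, repairable, inaccuracy: the closed integral system available from the earlier work is written for the auxiliary kernel $R$ (equations~\eqref{eq:Rkk=int.Qkj.Rjk}--\eqref{eq:Rjk=Qjk-int.Qjk.Rkk}) together with the Volterra system~\eqref{2.52op} for $P^{\pm}$, not for $K^{\pm}$ itself, so one must either run the whole argument through the factorization~\eqref{2.51op}, as the paper does, or first derive and justify a closed equation for $K^{\pm}$ incorporating the boundary condition~\eqref{3op_JMAA}.
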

First, we apply estimate~\eqref{eq:Kjk<sup.Q.sup_Intro} to the fundamental
matrix solution $\Phi_Q(x,\l)$ of system~\eqref{eq:systemIntro_1} by
establishing Lipshitz property of the mapping $Q \to \Phi_Q(x,\l)$ on the
$L^1$-balls $\bU_{1,r}^{2 \times 2}$. Namely, we show that there exists a
constant $C = C(r, B) > 0$, not dependent on $Q, \wt{Q} \in
\bU_{1,r}^{2 \times 2}$, such that the following uniform estimate holds
\begin{multline} \label{eq:Phi.Fourier_Intro}
 \abs{\Phi_Q(x,\l) - \Phi_{\wt{Q}}(x,\l)}
 \le C e^{2 (b_2-b_1) |\Im \l| x} \cdot \sum_{j \ne k}
 \(\sup_{s \in [0,x]} \abs{\int_0^s \(Q_{jk}(t) - \wt{Q}_{jk}(t)\)
 e^{i (b_k-b_j) \l t} dt} \right. \\
 \left. + \|Q - \wt{Q}\|_1 \sup_{s \in [0,x]}
 \abs{\int_0^s \wt{Q}_{jk}(t) e^{i (b_k-b_j) \l t} dt}\),
 \qquad x \in [0,1], \quad \l \in \bC.
\end{multline}

To state the next application of Theorem~\ref{th:int.wh.K<int.wh.Q.intro}
let us assume the spectrum $\L_{U,Q} = \{\l_{Q,n}\}_{n \in \bZ}$ of $L_U(Q)$ to
be asymptotically simple, and introduce a sequence $\{f_{Q,n}\}_{|n| > N}$ of
corresponding normalized eigenfunctions, $L_U(Q) f_{Q,n} = \l_{Q,n}f_{Q,n}$.

Now we are ready to state $l^p$-stability properties of eigenfunctions of the
operators $L_U(Q)$.
\begin{theorem} \label{th:eigenvalue.holes_Intro}
Let $Q, \wt{Q} \in \bU_{p,r}^{2 \times 2}$, $p \in (1,2]$, $r > 0$, and let BC
$\{U_j\}_1^2$ of the form~\eqref{eq:BC.intro} be strictly regular. Then there
exists an enumeration of the spectra $\{\l_{Q,n}\}_{n \in\bZ}$ and
$\{\l_{\wt{Q},n}\}_{n \in \bZ}$ of the operators $L_U(Q)$ and $L_U({\wt{Q}})$,
and the set $\cI_{Q,\wt{Q}} \subset \bZ$, such that with some constants
$C, N > 0$, not dependent on matrices $\{Q, \wt{Q}\}$, the following estimates
hold
\begin{align}
\label{eq:fn-wt.fn<Q.Lp.intro}
 \sum_{n \in \cI_{Q, \wt{Q}}}
 \bignorm{f_{Q,n} - f_{\wt{Q},n}}_{C([0,1]; \bC^2)}^{p'}
 & \le C \cdot \|Q - \wt{Q}\|_p^{p'}, \\
\label{eq:fn-wt.fn<Q.Hardy.intro}
 \sum_{n \in \cI_{Q, \wt{Q}}} (1+|n|)^{p-2}
 \bignorm{f_{Q,n} - f_{\wt{Q},n}}_{C([0,1]; \bC^2)}^{p}
 & \le C \cdot \|Q - \wt{Q}\|_p^{p}.
\end{align}
\end{theorem}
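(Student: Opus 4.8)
The plan is to derive both estimates from Theorem~\ref{th:int.wh.K<int.wh.Q.intro} by the standard route: pass from eigenvalue asymptotics to eigenfunction asymptotics via the transformation operator representation~\eqref{eq:e=(I+K)e0_Intro}, and then bound the resulting Fourier-type integrals. First I would recall that for strictly regular BC the normalized eigenfunction $f_{Q,n}$ is (up to normalization) a fixed linear combination of $e_+(\cdot,\l_{Q,n})$ and $e_-(\cdot,\l_{Q,n})$ whose coefficients depend continuously (Lipschitz-ly, on balls) on the boundary data and on $\l_{Q,n}$; by the eigenvalue localization~\eqref{eq:l.n=l.n0+o(1)_Intro_1} these coefficients are uniformly bounded for $|n|>N$. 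Hence
\begin{equation*}
 f_{Q,n}(x) - f_{\wt{Q},n}(x)
 = \sum_{\pm} \Bigl( c_{\pm}(\l_{Q,n}) e_{\pm}(x,\l_{Q,n})
 - c_{\pm}(\l_{\wt{Q},n}) e_{\pm}(x,\l_{\wt{Q},n}) \Bigr),
\end{equation*}
and I would split this difference into (a) the part coming from the change of the potential at a \emph{fixed} spectral parameter, controlled using~\eqref{eq:e=(I+K)e0_Intro} and Theorem~\ref{th:int.wh.K<int.wh.Q.intro}, and (b) the part coming from the shift $\l_{Q,n}\to\l_{\wt{Q},n}$, controlled by $|\l_{Q,n}-\l_{\wt{Q},n}|$ together with the (uniform on balls) Lipschitz dependence of $e_{\pm}(\cdot,\l)$ and of $c_\pm$ on $\l$; for (b) one then invokes the already-established eigenvalue estimates~\eqref{eq:sum.ln-wtln.intro}--\eqref{eq:weight.ln-wtln<Q-wtQ.intro}, which is why the same index set $\cI_{Q,\wt{Q}}$ and the same exceptional cardinality bound $N$ reappear here.

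The heart of the matter is term (a). Writing $e_{\pm}(x,\l)=e^0_{\pm}(x,\l)+\int_0^x K_Q^{\pm}(x,t)e^0_{\pm}(t,\l)\,dt$, the difference $e_{\pm}(x,\l)-\wt{e}_{\pm}(x,\l)$ at a common $\l$ equals $\int_0^x (K_Q^{\pm}-K_{\wt{Q}}^{\pm})(x,t)e^0_{\pm}(t,\l)\,dt$, which is exactly the quantity estimated in~\eqref{eq:Kjk<sup.Q.sup_Intro} (note $e^0_{\pm}$ has entries $e^{ib_k\l x}$). Evaluating at $\l=\l_{Q,n}$ and using $\l_{Q,n}=\l_n^0+o(1)=\pi n/\ell+O(1)$ (with $\ell$ the appropriate period), the factor $e^{2(b_2-b_1)|\Im\l_{Q,n}|x}$ is uniformly bounded since eigenvalues of regular BVPs lie in a horizontal strip. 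Thus, taking $\sup_{x\in[0,1]}$ and recalling the $C([0,1];\bC^2)$ norm,
\begin{equation*}
 \bignorm{f_{Q,n}-f_{\wt{Q},n}}_{C([0,1];\bC^2)}^{(a)}
 \le C \sum_{j\ne k}\Bigl( \sup_{s\in[0,1]}
 \Bigl| \int_0^s \bigl(Q_{jk}-\wt{Q}_{jk}\bigr)(t)\, e^{i(b_k-b_j)\l_{Q,n} t}\,dt\Bigr|
 + \|Q-\wt{Q}\|_1 \,\sup_{s\in[0,1]}\Bigl| \int_0^s \wt{Q}_{jk}(t)\, e^{i(b_k-b_j)\l_{Q,n} t}\,dt\Bigr|\Bigr).
\end{equation*}
Since $b_k-b_j=\pm(b_2-b_1)$ and $\l_{Q,n}$ runs through an asymptotically arithmetic progression, $e^{i(b_k-b_j)\l_{Q,n}t}$ is, up to a bounded multiplicative factor and a uniformly small perturbation of the frequency, $e^{2\pi i m_n t}$ for integers $m_n$ spaced boundedly; so the quantities $\sup_s|\int_0^s h(t)e^{i(b_k-b_j)\l_{Q,n}t}\,dt|$ are controlled by a \emph{maximal} Fourier transform of $h\in L^p[0,1]$, $p\in(1,2]$.

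The remaining, and genuinely hard, step is the summation over $n$. For~\eqref{eq:fn-wt.fn<Q.Lp.intro} I would bound $\sum_n \sup_s|\int_0^s h(t)e^{2\pi i m_n t}dt|^{p'}$ by the maximal-function version of the Hausdorff--Young inequality: this is precisely where the Carleson--Hunt theorem on the maximal partial-sum operator enters, giving $\|\sup_{s}|\int_0^s h e^{2\pi i m\cdot}|\|_{\ell^{p'}_m}\le C\|h\|_p$ for $p\in(1,2]$ (after handling the bounded gaps/overlaps in $\{m_n\}$ by grouping finitely many terms); applying this with $h=Q_{jk}-\wt{Q}_{jk}$ and, for the second summand, using $\|Q-\wt{Q}\|_1\le\|Q-\wt{Q}\|_p$ and $\|\sup_s|\int_0^s\wt{Q}_{jk}e^{\cdots}|\|_{\ell^{p'}}\le C\|\wt Q\|_p\le Cr$, yields~\eqref{eq:fn-wt.fn<Q.Lp.intro}. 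For the weighted estimate~\eqref{eq:fn-wt.fn<Q.Hardy.intro} I would instead invoke the generalized Hardy--Littlewood inequality $\sum_m (1+|m|)^{p-2}\,|\wh h(m)|^p\le C\|h\|_p^p$ (and its maximal-function strengthening) already alluded to in the introduction, applied analogously. In both cases the contribution (b) is absorbed using~\eqref{eq:sum.ln-wtln.intro} and~\eqref{eq:weight.ln-wtln<Q-wtQ.intro} respectively, together with the uniform bound $\sup_{|n|>N}\|\partial_\l e_{\pm}(\cdot,\l_{Q,n})\|_{C}\le C$ on balls. The main obstacle is making the reduction of $\sup_s|\int_0^s h(t)e^{i(b_k-b_j)\l_{Q,n}t}dt|$ to a genuine maximal Carleson--Hunt/Hardy--Littlewood operator rigorous: one must account for the non-integer, $Q$-dependent frequencies $(b_k-b_j)\l_{Q,n}/(2\pi)$, show the resulting index sequence has bounded multiplicity with the gaps uniform over the ball, and verify that the $o(1)$ drift in the frequencies costs only a bounded factor — this is where most of the technical work will sit.
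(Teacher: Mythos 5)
Your outline follows the same architecture as the paper's proof (Theorem~\ref{th:eigenfunc.holes}): express eigenfunctions through the transformation-operator/fundamental-matrix data, split the deviation into a fixed-$\l$ potential change plus a spectral shift, and sum via maximal Hausdorff--Young/Hardy--Littlewood bounds. But two steps are genuinely missing. First, normalization. The theorem concerns \emph{normalized} eigenfunctions, so after bounding the unnormalized combination $\sum_{\pm}c_{\pm}(\l_{Q,n})e_{\pm}(\cdot,\l_{Q,n})$ you must also bound its norm from \emph{below}, uniformly in $Q$, before dividing; your assertion that the coefficients are ``uniformly bounded for $|n|>N$'' is only the upper bound and does not give this. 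Over a ball $\bU_{p,r}^{2\times2}$ (as opposed to a compact set) the needed non-degeneracy, e.g. $|b+a\varphi_{12}(\l_{Q,n})|\gtrsim 1$ together with $\|\varphi_{kk}(\cdot,\l_{Q,n})\|\ge\delta_0$ as in~\eqref{eq:Phik.norm.holes}, fails on a $Q$-dependent set of indices whose location drifts with $Q$ (see the counterexample mechanism in Proposition~\ref{prop:eigenv.Q12=0}); this is a \emph{second, independent} source of the exceptional set $\cI_{Q,\wt{Q}}$, handled in the paper by a Chebyshev-type argument (Lemma~\ref{lem:max.fourier.tail.Lp}, Proposition~\ref{eq:Phix.holes}), and not, as you state, a consequence of the eigenvalue estimates alone. (Also, when $b=c=0$ the eigenfunctions must be built separately along the two spectral branches; a single formula with fixed coefficients degenerates there.)

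Second, the summation step. Replacing the frequencies $(b_k-b_j)\l_{Q,n}$ by nearby integers and quoting the periodic Carleson--Hunt theorem does not work as described: the frequencies are complex (and the fact that they lie in a strip of half-width uniform over the ball is itself a statement requiring proof, Proposition~\ref{prop:incompress.holes}(i), which uses $p>1$), they drift with $Q$, and the error in passing from frequency $\mu$ to the nearest $2\pi m$ is only controlled by $\sup_{s}\bigl|\int_0^s h(t)\bigl(e^{i\mu t}-e^{2\pi i m t}\bigr)dt\bigr|\le \|h\|_1\,|\mu-2\pi m|\,e^{|\Im\mu|}$, which is not summable in the required $\ell^{p'}$ or weighted $\ell^{p}$ sense without exactly the kind of argument you defer. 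The paper's resolution is Theorem~\ref{th:p.bessel}: maximal Hausdorff--Young and Hardy--Littlewood inequalities valid for \emph{arbitrary} incompressible sequences in a strip, with constants depending only on the density and the strip, proved via subharmonicity of $\bigl|\int_0^x g(t)e^{i\l t}dt\bigr|^{p'}$ (Lemma~\ref{lem:fourier.coef.Lp}) combined with the Carleson--Hunt theorem on the line, and Marcinkiewicz interpolation for the weight $(1+|n|)^{p-2}$. Supplying this device (or an equivalent) is the heart of the proof; with it, the rest of your plan coincides with the paper's route through Theorems~\ref{th:int.wh.K<int.wh.Q} and~\ref{th:int.phi<int.Q}, Lemma~\ref{lem:Fx-wtFx}, Theorem~\ref{th:ln-wtln<Delta.Q-wtQ.Lp} and Proposition~\ref{prop:sum.phi-wt.phi.x.p}.
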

On compact sets $\cK$ in $L^p$
estimates~\eqref{eq:fn-wt.fn<Q.Lp.intro}--\eqref{eq:fn-wt.fn<Q.Hardy.intro} are
simplified because subsets $\cI_{Q,\wt{Q}} \subset \bZ$ can be chosen to be
independent of the pair $\{Q,\wt{Q}\}$, and in view
of~\eqref{eq:unif_estim_I(q,wtQ).intro} the summation
in~\eqref{eq:sum.ln-wtln.intro}--\eqref{eq:fn-wt.fn<Q.Hardy.intro} can be
replaced with the following: $\sum_{|n|\ge N_1}$. Here $N_1 (\in \bN)$ does not
depend on $Q$ and $\wt{Q}$. Inequality~\eqref{eq:fn-wt.fn<Q.Hardy.intro}
generalizes the classical Hardy-Littlewood
inequality~\cite[Theorem XII.3.19]{Zig59_v2} for Fourier coefficients (see
Example~\ref{ex:stabil.equiv}).

The proof of estimates~\eqref{eq:fn-wt.fn<Q.Lp.intro}--\eqref{eq:fn-wt.fn<Q.Hardy.intro}
substantially relies on the uniform
estimate~\eqref{eq:Phi.Fourier_Intro}. Moreover, for efficient estimate of the
``maximal'' Fourier transform (see definition~\eqref{eq.Fg.sup.Gx_Intro})
we use classical theorems by Carleson-Hunt(\cite[Theorem 6.2.1]{Gra09}),
Hausdorff-Young, and Hardy-Littlewood (see~\cite[Chapter XII]{Zig59_v2}).

Estimates~\eqref{eq:sum.ln-wtln.intro}--\eqref{eq:weight.ln-wtln<Q-wtQ.intro} with $\wt{Q} = 0$ give
(uniform on balls) $l^p$-estimates of the remainder in the asymptotic formula for the
eigenvalues of the problem~\eqref{eq:BC.intro} for Dirac type system. In this case
$(\wt{Q} = 0)$ for Dirac system $(b_1 = -b_2)$ estimate~\eqref{eq:sum.ln-wtln.intro}
was recently and by another method obtained by A. Savchuk~\cite{Sav19}.

Note in conclusion that \emph{periodic and antiperiodic (necessarily
non-strictly regular) BVP} for $2 \times 2$ Dirac and Sturm-Liouville equations
have also attracted certain attention during the last decade. For\ instance, a
criterion for the system of root functions of the \emph{periodic} BVP for
$2 \times 2$ Dirac equation to contain a Riesz basis (without parentheses!) was
obtained by P. Djakov and B. Mityagin in~\cite{DjaMit12Crit} (see also recent
survey~\cite{DjaMit20UMNper} and recent papers~\cite{Mak19},~\cite{Mak20} by
A.S.~Makin and the references therein). It is also worth mentioning that
F.~Gesztesy and V.A.~Tkachenko~\cite{GesTka09,GesTka12} for $q \in L^2[0,\pi]$
and P.~Djakov and B.S.~Mityagin~\cite{DjaMit12Crit} for $q \in W^{-1,2}[0,\pi]$
established by different methods a \emph{criterion} for the system of root
functions to contain a Riesz basis for Sturm-Liouville operator
$-\frac{d^2}{dx^2} + q(x)$ on $[0,\pi]$ (see also survey~\cite{Mak12}).

Finally, we mention that the Riesz basis property for abstract operators is
investigated in numerous papers. Due to the lack of space we only
mention~\cite{Katsn67,MarMats84,Markus88,Agran99}, the recent
survey~\cite{Shk10}, and the references therein.

The paper is organized as follows.

In Section~\ref{sec:X1p} we introduce the Banach spaces $X_{1,p}:= X_{1,p}(\Omega)$
and $X_{\infty,p} := X_{\infty,p}(\Omega)$ as well as their separable subspaces
$X_{\infty,p}^0(\Omega)$ and $X_{1,p}^0(\Omega)$ being the closures of $C(\Omega)$ in
respective norms, and investigate Volterra type operators of the form
\eqref{eq:e=(I+K)e0_Intro} with kernels from these spaces. Besides, we show that for
each $a\in[0,1]$ the trace mappings $N(x,t)\to N(a,t)$ and $N(x,t)\to N(x,a)$
originally defined on $C(\Omega)$ are extended as continuous mappings
$X_{\infty,p}^0(\Omega)\to L^p[0,a]$ and $X_{1,p}^0(\Omega) \to L^p[a,1]$, respectively.
So, the functions $g_l$ in~\eqref{eq:Delta-wt.Delta_Intro}, being traces of
$K^{\pm}_Q,$ are well defined.

In Section~\ref{sec:Transform} we show that $K^{\pm}_Q \in
X_{\infty,p}^0(\Omega) \cap X_{1,p}^0(\Omega)$ and prove the main
estimate~\eqref{eq:K-wtK<Q-wtQ_Intro} for the difference of transformation
operators kernels, i.e. establish \emph{the Lipshitz property} of the mapping
$Q \to K^{\pm}_Q$ on the $L^p$-balls $\bU_{p,r}^{2 \times 2}$. In passing we
prove several auxiliary statements on kernels $K^{\pm}_Q$ useful in
Subsection~\ref{subsec:maxim.Fourier} for proving important
estimate~\eqref{eq:Kjk<sup.Q.sup_Intro}.

In Section~\ref{sec:general} we apply the main uniform estimate~\eqref{eq:K-wtK<Q-wtQ_Intro}
 to characteristic determinants $\Delta_Q$. In particular,
 we prove here Lemma~\ref{lem:dif-ce_of_determ-s_Four_repres-n} as well as clarify asymptotic
formula~\eqref{eq:l.n=l.n0+o(1)_Intro_1}.
 We also indicate here (see Remark~\ref{rem:cond.examples}) certain classes of
\emph{strictly regular BC} in purely algebraic terms. Emphasize that as distinguished
from the case of Dirac operators such a description \emph{is non-trivial for Dirac type
$(b_1\not = -b_2)$ operators}. For\ instance, if $bc=0$ and $ad\not = 0$, then
BC~\eqref{eq:BC.Reg_B_Con_Intro} are strictly regular whenever $b_1 \ln |d| + b_2 \ln
|a| \ne 0.$ Besides, \textbf{antiperiodic boundary
conditions}~\eqref{eq:BC.Reg_B_Con_Intro} $(a=d=1$, $b=c=0)$ \textbf{are strictly
regular} provided that
\begin{equation} \label{eq:cond_on_b_1,b_2}
b_1= -n_1\beta, \quad b_2 = n_2\beta,\quad \text{with} \quad n_1, n_2\in \bN, \ \beta\in \bC, \quad \text{and}\quad
n_1 - n_2 =2p+1\in \bZ.
\end{equation}
Therefore the previous results imply the following surprising statement.
\begin{corollary} \label{cor:basis.Perid_Cond}
Let $Q, \wt{Q} \in \bU_{p,r}^{2 \times 2}$, $p \in (1,2]$.
Assume also that $b_1, b_2$ satisfy~\eqref{eq:cond_on_b_1,b_2}. Then
\textbf{antiperiodic BC} are \textbf{strictly regular}, hence \textbf{the operator $L_{U}(Q)$ has Riesz basis property}. Moreover, the corresponding eigenvalues and eigenvectors satisfy uniform Lipshitz
type estimates~\eqref{eq:sum.ln-wtln.intro}--\eqref{eq:weight.ln-wtln<Q-wtQ.intro} and ~\eqref{eq:fn-wt.fn<Q.Lp.intro}--\eqref{eq:fn-wt.fn<Q.Hardy.intro}, respectively.
\end{corollary}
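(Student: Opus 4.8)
The plan is to assemble Corollary~\ref{cor:basis.Perid_Cond} purely as a bookkeeping exercise, collecting facts already established. First I would verify that condition~\eqref{eq:cond_on_b_1,b_2} forces antiperiodic BC to be strictly regular. Antiperiodic BC correspond to \eqref{eq:BC.Reg_B_Con_Intro} with $a=d=1$, $b=c=0$, so the criterion quoted just before the corollary — namely $bc=0$, $ad\ne 0$, and $b_1\ln|d|+b_2\ln|a|\ne 0$ — does \emph{not} apply directly, since here $\ln|a|=\ln|d|=0$. Instead I would recompute $\Delta_0(\l)$ for the unperturbed antiperiodic problem: with $e^0_\pm$ from~\eqref{eq:e=(I+K)e0_Intro} one gets $\Delta_0(\l)$ as a combination of $e^{ib_1\l}$ and $e^{ib_2\l}$, and after substituting $b_1=-n_1\beta$, $b_2=n_2\beta$ the zeros of $\Delta_0$ become the solutions of an equation of the form $e^{i n_1 \beta\l}+e^{-i n_2\beta\l}=\text{const}$ (or a similar two-term relation). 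The parity hypothesis $n_1-n_2=2p+1$ is odd, which is exactly what is needed to rule out the coincidence of zeros that makes ordinary antiperiodic Dirac BC ($n_1=n_2=1$, difference $0$) fail to be strictly regular; I would show the zero set is asymptotically separated, so by the definition recalled in the introduction the BC are strictly regular.

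Once strict regularity is in hand, the Riesz basis property of $L_U(Q)$ for $Q\in L^1$ (a fortiori for $Q\in\bU_{p,r}^{2\times2}$ with $p\in(1,2]\subset[1,\infty]$, since $L^p[0,1]\subset L^1[0,1]$) is immediate from the results of~\cite{LunMal14Dokl,LunMal16JMAA,SavShk14} cited in the introduction: strictly regular BC $\Rightarrow$ Riesz basis without parentheses. I would simply invoke that. Finally, the Lipschitz-type estimates are a direct application of Theorem~\ref{th:eigenvalue.holes_Intro} and of the eigenvalue estimates~\eqref{eq:sum.ln-wtln.intro}--\eqref{eq:weight.ln-wtln<Q-wtQ.intro}: both are stated precisely under the hypotheses ``$Q,\wt Q\in\bU_{p,r}^{2\times2}$, $p\in(1,2]$, and BC strictly regular,'' all of which we have just verified. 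So \eqref{eq:fn-wt.fn<Q.Lp.intro}--\eqref{eq:fn-wt.fn<Q.Hardy.intro} and \eqref{eq:sum.ln-wtln.intro}--\eqref{eq:weight.ln-wtln<Q-wtQ.intro} transfer verbatim to the antiperiodic problem under~\eqref{eq:cond_on_b_1,b_2}.

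The only genuine content — and hence the main obstacle — is the first step: the explicit computation showing that the odd-difference condition $n_1-n_2\in 2\bZ+1$ makes the antiperiodic characteristic determinant's zeros asymptotically separated. Everything else is quotation. I would carry this out by writing $\Delta_0(\l)=\det\bigl(\wh U_j(e^0_\pm)\bigr)$, reducing it (up to a nonzero factor) to something like $1-e^{i(b_1-b_2)\l}$ or $e^{ib_1\l}+e^{ib_2\l}$ depending on the precise normalization of antiperiodic BC in~\eqref{eq:BC.Reg_B_Con_Intro}, substituting $b_1-b_2=-(n_1+n_2)\beta$ and analyzing the zeros as an arithmetic progression with spacing $\propto 1/(n_1+n_2)$; asymptotic separation then follows because distinct zeros of $\Delta_0$ are separated by a fixed positive gap, which holds precisely because the relevant exponents $n_1\beta$ and $n_2\beta$ are not equal — indeed have opposite-parity multipliers. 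If the determinant turns out to have a double-root pattern for even $n_1-n_2$ (the classical antiperiodic Dirac degeneracy), the odd hypothesis is exactly what breaks it, and I would make that dichotomy explicit. This can be cross-checked against Remark~\ref{rem:cond.examples}, which the introduction says contains the algebraic description of strictly regular BC for Dirac type operators.
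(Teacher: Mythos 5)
Your proposal is correct and is essentially the paper's own route: the paper obtains strict regularity of the antiperiodic BC under~\eqref{eq:cond_on_b_1,b_2} directly from Remark~\ref{rem:cond.examples}(3) — criterion~\eqref{eq:bc=0.crit.rat}, which for $a=d=1$, $b=c=0$ reduces to $n_1\arg(-1)-n_2\arg(-1)=(n_1-n_2)\pi\notin 2\pi\bZ$, i.e. $n_1-n_2$ odd — and then, exactly as you do, quotes the Riesz basis results for strictly regular BC together with Theorems~\ref{th:ln-wtln<Delta.Q-wtQ.Lp} and~\ref{th:eigenfunc.holes} (the body versions of \eqref{eq:sum.ln-wtln.intro}--\eqref{eq:weight.ln-wtln<Q-wtQ.intro} and \eqref{eq:fn-wt.fn<Q.Lp.intro}--\eqref{eq:fn-wt.fn<Q.Hardy.intro}). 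Your only independent step, the determinant computation, amounts to the same calculation, but note that by~\eqref{eq:Delta0.new} the antiperiodic determinant is $\Delta_0(\l)=1+e^{i(b_1+b_2)\l}+e^{ib_1\l}+e^{ib_2\l}=(1+e^{ib_1\l})(1+e^{ib_2\l})$ (a product of two binomials, not a two-term relation), whose zeros are the two real arithmetic progressions $\{(2k+1)\pi/b_1\}_{k\in\bZ}$ and $\{(2m+1)\pi/b_2\}_{m\in\bZ}$; after reducing to $\gcd(n_1,n_2)=1$ these collide (producing double, non-separated zeros) precisely when $n_1$ and $n_2$ are both odd, i.e. when $n_1-n_2$ is even, so it is the parity hypothesis — not merely $n_1\beta\ne n_2\beta$, which would admit counterexamples such as $n_1=3$, $n_2=1$ — that yields asymptotic separation and hence strict regularity.
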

This result demonstrates substantial difference between Dirac and Dirac type operators.

In Section~\ref{sec:fourier.transform} we investigate Fourier transform and
\emph{``maximal'' Fourier transform}
\begin{equation} \label{eq.Fg.sup.Gx_Intro}
 F[g](\l) := \int_0^1 g(t) e^{i \l t}\, dt \quad\text{and}\quad \sF[g](\l) := \sup_{x \in [0,1]}
 \abs{\int_0^x g(t) e^{i \l t}\, dt}, \qquad \l \in \bC,
\end{equation}
in order to generalize certain classical Hausdorff-Young and Hardy-Littlewood
theorems for Fourier coefficients (\cite[Chapter XII]{Zig59_v2}). In particular,
we prove here that for any incompressible sequence $\L = \{\mu_n\}_{n \in \bZ}$
in the strip $\Pi_h$, there is a constant $C = C(p, h, d) > 0$, not dependent on
$\L$, such that the following estimate holds uniformly in $g$ and $\L$:
\begin{equation} \label{eq:sum.int.nu.g<g_Intro}
\sum_{n \in \bZ} (1+|n|)^{p-2} |F[g]|^{p}(\mu_n)
 \le \sum_{n \in \bZ} (1+|n|)^{p-2} \sF[g]^{p}(\mu_n)
 \le C \cdot \|g\|_p^p, \qquad g \in L^p[0,1], \quad p \in (1, 2].
\end{equation}
Here and throughout the paper $\Pi_h := \{z \in \bC : |\Im z| \le h\}$ denotes a
symmetrical horizontal strip of semi-width $h \ge 0$.

Inequality~\eqref{eq:sum.int.nu.g<g_Intro} generalizes Hardy-Littlewood theorem (\cite[Theorem XII.3.19]{Zig59_v2})
and coincides with it for ordinary Fourier transform and
$\mu_n = 2 \pi n$. In turn, this inequality is an important ingredient in
proving the estimate~\eqref{eq:weight.ln-wtln<Q-wtQ.intro}.

We also consider ``maximal'' Fourier transform $\cF[K^{\pm}_Q]$ given
by~\eqref{eq.Fg.sup.Gx_Intro} with kernels $K^{\pm}_Q(x,t)$ in place of $g$ and
applying the main estimate~\eqref{eq:K-wtK<Q-wtQ_Intro} establish the following
uniform Lipshitz estimate on $L^p$-balls:
\begin{equation} \label{eq:FK-FwtK_intro}
 \|\cF[K^{\pm}_Q] - \cF[K^{\pm}_{\wt{Q}}]\|_{C(\Pi_h)} \le e^{|b|h} \cdot
 C(B, p, r) \cdot \|Q - \wt{Q}\|_{L^p},
 \qquad Q, \wt{Q} \in \bU_{p, r}^{2 \times 2}, \quad p \in [1, \infty).
\end{equation}

In Section~\ref{sec:stability.eigenvalue} we prove
estimates~\eqref{eq:ln-ln0<eps_Intro}--\eqref{eq:weight.ln-wtln<Q-wtQ.intro}.
Here we substantially use the following important statement that can be
extracted from Lemma~\ref{lem:dif-ce_of_determ-s_Four_repres-n}: assuming the
spectra $\L_{U,Q} = \{\l_{Q,n}\}_{n \in \bZ}$ and $\L_{U,\wt{Q}} =
\{\l_{\wt{Q},n}\}_{n \in \bZ}$ of operators $L_U(Q)$ and $L_U(\wt{Q})$,
respectively, to be asymptotically simple, we show that for any compact subset
$\cK$ of $\bU_{p, r}^{2 \times 2}$ there exist constants $C$ and $N > 0$, not
dependent on $\{Q, \wt{Q}\}\subset \cK$, such that the following \emph{uniform}
two-sided estimates hold
\begin{equation} \label{eq:ln-wtln<C.Delta.intro}
 C^{-1} \cdot |\Delta_{\wt{Q}}(\l_{Q,n})| \le |\l_{Q,n} - \l_{\wt{Q},n}| \le
 C \cdot |\Delta_{\wt{Q}}(\l_{Q,n})|, \qquad |n| > N, \quad Q, \wt{Q} \in \cK.
\end{equation}
Emphasize that in
proving~\eqref{eq:sum.ln-wtln.intro}--\eqref{eq:weight.ln-wtln<Q-wtQ.intro} we
use only evaluation of the ordinary Fourier transform and \emph{do not use deep
Carleson-Hunt result}. In particular, the proof
of~\eqref{eq:weight.ln-wtln<Q-wtQ.intro} relies on the uniform estimate between
the first and third terms in~\eqref{eq:sum.int.nu.g<g_Intro} only with classical
$F$ (not $\cF$). This fact makes the proof of
estimates~\eqref{eq:sum.ln-wtln.intro}--\eqref{eq:weight.ln-wtln<Q-wtQ.intro}
elementary in character.

In Section~\ref{sec:eigenfunction.stabil} we
prove Theorems~\ref{th:int.wh.K<int.wh.Q.intro}
and~\ref{th:eigenvalue.holes_Intro}. We also establish uniform estimates similar
to~\eqref{eq:fn-wt.fn<Q.Lp.intro}--\eqref{eq:fn-wt.fn<Q.Hardy.intro}, where
the ball $\bU_{p,r}^{2 \times 2}$ is replaced with a compact $\cK \subset L^p$,
and summation is over $|n| > N$ (see Theorem~\ref{th:eigenfunc.compact}).
Proposition~\ref{prop:root.Q12=0} shows that in the case $Q_{12} = 0$ summation
in~\eqref{eq:fn-wt.fn<Q.Lp.intro}--\eqref{eq:fn-wt.fn<Q.Hardy.intro} can be
extended to the entire $\bZ$ and the estimate remains uniform for all $Q_{21}
\in L^p[0,1]$, $p \in (1,2]$.

In Section~\ref{sec:bari} we apply results of
Section~\ref{sec:eigenfunction.stabil} in the case of $Q \in L^2$ to establish a
criterion for the system of root vectors of
BVP~\eqref{eq:systemIntro_1}--\eqref{eq:BC.intro} \emph{to form a Bari basis in}
$\LLV{2}$. To the best of our knowledge this problem was not investigated
before. Inequality~\eqref{eq:fn-wt.fn<Q.Lp.intro} reduces the general case to a
simplest case of $Q=0$. In Proposition~\ref{prop:crit.bari.b.ne.0} we establish
a criterion in terms of the eigenvalues of the unperturbed operator $L_0$. If
either $b_1 b_2^{-1} \in \bQ$ or $abcd=0$, then the criterion can be simplified
as follows: the system of root vectors of such
BVP~\eqref{eq:systemIntro_1},~\eqref{eq:BC.Reg_B_Con_Intro} forms a Bari basis
if and only if BC~\eqref{eq:BC.Reg_B_Con_Intro} are self-adjoint.
\section{The Banach spaces $X_{1,p}$ and $X_{\infty,p}$} \label{sec:X1p}
Let $p \in [1, \infty]$. Following~\cite{Mal94} denote by $X_{1,p} :=
X_{1,p}(\Omega)$ and $X_{\infty,p} := X_{\infty,p}(\Omega)$ the linear spaces
composed of (equivalent classes of) measurable functions defined on $\Omega :=
\{(x,t) : 0 \le t \le x \le 1\}$ satisfying
\begin{align}
\label{eq:X1p.norm.def}
 \|f\|_{X_{1,p}}^p & :=
 \esssup_{t \in [0,1]} \int_t^1 |f(x,t)|^p \,dx < \infty, \qquad p < \infty, \\
\label{eq:Xinfp.norm.def}
 \|f\|_{X_{\infty,p}}^p & :=
 \esssup_{x \in [0,1]} \int_0^x |f(x,t)|^p \,dt < \infty, \qquad p < \infty,
\end{align}
respectively, and $\|f\|_{X_{1,\infty}} = \|f\|_{X_{\infty,\infty}}
= \esssup_{(x,t) \in \Omega} |f(x,t)|$. It can easily be
shown that the spaces $X_{1,p}$ and $X_{\infty,p}$ equipped with the
norms~\eqref{eq:X1p.norm.def} and~\eqref{eq:Xinfp.norm.def} form Banach spaces that
are not separable. Denote by $X_{1,p}^0 := X_{1,p}^0(\Omega)$ and
$X_{\infty,p}^0 := X_{\infty,p}^0(\Omega)$ the subspaces of $X_{1,p}(\Omega)$
and $X_{\infty,p}(\Omega)$, respectively, obtained by taking the closure of
continuous functions $f \in C(\Omega)$. Clearly, the set $C^1(\Omega)$ of smooth
functions is also dense in both spaces $X_{1,p}^0$ and $X_{\infty,p}^0$. Note
also that the following embeddings hold and are continuous
\begin{equation} \label{eq:Xqp_embedding}
 X_{1,p_1} \subset X_{1,p_2} \subset X_{1,1} \qquad \text{and}\qquad
 X_{\infty,p_1} \subset X_{\infty,p_2} \subset X_{\infty,1},
 \qquad p_1 > p_2 \ge 1.
\end{equation}
The following simple properties of the class $X_{\infty,p}^0(\Omega)$ will be
important in the sequel.
\begin{lemma} \label{lem:trace.X}
Let $p \ge 1$. For each $a \in [0,1]$, the trace mappings
\begin{equation}
 i_{a,\infty}:\ C(\Omega) \to C[0,a], \quad
 i_{a,\infty}\bigl(N(x,t)\bigr):=N(a,t), \quad
 \text{and}\quad i_{a,1}:\ C(\Omega) \to C[a,1], \quad
 i_{a,1}\bigl(N(x,t)\bigr):=N(x,a),
\end{equation}
originally defined on $C(\Omega)$ admit continuous extensions (also denoted by
$i_{a,\infty}$ and $i_{a,1}$, respectively) as mappings from
$X_{\infty,p}^0(\Omega)$ onto $L^p[0,a]$ and $X_{1,p}^0(\Omega)$ onto
$L^p[a,1]$, respectively.
\end{lemma}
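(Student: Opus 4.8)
The plan is to establish the trace estimate on the dense subspace $C(\Omega)$ and then extend by continuity and density. I will treat $i_{a,\infty}$; the argument for $i_{a,1}$ is completely analogous (it is obtained by reflecting the roles of $x$ and $t$ in the definition of the norm). Fix $a \in [0,1]$ and $p \in [1,\infty)$ (the case $p=\infty$ being immediate from the sup-norm estimate $|N(a,t)| \le \|N\|_{X_{\infty,\infty}}$). For $N \in C^1(\Omega)$ and $t \in [0,a]$ we write, using $x \ge a \ge t$ in the region of integration,
\begin{equation} \label{eq:trace.FTC}
 |N(a,t)|^p = \Bigl| N(x,t) - \int_a^x \partial_x |N(s,t)|^{?}\,ds \Bigr|,
\end{equation}
but it is cleaner to avoid differentiating $|N|^p$: instead use the fundamental theorem of calculus on $N$ itself together with the elementary inequality $|N(a,t)|^p \le 2^{p-1}\bigl(|N(x,t)|^p + |N(x,t)-N(a,t)|^p\bigr)$, and then $N(x,t)-N(a,t) = \int_a^x \partial_x N(s,t)\,ds$. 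Averaging \eqref{eq:trace.FTC}-type bound over $x \in [a,1]$ and applying Hölder's inequality to the integral remainder gives, for a constant depending only on $p$,
\begin{equation}
 \int_0^a |N(a,t)|^p\,dt \le C_p \Bigl( \esssup_{x\in[0,1]}\int_0^x |N(x,t)|^p\,dt
 + \esssup_{x\in[0,1]}\int_0^x |\partial_x N(x,t)|^p\,dt \Bigr).
\end{equation}
This controls the $L^p[0,a]$-norm of the trace by $\|N\|_{X_{\infty,p}} + \|\partial_x N\|_{X_{\infty,p}}$, which is not quite what we want since the right-hand side is not the $X_{\infty,p}$-norm alone. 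The remedy is the standard mollification trick: approximate $N$ in $X_{\infty,p}^0$ by a sequence whose traces at $x=a$ are Cauchy in $L^p[0,a]$, using that for $N \in C(\Omega)$ one already has $i_{a,\infty}(N) \in C[0,a]$ and $\|i_{a,\infty}(N)\|_{L^p[0,a]}^p = \int_0^a |N(a,t)|^p\,dt$; the genuine inequality to prove on $C(\Omega)$ is therefore
\begin{equation} \label{eq:trace.key}
 \int_0^a |N(a,t)|^p\,dt \le \|N\|_{X_{\infty,p}}^p \cdot \frac{a}{?}
\end{equation}
which in fact \emph{fails} pointwise in $a$ — a trace at a single slice is not bounded by an essential supremum over all slices. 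So the correct route, and the one I would actually carry out, is different: one does \emph{not} get a bound for a fixed $a$ from the $X_{\infty,p}$-norm alone.

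Reconsidering, the right statement to prove is that the trace map is continuous \emph{as a map on the closed subspace} $X_{\infty,p}^0$, exploiting that its elements are limits of continuous functions. Concretely: for $N \in C(\Omega)$ and $0 \le a \le b \le 1$,
\begin{equation}
 \bigl| |N(a,t)|^p - |N(b,t)|^p \bigr| \le p \, (\|N\|_\infty)^{p-1} \, \omega_N(|a-b|),
\end{equation}
where $\omega_N$ is the modulus of continuity of $N$; integrating in $t$ and also using $\int_0^x |N(x,t)|^p dt \le \|N\|_{X_{\infty,p}}^p$ for a.e.\ $x$, one shows that $a \mapsto \int_0^a |N(a,t)|^p\,dt$ is continuous and bounded by $\|N\|_{X_{\infty,p}}^p$ \emph{at every $a$ where the essential supremum is realized as an honest supremum}, hence — by continuity in $a$ of the left side and the fact that the $\esssup$ of a continuous integrand is its $\sup$ — at \emph{every} $a$. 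Thus $\|i_{a,\infty}(N)\|_{L^p[0,a]} \le \|N\|_{X_{\infty,p}}$ for all $N \in C(\Omega)$, which extends by density to $N \in X_{\infty,p}^0$, giving continuity. Surjectivity onto $L^p[0,a]$ is easy: given $g \in L^p[0,a]$, extend it by $0$ to $L^p[0,1]$, mollify, and build $N(x,t) := g_\varepsilon(t)\chi_{[0,a]}(t)$ independent of $x$ — or, to land in $C(\Omega)$ honestly, $N(x,t) := \varphi(x) g_\varepsilon(t)$ with $\varphi(a)=1$, $\varphi$ smooth — whose trace at $a$ approximates $g$ in $L^p[0,a]$; since $i_{a,\infty}$ has closed range (being bounded below is not claimed, but the image contains a dense subset of $L^p[0,a]$ and one checks the image is all of $L^p[0,a]$ by the open mapping theorem once we know it is a bounded operator onto a subspace that we have shown to be dense — alternatively just exhibit a bounded right inverse $g \mapsto \wt g(t)\chi_{[0,a]}$ composed with a fixed extension).

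The main obstacle, as the above vacillation shows, is that a trace at a fixed slice is genuinely \emph{not} controlled by $\|\cdot\|_{X_{\infty,p}}$ on the full (non-separable) space $X_{\infty,p}$ — the estimate is available only on the separable subspace $X_{\infty,p}^0$, and there it works precisely because on $C(\Omega)$ the essential supremum defining the norm is an actual supremum of a continuous function of $x$, so $\int_0^a|N(a,t)|^p\,dt \le \sup_x\int_0^x|N(x,t)|^p\,dt = \|N\|_{X_{\infty,p}}^p$. I would therefore organize the proof as: (i) note $i_{a,\infty}(N)\in C[0,a]$ and $\|i_{a,\infty}(N)\|_{L^p[0,a]}^p=\int_0^a|N(a,t)|^p dt$ for $N\in C(\Omega)$; (ii) prove the clean bound $\int_0^a|N(a,t)|^p dt\le\|N\|_{X_{\infty,p}}^p$ using continuity of $x\mapsto\int_0^x|N(x,t)|^p dt$; (iii) extend $i_{a,\infty}$ to $X_{\infty,p}^0\to L^p[0,a]$ by density, preserving the norm bound; (iv) prove surjectivity by exhibiting, for each $g\in L^p[0,a]$, a sequence in $C(\Omega)$ whose traces converge to $g$ in $L^p[0,a]$, e.g.\ $N_n(x,t)=g_n(t)$ with $g_n\in C[0,a]$, $g_n\to g$ in $L^p[0,a]$, extended to $\Omega$ by a cutoff in $x$, and invoke that $\|N_n-N_m\|_{X_{\infty,p}}\le\|g_n-g_m\|_{L^p[0,a]}$ so the limit lies in $X_{\infty,p}^0$ with the prescribed trace; the statement for $i_{a,1}$ follows verbatim with $[0,a]$ replaced by $[a,1]$ and the roles of the two arguments interchanged in \eqref{eq:X1p.norm.def}–\eqref{eq:Xinfp.norm.def}.
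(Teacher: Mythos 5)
Your final, organized argument (steps (i)--(iv) at the end) is correct, but it takes a different route from the paper. The paper disposes of this lemma in one line: it invokes the already-established $p=1$ case for $X_{\infty,1}^0(\Omega)$ and $X_{1,1}^0(\Omega)$ from \cite[Lemma 2.2]{LunMal16JMAA} together with the continuity of the embeddings~\eqref{eq:Xqp_embedding}, so no new estimate is written out. You instead give a self-contained proof, and its key observation is exactly the right one: for $N\in C(\Omega)$ the function $x\mapsto\int_0^x|N(x,t)|^p\,dt$ is continuous, so the essential supremum in~\eqref{eq:Xinfp.norm.def} is an honest supremum and hence $\int_0^a|N(a,t)|^p\,dt\le\|N\|_{X_{\infty,p}}^p$ for \emph{every} $a$; density of $C(\Omega)$ in $X_{\infty,p}^0$ then yields the extension with operator norm at most $1$, and surjectivity follows from your explicit $x$-independent kernels $N(x,t)=g_n(t)$ (with the bound $\|N_n-N_m\|_{X_{\infty,p}}\le\|g_n-g_m\|_{L^p}$ showing the limit kernel lies in $X_{\infty,p}^0$ with the prescribed trace). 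This buys a quantitative norm bound and makes transparent why the separable subspace $X_{\infty,p}^0$ (and not all of $X_{\infty,p}$) is essential -- a point the paper leaves implicit in the citation -- at the cost of reproving what the reference already contains; the paper's reduction is shorter but, strictly speaking, the embedding argument alone only identifies the trace as an $L^1$ function, and the $L^p$ bound still rests on the same density estimate you spell out. Do note that the first half of your write-up (the abandoned fundamental-theorem-of-calculus attempt, the displays containing ``?'', and the flirtation with the open mapping theorem) should be deleted from any final version: the inequality you momentarily assert to ``fail pointwise in $a$'' fails only on the full non-separable space, which is precisely the distinction your corrected argument exploits.
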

\begin{proof}
The proof is immediate by combining the corresponding results for
$X_{\infty,1}^0(\Omega)$ and $X_{1,1}^0(\Omega)$
(see~\cite[Lemma 2.2]{LunMal16JMAA}) with the continuity of
embeddings~\eqref{eq:Xqp_embedding}.
\end{proof}
Going over to the vector case let us recall some notations. For $u = \col(u_1,
\ldots, u_n) \in \bC^n$ we denote
\begin{equation}
 |u|_\a^\a := |u_1|^\a + \ldots + |u_n|^\a, \quad 0 < \a < \infty,
 \qquad |u|_{\infty} = \max\{|u_1|, \ldots, |u_n|\}.
\end{equation}
When there is no confusion, we will denote
$|u| = |u|_\a$. $\bC^n$ equipped with the norm $\|\cdot\|_\a$ will be denoted
as $\bC^n_\a$. We will also use notation $u^* := \begin{pmatrix} \ol{u_1} &
\ldots & \ol{u_n} \end{pmatrix}$. Clearly, the product $u_1 u_2^* \in \bC$.

Further, for $A = (a_{jk})_{j,k=1}^n \in \bC^{n \times n}$ we
denote as $|A|_{\a \to \b}$ a norm of the linear operator from $\bC^n_\a$ to
$\bC^n_\b$ with a matrix $A$, where $\a, \b \in [1, \infty]$,
\begin{equation} \label{eq:|A|.def}
 |A|_{\a \to \b} = \sup\{|A u|_\b : u \in \bC^n, |u|_\a = 1\}.
\end{equation}
Similarly, when there is no confusion, we will denote $|A| = |A|_{\a \to \b}$.
We will use the following well-known inequalities in the sequel:
\begin{align}
 & |u_1 u_2^*| \le |u_1|_{\a} \cdot |u_2^*|_{\a'}, \qquad 1/\a + 1/\a' = 1,
 \qquad u_1, u_2 \in \bC^n, \\
 & |Au|_\b \le |A|_{\a \to \b} \cdot |u|_\a, \qquad \a, \b \in [1, \infty],
 \qquad A \in \bC^{n \times n}, \quad u \in \bC^n, \\
 & |A_1 A_2|_{\a \to \b} \le |A_1|_{\a \to \b} \cdot |A_2|_{\a \to \b},
 \qquad 1 \le \a \le \b \le \infty, \qquad A_1, A_2 \in \bC^{n \times n}.
\end{align}

Now we are ready to introduce the Banach spaces
\begin{equation} \label{eq:Xhpnxn.def}
 X_{1,p}^n := X_{1,p}(\Omega; \bC^{n \times n}) :=
 X_{1,p}(\Omega) \otimes \bC^{n \times n} \qquad\text{and}\qquad
 X_{\infty, p}^n := X_{\infty,p}(\Omega; \bC^{n \times n}) :=
 X_{\infty,p}(\Omega) \otimes \bC^{n \times n},
\end{equation}
consisting of $n \times n$ matrix-functions $F = (F_{jk})_{j,k=1}^n$ with
$X_{1,p}$- and $X_{\infty,p}$-entries, respectively, equipped with the norms
\begin{align}
\label{eq:X1p.nxn.norm.def}
 \|F\|_{X_{1,p}^n}^p & :=
 \esssup_{t \in [0,1]} \int_t^1 |F(x,t)|_{1 \to p}^p \,dx < \infty,
 \qquad p \in [1, \infty), \\
\label{eq:Xinfp.nxn.norm.def}
 \|F\|_{X_{\infty,p}^n}^p & :=
 \esssup_{x \in [0,1]} \int_0^x |F(x,t)|_{p' \to \infty}^p \,dt < \infty,
 \qquad p \in [1, \infty), \qquad 1/p + 1/p' = 1,
\end{align}
and $\|F\|_{X_{1,\infty}^n} = \|F\|_{X_{\infty,\infty}^n} =
\esssup_{(x,t) \in \Omega} |F(x,t)|_{1 \to \infty}$.
Besides, we introduce subspaces
\begin{equation} \label{eq:Zhpnxn.def}
 X_{1,p}^{0,n} := X_{1,p}^0(\Omega; \bC^{n \times n}) :=
 X_{1,p}^0(\Omega) \otimes \bC^{n \times n} \quad \text{and} \quad
 X_{\infty,p}^{0,n} := X_{\infty,p}^0(\Omega; \bC^{n \times n}) :=
 X_{\infty,p}^0(\Omega) \otimes \bC^{n \times n},
\end{equation}
being separable parts of $X_{1,p}^n$ and $X_{\infty,p}^n$, respectively.

For brevity, throughout the section we denote
\begin{equation}
 L^s := L^s([0,1]; \bC^n), \qquad C := C([0,1]; \bC^n),
 \qquad s \in [1, \infty],
\end{equation}
Equip the space $L^s$ of vector functions with the following norm
\begin{equation}
 \|f\|_s^s := \|\col(f_1, \ldots, f_n)\|_s^s :=
 \|f_1\|_s^s + \ldots + \|f_n\|_s^s :=
 \|f_1\|_{L^s[0,1]}^s + \ldots + \|f_n\|_{L^s[0,1]}^s, \qquad s \in [1, \infty),
\end{equation}
and $\|f\|_{\infty} = \max\{\|f_1\|_{\infty}, \ldots, \|f_n\|_{\infty}$. It is
clear, that $\|f\|_s^s = \int_0^1 |f(x)|_s^s \,dx$, $s \in [1, \infty)$.
With each measurable matrix kernel $N(x,t) = \bigl(N_{jk}(x, t)\bigr)^n_{j,k=1}$
one associates a Volterra type operator
\begin{equation} \label{eq:cN.def}
 \cN: f \to \int^x_0 N(x,t) f(t) dt.
\end{equation}
Denote by $\|\cN\|_{\a \to \b} := \|N\|_{L^\a \to L^\b}$, $\a, \b \in
[1,\infty]$, the norm of the operator $\cN$ acting from $L^\a$ to $L^\b$,
provided that it is bounded, $\cN \in \cB(L^\a, L^\b)$. The following result
motivates introduction of the spaces $X_{1,1}^n$ and $X_{\infty,1}^n$. In
particular, the second statement sheds light on the interpolation role of these
spaces (cf.~\cite{Mal94}). This result substantially completes Lemma 2.3
from~\cite{LunMal16JMAA}.

Recall that a Volterra operator on a Banach space is \emph{a compact operator
with zero spectrum}.
\begin{lemma} \label{lem:volterra_oper}
Let $\cN$ be a Volterra type operator given by~\eqref{eq:cN.def} for a
measurable matrix-function $N(\cdot, \cdot)$.

\textbf{(i)} Let either $q=1$ or $q=\infty$. Then $\cN \in \cB(L^q, L^q)$ if and
only if $N \in X_{q,1}^n$\,, in which case
\begin{equation} \label{eq:|cN|1-1.inf-inf}
 \|\cN\|_{q \to q} = \|N\|_{X_{q,p}^n}.
\end{equation}
If $N \in X_{q,1}^{0,n}$, then the operator $\cN$ is a Volterra operator in
$L^q$, and the inverse $(I + \cN)^{-1}$ is given by
\begin{equation} \label{eq:1+cN.inv}
 (I + \cN)^{-1} = I + \cS : \ f \to f + \int^x_0 S(x,t) f(t) \,dt
 \quad \text{with}\quad S \in X_{q,1}^{0,n}.
\end{equation}

\textbf{(ii)} Let $N \in X_{1,1}^n \cap X_{\infty,1}^n$. Then
$\cN \in \cB(L^s, L^s)$ for each $s \in [1,\infty]$, and
\begin{equation} \label{eq:|cN|s-s}
 \|\cN\|_{s \to s} \le \|N\|_{X_{1,1}^n}^{1/s} \cdot
 \|N\|_{X_{\infty,1}^n}^{1-1/s}.
\end{equation}
Moreover, if $N \in X_{1,1}^{0,n} \cap X_{\infty,1}^{0,n}$, then $\cN$ is a
Volterra operator in $L^s$ for each $s \in [1,\infty]$.
\end{lemma}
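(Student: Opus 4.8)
The plan is to prove part (i) by a Schur–test argument tailored to the matrix norms in \eqref{eq:X1p.nxn.norm.def}--\eqref{eq:Xinfp.nxn.norm.def}, then to deduce part (ii) from (i) by Riesz--Thorin interpolation, and in both parts to obtain the Volterra property by approximating kernels from the separable subspaces by continuous (indeed $C^1$) ones. Much of part (i) overlaps with \cite[Lemma 2.3]{LunMal16JMAA}; the genuinely new ingredients are the precise identification of the matrix norms, the inverse formula \eqref{eq:1+cN.inv}, and the whole of part (ii).

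For \eqref{eq:|cN|1-1.inf-inf} I would first treat $q=\infty$. The bound $\|\cN\|_{\infty\to\infty}\le\|N\|_{X_{\infty,1}^n}$ is immediate from $|\cN f(x)|_\infty\le\int_0^x|N(x,t)|_{\infty\to\infty}|f(t)|_\infty\,dt$. For the reverse inequality — which simultaneously proves the ``only if'' — I would fix a countable family of $\bC^n$-valued step functions with rational data that is norming, for a.e.\ $x$ and each row index $j$, for the functional $e\mapsto\sum_k\int_0^x N_{jk}(x,t)e_k(t)\,dt$ on the unit ball of $L^\infty$; since each of these countably many test functions $e$ satisfies $|(\cN e)_j(x)|\le\|\cN\|_{\infty\to\infty}$ off a null set, intersecting the null sets yields $\int_0^x|N(x,t)|_{\infty\to\infty}\,dt=\max_j\int_0^x\sum_k|N_{jk}(x,t)|\,dt\le\|\cN\|_{\infty\to\infty}$ for a.e.\ $x$, i.e.\ $\|N\|_{X_{\infty,1}^n}\le\|\cN\|_{\infty\to\infty}$. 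The case $q=1$ I would reduce to $q=\infty$ by duality: the Banach adjoint of $\cN$ on $L^1$ is the reflected integral operator $g\mapsto\int_t^1 N(x,t)^\top g(x)\,dx$ on $L^\infty$, one has $\|\cN\|_{1\to1}=\|\cN^{*}\|_{\infty\to\infty}$, and $|A^\top|_{\infty\to\infty}=|A|_{1\to1}$ turns the $X_{\infty,1}$-norm of the kernel of $\cN^{*}$ into the $X_{1,1}$-norm of $N$.

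For the Volterra statement and the inverse in (i): given $N\in X_{q,1}^{0,n}$ pick continuous $N_m\to N$ in $X_{q,1}^n$; by \eqref{eq:|cN|1-1.inf-inf} then $\cN_m\to\cN$ in $\cB(L^q)$, and each $\cN_m$ is compact (Arzel\`a--Ascoli for $q=\infty$; Schauder's theorem applied to $\cN_m^{*}$, again via Arzel\`a--Ascoli, for $q=1$), hence $\cN$ is compact. For $\sigma(\cN)=\{0\}$ I would use a block-triangular decomposition: splitting $[0,1]$ into $m$ equal subintervals with coordinate projections $P_i$, lower-triangularity of $\cN$ gives $\cN=D_m+L_m$ with $L_m^{\,m}=0$ and $\|D_m\|_{q\to q}=\max_i\|P_i\cN P_i\|_{q\to q}\le\sup_{|b-a|\le1/m}\ \esssup_{x\in[a,b]}\int_a^x|N(x,t)|\,dt$, the last quantity tending to $0$ as $m\to\infty$ precisely because $N$ lies in the separable subspace (split $N=N_m+R_m$ with $N_m$ continuous and $R_m$ small). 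Since $D_m$ is block-diagonal so is $(\mu-D_m)^{-1}$, hence $(\mu-D_m)^{-1}L_m$ is again strictly block-lower-triangular and nilpotent, so $\mu-\cN=(\mu-D_m)\bigl(I-(\mu-D_m)^{-1}L_m\bigr)$ is invertible whenever $|\mu|>\|D_m\|_{q\to q}$; letting $m\to\infty$ gives $r(\cN)=0$. Then $(I+\cN)^{-1}=\sum_{k\ge0}(-\cN)^k$ converges in operator norm, its ``off-diagonal'' part is the integral operator with kernel $S=\sum_{k\ge1}(-1)^kN^{(k)}$ ($N^{(k)}$ the iterated kernels), and $\sum_k\|N^{(k)}\|_{X_{q,1}^n}=\sum_k\|\cN^k\|_{q\to q}<\infty$ by \eqref{eq:|cN|1-1.inf-inf} together with $r(\cN)=0$; since each $N^{(k)}$ is a limit of the continuous iterated kernels of $N_m$ it lies in $X_{q,1}^{0,n}$, hence so does $S$. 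Part (ii) is then short: by (i), $N\in X_{1,1}^n\cap X_{\infty,1}^n$ gives $\cN\in\cB(L^1)\cap\cB(L^\infty)$ with the two norm identities, Riesz--Thorin yields \eqref{eq:|cN|s-s}; and if $N\in X_{1,1}^{0,n}\cap X_{\infty,1}^{0,n}$, a common $C^1$-approximation $N_m\to N$ in both norms plus interpolation applied to $\cN-\cN_m$ gives $\cN_m\to\cN$ in $\cB(L^s)$ for every $s$, each $\cN_m$ compact on $L^s$, hence $\cN$ compact, while the block-triangular argument goes through verbatim with $\|D_m\|_{s\to s}\le\|D_m\|_{1\to1}^{1/s}\|D_m\|_{\infty\to\infty}^{1-1/s}\to0$, so $\sigma(\cN)=\{0\}$ in $L^s$. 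The step I expect to be the real obstacle is exactly this quasinilpotence for kernels that are merely in the separable subspace rather than continuous — verifying $\|D_m\|_{q\to q}\to0$ and running the block-triangular decomposition — since for a general $N\in X_{q,1}^n$ the operator $\cN$ is in general neither compact nor quasinilpotent; everything else is a direct estimate, a duality reduction, or quotable from \cite[Lemma 2.3]{LunMal16JMAA}.
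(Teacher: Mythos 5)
Your proposal is correct in substance but takes a genuinely different route at the two points where something actually has to be proved. For the norm identity the paper simply quotes Lemma~2.3 of~\cite{LunMal16JMAA}; your direct proof (Schur-type upper bound, countable norming family for the lower bound, duality for $q=1$) is workable, except that your intermediate step $\int_0^x|N(x,t)|_{\infty\to\infty}\,dt=\max_j\int_0^x\sum_k|N_{jk}(x,t)|\,dt$ interchanges the maximum and the integral and is in general only an inequality ``$\ge$'', so the sharp identification needs the maximum kept inside the integral; only two-sided comparability is used later, so nothing downstream breaks. For the Volterra property and~\eqref{eq:1+cN.inv} the paper argues at the operator level: continuous $N_k\to N$ in $X_{q,1}^n$, hence $\cN_k\to\cN$ in $\cB(L^q)$ by~\eqref{eq:|cN|1-1.inf-inf}, then it asserts that the norm limit is again Volterra and recovers $S$ as the $X_{q,1}^{0,n}$-limit of the inverse kernels $S_k$ via continuity of $T\mapsto T^{-1}$ and the Cauchy estimate~\eqref{eq:Sk-Sm}. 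You instead prove quasinilpotency directly through the splitting $\cN=D_m+L_m$ with $\|D_m\|_{q\to q}\to0$ (exactly where membership in the separable subspace enters) and build $S$ as the Neumann series of iterated kernels, whose convergence in $X_{q,1}^n$ and membership in $X_{q,1}^{0,n}$ follow from~\eqref{eq:|cN|1-1.inf-inf} and the composition estimate of Lemma~\ref{lem:Banach_algebras}. Your version is longer but supplies precisely the step the paper leaves implicit (zero spectrum of the limit operator) and gives $S$ explicitly; the paper's is shorter because it leans on the cited lemma and that unproved limit assertion. In (ii) both routes are Riesz--Thorin; the one point you should make explicit is that for $N\in X_{1,1}^{0,n}\cap X_{\infty,1}^{0,n}$ you need a single sequence of continuous kernels converging in both norms (mollification works, being uniformly bounded on both mixed-norm spaces and convergent on continuous kernels), after which your interpolation of $\cN-\cN_m$ and of $D_m$ yields compactness and zero spectrum on every $L^s$.
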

\begin{proof}
\textbf{(i)} Relation~\eqref{eq:|cN|1-1.inf-inf} was proved in Lemma 2.3
from~\cite{LunMal16JMAA} (see also~\cite{Mal94}). In particular, it implies
equivalence of inclusions $\cN \in \cB(L^q) := \cB(L^q, L^q)$ and $N \in
X_{q,1}^n$. Let us prove representation~\eqref{eq:1+cN.inv}. Since $N \in
X_{q,1}^{0,n}$, one finds a sequence $N_k \in C(\Omega; \bC^{n \times n})$ that
approaches $N$ in $X_{q,1}^n$ as $k \to \infty$. It is well-known, that the
corresponding operators $\cN_k$ are Volterra operators in $L^q$, such that
$(I + \cN_k)^{-1} = I + \cS_k$, where $\cS_k$ is the Volterra operator in $L^q$
of the form~\eqref{eq:cN.def} with $S_k \in C(\Omega; \bC^{n \times n})$.
Identity~\eqref{eq:|cN|1-1.inf-inf} implies that
\begin{equation} \label{eq:Nk-N}
 \lim\limits_{k \to \infty} \|\cN_k - \cN\|_{q \to q} =
 \lim\limits_{k \to \infty} \|N_k - N\|_{X_{q,1}^n} = 0,
\end{equation}
which yields that $\cN$ is a Volterra operator in $L^q$, and existence of
the inverse $(I + \cN)^{-1}$. Further, continuity of the mapping $T \to T^{-1}$
in the group of invertible operators in the Banach algebra $\cB(L^q)$,
relation~\eqref{eq:Nk-N} and identity~\eqref{eq:|cN|1-1.inf-inf} imply
\begin{align}
\label{eq:I+Nk-I+N}
 & I + \cS_k = (I + \cN_k)^{-1} \to (I + \cN)^{-1}
 \quad \text{as} \quad k \to \infty \quad \text{in} \quad L^q, \\
\label{eq:Sk-Sm}
 & \|S_k - S_m\|_{X_{q,1}^n} = \|\cS_k - \cS_m\|_{q \to q} =
 \|(I + \cN_k)^{-1} - (I + \cN_m)^{-1}\|_{q \to q} \to 0
 \quad \text{as} \quad k, m \to \infty.
\end{align}
Thus, there exists $S \in X_{q,1}^{0,n}$ such that $\lim\limits_{k \to \infty}
\|S_k - S\|_{X_{q,1}^n} = 0$. As it has been already proved for the kernel
$N \in X_{q,1}^{0,n}$, the kernel $S$ generates Volterra operator $\cS \in
\cB(L^q)$ of the form~\eqref{eq:cN.def}. Moreover, $\lim\limits_{k \to \infty}
\|\cS_k - \cS\|_{q \to q} = \lim\limits_{k \to \infty} \|S_k - S\|_{X_{q,1}^n} =
0$. Combining this with~\eqref{eq:I+Nk-I+N} implies
equality~\eqref{eq:1+cN.inv}.

\textbf{(ii)} The proof is immediate from relations~\eqref{eq:|cN|1-1.inf-inf}
combined with Riesz-Torin theorem (see~\cite{LunMal16JMAA} and~\cite{Mal94}).
\end{proof}
The following result demonstrates natural occurrence of the spaces
$X_{1,p}^n$ and $X_{\infty,p}^n$ in the study of the integral operators acting
from $L^\a$ to $L^\b$ with the special $\a, \b$.
\begin{proposition} \label{prop:volterra_oper.Xp}
Let $\cN$ be a Volterra type operator given by~\eqref{eq:cN.def} for a
measurable matrix-function $N(\cdot, \cdot)$. Let also $p \in [1, \infty]$ and
$1/p + 1/p' = 1$. Then:

\textbf{(i)} The inclusion $\cN \in \cB(L^1, L^p)$ holds if and only if
$N \in X_{1,p}^n$, in which case
\begin{equation} \label{eq:|cN|1-p}
 \|\cN\|_{1 \to p} = \|N\|_{X_{1,p}^n}.
\end{equation}
Moreover, if $N \in X_{1,p}^{0,n}$, then the operator $\cN$ is compact from
$L^1$ to $L^p$,
and the following relation holds:
\begin{equation} \label{eq:cS.X1p}
 -\cN \cdot (I + \cN)^{-1} = \cS \in \cB(L^1, L^p), \quad \text{where} \quad
 \cS : \ f \to \int^x_0 S(x,t) f(t) \,dt
 \quad \text{with} \quad S \in X_{1,p}^{0,n},
\end{equation}
where operator $(I + \cN)^{-1}$ is treated as an operator from $\cB(L^1, L^1)$,
and exists due to Lemma~\ref{lem:volterra_oper}(i).

\textbf{(ii)} The inclusion $\cN \in \cB(L^{p'}, L^{\infty})$ holds if and only
if $N \in X_{\infty,p}^n$\,, in which case
\begin{equation} \label{eq:|cN|p'-inf}
 \|\cN\|_{p' \to \infty} = \|N\|_{X_{\infty,p}^n},
\end{equation}
If $N \in X_{\infty,p}^{0,n}$ then the operator $\cN$ sends $L^{p'}$ to
$C = C([0,1]; \bC^n)$ and is compact. Let $\cN_C : C \to C$, be a restriction of
the operator $\cN$. Then the inverse operator $(I + \cN_C)^{-1} \in
\cB(C, C)$, and the following relation holds:
\begin{equation} \label{eq:cS.Xinfp}
 -(I + \cN_C)^{-1} \cdot \cN = \cS \in \cB(L^{p'}, C), \quad \text{where} \quad
 \cS : \ f \to \int^x_0 S(x,t) f(t) \,dt
 \quad \text{with} \quad S \in X_{\infty,p}^{0,n}.
\end{equation}

\textbf{(iii)} Let $N \in X_{1,p}^{0,n} \cap X_{\infty,p}^{0,n}$. Then a
triangular operator $\cN$ is a Volterra operator in $L^s$ for each $s \in
[1, \infty]$ and the inverse operator $(I + \cN)^{-1}$ is given by
\begin{equation} \label{eq:I+cN.inv=I+cS}
 (I + \cN)^{-1} = I + \cS: \ f \to f + \int^x_0 S(x,t)f(t) \,dt,
 \qquad S \in X_{1,p}^{0,n} \cap X_{\infty,p}^{0,n}.
\end{equation}
\end{proposition}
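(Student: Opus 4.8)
The plan is to prove the three parts essentially independently, reusing Lemma~\ref{lem:volterra_oper} and the trace/embedding machinery of Lemma~\ref{lem:trace.X} as the main tools, and to follow for (i)--(ii) the same pattern already used successfully in the proof of Lemma~\ref{lem:volterra_oper}(i): first establish the norm identities~\eqref{eq:|cN|1-p} and~\eqref{eq:|cN|p'-inf} on the dense subclass $C(\Omega)$ by a direct duality computation, then pass to the closure to get the ``if'' direction and compactness, and finally obtain the resolvent formulas by approximation. For part~(i), the identity $\|\cN\|_{1\to p}=\|N\|_{X_{1,p}^n}$ follows from the fact that for $f\in L^1$ the value $(\cN f)(x)=\int_0^x N(x,t)f(t)\,dt$ satisfies $|(\cN f)(x)|_p\le\bigl(\esssup_{t}|N(x,t)|_{1\to p}\bigr)$-type bounds against $\|f\|_1$; taking $p$-th powers in $x$, integrating, and choosing $f$ concentrated near an optimal $t_0$ shows the bound is attained, giving equality. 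Compactness of $\cN:L^1\to L^p$ when $N\in X_{1,p}^{0,n}$ is then inherited from the continuous-kernel case via the norm identity exactly as in~\eqref{eq:Nk-N}. The resolvent identity~\eqref{eq:cS.X1p} is purely algebraic once one knows $(I+\cN)^{-1}=I+\cS$ with $\cS$ of Volterra form and $S\in X_{1,1}^{0,n}$ from Lemma~\ref{lem:volterra_oper}(i): one writes $-\cN(I+\cN)^{-1}=-\cN+\cN\cS$, and must check that the kernel of this operator lies in $X_{1,p}^{0,n}$, which reduces to showing the kernel of a composition $\cN\cS$ with $N\in X_{1,p}^{0,n}$, $S\in X_{1,1}^{0,n}$ lies in $X_{1,p}^{0,n}$ — a Minkowski-type integral inequality for the convolution-like kernel $\int_t^x N(x,s)S(s,t)\,ds$.

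For part~(ii), the argument is dual/transpose to part~(i): the operator $\cN\in\cB(L^{p'},L^\infty)$ is controlled by $\esssup_x\int_0^x|N(x,t)|_{p'\to\infty}^p\,dt$ via Hölder in $t$ for fixed $x$, and the reverse inequality is obtained by testing against $f$ supported near an optimal $x_0$ with $|f(t)|$ realizing the Hölder-extremizer for the row vectors of $N(x_0,t)$. When $N\in X_{\infty,p}^{0,n}$ the approximation by continuous kernels shows $\cN f$ is a uniform limit of continuous functions, hence $\cN:L^{p'}\to C$, and compactness follows again from the norm identity. The resolvent statement requires noting that the restriction $\cN_C$ is a Volterra operator in $C$ (continuous-kernel case plus norm convergence), so $(I+\cN_C)^{-1}=I+\cS_C$ exists with continuous kernel in the approximating case; then $-(I+\cN_C)^{-1}\cN=-\cN+\cS_C\cN$ and one checks its kernel lies in $X_{\infty,p}^{0,n}$, again via a Minkowski/Hölder estimate on the composed kernel $\int_t^x S(x,s)N(s,t)\,ds$.

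Part~(iii) is the synthesis: if $N\in X_{1,p}^{0,n}\cap X_{\infty,p}^{0,n}$, then by the embeddings~\eqref{eq:Xqp_embedding} we have in particular $N\in X_{1,1}^{0,n}\cap X_{\infty,1}^{0,n}$, so Lemma~\ref{lem:volterra_oper}(ii) already gives that $\cN$ is a Volterra operator in $L^s$ for every $s\in[1,\infty]$ and, via Lemma~\ref{lem:volterra_oper}(i) applied with $q=1$ (and separately $q=\infty$), that $(I+\cN)^{-1}=I+\cS$ with $S\in X_{1,1}^{0,n}\cap X_{\infty,1}^{0,n}$. It remains only to upgrade the regularity of $S$ from index $1$ to index $p$ in both scales, and this is exactly what parts~(i) and~(ii) provide: from~\eqref{eq:cS.X1p} the kernel $S=-$(kernel of $\cN(I+\cN)^{-1}$)$\in X_{1,p}^{0,n}$, and from~\eqref{eq:cS.Xinfp} likewise $S\in X_{\infty,p}^{0,n}$; since the Volterra inverse is unique, the two kernels coincide and lie in the intersection, yielding~\eqref{eq:I+cN.inv=I+cS}.

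The main obstacle I expect is not the norm identities (which are routine Hölder/duality calculations) but the \emph{closure under composition} step: showing that the kernels $\int_t^x N(x,s)S(s,t)\,ds$ and $\int_t^x S(x,s)N(s,t)\,ds$ stay in $X_{1,p}^{0,n}$ (resp. $X_{\infty,p}^{0,n}$) when one factor is merely in the index-$1$ space and the other in the index-$p$ space. This needs a careful Minkowski integral inequality — morally $\|N*S\|_{X_{1,p}}\le\|N\|_{X_{1,p}}\|S\|_{X_{1,1}}$ and its $X_{\infty,p}$ analogue — together with a density argument to land in the separable subspace $X_{1,p}^{0,n}$ rather than just $X_{1,p}^n$; for the latter one approximates both factors by continuous kernels and uses that the composition of continuous-kernel Volterra operators has continuous kernel, combined with continuity of composition in the relevant mixed norms.
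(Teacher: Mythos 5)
Your proposal is correct and its skeleton coincides with the paper's proof: the norm identities \eqref{eq:|cN|1-p} and \eqref{eq:|cN|p'-inf} via H\"older plus a duality/extremizer argument, compactness and the $C$-valued range via approximation by continuous kernels, and the resolvent formulas by combining Lemma~\ref{lem:volterra_oper}(i) with the boundedness of $\cN$ in the mixed scale; part (iii) is in both cases just the gluing of (i), (ii) and Lemma~\ref{lem:volterra_oper}(ii). The one place where you genuinely diverge is the step you yourself single out as the crux: showing that the resolvent kernel $S$ lies in $X_{1,p}^{0,n}$ (resp. $X_{\infty,p}^{0,n}$) and not merely in $X_{1,1}^{0,n}\cap X_{1,p}^{n}$. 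You propose a mixed module inequality $\|N*S\|_{X_{1,p}^n}\le\|N\|_{X_{1,p}^n}\|S\|_{X_{1,1}^n}$ (and its $X_{\infty,p}$ analogue) proved by Minkowski's integral inequality, followed by approximating both factors by continuous kernels. The paper never writes such a kernel inequality: it approximates $N$ by continuous $N_k$, shows via \eqref{eq:Nk-Nm} that $\cS_k=-\cN_k(I+\cN_k)^{-1}$ is Cauchy in $\cB(L^1,L^p)$, and uses the isometry \eqref{eq:|cN|1-p} to turn this into Cauchyness of the kernels $S_k$ in $X_{1,p}^n$, whence $S\in X_{1,p}^{0,n}$ (and analogously in the $X_{\infty,p}$ scale). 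The two routes are essentially equivalent: in fact the Minkowski computation you anticipate as the main obstacle can be bypassed, since once \eqref{eq:|cN|1-p} and \eqref{eq:|cN|1-1.inf-inf} are available one gets $\|N*S\|_{X_{1,p}^n}=\|\cN\cS\|_{1\to p}\le\|\cN\|_{1\to p}\,\|\cS\|_{1\to 1}=\|N\|_{X_{1,p}^n}\|S\|_{X_{1,1}^n}$ for free; conversely, your formulation has the merit of isolating an explicit mixed-norm composition bound (a module variant of Lemma~\ref{lem:Banach_algebras}) and of settling membership in the separable subspace by a single bilinear-continuity-plus-density argument instead of re-running the Cauchy-sequence scheme in each scale. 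Your sketches of the extremizer argument for the reverse norm inequalities and of the uniqueness identification in (iii) are at the same level of detail as the paper's own text, so I see no genuine gap.
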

\begin{proof}
\textbf{(i)} Let $f \in L^1$, $g \in L^{p'}$, and $N \in X_{1,p}^n$. Then
applying H\"older's inequality one gets
\begin{align}
\nonumber
 |(\cN f,g)_{p,p'}| & = \abs{\int_0^1 \(\int_0^x N(x,t)f(t) \,dt\)g^*(x)\,dx}
 \le \int_0^1 \(\int_0^x \abs{N(x,t) f(t)}_p \,dt\) |g^*(x)|_{p'} \,dx \\
\label{eq:estim_(Nf,g)_f_in_L_1}
 & \le \int_0^1 |f(t)|_1 \,dt \int_t^1 |N(x,t)|_{1 \to p} \cdot
 |g^*(x)|_{p'} \,dx \le \|f\|_1\cdot \|N\|_{X_{1,p}^n} \cdot \|g\|_{p'}.
\end{align}
It follows that $\cN \in \cB(L^1, L^{p})$ and $\|\cN\|_{{1}\to p} \le
\|N\|_{X_{1,p}^n}$. Besides, estimate~\eqref{eq:estim_(Nf,g)_f_in_L_1} yields
$\cN^* \in \cB(L^{p'}, L^{\infty})$, i.e. $\cN^* g = \int_t^1 N(x,t)^* g(x) \,dx
\in L^{\infty}$ for each $g \in L^{p'}$. From this fact one
extracts the opposite inequality $\|\cN\|_{{1}\to p} \ge \|N\|_{X_{1,p}^n}$,
which yields equality~\eqref{eq:|cN|1-p}.

Now let $N \in X_{1,p}^{0,n}$. Due to the inclusion~\eqref{eq:Xqp_embedding}
one has $N \in X_{1,1}^{0,n}$. By Lemma~\ref{lem:volterra_oper}(ii), $\cN$ is a
Volterra operator in $L^1$ and the inverse $(I + \cN)^{-1} \in \cB(L^1, L^1)$ is
given by~\eqref{eq:1+cN.inv}, with $S \in X_{1,1}^{0,n}$. Hence $\cS = -\cN
(I + \cN)^{-1} \in \cB(L^1, L^{p})$ because $\cN \in \cB(L^1, L^p)$. This
yields the desired relation~\eqref{eq:cS.X1p}, but only with $S \in
X_{1,1}^{0,n} \cap X_{1,p}^n$.

Let us show that $S \in X_{1,p}^{0,n}$. Since $N \in X_{1,p}^{0,n}$, one finds a
sequence $N_k \in C(\Omega; \bC^{n \times n})$ that approaches $N$ in
$X_{1,p}^{0,n}$ as $k \to \infty$. Clearly, it also approaches $N$ in
$X_{1,1}^{0,n}$. With account of~\eqref{eq:Sk-Sm} and
identity~\eqref{eq:|cN|1-p}, this implies,
\begin{equation} \label{eq:Nk-Nm}
 \norm{N_k - N_m}_{X_{1,p}^n} = \norm{\cN_k - \cN_m}_{1 \to p} \to 0
 \qquad \text{and} \qquad
 \norm{(I + \cN_k)^{-1} - (I + \cN_m)^{-1}}_{1 \to 1} \to 0
 \quad \text{as}\quad k, m \to \infty.
\end{equation}
Clearly, $\cN_k$ is a sequence of compact operators from $L^1$ to $L^p$
approaching $\cN$ in $\|\cdot\|_{1 \to p}$-norm. Hence $\cN$ is a compact
operator from $L^1$ to $L^p$. It is clear that $\cS_k = -\cN_k
(I + \cN_k)^{-1}$, $k \in \bZ$. Hence, identity~\eqref{eq:|cN|1-p} and
relation~\eqref{eq:Nk-Nm} imply
\begin{multline} \label{eq:Sk-Sm.1p}
 \norm{S_k - S_m}_{X_{1,p}^n} =
 \norm{\cS_k - \cS_m}_{1 \to p} =
 \norm{\cN_k (I + \cN_k)^{-1} - \cN_m (I + \cN_m)^{-1}}_{1 \to p} \\
 \le \norm{\cN_k - \cN_m}_{1 \to p} \norm{(I + \cN_k)^{-1}}_{1 \to 1} +
 \norm{\cN_m}_{1 \to p} \norm{(I + \cN_k)^{-1} - (I + \cN_m)^{-1}}_{1 \to 1}
 \to 0 \quad \text{as}\quad k, m \to \infty.
\end{multline}
Thus, there exists $\wt{S} \in X_{1,p}^{0,n}$ such that
$\lim\limits_{k \to \infty} \|S_k - \wt{S}\|_{X_{1,p}^n} = 0$.
As it has been already proved for the kernel $N \in X_{1,p}^{0,n}$, the kernel
$\wt{S}$ generates a compact operator $\wt{\cS} \in \cB(L^1, L^p)$ of the
form~\eqref{eq:cN.def}. Moreover,
$\lim\limits_{k \to \infty} \|\cS_k - \wt{\cS}\|_{1 \to p} =
\lim\limits_{k \to \infty} \|S_k - \wt{S}\|_{X_{1,p}^n} = 0$.
Relation~\eqref{eq:Sk-Sm.1p} also implies that $\cS_k = -\cN_k (I + \cN_k)^{-1}
\to -\cN (I + \cN)^{-1} = \cS$ as $k \to \infty$ in the norm
$\|\cdot\|_{1 \to p}$, which implies that $\cS = \wt{\cS}$ and $S = \wt{S}
\in X_{1,p}^{0,n}$

\textbf{(ii)} Inequality $\|\cN\|_{p' \to \infty} \le \|N\|_{X_{\infty,p}^n}$ is
immediate from H\"older's inequality. The opposite one is proved as in the
previous step. The inclusion $\range(\cN) \subset C = C([0,1]; \bC^n)$ is
obvious for $N \in C(\Omega; \bC^n)$. Given an arbitrary $N \in
X_{\infty,p}^{0,n}$ one choose a sequence of continuous functions $N_k \subset
C(\Omega; \bC^n)$ approaching $N$ in $X_{\infty,p}^n$ as $k \to \infty$. Then
for any $f \in L^{p'}$ the sequence $\cN_k f \in C$ approaches $\cN f$
uniformly, hence $\cN f \in C$. It follows from the equality
$\lim\limits_{k \to \infty} \|\cN - \cN_k\|_{p' \to \infty} =
\lim\limits_{k \to \infty} \|N - N_k\|_{X_{\infty,p}^n} = 0$ that $\cN$ is
compact from $L^{p'}$ to $C$.

Now let $N \in X_{\infty,p}^{0,n} \subset X_{\infty,1}^{0,n}$.
Lemma~\ref{lem:volterra_oper}(i) implies that $\cN$ is a Volterra operator in
$L^{\infty}$ and, thus, $\cN_C$ is a Volterra operator in $C$ with
$(I + \cN_C)^{-1} \in \cB(C, C)$. It follows that $\cS := - (I + \cN_C)^{-1} \cN
\in \cB(L^{p'}, C)$. The proof is finished in the same way as in part (i), where
all the norms $\|\cdot\|_{X_{1,p}^n}$, $\|\cdot\|_{1 \to p}$ and
$\|\cdot\|_{1 \to 1}$ are replaced with $\|\cdot\|_{X_{\infty,p}^n}$,
$\|\cdot\|_{p' \to \infty}$ and $\|\cdot\|_{\infty \to \infty}$, respectively.

\textbf{(ii)} The statement is immediate by combining parts (i) and (ii) with
Lemma~\ref{lem:volterra_oper}(ii).
\end{proof}
\begin{remark}
In connection with Proposition~\ref{prop:volterra_oper.Xp}, let us recall
Theorems~XI.1.5 and~XI.1.6 from~\cite{KantAkil77}, concerning integral
representations of bounded linear operators. Namely, let $p \in (1, \infty]$,
$p' \in [1, \infty)$ and $1/p + 1/p' = 1$, and let $\cR$ and $\cS$ be bounded
linear operators from $L^1[0,1]$ to $L^p[0,1]$ and $L^{p'}[0,1]$ to $C[0,1]$,
respectively. Then they admit the following integral representations:
\begin{align}
 (\cR f)(x) = \int_0^1 R(x,t)f(t) \,dt, \quad \text{and for} \ p < \infty
 \ \text{its norm is} \quad \|\cR\|_{1 \to p}^p =
 \esssup_{t \in [0,1]} \int_0^1 |R(x,t)|^p \,dx < \infty, \\
 (\cS f)(x) = \int_0^1 S(x,t)f(t) \,dt, \quad \text{and for} \ p < \infty
 \ \text{its norm is} \quad \|\cS\|_{p' \to \infty}^{p} =
 \esssup_{x \in [0,1]} \int_0^1 |S(x,t)|^{p} \,dt < \infty.
\end{align}
\end{remark}
Denote by $\cB_{1,p}(r)\ (\cB_{\infty,p}(r))$ the ball centered at zero of
radius $r$ in $X_{1,p}^{0,n}\ (X_{\infty,p}^{0,n})$.
\begin{lemma} \label{lem:I+S}
Let either $q=1$ or $q=\infty$ and $p \in [2, \infty)$. Let $N \in
X_{q,p}^{0,n}$ and let $S$ be the kernel from~\eqref{eq:cS.X1p}
or~\eqref{eq:cS.Xinfp} if $q=1$ or $q=\infty$, respectively, i.e. the kernel
of ``inverse'' operator $\cS = -\cN (I + \cN)^{-1}$ or
$\cS = -(I + \cN_C)^{-1} \cN$.

\textbf{(i)} The kernel $S$ satisfies the following inequality
\begin{equation} \label{eq:|S|.Xqp}
 \|S\|_{X_{q,p}^n} \le
 2^{1-1/p} \cdot \|N\|_{X_{q,p}^n} \cdot
 \exp\left(p^{-1} 2^{p-1} \|N\|^p_{X_{q,p}^n}\right).
\end{equation}

\textbf{(ii)} Moreover, if $N, \wt{N} \in \cB_{q,p}(r) \subset X_{q,p}^{0,n}$,
and $\wt{S}$ is the kernel defined for $\wt{N}$ similarly as above
from~\eqref{eq:cS.X1p} or~\eqref{eq:cS.Xinfp}, then the following uniform
estimate holds on the ball $\cB_{q,p}(r)$:
\begin{equation} \label{eq:wh.S<wh.N}
 \|S - \wt S\|_{X_{q,p}^n} \le 3^{1-1/p} \exp\bigl(p^{-1} 3^p r^p\bigr)
 \cdot \|N - \wt N\|_{X_{q,p}^n}.
\end{equation}
\end{lemma}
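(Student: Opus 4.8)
The plan is to treat both cases $q=1$ and $q=\infty$ simultaneously, since the only structural fact used is that the operator $\cN$ of the form \eqref{eq:cN.def} with kernel $N\in X_{q,p}^{0,n}$ is a Volterra operator whose resolvent $(I+\cN)^{-1}$ (respectively $(I+\cN_C)^{-1}$) is again of the form $I+\cS$ with $S\in X_{q,p}^{0,n}$ by Proposition \ref{prop:volterra_oper.Xp}, together with the norm identities \eqref{eq:|cN|1-p} and \eqref{eq:|cN|p'-inf}. For part (i) I would work directly with the Neumann series. Since $\cS=-\cN(I+\cN)^{-1}=\sum_{m\ge1}(-1)^m\cN^m$ (and analogously for the $C$-restriction), the kernel $S$ is $\sum_{m\ge1}(-1)^m N^{(m)}$, where $N^{(m)}$ is the kernel of $\cN^m$. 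The key estimate is the bound on iterated kernels: writing $n(x):=|N(x,\cdot)|$-type quantities, the Volterra structure gives, for the $X_{q,p}$-norm of the $m$-fold composition, an estimate of the shape $\|N^{(m)}\|_{X_{q,p}^n}\le \dfrac{2^{(m-1)(p-1)/p}}{(m-1)!^{1/p}}\,\|N\|_{X_{q,p}^n}^{\,m}$, the factorial decay coming from integrating the ordered simplex $0\le t_1\le\cdots\le t_m$ and the $2^{p-1}$ factor from the elementary inequality $(a+b)^p\le 2^{p-1}(a^p+b^p)$ used each time one splits a convolution-type product. Summing this series over $m\ge1$ yields $\|S\|_{X_{q,p}^n}\le \|N\|_{X_{q,p}^n}\sum_{m\ge1}\frac{(2^{p-1}\|N\|^p)^{(m-1)/p}}{(m-1)!^{1/p}}$, and bounding $\sum_k \frac{c^{k/p}}{k!^{1/p}}\le\bigl(\sum_k\frac{c^k}{k!}\bigr)^{1/p}\cdot(\text{const})$ via Hölder (or simply $\sum_k \frac{c^{k/p}}{k!^{1/p}}=\bigl(\sum_k\frac{c^k}{k!}\bigr)^{1/p}$ is false, so one uses $\ell^p\hookrightarrow\ell^1$ type bookkeeping carefully) produces the stated factor $2^{1-1/p}\exp(p^{-1}2^{p-1}\|N\|^p)$.

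For part (ii), I would use the resolvent identity rather than re-expanding. Writing $\cS=-\cN(I+\cN)^{-1}$ and $\wt\cS=-\wt\cN(I+\wt\cN)^{-1}$, one has
\[
 \cS-\wt\cS = -(\cN-\wt\cN)(I+\cN)^{-1} - \wt\cN\bigl((I+\cN)^{-1}-(I+\wt\cN)^{-1}\bigr)
 = -(I+\wt\cS)(\cN-\wt\cN)(I+\cN)^{-1},
\]
using $(I+\cN)^{-1}-(I+\wt\cN)^{-1}=(I+\wt\cN)^{-1}(\wt\cN-\cN)(I+\cN)^{-1}$ and $I-\wt\cN(I+\wt\cN)^{-1}=(I+\wt\cN)^{-1}=I+\wt\cS$. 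Passing to kernels, the middle factor $\cN-\wt\cN$ has kernel $N-\wt N$, while $I+\wt\cS$ and $(I+\cN)^{-1}=I+\cS$ are bounded on the relevant $L^s$ spaces; one then needs that composition with a bounded triangular operator on the left or right keeps the $X_{q,p}$-norm controlled. Concretely, for $q=1$ one uses that $\cS-\wt\cS\in\cB(L^1,L^p)$ factors as $(I+\wt\cS):L^p\to L^p$ composed with $(\cN-\wt\cN):L^1\to L^p$ composed with $(I+\cS):L^1\to L^1$, so by \eqref{eq:|cN|1-p},
\[
 \|S-\wt S\|_{X_{1,p}^n}=\|\cS-\wt\cS\|_{1\to p}\le \|I+\wt\cS\|_{p\to p}\cdot\|N-\wt N\|_{X_{1,p}^n}\cdot\|(I+\cN)^{-1}\|_{1\to1},
\]
and symmetrically for $q=\infty$ with $p'\to\infty$ and $\infty\to\infty$. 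It then remains to bound $\|I+\wt\cS\|_{q\to q}$ and $\|(I+\cN)^{-1}\|_{q\to q}$ uniformly on the ball $\cB_{q,p}(r)$. For $\|(I+\cN)^{-1}\|_{q\to q}$ one has $\cN\in\cB(L^q,L^q)$ with $\|\cN\|_{q\to q}\le\|N\|_{X_{q,1}^n}\le\|N\|_{X_{q,p}^n}\le r$ by the embedding \eqref{eq:Xqp_embedding}, but since $\cN$ is Volterra the Neumann series again converges with factorial gain, giving $\|(I+\cN)^{-1}\|_{q\to q}\le\sum_m \frac{r^m}{m!^{1/?}}$—here I must be slightly careful, because the $L^q\to L^q$ operator norm of $\cN^m$ only gains a factorial through the $X_{q,p}$-refinement with $p\ge2$, not from the crude $X_{q,1}$-bound; this is exactly why the hypothesis $p\ge2$ appears. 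So I would instead bound $\|(I+\cN)^{-1}\|_{q\to q}\le 1+\|\cS\|_{X_{q,1}^n}\le 1+\|\cS\|_{X_{q,p}^n}$ and invoke part (i) to get $\|(I+\cN)^{-1}\|_{q\to q}\le 1+2^{1-1/p}r\exp(p^{-1}2^{p-1}r^p)$, and likewise $\|I+\wt\cS\|_{q\to q}\le$ the same bound; multiplying the three factors and crudely estimating $(1+2^{1-1/p}r e^{\cdots})^2\cdot 1\le 3^{1-1/p}e^{p^{-1}3^pr^p}$ for all $r>0$ (a routine numeric inequality, checking $r$ small and $r$ large separately) yields \eqref{eq:wh.S<wh.N}.

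The main obstacle, and the step deserving the most care, is the iterated-kernel estimate underlying part (i): one must show that the $X_{q,p}$-norm of the $m$-fold Volterra composition carries a $((m-1)!)^{-1/p}$ decay with the precise constant $2^{(m-1)(p-1)/p}$, so that summation reproduces exactly $2^{1-1/p}\exp(p^{-1}2^{p-1}\|N\|^p)$. This requires, for the $q=\infty$ case, writing $|N^{(m)}(x,t)|_{p'\to\infty}\le\int_{t\le s_1\le\cdots\le s_{m-1}\le x}\prod|N(s_i,s_{i+1})|_{p'\to\infty}\,ds$ (with $s_0=x,s_m=t$), taking the $p$-th power, applying Hölder/Minkowski in the $s$-variables to pull out one factor of $\|N\|_{X_{\infty,p}^n}$ at a time while the remaining $L^1$-type integrations over the simplex produce $\mathrm{vol}(\text{simplex})=1/(m-1)!$, and then tracking how many times the constant $2^{p-1}$ is spent. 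Getting the bookkeeping to land on precisely the stated constants (rather than something merely of the same exponential type) is the delicate point; everything else is either a direct citation of Proposition \ref{prop:volterra_oper.Xp} or elementary.
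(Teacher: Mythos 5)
Your route is genuinely different from the paper's. The paper never expands a Neumann series: it passes from $\cN+\cS+\cS\cN=0$ (resp. $\cN\cS$) to the kernel identity~\eqref{eq:N+S+int.NS}, raises it to the $p$-th power using $|a+b|^p\le 2^{p-1}(|a|^p+|b|^p)$ together with H\"older and the power-mean inequality (this is where $p\ge2\ge p'$ enters), integrates in the free variable and applies Gr\"onwall's inequality to $S_p(t)=\int_t^1|S(x,t)|^p\,dx$, which lands on~\eqref{eq:|S|.Xqp} at once; part (ii) is then the same scheme for the difference equation, the three-term splitting producing $3^{p-1}$ and a second Gr\"onwall producing~\eqref{eq:wh.S<wh.N}. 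Your part (ii) — second resolvent identity plus the factorization $\|\cS-\wt{\cS}\|_{1\to p}\le\|I+\wt{\cS}\|_{p\to p}\,\|N-\wt{N}\|_{X_{1,p}^n}\,\|(I+\cN)^{-1}\|_{1\to 1}$ and the identities~\eqref{eq:|cN|1-p},~\eqref{eq:|cN|p'-inf} — is sound and in fact cleaner than the paper's kernel-level computation (for $q=\infty$ you additionally need invertibility of $I+\cN$ on $L^{p'}$, which follows from $\|\cS\|_{p'\to p'}\le\|\cS\|_{p'\to\infty}$ and a density argument), but it delivers the exact constant in~\eqref{eq:wh.S<wh.N} only through the final numerical inequality $(1+2^{1-1/p}r e^{p^{-1}2^{p-1}r^p})^2\le 3^{1-1/p}e^{p^{-1}3^pr^p}$, which you assert without proof; the paper gets its constants structurally, not by such a comparison.

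The genuine gap is in part (i), exactly where you flag it. First, the factor $2^{(m-1)(p-1)/p}$ in your iterated-kernel bound has no source: in an $m$-fold composition there is no two-term sum to split, so no $(a+b)^p\le2^{p-1}(a^p+b^p)$ is spent, and if you keep that factor the series sums to an exponent strictly larger than $p^{-1}2^{p-1}\|N\|^p$. Second, the summation step as described fails: $\sum_{k\ge0}c^{k/p}(k!)^{-1/p}$ is \emph{not} bounded by an absolute constant times $e^{c/p}$ (it carries a power of $c$ in front), so no ``$l^p\hookrightarrow l^1$ bookkeeping'' with the same exponent can produce the stated factor. Both defects are repairable. (a) Prove the clean bound $\|N^{(m)}\|_{X_{q,p}^n}\le\|N\|_{X_{q,p}^n}^m\,((m-1)!)^{-1/p}$: the factorial does not come from crude norm submultiplicativity but from keeping the variable integration limit — e.g.\ for $q=1$, H\"older plus power-mean give $I_m(t)\le\|N\|_{X_{1,p}^n}^p\int_t^1 I_{m-1}(\xi)\,d\xi$ with $I_m(t):=\int_t^1|N^{(m)}(x,t)|^p\,dx$, whence $I_m(t)\le\|N\|^{mp}(1-t)^{m-1}/(m-1)!$ — which is the same Gr\"onwall-type bookkeeping the paper performs once, iterated. (b) Sum by H\"older with a geometric weight: writing $y^k(k!)^{-1/p}=(\theta^{1/p}y)^k(k!)^{-1/p}\cdot\theta^{-k/p}$ with $\theta=2^{p-1}$ gives $\sum_{k\ge0}y^k(k!)^{-1/p}\le(1-\tfrac12)^{-1/p'}e^{\theta y^p/p}=2^{1-1/p}e^{2^{p-1}y^p/p}$, since $\theta^{-p'/p}=2^{-1}$; applied to $y=\|N\|_{X_{q,p}^n}$ this reproduces~\eqref{eq:|S|.Xqp} exactly. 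With (a) and (b) supplied your plan closes; as written, it does not.
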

\begin{proof}
\textbf{(i)} Let first $q=1$, i.e. $N \in X_{1,p}^{0,n}$. By
Proposition~\ref{prop:volterra_oper.Xp}(i), $S \in X_{1,p}^{0,n}$, and
corresponding Volterra type operators are related by $\cS = -\cN
(I + \cN)^{-1}$, where $\cN, \cS \in \cB(L^1, L^p)$ and inverse operator
$(I + \cN)^{-1}$ is treated as an operator in $L^1$. Since $\cN$ acts from $L^1$
to $L^p$, we also have $\cS = -(I + \cN_p)^{-1} \cN$, where $\cN_p$ is a
restriction of $\cN$ acting in $L^p$. Hence $\cN + \cS + \cN_p \cS = 0$ on
$L^1$. This implies that the kernels $N$ and $S$ are related by the equation
\begin{equation} \label{eq:N+S+int.NS}
 N(x,t) + S(x,t) + \int_t^x N(x,\xi) S(\xi,t)\,d\xi = 0,
 \qquad \text{for almost all} \quad (x,t) \in \Omega.
\end{equation}
Since $p \ge 2 \ge p' := \frac{p}{p-1}$, combining H\"older's inequality with
power-mean inequality gives for almost all $(x,t) \in \Omega$,
\begin{equation} \label{eq:int.NS}
 \abs{\int_t^x N(x,\xi) S(\xi,t) \,d\xi}^p \le
 \(\int_t^x |S(x,\xi)|^{p'} \,d\xi \)^{p/p'} \int_t^x |N(\xi,t)|^p \,d\xi \le
 \|N\|^{p}_{X_{1,p}^n} \cdot \int_t^x |S(x,\xi)|^p \,d\xi.
\end{equation}
Here and throughout the rest of the proof $|A| = |A|_{1 \to p}$ for $A \in
\bC^{n \times n}$. Since $N, S \in X_{1,p}^{0,n}$, then, according to
Lemma~\ref{lem:trace.X}, for any $t \in [0,1]$ the traces $N(\cdot,t)$ and
$S(\cdot,t)$ are well defined and belong to $L^p[t,1]$. Therefore, fixing
$t \in [0,1]$, integrating~\eqref{eq:N+S+int.NS} over $x \in [t,1]$, applying
inequality $2^{1-p} |a+b|^p \le |a|^p + |b|^p$ and estimate~\eqref{eq:int.NS}
yields
\begin{align}
\nonumber
 2^{1 - p} S_p(t) & := 2^{1 - p} \int_t^1 |S(x,t)|^p \,dx
 \le \int_t^1 |N(x,t)|^p \,dx + \|N\|^{p}_{X_{1,p}^n}
 \int_t^1 \(\int_t^x |S(x,\xi)|^p \,d\xi\) \,dx \\
\label{eq:Ft<int.Fxi}
 & \le \|N\|_{X_{1,p}^n}^p + \|N\|_{X_{1,p}^n}^p
 \int_t^1 \(\int_{\xi}^1 |S(x,\xi)|^p \,dx\) \,d\xi
 = \|N\|_{X_{1,p}^n}^p \(1 + \int_t^1 S_p(\xi) \,d\xi\), \qquad t \in [0,1].
\end{align}
Applying Gr\"onwall's inequality here implies
\begin{equation} \label{eq:Ft<N}
 S_p(t) \le 2^{p-1} \cdot \|N\|_{X_{1,p}^n}^p \cdot
 \exp\left( (1-t) 2^{p-1} \|N\|^p_{X_{1,p}^n}\right), \qquad t \in [0,1].
\end{equation}
Noting that $\|S\|_{X_{1,p}^n}^p = \sup_{t \in [0,1]} S_p(t)$, we arrive
at~\eqref{eq:|S|.Xqp} with $q=1$.

The case $q = \infty$ is treated similarly but using identity $\cN + \cS +
\cS \cN = 0$ instead of~\eqref{eq:N+S+int.NS}.

\textbf{(ii)} Let again $q=1$. Setting $\wh S(x,t) := S(x,t) - \wt{S}(x,t)$ and
$\wh{N}(x,t) := N(x,t) - \wt{N}(x,t)$ we easily derive from
equation~\eqref{eq:N+S+int.NS} and similar equation related $\wt{N}(x,t)$ and
$\wt{S}(x,t)$, that
\begin{equation}\label{eq:for_differ-ce_kernels_of_inverse_oper-r}
 |\wh{S}(x,t)| \le |\wh{N}(x,t)| +
 \int_t^x |\wh{N}(x,\xi)| \cdot |S(\xi,t)| \,d\xi +
 \int_t^x |\wt{N}(x,\xi)| \cdot |\wh{S}(x,t)| \,d\xi.
\end{equation}
Using inequality~\eqref{eq:int.NS} and performing the same transformations
as in~\eqref{eq:Ft<int.Fxi} we arrive at
\begin{align} \label{eq:whFt<int.whFxi}
 3^{1 - p} \wh{S}_p(t) := 3^{1 - p} \int_t^1 |\wh{S}(x,t)|^p \,dx
 \le \|\wh{N}\|_{X_{1,p}^n}^p + \|\wh{N}\|_{X_{1,p}^n}^p
 \int_t^1 S_p(\xi) \,d\xi +
 \|\wt{N}\|_{X_{1,p}^n}^p \int_t^1 \wh{S}_p(\xi) \,d\xi, \qquad t \in [0,1].
\end{align}
Note that $\|N\|_{X_{1,p}^n} \le r$, hence, in accordance with
estimate~\eqref{eq:Ft<N},
\begin{equation} \label{eq:1+int.F}
 1 + \int_t^1 S_p(\xi) \,d\xi \le 1 + \int_t^1 \mu e^{(1-\xi) \mu} \,d\xi
 = e^{(1-t) \mu} \le e^{\mu} = \exp(2^{p-1} r^p),
 \qquad \text{where} \quad \mu := 2^{p-1} r^p.
\end{equation}
With account of~\eqref{eq:1+int.F} and inequality $\|\wt{N}\|_{X_{1,p}^n}
\le r$, estimate~\eqref{eq:whFt<int.whFxi} turns into
\begin{align} \label{eq:whFt<int.whFxi.simple}
 \wh{S}_p(t) \le 3^{p-1} \exp(2^{p-1} r^p) \cdot \|\wh{N}\|_{X_{1,p}^n}^p +
 3^{p-1} r^p \int_t^1 \wh{S}_p(\xi) \,d\xi, \qquad t \in [0,1].
\end{align}
Now Gr\"onwall's inequality applies and gives
\begin{equation}
 \wh{S}_p(t) \le 3^{p-1} \exp(2^{p-1} r^p) \cdot \|N - \wt{N}\|_{X_{1,p}^n}^p
 \cdot \exp\( (1-t) 3^{p-1} r^p\)
 \le 3^{p-1} \exp(3^p r^p) \cdot \|N - \wt{N}\|_{X_{1,p}^n}^p,
 \qquad t \in [0,1].
\end{equation}
In turn, this inequality yields~\eqref{eq:wh.S<wh.N}.
\end{proof}
\begin{lemma} \label{lem:Banach_algebras}
For each $p\in [1,\infty]$ the spaces $X_{1,p}^n$ and $X_{\infty,p}^n$ form
Banach algebras with respect to the product (``composition of kernels'')
\begin{equation} \label{eq:composition_of_kernels}
 N(x,t) = (N_1*N_2)(x,t) := \int_t^x N_1(x,\xi) N_2(\xi,t)\,d\xi.
\end{equation}
\end{lemma}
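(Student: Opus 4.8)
The plan is to show two things: first, that the composition $N_1 * N_2$ of two kernels in $X_{1,p}^n$ (respectively $X_{\infty,p}^n$) again lies in that space, with the submultiplicative norm bound $\|N_1 * N_2\|_{X_{1,p}^n} \le \|N_1\|_{X_{1,p}^n} \cdot \|N_2\|_{X_{1,p}^n}$ (and similarly for $X_{\infty,p}^n$); and second, that the remaining algebra axioms (bilinearity, associativity) hold. The associativity and bilinearity are routine Fubini-type manipulations on the triangular domain $\Omega$, so the real content is the norm estimate, which simultaneously guarantees that the product is well defined.

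For the case $X_{1,p}^n$ with $p \in [1,\infty)$, I would argue as follows. Fix $t \in [0,1]$ and $x \in [t,1]$. Using the operator-norm inequality $|A_1 A_2|_{1 \to p} \le |A_1|_{1 \to p} \cdot |A_2|_{1 \to p}$ (valid since $1 \le 1 \le p \le \infty$, one of the inequalities recorded just before~\eqref{eq:Xhpnxn.def}) together with Minkowski's integral inequality in $L^p$, one gets
\begin{equation*}
 \(\int_t^1 |(N_1 * N_2)(x,t)|_{1\to p}^p \,dx\)^{1/p}
 \le \int_t^1 \(\int_\xi^1 |N_1(x,\xi)|_{1\to p}^p \,dx\)^{1/p}
 |N_2(\xi,t)|_{1 \to p} \,d\xi
 \le \|N_1\|_{X_{1,p}^n} \int_t^1 |N_2(\xi,t)|_{1\to p} \,d\xi.
\end{equation*}
The remaining single integral $\int_t^1 |N_2(\xi,t)|_{1\to p}\,d\xi$ is dominated by $\int_t^1 |N_2(\xi,t)|_{1\to p}\,d\xi$, which I would bound — using Hölder on $[t,1]$ against the length $\le 1$ when $p>1$, or directly when $p=1$ — by $\bigl(\int_t^1 |N_2(\xi,t)|_{1\to p}^p\,d\xi\bigr)^{1/p} \le \|N_2\|_{X_{1,p}^n}$. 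Taking the essential supremum over $t$ yields $\|N_1*N_2\|_{X_{1,p}^n} \le \|N_1\|_{X_{1,p}^n}\cdot\|N_2\|_{X_{1,p}^n}$. The case $X_{\infty,p}^n$ is symmetric: fix $x$, integrate in $t$ over $[0,x]$, apply Minkowski's inequality in the inner variable $\xi$ and the bound $|A_1 A_2|_{p'\to\infty} \le |A_1|_{p'\to\infty}\cdot|A_2|_{p'\to\infty}$, and reduce to a single integral controlled by $\|N_1\|_{X_{\infty,p}^n}$. The endpoint $p=\infty$, where the norm is the plain essential supremum $|F(x,t)|_{1\to\infty}$, follows even more directly from $|A_1 A_2|_{1\to\infty} \le |A_1|_{1\to\infty}\cdot|A_2|_{1\to\infty}$ and the fact that the $\xi$-integral runs over an interval of length $\le 1$.

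The main obstacle — really the only delicate point — is organizing the Minkowski/Hölder step so that the mixed $L^p$-in-one-variable, $L^1$-in-the-other structure of the $X_{q,p}$ norms is respected; one has to be careful that the convolution variable $\xi$ is the one integrated in $L^1$ while the ``frozen'' variable keeps its $L^p$ role, and that the triangular constraints $t \le \xi \le x$ are used to extend integration limits correctly (e.g. $\int_t^x(\cdots)dx' \le \int_\xi^1(\cdots)dx'$ after swapping order). Once the norm inequality is in place, completeness of $X_{1,p}^n$ and $X_{\infty,p}^n$ was already noted, so these are genuine Banach algebras (without unit); the associativity $(N_1*N_2)*N_3 = N_1*(N_2*N_3)$ is a direct application of Fubini's theorem on the simplex $\{t \le \eta \le \xi \le x\}$, and bilinearity is immediate.
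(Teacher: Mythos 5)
Your proof is correct and follows essentially the same route as the paper: the paper gets the submultiplicative bound $\|N_1*N_2\|\le\|N_1\|\cdot\|N_2\|$ by applying H\"older's inequality against the constant function (producing the harmless factor $(x-t)^{p/p'}\le 1$) and then Fubini on the triangle $t\le\xi\le x$, while you reach the identical estimate via Minkowski's integral inequality followed by H\"older on the remaining $\xi$-integral — an interchangeable computation of the same depth, organized exactly as you describe (the $L^1$ role for the convolution variable, $L^p$ for the frozen one). Both arguments rest on the matrix-norm inequality $|A_1A_2|\le|A_1|\cdot|A_2|$ in the mixed norms $|\cdot|_{1\to p}$ and $|\cdot|_{p'\to\infty}$ stated just before~\eqref{eq:Xhpnxn.def}, and your additional remarks on the endpoint $p=\infty$, the $X_{\infty,p}^n$ case, associativity and completeness match what the paper dismisses as ``treated simpler''/``treated similarly.''
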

\begin{proof}
\textbf{(i)} Let $N_1, N_2 \in X_{1,p}^n$. Assuming that $p \in (1, \infty)$ and
applying H\"older's inequality yields
\begin{align}
\nonumber
 \int_t^1 |N(x,t)|^{p}\,dx & \le \int_t^1 \left((x-t)^{p/p'}\int_t^x
 |N_1(x,\xi)|^{p} |N_2(\xi,t)|^{p} \,d\xi\right)\,dx \\
 & \le \int_t^1 |N_2(\xi,t)|^{p} \(\int_\xi^1|N_1(x,\xi)|^{p}\,dx\) \,d\xi \le
 \|N_1\|^{p}_{X_{1,p}^n} \cdot \|N_2\|^{p}_{X_{1,p}^n}.
\end{align}
It follows that $N_1*N_2 \in X_{1,p}^n$ and the inequality
$\|N\|^{p}_{X_{1,p}^n} \le \|N_1\|^{p}_{X_{1,p}^n} \cdot
\|N_2\|^{p}_{X_{1,p}^n}$ holds. This proves the statement for $p>1$. The cases
$p=1$ and $p=\infty$ are treated simpler without using H\"older's inequality.

\textbf{(ii)} The case of the space $X_{\infty,p}^n$ is treated similarly.
\end{proof}
\begin{lemma} \label{lem:inverse_oper_for_small_N}
Let $N \in X_{\infty,p}^{0,n}$\,, $p \ge 1$, let $N_p(x) :=
\sup_{\xi \in [0,x]} \int_0^\xi |N(\xi,t)|^p\,dt$, and let $S$ be given
by~\eqref{eq:I+cN.inv=I+cS}. Then the following implication
holds
\begin{equation}\label{eq:estimate_for_S(x)}
 N_p(x) < (2x)^{1-p} \quad \Longrightarrow \quad S_p(x)
 \le \frac{2^{p-1} N_p(x)}{1 - (2x)^{p-1} N_p(x)}\,.
\end{equation}
In particular, if $\|N\|_{X_{\infty,p}^n} \le r =: 2^{1/p-1} R$ where
$R < 1$, then $\|S\|_{X_{\infty,p}^n} \le R \cdot (1 - R^p)^{-1/p}$.
\end{lemma}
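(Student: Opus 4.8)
The plan is to mimic the structure of Lemma~\ref{lem:I+S}(i) but to keep track of the quantities $S_p(x)$ and $N_p(x)$ pointwise in $x$, exploiting that for the $X_{\infty,p}$-scale the relevant traces $N(\xi,\cdot)$, $S(\xi,\cdot)$ live on $[0,\xi]$ and the natural ``time'' variable runs forward in $x$ rather than backward in $t$. Concretely, in the case $q=\infty$ the kernels $N$ and $S$ of $\cN$ and $\cS=-(I+\cN_C)^{-1}\cN$ satisfy $\cN+\cS+\cS\cN=0$, i.e.
\begin{equation*}
 N(x,t)+S(x,t)+\int_t^x S(x,\xi)N(\xi,t)\,d\xi=0,\qquad\text{a.e. }(x,t)\in\Omega.
\end{equation*}
First I would fix $x\in[0,1]$, take $|\cdot|=|\cdot|_{p'\to\infty}$, and estimate the convolution term: by H\"older (using $p\ge2\ge p'$) and the definition of $N_p$,
\begin{equation*}
 \Bigl|\int_t^x S(x,\xi)N(\xi,t)\,d\xi\Bigr|^p
 \le\Bigl(\int_0^x|S(x,\xi)|^{p'}\,d\xi\Bigr)^{p/p'}\!\!\int_0^x|N(\xi,t)|^p\,d\xi
 \le\Bigl(\int_0^x|S(x,\xi)|^p\,d\xi\Bigr)(\text{?})\cdot N_p(x)\cdot x^{p-1},
\end{equation*}
wait — the careful bookkeeping is that $\int_0^x|N(\xi,t)|^p\,d\xi$ is not directly $N_p(x)$; rather one must integrate the already-raised-to-$p$ form, so I would instead integrate the basic identity in $t$ over $[0,x]$, apply $2^{1-p}|a+b|^p\le|a|^p+|b|^p$, interchange the order of integration, and bound $\int_0^s|N(\xi,t)|^p\,dt\le N_p(x)$ for $\xi\le s\le x$, getting $\int_0^x\!\bigl(\int_0^x|S(x,\xi)|^{p'}d\xi\bigr)^{p/p'}N_p(x)\,dt$, which after using $\bigl(\int_0^x|S(x,\xi)|^{p'}d\xi\bigr)^{p/p'}\le x^{p/p'-1}\int_0^x|S(x,\xi)|^p\,d\xi=x^{p-1}\int_0^x|S(x,\xi)|^pd\xi$ (power-mean, since $p/p'\ge1$) yields a closed inequality of the form $2^{1-p}S_p(x)\le N_p(x)+x^{p-1}N_p(x)\,S_p(x)$, where I must be slightly more careful and note the extra $x$ from the outer $dt$-integration cancels appropriately so that the coefficient is $(2x)^{p-1}N_p(x)$ on the right. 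Solving the resulting scalar inequality $S_p(x)\le 2^{p-1}N_p(x)+(2x)^{p-1}N_p(x)\,S_p(x)$ for $S_p(x)$ gives exactly~\eqref{eq:estimate_for_S(x)}, valid precisely when the coefficient $(2x)^{p-1}N_p(x)<1$, i.e. $N_p(x)<(2x)^{1-p}$.

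Then for the final ``in particular'' clause: if $\|N\|_{X_{\infty,p}^n}\le r=2^{1/p-1}R$ with $R<1$, then $N_p(x)\le r^p=2^{1-p}R^p$ for every $x\in[0,1]$, so $(2x)^{p-1}N_p(x)\le 2^{p-1}\cdot 2^{1-p}R^p=R^p<1$ (using $x\le1$), hence the hypothesis of~\eqref{eq:estimate_for_S(x)} holds for all $x$ and
\begin{equation*}
 S_p(x)\le\frac{2^{p-1}\cdot 2^{1-p}R^p}{1-R^p}=\frac{R^p}{1-R^p},
\end{equation*}
so $\|S\|_{X_{\infty,p}^n}^p=\sup_{x\in[0,1]}S_p(x)\le R^p/(1-R^p)$, i.e. $\|S\|_{X_{\infty,p}^n}\le R(1-R^p)^{-1/p}$, as claimed. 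The well-definedness of the traces $N(\xi,\cdot)\in L^p[0,\xi]$ and $S(\xi,\cdot)\in L^p[0,\xi]$ needed to make the pointwise-in-$x$ argument legitimate comes from $N,S\in X_{\infty,p}^{0,n}$ (Proposition~\ref{prop:volterra_oper.Xp}(ii) gives $S\in X_{\infty,p}^{0,n}$) together with Lemma~\ref{lem:trace.X}.

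The main obstacle I anticipate is purely bookkeeping: getting the powers of $x$ and the constants $2^{p-1}$ to line up exactly as in~\eqref{eq:estimate_for_S(x)}, since there are three places where a factor enters (the power-mean step on $\int_0^x|S(x,\xi)|^{p'}d\xi$, the outer integration $\int_0^x dt$, and the constant $2^{1-p}$ from the elementary inequality), and one must verify their product is exactly $(2x)^{p-1}$. No analytic difficulty arises beyond that — unlike Lemma~\ref{lem:I+S}, no Gr\"onwall argument is needed here because the ``convolution in $\xi$'' is controlled by the trace-at-fixed-$x$ quantity $\int_0^x|S(x,\xi)|^pd\xi=S_p(x)$ rather than by an integral of $S_p(\xi)$ over $\xi$, so the recursion closes algebraically rather than differentially.
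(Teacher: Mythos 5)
Your overall schema coincides with the paper's: start from the kernel identity $N(x,t)+S(x,t)+\int_t^x S(x,\xi)N(\xi,t)\,d\xi=0$ (the $\cS\cN$-form of the resolvent identity), fix $x$, integrate the $p$-th power in $t$ over $[0,x]$ with $2^{1-p}|a+b|^p\le|a|^p+|b|^p$, interchange the order of integration to bring in $N_p(x)$, close the inequality algebraically without Gr\"onwall, and solve for $S_p(x)$; your ``in particular'' computation with $r=2^{1/p-1}R$ is also correct.

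The one substantive defect is the H\"older/power-mean step. You split the convolution with dual exponents, putting $S(x,\cdot)$ in $L^{p'}$ and $N(\cdot,t)$ in $L^p$, and then convert $\bigl(\int_0^x|S(x,\xi)|^{p'}\,d\xi\bigr)^{p/p'}$ back into $\int_0^x|S(x,\xi)|^p\,d\xi$ by a power-mean inequality. That conversion needs $p'\le p$, i.e. $p\ge 2$; for $1\le p<2$ the inequality $\bigl(\int_0^x|S|^{p'}\bigr)^{p/p'}\le x^{p/p'-1}\int_0^x|S|^p$ goes the wrong way, and for $p=1$ it would require $\sup_{\xi}|S(x,\xi)|$, which $S_p$ does not control. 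The lemma, however, is stated for all $p\ge1$, and the range $1\le p<2$ is exactly the one not already served by Lemma~\ref{lem:I+S}. In addition, your exponent identity $x^{p/p'-1}=x^{p-1}$ is wrong (since $p/p'-1=p-2$); the correct total coefficient $(2x)^{p-1}N_p(x)$ only appears after the extra factor $x$ from the outer $dt$-integration is accounted for, which you gesture at but do not verify. The repair — and what the paper actually does in~\eqref{eq:Minkovsk_ineq} — is to apply H\"older to $1\cdot\bigl(S(x,\xi)N(\xi,t)\bigr)$, keeping the product together:
\begin{equation*}
 \abs{\int_t^x S(x,\xi)N(\xi,t)\,d\xi}^p \le (x-t)^{p-1}\int_t^x \abs{S(x,\xi)N(\xi,t)}^p\,d\xi,
\end{equation*}
valid for every $p\ge1$; integrating in $t$, swapping the order of integration and using $\int_0^\xi|N(\xi,t)|^p\,dt\le N_p(x)$ for $\xi\le x$ yields $2^{1-p}\int_0^x|S(x,t)|^p\,dt\le N_p(x)\bigl(1+x^{p-1}\int_0^x|S(x,t)|^p\,dt\bigr)$ directly, from which~\eqref{eq:estimate_for_S(x)} follows exactly as you conclude. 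With this single substitution your argument is complete for all $p\ge1$.
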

\begin{proof}
Applying H\"older's inequality and changing order of integration, yields
\begin{equation}\label{eq:Minkovsk_ineq}
 \int_0^x \left|\int_t^x S(x,\xi) N(\xi,t)\,d\xi \right|^p \,dt
 \le x^{p-1} \int_0^x \int_0^\xi |S(x,\xi) N(\xi,t)|^p\,dt \,d\xi
\end{equation}
It easily follows from relation~\eqref{eq:N+S+int.NS} with account
of~\eqref{eq:Minkovsk_ineq} that
\begin{align} \label{eq:estimate_for_kernel_of_inverse_oper_New}
 2^{1-p} \int_0^x |S(x,t)|^p \,dt \le \int_0^x |N(x,t)|^p \,dt +
 x^{p-1} \int_0^x |S(x,\xi)|^p \,d\xi \int_0^\xi |N(\xi,t)|^p \,dt
 \le N_p(x) \(1 + x^{p-1} \int_0^x |S(x,t)|^p\,dt\).
\end{align}
This estimate implies~\eqref{eq:estimate_for_S(x)}.
\end{proof}
\section{Triangular transformation operators} \label{sec:Transform}
Consider $2 \times 2$ Dirac type system~\eqref{eq:systemIntro_1}
\begin{equation} \label{eq:systemIntro}
 L(Q)y := -i B^{-1} y'+Q(x)y=\l y, \qquad y={\rm col}(y_1,y_2), \qquad x\in[0,1],
\end{equation}
with the diagonal matrix $B$ and potential matrix $Q(\cdot)$ given
by~\eqref{eq:BQ}.

The existence of a triangular transformation operator for
system~\eqref{eq:systemIntro} with summable potential matrix $Q \in L^1([0,1];
\bC^{2 \times 2})$ was established in our previous paper~\cite{LunMal16JMAA}
(the case $Q \in L^\infty([0,1]; \bC^{2 \times 2})$ was treated earlier
in~\cite{Mal99}). The purpose of this section is to prove Lipshitz dependence
(in respective norms) of the kernels of transformation operators on $Q \in L^p([0,1];
\bC^{2 \times 2})$.
 We start with the following result from~\cite{LunMal16JMAA}.
\begin{theorem} \label{th:Trans}~\cite[Theorem 2.5]{LunMal16JMAA}
Let $Q = \codiag(Q_{12}, Q_{21})\in \LL{1}$. Assume that $e_{\pm}(\cdot,\l)$ are the solutions to the
system~\eqref{eq:systemIntro} satisfying the initial conditions
$e_\pm(0,\l)=\binom{1}{\pm1}$. Then $e_{\pm}(\cdot,\l)$ admits the following
representation by means of the triangular transformation operator
\begin{equation} \label{eq:e=(I+K)e0}
 e_{\pm}(x,\l) = (I+K^{\pm})e^0_{\pm}(x,\l)
 = e^0_{\pm}(x,\l) + \int^x_0 K^{\pm}(x,t) e^0_{\pm}(t,\l)dt,
\end{equation}
where
\begin{equation} \label{eq:e=e0}
 e^0_{\pm}(x,\l)=\binom{e^{ib_1\l x}}{\pm e^{ib_2\l x}}, \quad\text{and}\quad
 K^{\pm}=\bigl(K^{\pm}_{jk}\bigr)^2_{j,k=1} \in \XXZ{1,1} \cap \XXZ{\infty,1}.
\end{equation}
\end{theorem}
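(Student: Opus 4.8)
\medskip
\noindent\textbf{Proof outline.}
The plan is to convert the existence of $K^\pm$ into a closed system of Volterra type integral equations on $\Omega$ and to solve it by successive approximations inside the Banach space $\XXZ{1,1}\cap\XXZ{\infty,1}$, relying on the $X_{q,1}$-machinery of Section~\ref{sec:X1p}. First I would substitute the ansatz $e_\pm(x,\l)=e^0_\pm(x,\l)+\int_0^x K^\pm(x,t)e^0_\pm(t,\l)\,dt$ into~\eqref{eq:systemIntro} (or rather into its integrated, variation-of-parameters form), differentiate, use $\partial_t e^0_\pm(t,\l)=i\l B e^0_\pm(t,\l)$ and integrate by parts in $t$, so that the whole identity takes the shape $\int_0^x(\,\cdot\,)(x,t)e^0_\pm(t,\l)\,dt+(\,\cdot\,)(x)e^0_\pm(x,\l)+(\,\cdot\,)\binom{1}{\pm1}\equiv0$. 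Separating the $\l$-dependence (by the standard argument based on the asymptotics as $|\Im\l|\to\infty$, which uses $b_1\ne b_2$) then yields three conditions: the algebraic diagonal relation $K^\pm(x,x)-BK^\pm(x,x)B^{-1}=-iBQ(x)$, which forces $K^\pm_{12}(x,x)=\frac{-ib_1b_2}{b_2-b_1}Q_{12}(x)$ and $K^\pm_{21}(x,x)=\frac{ib_1b_2}{b_2-b_1}Q_{21}(x)$; the relation $BK^\pm(x,0)B^{-1}\binom{1}{\pm1}=0$ on $\{t=0\}$; and the hyperbolic system $\partial_x K^\pm+B(\partial_t K^\pm)B^{-1}+iBQ(x)K^\pm=0$ on $\Omega$.

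The second step is to integrate this hyperbolic system along characteristics. The diagonal entries $K^\pm_{jj}$ propagate along the lines $x-t=\const$, taking their Cauchy data from $\{t=0\}$ via the second relation, while the off-diagonal entries $K^\pm_{12},K^\pm_{21}$ propagate along the lines of slopes $b_1/b_2$ and $b_2/b_1$, taking their data from the diagonal $\{t=x\}$ via the first relation. Because $b_1<0<b_2$, one checks that every characteristic segment carrying data to a point of $\Omega$ stays inside $\Omega$, so this recasts the whole problem as one closed system $\mathbf K^\pm=\mathbf K^\pm_0+\mathcal A\mathbf K^\pm$, where the free term $\mathbf K^\pm_0$ is explicit and linear in $Q$ and $\mathcal A$ is a Volterra type operator whose kernel is built from $Q$. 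A change of the integration variable reduces the inclusion $\mathbf K^\pm_0\in X_{1,1}^2\cap X_{\infty,1}^2$ to the bound $\|Q\|_1<\infty$, and membership of $\mathbf K^\pm_0$ in the separable parts $\XXZ{1,1}\cap\XXZ{\infty,1}$ follows by approximating $Q$ in $\LL1$ by continuous potentials, for which the kernels are continuous on $\Omega$.

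Finally I would show that $\mathcal A$ is bounded on $X_{1,1}^2\cap X_{\infty,1}^2$ and, more importantly, that $\|\mathcal A^{n}\|\to0$ super-exponentially fast — a Gr\"onwall estimate run simultaneously in the $X_{1,1}$- and the $X_{\infty,1}$-norm along the characteristics, in the spirit of Lemmas~\ref{lem:I+S} and~\ref{lem:inverse_oper_for_small_N} — so that $\mathbf K^\pm:=(I-\mathcal A)^{-1}\mathbf K^\pm_0=\sum_{n\ge0}\mathcal A^{n}\mathbf K^\pm_0$ converges in $\XXZ{1,1}\cap\XXZ{\infty,1}$. Since every step of the derivation above is an equivalence, reversing it shows that the function $e^0_\pm+\int_0^x K^\pm e^0_\pm$ solves~\eqref{eq:systemIntro} and satisfies $e_\pm(0,\l)=\binom{1}{\pm1}$, and uniqueness of the Cauchy problem identifies it with $e_\pm(\cdot,\l)$.

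I expect the main obstacle to be this last step: because the characteristics of the off-diagonal entries have slopes different from $1$, the operator $\mathcal A$ is not a plain composition of kernels, so Lemma~\ref{lem:Banach_algebras} does not apply directly and the $X_{q,1}$-estimates for its iterates have to be established by hand, carefully tracking the interaction between the two norms. A secondary technical nuisance is arranging the change of variables in the second step so that the splitting of each $Q$-integral into the part feeding $K^\pm_{12}$ versus the part feeding $K^\pm_{11}$ (and likewise $K^\pm_{21}$ versus $K^\pm_{22}$) is consistent with the $\pm$-structure of $e^0_\pm$.
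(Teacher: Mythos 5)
Your outline is essentially sound, but it takes a genuinely different route from the one the authors actually use (the theorem is quoted from~\cite[Theorem 2.5]{LunMal16JMAA}; the structure of that proof is visible here through Proposition~\ref{prop:K.props}). You propose to solve for $K^{\pm}$ \emph{directly}: derive the goal problem~\eqref{1op_JMAA}--\eqref{3op_JMAA}, integrate along characteristics (off-diagonal entries from the diagonal $t=x$ via~\eqref{2op_JMAA}, diagonal entries from $t=0$ via~\eqref{3op_JMAA}), and run a Neumann series in $\XXZ{1,1}\cap\XXZ{\infty,1}$. The paper instead factorizes: it first constructs the $\pm$-independent kernel $R$ solving~\eqref{1op_JMAA}--\eqref{2op_JMAA} with the simpler data~\eqref{6op_JMAA}, which makes the characteristic system decouple into the two column pairs~\eqref{eq:Rkk=int.Qkj.Rjk}--\eqref{eq:Rjk=Qjk-int.Qjk.Rkk}, and then restores condition~\eqref{3op_JMAA} by a diagonal convolution correction $P^{\pm}$ determined from the second-kind Volterra system~\eqref{2.52op}, so that $K^{\pm}=R+P^{\pm}+R*P^{\pm}$ as in~\eqref{2.51op}. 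Your direct system closes (your reading of the characteristics and of the diagonal relation is correct), but the condition at $t=0$ couples all four entries, so the fixed-point estimates are messier than the paper's column-wise Gr\"onwall bounds of Lemma~\ref{lem:wh.estim.Xinf}; the factorization also pays off later in the paper (e.g.\ in the proof of Theorem~\ref{th:K-wtK<Q-wtQ} and in Lemma~\ref{lem:P.Fourier.Q}), which is what your one-shot construction forgoes.

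One step you should not leave as stated: ``since every step of the derivation is an equivalence, reversing it'' is not available for $Q\in \LL{1}$, because the derivation of~\eqref{1op_JMAA}--\eqref{3op_JMAA} (differentiation of the kernel, integration by parts, separation of the $\l$-dependence via asymptotics) is only formal when $K^{\pm}$ is merely an $X$-class kernel. The necessity part is in fact not needed; what must be proved is sufficiency, and for that you either verify directly, by Fubini in the integrated (Volterra) form of~\eqref{eq:systemIntro}, that the kernel produced by your fixed-point scheme yields a solution with $e_{\pm}(0,\l)=\binom{1}{\pm1}$, or you approximate $Q$ in $\LL{1}$ by smooth potentials $Q_n$ (for which the smooth theory of~\cite{Mal99} applies), and pass to the limit using the continuity of $Q\mapsto K^{\pm}_Q$ in $\XX{\infty,1}$ together with the standard continuous dependence of $e_{\pm}(\cdot,\l)$ on $Q$. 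You invoke approximation only to place the free term in the separable subspaces; it is this limit argument (or the direct verification) that actually closes the proof for summable potentials.
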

Moreover, it is shown in~\cite{Mal99} that if $Q = \codiag(Q_{12}, Q_{21}) \in
\CC{1}$, then the matrix kernel in triangular
representation~\eqref{eq:e=(I+K)e0} is smooth, $K^{\pm} =
\bigl(K_{jk}^{\pm}\bigr)_{j,k=1}^2 \in C^1(\Omega, \bC^{2 \times 2})$, and it is
the unique solution of the following boundary value problem
\begin{equation} \label{1op_JMAA}
B^{-1}D_x K^{\pm}(x,t) + D_t K^{\pm}(x,t)B^{-1} + iQ(x)K^{\pm}(x,t)=0,
\end{equation}
\begin{equation} \label{2op_JMAA}
K^{\pm}(x,x)B^{-1} - B^{-1}K^{\pm}(x,x) = iQ(x),\quad x\in [0,1],
\end{equation}
\begin{equation} \label{3op_JMAA}
K^{\pm}(x,0)B^{-1} \binom{1}{\pm 1} = 0, \quad x\in [0,1].
\end{equation}
The proof of this result in~\cite{Mal99} is divided in two steps. At first it is proved
that there exists the smooth unique solution
$R(x,t) = \bigl(R_{jk}(x,t)\bigr)^2_{j,k=1} \in C^1(\Omega, \bC^{2 \times 2})$ of
the problem~\eqref{1op_JMAA}-\eqref{2op_JMAA} satisfying instead of~\eqref{3op_JMAA} the
following conditions:
\begin{equation} \label{6op_JMAA}
R_{11}(x,0)= R_{22}(x,0) =0, \quad x\in [0,1].
\end{equation}
 Using this result it is proved in~\cite{LunMal16JMAA} the
following result being a starting point of our investigation here.
\begin{proposition} \label{prop:K.props}~\cite{LunMal16JMAA}
Let $Q \in \LL{1}$ and let $K^{\pm}$ be the kernels of the corresponding transformation
operators from representation~\eqref{eq:e=(I+K)e0}. Then there exist
\begin{equation} \label{eq:P.R.in.X0}
 R=(R_{jk})^2_{j,k=1} \in \XXZ{1,1} \cap \XXZ{\infty,1} \quad\text{and}\quad
 P^{\pm} = \diag(P^{\pm}_1, P^{\pm}_2) \in \LL{1},
\end{equation}
such that
\begin{equation} \label{2.51op}
 K^{\pm}(x,t) = R(x,t) + P^{\pm}(x-t) + \int^x_t R(x,s) P^{\pm}(s-t)ds,
 \qquad 0 \le t \le x \le 1.
\end{equation}
Moreover, $R(\cdot,\cdot)$ is a unique solution of the following system of integral
equations for $0 \le t \le x \le 1$,
\begin{align}
\label{eq:Rkk=int.Qkj.Rjk}
 R_{kk}(x,t) &= - \frac{i}{a_k}\int^x_{x-t}Q_{kj}(\xi)R_{jk}
 \bigl(\xi, \xi -x + t \bigr) \,d\xi, \quad j=2/k, \quad k \in \{1,2\}, \\
\label{eq:Rjk=Qjk-int.Qjk.Rkk}
 R_{jk}(x,t) &= \frac{i}{a_k - a_j} Q_{jk}(\a_k x + \a_j t)
 - \frac{i}{a_j} \int^x_{\a_k x + \a_j t}
 Q_{jk}(\xi)R_{kk}\bigl(\xi,\g_k(\xi-x)+t\bigr)d\xi,
 \quad j=2/k, \quad k \in \{1,2\}.
\end{align}
\end{proposition}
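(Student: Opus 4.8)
The plan is to derive the integral system \eqref{eq:Rkk=int.Qkj.Rjk}--\eqref{eq:Rjk=Qjk-int.Qjk.Rkk} directly from the boundary value problem \eqref{1op_JMAA}--\eqref{3op_JMAA} by integrating along the characteristics of the first-order operators $B^{-1}D_x$ and $D_t B^{-1}$, first in the smooth case $Q \in \CC{1}$ (where the cited result of~\cite{Mal99} guarantees $K^\pm \in C^1(\Omega;\bC^{2\times 2})$), and then pass to general $Q \in \LL{1}$ by density together with the continuity statements of Lemma~\ref{lem:trace.X} and Lemma~\ref{lem:volterra_oper}. The decomposition \eqref{2.51op} of $K^\pm$ into a ``transversal'' part $R$ (vanishing diagonal at $t=0$, cf.~\eqref{6op_JMAA}) plus a convolution term $P^\pm(x-t)$ is exactly the device that separates the part of $K^\pm$ governed by the homogeneous equation from the part carrying the boundary data~\eqref{3op_JMAA}; so I would first establish that $R$, the solution of \eqref{1op_JMAA}--\eqref{2op_JMAA}, \eqref{6op_JMAA}, is the fixed point of the system \eqref{eq:Rkk=int.Qkj.Rjk}--\eqref{eq:Rjk=Qjk-int.Qjk.Rkk}, and only afterwards record how $P^\pm$ and the last term in \eqref{2.51op} are reconstructed from $R$.

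The key computation is this. Write $a_k := b_k^{-1}$ (so $a_1<0<a_2$) and let $\a_k,\g_k$ be the barycentric-type coefficients fixed by the geometry of the characteristics; for the diagonal entries $R_{kk}$ the operator in \eqref{1op_JMAA} reduces, along the line $\xi \mapsto (\xi,\,\xi - x + t)$, to $\tfrac{d}{d\xi}\bigl(a_k R_{kk}\bigr) = -i\,(Q(x)K^\pm)_{kk}$ evaluated on that line, and since $Q$ is off-diagonal only the term $Q_{kj}R_{jk}$ survives; integrating from the boundary point where the characteristic meets $\{t=0\}$ and using \eqref{6op_JMAA} to kill the boundary term yields \eqref{eq:Rkk=int.Qkj.Rjk}. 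For the off-diagonal entries $R_{jk}$ one proceeds analogously along the characteristic $\xi\mapsto(\xi,\g_k(\xi-x)+t)$, but now the ``initial'' data at $\xi = \a_k x + \a_j t$ is not zero: it is supplied by the jump relation \eqref{2op_JMAA}, which on the diagonal $x=t$ gives $(a_k-a_j)^{-1}$ times $i\,Q_{jk}$, producing the first (free) term $\tfrac{i}{a_k-a_j}Q_{jk}(\a_k x+\a_j t)$ in \eqref{eq:Rjk=Qjk-int.Qjk.Rkk}, while the integral along the characteristic of $-i\,(Q(x)K^\pm)_{jk} = -i\,Q_{jk}R_{kk}$ gives the remaining term. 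Uniqueness then follows from a standard Volterra/contraction argument: the right-hand side of \eqref{eq:Rkk=int.Qkj.Rjk}--\eqref{eq:Rjk=Qjk-int.Qjk.Rkk} defines a map on $\XXZ{1,1}\cap\XXZ{\infty,1}$ whose iterates have operator-norm bounds decaying like $\|Q\|_1^m/m!$ (Lemma~\ref{lem:Banach_algebras} provides the algebra structure needed to control the composition), hence $I - (\text{that map})$ is boundedly invertible.

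The main obstacle is the passage from the smooth case to $Q\in\LL{1}$ while keeping the system \eqref{eq:Rkk=int.Qkj.Rjk}--\eqref{eq:Rjk=Qjk-int.Qjk.Rkk} meaningful: for general $Q$ the kernel $K^\pm$ (and $R$) lies only in $\XXZ{1,1}\cap\XXZ{\infty,1}$, so pointwise traces such as $R_{jk}(\xi,\g_k(\xi-x)+t)$ along characteristics are not a priori defined. This is precisely where Lemma~\ref{lem:trace.X} is used: the trace maps $i_{a,\infty}$, $i_{a,1}$ extend continuously to these spaces, and one checks that the composition-along-characteristics operators appearing in \eqref{eq:Rkk=int.Qkj.Rjk}--\eqref{eq:Rjk=Qjk-int.Qjk.Rkk} are bounded on $\XXZ{1,1}\cap\XXZ{\infty,1}$ — essentially a change of variables identifying the characteristic-slices with the $t$-slices (for $\XX{1,1}$) or $x$-slices (for $\XX{\infty,1}$) that define those norms. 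Granting this, one approximates $Q$ in $\LL{1}$ by smooth $Q_n$, applies the smooth result to each $Q_n$, and uses the established boundedness to pass to the limit in both \eqref{2.51op} and the integral system; the limiting $R$ inherits membership in $\XXZ{1,1}\cap\XXZ{\infty,1}$ because these are closed subspaces and the approximating $R_n$ converge in the respective norms by the contraction estimate applied to $Q_n - Q_m$. The existence of $P^\pm\in\LL{1}$ in \eqref{eq:P.R.in.X0} and of the convolution term in \eqref{2.51op} then follows by substituting \eqref{2.51op} back into \eqref{3op_JMAA}, which after using $R_{11}(x,0)=R_{22}(x,0)=0$ becomes an explicit Volterra equation of the second kind for the pair $(P_1^\pm,P_2^\pm)$, solvable in $\LL{1}$.
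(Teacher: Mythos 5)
Your proposal is correct and takes essentially the same route as the paper: the statement is imported from \cite{LunMal16JMAA}, whose proof, as sketched around \eqref{1op_JMAA}--\eqref{6op_JMAA} (building on \cite{Mal99} for smooth $Q$), likewise obtains the system \eqref{eq:Rkk=int.Qkj.Rjk}--\eqref{eq:Rjk=Qjk-int.Qjk.Rkk} by integrating the problem for $R$ along characteristics with the data \eqref{2op_JMAA}, \eqref{6op_JMAA}, recovers $P^{\pm}$ by inserting \eqref{2.51op} into \eqref{3op_JMAA} to get a second-kind Volterra system (exactly \eqref{2.52op} in the proof of Theorem~\ref{th:K-wtK<Q-wtQ}), and extends to $Q\in\LL{1}$ by approximation using the $X$-space estimates and the trace Lemma~\ref{lem:trace.X}. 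Your outline matches this scheme, so no correction is needed.
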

Here and in what follows the following notations are used:
\begin{equation} \label{eq:def.ak.alphak}
 a_k := b_k^{-1}, \quad \g_{k} := \frac{a_k}{a_j} = \frac{b_j}{b_k},
 \quad \a_k := \frac{a_k}{a_k-a_j} = \frac{b_j}{b_j-b_k},
 \qquad j = 2/k, \quad k \in \{1,2\}.
\end{equation}
Note that $\a_1 + \a_2 = 1$. Since $b_1 < 0 < b_2$, it is clear that
$\a_k \in (0, 1)$ and $\g_k < 0$, $k \in \{1, 2\}$.

Note also that for smooth $Q (\in \CC{1})$
system~\eqref{eq:Rkk=int.Qkj.Rjk}-\eqref{eq:Rjk=Qjk-int.Qjk.Rkk} is equivalent
to the system~\eqref{1op_JMAA}--\eqref{2op_JMAA},~\eqref{6op_JMAA}.

Alongside equation~\eqref{eq:systemIntro} we consider similar Dirac type
equation with a potential matrix $\wt{Q} \in \LL{1}$ and denote by
$\wt{e}_{\pm}(\cdot,\l)$ the solutions of the system~\eqref{eq:systemIntro}
satisfying the initial conditions $\wt{e}_\pm(0, \l) = \binom{1}{\pm 1}$. In
accordance with Theorem~\ref{th:Trans} it admits the representation
\begin{equation} \label{eq:wt_e=(I+K)e0}
 \wt{e}_{\pm}(x,\l) = (I+ \wt{K}^{\pm})e^0_{\pm}(x,\l)
 = e^0_{\pm}(x,\l) + \int^x_0 \wt{K}^{\pm}(x,t) e^0_{\pm}(t,\l)dt,
\end{equation}
The main result of this section shows the Lipshitz of the mapping $Q\to K^{\pm} :=
K^{\pm}_Q$ on the balls in $L^p\bigl([0,1]; \bC^{2 \times 2}\bigr)$ and reads as
follows.
\begin{theorem} \label{th:K-wtK<Q-wtQ}
Let $Q= \codiag(Q_{12}, Q_{21}), \ \wt{Q} = \codiag(\wt{Q}_{12}, \wt{Q}_{21}) \in \bU_{p,
r}^{2 \times 2}$ for some $p \in [1, \infty)$ and $r > 0$. Let also $K^{\pm}_Q$
and $K^{\pm}_{\wt{Q}}$
be the kernels of the corresponding transformation
operators from representations~\eqref{eq:e=(I+K)e0} and~\eqref{eq:wt_e=(I+K)e0}. Then
\begin{equation} \label{eq:K.wtK.in.X0}
 K^{\pm} := K^{\pm}_{Q},\ \wt{K}^{\pm} := K^{\pm}_{\wt{Q}} \in \XXZ{1,p} \cap \XXZ{\infty,p},
\end{equation}
and there exists a constant $C = C(B, p, r)$ that does not depend on $Q$ and
$\wt{Q}$ such that the following estimate holds
\begin{equation} \label{eq:K-wtK<Q-wtQ}
 \|K^{\pm} - \wt{K}^{\pm}\|_{\XX{\infty,p}} + \|K^{\pm} - \wt{K}^{\pm}\|_{\XX{1,p}}
 \le C \cdot \|Q - \wt{Q}\|_p.
\end{equation}
\end{theorem}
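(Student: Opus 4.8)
The plan is to start from the integral representation~\eqref{2.51op}, which reduces everything to estimating the building blocks $R = R_Q$ and $P^{\pm} = P^{\pm}_Q$ separately and then reassembling. First I would treat the diagonal part $P^{\pm}$: by~\eqref{3op_JMAA} (more precisely, the relation between $P^{\pm}$ and the boundary trace of the kernel that produces it, worked out in~\cite{LunMal16JMAA}), each $P^{\pm}_k$ is essentially a trace $R_{kk}(\cdot, 0)$ up to sign, so by Lemma~\ref{lem:trace.X} its $L^p$-norm is controlled by $\|R\|_{X_{\infty,p}^2}$ (and by $\|R\|_{X_{1,p}^2}$ for the other endpoint). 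Hence the whole theorem will follow once I prove the Lipshitz estimate
\begin{equation} \label{eq:R-wtR<Q-wtQ.plan}
 \|R_Q - R_{\wt Q}\|_{X_{\infty,p}^2} + \|R_Q - R_{\wt Q}\|_{X_{1,p}^2} \le C(B,p,r)\cdot \|Q-\wt Q\|_p,
\end{equation}
together with the elementary observation that the convolution term $\int_t^x R(x,s)P^{\pm}(s-t)\,ds$ in~\eqref{2.51op} is bilinear in $(R,P^{\pm})$ and, via Young's inequality for the $s\mapsto s-t$ convolution combined with the $X_{\infty,p}/X_{1,p}$ norm structure, is Lipshitz in $Q$ once $R$ and $P^{\pm}$ are. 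The inclusion~\eqref{eq:K.wtK.in.X0} is then immediate since $P^{\pm}\in L^p[0,1]$ implies the corresponding kernel $P^{\pm}(x-t)$ lies in $X_{\infty,p}^{0,2}\cap X_{1,p}^{0,2}$ (again by a direct change of variables and density of $C^1$).

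The heart of the matter is therefore~\eqref{eq:R-wtR<Q-wtQ.plan}, which I would extract from the coupled system~\eqref{eq:Rkk=int.Qkj.Rjk}--\eqref{eq:Rjk=Qjk-int.Qjk.Rkk}. Write $\wh R := R_Q - R_{\wt Q}$, $\wh Q := Q - \wt Q$. Subtracting the two systems gives, schematically,
\begin{equation} \label{eq:whR.system.plan}
 \wh R_{jk}(x,t) = (\text{linear in } \wh Q) + (\text{integral operator applied to } \wh R) + (\text{integral operator with kernel } \wh Q \text{ applied to } R_{\wt Q}),
\end{equation}
and similarly for $\wh R_{kk}$. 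The free term $\tfrac{i}{a_k-a_j}\wh Q_{jk}(\a_k x+\a_j t)$ contributes, after the affine change of variables $(x,t)\mapsto \a_k x + \a_j t$ (whose Jacobian is bounded below since $\a_k\in(0,1)$), a term bounded by $C\|\wh Q\|_p$ in both $X_{\infty,p}$ and $X_{1,p}$ norms — this is exactly the kind of computation done for $\wt Q = 0$ in~\cite{LunMal16JMAA} and I would adapt it verbatim. The terms of the form $\int Q\cdot \wh R$ and $\int \wh Q \cdot R_{\wt Q}$ are handled by Minkowski's integral inequality together with the a priori bound $\|R_{\wt Q}\|_{X_{\infty,1}^2\cap X_{1,1}^2}\le C(B,r)$ valid uniformly on the ball $\bU_{1,r}^{2\times 2}$ (this uniform a priori bound is itself a consequence of the $\wt Q=0$ analysis in~\cite{LunMal16JMAA} run on the ball). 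Closing the estimate then requires a Gr\"onwall-type iteration on the coupled pair $(\wh R_{kk}, \wh R_{jk})$: substitute the equation for $\wh R_{jk}$ into that for $\wh R_{kk}$, introduce the scalar majorants $\wh R_{kk,p}(x) := \sup_{\xi\le x}\int_0^\xi |\wh R_{kk}(\xi,t)|^p\,dt$ (and the analogous $X_{1,p}$ majorants using traces over $t$), and iterate — precisely the mechanism already exhibited in Lemma~\ref{lem:I+S} and Lemma~\ref{lem:inverse_oper_for_small_N}, only now for a $2\times 2$ system rather than a single inversion identity.

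The main obstacle I anticipate is the bookkeeping required to run the Gr\"onwall argument simultaneously for the two norms $X_{\infty,p}$ and $X_{1,p}$: the defining suprema in~\eqref{eq:Xinfp.norm.def} and~\eqref{eq:X1p.norm.def} are taken over the two different variables $x$ and $t$, and the change of variables $\xi\mapsto \a_k x + \a_j t$ and $\xi\mapsto \g_k(\xi-x)+t$ appearing inside~\eqref{eq:Rkk=int.Qkj.Rjk}--\eqref{eq:Rjk=Qjk-int.Qjk.Rkk} mixes the roles of $x$ and $t$, so one cannot simply quote a scalar estimate twice; the integration domains in the iterated integrals must be tracked carefully (as functions of the slopes $\a_k,\g_k$) to see that Fubini/Minkowski can be applied and that the resulting kernel in the Gr\"onwall inequality is integrable with constant depending only on $B$, $p$, $r$. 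A secondary technical point is verifying that $\wh R$ indeed has well-defined traces $\wh R(\cdot,0)$, $\wh R(\cdot, a)$, needed both to make sense of $\wh P^{\pm}$ and to define the $X_{1,p}$ majorants; this follows from~\eqref{eq:K.wtK.in.X0} (which I would establish first, using Proposition~\ref{prop:volterra_oper.Xp}) together with Lemma~\ref{lem:trace.X}. Once these geometric/measure-theoretic details are pinned down, the estimate~\eqref{eq:R-wtR<Q-wtQ.plan}, and hence~\eqref{eq:K-wtK<Q-wtQ}, drops out by a single application of Gr\"onwall's lemma, exactly as in part~(ii) of Lemma~\ref{lem:I+S}.
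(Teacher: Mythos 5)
Your handling of the kernel $R_Q$ is essentially the paper's own argument: subtract the systems~\eqref{eq:Rkk=int.Qkj.Rjk}--\eqref{eq:Rjk=Qjk-int.Qjk.Rkk}, estimate the free term by the affine change of variables, bound the remaining terms using the uniform a priori bound for $R_{\wt Q}$ on the ball, and close with Gr\"onwall in both norms; this is exactly what Lemmas~\ref{lem:wh.estim.Xinf} and~\ref{lem:R.diff.X1} do, and your reassembly of $K^{\pm}$ from~\eqref{2.51op} matches the paper's estimate~\eqref{eq:estimate_for_dif-ce_wtK-K}. The genuine gap is in your treatment of $P^{\pm}$. It is not ``essentially a trace $R_{kk}(\cdot,0)$ up to sign'': in fact $R_{11}(x,0)=R_{22}(x,0)=0$ by~\eqref{6op_JMAA}, and inserting representation~\eqref{2.51op} into condition~\eqref{3op_JMAA} produces the second-kind Volterra system~\eqref{2.52op}, so that $\col\bigl(a_1P_1^{\pm},\pm a_2P_2^{\pm}\bigr)=(I+\cR_Q)^{-1}g^{\pm}$, where $g^{\pm}$ is built from the \emph{off-diagonal} traces $R_{12}(\cdot,0)$, $R_{21}(\cdot,0)$ (controlled, via $i_{0,1}$ of Lemma~\ref{lem:trace.X}, by the $X_{1,p}$-norm of $R$, not the $X_{\infty,p}$-norm as you state). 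Consequently, $\|P^{\pm}-\wt P^{\pm}\|_p\le C\|Q-\wt Q\|_p$ does not follow from your Lipshitz estimate for $R$ plus trace continuity alone: you additionally need the uniform bound $\|(I+\cR_{\wt Q})^{-1}\|_{p\to p}\le C$ and the Lipshitz estimate $\|(I+\cR_Q)^{-1}-(I+\cR_{\wt Q})^{-1}\|_{p\to p}\le C\|Q-\wt Q\|_p$ on the ball $\bU_{p,r}^{2\times2}$, because the operator you must invert itself depends on $Q$.

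That missing step is Proposition~\ref{prop2.5}(iv), and it is the most delicate part of the paper's proof: the kernel $S$ of $(I+\cR_Q)^{-1}-I$ is estimated in $X_{1,p}$ and $X_{\infty,p}$ uniformly on the ball by running the same Gr\"onwall scheme on integral equations for $S$, which the paper derives for smooth $Q$ from the intertwining identity $(I+\cS)L_0(Q)=L_0(0)(I+\cS)$ and then extends by density, after which Lemma~\ref{lem:volterra_oper} and the interpolation bound~\eqref{eq:|cN|s-s} convert the kernel estimates into operator-norm estimates (for $p\ge2$, or for small $r$, one can instead quote Lemma~\ref{lem:I+S} or Lemma~\ref{lem:inverse_oper_for_small_N}, as the paper remarks). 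Your appeal to Lemmas~\ref{lem:I+S} and~\ref{lem:inverse_oper_for_small_N} only in the context of the Gr\"onwall bookkeeping for $\wh R$ does not cover this; until the Lipshitz dependence of $(I+\cR_Q)^{-1}$ on $Q$ is established, the estimate for $\wh P^{\pm}$, and hence~\eqref{eq:K-wtK<Q-wtQ}, does not follow. With that ingredient added, the rest of your outline goes through as in the paper.
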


Let $\wt{R}(\cdot, \cdot)$ be the kernel satisfying the system of integral
equations~\eqref{eq:Rkk=int.Qkj.Rjk}--\eqref{eq:Rjk=Qjk-int.Qjk.Rkk} with
$\wt{Q}(\cdot)$ in place of $Q(\cdot)$. Alongside definition~\eqref{2.51op} the
kernels $\wt{K}^{\pm}(\cdot, \cdot)$ and $\wt{R}(\cdot, \cdot)$ are related by the
similar equation
\begin{equation} \label{eq:connec-n_of_wtK_and_wtR}
 \wt{K}^{\pm}(x,t) = \wt{R}(x,t) + \wt{P}^{\pm}(x-t)
 + \int^x_t \wt{R}(x,s) \wt{P}^{\pm}(s-t)ds
\end{equation}
with a certain diagonal matrix function $\wt{P}^{\pm} = \diag \bigl(\wt{P}^{\pm}_1,
\wt{P}^{\pm}_2\bigr) \in \LL{1}$. For brevity we will systematically use the following
notations below,
\begin{equation} \label{eq:wh.Q.K.R.P.def}
 \wh{Q} := Q - \wt{Q}, \quad \wh{K}^{\pm} := K^{\pm} - \wt{K}^{\pm}, \quad
 \wh{R} := R - \wt{R}, \quad \wh{P}^{\pm} := P^{\pm} - \wt{P}^{\pm}.
\end{equation}
In the following two auxiliary results we show that the (non-linear) mappings
\begin{align}
 \cR_{Q,\infty}: \LL{p} \to \XX{\infty,p},
 & \qquad \cR_{Q,\infty}: Q \to R = R_Q, \\
 \cR_{Q,1}: \LL{p} \to \XX{1,p}, & \qquad \cR_{Q,1}: Q \to R = R_Q,
\end{align}
are uniformly Lipshitz on each ball of the radius $r$ in $\LL{p}$.
\begin{lemma} \label{lem:wh.estim.Xinf}
Let $Q, \wt{Q} \in \bU_{p, r}^{2 \times 2}$ for some $p \in [1, \infty)$ and
$r > 0$. Let also $R = R_Q$ be the solution of
problem~\eqref{eq:Rkk=int.Qkj.Rjk}--\eqref{eq:Rjk=Qjk-int.Qjk.Rkk} and let
$\wt{R} = R_{\wt{Q}}$ be the solution of
problem~\eqref{eq:Rkk=int.Qkj.Rjk}--\eqref{eq:Rjk=Qjk-int.Qjk.Rkk} with $\wt{Q}$
in place of $Q$. Then $R, \wt{R} \in \XXZ{\infty,p}$ and there exists a constant
$C^{(\infty)} = C^{(\infty)}(B, p, r)$ not dependent on $Q$, $\wt{Q}$, and such
that the following uniform estimate holds
\begin{equation} \label{2.12op}
 \|R - \wt{R}\|_{X_{\infty,p}(\Omega; \bC^{2 \times 2})}
 = \|R_Q - R_{\wt{Q}}\|_{X_{\infty,p}(\Omega; \bC^{2 \times 2})}
 \le C^{(\infty)} \|Q - \wt{Q}\|_p.
\end{equation}
In particular, kernels $R_Q$ are uniformly bounded on each $L^p$-ball, i.e. \ \
$\|R_Q\|_{X_{\infty,p}(\Omega; \bC^{2 \times 2})} \le C^{(\infty)}\|Q\|_{p}$
\ \ \ for \ \ $Q \in \bU_{p, r}^{2 \times 2}$.
\end{lemma}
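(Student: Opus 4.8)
The plan is to analyze the system of integral equations~\eqref{eq:Rkk=int.Qkj.Rjk}--\eqref{eq:Rjk=Qjk-int.Qjk.Rkk} directly in the $X_{\infty,p}$-norm and set up a Gr\"onwall-type argument for the difference $\wh R := R - \wt R$. First I would substitute the expressions for $R_{kk}$ from~\eqref{eq:Rkk=int.Qkj.Rjk} into~\eqref{eq:Rjk=Qjk-int.Qjk.Rkk} (or vice versa) to decouple the system, obtaining a single fixed-point equation for the off-diagonal entries $R_{jk}$ of the schematic form $R_{jk} = (\text{linear in } Q_{jk}) + (\text{bilinear in } Q \text{ and } R_{jk})$. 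The linear term $\frac{i}{a_k-a_j}Q_{jk}(\a_k x + \a_j t)$ contributes an $X_{\infty,p}$-norm controlled by $\|Q\|_p$ up to a Jacobian factor coming from the affine change of variables $t \mapsto \a_k x + \a_j t$ (here $\a_k \in (0,1)$, so the substitution is nondegenerate and the constant depends only on $B$); taking the difference, this term is bounded by $C\|\wh Q\|_p$.

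Next I would estimate the iterated-integral (bilinear) term. Writing out the double integral obtained after decoupling, one has a kernel of the form $\int\!\!\int Q_{jk}(\cdot)Q_{kj}(\cdot) R_{jk}(\cdot,\cdot)\,d\xi\,d\eta$ along characteristic lines; applying H\"older's inequality in the inner variable (exploiting $p \ge 1$ and pairing $L^p$ with $L^{p'}$ where needed) and then changing the order of integration, I expect to bound its $\esssup_{x}\int_0^x |\cdot|^p\,dt$ by something like $\|Q\|_p^{2}$ times $\sup_{\xi\le x} R_p(\xi)$, where $R_p(x) := \esssup_{\xi\in[0,x]}\int_0^\xi |R(\xi,t)|^p\,dt$. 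This is exactly the structure handled by Lemma~\ref{lem:inverse_oper_for_small_N} and the Gr\"onwall computations in the proof of Lemma~\ref{lem:I+S}: for $\|Q\|_p$ small one gets solvability and an a priori bound $R_p(x) \le C(B,p,r)$ on the ball $\bU_{p,r}^{2\times2}$, proving $R_Q \in X_{\infty,p}^0$ with the stated uniform bound; to handle arbitrary $r$ one splits $[0,1]$ into finitely many subintervals on which the local $L^p$-norm of $Q$ is small and iterates.

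For the Lipschitz estimate~\eqref{2.12op} I would subtract the fixed-point equation for $\wt R$ from that for $R$, write $\wh R$ in terms of $\wh Q$, $\wh R$ itself, the ``frozen'' kernels $Q, \wt Q$, and the already-bounded $R$, $\wt R$, obtaining schematically $|\wh R(x,t)| \le C|\wh Q(\cdots)| + (\text{bilinear: } \wh Q \text{ with } R \text{ or } \wt R) + (\text{bilinear: } Q\wt Q \text{ with } \wh R)$, in full analogy with~\eqref{eq:for_differ-ce_kernels_of_inverse_oper-r}. Raising to the $p$-th power, integrating in $t$, using H\"older as before, and invoking the uniform bound on $R_p,\wt R_p$ to absorb the $\wh Q$-times-$R$ term into $C(B,p,r)\|\wh Q\|_p^p$, I arrive at $\wh R_p(x) \le C\|\wh Q\|_p^p + C\int_0^x \wh R_p(\xi)\,d\xi$, and Gr\"onwall's inequality finishes it. Membership $R_Q \in X_{\infty,p}^0$ (the \emph{separable} part) follows since the fixed-point iterates are continuous kernels converging in $X_{\infty,p}$, exactly as in Lemma~\ref{lem:I+S}(i).

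The main obstacle I anticipate is bookkeeping the affine changes of variables in the characteristic arguments $\a_k x + \a_j t$, $\g_k(\xi-x)+t$, $\xi-x+t$ so that each substitution stays within $\Omega$ and produces only $B$-dependent Jacobians, and simultaneously arranging the H\"older pairings in the bilinear terms so that one factor of $Q$ (or $\wh Q$) is measured in $L^p$ while the $R$-factor is measured in the $X_{\infty,p}$-norm with the correct operator-norm index $|A|_{p'\to\infty}$ from~\eqref{eq:Xinfp.nxn.norm.def}. Once the estimates are phrased so that the recursion closes with the function $R_p(x)$ as above, the Gr\"onwall step is routine and mirrors Lemmas~\ref{lem:I+S} and~\ref{lem:inverse_oper_for_small_N} verbatim. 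The companion statement for $X_{1,p}$ (the next lemma in the paper) will be proved by the symmetric argument, integrating in $x$ over $[t,1]$ rather than in $t$ over $[0,x]$.
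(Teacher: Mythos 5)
Your proposal is correct in substance and its skeleton coincides with the paper's proof: estimate the system \eqref{eq:Rkk=int.Qkj.Rjk}--\eqref{eq:Rjk=Qjk-int.Qjk.Rkk} directly via the quantities $\int_0^x|R_{jk}(x,t)|^p\,dt$, use the elementary H\"older bound \eqref{eq:holder.ineq} together with the characteristic changes of variables, decouple by feeding the diagonal equation into the off-diagonal one, obtain a uniform a priori bound for $R_Q$ on the ball $\bU_{p,r}^{2\times 2}$, then subtract the two systems and close a Gr\"onwall recursion for $\wh{R}=R-\wt{R}$. Where you deviate is the a priori bound: you propose a small-norm fixed-point argument plus splitting $[0,1]$ into subintervals with small local $L^p$-norm, whereas the paper avoids any smallness or localization by running Gr\"onwall with the \emph{integrable weight} $|Q_{12}(s)|^p\|Q_{21}\|_p^p$ (see \eqref{eq:est-te_fI_21_via_integ_of_fI_21}--\eqref{eq:evaluation_of_fI_21}), which works for arbitrary $r$ in one stroke; your localization would also work but costs extra bookkeeping along the characteristics $\a_kx+\a_jt$. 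Two smaller points to repair in a write-up: (i) in your closed recursion the integral term must carry the weight $|\wt{Q}_{12}(\xi)|^p$ (as in \eqref{eq:estimate_for_fI_21}) rather than a constant, which is harmless since Gr\"onwall applies with an $L^1$ weight; (ii) your justification of the membership $R_Q\in\XXZ{\infty,p}$ via continuity of the Picard iterates fails for $Q\in L^p$, since already the first iterate contains $Q$ itself; the correct route is to approximate $Q$ by continuous potentials, for which $R_Q$ is continuous, and invoke the Lipschitz estimate \eqref{2.12op} itself to pass to the limit in the separable subspace, which is effectively how the paper proceeds.
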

\begin{proof}
We put
\begin{equation} \label{14op}
 \wh{R}_{jk}= R_{jk} - \wt{R}_{jk}, \quad \wh{Q}_{jk} = Q_{jk} - \wt{Q}_{jk},
 \qquad j,k \in \{1,2\}.
\end{equation}
and
\begin{equation} \label{15op}
 \wh{\fI}_{jk}(p,x) := \int_0^x |\wh{R}_{jk}(x,t)|^p \,dt,
 \qquad \fI_{jk}(p,x) := \int_0^x |R_{jk}(x,t)|^p \,dt, \qquad j,k\in\{1,2\}.
\end{equation}
It is easily seen that for any $f \in L^p[0,1] \subset L^1[0,1]$ and $0 \le t_1
\le t_2 \le 1$, one has due to H\"older's inequality
\begin{equation} \label{eq:holder.ineq}
 \abs{\int_{t_1}^{t_2} f(\xi) \,d\xi}^p
 \le (t_2-t_1)^{p/p'} \int_{t_1}^{t_2} \abs{f(\xi)}^p \,d\xi
 \le \int_{t_1}^{t_2} \abs{f(\xi)}^p \,d\xi, \qquad 1/p' + 1/p = 1.
\end{equation}

Writing down the difference $R_{11} - \wt{R}_{11}$ as a sum of two terms
containing $\wh{Q}_{12}$ and $\wh{R}_{21}$ as the factors, then evaluating each
summand with the help of inequality~\eqref{eq:holder.ineq}, and making use the
change of variables $\xi = u$, \ $\xi - x + t = v$, we obtain
\begin{align} \label{16op}
\wh{\fI}_{11}(p,x) &:= \int^x_0|\wh{R}_{11}(x,t)|^p\,dt \nonumber \\
&\le 2^{p-1}|b_1|^p \(\int^x_0\,dt
\int^x_{x-t}|\wh{Q}_{12}(\xi)
 R_{21}(\xi,\xi-x+t)|^p\, d\xi
+ \ \int^x_0\,dt \int^x_{x-t}|\wt{Q}_{12}(\xi) \wh
R_{21}(\xi,\xi-x+t)|^p\, d\xi \) \nonumber \\
&\le 2^{p-1} |b_1|^p \(\int^x_0 |\wh{Q}_{12}(u)|^p\,du \int^u_0 |R_{21}(u, v)|^p\, dv
+ \int^x_0 |\wt{Q}_{12}(u)|^p\,du \int^u_0 | \wh{R}_{21}(u,v)|^p\, dv\) \nonumber \\
&= 2^{p-1}|b_1|^p \int^x_0 |\wh{Q}_{12}(u)|^p \fI_{21}(p,u)\,du + 2^{p-1} |b_1|^p \int^x_0 |\wt
Q_{12}(u)|^p \wh{\fI}_{21}(p,u)\,du.
\end{align}
Similarly, it follows from~\eqref{eq:Rjk=Qjk-int.Qjk.Rkk} and~\eqref{15op}
\begin{align}
\nonumber
 \wh{\fI}_{21}(p,x) = \int^x_0|\wh{R}_{21}(x,t)|^p\,dt
 & \le \frac{3^{p-1}}{|a_1-a_2|^p}
 \int^x_0 \abs{\wh{Q}_{21}(\a_1 x + \a_2 t)}^p \,dt \\
\nonumber
 & + \frac{3^{p-1}}{|a_2|^p} \int^x_0 dt \int^x_{\a_1 x + \a_2 t}
 |\wh{Q}_{21}(\xi)R_{11} (\xi,\g_{1}(\xi-x)+t\bigr)|^p\, d\xi \\
 & + \frac{3^{p-1}}{|a_2|^p} \int^x_0 dt \int^x_{\a_1 x + \a_2 t}
 |\wt{Q}_{21}(\xi) \wh{R}_{11}(\xi,\g_{1}(\xi-x) + t)|^p \,d\xi.
\end{align}
Making use the change of variables $\xi=u,$\ $\frac{a_1}{a_2}(\xi-x)+t=v$, in
the two last integrals, noting that $\frac{a_1 x}{a_1-a_2}>0$ and
$\frac{a_1}{a_2}(u-x)>0$, and setting
\begin{equation} \label{eq:constant_C_1(B,p)}
C_1(B,p):= \max \left\{\frac{3^{p-1}}{|a_2|\cdot|a_1 - a_2|^{p-1}},\
\frac{3^{p-1}}{|a_2|^p} \right\}
\end{equation}
we obtain
\begin{align} \label{17op}
\wh{\fI}_{21}(p,x) & \le \frac{3^{p-1}}{|a_2|\cdot|a_1-a_2|^{p-1}}
\int^x_{\frac{a_1 x}{a_1-a_2}}|\wh{Q}_{21}(u)|^p\,du +
\frac{3^{p-1}}{|a_2|^p}\int^x_{\frac{a_1 x}{a_1-a_2}}|\wh
Q_{21}(u)|^p \,du\int^u_{\frac{a_1}{a_2}(u-x)}|R_{11}(u,v)|^p\, dv \nonumber \\
& + \frac{3^{p-1}}{|a_2|^p} \int^x_{\frac{a_1 x}{a_1-a_2}}|\wt{Q}_{21}(u)|^p\,
du\int^u_{\frac{a_1}{a_2}(u-x)}|\wh{R}_{11}(u,v)|^p \,dv
\le C_1(B,p) \int^x_0 |\wh{Q}_{21}(u)|^p \,du \nonumber \\
& + C_1(B,p)\left\{ \int^x_0 |\wh{Q}_{21}(u)|^p \,du\int^u_0 |R_{11}(u,v)|^p\, dv +
 \int^x_0 |\wt{Q}_{21}(u)|^p \,du \int^u_0 |\wh{R}_{11}(u,v)|^p\, dv \right\} \nonumber \\
&= C_1(B,p) \left\{\int^x_0|\wh{Q}_{21}(t)|^p\, dt + \int^x_0 \fI_{11}(p,
u)|\wh{Q}_{21}(u)|^p \,du + \int^x_0 \wh{\fI}_{11}(p,u)|\wt{Q}_{21}(u)|^p \,du
\right\}.
\end{align}

Now we are ready to evaluate the functions $R_{11}(\cdot,\cdot)$ and
$R_{21}(\cdot,\cdot)$ in $X_{\infty,p}(\Omega)$-norm. We show that these norms
depend only on radius $r$ of the ball in $L^p([0,1];\bC^{2 \times 2})$ contained
$Q$ and do not depend on $Q$ itself. This fact plays a crucial role in what
follows.

If $\wt{Q} = 0$ the estimates~\eqref{17op} and~\eqref{16op} are simplified to
\begin{equation} \label{eq:est-te_fI_21_via_fI_11}
{\fI}_{21}(p,x) = \int^x_0|R_{21}(x,t)|^p\,dt \le C_1(B,p)
\left\{\int^x_0|Q_{21}(t)|^p\, dt + \int^x_0 \fI_{11}(p, u)|Q_{21}(u)|^p \,du \right\}
\end{equation}
and
\begin{equation} \label{eq:est-te_fI_11_via_fI_21}
{\fI}_{11}(p,x) = \int^x_0|R_{11}(x,t)|^p\,dt \le |b_1|^p \int^x_0 |Q_{12}(u)|^p
\fI_{21}(p,u)\,du,
\end{equation}
respectively. Inserting the second estimate into~\eqref{eq:est-te_fI_21_via_fI_11} one
gets
\begin{align} \label{eq:est-te_fI_21_via_integ_of_fI_21}
{\fI}_{21}(p,x) & \le C_1(B,p) \(\|Q_{21}\|^p_{L^p[0,1]} + |b_1|^p \int^x_0
|Q_{21}(u)|^p\,du \int^u_0 |Q_{12}(s)|^p \fI_{21}(p,s)\,ds \) \nonumber \\
 & \le C_1(B,p) \(\|Q_{21}\|^p_{L^p[0,1]} + |b_1|^p \int^1_0
|Q_{21}(u)|^p\,du \int^x_0 |Q_{12}(s)|^p \fI_{21}(p,s)\,ds \)
\end{align}
Now the Gr\"onwall's inequality applies and leads to the first desired estimate
\begin{align} \label{eq:evaluation_of_fI_21}
{\fI}_{21}(p,x) \le C_1(B,p)\|Q_{21}\|^p_{L^p[0,1]} \cdot
\exp\(|b_1|^p\|Q_{21}\|^p_{L^p[0,1]}\int^x_0 |Q_{12}(s)|^p\,ds\)
\end{align}
In turn, inserting this estimate in~\eqref{eq:est-te_fI_11_via_fI_21} yields the
second desired estimate
\begin{align}
{\fI}_{11}(p,x) \le |b_1|^p C_1(B,p)\|Q_{21}\|^p_{L^p[0,1]} \cdot \int^x_0 &
|Q_{12}(u)|^p \cdot \exp\(|b_1|^p\|Q_{21}\|^p_{L^p[0,1]}\int^u_0
 |Q_{12}(s)|^p\,ds\)\,du \nonumber \\
& = C_1(B,p) \(\exp\(|b_1|^p\|Q_{21}\|^p_{L^p[0,1]}\int^x_0 |Q_{12}(s)|^p\,ds
\) -1 \)
\end{align}
For potential matrices $Q$ belonging the ball of $\LLV{p}$ of the radius $r$
these estimates imply
\begin{align} \label{eq:main_unif_est-s_for_R_11_and_R_21}
\|R_{11}(\cdot,\cdot)\|^p_{X_{\infty, p}(\Omega)}
&\le C_1(B,p)\(\exp\(|b_1|^p \cdot r^{2p}\)-1\) \le |b_1|^p r^{2p}
C_1(B,p)\exp\(|b_1|^p \cdot r^{2p}\), \nonumber \\
& \|R_{21}(\cdot,\cdot)\|^p_{X_{\infty, p}(\Omega)} \le C_1(B,p)r^p \exp\(|b_1|^p
\cdot r^{2p}\).
\end{align}
We set
\begin{equation} \label{18op}
C_1(B, p, r) := C_1(B,p) r^p \cdot \exp\(|b_1|^p \cdot r^{2p}\)\cdot \max\{1,
|b_1|^pr^{p}\}.
\end{equation}
Next using estimates~\eqref{eq:main_unif_est-s_for_R_11_and_R_21} we return to
evaluation of the functions $\wh{\fI}_{11}(p, \cdot)$ and $\wh{\fI}_{21}(p,
\cdot)$. Combining estimate~\eqref{16op} with estimate
\eqref{eq:main_unif_est-s_for_R_11_and_R_21} (for ${\fI}_{21}(p, \cdot)$) and using
definition~\eqref{18op} yields
\begin{equation} \label{19op}
\wh{\fI}_{11}(p, x) \le 2^{p-1}|b_1|^p C_1(p,r) \|\wh{Q}_{12}\|_{L^p}^p +
2^{p-1}|b_1|^p \int^x_0\wh{\fI}_{21}(p,t)|\wt{Q}_{12}(t)|^p\,dt.
\end{equation}
Inserting this inequality into~\eqref{17op} and taking into account estimate
\eqref{eq:main_unif_est-s_for_R_11_and_R_21} (for ${\fI}_{11}(p, \cdot)$), using
definition~\eqref{18op}, and noting that $\|\wt{Q}_{21}\|_{L^p} \le r$ we derive
\begin{align} \label{eq:estimate_for_fI_21}
\wh{\fI}_{21}(p,x) &\le C_1(B,p)\(1 + C_1(p,r)\) \int^x_0|\wh
Q_{21}(u)|^p\,du +
 C_1(B,p) \int^x_0\wh{\fI}_{11}(p,u)|\wt{Q}_{21}(u)|^p\,du \nonumber \\
&\le C_1(B,p)\(1 + C_1(p,r)\) \|\wh{Q}_{21}\|_{L^p}^p + 2^{p-1}|b_1|^p
C_1(p,r)\|\wh
Q_{12}\|^p_{L^p}\cdot\|\wt{Q}_{21}\|^p_{L^p} \nonumber \\
& + 2^{p-1}|b_1|^p C_1(B,p) \int^x_0|\wt{Q}_{21}(u)|^p\,du \int^u_0\wh{\fI}_{21}(p,t)|\wt{Q}_{12}(t)|^p\,dt \nonumber \\
&\le C_2(B,p,r) \(\|\wh{Q}_{21}\|^p_{L^p} + \|\wh{Q}_{12}\|^p_{L^p}\) +
2^{p-1}|b_1|^p C_1(B,p) \int^x_0\wh{\fI}_{21}(p,t)|\wt
Q_{12}(t)|^p\,dt \int^x_t|\wt{Q}_{21}(u)|^p\,du \nonumber \\
&\le C_2(B,p,r) \(\|\wh{Q}_{21}\|^p_{L^p} + \|\wh{Q}_{12}\|^p_{L^p}\) +
2^{p-1}|b_1|^p C_1(B,p) \cdot \|\wt{Q}_{21}\|^p_{L^p} \int^x_0\wh
{\fI}_{21}(p,t)|\wt{Q}_{12}(t)|^pdt,
\end{align}
where
$$
C_2(B,p,r) = \max\left\{ C_1(B,p)(1 + C_1(p,r)),\ 2^{p-1}|b_1|^p C_1(p,r) r
\right\}.
$$
Applying Gr\"onwall's inequality to~\eqref{eq:estimate_for_fI_21} and noting
that $\|\wt{Q}_{21}\|_{L^p}, \|\wt{Q}_{12}\|_{L^p} \le r$, one easily deduces
\begin{align}
\nonumber
 \wh{\fI}_{21}(p,x) \le C_2(B,p,r) \(\|\wh{Q}_{12}\|^p_{L^p}
 + \|\wh{Q}_{21}\|^p_{L^p}\) \exp\(2^{p-1}|b_1|^p C_1(B,p)\cdot
 \|\wt{Q}_{21}\|^p_{L^p}\int^x_0 |\wt{Q}_{12}(t)|^pdt\) \\
\label{2.29op}
 \le C_2(B,p,r) \(\|\wh{Q}_{12}\|^p_{L^p}
 + \|\wh{Q}_{21}\|^p_{L^p}\) \exp\(2^{p-1}|b_1|^p C_1(B,p)\cdot r^{2p}\).
\end{align}
Implementing this inequality into~\eqref{19op} we arrive at the estimate
\begin{align} \label{2.30op}
\wh{\fI}_{11}(p,x) \le \(\|\wh{Q}_{12}\|^p_{L^p} + \|\wh
Q_{21}\|^p_{L^p}\) \(C_1(p,r) + C_2(B,p,r)\|\wt
Q_{12}\|^p_{L^p}\exp\(2^{p-1}|b_1|^p C_1(B,p)\cdot r^{2p}\)\) \nonumber \\
 \le 2^{p-1}|b_1|^p \|\wh{Q}\|^p_{\LL{p}} \(C_1(p,r) + C_2(B,p,r)r^p\cdot\exp\(2^{p-1}|b_1|^p C_1(B,p)\cdot r^{2p}\)\).
\end{align}
Similar reasoning leads to similar estimates for
$\wh{\fI}_{12}$ and $\wh{\fI}_{22}$. Combining
these estimates with~\eqref{2.29op} and~\eqref{2.30op} we arrive
at~\eqref{2.12op}.
\end{proof}
\begin{lemma} \label{lem:R.diff.X1}
Let $Q, \wt{Q} \in \bU_{p, r}^{2 \times 2}$ for some $p \in [1, \infty)$ and
$r > 0$. Assume also that $R = R_Q$ and $\wt{R} = R_{\wt{Q}}$ are the (unique)
solutions of the
problem~\eqref{eq:Rkk=int.Qkj.Rjk}--\eqref{eq:Rjk=Qjk-int.Qjk.Rkk} with $Q$
and $\wt{Q}$ in place of $Q$, respectively. Then $R, \wt{R} \in \XXZ{1,p}$ and
there exists a constant $C^{(1)} = C^{(1)}(B, p, r)$ not dependent on $Q$ and
$\wt{Q}$ and such that the following uniform estimate holds
\begin{equation} \label{eq:wh.R.X1p<wh.Q}
 \|R - \wt{R}\|_{X_{1,p}(\Omega; \bC^{2 \times 2})} = \|R_Q - R_{\wt{Q}}\|_{X_{1,p}(\Omega; \bC^{2 \times 2})}
 \le C^{(1)} \|Q - \wt{Q}\|_p.
\end{equation}
In particular, one has the uniform estimate: $\|R_Q\|_{X_{1,p}(\Omega; \bC^{2 \times
2})} \le C^{(1)}\|Q\|_{p}$\ \ for \ \ $Q \in \bU_{p, r}^{2 \times 2}$.
\end{lemma}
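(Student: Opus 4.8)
The plan is to mirror the structure of the proof of Lemma~\ref{lem:wh.estim.Xinf}, replacing the $X_{\infty,p}$-norm throughout by the $X_{1,p}$-norm; that is, instead of controlling $\fI_{jk}(p,x) = \int_0^x |R_{jk}(x,t)|^p\,dt$ uniformly in $x$, I would introduce
\[
 \fJ_{jk}(p,t) := \int_t^1 |R_{jk}(x,t)|^p\,dx, \qquad
 \wh{\fJ}_{jk}(p,t) := \int_t^1 |\wh{R}_{jk}(x,t)|^p\,dx,
\]
and seek Grönwall-type bounds for these quantities, so that $\|R_Q - R_{\wt Q}\|_{X_{1,p}}^p = \max_{j,k} \sup_{t\in[0,1]} \wh{\fJ}_{jk}(p,t)$ is estimated by $C\|Q-\wt Q\|_p^p$. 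First I would write the differences $R_{kk} - \wt R_{kk}$ and $R_{jk} - \wt R_{jk}$ from~\eqref{eq:Rkk=int.Qkj.Rjk}--\eqref{eq:Rjk=Qjk-int.Qjk.Rkk} as sums of two terms, one carrying the factor $\wh Q_{jk}$ and one carrying $\wh R_{kk}$ (resp. $\wh R_{jk}$), exactly as in~\eqref{16op} and the display after it.

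The essential new point, and the step I expect to be the main obstacle, is the choice of change of variables: when integrating the diagonal equation~\eqref{eq:Rkk=int.Qkj.Rjk} in $x$ over $[t,1]$ one must pass from $\int_t^1 dx \int_{x-t}^x |\wh Q_{kj}(\xi)\,R_{jk}(\xi,\xi-x+t)|^p\,d\xi$ to an iterated integral in which the \emph{inner} variable matches the $X_{1,p}$-slicing, i.e. to something of the form $\int \!|\wh Q_{kj}(u)|^p\,du \int_? \!|R_{jk}(u,v)|^p\,dv$ with the $v$-integral being $\int_v^1$-type rather than $\int_0^v$-type. Here the substitution $u=\xi$, $v = \xi - x + t$ gives $dv = -dx$, and as $x$ ranges over $[t,1]$ with $\xi$ fixed, $v$ ranges over $[\xi - 1 + t,\ t] \cap [0,\xi]$, which is contained in $[0,\xi]$ rather than in $[v_0,\xi]$; so a direct Fubini step does not immediately produce the $X_{1,p}$-slice. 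The fix is to estimate $\int_0^\xi |R_{jk}(\xi,v)|^p\,dv \le \xi^{1-?}\cdots$ is not available; instead one should exploit that in~\eqref{eq:Rkk=int.Qkj.Rjk} the relevant trace is $R_{jk}(\xi,\cdot)$ evaluated on the full fiber, so one bounds it by $\|R_{jk}\|_{X_{\infty,p}}$ — which is \emph{already controlled uniformly by Lemma~\ref{lem:wh.estim.Xinf}} and by its difference analog~\eqref{2.12op}. In other words, the diagonal equations are estimated using the $X_{\infty,p}$-bounds on the off-diagonal kernels (and vice versa where needed), so that the $X_{1,p}$-estimate for $\wh R$ reduces, after the off-diagonal substitution $u=\xi$, $v = \a_1 x \pm \ldots$ appropriate to~\eqref{eq:Rjk=Qjk-int.Qjk.Rkk}, to a single Grönwall inequality in $t$.

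Concretely the order of steps is: (1) record the Hölder inequality~\eqref{eq:holder.ineq} and fix notation $\wh{\fJ}_{jk}$, $\fJ_{jk}$; (2) for the off-diagonal components $R_{jk}$ ($j\ne k$), integrate~\eqref{eq:Rjk=Qjk-int.Qjk.Rkk} in $x$ over $[t,1]$, use the change of variables $\xi=u$, $\g_k(\xi-x)+t=v$ — whose Jacobian is $|\g_k|^{-1}$ and which maps the relevant triangle onto an $X_{1,p}$-type slice — to obtain $\wh{\fJ}_{jk}(p,t) \le C(\|\wh Q\|_p^p + \int_0^t |\wh Q_{jk}|^p \cdot (\text{bounded}) + \|R_{jj}\|_{X_{\infty,p}}^p \cdot(\ldots))$, invoking~\eqref{2.12op} and~\eqref{eq:main_unif_est-s_for_R_11_and_R_21} to turn the $R_{kk}$- and $\wh R_{kk}$-dependent pieces into constants times $\|Q-\wt Q\|_p^p$; (3) for the diagonal components, similarly integrate~\eqref{eq:Rkk=int.Qkj.Rjk} in $x$ and estimate the kernel $R_{jk}$ appearing inside by its $X_{\infty,p}$-norm (already controlled), producing $\wh{\fJ}_{kk}(p,t) \le C(\ldots)$ in terms of $\wh{\fJ}_{jk}$; (4) combine the two to get a closed inequality of the form $\wh{\fJ}(p,t) \le A\|Q-\wt Q\|_p^p + B\int_0^t \wh{\fJ}(p,s)\,w(s)\,ds$ with $\int_0^1 w \le r^p$, apply Grönwall, and take the supremum over $t$ to conclude~\eqref{eq:wh.R.X1p<wh.Q}; the special case $\wt Q = 0$ (needed for the last assertion $\|R_Q\|_{X_{1,p}} \le C^{(1)}\|Q\|_p$) drops out by the same computation exactly as~\eqref{eq:main_unif_est-s_for_R_11_and_R_21} did in the previous lemma.
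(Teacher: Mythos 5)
Your proposal is essentially the paper's own argument: after integrating~\eqref{eq:Rkk=int.Qkj.Rjk}--\eqref{eq:Rjk=Qjk-int.Qjk.Rkk} in $x$ over $[t,1]$ and changing variables, the inner integrals are $X_{\infty,p}$-type slices of the other components, and you correctly close the estimate by invoking the uniform bounds~\eqref{2.12op} and~\eqref{eq:main_unif_est-s_for_R_11_and_R_21} from Lemma~\ref{lem:wh.estim.Xinf}, exactly as the paper does. The only (harmless) deviation is your concluding Gr\"onwall step, which is superfluous: once the $X_{\infty,p}$-norms of $R$ and $\wh{R}$ are used, the inequality for $\wh{\fJ}_{jk}(p,t)$ contains no integral term in $\wh{\fJ}$ and yields~\eqref{eq:wh.R.X1p<wh.Q} directly.
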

\begin{proof}
\textbf{(i)} As in the proof of Lemma~\ref{lem:wh.estim.Xinf} we put
(see~\eqref{14op}) $\wh{R}_{jk} = R_{jk} - \wt{R}_{jk}$ and $\wh{Q}_{jk} =
Q_{jk} - \wt{Q}_{jk}$,\ $j,k\in\{1,2\}$.

First we prove estimate~\eqref{eq:wh.R.X1p<wh.Q} for the case
$j \ne k$. Since $\a_1, \a_2 \in (0,1)$ and $\a_1 + \a_2 = 1$, it
follows that
\begin{equation} \label{eq:estimate_of_xi_jk}
 0 \le t \le \a_k + \a_j t \le 1.
\end{equation}
Making a change of variable $u = \a_k x + \a_j t$ and taking into
account~\eqref{eq:estimate_of_xi_jk} and~\eqref{eq:holder.ineq}, one has
\begin{align} \label{eq:estimate_Q_{jk,n}}
 \abs{\frac{i}{a_k - a_j} \int_t^1 \wh{Q}_{jk}(\a_k x+ \a_j t)\,dx}^p
 = \abs{\frac{1}{a_k} \int^{\a_k + \a_j t}_t \wh{Q}_{jk}(u)\,du}^p
 \le C_b^p \int^{\a_k + \a_j t}_t \abs{\wh{Q}_{jk}(u)}^p\,du,
\end{align}
where $C_b := \max\{|b_1|, |b_2|\} = 1 / \min\{|a_1|, |a_2|\}$.
Further, note that alongside $R_{jk}$ the kernel $\wt{R}_{jk}$ satisfies
equation~\eqref{eq:Rjk=Qjk-int.Qjk.Rkk} with $\wt{Q}_{jk}$ in place of $Q_{jk}$,
$j,k \in \{1,2\}.$ Taking difference of these equations, then integrating the
result with respect to $x\in [t,1],$ making use the change of variables $u =
\xi,$ $v = (\xi-x)\g_{lk} + t = (\xi-x)\frac{a_k}{a_j} + t$, and using
inequalities~\eqref{eq:estimate_Q_{jk,n}},~\eqref{eq:holder.ineq} we obtain
\begin{align}
\nonumber
 3^{1-p} C_b^{-p} \int^1_t|\wh{R}_{jk}(x,t)|^p\,dx
 & = 3^{1-p} C_b^{-p} \int_t^1 |R_{jk}(x,t) - \wt{R}_{jk}(x,t)|^p\,dx \\
\nonumber
 & \le \int^{\a_k + \a_j t}_t |\wh{Q}_{jk}(u)|^p\,du
 + \int^1_t\,dx \int^x_{\a_k x + \a_j t} \abs{\wt{Q}_{jk}(\xi)
 \wh{R}_{kk}(\xi,\g_k(\xi-x)+ t)}^p\,d\xi \nonumber \\
\nonumber
 & \qquad \qquad \qquad \qquad \quad \ \,
 + \int^1_t dx\int^x_{\a_k x + \a_j t}
 |\wh{Q}_{jk}(\xi)R_{kk}(\xi,\g_k(\xi-x)+t)|^p\,d\xi \\
\nonumber
 & = \int^{\a_k + \a_j t}_t |\wh{Q}_{jk}(u)|^p\,du
 + \int_t^{\a_k + \a_j t}\,dv
 \int^{(v-t)\frac{a_j}{a_k} + 1}_v|\wt{Q}_{jk}(u) \wh{R}_{kk}(u,v)|^p\,du \\
\nonumber
 & \qquad \qquad \qquad \qquad \quad \ \,
 + \int_t^{\a_k + \a_j t}
dv\int^{(v-t)\frac{a_j}{a_k}+1}_v|\wh
Q_{jk}(u) R_{kk}(u,v)|^p\,du \nonumber \\
\nonumber
 & \le \int^1_t |\wh{Q}_{jk}(u)|^p\,du
 + \int_t^1\,dv\int^1_v|\wt{Q}_{jk}(u) \wh{R}_{kk}(u,v)|^p\,du \\
\nonumber
 & \qquad \qquad \qquad \quad \ \ \,
 + \int_t^1\,dv \int^1_v|\wh{Q}_{jk}(u) R_{kk}(u,v)|^p\,du \\
\nonumber
 & = \int^1_t |\wh{Q}_{jk}(u)|^p\,du
 + \int_t^1\,|\wt{Q}_{jk}(u)|^p\,du\int^u_t|\wh{R}_{kk}(u,v)|^p\,dv \\
\nonumber
 & \qquad \qquad \qquad \quad \ \ \,
 + \int_t^1 |\wh{Q}_{jk}(u)|^p\,du \int^u_t |R_{kk}(u,v)|^p\,dv \\
\label{2.39op_New}
 & \le \|\wh{Q}_{jk}\|_p^p
 + \|\wt{Q}_{jk}\|_p^p \cdot \|\wh{R}_{kk}\|^p_{X_{\infty, p}}
 + \|\wh{Q}_{jk}\|_p^p \cdot \|R_{kk}\|^p_{X_{\infty, p}}.
\end{align}
Here we use simple inequalities $\a_k + \a_j t\le 1$ and ${(v-t)\frac{a_j}{a_k} + 1}\le
1$. The latter holds because $t\le v$ and ${a_j}{a_k}<0$. It follows from
\eqref{2.39op_New} with account of definitions~\eqref{eq:X1p.norm.def} and
\eqref{eq:Xinfp.norm.def} that
\begin{equation} \label{2.40op}
 \|\wh{R}_{jk}\|_{X_{1,p}} \le 3 C_b \(\|\wh{Q}_{jk}\|_p +
 \|\wt{Q}_{jk}\|_{p} \cdot \|\wh{R}_{kk}\|_{X_{\infty,p}(\Omega)} +
 \|\wh{Q}_{jk}\|_{p} \cdot \|R_{kk}\|_{X_{\infty,p}(\Omega)}\), \quad j \ne k.
\end{equation}
Further, in accordance with Lemma~\ref{lem:wh.estim.Xinf} the following
estimates hold
$$
\|\wh{R}_{kk}\|_{X_{\infty,p}(\Omega)} \le C^{(\infty)} \|\wh{Q}\|_p \quad \text{and}\quad \|R_{kk}\|_{X_{\infty,p}(\Omega)} \le
C^{(\infty)} \|Q\|_p \le C^{(\infty)} r,
$$
with the constant $C^{(\infty)} = C^{(\infty)}(r, B, p)$ not dependent on $Q$ and $\wt{Q}$ running through the ball $\bU_{p,r}^{2 \times 2}$. Inserting these
estimates into~\eqref{2.40op} yields
\begin{equation} \label{2.40op_second}
 \|\wh{R}_{jk}\|_{X_{1,p}} \le 3 C_b \(\|\wh{Q}_{jk}\|_p
 \bigl(1 + \|R_{kk}\|_{X_{\infty,p}}\bigr) + r \|\wh{R}_{kk}\|_{X_{\infty,p}} \)
 \le 3 C_b \(1 + 2r C^{(\infty)}\)\|\wh{Q}\|_p,
 \quad j \ne k.
\end{equation}

(ii) Going over to the case $j = k$ we start with
equation~\eqref{eq:Rkk=int.Qkj.Rjk} and similar equation for $\wt{R}_{kk}$ which
holds with $\wt{Q}_{jk}$ in place of $Q_{jk}$, $j,k \in \{1,2\}$. Taking
difference of~\eqref{eq:Rkk=int.Qkj.Rjk} and this equation, then integrating
the difference with respect to $x\in [t,1],$ and then making use the change of
variables $\xi=u, \xi-x+t=v$, applying inequality~\eqref{eq:holder.ineq} one
obtains
\begin{align}
2^{1-p} |a_j|^p\int^1_t|\wh{R}_{jj}(x,t)|^p\,dx & = 2^{1-p}|a_j|^p\int^1_t|R_{jj}(x,t)- \wt{R}_{jj}(x,t)|^p\,dx \nonumber \\
&\le \int^1_t dx\int^x_{x-t} \abs{\wt{Q}_{jk}(\xi)\wh
R_{kj}(\xi,\xi-x+t)}^p\, d\xi +
\int^1_t dx\int^x_{x-t} \abs{\wh{Q}_{jk}(\xi)R_{kj}(\xi,\xi-x+t)}^p\,d\xi \nonumber \\
&= \int^t_0 dv\int^{v-t+1}_v \abs{\wt{Q}_{jk}(u) \wh{R}_{kj}(u,v)}^p\,du +
\int^t_0 dv\int^{v-t+1}_v \abs{\wh{Q}_{jk}(u) R_{kj}(u,v)}^p\,du \nonumber \\
&\le \int^1_0 dv\int^{1}_v \abs{\wt{Q}_{jk}(u) \wh{R}_{kj}(u,v)}^p\,du +
\int^1_0 dv\int^{1}_v \abs{\wh{Q}_{jk}(u) R_{kj}(u,v)}^p\,du \nonumber \\
&= \int^1_0 \abs{\wt{Q}_{jk}(u)}^p\,du\int^u_0 \abs{\wh
R_{kj}(u,v)}^p\,dv + \int^1_0 \abs{\wh{Q}_{jk}(u)}^p\,du\int^u_0
\abs{R_{kj}(u,v)}^p\,dv.
\end{align}
It follows with account of definitions~\eqref{eq:X1p.norm.def} and
\eqref{eq:Xinfp.norm.def} that
\begin{equation} \label{estim_for_widehat_R_jj_1}
 \|R_{jj} - \wt{R}_{jj}\|_{X_{1,p}(\Omega)} \le 2|b_j|
 \(\|\wt{Q}_{jk}\|_p \cdot \|\wh{R}_{kj}\|_{X_{\infty,p}(\Omega)} +
 \|\wh{Q}_{jk}\|_p \cdot \|R_{kj}\|_{X_{\infty,p}(\Omega)}\),
 \quad j \in \{1,2\}.
\end{equation}
Further, in accordance with Lemma~\ref{lem:wh.estim.Xinf} the following
estimates hold
$$
\|\wh{R}_{kj}\|_{X_{\infty,p}(\Omega)} \le C^{(\infty)} \|\wh{Q}\|_p \quad \text{and}\quad \|R_{kj}\|_{X_{\infty,p}} \le C^{(\infty)}
\|Q\|_p \le C^{(\infty)} r,
$$
with the constant $C^{(\infty)} = C^{(\infty)}(r, b_1, b_2, p)$ not dependent on $Q$ and
$\wt{Q}$ from the ball in $L^p$ of the radius $r$. Inserting these estimates into
\eqref{2.40op} yields
\begin{equation} \label{estim_for_widehat_R_jj_2}
 \|\wh{R}_{jj}\|_{X_{1,p}(\Omega)} \le 2|b_j| \(r \|\wh{R}_{kj}\|_{X_{\infty,p}}
 + \|\wh{Q}\|_p \|R_{kj}\|_{X_{\infty,p}}\) \le 4 C_b r C^{(\infty)} \cdot
 \|\wh{Q}\|_p, \quad j \in \{1,2\}.
\end{equation}
Combining~\eqref{2.40op_second} with~\eqref{estim_for_widehat_R_jj_2}
one arrives at the desired estimate~\eqref{eq:wh.R.X1p<wh.Q}.
\end{proof}
Combining Lemma~\ref{lem:wh.estim.Xinf} with Lemma~\ref{lem:R.diff.X1} we arrive
at a part of the following result.
\begin{proposition} \label{prop2.5}
Let $Q= \codiag(Q_{12}, Q_{21}), \wt{Q} = \codiag(\wt{Q}_{12}, \wt{Q}_{21}) \in \bU_{p, r}^{2 \times 2}$
for some $p \ge 1$ and $r > 0$. Then:

(i) The system of integral equations~\eqref{eq:Rkk=int.Qkj.Rjk}--\eqref{eq:Rjk=Qjk-int.Qjk.Rkk} has the unique
solution $R = R_Q = (R_{jk})_{j,k=1}^2$ belonging to $X_{1,p}^0\(\Omega;\bC^{2\times
2}\) \cap X_{\infty,p}^0\(\Omega;\bC^{2 \times 2}\)$. Moreover, this
solution is unique in the space $X_{\infty,p}^0\(\Omega;\bC^{2 \times 2}\)$.

(ii) Let also $\wt{R} = R_{\wt{Q}}$ be the (unique) solution of the
problem~\eqref{eq:Rkk=int.Qkj.Rjk}--\eqref{eq:Rjk=Qjk-int.Qjk.Rkk} with $\wt{Q}$ in
place of $Q$. Then $\wt{R} \in \XXZ{1,p}\cap \XXZ{\infty,p}$ and the following uniform
estimate holds
\begin{equation} \label{eq:wh.R.X1infp<wh.Q}
 \|R - \wt{R}\|_{\XX{1,p}} + \|R - \wt{R}\|_{\XX{\infty,p}}
 \le C^{(1,\infty)} \|Q - \wt{Q}\|_p,
\end{equation}
where the constant $C^{(1,\infty)} := C^{(1)}(B, p, r) + C^{(\infty)}(B, p, r)$ does
not depend on $Q, \wt{Q} \in \bU_{p,r}^{2 \times 2}$.

(iii) The following uniform estimate holds (with $b:= \max\{|b_1|, |b_2|\}$)
 $$
\|R_Q(\cdot,\cdot)\|^p_{X_{\infty,p}(\Omega; \bC^{2 \times 2})} +
\|R_Q(\cdot,\cdot)\|^p_{X_{1,p}(\Omega; \bC^{2 \times 2})} \le C_1(B,p)
r^{p}\cdot \exp\(|b|^p \cdot r^{2p}\)\cdot \max\{1, |b|^p r^{p}\}.
 $$

(iv) The operator
\begin{equation} \label{2.39op}
 \cR_Q: \binom{f_1}{f_2}\to \int^x_0 R_Q(x,t)\binom{f_1(t)}{f_2(t)}dt =
 \int^x_0 \begin{pmatrix} R_{11}(x,t) & R_{12}(x,t) \\ R_{21}(x,t) & R_{22}(x,t)
 \end{pmatrix} \binom{f_1(t)}{f_2(t)}\,dt
\end{equation}
is a Volterra operator in the scale $L^s([0,1],\bC^2)$, $s \in [1,\infty]$.
Moreover, there exists a constant $C_3 = C_3(B,p,r)>0$ not dependent on $s \in
[1, \infty]$ and $Q, \wt{Q} \in \bU_{p,r}^{2 \times 2}$, and such that the
following uniform estimate holds
\begin{equation} \label{estimate_for_diference_inverses}
 \|(I + \cR_Q)^{-1} - (I + \cR_{\wt{Q}})^{-1}\|_{s \to s} \le C_3
 \|\wt{Q} - Q\|_p,
\end{equation}
\end{proposition}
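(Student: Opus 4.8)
The plan is to obtain Proposition~\ref{prop2.5} essentially as a corollary of Lemmas~\ref{lem:wh.estim.Xinf} and~\ref{lem:R.diff.X1} and the abstract machinery on Volterra operators from Section~\ref{sec:X1p}. For existence in part~(i) I would run a Picard iteration $R^{(0)}=0$, $R^{(m+1)}=\cT R^{(m)}$ for the system~\eqref{eq:Rkk=int.Qkj.Rjk}--\eqref{eq:Rjk=Qjk-int.Qjk.Rkk} directly in $X_{\infty,p}^0(\Omega;\bC^{2\times 2})$ ($p\in[1,\infty)$): when $Q\in L^p$ the inhomogeneous terms $\frac{i}{a_k-a_j}Q_{jk}(\a_k x+\a_j t)$ lie in $X_{\infty,p}^0$ (they are continuous for continuous $Q$, and $Q_{jk}\mapsto Q_{jk}(\a_k x+\a_j t)$ is bounded $L^p\to X_{\infty,p}$), the Volterra-type integral terms preserve $X_{\infty,p}^0$, and the iteration converges in a $\sup$-exponentially-weighted norm; Lemma~\ref{lem:R.diff.X1} applied with $\wt Q=0$ then upgrades the solution to $X_{1,p}^0$ as well. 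Uniqueness in $X_{\infty,p}^0$ is read off the chain of inequalities~\eqref{16op}--\eqref{17op}: feeding two solutions with the same $Q$ into them bounds $\wh\fI_{21}(p,\cdot)$ by an integral of itself with vanishing free term, so Gr\"onwall forces the difference to vanish; in particular this $R$ agrees with the kernel of Proposition~\ref{prop:K.props} from~\cite{LunMal16JMAA}, which lies in the larger space $X_{\infty,1}^0\supset X_{\infty,p}^0$ where uniqueness also holds.

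Part~(ii) is then just the sum of~\eqref{2.12op} and~\eqref{eq:wh.R.X1p<wh.Q}, with $C^{(1,\infty)}:=C^{(1)}+C^{(\infty)}$, and part~(iii) is the specialization $\wt Q=0$ of the same lemmas: setting $\wt Q=0$ reduces~\eqref{16op},~\eqref{17op} to~\eqref{eq:est-te_fI_11_via_fI_21},~\eqref{eq:est-te_fI_21_via_fI_11}, and tracking the constants through~\eqref{eq:main_unif_est-s_for_R_11_and_R_21},~\eqref{18op} (together with the symmetric bounds for $R_{12},R_{22}$ and the analogous $X_{1,p}$-computation) yields the stated explicit constant. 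For part~(iv), the fact that $\cR_Q$ is a Volterra operator on every $L^s$, $s\in[1,\infty]$, is immediate from part~(i) and~\eqref{eq:Xqp_embedding}: $R_Q\in X_{1,p}^{0,2}\cap X_{\infty,p}^{0,2}\subset X_{1,1}^{0,2}\cap X_{\infty,1}^{0,2}$, so Proposition~\ref{prop:volterra_oper.Xp}(iii) applies and also yields $(I+\cR_Q)^{-1}=I+\cS_Q$. For the quantitative bound~\eqref{estimate_for_diference_inverses} I would use the resolvent identity
\[
 (I+\cR_Q)^{-1}-(I+\cR_{\wt Q})^{-1}=(I+\cR_Q)^{-1}\,(\cR_{\wt Q}-\cR_Q)\,(I+\cR_{\wt Q})^{-1},
\]
controlling the middle factor by Lemma~\ref{lem:volterra_oper}(ii) and part~(ii),
\[
 \|\cR_Q-\cR_{\wt Q}\|_{s\to s}\le\|R_Q-R_{\wt Q}\|_{X_{1,1}^2}^{1/s}\,\|R_Q-R_{\wt Q}\|_{X_{\infty,1}^2}^{1-1/s}\le\|R_Q-R_{\wt Q}\|_{X_{1,p}^2}+\|R_Q-R_{\wt Q}\|_{X_{\infty,p}^2}\le C^{(1,\infty)}\,\|Q-\wt Q\|_p .
\]
Everything then reduces to a bound $\|(I+\cR_Q)^{-1}\|_{s\to s}\le C_4(B,p,r)$ uniform over the ball $\bU_{p,r}^{2\times2}$, after which~\eqref{estimate_for_diference_inverses} holds with $C_3=C_4^2\,C^{(1,\infty)}$.

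This uniform control of the inverse is the step I expect to be the main obstacle: the naive estimate $\|\cR_Q^k\|_{q\to q}\le\|R_Q\|_{X_{q,1}^2}^k$ only produces a possibly divergent geometric series, so the Volterra (triangular) structure must be exploited to gain a factorial. Concretely, for $q\in\{1,\infty\}$ I would prove by induction that
\[
 \|\cR_Q^k\|_{q\to q}\le\frac{\|R_Q\|_{X_{q,p}^2}^{\,k}}{(k!)^{1/p'}},\qquad k\ge0,\qquad \tfrac1p+\tfrac1{p'}=1,
\]
the inductive step being H\"older's inequality in the form $\int_0^x\bigl(\tfrac{s^{k}}{k!}\bigr)^{1/p'}|R_Q(x,s)|\,ds\le\|R_Q(x,\cdot)\|_{L^p[0,x]}\bigl(\tfrac{x^{k+1}}{(k+1)!}\bigr)^{1/p'}$ for $q=\infty$ (and its mirror image, integrating the first argument, for $q=1$). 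Since $(k!)^{1/p'}$ grows super-exponentially when $p>1$, the Neumann series converges uniformly on $\bU_{p,r}^{2\times 2}$: with the explicit bound $r_0=r_0(B,p,r)$ for $\|R_Q\|_{X_{q,p}^2}$ from part~(iii) one gets $\|(I+\cR_Q)^{-1}\|_{q\to q}\le\sum_{k\ge0}r_0^{\,k}/(k!)^{1/p'}=:C_4$, and Riesz--Thorin interpolation extends this to all $s\in[1,\infty]$. The borderline value $p=1$, where this series diverges, is not covered by the argument and would need separate treatment (e.g.\ by localizing $\cR_Q$ to a partition of $[0,1]$ or by a compactness argument), while for the spectral applications developed later only the range $p\in(1,2]$ is used.
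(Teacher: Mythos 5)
Your treatment of parts (i)--(iii) and your reduction of (iv) to a uniform bound on $\|(I+\cR_Q)^{-1}\|_{s\to s}$ over the ball (resolvent identity plus $\|\cR_Q-\cR_{\wt Q}\|_{s\to s}\le C^{(1,\infty)}\|Q-\wt Q\|_p$) follow the paper. For the uniform bound itself you take a genuinely different route: the induction $\|\cR_Q^k\|_{q\to q}\le \|R_Q\|_{X_{q,p}^2}^k/(k!)^{1/p'}$, $q\in\{1,\infty\}$, via H\"older is correct, and summing the Neumann series plus Riesz--Thorin does give the claim for every $p>1$ using only the norm bound of part (iii). This is more elementary than the paper's argument and even covers $1<p<2$, a range where the paper's ``abstract'' shortcut (Lemma~\ref{lem:I+S}, which needs $p\ge 2$) is unavailable.

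However, the proposition is stated for all $p\ge 1$, and the case $p=1$ you leave open is a genuine gap, not a loose end: no argument that uses only the bound $\|R_Q\|_{X_{\infty,1}^2}\le r_0$ from part (iii) can produce a bound uniform on $L^1$-balls once $r_0$ is large. For instance, smoothed kernels $N_{\eps}(x,t)=-M\,\chi_{[x-\eps,x]}(t)$ with $M\eps=r_0>1$ satisfy $\|N_{\eps}\|_{X_{\infty,1}}\le r_0$, yet $\|(I+\cN_{\eps})^{-1}\|_{\infty\to\infty}$ grows like $e^{c/\eps}$ (the associated delay equation $u'=M(u(x)-u(x-\eps))$ has a real characteristic exponent $\sim y^*/\eps$ when $r_0>1$). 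Your suggested fixes do not close this: partitioning $[0,1]$ does not reduce near-diagonal mass of the kernel, and $\bU_{1,r}^{2\times 2}$ is not compact in $\LL{1}$, so the compactness argument (which the paper invokes only for compact sets $\cK$, see the remark following the proof of Theorem~\ref{th:K-wtK<Q-wtQ}) is unavailable. The missing idea in the paper is to exploit the specific structure of $R_Q$: for $Q\in\CC{1}$ the intertwining relation $L_0(Q)(I+\cR_Q)=(I+\cR_Q)L_0(0)$ from~\cite{Mal99} shows that the kernel $S_Q$ of $(I+\cR_Q)^{-1}-I$ satisfies a system of integral equations of the same type as~\eqref{eq:Rkk=int.Qkj.Rjk}--\eqref{eq:Rjk=Qjk-int.Qjk.Rkk}; the Gr\"onwall analysis of Lemmas~\ref{lem:wh.estim.Xinf} and~\ref{lem:R.diff.X1} then applies to $S_Q$ directly and gives $\|S_Q\|_{\XX{1,p}}+\|S_Q\|_{\XX{\infty,p}}\le C\|Q\|_p$ uniformly on the ball for every $p\in[1,\infty)$ (the case of general $Q\in\LL{p}$ follows by approximation), whence via~\eqref{eq:|cN|1-1.inf-inf} and~\eqref{eq:|cN|s-s} the uniform bound on $(I+\cR_Q)^{-1}$ in all $L^s$; the proof then concludes exactly as in your resolvent-identity step.
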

\begin{proof}
(i), (ii). These statements are immediate by combining
Lemma~\ref{lem:wh.estim.Xinf} with Lemma~\ref{lem:R.diff.X1}.

(iii) This statement is immediate from
estimates~\eqref{eq:main_unif_est-s_for_R_11_and_R_21} and
definition~\eqref{18op}.

(iv) In accordance with (i) the kernel $R = (R_{jk})_{j,k=1}^2 \in
\XXZ{\infty,p} \cap \XXZ{1,p}$. Therefore, Lemma~\ref{lem:volterra_oper}(ii)
ensures that $\cR := \cR_Q$ is Volterra operator in each $\LLV{s}$ and
$(I + \cR)^{-1} = I + \cS$, where $(\cS f)(x) = \int_0^x S(x,t) f(t)\,dt$,
$f \in \LLV{1}$, and $\cS$ is also a Volterra operator, where
$S \in \XXZ{\infty,1} \cap \XXZ{1,1}$.

Let us first assume that $Q \in \CC{1}$. Now the kernel $R_Q$ is $C^1$-smooth,
$R_Q = (R_{jk})_{j,k=1}^2 \in C^1(\Omega; \bC^{2 \times 2})$, hence so is the
kernel $S = S_Q$, i.e. $(S_{jk})_{j,k=1}^2 \in C^1(\Omega; \bC^{2 \times 2})$.
Moreover, according to~\cite{Mal99} the operator $I + \cR$ intertwines the
operators $L_0(Q)$ and $L_0(0)$ generated by equation~\eqref{eq:systemIntro}
with potential matrices $Q$ and $0$, respectively, subject to the Cauchy
boundary condition $y(0)=0$, i.e. $L_0(Q) (I + \cR) = (I + \cR) L_0(0)$.
It follows that $(I + \cS) L_0(Q) = L_0(0) (I + \cS)$. Starting with this
equation and repeating the proof of~\cite[Theorem 1.2]{Mal99} one
rewrites it
as an equation on the matrix kernel $S = (S_{jk})_{j,k=1}^2)$ of the operator
$\cS$, similar to~\eqref{1op_JMAA}--\eqref{3op_JMAA}, which in turn leads
to the system of integral equations similar
to~\eqref{eq:Rkk=int.Qkj.Rjk}--\eqref{eq:Rjk=Qjk-int.Qjk.Rkk}.

Setting $J_{jk}(t) := \int_t^1 |S_{jk}(x,t)|^p dx$, $p \in [1, \infty)$,
and following the reasoning of Lemma~\ref{lem:wh.estim.Xinf} with $X_{1,p}$-norm
instead of $X_{\infty,p}$-norm one derives that for $Q \in
\bU_{p, r}^{2 \times 2} \cap \CC{1}$ the following uniform estimate holds
\begin{equation} \label{eq:S.X1}
 \|S\|_{\XX{1,1}} \le \|S\|_{\XX{1,p}} \le C_1 \|Q\|_p.
\end{equation}
Performing similar computations as in Lemma~\ref{lem:R.diff.X1} and using
estimate~\eqref{eq:S.X1} we arrive at the similar estimate
\begin{equation} \label{eq:S.Xinf}
 \|S\|_{\XX{\infty,1}} \le \|S\|_{\XX{\infty,p}} \le C_{\infty} \|Q\|_p.
\end{equation}
As usual, both constants $C_1 = C_1(B,p,r)$ and $C_{\infty} = C_{\infty}(B,p,r)$
do not depend on $Q$.

Going over to the case $Q = \codiag(Q_{12}, Q_{21}) \in \LL{p}$, choose
a sequence $Q_n = \codiag(Q_{12,n}, Q_{21,n}) \in \CC{1}$ approaching $Q$ in
$\LLV{p}$-norm. So, we have estimates~\eqref{eq:S.X1}--\eqref{eq:S.Xinf} with
$Q_n$ instead of $Q$. Passing in these
inequalities to the limit as $n \to \infty$ we arrive at the required
estimates~\eqref{eq:S.X1}--\eqref{eq:S.Xinf} with $Q \in \LL{p}$. Using
relations~\eqref{eq:|cN|1-1.inf-inf} one rewrites these estimates in the
operator form
\begin{align}
\label{eq:1+R-1+wtR.1}
 \|(I + \cR)^{-1}\|_{1 \to 1} & = \|I + \cS\|_{1 \to 1} \le 1 + C_1 \|Q\|_p, \\
\label{eq:1+R-1+wtR.inf}
 \|(I + \cR)^{-1}\|_{\infty \to \infty} &= \|I + \cS\|_{\infty \to \infty}
 \le 1 + C_{\infty} \|Q\|_p.
\end{align}
Combining~\eqref{eq:1+R-1+wtR.1}--\eqref{eq:1+R-1+wtR.inf}
and~\eqref{eq:wh.R.X1infp<wh.Q} with estimate~\eqref{eq:|cN|s-s} from
Lemma~\ref{lem:volterra_oper} one has
\begin{align}
\label{eq:norm.I+R}
 \qquad \qquad \qquad \|(I + \cR)^{-1}\|_{s \to s} & = \|I + \cS\|_{s \to s}
 \le 1 + C_1^{1/s} C_{\infty}^{1-1/s} \|Q\|_p \le C_2,
 \qquad & s \in [1, \infty), \qquad \\
\label{eq:norm.R-wtR}
 \qquad \qquad \qquad \|\cR_Q - \cR_{\wt{Q}}\|_{s \to s}
 &\le C_1^{1/s} C_{\infty}^{1-1/s} \|Q - \wt{Q}\|_p,
 \qquad & s \in [1, \infty), \qquad
\end{align}
where $C_2 := 1 + r \max\{C_1, C_{\infty}\}$.
Combining~\eqref{eq:norm.I+R}--\eqref{eq:norm.R-wtR} we arrive at
\begin{multline} \label{eq:norm.IQ-QwtQ}
 \norm{(I + \cR_Q)^{-1} - (I + \cR_{\wt{Q}})^{-1}}_{s \to s} =
 \norm{(I + \cR_Q)^{-1} (\cR_{\wt{Q}} - \cR_Q)
 (I + R_{\wt{Q}})^{-1}}_{s \to s} \\
 \le \norm{(I + \cR_Q)^{-1}}_{s \to s} \norm{\cR_{\wt{Q}} - \cR_Q}_{s \to s}
 \norm{(I + R_{\wt{Q}})^{-1}}_{s \to s}
 \le C_2^2 C_1^{1/s} C_{\infty}^{1-1/s} \|Q - \wt{Q}\|_p.
\end{multline}
Setting $C_3 := C_2^2 \max\{C_1, C_{\infty}\}$ we arrive
at~\eqref{estimate_for_diference_inverses}.
\end{proof}
\begin{proof}[Proof of Theorem~\ref{th:K-wtK<Q-wtQ}]
Let $Q = \codiag(Q_{12}, Q_{21}), \wt{Q} = \codiag(\wt{Q}_{12}, \wt{Q}_{21})$
and let $Q, \wt{Q} \in \bU_{p, r}^{2 \times 2}$ for some $p \in [1, \infty)$ and
$r > 0$. In accordance with Theorem~\ref{th:Trans}, for any $Q =
\codiag(Q_{12}, Q_{21}) \in \LL{1}$
representation~\eqref{eq:e=(I+K)e0}--\eqref{eq:e=e0} holds with the matrix
kernel $K^{\pm} \in \XXZ{1,1} \cap \XXZ{\infty,1}$. To evaluate the difference
$K^{\pm} - \wt{K}^{\pm}$ in respective norms we start with
representations~\eqref{2.51op} and~\eqref{eq:connec-n_of_wtK_and_wtR} for
$K^{\pm}$ and $\wt{K}^{\pm}$, respectively.

First, one should find $P^{\pm}= \diag(P^{\pm}_1, P^{\pm}_2) \in \LL{p}$ so that
$K^{\pm}$ satisfies condition~\eqref{3op_JMAA}, i.e.
\begin{equation}
 a_1 K^{\pm}_{j1}(x,0) \pm a_2 K^{\pm}_{j2}(x,0) = 0, \quad j \in \{1,2\}.
\end{equation}
Inserting representation~\eqref{2.51op} for $K^{\pm}$ in these relations leads
to the following system of Volterra type integral equations
\begin{equation} \label{2.52op}
\begin{cases}
a_1 P_1^{\pm}(x) + \int^x_0[a_1 R_{11}(x,t)P^{\pm}_1(t) \pm a_2 R_{12}(x,t)
P^{\pm}_2(t)]dt=\mp a_2 R_{12}(x,0) =: g_1^{\pm}(x), \\
\pm a_2 P^{\pm}_2(x) + \int^x_0[a_1 R_{21}(x,t)P^{\pm}_1(t) \pm a_2
R_{22}(x,t)P^{\pm}_2(t)]dt = - a_1 R_{21}(x,0) =: g_2^{\pm}(x),
\end{cases} \qquad g^{\pm} := \binom{g_{1}^{\pm}}{g_{2}^{\pm}}.
\end{equation}

By Lemma~\ref{lem:R.diff.X1}, $R = R_Q \in \XXZ{1,p}$. Therefore,
Lemma~\ref{lem:trace.X} applies and ensures that the trace mapping $i_{0,1}:
X_{1,p}^0(\Omega) \to L^p[0,1]$, $i_{0,1}\bigl(N(x,t)\bigr) := N(x,0)$, is a
contraction, hence the functions $R_{jk}(x,0)$, $j,k\in \{1,2\}$, are well
defined and $g_j^{\pm}(\cdot) \in L^p[0,1]$, $j\in \{1,2\}.$

On the other hand, by Proposition~\ref{prop2.5}(iv), the operator $\cR$ of the
form~\eqref{2.39op} is a Volterra operator in $\LL{p}$. Therefore,
system~\eqref{2.52op}, being a second kind system of Volterra equations in
$\LLV{p}$ with respect to $\col\{a_1 P_1^{\pm}(\cdot),
\pm a_2 P_2^{\pm}(\cdot)\}$, has the unique solution in $\LLV{p}$.

Similar conclusion is valid with respect to $\wt{R}= R_{\wt{Q}}$ and
$\wt{g}^{\pm}(x) := \col\(\wt{g}_{1}^{\pm}, \wt{g}_{2}^{\pm}\) :=
\col\( \mp a_2 \wt{R}_{12}(x,0), -a_1 \wt{R}_{21}(x,0)\)$.
Taking the difference of~\eqref{2.51op}
and~\eqref{eq:connec-n_of_wtK_and_wtR}, then taking $p$-th power with account
of estimate~\eqref{eq:holder.ineq}, and integrating the obtained inequality
with respect to $t \in [0,x]$ one gets
\begin{multline} \label{eq:estimate_for_dif-ce_wtK-K}
 4^{-p+1}\int^x_0 |K^{\pm}(x,t) - \wt{K}^{\pm}(x,t)|^p\,dt
 \le \int^x_0 |R(x,t) - \wt{R}(x,t)|^p \,dt
 + \int^x_0 |P^{\pm}(t) - \wt{P}^{\pm}(t)|^p\, dt \\
 + \int^x_0 |R(x,s) - \wt{R}(x,s)|^p\,ds \int^s_0 |\wt{P}^{\pm}(t)|^p\,dt
 + \int^x_0 |R(x,s)|^p\,ds \int^s_0 |P^{\pm}(s-t) - \wt{P}^{\pm}(s-t)|^p\,dt \\
 \le \|R - \wt{R}\|^p_{\XX{\infty,p}} \(1 + \|\wt{P}^{\pm}\|_p^p\)
 + \|P^{\pm} - \wt{P}^{\pm}\|_p^p \(1 + \|R\|^p_{\XX{\infty,p}}\).
\end{multline}
Combining Lemma~\ref{lem:R.diff.X1} (see~\eqref{eq:wh.R.X1p<wh.Q}) with
Lemma~\ref{lem:trace.X} yields the following estimate
\begin{align}
\nonumber
 \|g^{\pm} - \wt{g}^{\pm}\|_{L^p([0,1]; \bC^2)}
 & \le \|g_{1}^{\pm} - \wt{g}^{\pm}_1\|_{L^p[0,1]}
 + \|g_{2}^{\pm} - \wt{g}^{\pm}_2\|_{L^p[0,1]} \\
\nonumber
 & = |a_2| \cdot \|R_{12}(x,0) - \wt{R}_{12}(x,0)\|_{L^p[0,1]}
 + |a_1| \cdot \|R_{21}(x,0) - \wt{R}_{21}(x,0)\|_{L^p[0,1]} \\
\label{eq:L-p-estim_differ_(wt_g-g)}
 & \le |a_2|\cdot \|R_{12} - \wt{R}_{12}\|_{X_{1,p}(\Omega)}
 + |a_1| \cdot \|R_{21} - \wt{R}_{21}\|_{X_{1,p}(\Omega)}
 \le C_1 \|Q - \wt{Q}\|_p.
\end{align}
where $C_1$
does not depend on $Q, \wt{Q} \in \bU_{p,r}^{2 \times 2}$.

Further, it follows from the system~\eqref{2.52op} and similar system for
a vector $\wt{P}$ that
\begin{equation} \label{eq:for-la_for_P^{pm}}
 \binom{a_1 P_{1}^{\pm}(\cdot)}{\pm a_2 P_{2}^{\pm}(\cdot)}
 = (I + \cR)^{-1}\binom{g_{1}^{\pm}}{g_{2}^{\pm}}
 \qquad \text{and} \qquad
 \binom{a_1 \wt{P}_{1}^{\pm}(\cdot)}{\pm a_2 \wt{P}_{2}^{\pm}(\cdot)}
 = (I + \wt{\cR})^{-1}\binom{\wt{g}_{1}^{\pm}}{\wt{g}_{2}^{\pm}}.
\end{equation}
It follows from~\eqref{eq:L-p-estim_differ_(wt_g-g)} with $\wt{Q}=0$ that
\begin{equation} \label{eq:estim_for_g_j}
 \|g^{\pm}\|_{\LLV{p}} \le C_1 \|Q\|_p \le r C_1, \qquad
 Q \in \bU_{p, r}^{2 \times 2}.
\end{equation}
Combining~\eqref{eq:for-la_for_P^{pm}} with
estimates~\eqref{eq:L-p-estim_differ_(wt_g-g)},~\eqref{eq:estim_for_g_j} and
estimates~\eqref{estimate_for_diference_inverses},~\eqref{eq:norm.I+R} from
Proposition~\ref{prop2.5}(iv), implies
\begin{multline} \label{eq:estim_wtP-P<(wtQ-Q)}
 |a_k| \cdot \norm{P_k^{\pm} - \wt{P}_k^{\pm}}_{p}
 \le \norm{(I + \cR)^{-1} g^{\pm} - (I + \wt{\cR})^{-1} \wt{g}^{\pm}}_{p}
 \le \norm{ \((I + \cR)^{-1} - (I + \wt{\cR})^{-1}\)g^{\pm}}_{p}
 + \norm{(I + \wt{\cR})^{-1} (g^{\pm} - \wt{g}^{\pm})}_{p} \\
 \le \norm{(I + \cR)^{-1} - (I + \wt{\cR})^{-1}}_{p \to p}
 \norm{g^{\pm}}_{p}
 + \norm{(I + \wt{\cR})^{-1}}_{p \to p} \norm{g^{\pm} - \wt{g}^{\pm}}_{p}
 \le C_4 \|Q - \wt{Q}\|_p, \qquad k \in \{1,2\},
\end{multline}
where $C_4 = C_1 \cdot (C_2 + r C_3)$ and does not depend on $Q, \wt{Q} \in
\bU_{p, r}^{2 \times 2}$. Given that $b_k = a_k^{-1}$, this in turn yields
estimates
\begin{equation} \label{eq:norm.whP.P}
 \norm{P^{\pm} - \wt{P}^{\pm}}_{p} \le C_5 \|Q - \wt{Q}\,\|_p, \qquad
 \|P^{\pm}\|_{L^p} \le C_5 r, \qquad Q, \wt{Q} \in \bU_{p, r}^{2 \times 2},
\end{equation}
where $C_5 = (|b_1|^p + |b_2|^p)^{1/p} C_4$.
Inserting estimates~\eqref{eq:norm.whP.P} and~\eqref{eq:wh.R.X1infp<wh.Q}
into~\eqref{eq:estimate_for_dif-ce_wtK-K}, we arrive at the estimate
\begin{equation} \label{eq:K-wtK<Q-wtQ_New_1}
 \|K^{\pm} - \wt{K}^{\pm}\|_{\XX{\infty,p}} \le C \cdot \|Q - \wt{Q}\|_p
\end{equation}
being a part of the required estimate~\eqref{eq:K-wtK<Q-wtQ}. The second
inequality in~\eqref{eq:K-wtK<Q-wtQ} is proved similarly.
\end{proof}
\begin{remark}
(i) For Dirac $2 \times 2$ system $(B = \diag(-1,1))$ with continuous $Q$ the
triangular transformation operators have been constructed
in~\cite[Ch.10.3]{LevSar88} and~\cite[Ch.1.2]{Mar77}. For $Q \in (L^1[0,1];
\bC^{2 \times 2})$ it is proved in~\cite{AlbHryMyk05} by an appropriate
generalization of Marchenko's method.

(ii) Let $J: f\to \int_0^xf(t)dt$ denote the Volterra integration operator on
$L^p[0,1]$. Note that the similarity of integral Volterra operators given
by~\eqref{eq:cN.def} to the simplest Volterra operator of the form $B \otimes J$
acting in the spaces $L^p([0,1]; \bC^2)$ has been investigated
in~\cite{Mal99, Rom08}. The technique of investigation of integral equations for
the kernels of transformation operators in the spaces $X_{\infty,1}(\Omega)$ and
$X_{1,1}(\Omega)$ goes back to the paper~\cite{Mal94}.

(iii) The proof of Proposition~\ref{prop2.5}(iv) can be significantly
simplified in multiple cases without the use of the intricate intertwining
property. Namely, if $p \ge 2$, then
inequalities~\eqref{eq:S.X1}--\eqref{eq:S.Xinf} are immediate by combining
Lemma~\ref{lem:I+S}(i) with Proposition~\ref{prop2.5}(ii). Similarly, if radius
$r$ is sufficiently small, then Lemma~\ref{lem:inverse_oper_for_small_N} implies
these estimates.

Finally, let us consider a compact set $\cK$ instead of the ball
$\bU_{p,r}^{2 \times 2}$. Now, estimate~\eqref{eq:wh.R.X1infp<wh.Q} and
Lemma~\ref{lem:volterra_oper}(ii) ensure the compactness of the set of operators
$\{I + \cR_Q\}_{Q \in \cK}$ in each $\cB(L^s) := \cB(L^s, L^s)$. It remains to
note that the set $\{(I + \cR_Q)^{-1}\}_{Q \in \cK}$ is compact in $\cB(L^s)$,
as an image of a compact set under the continuous mapping $T \to T^{-1}$,
defined on the open set of invertible elements of the Banach algebra $\cB(L^s)$.
The proof is finished in the same way using~\eqref{eq:norm.IQ-QwtQ}.
\end{remark}

\section{General properties of a $2 \times 2$ Dirac-type BVP} \label{sec:general}
Here we consider $2 \times 2$ Dirac-type equation~\eqref{eq:systemIntro_1}
\begin{equation} \label{eq:system}
 -i B^{-1} y'+Q(x)y=\l y, \qquad y={\rm col}(y_1,y_2), \qquad x \in [0,1],
\end{equation}
subject to the following general boundary conditions
\begin{equation} \label{eq:BC}
 U_j(y) := a_{j 1}y_1(0) + a_{j 2}y_2(0) + a_{j 3}y_1(1) + a_{j 4}y_2(1)= 0,
 \quad j \in \{1,2\}.
\end{equation}
Let us also set
\begin{equation} \label{eq:A.Ajk.Jjk}
 A := \begin{pmatrix} a_{11} & a_{12} & a_{13} & a_{14} \\
 a_{21} & a_{22} & a_{23} & a_{24} \end{pmatrix}, \qquad
 A_{jk} = \begin{pmatrix} a_{1j} & a_{1k} \\ a_{2j} & a_{2k} \end{pmatrix},
 \quad J_{jk} = \det (A_{jk}), \quad j,k\in\{1,\ldots,4\}.
\end{equation}
With the system~\eqref{eq:system} one associates, in a natural way, the maximal
operator $L_{\max} = L_{\max}(Q)$ defined in $L^2([0,1]; \bC^n)$ by the differential
expression~\eqref{eq:system} on the domain
\begin{equation} \label{Max_oper_Intro}
\dom(L_{\max}) = \{y \in W^1_1([0,1]; \bC^n) : L_{\max}y \in L^2([0,1]; \bC^n)\}.
\end{equation}
Next we denote by $L := L(Q, U_1, U_2)$ the operator associated in $\LLV{2}$
with the BVP~\eqref{eq:system}--\eqref{eq:BC}. It is defined as the restriction
of the maximal operator $L_{\max} = L_{\max}(Q)$~\eqref{Max_oper_Intro} to the
domain
\begin{equation} \label{eq:dom}
 \dom(L) = \dom(L(Q, U_1, U_2)) = \{y \in \dom(L_{\max}) : U_1(y) = U_2(y) = 0\}.
\end{equation}
Let
\begin{equation} \label{eq:Phi.def}
 \Phi(\cdot, \l) =
 \begin{pmatrix} \varphi_{11}(\cdot, \l) & \varphi_{12}(\cdot, \l)\\
 \varphi_{21}(\cdot,\l) & \varphi_{22}(\cdot,\l)
 \end{pmatrix} =: \begin{pmatrix} \Phi_1(\cdot, \l) & \Phi_2(\cdot, \l)
 \end{pmatrix}, \qquad \Phi(0, \l) = I_2 :=
 \begin{pmatrix} 1 & 0 \\ 0 & 1 \end{pmatrix},
\end{equation}
be a fundamental matrix solution of the
system~\eqref{eq:system}. Here $\Phi_k(\cdot, \l)$ is the
$k$th column of $\Phi(\cdot, \l)$.

The eigenvalues of the problem~\eqref{eq:system}--\eqref{eq:BC} counting multiplicity
are the zeros (counting multiplicity) of the characteristic determinant
\begin{equation} \label{eq:Delta.def}
 \Delta_Q(\l) := \det
 \begin{pmatrix}
 U_1(\Phi_1(\cdot,\l)) & U_1(\Phi_2(\cdot,\l)) \\
 U_2(\Phi_1(\cdot,\l)) & U_2(\Phi_2(\cdot,\l))
 \end{pmatrix}.
\end{equation}
Inserting~\eqref{eq:Phi.def} and~\eqref{eq:BC} into~\eqref{eq:Delta.def}, setting
$\varphi_{jk}(\l) := \varphi_{jk}(1,\l)$,
 and taking notations~\eqref{eq:A.Ajk.Jjk} into account we arrive at the following expression for
the characteristic determinant
\begin{equation} \label{eq:Delta}
 \Delta_Q(\l) = J_{12} + J_{34}e^{i(b_1+b_2)\l}
 + J_{32}\varphi_{11}(\l) + J_{13}\varphi_{12}(\l)
 + J_{42}\varphi_{21}(\l) + J_{14}\varphi_{22}(\l).
\end{equation}
Alongside the problem~\eqref{eq:system}--\eqref{eq:BC} we consider the same problem with
$\wt{Q}$ in place of $Q$. Denote the corresponding fundamental matrix solution, its
entries, and the corresponding characteristic determinant as $\wt{\Phi}(\cdot, \l)$,
$\wt{\varphi}_{jk}(\cdot, \l)$, $j, k \in \{1, 2\}$, and $\wt{\Delta}(\l)$,
respectively.
If $Q=0$ we denote a fundamental matrix solution as $\Phi^0(\cdot, \l)$. Clearly
\begin{equation} \label{eq:Phi0.def}
 \Phi^0(x, \l)
 = \begin{pmatrix} e^{i b_1 x \l} & 0 \\ 0 & e^{i b_2 x \l} \end{pmatrix}
 =: \begin{pmatrix}
 \varphi_{11}^0(x, \l) & \varphi_{12}^0(x, \l)\\
 \varphi_{21}^0(x,\l) & \varphi_{22}^0(x,\l)
 \end{pmatrix}
 =: \begin{pmatrix} \Phi_1^0(x, \l) & \Phi_2^0(x, \l) \end{pmatrix},
 \qquad x \in [0,1], \quad \l \in \bC.
\end{equation}
Here $\Phi_k^0(\cdot, \l)$ is the $k$th column of $\Phi^0(\cdot, \l)$. In
particular, the characteristic determinant $\Delta_0(\cdot)$ becomes
\begin{equation} \label{eq:Delta0}
 \Delta_0(\l) = J_{12} + J_{34}e^{i(b_1+b_2)\l}
 + J_{32}e^{ib_1\l} + J_{14}e^{ib_2\l}.
\end{equation}
In the case of Dirac system $(B =\diag (-1,1))$ this formula is
simplified to
\begin{equation} \label{eq:Delta0_Dirac}
 \Delta_0(\l) = J_{12} + J_{34} + J_{32}e^{-i\l} + J_{14}e^{i\l}.
\end{equation}
\subsection{Representation of the characteristic determinant}
\label{subsec:det}
Our investigation of the perturbation determinant relies on the following result
clarifying our Proposition 3.1 from~\cite{LunMal16JMAA} and coinciding with it
for $Q \in \LL{1}$.
\begin{lemma} \label{lem:phi.jk=e+int}
Let $Q \in \LL{p}$ for some $p \in [1,\infty)$. Then
the functions $\varphi_{jk}(\cdot, \l)$, $j,k \in \{1,2\}$, admit the following
representations
\begin{equation} \label{eq:phi.jkx}
 \varphi_{jk}(x,\l) = \delta_{jk} e^{i b_k \l x}
 + \int_0^x K_{j1,k}(x,t) e^{i b_1 \l t}dt
 + \int_0^x K_{j2,k}(x,t) e^{i b_2 \l t}dt,
 \qquad x \in [0,1], \quad \l \in \bC,
\end{equation}
where
\begin{equation} \label{eq:Kjlk}
 K_{jl,k} := 2^{-1} \(K_{jl}^+ + (-1)^{l+k} K_{jl}^-\)
 \in X_{1,p}^0(\Omega) \cap X_{\infty,p}^0(\Omega), \qquad j,k,l \in \{1,2\}.
\end{equation}
\end{lemma}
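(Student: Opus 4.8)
The plan is to derive formula~\eqref{eq:phi.jkx} directly from the transformation operator representation~\eqref{eq:e=(I+K)e0} for the solutions $e_{\pm}(\cdot,\l)$, by expressing the columns of the fundamental matrix $\Phi(\cdot,\l)$ as linear combinations of $e_+(\cdot,\l)$ and $e_-(\cdot,\l)$. Recall that $e_{\pm}(0,\l)=\binom{1}{\pm1}$, so that $\Phi_1(\cdot,\l)=\tfrac12\bigl(e_+(\cdot,\l)+e_-(\cdot,\l)\bigr)$ and $\Phi_2(\cdot,\l)=\tfrac12\bigl(e_+(\cdot,\l)-e_-(\cdot,\l)\bigr)$, since the right-hand sides solve~\eqref{eq:systemIntro} and have the correct initial values $\binom{1}{0}$ and $\binom{0}{1}$ respectively. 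More uniformly, $\Phi_k(\cdot,\l)=\tfrac12\sum_{\sigma=\pm}\sigma^{\,k-1}e_\sigma(\cdot,\l)$ with the convention $(+)^{k-1}=1$ and $(-)^{k-1}=(-1)^{k-1}$.

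Next I would substitute~\eqref{eq:e=(I+K)e0}--\eqref{eq:e=e0} into this linear combination. Writing out the $j$th component, $e_{\sigma,j}(x,\l)=\sigma^{\,j-1}e^{ib_j\l x}+\sum_{l=1}^{2}\int_0^x K^{\sigma}_{jl}(x,t)\,\sigma^{\,l-1}e^{ib_l\l x}\,dt$, and collecting the two values $\sigma=\pm$, the free term contributes $\tfrac12\sum_\sigma \sigma^{\,k-1}\sigma^{\,j-1}e^{ib_j\l x}=\delta_{jk}e^{ib_j\l x}$ precisely because $\tfrac12\bigl(1+(-1)^{j-1}(-1)^{k-1}\bigr)=\tfrac12\bigl(1+(-1)^{j+k}\bigr)$ equals $1$ when $j+k$ is even (i.e. $j=k$) and $0$ when $j+k$ is odd; but one must be slightly careful: this Kronecker delta identity works only because $j,k\in\{1,2\}$, where $j\ne k$ forces $j+k$ odd and $j=k$ forces $j+k$ even. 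The kernel terms contribute $\tfrac12\sum_\sigma\sigma^{\,k-1}\sum_l\sigma^{\,l-1}\int_0^x K^{\sigma}_{jl}(x,t)e^{ib_l\l t}\,dt=\sum_l\Bigl(\tfrac12\bigl(K^+_{jl}+(-1)^{l+k}K^-_{jl}\bigr)\Bigr)\int_0^x(\cdot)\,dt$, which is exactly~\eqref{eq:Kjlk} inserted into~\eqref{eq:phi.jkx}.

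The only remaining point is the membership~\eqref{eq:Kjlk}, namely $K_{jl,k}\in X_{1,p}^0(\Omega)\cap X_{\infty,p}^0(\Omega)$. This is immediate from Theorem~\ref{th:K-wtK<Q-wtQ} (with $\wt{Q}=0$, or rather its assertion~\eqref{eq:K.wtK.in.X0} that $K^{\pm}_Q\in X_{1,p}^0\cap X_{\infty,p}^0$ for $Q\in\bU_{p,r}^{2\times2}$, applied with $r=\|Q\|_p$) together with the fact that $X_{1,p}^0$ and $X_{\infty,p}^0$ are linear spaces, so any fixed linear combination of their elements — here $\tfrac12\bigl(K^+_{jl}\pm K^-_{jl}\bigr)$ — again lies in them. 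I would also remark that Proposition~\ref{prop:K.props} and the original Theorem 2.5/Proposition 3.1 of~\cite{LunMal16JMAA} already give the $p=1$ version; the present lemma simply upgrades the function-space membership to $X_{\cdot,p}^0$ using the sharper Theorem~\ref{th:K-wtK<Q-wtQ}.

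I do not expect any genuine obstacle here: the statement is essentially a bookkeeping consequence of already-established facts. The one place to be attentive is the sign/index algebra — keeping track of the factors $\sigma^{k-1}$ and verifying the identity $\tfrac12\bigl(1+(-1)^{j+k}\bigr)=\delta_{jk}$ on the index set $\{1,2\}$ — and making sure the passage from $X^0_{\cdot,1}$ (as in~\cite{LunMal16JMAA}) to $X^0_{\cdot,p}$ is justified by quoting Theorem~\ref{th:K-wtK<Q-wtQ} rather than the older $L^1$-result. Everything else is a direct substitution.
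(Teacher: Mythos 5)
Your proposal is correct and takes essentially the same route as the paper: one writes $\Phi_1(\cdot,\l)=\tfrac12\bigl(e_+(\cdot,\l)+e_-(\cdot,\l)\bigr)$, $\Phi_2(\cdot,\l)=\tfrac12\bigl(e_+(\cdot,\l)-e_-(\cdot,\l)\bigr)$ by comparing initial values (Cauchy uniqueness), inserts~\eqref{eq:e=(I+K)e0}, and reads off~\eqref{eq:phi.jkx}--\eqref{eq:Kjlk}, with the membership $K_{jl,k}\in X_{1,p}^0(\Omega)\cap X_{\infty,p}^0(\Omega)$ following from Theorem~\ref{th:K-wtK<Q-wtQ} and linearity, just as the paper intends. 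The only blemish is a harmless typo ($e^{ib_l\l x}$ instead of $e^{ib_l\l t}$ inside one displayed integrand), corrected later in your own text.
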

\begin{proof}
Comparing initial conditions and applying the Cauchy uniqueness theorem one
easily gets $2 \Phi_1(\cdot,\l) = 2 \begin{pmatrix} \varphi_{11}(\cdot, \l) \\
\varphi_{21}(\cdot,\l) \end{pmatrix} = e_{+}(\cdot, \l) + e_{-}(\cdot, \l)$.
Inserting in place of $e_{+}(\cdot, \l)$ and $e_{-}(\cdot, \l)$ their
expressions from~\eqref{eq:e=(I+K)e0} one arrives
at~\eqref{eq:phi.jkx}--\eqref{eq:Kjlk} for $k=1$. Case $k=2$ is treated
similarly.
\end{proof}
\begin{remark}
Let $Q_{12} = 0$ and $Q_{21} \in L^1[0,1]$. Straightforward calculations show
that in this case
\begin{equation} \label{eq:Phi.Q12=0}
 \Phi(x,\l) = \begin{pmatrix}
 e^{i b_1 \l x} & 0 \\
 -i b_2 e^{i b_2 \l x} \int_0^x Q_{21}(t) e^{i (b_1 - b_2) \l t} dt
 & e^{i b_2 \l x}
 \end{pmatrix}.
\end{equation}

Let us demonstrate formulas~\eqref{eq:phi.jkx} in this model example. Set for brevity
$q(t) := -i b_2 Q_{21}(t)$ and recall that $\a_1 = \frac{b_2}{b_2-b_1}$ and $\a_2 =
\frac{-b_1}{b_2-b_1}$. It is easy to verify
using~\eqref{eq:Rkk=int.Qkj.Rjk}--\eqref{eq:Rjk=Qjk-int.Qjk.Rkk},~\eqref{2.52op}
and~\eqref{2.51op} that
\begin{align}
\label{eq:R.P.Q12=0}
 R(x,t) &= \begin{pmatrix} 0 & 0 \\ \a_2 q(\a_1 x + \a_2 t) & 0
 \end{pmatrix}, \qquad
 P^{\pm}(x) = \begin{pmatrix} 0 & 0 \\ 0 & \pm \a_1 q(\a_1 x)\end{pmatrix}, \\
\label{eq:K.Q12=0}
 K^{\pm}(x,t) &= \begin{pmatrix} 0 & 0 \\ \a_2 q(\a_1 x + \a_2 t) &
 \pm \a_1 q(\a_1 x - \a_1 t) \end{pmatrix}.
\end{align}
Relation~\eqref{eq:Kjlk} now implies that
\begin{equation} \label{eq:Kjlk.Q12=0}
 K_{21,1}(x,t) = \a_2 q(\a_1 x + \a_2 t), \quad
 K_{22,1}(x,t) = \a_1 q(\a_1 x - \a_1 t),
\end{equation}
while $K_{jl,k} = 0$ for other triples $(j,k,l) \in \{1,2\}^3$.
Hence~\eqref{eq:phi.jkx} implies that $\varphi_{jk}(x,\l) = \delta_{jk}
e^{i b_k \l x}$ for all pairs $(j,k) \in \{1,2\}^2$ but $(2,1)$ and that
\begin{equation} \label{eq:phi21x}
 \varphi_{21}(x,\l) =
 \int_0^x \a_2 q(\a_1 x + \a_2 t) e^{i b_1 \l t}dt
 + \int_0^x \a_1 q(\a_1 x - \a_1 t) e^{i b_2 \l t}dt,
 \qquad x \in [0,1], \quad \l \in \bC.
\end{equation}
Making changes of variables $u = \a_1 x + \a_2 t$, $v = \a_1 x - \a_1 t$ in two
integrals in~\eqref{eq:phi21x} we arrive at~\eqref{eq:Phi.Q12=0}. Namely, first
integral turns into integral $e^{i b_2 \l x} \int_{\a_1 x}^x q(t)
e^{i (b_1 - b_2) \l t} dt$, while the second one turns into similar integral
with integration over interval $[0, \a_1 x]$.
\end{remark}
\begin{lemma} \label{lem:phi-wt.phi}
Let $Q, \wt{Q} \in \bU_{p, r}^{2 \times 2}$ for some $p \in [1, \infty)$ and
$r > 0$. Then the following representation takes place
\begin{align} \label{eq:phi-wt.phi.jkx}
 \varphi_{jk}(x,\l) - \wt{\varphi}_{jk}(x,\l) =
 \int_0^x \wh{K}_{j1,k}(x,t) e^{i b_1 \l t}dt
 + \int_0^x \wh{K}_{j2,k}(x,t) e^{i b_2 \l t}dt,
 \qquad x \in [0,1], \quad \l \in \bC,
\end{align}
where
\begin{equation} \label{eq:whK.def}
 \wh{K}_{jl,k} := K_{jl,k} - \wt{K}_{jl,k} \in X_{1,p}^0(\Omega) \cap
 X_{\infty,p}^0(\Omega), \quad j,k,l \in \{1,2\},
\end{equation}
and for some $C = C(p, r, B)$ the following \emph{uniform estimate} holds
\begin{equation} \label{eq:whK<C.whQ}
 \|\wh{K}_{jl,k}\|_{X_{\infty, p}(\Omega)} + \|\wh{K}_{jl,k}\|_{X_{1,p}(\Omega)}
 \le C \cdot \|Q-\wt{Q}\|_p, \quad j,k,l \in \{1,2\}.
\end{equation}
\end{lemma}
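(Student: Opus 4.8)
The plan is to derive everything from the already-established results on the kernels $K^{\pm}_Q$, treating Lemma~\ref{lem:phi-wt.phi} as essentially a corollary of Theorem~\ref{th:K-wtK<Q-wtQ} transported through the linear algebra of Lemma~\ref{lem:phi.jk=e+int}. First I would recall from Lemma~\ref{lem:phi.jk=e+int} the defining identity $K_{jl,k} = 2^{-1}\bigl(K^+_{jl} + (-1)^{l+k} K^-_{jl}\bigr)$, and the analogous identity for the tilded quantities, so that $\wh{K}_{jl,k} = K_{jl,k} - \wt{K}_{jl,k} = 2^{-1}\bigl(\wh{K}^+_{jl} + (-1)^{l+k}\wh{K}^-_{jl}\bigr)$, where $\wh{K}^{\pm} := K^{\pm}_Q - K^{\pm}_{\wt{Q}}$ as in the notation~\eqref{eq:wh.Q.K.R.P.def}. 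The membership $\wh{K}_{jl,k} \in X^0_{1,p}(\Omega) \cap X^0_{\infty,p}(\Omega)$ is then immediate from the membership $K^{\pm}_Q, K^{\pm}_{\wt{Q}} \in X^0_{1,p}(\Omega) \cap X^0_{\infty,p}(\Omega)$ stated in~\eqref{eq:K.wtK.in.X0} of Theorem~\ref{th:K-wtK<Q-wtQ}, since these are linear spaces.

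Next, for the representation~\eqref{eq:phi-wt.phi.jkx}, I would simply subtract the representation~\eqref{eq:phi.jkx} for $\varphi_{jk}(x,\l)$ from that for $\wt{\varphi}_{jk}(x,\l)$; the exponential factors $e^{ib_1\l t}$ and $e^{ib_2\l t}$ are the same in both, so the difference of the two representations is exactly~\eqref{eq:phi-wt.phi.jkx} with the kernels $\wh{K}_{j1,k}$ and $\wh{K}_{j2,k}$ in place of $K_{j1,k}$ and $K_{j2,k}$. (Strictly, I should note that Lemma~\ref{lem:phi.jk=e+int} applies to each $Q$ and $\wt{Q}$ separately since both lie in $\LL{p}$.) For the uniform estimate~\eqref{eq:whK<C.whQ}, I would use the triangle inequality in each of the two norms $\|\cdot\|_{X_{\infty,p}(\Omega)}$ and $\|\cdot\|_{X_{1,p}(\Omega)}$ together with the identity $\wh{K}_{jl,k} = 2^{-1}(\wh{K}^+_{jl} + (-1)^{l+k}\wh{K}^-_{jl})$, giving
\[
 \|\wh{K}_{jl,k}\|_{X_{\infty,p}(\Omega)} + \|\wh{K}_{jl,k}\|_{X_{1,p}(\Omega)}
 \le \tfrac12 \sum_{\pm}\bigl(\|\wh{K}^{\pm}\|_{X_{\infty,p}(\Omega)}
 + \|\wh{K}^{\pm}\|_{X_{1,p}(\Omega)}\bigr),
\]
and then applying estimate~\eqref{eq:K-wtK<Q-wtQ} from Theorem~\ref{th:K-wtK<Q-wtQ} to each of the two terms on the right, with the constant $C = C(B,p,r)$ furnished there. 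This yields~\eqref{eq:whK<C.whQ} with, say, the same $C$ (the factor $\tfrac12\cdot 2$ being absorbed).

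There is essentially no serious obstacle here: the lemma is a bookkeeping consequence of Theorem~\ref{th:K-wtK<Q-wtQ} and Lemma~\ref{lem:phi.jk=e+int}. The only point requiring a word of care is that the norms $\|\cdot\|_{X_{\infty,p}}$ on matrix entries $\wh{K}_{jl,k}$ (scalar functions on $\Omega$) are controlled by the matrix norm $\|\wh{K}^{\pm}\|_{\XX{\infty,p}}$ appearing in Theorem~\ref{th:K-wtK<Q-wtQ}; this follows from the definition~\eqref{eq:Xinfp.nxn.norm.def} of the matrix-valued norm together with the elementary fact that each entry of a matrix is bounded by any of the operator norms $|\cdot|_{\a\to\b}$ of that matrix (up to a fixed dimensional constant), which for $2\times2$ matrices is harmless. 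I would state this passage in one sentence and then conclude. In brief: the proof is ``subtract the two representations of Lemma~\ref{lem:phi.jk=e+int}, express the resulting kernels via $\wh{K}^{\pm}$, and invoke Theorem~\ref{th:K-wtK<Q-wtQ}.''
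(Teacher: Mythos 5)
Your proposal is correct and follows exactly the paper's own argument: subtract the two representations of Lemma~\ref{lem:phi.jk=e+int}, write $\wh{K}_{jl,k}$ in terms of $\wh{K}^{\pm}$ via~\eqref{eq:Kjlk}, and invoke Theorem~\ref{th:K-wtK<Q-wtQ} (estimate~\eqref{eq:K-wtK<Q-wtQ}). The extra remark on bounding entrywise $X_{\infty,p}$, $X_{1,p}$ norms by the matrix norms is a harmless clarification the paper leaves implicit.
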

\begin{proof}
Subtracting formula~\eqref{eq:phi.jkx} for $\wt{Q}$ from the same formula for
$Q$ we arrive at~\eqref{eq:phi-wt.phi.jkx}. Taking into account
formulas~\eqref{eq:Kjlk} and applying Theorem~\ref{th:K-wtK<Q-wtQ} (estimate~\eqref{eq:K-wtK<Q-wtQ}) we
arrive at uniform estimate~\eqref{eq:whK<C.whQ}.
\end{proof}
\begin{lemma} \label{lem:Delta=Delta0+}
Let $Q \in \LL{p}$ for some $p \in [1,\infty)$. Then the characteristic determinant
$\Delta_Q(\cdot)$ of the problem~\eqref{eq:system}--\eqref{eq:BC} is an entire function
of exponential type and admits the following representation:
\begin{align}
\label{eq:Delta=Delta0+}
 \Delta_Q(\l) &= \Delta_0(\l) + \int^1_0 g_{1,Q}(t) e^{i b_1 \l t} dt
 + \int^1_0 g_{2,Q}(t) e^{i b_2 \l t} dt, \\
\label{eq:gl=J.R}
 g_{l,Q}(\cdot) &= J_{32} K_{1l,1}(1, \cdot) + J_{42} K_{2l,1}(1, \cdot)
 + J_{13} K_{1l,2}(1, \cdot) + J_{14} K_{2l,2}(1, \cdot) \in L^p[0,1],
 \quad l \in \{1,2\}.
\end{align}
\end{lemma}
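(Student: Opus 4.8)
The plan is to combine the explicit formula~\eqref{eq:Delta} for the characteristic determinant with the integral representations~\eqref{eq:phi.jkx} for the entries $\varphi_{jk}$ established in Lemma~\ref{lem:phi.jk=e+int}. First I would note that $\Delta_Q$ is an entire function of exponential type: the fundamental matrix $\Phi(\cdot,\l)$ is entire in $\l$ by the standard Picard iteration for~\eqref{eq:system}, its entries $\varphi_{jk}(1,\l)$ grow at most like $e^{C|\Im\l|}$ (this is visible directly from~\eqref{eq:phi.jkx} once the kernels are known to lie in $X_{1,p}^0\subset X_{1,1}^0$), and $\Delta_Q$ is a finite linear combination of these entries together with the pure exponentials $1$ and $e^{i(b_1+b_2)\l}$; hence $\Delta_Q$ is entire of exponential type.

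Next I would substitute~\eqref{eq:phi.jkx} into the expression~\eqref{eq:Delta}. Writing $\varphi_{jk}(\l)=\varphi_{jk}(1,\l)$ and using $\varphi_{jk}(1,\l)=\delta_{jk}e^{ib_k\l}+\int_0^1 K_{j1,k}(1,t)e^{ib_1\l t}\,dt+\int_0^1 K_{j2,k}(1,t)e^{ib_2\l t}\,dt$, the diagonal terms $\delta_{jk}e^{ib_k\l}$ reproduce exactly the four pure-exponential terms $J_{12}+J_{34}e^{i(b_1+b_2)\l}+J_{32}e^{ib_1\l}+J_{14}e^{ib_2\l}$ which by~\eqref{eq:Delta0} equal $\Delta_0(\l)$. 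The remaining contributions are the four integral terms in~\eqref{eq:Delta} with $\varphi_{jk}$ replaced by its integral part; collecting the coefficients of $e^{ib_1\l t}$ and of $e^{ib_2\l t}$ under a single integral over $[0,1]$ yields~\eqref{eq:Delta=Delta0+} with
\[
 g_{l,Q}(t) = J_{32} K_{1l,1}(1,t) + J_{42} K_{2l,1}(1,t) + J_{13} K_{1l,2}(1,t) + J_{14} K_{2l,2}(1,t),
\]
which is~\eqref{eq:gl=J.R}. It remains only to check that these traces $K_{jl,k}(1,\cdot)$ are well defined and lie in $L^p[0,1]$: by Lemma~\ref{lem:phi.jk=e+int} (more precisely~\eqref{eq:Kjlk}) each $K_{jl,k}\in X_{1,p}^0(\Omega)$, so Lemma~\ref{lem:trace.X} applied with $a=1$ gives that the trace map $i_{1,1}\colon X_{1,p}^0(\Omega)\to L^p[\{1\}]$... more usefully, for $a\in[0,1]$ the map $N(x,t)\mapsto N(x,a)$ is bounded $X_{1,p}^0(\Omega)\to L^p[a,1]$, and symmetrically one needs the trace at $x=1$; this is covered by the companion statement of Lemma~\ref{lem:trace.X} (the $i_{a,\infty}$-type trace, or by the evident symmetry of $\Omega$), so $K_{jl,k}(1,\cdot)\in L^p[0,1]$ and hence $g_{l,Q}\in L^p[0,1]$ as a finite linear combination.

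The only genuinely delicate point is this last trace issue: one must be sure that the value of a $X_{1,p}^0$-function "on the edge $x=1$" of $\Omega$ makes sense as an $L^p[0,1]$-function, since a priori elements of $X_{1,p}$ are only equivalence classes. This is precisely what the density of $C(\Omega)$ in $X_{1,p}^0$ plus the continuity estimate of Lemma~\ref{lem:trace.X} buys us — for continuous kernels $K_{jl,k}(1,\cdot)$ is literally a continuous function, and the trace map extends continuously, so the formula~\eqref{eq:gl=J.R} is unambiguous. Everything else is bookkeeping: expanding a $2\times 2$ determinant and matching exponential factors. I would therefore present the proof as (1) entire-of-exponential-type via the explicit combination in~\eqref{eq:Delta}, (2) substitute~\eqref{eq:phi.jkx} and identify the exponential part with $\Delta_0$, (3) collect integrands and invoke Lemma~\ref{lem:trace.X} for $g_{l,Q}\in L^p$.
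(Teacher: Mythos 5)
Your proposal is correct and follows essentially the same route as the paper: substitute the representations~\eqref{eq:phi.jkx} at $x=1$ into~\eqref{eq:Delta}, identify the pure exponential part with $\Delta_0(\l)$ from~\eqref{eq:Delta0}, and collect the integral terms into $g_{l,Q}$, with $g_{l,Q}\in L^p[0,1]$ secured by the inclusion $K_{jl,k}\in X_{1,p}^0(\Omega)\cap X_{\infty,p}^0(\Omega)$ from~\eqref{eq:Kjlk} and the trace Lemma~\ref{lem:trace.X}. Note only that the relevant trace at $x=1$ is the $i_{1,\infty}$ map, which uses the $X_{\infty,p}^0$ membership (available from~\eqref{eq:Kjlk}), exactly as you conclude after your brief detour.
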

\begin{proof}
Consider representations~\eqref{eq:phi.jkx} for $\varphi_{jk}(\cdot,\l)$, $j,k
\in \{1,2\}$. By Lemma~\ref{lem:phi.jk=e+int}, $K_{jl,k}(\cdot,\cdot) \in
X_{1,p}^0(\Omega) \cap X_{\infty,p}^0(\Omega),$ \ $j,k,l \in \{1,2\}$. Therefore
by Lemma~\ref{lem:trace.X}, the trace functions $K_{jl,k}(1,\cdot)$ are well
defined and $K_{jl,k}(1,\cdot) \in L^p[0,1]$, $j,k,l \in \{1,2\}$. Therefore
inserting $x=1$ in~\eqref{eq:phi.jkx} we obtain special
representations for $\varphi_{jk}(\cdot)$, $j,k \in \{1,2\}$,
\begin{equation}
 \varphi_{jk}(\l) := \varphi_{jk}(1,\l) = \delta_{jk} e^{i b_k \l} +
 \int_0^1 K_{j1,k}(1,t) e^{i b_1 \l t}dt +
 \int_0^1 K_{j2,k}(1,t) e^{i b_2 \l t}dt.
\end{equation}
Inserting these expressions in~\eqref{eq:Delta} and taking
formula~\eqref{eq:Delta0} for $\Delta_0(\cdot)$ into account we arrive
at~\eqref{eq:Delta=Delta0+}--\eqref{eq:gl=J.R}.
\end{proof}
\begin{lemma} \label{lem:Delta-wt.Delta}
Let $Q, \wt{Q} \in \bU_{p, r}^{2 \times 2}$ for some $p \in [1, \infty)$ and
$r > 0$. Then the following representation takes place
\begin{equation} \label{eq:Delta-wt.Delta}
 \Delta_Q(\l) - \Delta_{\wt{Q}}(\l) = \int^1_0 \wh{g}_1(t) e^{i b_1 \l t} dt
 + \int^1_0 \wh{g}_2(t) e^{i b_2 \l t} dt,
\end{equation}
where $\wh{g}_l := g_{Q,l} - g_{{\wt{Q}},l} \in L^p[0,1]$, $l\in\{1,2\}$, and
for some $\wh{C} = \wh{C}(p, r, B, A)$, the following \emph{uniform estimate}
holds
\begin{equation} \label{eq:whg1+whg2<C.whQ}
 \|\wh{g}_1\|_p + \|\wh{g}_2\|_p = \|{g}_{Q,1} - g_{{\wt{Q}},1}\|_p
 + \|g_{Q,2} - g_{{\wt{Q}},2}\|_p \le \wh{C} \cdot \|Q-\wt{Q}\|_p,
 \qquad Q, \wt{Q} \in \bU_{p, r}^{2 \times 2}.
\end{equation}
\end{lemma}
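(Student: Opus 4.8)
The plan is to read off the statement as an immediate corollary of Lemma~\ref{lem:Delta=Delta0+}, combined with the Lipshitz estimate for the transformation operator kernels from Lemma~\ref{lem:phi-wt.phi} (which ultimately rests on Theorem~\ref{th:K-wtK<Q-wtQ}). First I would apply Lemma~\ref{lem:Delta=Delta0+} separately to $Q$ and to $\wt{Q}$, writing out representation~\eqref{eq:Delta=Delta0+} together with~\eqref{eq:gl=J.R} for each of $\Delta_Q(\cdot)$ and $\Delta_{\wt{Q}}(\cdot)$. Since the unperturbed characteristic determinant $\Delta_0(\cdot)$ in~\eqref{eq:Delta0} does not involve the potential, subtracting the two representations cancels the $\Delta_0(\l)$ terms and produces~\eqref{eq:Delta-wt.Delta} with
\[
 \wh{g}_l = g_{Q,l} - g_{\wt{Q},l}
 = J_{32}\,\wh{K}_{1l,1}(1,\cdot) + J_{42}\,\wh{K}_{2l,1}(1,\cdot)
 + J_{13}\,\wh{K}_{1l,2}(1,\cdot) + J_{14}\,\wh{K}_{2l,2}(1,\cdot),
 \qquad l \in \{1,2\},
\]
where $\wh{K}_{jl,k} := K_{jl,k} - \wt{K}_{jl,k}$ as in~\eqref{eq:whK.def}. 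By Lemma~\ref{lem:phi-wt.phi} each $\wh{K}_{jl,k}$ lies in $X_{1,p}^0(\Omega) \cap X_{\infty,p}^0(\Omega)$, so Lemma~\ref{lem:trace.X} guarantees that the traces at $x=1$ are well defined in $L^p[0,1]$ and hence $\wh{g}_l \in L^p[0,1]$; the same remark applied to $K_{jl,k}$ and $\wt{K}_{jl,k}$ individually confirms that $g_{Q,l}$ and $g_{\wt{Q},l}$ themselves are well defined via the trace maps $i_{1,\infty}$.

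For the uniform bound~\eqref{eq:whg1+whg2<C.whQ} I would use that the trace mapping $i_{1,\infty}: X_{\infty,p}^0(\Omega) \to L^p[0,1]$ has operator norm at most $1$, which is immediate from definition~\eqref{eq:Xinfp.norm.def} since $\int_0^1 |\wh{K}_{jl,k}(1,t)|^p\,dt \le \|\wh{K}_{jl,k}\|_{X_{\infty,p}(\Omega)}^p$. The triangle inequality then gives
\[
 \|\wh{g}_l\|_p \le \bigl(|J_{32}|+|J_{42}|+|J_{13}|+|J_{14}|\bigr)
 \max_{j,k\in\{1,2\}} \|\wh{K}_{jl,k}\|_{X_{\infty,p}(\Omega)},
 \qquad l \in \{1,2\}.
\]
Applying estimate~\eqref{eq:whK<C.whQ} from Lemma~\ref{lem:phi-wt.phi}, the right-hand side is at most $C \cdot \|Q-\wt{Q}\|_p$ with $C = C(p,r,B)$ independent of the pair $\{Q,\wt{Q}\} \subset \bU_{p,r}^{2 \times 2}$. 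Summing over $l \in \{1,2\}$ yields~\eqref{eq:whg1+whg2<C.whQ} with $\wh{C} = 2\bigl(|J_{32}|+|J_{42}|+|J_{13}|+|J_{14}|\bigr)\,C$, depending only on $p$, $r$, $B$ and on the boundary data $A$ through the minors $J_{jk}$.

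I do not expect any genuine obstacle here. The analytically heavy content — the Lipshitz continuity of the map $Q \mapsto K_Q^{\pm}$ in the $X_{q,p}$-norms — has already been isolated in Theorem~\ref{th:K-wtK<Q-wtQ} and transported to the scalarized kernels $K_{jl,k}$ by Lemma~\ref{lem:phi-wt.phi}, so the present statement is a routine bookkeeping corollary. The only two points that require (already-supplied) care are the cancellation of the potential-free summand $\Delta_0$ upon subtraction, and the fact that the traces $K_{jl,k}(1,\cdot)$ entering $g_{Q,l}$ are meaningful as $L^p$-functions, which is exactly Lemma~\ref{lem:trace.X}.
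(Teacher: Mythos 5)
Your proposal is correct and follows essentially the same route as the paper: subtract the representations of Lemma~\ref{lem:Delta=Delta0+} for $Q$ and $\wt{Q}$ so that $\Delta_0$ cancels, identify $\wh{g}_l$ as the linear combination of the trace differences $\wh{K}_{jl,k}(1,\cdot)$, and then combine the trace bound $\|\wh{K}_{jl,k}(1,\cdot)\|_p \le \|\wh{K}_{jl,k}\|_{X_{\infty,p}(\Omega)}$ (Lemma~\ref{lem:trace.X}) with the uniform estimate~\eqref{eq:whK<C.whQ} of Lemma~\ref{lem:phi-wt.phi}. Even the resulting constant $\wh{C} = 2(|J_{32}|+|J_{42}|+|J_{13}|+|J_{14}|)\,C(p,r,B)$ coincides with the paper's~\eqref{eq:C=C.rpBA}.
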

\begin{proof}
Subtracting formula~\eqref{eq:Delta=Delta0+} for $\wt{Q}$ from the same formula
for $Q$ we arrive at~\eqref{eq:Delta-wt.Delta} with
\begin{equation} \label{eq:whgl=sum}
 \wh{g}_l(\cdot) = g_l(\cdot) - \wt{g}_l(\cdot) =
 J_{32} \wh{K}_{1l,1}(1, \cdot) + J_{42} \wh{K}_{2l,1}(1, \cdot) +
 J_{13} \wh{K}_{1l,2}(1, \cdot) + J_{14} \wh{K}_{2l,2}(1, \cdot),
 \quad l \in \{1, 2\}.
\end{equation}
Combining formulas~\eqref{eq:whK.def} with estimate\eqref{eq:whK<C.whQ}, and
Lemma~\ref{lem:trace.X} implies the following uniform estimate
\begin{equation} \label{eq:whKp<C.whQp}
 \|\wh{K}_{jl,k}(1, \cdot)\|_p \le
 \|\wh{K}_{jl,k}\|_{X_{\infty, p}(\Omega)} \le
 C(p, r, B) \cdot \|Q-\wt{Q}\|_p, \quad j,k,l \in \{1,2\}, \qquad Q, \wt{Q} \in \bU_{p, r}^{2 \times 2}.
\end{equation}
In turn, combining formulas~\eqref{eq:whKp<C.whQp} and~\eqref{eq:whgl=sum} we arrive
at \emph{uniform estimate}~\eqref{eq:whg1+whg2<C.whQ} with
\begin{equation} \label{eq:C=C.rpBA}
 \wh{C} = \wh{C}(p, r, B, A) = 2 \cdot (|J_{32}| + |J_{42}| + |J_{13}| +
 |J_{14}|) \cdot C(p, r, B).
\end{equation}
This completes the proof.
\end{proof}
\begin{lemma} \label{lem:Delta<Exp}
Let $Q \in \bU_{p, r}^{2 \times 2}$ for some $p \in [1, \infty)$ and $r > 0$.
Then there exists a constant $C = C(r, B, A)>0$ not dependent on $Q$ and
$p$ and such that the following uniform estimate holds
\begin{equation} \label{eq:Delta.l<C.exp}
 |\Delta_Q(\l)| \le C \cdot \bigl(e^{-b_1 \Im \l} + e^{-b_2 \Im \l}\bigr),
 \quad \l \in \bC.
\end{equation}
\end{lemma}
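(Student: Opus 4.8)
The plan is to start from the representation of the characteristic determinant obtained in Lemma~\ref{lem:Delta=Delta0+}, namely
$$
 \Delta_Q(\l) = \Delta_0(\l) + \int_0^1 g_{1,Q}(t) e^{i b_1 \l t}\,dt + \int_0^1 g_{2,Q}(t) e^{i b_2 \l t}\,dt,
$$
and estimate each of the three terms separately. For the unperturbed determinant $\Delta_0(\l) = J_{12} + J_{34} e^{i(b_1+b_2)\l} + J_{32} e^{i b_1 \l} + J_{14} e^{i b_2 \l}$ the bound is immediate: since $|e^{i b_k \l}| = e^{-b_k \Im \l}$ and $b_1 < 0 < b_2$, one has $|e^{i(b_1+b_2)\l}| = e^{-b_1\Im\l} e^{-b_2\Im\l} \le e^{-b_1\Im\l} + e^{-b_2\Im\l}$ (in fact bounded by the product, but the sum dominates since one factor is $\ge 1$ depending on the sign of $\Im\l$), and the constant term is trivially bounded. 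So $|\Delta_0(\l)| \le (|J_{12}| + |J_{34}| + |J_{32}| + |J_{14}|)(e^{-b_1\Im\l} + e^{-b_2\Im\l})$.

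For each integral term I would apply Hölder's inequality in the form $\bigl|\int_0^1 g_{l,Q}(t) e^{i b_l \l t}\,dt\bigr| \le \int_0^1 |g_{l,Q}(t)| e^{-b_l \Im \l \, t}\,dt \le \|g_{l,Q}\|_1 \cdot \max_{t\in[0,1]} e^{-b_l \Im\l\, t} \le \|g_{l,Q}\|_1 \cdot (1 + e^{-b_l\Im\l})$, using that $e^{-b_l \Im\l t}$ is monotone in $t$ so its maximum over $[0,1]$ is $\max\{1, e^{-b_l\Im\l}\}\le 1 + e^{-b_l\Im\l}$. Then $\|g_{l,Q}\|_1 \le \|g_{l,Q}\|_p$ by the embedding $L^p[0,1]\subset L^1[0,1]$, and applying Lemma~\ref{lem:Delta-wt.Delta} with $\wt Q = 0$ (equivalently, using the uniform bound $\|K_{jl,k}(1,\cdot)\|_p \le C(p,r,B)\|Q\|_p \le C(p,r,B)\,r$ that follows from Theorem~\ref{th:K-wtK<Q-wtQ} via Lemma~\ref{lem:trace.X}, together with formula~\eqref{eq:gl=J.R}) gives $\|g_{l,Q}\|_p \le C(p,r,B,A)$ uniformly for $Q \in \bU_{p,r}^{2\times 2}$.

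Collecting the three estimates yields $|\Delta_Q(\l)| \le C(r,B,A)(e^{-b_1\Im\l} + e^{-b_2\Im\l})$ with a constant independent of $Q$, as claimed. The one point requiring a little care is the claimed independence of the constant from $p$: the bound on $\|g_{l,Q}\|_p$ coming from Theorem~\ref{th:K-wtK<Q-wtQ} a priori carries a $p$-dependent constant, so I would instead estimate $\|g_{l,Q}\|_1$ directly, bounding $\|K_{jl,k}(1,\cdot)\|_1 \le \|K_{jl,k}\|_{X_{\infty,1}}$ and using the $p=1$ version of the trace lemma and of Proposition~\ref{prop2.5}(iii), whose constants depend only on $B$ and $r$; this is the only mild obstacle and is essentially bookkeeping. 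Everything else is a routine application of Hölder's inequality and the triangle inequality.
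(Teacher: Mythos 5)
Your proposal is correct and follows essentially the same route as the paper: the representation of Lemma~\ref{lem:Delta=Delta0+}, the elementary bound $|\Delta_0(\l)| \le (|J_{12}|+|J_{34}|+|J_{32}|+|J_{14}|)(e^{-b_1\Im\l}+e^{-b_2\Im\l})$ using $b_1<0<b_2$, the bound $\bigl|\int_0^1 g_{l,Q}(t)e^{ib_l\l t}\,dt\bigr|\le\|g_{l,Q}\|_1\max\{1,e^{-b_l\Im\l}\}$, and the same device for $p$-independence, namely $\bU_{p,r}^{2\times2}\subset\bU_{1,r}^{2\times2}$ so that only the $p=1$ constants (from the uniform $L^1$ bound on $g_{l,Q}$, i.e.\ estimate~\eqref{eq:whg1+whg2<C.whQ} with $\wt{Q}=0$) enter. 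Your only slight imprecision is citing Proposition~\ref{prop2.5}(iii), which bounds $R_Q$ rather than $K_Q^{\pm}$; the $X_{\infty,1}$ bound on the kernels should come from Theorem~\ref{th:K-wtK<Q-wtQ} with $p=1$ and $\wt{Q}=0$, which you in fact invoke earlier.
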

\begin{proof}
Since $b_1 < 0 < b_2$ then
\begin{align}
\label{eq:e1+e2>1}
 e^{-b_1 \Im \l} + e^{-b_2 \Im \l} > 1, \qquad
 e^{-b_1 \Im \l} + e^{-b_2 \Im \l} > \abs{e^{i (b_1 + b_2) \l}},
 \qquad \l \in \bC.
\end{align}
Inserting these inequalities into~\eqref{eq:Delta0} implies
estimate~\eqref{eq:Delta.l<C.exp} for $\Delta_0$ with $C_0 = |J_{12}| + |J_{34}|
+ |J_{32}| + |J_{14}|$. Since $\bU_{p, r}^{2 \times 2} \subset
\bU_{1, r}^{2 \times 2}$, it follows from~\eqref{eq:Delta=Delta0+},
\eqref{eq:whg1+whg2<C.whQ}, and estimate~\eqref{eq:Delta.l<C.exp} for $\Delta_0$
that
\begin{equation}
 |\Delta_Q(\l)| \le |\Delta_0(\l)| + \|g_1\|_1 \cdot \max\{e^{-b_1 \Im \l}, 1\}
 + \|g_2\|_1 \cdot \max\{e^{-b_2 \Im \l}, 1\} \le
 (C_0 + \wh{C} \|Q\|_1) \bigl(e^{-b_1 \Im \l} + e^{-b_2 \Im \l}\bigr),
 \quad \l \in \bC.
\end{equation}
Since $\|Q\|_1 \le \|Q\|_p \le r$, this implies~\eqref{eq:Delta.l<C.exp} with
$C = C_0 + \wh{C} r$.
\end{proof}
\subsection{Regular and strictly regular boundary conditions}
\label{subsec:regular}
Let us recall the definition of regular boundary conditions.
\begin{definition} \label{def:regular}
Boundary conditions~\eqref{eq:BC} are called \textbf{regular} if
\begin{equation} \label{eq:J32J14ne0}
 J_{14} J_{32} \ne 0.
\end{equation}
\end{definition}
Let us recall one more definition (cf.~\cite{Katsn71}).
\begin{definition} \label{def:incompressible}
Let $\L := \{\l_n\}_{n \in \bZ}$ be a sequence of complex numbers. It is
called \textbf{incompressible} if for some $d \in \bN$ every rectangle
$[t-1,t+1] \times \bR \subset \bC$ contains at most $d$ entries of the sequence,
i.e.
\begin{equation} \label{eq:card.incomp}
 \card\{n \in \bZ : |\Re \l_n - t| \le 1 \} \le d, \quad t \in \bR.
\end{equation}
To emphasize parameter $d$ we will sometimes call $\L$ an incompressible
sequence of density $d$.
\end{definition}
Recall that $\bD_r(z) \subset \bC$ denotes the disc of radius $r$ with a
center $z$.

Let us recall the following simple property of incompressible sequences
that improves Lemma 4.3 from~\cite{LunMal16JMAA}.
\begin{lemma} \label{lem:incompress.connect}
Let $\L = \{\l_n\}_{n \in \bZ}$ be an incompressible sequence of density
$d$. Then for any $\eps \le (2 d)^{-1}$ every connected component of the union
of discs $\cup_{n \in \bZ} \bD_{\eps}(\l_n)$ has at most $d$ discs
$\bD_{\eps}(\l_n)$.
\end{lemma}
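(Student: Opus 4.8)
The plan is to translate the geometric statement into a graph‑theoretic one and then feed the resulting real‑part estimate into the strip‑counting hypothesis of Definition~\ref{def:incompressible}. First I would record the elementary fact that two discs $\bD_{\eps}(\l_m)$ and $\bD_{\eps}(\l_n)$ lie in the same connected component of $\cup_{n\in\bZ}\bD_{\eps}(\l_n)$ precisely when they are joined by a finite chain of discs in which consecutive members intersect; equivalently, the connected components of the union are in bijection with the connected components of the graph $G$ on vertex set $\bZ$ having an edge $\{m,n\}$ exactly when $\bD_{\eps}(\l_m)\cap\bD_{\eps}(\l_n)\neq\varnothing$. The one quantitative input is that $\bD_{\eps}(\l_m)\cap\bD_{\eps}(\l_n)\neq\varnothing$ forces $|\l_m-\l_n|\le 2\eps$, hence $|\Re\l_m-\Re\l_n|\le 2\eps$.

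Next I would argue by contradiction. Suppose some connected component contains at least $d+1$ discs, so the corresponding component of $G$ has a vertex set $S$ with $\card S\ge d+1$. Choose a spanning tree of this component and, running a breadth‑first search from an arbitrary root, keep the first $d+1$ vertices visited; since each newly visited vertex is attached to a previously visited one, they span a subtree $T'$ with exactly $d$ edges, on a vertex set $S'\subset S$ with $\card S'=d+1$. For any $m,n\in S'$ the unique path joining $m$ and $n$ inside $T'$ has at most $d$ edges, and summing the bound $|\Re\l_i-\Re\l_j|\le 2\eps$ over the edges of that path gives $|\Re\l_m-\Re\l_n|\le 2\eps d\le 1$, where the last inequality uses the hypothesis $\eps\le(2d)^{-1}$.

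Finally, letting $t\in\bR$ be the midpoint of the interval $[\min_{n\in S'}\Re\l_n,\ \max_{n\in S'}\Re\l_n]$, whose length is at most $1\le 2$, I get $|\Re\l_n-t|\le 1$ for every $n\in S'$. Thus the rectangle $[t-1,t+1]\times\bR$ contains at least $\card S'=d+1$ entries of $\L$, contradicting that $\L$ is incompressible of density $d$. Hence every connected component of the union contains at most $d$ discs, which is the assertion.

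The only point requiring care is the bookkeeping in the second step: one must extract a \emph{connected} sub‑collection of exactly $d+1$ discs, so that the diameter of the corresponding set of real parts is controlled by $2\eps d$ rather than by $2\eps(\card S-1)$, which could be arbitrarily large for a big component. Truncating a spanning tree via breadth‑first search is the clean way to do this; beyond that, the proof is just the triangle inequality together with the definition of incompressibility.
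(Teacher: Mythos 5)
Your proof is correct and follows essentially the same route as the paper's: argue by contradiction, use the intersection-graph (chain) structure to bound the distance between centers of discs in one component by $2\eps d\le 1$ via the triangle inequality, and then invoke the incompressibility count over a rectangle of width $2$. The only difference is bookkeeping — you truncate a BFS spanning tree to exactly $d+1$ discs and center the rectangle at the midpoint of the real parts, whereas the paper takes the ball of graph radius $d$ around a fixed disc and centers the rectangle at that disc's center — which does not change the substance of the argument.
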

\begin{proof}
Let $2 d \eps \le 1$ and assume that some connected component $\fC$ of
$\cup_{n \in \bZ} \bD_{\eps}(\l_n)$ has more than $d$ discs $\bD_{\eps}(\l_n)$.
Let $D_0 = \bD_{\eps}(\l_{n_0})$, $n_0 \in \bZ$, be one of the discs in
$\fC$.

Consider the sequence $\fB$ of all discs $\bD_{\eps}(\l_n)$ in $\fC$ that have
graph distance at most $d$ from $D_0$. Let us show that $\fB$ has more than $d$
discs. Indeed, if no disc in $\fC$ has graph distance more than $d$ from $D_0$,
then $\fB$ contains all discs from $\fC$ which has more than $d$ discs.
Otherwise, $\fB$ has some disc $D$ with distance $d$ from $D_0$. All discs on
the path from $D_0$ to $D$ belong to $\fB$ and hence $\fB$ has at least $d+1$
discs.

Let $D = \bD_{\eps}(\l_n)$ be a disc in $\fB$. Since graph distance from
$D$ to $D_0$ is at most $d$ and disc radii are $\eps$ we have $|\l_n - \l_{n_0}|
< 2 d \eps \le 1$. Thus centers of all discs in $\fB$ lie in the rectangle
$[t_0 - 1, t_0 + 1] \times \bR$, where $t_0 = \Re \l_{n_0}$. Since there are
more than $d$ discs in $\fB$ it contradicts the fact that $\L$ is an
incompressible sequence of density $d$.
\end{proof}
In Section~\ref{sec:stability.eigenvalue} we will also need the following
additional property of incompressible sequences.
\begin{lemma} \label{lem:incompress.M}
Let $\L_0 = \{\l_n^0\}_{n \in \bZ}$ be an incompressible sequence of
density $d_0$. Let $\L = \{\l_n\}_{n \in \bZ}$ be a sequence of complex
numbers such that $|\l_n - \l_n^0| < M$, $n \in \bZ$, for some $M > 0$. Then
$\L$ is an incompressible sequence of density $d = d_0 \ceil{M+1}$. Here
$\ceil{x}$ denotes the smallest integer $k$ such that $x \le k$.
\end{lemma}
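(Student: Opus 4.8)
The plan is to reduce a count for $\L$ around an arbitrary point to finitely many counts for $\L_0$. Fix $t \in \bR$ and set $N := \ceil{M+1}$. First I would note that if $n \in \bZ$ satisfies $|\Re \l_n - t| \le 1$, then by the triangle inequality together with the hypothesis $|\l_n - \l_n^0| < M$,
\[
 |\Re \l_n^0 - t| \le |\Re(\l_n^0 - \l_n)| + |\Re \l_n - t| \le |\l_n^0 - \l_n| + 1 < M + 1 \le N,
\]
so that $\Re \l_n^0 \in (t-N, t+N)$.

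Next I would cover the closed interval $[t-N, t+N]$ (of length $2N$) by the $N$ unit-radius intervals $I_k := [s_k - 1, s_k + 1]$, $k = 1, \dots, N$, where $s_k := t - N + 2k - 1$; a direct check shows $I_k = [t-N+2k-2,\, t-N+2k]$, and since consecutive intervals share an endpoint one has $\bigcup_{k=1}^{N} I_k = [t-N, t+N] \supset (t-N,t+N)$. Consequently every index $n$ with $|\Re \l_n - t| \le 1$ satisfies $\Re \l_n^0 \in I_k$, i.e.\ $|\Re \l_n^0 - s_k| \le 1$, for at least one $k \in \{1,\dots,N\}$.

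Finally, since $\L_0$ is incompressible of density $d_0$, each rectangle $[s_k - 1, s_k + 1] \times \bR$ contains at most $d_0$ entries of $\L_0$, that is $\card\{n : |\Re \l_n^0 - s_k| \le 1\} \le d_0$ for every $k$. Passing to the union over $k$ gives $\card\{n : |\Re \l_n - t| \le 1\} \le \sum_{k=1}^{N} d_0 = N d_0 = d_0 \ceil{M+1}$, and since $t \in \bR$ was arbitrary this is precisely the assertion that $\L$ is incompressible of density $d = d_0 \ceil{M+1}$. There is essentially no obstacle here; the only points requiring mild care are choosing the covering so that exactly $\ceil{M+1}$ unit-radius intervals suffice (rounding $M+1$ up is what makes the closed interval $[t-N,t+N]$ coverable by such intervals) and observing that overlaps among the $I_k$ only weaken, hence do not affect, the upper bound.
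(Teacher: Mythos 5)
Your proof is correct and follows essentially the same route as the paper: transfer the unit-width rectangle condition for $\L$ to a rectangle of half-width $M+1$ for $\L_0$, cover it by $\ceil{M+1}$ unit-radius rectangles, and apply the density-$d_0$ bound to each (the paper covers $[t-1-M,t+1+M]$ with $\ceil{M}+1=\ceil{M+1}$ such rectangles, which is the same count). No gaps; the only difference is the slightly larger covering interval $[t-\ceil{M+1},\,t+\ceil{M+1}]$, which is immaterial.
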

\begin{proof}
Let $t \in \bR$ and let
\begin{equation}
 \cN_t := \{n \in \bZ : \l_n \in [t-1, t+1] \times \bR\}.
\end{equation}
We need to show that $\card(\cN_t) \le d = d_0 \ceil{M+1}$. Since
$|\l_n - \l_n^0| < M$, $n \in \bZ$, it follows that
\begin{equation} \label{eq:ln0.in.RtM}
 \l_n^0 \in R_{t,M} := [t-1-M, t+1+M] \times \bR, \quad n \in \cN_t,
\end{equation}
which in turn yields
\begin{equation} \label{eq:card<card}
 \card(\cN_t) \le \card\{n \in \bZ : \l_n^0 \in R_{t,M}\}.
\end{equation}
Let $t_k := t - M + 2k$, $k \in \bN$. It is clear from the definition of
$\ceil{x}$ that
\begin{equation} \label{eq:RtM.in.union}
 R_{t,M} \subset \bigcup_{k=0}^{\ceil{M}}
 \Bigl([t_k-1, t_k+1] \times \bR\Bigr).
\end{equation}
Combining~\eqref{eq:card.incomp} with~\eqref{eq:RtM.in.union} implies that
$\card\{n \in \bZ : \l_n^0 \in R_{t,M}\} \le d_0 \ceil{M+1}$.
Relation~\eqref{eq:card<card} now yields desired inequality
$\card(\cN_t) \le d$.
\end{proof}

Let us recall certain important properties from~\cite{LunMal16JMAA} of the
characteristic determinant $\Delta(\cdot)$ in the case of regular boundary
conditions.
\begin{proposition}~\cite[Proposition 4.6]{LunMal16JMAA} \label{prop:sine.type}
Let the boundary conditions~\eqref{eq:BC} be regular and let
$\Delta_Q(\cdot)$ be the characteristic determinant of the
problem~\eqref{eq:system}--\eqref{eq:BC} given by
\eqref{eq:Delta}. Then the following statements hold:

\textbf{(i)}
The function $\Delta_Q(\cdot)$ has infinitely many zeros $\L :=
\{\l_n\}_{n \in \bZ}$ counting multiplicities and $\L \subset \Pi_h$ for
some $h \ge 0$.

\textbf{(ii)} The sequence $\L$ is incompressible.

\textbf{(iii)} For any $\eps > 0$ there exists $C_{\eps} > 0$ such that the
determinant $\Delta_Q(\cdot)$ admits the following estimate from below
\begin{equation} \label{eq:Delta>=}
 |\Delta_Q(\l)| \ge C_{\eps}(e^{-b_1 \Im \l} + e^{-b_2 \Im \l}),
 \quad \l \in \bC \setminus \bigcup_{n \in \bZ} \bD_{\eps}(\l_n).
\end{equation}
\end{proposition}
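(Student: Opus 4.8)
The statement to prove is Proposition~\ref{prop:sine.type}, which asserts three facts about the characteristic determinant $\Delta_Q(\cdot)$ of a regular BVP: that it has an incompressible zero sequence $\L=\{\l_n\}_{n\in\bZ}\subset\Pi_h$, and that it satisfies a lower bound of exponential type away from small discs around its zeros. Since this is cited from \cite[Proposition 4.6]{LunMal16JMAA}, the cleanest route is to reduce everything to the unperturbed determinant $\Delta_0(\cdot)$ given by~\eqref{eq:Delta0}, for which all three claims are classical, and then transfer the conclusions to $\Delta_Q$ via the uniform perturbation estimate~\eqref{eq:whg1+whg2<C.whQ} of Lemma~\ref{lem:Delta-wt.Delta} (with $\wt Q = 0$) together with the Riemann–Lebesgue lemma and Rouch\'e's theorem.

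The plan is as follows. First I would analyze $\Delta_0(\l) = J_{12} + J_{34}e^{i(b_1+b_2)\l} + J_{32}e^{ib_1\l} + J_{14}e^{ib_2\l}$ under the regularity hypothesis $J_{14}J_{32}\ne 0$. Dividing by $e^{ib_1\l}$ (the "dominant" exponential in the lower half-plane) and by $e^{ib_2\l}$ (dominant in the upper half-plane), one sees that $\Delta_0$ is a sine-type (almost periodic exponential) function: its zeros form a sequence lying in a fixed horizontal strip $\Pi_{h_0}$, are separated into finitely many arithmetic-progression-like families of densities governed by $b_1, b_2$, hence $\L_0 = \{\l_n^0\}$ is incompressible, and $\Delta_0$ obeys the two-sided bound $c(e^{-b_1\Im\l}+e^{-b_2\Im\l}) \le |\Delta_0(\l)| \le C(e^{-b_1\Im\l}+e^{-b_2\Im\l})$ off $\eps$-discs around $\L_0$. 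These are standard facts about exponential sums that I would either cite from the literature on sine-type functions (e.g.\ Levin) or reprove by the elementary argument just sketched; combined with Lemma~\ref{lem:Delta<Exp} this already gives claim (iii) for $Q=0$.

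Next I would pass to general $Q\in\bU_{p,r}^{2\times 2}$. By Lemma~\ref{lem:Delta=Delta0+} (with $\wt Q=0$) we have $\Delta_Q(\l) - \Delta_0(\l) = \int_0^1 g_{1,Q}(t)e^{ib_1\l t}\,dt + \int_0^1 g_{2,Q}(t)e^{ib_2\l t}\,dt$ with $g_{l,Q}\in L^p[0,1]\subset L^1[0,1]$. Splitting each integral at a parameter $\delta$ and using that $g_{l,Q}\in L^1$, the Riemann–Lebesgue lemma gives, for $\l$ in a horizontal strip, a bound $|\Delta_Q(\l)-\Delta_0(\l)| \le \eta(|\Re\l|)\,(e^{-b_1\Im\l}+e^{-b_2\Im\l})$ with $\eta(t)\to 0$ as $t\to\infty$. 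Comparing with the lower bound for $|\Delta_0|$ off $\eps$-discs around $\L_0$, Rouch\'e's theorem on circles $\partial\bD_\eps(\l_n^0)$ (for $|n|$ large) shows that $\Delta_Q$ has exactly one zero near each far-out $\l_n^0$ and finitely many zeros in a bounded region; this establishes (i)—infinitely many zeros, all in some possibly larger strip $\Pi_h$—and shows $|\l_{Q,n}-\l_n^0|$ stays bounded, whence by Lemma~\ref{lem:incompress.M} the sequence $\L=\{\l_{Q,n}\}$ is incompressible, giving (ii). Finally, for (iii): on the complement of $\bigcup_n\bD_\eps(\l_n)$, I combine the triangle inequality $|\Delta_Q|\ge|\Delta_0| - |\Delta_Q-\Delta_0|$ with the $|\Delta_0|$-lower bound on a slightly smaller family of discs around $\L_0$ and the fact that the $\eps$-discs around $\L$ eventually contain those around $\L_0$ (for $|n|$ large, since $|\l_{Q,n}-\l_n^0|\to 0$), while for the finitely many exceptional indices one uses that $\Delta_Q$ is a nonzero entire function of exponential type with the prescribed growth from Lemma~\ref{lem:Delta<Exp} and argues on a bounded region directly.

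The main obstacle will be the bookkeeping in step three: making the Riemann–Lebesgue decay genuinely uniform in the relevant strip and extracting a quantitative-enough version to run Rouch\'e simultaneously over all large $|n|$, and then reconciling the discs around $\L$ versus those around $\L_0$ so that the lower bound~\eqref{eq:Delta>=} holds with a single constant $C_\eps$ valid for all $\l$ outside $\bigcup_n\bD_\eps(\l_n)$ rather than just asymptotically. Handling the finitely many "low-frequency" zeros, where the asymptotic analysis does not apply, requires a separate compactness argument using only that $\Delta_Q\not\equiv 0$ (regularity guarantees $\Delta_0\not\equiv 0$, and the perturbation is a small entire function, so $\Delta_Q\not\equiv 0$) together with its order and indicator from Lemma~\ref{lem:Delta<Exp}. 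None of these steps is conceptually deep, but assembling them into the clean three-part statement is where the care lies.
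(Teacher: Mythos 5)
The paper itself gives no proof of this proposition — it is quoted verbatim from \cite[Proposition 4.6]{LunMal16JMAA} — so your task is really to reconstruct the argument, and your plan is essentially the same perturbative scheme the present paper uses for the neighbouring statements (Propositions~\ref{prop:lambdan.ordering} and~\ref{prop:incompress.uniform}): a two-sided bound for the exponential polynomial $\Delta_0$ off $\eps$-discs around $\L_0$, smallness of $\Delta_Q-\Delta_0$ via \eqref{eq:Delta=Delta0+}, Rouch\'e, and a compactness argument on the bounded exceptional region. That route is sound, but two steps need repair as written. First, the boundary conditions are only \emph{regular}, not strictly regular, so the zeros $\l_n^0$ need not be simple or asymptotically separated (they cluster pairwise, e.g., for antiperiodic Dirac-type conditions); hence Rouch\'e on the individual circles $\partial\bD_\eps(\l_n^0)$ fails — $|\Delta_0|$ has no lower bound on a circle passing near another zero — and your claim of ``exactly one zero near each far-out $\l_n^0$'' is false in general. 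The correct move, which the paper itself makes in Proposition~\ref{prop:lambdan.ordering}, is to apply Rouch\'e to the boundaries of the \emph{connected components} of $\bigcup_n\bD_\eps(\l_n^0)$ (together with one large central disc), using Lemma~\ref{lem:incompress.connect} to keep the far components of diameter at most one; this still yields part (i), the uniform bound $\sup_n|\l_n-\l_n^0|\le M$ for a suitable enumeration, and hence part (ii) via Lemma~\ref{lem:incompress.M}. Second, for part (iii) the smallness of $\Delta_Q-\Delta_0$ must be measured against $e^{-b_1\Im\l}+e^{-b_2\Im\l}$ for \emph{all} $\l\in\bC$ with $|\l|$ large, not merely in a horizontal strip as you state; this is exactly what \cite[Lemma 3.5]{LunMal16JMAA} (or the $C^1$-approximation argument of Lemma~\ref{lem:fourier.coef} here) provides. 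With that in hand, your disc-comparison (points outside the $\eps$-discs around $\L$ lie, for large indices, outside the $\eps/2$-discs around $\L_0$, since $|\l_n-\l_n^0|<\eps/2$ there), the lower bound for $\Delta_0$ with radius $\eps/2$, and the compactness step on $\{|\l|\le M'\}\setminus\bigcup_n\bD_\eps(\l_n)$ (where $\Delta_Q$ is zero-free and the comparison function is bounded) assemble into \eqref{eq:Delta>=} with a single constant $C_\eps$, as required.
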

Clearly, the conclusions of Proposition~\ref{prop:sine.type} are valid for the
characteristic determinant $\Delta_0(\cdot)$ given by~\eqref{eq:Delta0}. Let
$\L_0 = \{\l_n^0\}_{n \in \bZ}$ be the sequence of its zeros counting
multiplicity. Let us order the sequence $\L_0$ in a (possibly non-unique)
way such that $\Re \l_n^0 \le \Re \l_{n+1}^0$, $n \in \bZ$.

Let us recall an important result from~\cite{LunMal14Dokl,LunMal16JMAA}
and~\cite{SavShk14} concerning asymptotic behavior of eigenvalues.
\begin{proposition}~\cite[Proposition 4.7]{LunMal16JMAA}
\label{prop:Delta.regular.basic}
Let $Q \in \LL{1}$ and let boundary conditions~\eqref{eq:BC} be regular. Then
the sequence $\L = \{\l_n\}_{n \in \bZ}$ of zeros of $\Delta_Q(\cdot)$ can be ordered in
such a way that the following asymptotic formula holds
\begin{equation} \label{eq:l.n=l.n0+o(1)}
 \l_n = \l_n^0 + o(1), \quad\text{as}\quad |n| \to \infty, \quad n \in \bZ.
\end{equation}
\end{proposition}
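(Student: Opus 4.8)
The plan is to combine the additive decomposition of the characteristic determinant obtained above with the classical lower bound for $\Delta_0$ and Rouch\'e's theorem, keeping the possible clustering of the unperturbed eigenvalues under control via incompressibility. First I would invoke Lemma~\ref{lem:Delta=Delta0+}, which gives $\Delta_Q(\l)=\Delta_0(\l)+\int_0^1 g_{1,Q}(t)e^{ib_1\l t}\,dt+\int_0^1 g_{2,Q}(t)e^{ib_2\l t}\,dt$ with $g_{1,Q},g_{2,Q}\in L^1[0,1]$, together with Proposition~\ref{prop:sine.type}(i) applied to $\Delta_0$ and to $\Delta_Q$, so that all zeros of both functions lie in a common strip $\Pi_h$. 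On $\Pi_h$ one has $|e^{ib_l\l t}|=e^{-b_l t\,\Im\l}\le e^{|b_l|h}$, so approximating each $g_{l,Q}$ in $L^1$ by a $C^1$-function and integrating by parts (a Riemann--Lebesgue argument with a bounded parameter) yields $\sup\{|\Delta_Q(\l)-\Delta_0(\l)|:\l\in\Pi_h,\ |\Re\l|\ge R\}\to0$ as $R\to\infty$. Since $b_1<0<b_2$, on $\Pi_h$ the quantity $e^{-b_1\Im\l}+e^{-b_2\Im\l}$ is bounded above and below by positive constants depending only on $h$ and $B$, so this is equivalent to
\[
 |\Delta_Q(\l)-\Delta_0(\l)|=o\bigl(e^{-b_1\Im\l}+e^{-b_2\Im\l}\bigr),\qquad |\Re\l|\to\infty,\ \l\in\Pi_h.
\]

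Next I would localise the zeros of $\Delta_Q$ near those of $\Delta_0$. Let $d_0$ be the density of the incompressible sequence $\L_0$ (Proposition~\ref{prop:sine.type}(ii)) and fix $\eps\in(0,(2d_0)^{-1}]$. By Lemma~\ref{lem:incompress.connect} every connected component of $\bigcup_n\bD_\eps(\l_n^0)$ consists of at most $d_0$ of the discs and hence has diameter at most $2d_0\eps$. Using the lower bound $|\Delta_0(\l)|\ge C_\eps\bigl(e^{-b_1\Im\l}+e^{-b_2\Im\l}\bigr)$ off $\bigcup_n\bD_\eps(\l_n^0)$ from Proposition~\ref{prop:sine.type}(iii) together with the decay from the previous step, I would choose $R_\eps$ so that $|\Delta_Q(\l)-\Delta_0(\l)|<|\Delta_0(\l)|$ whenever $\l\in\Pi_h$, $|\Re\l|\ge R_\eps$, and $\l\notin\bigcup_n\bD_\eps(\l_n^0)$, in particular on the boundary of every connected component of $\bigcup_n\bD_\eps(\l_n^0)$ lying in $\{|\Re\l|\ge R_\eps\}$. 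Rouch\'e's theorem applied on each such component to $\Delta_0$ and $\Delta_Q=\Delta_0+(\Delta_Q-\Delta_0)$ then shows that $\Delta_Q$ and $\Delta_0$ have equally many zeros (counting multiplicity) inside it, and that $\Delta_Q$ has no zeros in $\{|\Re\l|\ge R_\eps\}\cap\Pi_h$ off these components; since all zeros of $\Delta_Q$ sit in $\Pi_h$, all but finitely many of them therefore lie within $2d_0\eps$ of $\L_0$, with matching local multiplicities.

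Finally I would assemble the enumeration. Fixing $\eps_0=(2d_0)^{-1}$ and using the component-wise count equality, I would attach to each $\l_n^0$ with $|n|$ large a zero $\l_n$ of $\Delta_Q$ in the same $\eps_0$-component, respecting multiplicities and the already-fixed ordering of $\L_0$, the finitely many leftover zeros of $\Delta_Q$ getting the remaining indices arbitrarily. Inside a fixed $\eps_0$-component $\fC$ there are at most $d_0$ indices, and running the previous step with ever smaller $\eps$ — which is legitimate once $R_\eps$ drops below the real part of $\fC$ — the matchings it induces inside $\fC$ stabilise to one with $|\l_n-\l_n^0|\le 2d_0\,\eps_{\fC}$, where $\eps_{\fC}$ is the smallest admissible $\eps$ for $\fC$. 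Since $\eps_{\fC}\le\eps$ as soon as the real part of $\fC$ exceeds $R_\eps$, one has $\eps_{\fC}\to0$ as $\fC$ recedes along the strip, whence $\l_n-\l_n^0\to0$ as $|n|\to\infty$, i.e.~\eqref{eq:l.n=l.n0+o(1)}.

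The hard part will be this last bookkeeping step: because the boundary conditions are only regular, the discs $\bD_\eps(\l_n^0)$ need not be pairwise disjoint and $\Delta_0$ may have multiple zeros, so Rouch\'e cannot be applied one disc at a time; one is forced to work with connected components, bound their size through incompressibility (Lemma~\ref{lem:incompress.connect}), fix the enumeration once and for all at the coarse scale $\eps_0$, and then verify that refining the decomposition sharpens the distance estimate uniformly for components lying far out along $\Pi_h$. The remaining ingredients — the Riemann--Lebesgue decay, the lower bound for $\Delta_0$, and Rouch\'e's theorem — are entirely routine.
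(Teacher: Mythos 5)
Your overall strategy is the same as the one the paper uses for its refined version of this statement (Proposition~\ref{prop:lambdan.ordering}): the representation~\eqref{eq:Delta=Delta0+} with $g_{1,Q},g_{2,Q}\in L^1$, the lower bound~\eqref{eq:Delta>=} for $\Delta_0$ off the discs $\bD_{\eps}(\l_n^0)$, Rouch\'e on connected components of the union of discs, control of component diameters via incompressibility (Lemma~\ref{lem:incompress.connect}), and a refinement of the matching as $\eps\to0$ (your ``stabilisation'' is the paper's ``lifetime'' tracking). However, there is one genuine gap in your final bookkeeping. You apply Rouch\'e only to components lying in $\{|\Re\l|\ge R_\eps\}$ and then say the finitely many leftover zeros of $\Delta_Q$ ``get the remaining indices arbitrarily''. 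This presupposes that the number of leftover zeros of $\Delta_Q$ equals the number of leftover indices $n$, and nothing in your argument proves this. If these two finite cardinalities differ, no bijection $\bZ\to\L$ extending your component-wise matching exists, and repairing it forces an index shift along an infinite tail, which in general destroys $\l_n-\l_n^0\to0$ (the gaps $\l_{n+1}^0-\l_n^0$ do not tend to zero for merely regular BC). So the count-matching near the origin is not a formality; it is exactly what the enumeration claim needs.

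The paper closes this by proving the inequality $|\Delta_Q(\l)-\Delta_0(\l)|<|\Delta_0(\l)|$ on the complement of $\wt{\Omega}_{\eps}=\bD_{M_{\eps}}(0)\cup\bigcup_n\bD_{\eps}(\l_n^0)$ in the \emph{whole plane} (estimates~\eqref{eq:Rouche_estimate}--\eqref{eq:Delta-Delta0}), and then applying Rouch\'e also to the bounded connected component containing $\bD_{M_{\eps}}(0)$; this forces $\Delta_0$ and $\Delta_Q$ to have equally many zeros there, which is precisely the missing count equality. Note that your Riemann--Lebesgue argument in the strip $\Pi_h$ is not sufficient for this extra application: the boundary of that bounded component contains points with $|\Im\l|$ as large as $M_{\eps}$, where one needs the bound $\bigl|\int_0^1 g_l(t)e^{ib_l\l t}\,dt\bigr|\le\delta\,(e^{-b_l\Im\l}+1)$ for all large $|\l|$, i.e. the full-plane estimate of \cite[Lemma 3.5]{LunMal16JMAA} (cf. Lemma~\ref{lem:fourier.coef}), not just decay as $|\Re\l|\to\infty$ with $\Im\l$ bounded. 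With that estimate and the one additional Rouch\'e application your proof becomes complete and coincides with the paper's.
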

Let us refine this ordering to have some additional important properties.
\begin{proposition} \label{prop:lambdan.ordering}
Let $Q \in \LL{1}$ and let boundary conditions~\eqref{eq:BC} be regular. Then:

(i) For any $\eps > 0$ there exist constants $M_{\eps} = M_{\eps}(Q, B, A)> 0$
and $C_{\eps} = C_{\eps}(B, A) > 0$ such that
\begin{align}
\label{eq:Rouche_estimate}
 & |\Delta_Q(\l) - \Delta_0(\l)| < |\Delta_0(\l)|, \qquad \l \notin
 \wt{\Omega}_{\eps}, \\
\label{eq:Delta_estimate}
 & |\Delta_Q(\l)| > C_{\eps} \(e^{-b_1 \Im \l} + e^{-b_2 \Im \l}\),
 \qquad \l \notin \wt{\Omega}_{\eps}.
\end{align}
where
\begin{equation} \label{eq:Omega.eps}
 \wt{\Omega}_{\eps} := \bD_{M_{\eps}}(0) \cup \Omega_{\eps}^0, \quad
 \Omega_{\eps}^0 := \bigcup\limits_{n \in \bZ} \bD_{\eps}(\l_n^0).
\end{equation}

(ii) The sequence $\L = \{\l_n\}_{n \in \bZ}$ can be ordered in such a way that
for any $\eps > 0$ and $n \in \bZ$ numbers $\l_n$ and $\l_n^0$ belong to the
same connected component of \ $\wt{\Omega}_{\eps}$. In addition,
relation~\eqref{eq:l.n=l.n0+o(1)} also holds for this ordering.
\end{proposition}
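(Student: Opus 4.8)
The plan is to read off both parts from the representation of $\Delta_Q-\Delta_0$ in Lemma~\ref{lem:Delta=Delta0+} together with the lower bound for $\Delta_0$ supplied by Proposition~\ref{prop:sine.type}(iii) (applied with $Q=0$, so that the relevant constant depends only on $B,A$). For part~(i), Lemma~\ref{lem:Delta=Delta0+} gives $\Delta_Q(\l)-\Delta_0(\l)=\int_0^1 g_{1,Q}(t)e^{ib_1\l t}\,dt+\int_0^1 g_{2,Q}(t)e^{ib_2\l t}\,dt$ with $g_{l,Q}\in L^1[0,1]$ (take $p=1$). Since $b_1<0<b_2$, for $t\in[0,1]$ one has $|e^{ib_l\l t}|=e^{-b_l t\,\Im \l}\le\max\{1,e^{-b_l\Im \l}\}\le e^{-b_1\Im \l}+e^{-b_2\Im \l}$. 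I would then prove a uniform Riemann-Lebesgue-type bound: for $g\in L^1[0,1]$ and $l\in\{1,2\}$ there is $\eta_g(M)\to 0$ as $M\to\infty$ with $\bigl|\int_0^1 g(t)e^{ib_l\l t}\,dt\bigr|\le\eta_g(|\l|)\,(e^{-b_1\Im \l}+e^{-b_2\Im \l})$. This follows by splitting $g=h+(g-h)$ with $h\in C^1[0,1]$ and $\|g-h\|_1$ small: the remainder is controlled by the pointwise bound above, while integration by parts yields $\bigl|\int_0^1 h(t)e^{ib_l\l t}\,dt\bigr|\le C_h|\l|^{-1}(e^{-b_1\Im \l}+e^{-b_2\Im \l})$. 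Applying this to $g_{1,Q},g_{2,Q}$ gives $|\Delta_Q(\l)-\Delta_0(\l)|\le\eta(|\l|)(e^{-b_1\Im \l}+e^{-b_2\Im \l})$ with $\eta=\eta(\,\cdot\,;Q,B,A)\to 0$ at infinity.

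Now fix $\eps>0$. Proposition~\ref{prop:sine.type}(iii) applied to $\Delta_0$ gives $C_\eps=C_\eps(B,A)>0$ with $|\Delta_0(\l)|\ge C_\eps(e^{-b_1\Im \l}+e^{-b_2\Im \l})$ for $\l\notin\Omega_\eps^0$. Choosing $M_\eps$ so large that $\eta(M_\eps)<C_\eps/2$, we obtain for every $\l\notin\wt\Omega_\eps=\bD_{M_\eps}(0)\cup\Omega_\eps^0$ the inequalities $|\Delta_Q(\l)-\Delta_0(\l)|<\tfrac12|\Delta_0(\l)|<|\Delta_0(\l)|$, which is~\eqref{eq:Rouche_estimate}, and $|\Delta_Q(\l)|\ge|\Delta_0(\l)|-|\Delta_Q(\l)-\Delta_0(\l)|\ge\tfrac{C_\eps}{2}(e^{-b_1\Im \l}+e^{-b_2\Im \l})$, which is~\eqref{eq:Delta_estimate} after renaming $C_\eps/2$ as $C_\eps$. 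Here $M_\eps$ depends on $Q,B,A$ (through $g_{1,Q},g_{2,Q}$) while the final $C_\eps$ depends only on $B,A$, as required.

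For part~(ii), I would simply take the enumeration $\{\l_n\}_{n\in\bZ}$ of the zeros of $\Delta_Q$ already furnished by Proposition~\ref{prop:Delta.regular.basic}, so that~\eqref{eq:l.n=l.n0+o(1)} holds for it automatically; it remains to verify the ``same connected component'' property. Fix $\eps>0$ and pick $N_\eps$ with $|\l_n-\l_n^0|<\eps$ for $|n|>N_\eps$. The set $\{\l_n,\l_n^0:|n|\le N_\eps\}$ is finite, hence bounded, so after enlarging $M_\eps$ from part~(i) (which only shrinks the exterior of $\wt\Omega_\eps$ and therefore preserves~\eqref{eq:Rouche_estimate}--\eqref{eq:Delta_estimate}) we may assume this finite set lies inside $\bD_{M_\eps}(0)$. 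Then for $|n|>N_\eps$ both $\l_n$ and $\l_n^0$ lie in $\bD_\eps(\l_n^0)\subset\wt\Omega_\eps$, which is contained in a single connected component of $\wt\Omega_\eps$; and for $|n|\le N_\eps$ both lie in the connected set $\bD_{M_\eps}(0)\subset\wt\Omega_\eps$. In either case $\l_n$ and $\l_n^0$ belong to the same component of $\wt\Omega_\eps$, and since $\eps>0$ was arbitrary the claim follows.

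The only genuinely delicate point is the uniformity in part~(i): one must see that $\int_0^1 g_{l,Q}(t)e^{ib_l\l t}\,dt$ is $o(1)$ \emph{relative to} the growth factor $e^{-b_1\Im \l}+e^{-b_2\Im \l}$ uniformly over all of $\bC$ as $|\l|\to\infty$, not merely along the real axis; this is what forces the smooth-approximation-plus-integration-by-parts argument in place of a bare Riemann-Lebesgue lemma, and it is what makes $M_\eps$ genuinely depend on $Q$. Part~(ii) is then pure bookkeeping, the mild subtlety being that $M_\eps$ is allowed to depend on $\eps$ (and may have to grow as $\eps\downarrow 0$), which is harmless since for each individual $\eps$ we only need the \emph{existence} of an admissible $M_\eps$.
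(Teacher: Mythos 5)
Your part (i) is essentially the paper's argument: the paper also combines the representation of $\Delta_Q-\Delta_0$ from Lemma~\ref{lem:Delta=Delta0+} with the lower bound for $\Delta_0$ from Proposition~\ref{prop:sine.type}(iii) and the inequality~\eqref{eq:e1+e2>1}; the only difference is that it quotes a uniform Riemann--Lebesgue bound (Lemma 3.5 of \cite{LunMal16JMAA}, with $\delta=C^0_{\eps}/6$) where you reprove it by smooth approximation plus integration by parts, which is exactly the same mechanism. Part (ii), however, is a genuinely different route. The paper does \emph{not} assume the asymptotics: it bounds the diameters of the components of $\Omega_{\eps}^0$ via incompressibility (Lemma~\ref{lem:incompress.connect}), applies Rouch\'e in each bounded component of $\wt{\Omega}_{\eps}$ to match zero counts of $\Delta_0$ and $\Delta_Q$, and then runs a rather delicate ``tracking'' argument over the family of orderings as $\eps\downarrow 0$ (splitting of components, absorption into $\fC_{\eps}$) to extract one ordering serving all $\eps$ simultaneously, obtaining~\eqref{eq:l.n=l.n0+o(1)} as a byproduct. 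You instead import the ordering with $\l_n=\l_n^0+o(1)$ from Proposition~\ref{prop:Delta.regular.basic} and, for each $\eps$, enlarge $M_{\eps}$ so that the finitely many pairs with $|n|\le N_{\eps}$ are swallowed by $\bD_{M_{\eps}}(0)$; this is legitimate because part (i) only asserts existence of some admissible $M_{\eps}$, the estimates~\eqref{eq:Rouche_estimate}--\eqref{eq:Delta_estimate} are only required off $\wt{\Omega}_{\eps}$ and so survive enlargement, and the ``same component'' property is monotone under enlarging $\wt{\Omega}_{\eps}$. What each approach buys: yours is much shorter and avoids the component-tracking argument entirely; the paper's is self-contained (it re-derives the asymptotics rather than citing them), keeps $M_{\eps}$ tied only to the Fourier decay of $g_{1,Q},g_{2,Q}$ rather than to the unquantified rate in~\eqref{eq:l.n=l.n0+o(1)}, and its per-component Rouch\'e counting and canonical-ordering construction are precisely the structural ingredients reused later in the uniform statements (Propositions~\ref{prop:incompress.uniform} and~\ref{prop:incompress.holes}), where the constants must not depend on the individual $Q$.
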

\begin{proof}
\textbf{(i)} By Proposition~\ref{prop:sine.type}(ii), the sequence $\L_0 =
\{\l_n^0\}_{n \in \bZ}$ of zeros of $\Delta_0$ is incompressible, is of density
$d_0$ and lies in the strip $\Pi_{h_0}$ for some $d_0 = d_0(B, A) \in \bN$ and
$h_0 = h_0(B, A) > 0$. Set $\eps_0 := (2 d_0)^{-1} = \eps_0(B, A)$ and let
$0 < \eps \le \eps_0$. By Proposition~\ref{prop:sine.type}(iii), there exists
$C_{\eps}^0 = C_{\eps}^0(B, A) > 0$ such that the estimate~\eqref{eq:Delta>=}
for $\Delta_0$ holds,
\begin{equation} \label{eq:Delta0_estimate}
 |\Delta_0(\l)| > C_{\eps}^0 \(e^{-b_1 \Im \l} + e^{-b_2 \Im \l}\),
 \qquad \l \notin \Omega_{\eps}^0.
\end{equation}
Since functions $g_1$ and $g_2$ from the representation~\eqref{eq:Delta=Delta0+}
are summable, it follows from~\cite[Lemma 3.5]{LunMal16JMAA} with $\delta =
C_{\eps}^0/6 = \delta(B, A)$, Lemma~\ref{lem:Delta=Delta0+} and
estimate~\eqref{eq:e1+e2>1} that
\begin{align}
\nonumber
 |\Delta_0(\l) - \Delta_Q(\l)|
 & \le \abs{\int_0^1 g_1(t) e^{i b_1 \l t} dt}
 + \abs{\int_0^1 g_2(t) e^{i b_2 \l t} dt} \\
\nonumber
 & \le 6^{-1} C_{\eps}^0
 \(e^{-b_1 \Im \l} + 1 + e^{-b_2 \Im \l} + 1\) \\
\label{eq:Delta-Delta0}
 & < 2^{-1} C_{\eps}^0
 \(e^{-b_1 \Im \l} + e^{-b_2 \Im \l}\), \qquad |\l| > M_{\eps},
\end{align}
with certain $M_{\eps} = M_{\eps}(Q, B, A) > 0$. Combining
estimate~\eqref{eq:Delta0_estimate} with~\eqref{eq:Delta-Delta0}
yields~\eqref{eq:Rouche_estimate} and~\eqref{eq:Delta_estimate} with $C_\eps =
C_{\eps}^0/2$. Estimate~\eqref{eq:Delta_estimate} yields that all zeros of
$\Delta(\cdot)$ belong to $\wt{\Omega}_{\eps}$, $0 < \eps \le \eps_0$.

\textbf{(ii)} Let $\fC_{\eps,k}^0$, $k \in \bZ$, be the sequence of all
connected components of $\Omega_{\eps}^0$. Since $\L_0$ is an incompressible
sequence of density $d_0$ and $\eps \le (2 d_0)^{-1}$,
Lemma~\ref{lem:incompress.connect} implies that each connected component of
$\Omega_{\eps}^0$ contains at most $d_0$ discs $\bD_{\eps}(\l_n^0)$. Hence
\begin{equation} \label{eq:diam.Cepsk}
 \diam\(\fC_{\eps,k}^0\) \le 2 \eps d_0 \le 1, \quad k \in \bZ.
\end{equation}
Let $\fC_{\eps}$ be a connected component of $\wt{\Omega}_{\eps}$ that contains
$\bD_{M_{\eps}}(0)$. Clearly, $\fC_{\eps}$ is the union of the disc
$\bD_{M_{\eps}}(0)$ and all connected components $\fC_{\eps,k}^0$ that intersect
with this disc. Hence inequality~\eqref{eq:diam.Cepsk} implies that
\begin{equation} \label{eq:Ceps.in.DM}
 \fC_{\eps} \subset \bD_{M_{\eps}+1}(0).
\end{equation}
We can cover $\bD_{M_{\eps}+1}(0)$ with $\ceil{M_{\eps} + 1}$ rectangles
$[t-1, t+1] \times \bR$. Hence, inequality~\eqref{eq:card.incomp} yields that
$\fC_{\eps}$ contains at most $d_0 \cdot \ceil{M_{\eps} + 1}$ entries of $\L_0$.
Due to estimate~\eqref{eq:Rouche_estimate} the Rouch\'e theorem applies and
ensures that in every (necessary bounded) connected component of
$\wt{\Omega}_{\eps}$ the functions $\Delta_0$ and $\Delta_Q = \Delta_0 +
(\Delta_Q - \Delta_0)$ have the same number of zeros counting multiplicity.
Hence, $\Delta_Q(\cdot)$ has at most $d_0 \cdot \ceil{M_{\eps} + 1}$ zeros in
$\fC_{\eps}$ counting multiplicity and at most $d_0$ zeros in any other
connected component of $\wt{\Omega}_{\eps}$.

Due to this for each $\eps$ we can choose ordering of the sequence
$\{\l_n\}_{n \in \bZ}$, in such a way that for each $n \in \bZ$, numbers $\l_n$
and $\l_n^0$ belong to the same connected component of $\wt{\Omega}_{\eps}$.
Note that this ordering is different for different $\eps$. Let's analyze how
this ordering changes when $\eps$ tends to zero. Without loss of generality we
can assume that $M_{\eps}$ is a non-decreasing function of $\eps$. We have
$\eps = \eps_0$ as our initial state of the ordering. Each connected component
has finite number of $\l_n^0$ and ordering is fixed up to a permutation within
the component. With $\eps$ tending to zero each connected component of
$\Omega_{\eps}^0$ which is not yet part of $\fC_{\eps}$ will have a finite
number of events happening. One type of event is a split. This is when some of
the discs $\bD_{\eps}(\l_n^0)$ and $\bD_{\eps}(\l_m^0)$ no longer intersect each
other. For each split we will refine the ordering after the split based on the
newly formed components and will track the ``lifetime'' of each new component
independently. Note that equal eigenvalues $\l_n^0$ and $\l_m^0$ will always
stay in one component together for all $\eps > 0$. The second type of event is
when the disc $\bD_{M_{\eps}}(0)$ ``reaches'' one of the discs
$\bD_{\eps}(\l_n^0)$ from the component we track. In this case this component is
consumed by $\fC_{\eps}$ and we no longer need to change ordering. It is clear
that after consumption ordering will satisfy the property of $\l_n$ and $\l_n^0$
being in $\fC_{\eps}$ simultaneously. Note, that for each $n \in \bZ$ this
process will end in a finite number of steps: either component will split into
individual discs $\bD_{\eps}(\l_n^0)$ that no longer will split and ordering
will be fixed for all multiple eigenvalues equal to $\l_n^0$; or component
$\fC_{\eps}$ will consume the component of $\bD_{\eps}(\l_n^0)$ and so ordering
will not change after that. Hence this process defines a single ordering of the
sequence $\L$ for which $\l_n$ and $\l_n^0$ are in the same connected
component of $\wt{\Omega}_{\eps}$ for all $\eps > 0$ and $n \in \bZ$. Combining
this with the fact that for connected components $\fC_{\eps,k}^0$ we have
$\diam(\fC_{\eps,k}^0) \to 0$ as $\eps \to 0$ uniformly at $k$, yields
relation~\eqref{eq:l.n=l.n0+o(1)}.
\end{proof}
\begin{definition} \label{def:canon.order}
Let $\L = \{\l_n\}_{n \in \bZ}$ be the sequence of zeros of the
characteristic determinant $\Delta_Q(\cdot)$ of the Dirac-type operator $L(Q)$
with summable potential and regular boundary conditions. Let
$\wt{\Omega}_{\eps}$ be defined in~\eqref{eq:Omega.eps}. The ordering of
$\L$ for which $\l_n$ and $\l_n^0$ belong to the same connected component
of $\wt{\Omega}_{\eps}$ for all $\eps > 0$ and $n \in \bZ$, will be called a
\textbf{canonical ordering}. Proposition~\ref{prop:lambdan.ordering} implies
its existence.
\end{definition}
In the sequel we need the following definitions.
\begin{definition} \label{def:sequences}
\textbf{(i)} A sequence $\L := \{\l_n\}_{n \in \bZ}$ of complex numbers is
said to be \textbf{separated} if for some positive $\tau > 0,$
\begin{equation} \label{separ_cond}
 |\l_j - \l_k| > 2 \tau \quad \text{whenever}\quad j \ne k.
\end{equation}
In particular, all entries of a separated sequence are distinct.

\textbf{(ii)} The sequence $\L$ is said to be \textbf{asymptotically
separated} if for some $N \in \bN$ the subsequence $\{\l_n\}_{|n| > N}$ is
separated.
\end{definition}
Let us recall a notion of strictly regular boundary conditions.
\begin{definition} \label{def:strictly.regular}
Boundary conditions~\eqref{eq:BC} are called \textbf{strictly regular}, if they
are regular, i.e. $J_{14} J_{32} \ne 0$, and the sequence of zeros $\l_0 =
\{\l_n^0\}_{n \in \bZ}$ of the characteristic determinant $\Delta_0(\cdot)$ is
asymptotically separated. In particular, there exists $n_0$ such that zeros
$\{\l_n^0\}_{|n| > n_0}$ are geometrically and algebraically simple.
\end{definition}
It follows from Proposition~\ref{prop:Delta.regular.basic} that the sequence
$\L = \{\l_n\}_{n \in \bZ}$ of zeros of $\Delta_Q(\cdot)$ is asymptotically
separated if the boundary conditions are strictly regular.

Assuming boundary conditions~\eqref{eq:BC} to be regular, let us rewrite them in
a more convenient form. Since $J_{14} \ne 0$, the inverse matrix $A_{14}^{-1}$
exists. Therefore writing down boundary conditions~\eqref{eq:BC} as the vector
equation $\binom{U_1(y)}{U_2(y)} = 0$ and multiplying it by the matrix
$A_{14}^{-1}$ we transform these conditions as follows
\begin{equation} \label{eq:BC.new}
\begin{cases}
 \wh{U}_{1}(y) = y_1(0) + b y_2(0) + a y_1(1) = 0, \\
 \wh{U}_{2}(y) = d y_2(0) + c y_1(1) + y_2(1) = 0,
\end{cases}
\end{equation}
with some $a,b,c,d \in \bC$. Now $J_{14} = 1$ and the boundary conditions
~\eqref{eq:BC.new} are regular if and only if $J_{32} = ad-bc \ne 0$. Thus, the
characteristic determinants $\Delta_0(\cdot)$ and $\Delta(\cdot)$ take the form
\begin{align}
\label{eq:Delta0.new}
 \Delta_0(\l) &= d + a e^{i (b_1+b_2) \l} + (ad-bc) e^{i b_1 \l}
 + e^{i b_2 \l}, \\
\label{eq:Delta.new}
 \Delta(\l) &= d + a e^{i (b_1+b_2) \l} + (ad-bc) \varphi_{11}(\l)
 + \varphi_{22}(\l) + c \varphi_{12}(\l) + b \varphi_{21}(\l).
\end{align}
Recall that $a_n \asymp b_n$, $n \in \bZ$, means that there exists
$C_2 > C_1 > 0$ such that $C_1 |b_n| \le |a_n| \le C_2 |b_n|$, $n \in \bZ$. We
will need the following simple property of zeros of $\Delta_0(\cdot)$.
\begin{lemma} \label{lem:ln0.exp.asymp}
Let boundary conditions~\eqref{eq:BC.new} be strictly regular and $\L_0 =
\{\l_n^0\}_{n \in \bZ}$ be the sequence of zeros of $\Delta_0(\cdot)$. Then the
following asymptotic relation holds
\begin{equation} \label{eq:|1+de|+|1+ae|.asymp.1}
 \abs{1 + d e^{-i b_2 \l_n^0}}^2 +
 \abs{1 + a e^{ i b_1 \l_n^0}}^2 \asymp 1, \quad n \in \bZ.
\end{equation}
Moreover, if either $b_1/b_2 \in \bQ$ or $bc=0$, then the sequence $\L_0$ is
separated. I.e. for some $\tau > 0$ we have
\begin{equation} \label{eq:ln0-lm0}
 |\l_n^0 - \l_m^0| > 2 \tau, \qquad n \ne m, \quad n, m \in \bZ.
\end{equation}
\end{lemma}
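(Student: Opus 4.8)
My plan is to start from an elementary factorization of $\Delta_0$. Multiplying \eqref{eq:Delta0.new} by $e^{-ib_2\l}$ and regrouping the four terms one checks directly that
\[
 \Delta_0(\l) = e^{ib_2\l}\Bigl[\bigl(1 + d e^{-ib_2\l}\bigr)\bigl(1 + a e^{ib_1\l}\bigr) - bc\, e^{i(b_1-b_2)\l}\Bigr],
 \qquad \l \in \bC,
\]
so that at every zero $\l_n^0$ of $\Delta_0$ one has $\bigl(1 + d e^{-ib_2\l_n^0}\bigr)\bigl(1 + a e^{ib_1\l_n^0}\bigr) = bc\, e^{i(b_1-b_2)\l_n^0}$. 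By Proposition~\ref{prop:sine.type}(i), applied to $\Delta_0$, the sequence $\L_0$ lies in a strip $\Pi_{h_0}$, $h_0 = h_0(B,A)$; since $b_1<0<b_2$ this yields $e^{-(b_2-b_1)h_0}\le|e^{i(b_1-b_2)\l_n^0}|\le e^{(b_2-b_1)h_0}$ together with $|e^{-ib_2\l_n^0}|\le e^{b_2h_0}$ and $|e^{ib_1\l_n^0}|\le e^{-b_1h_0}$. The last two bounds give at once the upper half of \eqref{eq:|1+de|+|1+ae|.asymp.1} with constant $(1+|d|e^{b_2h_0})^2 + (1+|a|e^{-b_1h_0})^2$, using regularity alone.

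For the lower half I would separate two cases. If $bc\ne0$, the displayed identity gives $|1 + d e^{-ib_2\l_n^0}|\cdot|1 + a e^{ib_1\l_n^0}| = |bc|\,|e^{i(b_1-b_2)\l_n^0}| \ge |bc|e^{-(b_2-b_1)h_0}>0$, and $x^2+y^2\ge2xy$ finishes it. If $bc=0$, then regularity ($ad-bc = ad \ne 0$) forces $a,d\ne0$ and the identity collapses to $\Delta_0(\l) = (d + e^{ib_2\l})(1 + a e^{ib_1\l})$; hence each $\l_n^0$ satisfies either $e^{ib_2\l_n^0} = -d$ (so $1 + d e^{-ib_2\l_n^0} = 0$) or $e^{ib_1\l_n^0} = -1/a$ (so $1 + a e^{ib_1\l_n^0} = 0$). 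In the first situation $\Im\l_n^0 = -b_2^{-1}\ln|d|$, whence $|e^{ib_1\l_n^0}| = |d|^{b_1/b_2}$ and $|1 + a e^{ib_1\l_n^0}| = |a|\,\bigl|e^{ib_1\l_n^0} + 1/a\bigr| \ge \bigl|\,|a|\,|d|^{b_1/b_2} - 1\,\bigr|$; symmetrically the second situation yields $|1 + d e^{-ib_2\l_n^0}| \ge \bigl|\,|d|\,|a|^{b_2/b_1} - 1\,\bigr|$. Both right-hand sides vanish precisely when $b_1\ln|d| + b_2\ln|a| = 0$, i.e. when the two arithmetic progressions of zeros lie on a common horizontal line. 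Under strict regularity this degenerate alignment forces $b_1/b_2\in\bQ$ (otherwise the two progressions approach each other and $\L_0$ fails to be asymptotically separated), and then the full zero set is periodic with no multiple zeros, so the finitely many values $e^{ib_1\l_n^0}$, $e^{-ib_2\l_n^0}$ that occur stay bounded away from $-1/a$, $-1/d$ and the lower bound follows by finiteness. Reconciling this degenerate sub-case with strict regularity is the one genuinely delicate point.

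For the ``moreover'' statement I would pass to a polynomial picture. When $b_1/b_2\in\bQ$, write $b_1 = -n_1\beta$, $b_2 = n_2\beta$ with coprime $n_1,n_2\in\bN$ and $\beta>0$, and set $z := e^{i\beta\l}$; then $z^{n_1}\Delta_0(\l) = P(z) := z^{n_1+n_2} + a z^{n_2} + d z^{n_1} + (ad-bc)$ with $P(0) = ad-bc\ne0$. Thus $\L_0 = \bigcup_{P(w)=0}\{\l : e^{i\beta\l} = w\}$ is a finite union of horizontal arithmetic progressions, all of real step $2\pi/\beta$, lying on the lines $\Im\l = -\beta^{-1}\ln|w|$; distinct roots $w\ne w'$ produce disjoint progressions at a fixed positive distance (either different heights, or the same line with incommensurate offsets), so $\L_0$ is separated. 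When $bc=0$, as above $\L_0$ is the union of the two progressions $\{e^{ib_2\l}=-d\}$ and $\{e^{ib_1\l}=-1/a\}$; these lie either on distinct horizontal lines — each internally separated, with a positive vertical gap — or, forced by strict regularity, on the same line with $b_1/b_2\in\bQ$, where once more the periodic pattern without multiple zeros is separated. Apart from the delicate alignment sub-case already flagged, the rest is routine bookkeeping with the strip bound and with the arithmetic-progression structure of the zeros of $\Delta_0$.
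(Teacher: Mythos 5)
Your proposal is correct and follows essentially the same route as the paper's proof: the factorization $\Delta_0(\l)=e^{i b_2\l}\(1+d e^{-i b_2\l}\)\(1+a e^{i b_1\l}\)-bc\,e^{i b_1\l}$ evaluated at the zeros, the strip inclusion $\L_0\subset\Pi_{h_0}$ giving $e^{i b_j\l_n^0}\asymp 1$, the product identity for $bc\ne 0$, and for $bc=0$ the explicit arithmetic progressions together with the density argument (irrational $b_1/b_2$) and the periodicity argument (rational $b_1/b_2$). The only deviations are minor: you conclude the lower bound in the $bc=0$ case by a direct modulus estimate and, in the aligned rational sub-case, by a finiteness argument (where one should also record explicitly, for the ``moreover'' part, that strict regularity combined with periodicity excludes multiple roots of the polynomial $P$, so the zero sequence counted with multiplicity is genuinely separated), whereas the paper deduces the same bound from the separation relation \eqref{eq:ln0-lm0}.
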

\begin{proof}
Note that
\begin{equation}
 \Delta_0(\l)
 = \(d + e^{i b_2 \l}\) \(1 + a e^{i b_1 \l}\) - b c \cdot e^{i b_1 \l}
 = e^{i b_2\l} \(1 + d e^{-i b_2\l}\) \(1 + a e^{i b_1 \l}\)
 - b c \cdot e^{i b_1 \l}, \quad \l \in \bC.
\end{equation}
Hence
\begin{equation} \label{eq:Delta_0_in_roots}
 \(1 + d e^{-i b_2 \l_n^0}\) \(1 + a e^{i b_1 \l_n^0}\)
 = b c \cdot e^{i (b_1-b_2) \l_n^0}, \quad n \in \bZ.
\end{equation}
According to Proposition~\ref{prop:sine.type}(i), there exists $h \ge 0$ such
that $\{\l_n^0\}_{n \in \bZ} \subset \Pi_h$. Hence
\begin{equation} \label{eq:e.bj.ln.asymp.1}
 e^{i b_j \l_n^0} \asymp 1, \quad n \in \bZ, \quad j \in \{1, 2\}.
\end{equation}
First assuming that $bc \ne 0$ and combining~\eqref{eq:Delta_0_in_roots} with
of~\eqref{eq:e.bj.ln.asymp.1} yields the following estimate with some $C_2 > C_1
> 0$
\begin{equation} \label{eq:||+||>=bc.e}
 C_2 > \abs{1 + d e^{-i b_2 \l_n^0}}^2 + \abs{1 + a e^{ i b_1 \l_n^0}}^2
 \ge 2 \abs{\(1 + d e^{-i b_2 \l_n^0}\)\(1 + a e^{ i b_1 \l_n^0}\)}
 = 2 |bc| \cdot \abs{e^{i (b_1-b_2) \l_n^0}} > C_1, \quad |n| \in \bZ,
\end{equation}
which proves~\eqref{eq:|1+de|+|1+ae|.asymp.1} in this case.

Now let $b c = 0$. In this case $a d \ne 0$ and $\Delta_0(\l) = e^{i b_2\l}
\(1 + d e^{-i b_2\l}\) \(1 + a e^{i b_1 \l}\)$. Let $\L_0^1 =
\{\l_{1,n}^{0}\}_{n \in \bZ}$ and $\L_0^2 = \{\l_{2,n}^{0}\}_{n \in \bZ}$ be the
sequences of zeros of the first and second factor, respectively. Clearly, these
sequences constitute arithmetic progressions lying on the lines, parallel to the
real axis. More precisely,
\begin{equation} \label{eq:l1n.l2n.bc=0}
 \l_{1,n}^{0} = \frac{\arg(-d) + 2 \pi n}{b_2} - i\frac{\ln|d|}{b_2}, \qquad
 \l_{2,n}^{0} = \frac{\arg(-a^{-1}) + 2 \pi n}{b_1} + i\frac{\ln|a|}{b_1},
 \qquad n \in \bZ.
\end{equation}
Since boundary conditions~\eqref{eq:BC.new} are strictly regular, the arithmetic
progressions $\L_0^1$ and $\L_0^2$ are asymptotically separated. Let us prove
relation~\eqref{eq:ln0-lm0}. If $b_1 / b_2 \in \bQ$, then the entire sequence
$\{\l_n^0\}_{n \in \bZ}$ is periodic, and hence it will be separated not just
asymptotically (this is valid not only if $bc=0$, see
Remark~\ref{rem:cond.examples}, case 3, below). If $b_1/b_2 \notin \bQ$, then
the set $\{2 \pi n / b_2 - 2 \pi m / b_1\}_{m, n \in \bZ}$ is everywhere dense
on $\bR$. Hence the arithmetic progressions $\L_0^1$ and $\L_0^2$ should lie on
a different lines, parallel to the real axis, and will be separated as well.
Relation~\eqref{eq:ln0-lm0} implies the following asymptotic relations
\begin{equation} \label{eq:1+de.1+ae.asymp.1}
 1 + d e^{-i b_2 \l_{2,n}^0} \asymp 1, \quad
 1 + a e^{i b_1 \l_{1,n}^0} \asymp 1, \qquad n \in \bZ,
\end{equation}
Relations~\eqref{eq:1+de.1+ae.asymp.1} trivially
imply~\eqref{eq:|1+de|+|1+ae|.asymp.1}.
\end{proof}
\begin{remark} \label{rem:cond.examples}
Let us list some types of \emph{strictly regular} boundary
conditions~\eqref{eq:BC.new}. In all of these cases except 4b the set of zeros
of $\Delta_0$ is a union of finite number of arithmetic progressions.

\begin{enumerate}

\item Regular BC~\eqref{eq:BC.new} for Dirac operator ($-b_1 = b_2 = 1$) are
strictly regular if and only if $(a-d)^2 \ne -4bc$.

\item Separated BC ($a=d=0$, $bc \ne 0$) are always strictly regular.

\item Let $b_1 / b_2 \in \bQ$, i.e. $b_1 = -n_1 \b$, $b_2 = n_2 \b$,
$n_1, n_2 \in \bN$, $\b > 0$ and $\gcd(n_1,n_2)=1$. Since $ad \ne bc$,
$\Delta_0(\cdot)$ is a polynomial at $e^{i \b \l}$ of degree $n_1 + n_2$.
Hence, BC~\eqref{eq:BC.new} are strictly regular if and only if this polynomial
does not have multiple roots. Let us list some cases with explicit conditions.

\begin{enumerate}

\item~\cite[Lemma 5.3]{LunMal16JMAA} Let $ad \ne 0$ and $bc=0$. Then
BC~\eqref{eq:BC.new} are strictly regular if and only if
\begin{equation} \label{eq:bc=0.crit.rat}
 b_1 \ln |d| + b_2 \ln |a| \ne 0 \quad\text{or}\quad
 n_1 \arg(-d) - n_2 \arg(-a) \notin 2 \pi \bZ.
\end{equation}

\item In particular, antiperiodic BC ($a=d=1$, $b=c=0$) are strictly regular if
and only if $n_1 - n_2$ is odd. Note that these BC are not strictly regular in
the case of a Dirac system.

\item~\cite[Proposition 5.6]{LunMal16JMAA} Let $a=0$, $bc \ne 0$. Then
BC~\eqref{eq:BC.new} are strictly regular if and only if
\begin{equation} \label{eq:a=0.crit.rat}
 n_1^{n_1} n_2^{n_2} (-d)^{n_1 + n_2} \ne (n_1 + n_2)^{n_1 + n_2} (-b c)^{n_2}.
\end{equation}

\end{enumerate}

\item Let $\a := -b_1 / b_2 \notin \bQ$. Then the problem of strict regularity of BC is
generally much more complicated. Let us list some known cases:

\begin{enumerate}

\item~\cite[Lemma 5.3]{LunMal16JMAA} Let $ad \ne 0$ and $bc=0$. Then
BC~\eqref{eq:BC.new} are strictly regular if and only if
\begin{equation} \label{eq:bc=0.crit.irrat}
 b_1 \ln |d| + b_2 \ln |a| \ne 0.
\end{equation}

\item~\cite[Proposition 5.6]{LunMal16JMAA} Let $a=0$ and $bc, d \in \bR
\setminus \{0\}$. Then BC~\eqref{eq:BC.new} are strictly regular if and only if
\begin{equation} \label{eq:a=0.crit}
 d \ne -(\a+1)\(|bc| \a^{-\a}\)^{\frac{1}{\a+1}}.
\end{equation}

\end{enumerate}

\end{enumerate}
\end{remark}

\section{Fourier transform estimates} \label{sec:fourier.transform}
\subsection{Generalization of Hausdorff-Young and Hardy-Littlewood theorems}
\label{subsec:HausYoungHardyLittle}
To evaluate deviations of eigenvalues of operators $L(Q)$ and $L(\wt{Q})$ we
extend here classical Hausdorff-Young and Hardy-Littlewood interpolation
theorems for Fourier coefficients (see~\cite[Theorem XII.2.3]{Zig59_v2}
and~\cite[Theorem XII.3.19]{Zig59_v2}, respectively) to the case of arbitrary
incompressible sequence $\L = \{\mu_n\}_{n \in \bZ}$ instead of
$\L = \{2\pi n\}_{n \in\bZ}$.

For efficient estimate of \emph{eigenvectors deviations} in
Section~\ref{sec:eigenfunction.stabil} we will use the following (sublinear)
Carleson transform (the maximal version of the classical Fourier transform)
\begin{equation} \label{eq:cE.f.def}
 \cE[f](\l) := \sup_{N >0}
 \abs{\int_{-N}^N F[f](t) e^{-i \l t}\, dt}, \qquad \l\in \bR.
\end{equation}
where $F[f]$ denotes the classical Fourier transform, $F[f](\l) =
\lim_{N \to \infty} \int_{-N}^N f(t) e^{i \l t}\, dt$.
The most important property of such transforms is the following Carleson-Hunt
theorem (see~\cite[Theorems 6.2.1, 6.3.3]{Gra09}).
\begin{theorem} \label{th:Carleson-Hunt}
For any $p\in (1,\infty)$ the Carleson operator $\cE$ is a bounded operator
from $L^p$ to itself, i.e. there exists a constant $C_p>0$ such that
$$
\|\cE[f]\|_{L^p} \le C_p\|f\|_{L^p}, \qquad f\in L^p(\bR).
$$
\end{theorem}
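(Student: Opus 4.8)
The statement is the celebrated Carleson--Hunt theorem, so the ``proof'' is really a pointer to the deep phase-space machinery of harmonic analysis; in the paper it is simply quoted from~\cite[Theorems 6.2.1, 6.3.3]{Gra09}. Were one to reconstruct it, here is the route I would follow.

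First I would linearize: the $L^p$-boundedness of $\cE$ is equivalent to a uniform bound for the family of operators $f \mapsto \int_{-N(\cdot)}^{N(\cdot)} F[f](t)\,e^{-i(\cdot)t}\,dt$ with $N(\cdot)$ an arbitrary measurable frequency function, and by the Marcinkiewicz interpolation theorem for sublinear operators it suffices to establish a \emph{restricted weak type} $(2,2)$ estimate $\|\cE[\mathbf{1}_E]\|_{L^{2,\infty}} \le C\,|E|^{1/2}$ together with a complementary restricted weak type bound for $p$ close to $1$. This reduces everything to controlling a bilinear form $\langle \cE_N[\mathbf{1}_E],\mathbf{1}_F\rangle$ with $N$ measurable and $E,F$ sets of finite measure.

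The heart of the matter is the time--frequency (Fefferman / Lacey--Thiele) analysis of this bilinear form. I would decompose it into contributions indexed by Heisenberg tiles $p = I_p\times\omega_p$ with $|I_p|\,|\omega_p| = 1$, group the tiles into \emph{trees} (collections sharing a common time--frequency ``tip''), and prove a single-tree estimate via an $L^2$ almost-orthogonality / $TT^*$ argument; the resulting bound is governed by two quantities attached to a tree, its \emph{energy} (an $\ell^2$-size of the wave-packet coefficients of $\mathbf{1}_E$) and its \emph{mass} (the density of its tiles relative to $F$). A greedy selection algorithm then organizes the relevant tiles into a \emph{forest} along which energy and mass decay geometrically in a dyadic parameter, and summing the single-tree bounds over the forest, using Bessel-type and Fefferman-type counting inequalities, yields the restricted weak type $(2,2)$ estimate; interchanging the roles of energy and mass in the selection produces the estimate needed near $p=1$.

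Finally, interpolating between the two restricted weak type endpoints covers the whole range $p \in (1,\infty)$. The genuinely hard step is the forest/selection combinatorics, together with the single-tree and counting estimates that support it; everything else --- the linearization, the reduction to the sets $E,F$, and the interpolation --- is routine. This is precisely why the paper, quite sensibly, only cites the theorem rather than reproving it.
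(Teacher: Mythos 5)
Your proposal matches the paper's treatment: the paper does not prove this statement at all, but simply quotes the Carleson--Hunt theorem from~\cite[Theorems 6.2.1, 6.3.3]{Gra09}, exactly as you indicate. The time--frequency outline you append (linearization, restricted weak type reductions, tiles, trees, energy/mass, forest selection, interpolation) is a fair description of the standard Lacey--Thiele/Hunt route, but it is an outline rather than a proof, which is entirely appropriate here since the paper itself, quite deliberately, only cites the result.
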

For our considerations it is more convenient to consider the following version
(with $g$ in place of $F[g]$) of $\cE$:
\begin{equation} \label{eq:sF.f.def}
\sF[g](\l) := \sup_{x \in [0,1]}
 \abs{\int_0^x g(t) e^{i \l t}\, dt}, \qquad g \in L^p[0,1], \quad \l \in \bC.
\end{equation}
We also put for brevity $\sF[g]^{\theta}(\l) := \(\sF[g](\l)\)^{\theta}$.

Combining Carleson-Hunt Theorem~\ref{th:Carleson-Hunt} with Hausdorff-Young
theorem leads to the following result (see e.g.~\cite{Sav19} for details).
\begin{proposition} \label{prop:CarlHuntHausYoung}
For any $p \in (1, 2]$ the maximal Fourier transform $\sF$ maps $L^p[0,1]$ into
$L^{p'}[0,1]$ boundedly, i.e. the following estimate holds
\begin{equation} \label{eq:carl.hunt}
 \int_{-\infty}^{\infty} \sF[g]^{p'}(x) \,dx
 \le \g_{p} \cdot \|g\|_p^{p'}, \qquad g \in L^p[0,1], \quad 1/p + 1/p' = 1,
\end{equation}
where $\g_{p} > 0$ does not depend on $g \in L^p[0,1]$.
\end{proposition}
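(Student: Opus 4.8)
The plan is to recognise $\sF[g]$, for real $\l$, as a fixed multiple of the Carleson maximal operator applied to the Fourier transform $G:=F[g]$, and then to combine Theorem~\ref{th:Carleson-Hunt} with the Hausdorff--Young inequality. First I would extend $g$ by zero to $\bR$; since $g\in L^p[0,1]\subset L^1[0,1]$, the function $G(\l)=F[g](\l)=\int_0^1 g(t)e^{i\l t}\,dt$ is bounded and continuous, and for real $\l$ one has $\sF[g](\l)=\sup_{x\in[0,1]}\bigl|\int_0^x g(t)e^{i\l t}\,dt\bigr|$, where the supremum may equally be taken over all $x\in\bR$ (for $x\le 0$ the integral is $0$, and for $x\ge 1$ it is constant in $x$, equal to its value at $x=1$).

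Next, working first with $g$ in the dense class $C_c^\infty(0,1)$ (so that $G$ is Schwartz and all manipulations are trivially legitimate), I would insert the Fourier inversion formula $g(t)=\tfrac1{2\pi}\int_{\bR}G(\eta)e^{-i\eta t}\,d\eta$ and apply Fubini's theorem to obtain, for all $x\in\bR$ and $\l\in\bR$,
\[
 \int_0^x g(t)e^{i\l t}\,dt
 =\frac1{2\pi i}\int_{\bR}G(\eta)\,\frac{e^{i(\l-\eta)x}-1}{\l-\eta}\,d\eta
 =\frac1{2\pi i}\Bigl(e^{i\l x}\,\mathrm{p.v.}\!\!\int_{\bR}\frac{G(\eta)e^{-i\eta x}}{\l-\eta}\,d\eta
 -\mathrm{p.v.}\!\!\int_{\bR}\frac{G(\eta)}{\l-\eta}\,d\eta\Bigr),
\]
the combined integrand $\bigl(e^{i(\l-\eta)x}-1\bigr)/(\l-\eta)$ being regular at $\eta=\l$, which makes the splitting into two principal values legitimate. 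Both terms on the right are dominated by the maximal modulated Hilbert transform $\cC[G](\l):=\sup_{x\in\bR}\bigl|\mathrm{p.v.}\!\int_{\bR}\frac{G(\eta)e^{-i\eta x}}{\l-\eta}\,d\eta\bigr|$ (the second term being its value at $x=0$), so that $\sF[g](\l)\le\pi^{-1}\cC[G](\l)$ for a.e.\ $\l\in\bR$.

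It then remains to estimate $\cC[G]$. By the Hausdorff--Young inequality, valid for $1<p\le 2$, we have $G\in L^{p'}(\bR)$ with $\|G\|_{L^{p'}(\bR)}\le c_p\,\|g\|_{L^p[0,1]}$ for a constant $c_p$ depending only on $p$; and since $p'\in[2,\infty)\subset(1,\infty)$, the Carleson--Hunt theorem (Theorem~\ref{th:Carleson-Hunt}), in its standard equivalent formulation for the maximal modulated Hilbert transform $\cC$ — to which the operator $\cE$ of Theorem~\ref{th:Carleson-Hunt} reduces by a classical argument — yields $\|\cC[G]\|_{L^{p'}(\bR)}\le C_{p'}\|G\|_{L^{p'}(\bR)}$. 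Combining these bounds gives $\|\sF[g]\|_{L^{p'}(\bR)}\le\pi^{-1}C_{p'}c_p\,\|g\|_{L^p[0,1]}$, i.e.\ \eqref{eq:carl.hunt} with $\g_p=(\pi^{-1}C_{p'}c_p)^{p'}$, at first for $g\in C_c^\infty(0,1)$. For arbitrary $g\in L^p[0,1]$ I would take $g_n\in C_c^\infty(0,1)$ with $g_n\to g$ in $L^p$; since $\bigl|\int_0^x(g-g_n)e^{i\l t}\,dt\bigr|\le\|g-g_n\|_{L^1[0,1]}\le\|g-g_n\|_{L^p[0,1]}$ uniformly in $x$ and $\l$, it follows that $\sF[g_n]\to\sF[g]$ uniformly on $\bR$, and Fatou's lemma transfers the estimate to $g$.

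The only deep ingredient here is the Carleson--Hunt theorem, which is granted; everything else reduces to bookkeeping. The main obstacle will be carrying out cleanly the reduction in the second paragraph: justifying the inversion/Fubini step (performed on $C_c^\infty(0,1)$ and propagated by density) and invoking the classical equivalence between the symmetric-truncation operator $\cE$ of Theorem~\ref{th:Carleson-Hunt} and the maximal modulated Hilbert transform $\cC$. Once the pointwise bound $\sF[g]\le\pi^{-1}\cC[G]$ is in place, the conclusion follows by a one-line application of Hausdorff--Young and Fatou; full details are in~\cite{Sav19}.
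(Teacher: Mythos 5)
Your argument is correct and follows essentially the same route the paper intends (and delegates to~\cite{Sav19}): control $\sF[g]$ by the Carleson maximal operator applied to a Hausdorff--Young--controlled function, then apply Theorem~\ref{th:Carleson-Hunt} at the exponent $p'$. Note only that the detour through the maximally modulated Hilbert transform $\cC$ and its classical equivalence with $\cE$ can be skipped: since $\supp g\subset[0,1]$, taking $f$ with $F[f]=g$ (so $f\in L^{p'}(\bR)$ by Hausdorff--Young) gives $\sF[g](\l)=\sup_{N>0}\bigl|\int_{-N}^{N}g(t)e^{-i(-\l)t}\,dt\bigr|=\cE[f](-\l)$, so Theorem~\ref{th:Carleson-Hunt} applies verbatim and yields~\eqref{eq:carl.hunt} at once.
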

In the sequel we will need the following lemma which proof substantially relies
on estimate~\eqref{eq:carl.hunt}.
\begin{lemma} \label{lem:fourier.coef.Lp}
Let $g \in L^p[0,1]$ for some $p \in (1, 2]$ and $h \ge 0$. Let us set
\begin{equation} \label{eq:gn.def}
 g_n := \sup\bigl\{\sF[g](\l) : \l \in \Pi_{h,n} \bigr\},
 \qquad \Pi_{h,n} := [n, n+1] \times [-h, h] \subset \bC.
 \qquad n \in \bZ.
\end{equation}
Then the following inequality holds
\begin{equation} \label{eq:sum.gn.p}
 \sum_{n \in \bZ} g_n^{p'} \le C_{p,h} \cdot \|g\|_p^{p'},
 \qquad C_{p,h} := \g_p \cdot e^{p' (h+1)}, \quad 1/p + 1/p' = 1.
\end{equation}
\end{lemma}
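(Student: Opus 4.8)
The plan is to reduce the supremum of $\sF[g]$ over the box $\Pi_{h,n}$ to a pointwise bound on the real line, at the cost of an exponential factor coming from the vertical extent $[-h,h]$ of the box, and then to sum over $n$ using Proposition~\ref{prop:CarlHuntHausYoung}. First I would fix $n \in \bZ$ and $\l = \sigma + i\tau \in \Pi_{h,n}$, so $\sigma \in [n,n+1]$ and $|\tau| \le h$. For $x \in [0,1]$ and $t \in [0,x]$ we have $\bigabs{e^{i\l t}} = e^{-\tau t} \le e^{h}$, hence $\bigabs{\int_0^x g(t) e^{i\l t}\,dt} \le e^{h}\bigabs{\int_0^x g(t) e^{i\sigma t}\,dt}$ is not quite what I want because the phase $e^{i\sigma t}$ still varies with $\sigma$; instead I would write $e^{i\l t} = e^{i\sigma t} e^{-\tau t}$ and integrate by parts, or more simply bound $\sF[g](\l)$ by $e^{h}\,\sF[g](\sigma)$ after absorbing the real part of the exponent — since $|e^{-\tau t}| \le e^h$ uniformly — giving
\begin{equation} \label{eq:plan.sFbox}
 \sF[g](\l) \le e^{h} \cdot \sup_{\sigma \in [n,n+1]} \sF[g](\sigma), \qquad \l \in \Pi_{h,n}.
\end{equation}
Here one must be slightly careful: $\sF[g](\l)$ is a supremum over $x$ of moduli of integrals whose integrand is $g(t)e^{i\sigma t}e^{-\tau t}$, and $e^{-\tau t}$ is a positive decreasing (or increasing) factor, so one can invoke monotonicity or simply the crude bound $\le e^{h}$ on this factor together with the observation that $\sup_x \bigabs{\int_0^x g(t)e^{i\sigma t}e^{-\tau t}\,dt} \le e^h \sup_x \bigabs{\int_0^x g(t)e^{i\sigma t}\,dt}$; the last inequality does require a small argument (e.g. writing the inner integral as a Stieltjes integral against $e^{-\tau t}$ and using that its total variation on $[0,1]$ is controlled by $e^h$), which is the one genuinely non-cosmetic point.

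Granting \eqref{eq:plan.sFbox}, I would then estimate $\sup_{\sigma \in [n,n+1]} \sF[g](\sigma)$ by an $L^{p'}$-average of $\sF[g]$ over a slightly larger interval. The cleanest way is to use the fact that $\sigma \mapsto \sF[g](\sigma)$ is, up to a constant, comparable to a maximal function: for $\sigma \in [n,n+1]$, the quantity $\sF[g](\sigma)$ can be dominated by $C\bigl(\int_{n-1}^{n+2} \sF[g]^{p'}(x)\,dx\bigr)^{1/p'}$ — this follows because $\sF[g]$ differs from $\sF[g]$ at a nearby point only by $\bigabs{\int(\ldots)(e^{i\sigma t}-e^{i x t})\,dt}$, and one can run a standard covering/averaging argument, or alternatively sidestep it entirely by replacing $g_n$ in the definition with the supremum over the real segment only and noting the integrand's Lipschitz dependence on $\sigma$. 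Since $\{[n-1,n+2]\}_{n\in\bZ}$ is a covering of $\bR$ of bounded overlap ($3$), summing gives
\begin{equation} \label{eq:plan.sum}
 \sum_{n \in \bZ} g_n^{p'} \le e^{p' h} \sum_{n \in \bZ} \Bigl(\sup_{\sigma\in[n,n+1]}\sF[g](\sigma)\Bigr)^{p'}
 \le C e^{p'h} \sum_{n\in\bZ} \int_{n-1}^{n+2} \sF[g]^{p'}(x)\,dx \le 3 C e^{p'h} \int_{-\infty}^{\infty} \sF[g]^{p'}(x)\,dx.
\end{equation}
Applying Proposition~\ref{prop:CarlHuntHausYoung} (estimate \eqref{eq:carl.hunt}) to the last integral yields $\sum_n g_n^{p'} \le 3C\g_p e^{p'h}\|g\|_p^{p'}$, and adjusting constants to match the asserted $C_{p,h} = \g_p e^{p'(h+1)}$ completes the proof.

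The main obstacle is step \eqref{eq:plan.sum}: passing from a genuine supremum of $\sF[g]$ over each unit segment to an integrable quantity. The issue is that $\sF[g]$ is already itself a supremum (over $x$), so one is taking a sup of a sup, and the naive bound $\sup_{[n,n+1]} F \le \int_{n}^{n+1} F + \text{oscillation}$ needs the $\sigma$-modulus of continuity of $\sF[g]$, which is not a priori finite for merely $L^p$ data. I expect the right fix is to exploit that, for the purpose of the lemma, one may harmlessly enlarge $g_n$: rather than bounding the sup by an average directly, first reduce to a countable sup over a dense set of $\sigma$'s and then, for each fixed $x$, use that $\sigma \mapsto \int_0^x g(t)e^{i\sigma t}\,dt$ extends to an entire function of $\sigma$ whose growth on $\Pi_{h,n}$ is controlled — so that $\sup_{\Pi_{h,n}}$ of it is bounded (via the maximum principle or a Bernstein-type inequality) by a constant times the $L^{p'}$-norm of its boundary values over $[n-1,n+2]$. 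This is essentially how such ``boxed'' Fourier estimates are handled in the literature (cf.~the reference to~\cite{Sav19}), and it is the only place where more than bookkeeping is needed.
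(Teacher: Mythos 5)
Your first reduction---controlling $\sF[g](\sigma+i\tau)$ for $|\tau|\le h$ by a constant multiple of $\sF[g](\sigma)$ via integration by parts against $e^{-\tau t}$---is sound (with a constant like $e^{h}(1+h)$ rather than $e^{h}$), and it is a legitimate substitute for the device the paper uses in the vertical direction, namely the identity $\sF[g](t+iy)=\sF[\wt g_y](t)$ with $\wt g_y(x)=g(x)e^{-yx}$ and $\|\wt g_y\|_p\le e^{|y|}\|g\|_p$. The genuine gap is your second step: dominating the supremum of $\sF[g]$ over a unit segment or box by a constant times the local average $\bigl(\int_{n-1}^{n+2}\sF[g]^{p'}(t)\,dt\bigr)^{1/p'}$. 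You correctly flag this as the crux, but the fix you propose---``maximum principle or a Bernstein-type inequality'' bounding $\sup_{\Pi_{h,n}}|G_x|$, where $G_x(\l):=\int_0^x g(t)e^{i\l t}\,dt$, by the $L^{p'}$-norm of its values on $[n-1,n+2]$---is not a valid principle for entire functions of exponential type at most $1$. For instance, the rescaled Chebyshev polynomial $F_k(z)=T_k\bigl((2z-2n-1)/3\bigr)$ has exponential type $0$, satisfies $|F_k|\le 1$ on $[n-1,n+2]$, yet $|F_k(n+\tfrac12+ih)|\to\infty$ as $k\to\infty$ for any $h>0$; and since Nikolskii's inequality for polynomials is sharp, even the supremum over the real segment cannot be bounded by the local $L^{p'}$-norm with a constant depending only on the type. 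The maximum principle is of no help either, because you control $G_x$ only on the bottom edge of the box, not on its whole boundary.

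What actually closes the gap---and is the key step of the paper's proof---is the sub-mean-value property: for each fixed $x$ the function $|G_x(\cdot)|^{p'}$ is subharmonic, so for $\l\in\Pi_{h,n}$ one has $|G_x(\l)|^{p'}\le\pi^{-1}\iint_{\bD_1(\l)}|G_x|^{p'}\le\pi^{-1}\int_{-h-1}^{h+1}\int_{n-1}^{n+2}\sF[g]^{p'}(t+iy)\,dt\,dy$. Taking the supremum over $x$ and $\l$, summing in $n$ using the bounded-overlap covering $\{[n-1,n+2]\}_{n\in\bZ}$, and then applying estimate \eqref{eq:carl.hunt} on each horizontal line (to $\wt g_y$, or equivalently after your integration-by-parts reduction to the real line) yields \eqref{eq:sum.gn.p}. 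So the architecture of your plan is right and your vertical estimate is a workable variant of the paper's, but without the subharmonic disc-averaging the passage from a pointwise supremum to a locally integrable quantity---precisely the step you identified as the obstacle---remains unproved, and the substitute you suggest would fail.
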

\begin{proof}
By definition~\eqref{eq:sF.f.def}, $\sF[g](\l) = \sup_{x \in [0,1]} |G_x(\l)|$,
where $G_x(\l) := \int_0^x g(t) e^{i \l t} dt$, $x \in [0,1]$, is an entire
function. Hence the function $|G_x(\cdot)|^{p'}$ is subharmonic, $x \in [0,1]$.

Let $n \in \bZ$ be fixed. It is clear that $\bD_1(\l) \subset [n-1,n+2] \times
[-h-1, h+1]$ for $\l \in \Pi_{h,n}$. Combining
subharmonic property with definition~\eqref{eq:sF.f.def} and this inclusion
yields
\begin{equation} \label{eq:G.l<int}
 |G_x(\l)|^{p'} \le \frac{1}{\pi} \iint\limits_{|t + iy - \l| \le 1}
 |G_x(t + i y)|^{p'} dt\, dy
 \le \frac{1}{\pi} \int_{-h-1}^{h+1} \int_{n-1}^{n+2}
 \sF[g]^{p'}(t + i y) \,dt\, dy, \qquad x \in [0,1], \quad \l \in \Pi_{h,n}.
\end{equation}
Taking supremum over $x$ and $\l$ in~\eqref{eq:G.l<int} with account
of~\eqref{eq:gn.def}, and then summing up resulting inequalities, and taking
into account that the union of intervals $[n-1, n+2)$, $n \in \bZ$, covers every
point of $\bR$ exactly 3 times, we get
\begin{equation} \label{eq:sum.sn<int}
 \sum_{n \in \bZ} g_n^{p'} = \sum_{n \in \bZ}
 \sup_{\genfrac{}{}{0pt}{2}{x \in [0,1]}{\l \in \Pi_{h,n}}}
 |G_x(\l)|^{p'} \le
 \frac{1}{\pi} \sum_{n \in \bZ} \int_{-h-1}^{h+1} \int_{n-1}^{n+2}
 \sF[g]^{p'}(t + i y) \,dt\, dy =
 \frac{3}{\pi} \int_{-h-1}^{h+1}
 \int_{-\infty}^{\infty} \sF[g]^{p'}(t + i y) dt\, dy.
\end{equation}
It is clear from the definition~\eqref{eq:sF.f.def} that
\begin{equation} \label{eq:sFg.t+iy}
 \sF[g](t + iy) = \sF[\wt{g}_y](t), \qquad \wt{g}_y(x) := g(x) e^{-yx},
 \qquad t, y \in \bR, \quad x \in [0,1].
\end{equation}
Hence inequality~\eqref{eq:carl.hunt} implies
\begin{equation} \label{eq:carl.hunt.y}
 \int_{-\infty}^{\infty} \sF[g]^{p'}(t + iy) \,dt
 = \int_{-\infty}^{\infty} \sF[\wt{g}_y]^{p'}(t) \,dt
 \le \g_{p} \cdot \|\wt{g}_y\|_p^{p'}
 \le \g_{p} \cdot e^{p'|y|} \cdot \|g\|_p^{p'}, \qquad y \in \bR.
\end{equation}
Inserting~\eqref{eq:carl.hunt.y} into~\eqref{eq:sum.sn<int} we arrive at the inequality
\begin{align} \label{eq:sum.gnp}
 \sum_{n \in \bZ} g_n^{p'}
 \le \frac{3 \g_{p}}{\pi} \cdot \|g\|_p^{p'}
 \int_{-h-1}^{h+1} e^{p'|y|} dy = C \cdot \|g\|_p^{p'}, \qquad
 C := \frac{6 \g_{p}}{\pi p'} \(e^{p'(h+1)}-1\) < \g_p e^{p'(h+1)} = C_{p,h}.
\end{align}
To estimate $C$ we used inequalities $\pi > 3$ and $p' \ge 2$. This finishes
the proof of the desired estimate~\eqref{eq:sum.gn.p}.
\end{proof}
Now we are ready to state the main result of this section being a generalization of the Hausdorff-Young and
Hardy-Littlewood theorems to the case of
 non-harmonic series with exponents forming an incompressible sequence $\L = \{\mu_n\}_{n \in \bZ}$ of density $d (\in \bN)$ instead of $\L
= \{2\pi n\}_{n \in \bZ}$.
\begin{theorem} \label{th:p.bessel}
Let $p \in (1, 2]$ and let $\L = \{\mu_n\}_{n \in \bZ}$ be an incompressible
sequence of density $d \in \bN$ lying in the strip $\Pi_h$. Then there exists
$C = C(p, h, d) > 0$ that does not depend on $\L$ and such that the following
estimates hold uniformly with respect to $g$ and $\L$
\begin{align}
\label{eq:sum.int.g<g}
 \sum_{n \in \bZ} \sF[g]^{p'}(\mu_n)
 & \le C \cdot \|g\|_p^{p'}, \qquad g \in L^p[0,1], \quad 1/p' + 1/p = 1. \\
\label{eq:sum.int.nu.g<g}
 \sum_{n \in \bZ} (1+|n|)^{p-2} \sF[g]^{p}(\mu_n)
 & \le C \cdot \|g\|_p^p, \qquad g \in L^p[0,1].
\end{align}
\end{theorem}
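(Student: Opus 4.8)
The plan is to reduce both estimates to Lemma~\ref{lem:fourier.coef.Lp} by exploiting the incompressibility of $\L$. The point of incompressibility is that every horizontal unit strip $\Pi_{h,n} := [n,n+1]\times[-h,h]$ can contain at most $d$ of the $\mu_k$'s, so the quantities $\sF[g](\mu_k)$ can be grouped according to which strip $\Pi_{h,n}$ the point $\mu_k$ lies in, and within each strip they are all dominated by $g_n := \sup\{\sF[g](\l) : \l \in \Pi_{h,n}\}$. First I would fix, for each $n\in\bZ$, the set $S_n := \{k\in\bZ : \mu_k \in \Pi_{h,n}\}$; by Definition~\ref{def:incompressible} (applied with $t=n$, noting $\Re\mu_k\in[n,n+1]$ forces $|\Re\mu_k - n|\le 1$) we have $\card(S_n)\le d$, and since $\L\subset\Pi_h$ the sets $\{S_n\}_{n\in\bZ}$ partition $\bZ$. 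Then for any exponent $\theta>0$,
\begin{equation*}
 \sum_{k\in\bZ}\sF[g]^{\theta}(\mu_k)
 = \sum_{n\in\bZ}\sum_{k\in S_n}\sF[g]^{\theta}(\mu_k)
 \le d\cdot\sum_{n\in\bZ} g_n^{\theta}.
\end{equation*}

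For \eqref{eq:sum.int.g<g} I would simply take $\theta=p'$ in the above display and apply Lemma~\ref{lem:fourier.coef.Lp}: the right-hand side is $d\sum_n g_n^{p'}\le d\,C_{p,h}\|g\|_p^{p'}$, which gives \eqref{eq:sum.int.g<g} with $C = d\,C_{p,h} = d\,\g_p e^{p'(h+1)}$, manifestly independent of $\L$.

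For \eqref{eq:sum.int.nu.g<g} the weight $(1+|k|)^{p-2}$ (with $p-2\le 0$) must be converted into a weight depending only on $n$. The key observation is that if $\mu_k\in\Pi_{h,n}$ then $k$ cannot be too far from $n$ in index: since $\L$ is incompressible of density $d$, a crude counting shows $\Re\mu_k$ grows at least linearly in $|k|$ up to a bounded multiplicative and additive constant, more precisely $|k|\le d\,(|n|+1)+$const, equivalently $1+|n|\ge c(1+|k|)$ for a constant $c=c(d)>0$. Hence $(1+|k|)^{p-2}\le c^{p-2}(1+|n|)^{p-2}$ for all $k\in S_n$, and therefore
\begin{equation*}
 \sum_{k\in\bZ}(1+|k|)^{p-2}\sF[g]^{p}(\mu_k)
 \le c^{p-2}\sum_{n\in\bZ}(1+|n|)^{p-2}\sum_{k\in S_n}\sF[g]^{p}(\mu_k)
 \le c^{p-2}\,d\sum_{n\in\bZ}(1+|n|)^{p-2} g_n^{p}.
\end{equation*}
It remains to bound $\sum_n (1+|n|)^{p-2} g_n^p$. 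Here I would invoke the Hardy--Littlewood inequality in the form used classically (\cite[Theorem XII.3.19]{Zig59_v2}): for a sequence $(g_n)$ with $\sum |g_n|^{p'}<\infty$ one has $\sum (1+|n|)^{p-2}|g_n|^p \le C_p(\sum|g_n|^{p'})^{p/p'}$ — this is exactly the duality/rearrangement inequality $\|(a_n)\|_{\ell^{p,?}}$ comparison that underlies Hardy--Littlewood, valid since $1<p\le 2$ so $p\le p'$. Combining this with the bound $\sum_n g_n^{p'}\le C_{p,h}\|g\|_p^{p'}$ from Lemma~\ref{lem:fourier.coef.Lp} yields $\sum_n(1+|n|)^{p-2}g_n^p \le C_p C_{p,h}^{p/p'}\|g\|_p^p$, and chaining the constants gives \eqref{eq:sum.int.nu.g<g}.

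The main obstacle I anticipate is the second step of the weight comparison: justifying cleanly that $1+|n|\asymp 1+|k|$ uniformly over $k\in S_n$ with a constant depending only on $d$ (and not on $\L$). This requires a careful counting argument from incompressibility — essentially that the number of $\mu_k$ with $|\Re\mu_k|\le R$ is between $\lfloor R\rfloor/(\text{something})$ and $d(\lceil R\rceil+1)$ — together with keeping track that the enumeration of $\L$ by $\bZ$ is compatible with the order of real parts up to a bounded shift. One should state this as a short lemma on incompressible sequences (of the flavor of Lemma~\ref{lem:incompress.connect}) before entering the main estimate. Everything else is a routine application of Lemma~\ref{lem:fourier.coef.Lp} and the scalar Hardy--Littlewood inequality; the subharmonicity input has already been absorbed into Lemma~\ref{lem:fourier.coef.Lp}, so Carleson--Hunt does not need to be re-invoked here.
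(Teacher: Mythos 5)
Your proof of \eqref{eq:sum.int.g<g} is correct and is essentially the paper's argument: group the $\mu_k$ by unit strips, use incompressibility to get at most $d$ points per strip, and apply Lemma~\ref{lem:fourier.coef.Lp}. The proof of the weighted estimate \eqref{eq:sum.int.nu.g<g}, however, has two genuine gaps. First, the weight-transfer step is both mis-derived and false in general. From your claim $1+|n|\ge c(1+|k|)$ and $p-2\le 0$ one gets $(1+|n|)^{p-2}\le c^{p-2}(1+|k|)^{p-2}$, i.e.\ the \emph{reverse} of the inequality you write; what you actually need is $1+|k|\ge c(1+|n|)$ for $k\in S_n$, a \emph{lower} bound on the index in terms of the strip position. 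Incompressibility gives only an upper bound on the number of points per strip and imposes nothing on the enumeration, so no such bound is available: already for the monotone sparse sequence $\mu_k\asymp k^3$ (incompressible of density $1$) a point in the strip $[n,n+1]$ has index $|k|\asymp|n|^{1/3}$, and $(1+|k|)^{p-2}/(1+|n|)^{p-2}\to\infty$, so the comparison fails with any constant depending only on $d$. Second, the sequence inequality you invoke, $\sum_n(1+|n|)^{p-2}|g_n|^{p}\le C_p\bigl(\sum_n|g_n|^{p'}\bigr)^{p/p'}$, is false: the weighted sum is (up to constants) the $\ell^{p',p}$ Lorentz quasi-norm, and for $p<p'$ the embedding goes the other way, $\ell^{p',p}\subset\ell^{p'}$. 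For instance, with $p=3/2$, $p'=3$, the decreasing sequence $g_n=n^{-1/3}(\log n)^{-1/2}$, $n\ge2$, has $\sum g_n^{p'}<\infty$ while $\sum n^{p-2}g_n^{p}=\infty$. This is precisely why the classical Hardy--Littlewood theorem is not a corollary of Hausdorff--Young, and why \eqref{eq:sum.int.nu.g<g} cannot be deduced from \eqref{eq:sum.int.g<g} by a purely sequence-space argument.

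The paper proves \eqref{eq:sum.int.nu.g<g} differently: it considers the sublinear map $T:f\mapsto\{n\,\sF[f](\mu_n)\}_{n\in\bZ}$ on $\bZ$ equipped with the measure $\nu(n)=(1+|n|)^{-2}$, verifies a strong $(2,2)$ bound (this is exactly \eqref{eq:sum.int.g<g} with $p=2$) and a weak $(1,1)$ bound coming from the trivial estimate $\sF[f](\mu_n)\le e^{h}\|f\|_1$, and then applies the Marcinkiewicz interpolation theorem; since $\sum_n|n|^{p}(1+|n|)^{-2}\sF[f]^{p}(\mu_n)$ dominates the weighted sum up to the single $n=0$ term, the estimate follows. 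Note that this route is enumeration-independent and needs no lower density of $\L$, which is unavoidable given that the theorem assumes neither. Your argument could be repaired by dropping the weight-transfer and the sequence Hardy--Littlewood step and running this interpolation scheme directly (your strip decomposition and Lemma~\ref{lem:fourier.coef.Lp} then enter only through the $p=2$ case).
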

\begin{proof}
For brevity let us set
\begin{equation}
 \sF_f := \sF[f], \qquad \sF_f^{\a}(\l) := \Bigl(\sF[f](\l)\Bigr)^{\a},
 \qquad f \in L^1[0,1], \quad \l \in \bC, \quad \a > 0.
\end{equation}

\textbf{(i)} Set $m_n := [\Re \mu_n]$, $n \in \bZ$, where $[x]$ denotes the integer part
of $x$. Since $\{\mu_n\}_{n \in \bZ}$ is an incompressible sequence of density $d$, it
follows that for every $k \in \bZ$ there are at most $d$ indexes $n \in \bZ$ with
$m_n = k$. It is clear from the definition~\eqref{eq:gn.def} of $g_n$ that $\sF_g(\mu_n)
\le g_{m_n}$, $n \in \bZ$. Summing up the $p'$th powers of these inequalities, setting
$C := d \cdot C_{p,h}$, then applying Lemma~\ref{lem:fourier.coef.Lp} (inequality
\eqref{eq:sum.gn.p}) and taking observation on multiplicities of $m_n$ into account, we
arrive at
\begin{equation}
 \sum_{n \in \bZ} \sF_g^{p'}(\mu_n) \le \sum_{n \in \bZ} g_{m_n}^{p'}
 \le d \sum_{k \in \bZ} g_{k}^{p'} \le C \cdot \|g\|_p^{p'}.
\end{equation}

\textbf{(ii)} Now let us prove~\eqref{eq:sum.int.nu.g<g}. Consider a mapping
\begin{equation} \label{eq:oper_T_for_cFourier}
 T:\ f \to \{n \sF_f(\mu_n)\}_{n \in \bZ}
\end{equation}
defined on $L^1[0,1]$. Equip the set of integers $\bZ$ with a (finite) discrete
measure $\nu$ by setting $\nu(n) = (1+|n|)^{-2}$, $n \in \bZ$.
Inequality~\eqref{eq:sum.int.g<g} applied with $g=f$ and $p=2$ implies that
\begin{equation} \label{eq:L2-l2-weight_estimate}
 \|T f\|^2_{l^2\bigl(\bZ;\nu\bigr)}
 = \sum_{n\in \bZ} \frac{|(T f)(n)|^2}{(1+|n|\bigr)^{2}}
 = \sum_{n\in\bZ}\frac{n^2}{(1+|n|)^2} \sF_f^2(\mu_n)
 \le \sum_{n\in\bZ} \sF_f^2(\mu_n) \le N_2^2 \|f\|^2_2,
\end{equation}
where $N_2 = C(2, h, d)$, i.e. the mapping $T$ boundedly maps $L^2[0,1]$ into
the weighted space $l^2(\bZ; \nu)$ and the norm of this mapping is at most
$N_2$.

Next denote by $l^1_w(\bZ; \nu)$ a weak $l^1(\bZ; \nu)$ space. We show that the
mapping
\begin{equation}
 T: L^1[0,1] \to l^1_w(\bZ; \nu)
\end{equation}
is well defined. To this end we set $E_t :=
\{n \in \bZ:\ |n| \sF_f(\mu_n) > t\}$. Since $|\Im \mu_n| \le h$, $ n \in \bZ$,
it follows from~\eqref{eq:sF.f.def} that $\sF_f(\mu_n) \le e^h \cdot \|f\|_1$,
$n \in \bZ$. Therefore one gets
\begin{equation} \label{eq:weak-type_estimate}
 \nu(E_t) = \sum_{|n| \sF_f(\mu_n) > t} (1+|n|)^{-2}
 \le \sum_{|n| > t / (e^h \|f\|_1)}
 (1+|n|)^{-2} \le \frac{2 e^h \|f\|_1}{t}.
\end{equation}
This estimate means that $Tf \in l^1_w(\bZ; \nu)$ for each $f \in L_1[0,1]$,
i.e. the mapping $T$ has a weak type $(1,1)$ with a norm not exceeding
$N_1 := 2 e^h$. Since for each $\l \in \bC$, the functional $\sF_f(\l)$ is
sublinear,
\begin{equation}
 \sF_{f_1+f_2}(\l) \le \sF_{f_1}(\l) + \sF_{f_2}(\l),
 \qquad f_1, f_2 \in L^1[0,1],
\end{equation}
it follows that $T$ is a quasilinear operator with parameter $\kappa \le 2$ in
both cases we considered. Combining estimates~\eqref{eq:L2-l2-weight_estimate}
and~\eqref{eq:weak-type_estimate} and applying the Marcinkiewicz theorem
(\cite[Theorem 1.3.1]{BergLof76},~\cite[Theorem XII.4.6]{Zig59_v2})
we conclude that the mapping $T$ is of type $(p,p)$ for each $p \in (1,2]$ with
a norm not exceeding $c_{\kappa,p} N_1^{2/p-1} N_2^{2-2/p}$ for some constant
$c_{\kappa,p} > 0$, which proves~\eqref{eq:sum.int.nu.g<g}.
\end{proof}
\begin{corollary} \label{cor:p.bessel_with_sine_type_null_set}
Let $\L = \{\mu_n\}_{n\in \bZ}$ be sequence of zeros of a sine-type function
$\Phi(\cdot)$ with the width of indicator diagram $1$. Then for any $p \in
(1, 2]$ estimates~\eqref{eq:sum.int.g<g} and~\eqref{eq:sum.int.nu.g<g} hold
uniformly in $g \in L^p[0,1]$ and $\L$.
\end{corollary}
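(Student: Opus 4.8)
The plan is to deduce Corollary~\ref{cor:p.bessel_with_sine_type_null_set} directly from Theorem~\ref{th:p.bessel} by verifying that the zero set of a sine-type function with width of indicator diagram $1$ is an incompressible sequence lying in a horizontal strip. Once these two structural properties are established, estimates~\eqref{eq:sum.int.g<g} and~\eqref{eq:sum.int.nu.g<g} follow immediately from Theorem~\ref{th:p.bessel} with the appropriate $d$ and $h$, and the uniformity in $g$ is inherited verbatim.

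\textbf{Step 1: Localization in a strip.} First I would recall the standard fact (going back to Levin; see e.g. the monograph~\cite{LevSar88} or Levin's book on entire functions) that a sine-type function $\Phi(\cdot)$ is bounded below and above on lines $\Im \l = \pm H$ for $H$ large, and more precisely that all its zeros lie in some horizontal strip $\Pi_h = \{|\Im \l| \le h\}$. This is part of the definition/basic theory of sine-type functions: the indicator diagram being a horizontal segment forces the zeros to accumulate only along the real direction, and the lower bound $|\Phi(\l)| \ge \delta e^{|\Im \l|/2}$ for $|\Im \l| \ge h$ precludes zeros outside $\Pi_h$. So there is $h = h(\Phi) \ge 0$ with $\L \subset \Pi_h$.

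\textbf{Step 2: Incompressibility.} Next I would show $\L$ is incompressible of some density $d = d(\Phi)$. Using the lower bound for $|\Phi|$ away from the zeros and the exponential type estimate $|\Phi(\l)| \le C e^{|\Im \l|/2}$, an application of Jensen's formula (or the argument principle) on a disc $\bD_{R}(t)$ with $R$ a fixed constant, say $R = 2$, bounds the number of zeros in $[t-1,t+1]\times\bR \subset \bD_2(t+ish)$ by a constant independent of $t \in \bR$. Concretely, the number of zeros of $\Phi$ in $\bD_2(z_0)$ is at most $\frac{1}{\ln 2}\bigl(\ln \max_{|\l - z_0| = 4}|\Phi(\l)| - \ln|\Phi(z_0)|\bigr)$ for a suitably chosen center $z_0$ off the zero set, and both quantities grow at most linearly and are bounded below uniformly in the real shift, giving a uniform bound $d$. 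Hence~\eqref{eq:card.incomp} holds and $\L$ is incompressible of density $d$.

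\textbf{Step 3: Conclusion.} With $\L$ incompressible of density $d$ and contained in $\Pi_h$, Theorem~\ref{th:p.bessel} applies with this $d$ and $h$, yielding constants $C = C(p,h,d) = C(p,\Phi)$ for which both~\eqref{eq:sum.int.g<g} and~\eqref{eq:sum.int.nu.g<g} hold uniformly in $g \in L^p[0,1]$; since the constant in Theorem~\ref{th:p.bessel} depends only on $(p,h,d)$ and not on the particular incompressible sequence, uniformity in $\L$ over all sine-type functions with a fixed strip and density is automatic. \textbf{The main obstacle} is not deep: it is simply marshalling the classical facts about sine-type functions (the strip localization and the uniform zero-counting bound) cleanly; both are standard, so this corollary is essentially a dictionary translation of Theorem~\ref{th:p.bessel}, and I would present it as such in a few lines.
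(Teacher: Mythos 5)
Your proposal is correct and follows essentially the same route as the paper: the paper's proof is precisely to note that the null set of a sine-type function lies in a strip and is incompressible (citing Levin, Katsnelson, and Proposition~\ref{prop:sine.type}(ii)) and then to invoke Theorem~\ref{th:p.bessel}, whose constant depends only on $(p,h,d)$. You merely supply a short Jensen-type justification of the classical strip/incompressibility facts in place of the citations, which changes nothing essential.
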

\begin{proof}
The proof is immediate from Theorem~\ref{th:p.bessel} if one notes that the null
set of sine-type function $\Phi(\cdot)$ is always incompressible
(see~\cite{Lev61},~\cite{Katsn71}, and Proposition~\ref{prop:sine.type}(ii)).
\end{proof}
Next we present a version of Bessel type inequalities, where the maximal version
$\cF$ of Fourier transform is replaced by the classical one. It is an immediate
consequence of Theorem~\ref{th:p.bessel}. However, we present a direct proof
which is elementary in character because it does not involve Carleson-Hunt
Theorem~\ref{th:Carleson-Hunt}.
\begin{proposition} \label{th:p.bessel_for_ordinary_Fourier}
Let $p \in (1, 2]$, $1/p' + 1/p = 1$, let $\L = \{\mu_n\}_{n \in \bZ}$ be an
incompressible sequence of density $d \in \bN$ lying in the strip $\Pi_h$, and $G(\l) =
G_g(\l) := \int_0^1 g(t) e^{i b \l t}\,dt$. Then there exists $C = C(b, p, h, d) > 0$
that does not depend on $\L$ and such that the following estimates hold \emph{uniformly
with respect to $g \in L^p[0,1]$ and $\L$}
\begin{align}
\label{eq:sum.int.g<g_Fourier}
 \sum_{n \in \bZ} |G(\mu_n)|^{p'} = \sum_{n \in \bZ} \abs{\int_0^1 g(t) e^{i b \mu_n t}\,dt}^{p'}
 & \le C \cdot \|g\|_p^{p'}, \qquad g \in L^p[0,1]. \\
\label{eq:sum.int.nu.g<g_Fourier}
 \sum_{n \in \bZ} (1+|n|)^{p-2} |G(\mu_n)|^{p}
 & \le C \cdot \|g\|_p^p, \qquad g \in L^p[0,1].
\end{align}
\end{proposition}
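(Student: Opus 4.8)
The plan is to deduce Proposition~\ref{th:p.bessel_for_ordinary_Fourier} directly from Theorem~\ref{th:p.bessel} together with an elementary ``rescaling + covering'' argument that replaces the deep Carleson--Hunt input by the classical Hausdorff--Young and Hardy--Littlewood theorems. First observe that $G_g(\l)=\int_0^1 g(t)e^{ib\l t}\,dt = F[g](b\l)$ in the notation of~\eqref{eq.Fg.sup.Gx_Intro}, so $|G_g(\mu_n)| = |F[g](b\mu_n)|$; equivalently, writing $g_b(s):=b^{-1}g(s/b)$ on $[0,|b|]$ and extending by zero, one has $|G_g(\mu_n)| \le \sF[\widetilde g](\mu_n)$ for a suitable $L^p$-function $\widetilde g$ supported on a fixed bounded interval, with $\|\widetilde g\|_p \le C(b,p)\|g\|_p$. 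Since $|F[g]|(\l)\le \sF[g](\l)$ trivially, the crude route is simply to invoke Theorem~\ref{th:p.bessel} applied to the incompressible sequence $\{b\mu_n\}_{n\in\bZ}$ (which is incompressible of density $d'=d\ceil{|b|}$ and lies in the strip $\Pi_{|b|h}$ by Lemma~\ref{lem:incompress.M}-type reasoning), yielding~\eqref{eq:sum.int.g<g_Fourier} and~\eqref{eq:sum.int.nu.g<g_Fourier} immediately. That already proves the statement; the point of the Proposition, however, is to give a self-contained elementary proof, so I would instead present the following direct argument.

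For the elementary proof I would mimic the structure of the proof of Theorem~\ref{th:p.bessel} but with the subharmonic-majorization step replaced by a direct interpolation. Define $G_n := \sup\{|G_g(\l)| : \l\in\Pi_{h,n}\}$ with $\Pi_{h,n}=[n,n+1]\times[-h,h]$ as in~\eqref{eq:gn.def}. Since $G_g$ is entire, $|G_g|^{p'}$ is subharmonic, and the mean-value inequality over a disc of radius $1$ gives $G_n^{p'} \le \frac1\pi\int_{-h-1}^{h+1}\int_{n-1}^{n+2} |G_g|^{p'}(t+iy)\,dt\,dy$, exactly as in~\eqref{eq:G.l<int}--\eqref{eq:sum.sn<int}. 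Summing over $n$ and using that the intervals $[n-1,n+2)$ cover $\bR$ threefold reduces matters to bounding $\int_{-\infty}^\infty |G_g|^{p'}(t+iy)\,dt$ for $|y|\le h+1$. Now $G_g(t+iy) = \int_0^1 g(s)e^{ibys\,i\cdot(-1)}e^{ibts}\,ds$ — more precisely $G_g(t+iy)=\int_0^1 (g(s)e^{-bys})e^{ibts}\,ds$ is the value at $bt$ of the classical Fourier transform of $g(\cdot)e^{-by\cdot}\in L^p[0,1]$, so after the substitution $u=bt$ the classical Hausdorff--Young inequality gives $\int_{\bR}|G_g|^{p'}(t+iy)\,dt = |b|^{-1}\int_{\bR}|F[g_y]|^{p'}(u)\,du \le |b|^{-1}C_p\|g_y\|_p^{p'} \le |b|^{-1}C_p e^{p'|b|(h+1)}\|g\|_p^{p'}$, where $g_y(s):=g(s)e^{-bys}$. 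This yields~\eqref{eq:sum.int.g<g_Fourier} by the same bookkeeping as in~\eqref{eq:sum.gnp}, with $C=C(b,p,h,d)$, after noting (as in part (i) of the proof of Theorem~\ref{th:p.bessel}) that incompressibility of density $d$ means each integer $m_n=[\Re\mu_n]$ is hit at most $d$ times, so $\sum_n |G_g(\mu_n)|^{p'} \le \sum_n G_{m_n}^{p'} \le d\sum_k G_k^{p'}$.

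For the weighted estimate~\eqref{eq:sum.int.nu.g<g_Fourier} I would again follow part (ii) of the proof of Theorem~\ref{th:p.bessel} verbatim, but using the classical Hausdorff--Young bound just established in place of~\eqref{eq:sum.int.g<g}: the operator $T: g\mapsto \{n\,G_g(\mu_n)\}_{n\in\bZ}$ maps $L^2[0,1]$ into $l^2(\bZ;\nu)$ with $\nu(n)=(1+|n|)^{-2}$ (applying~\eqref{eq:sum.int.g<g_Fourier} with $p=2$ and dropping the harmless factor $n^2/(1+|n|)^2\le 1$), and $T$ has weak type $(1,1)$ into $l^1_w(\bZ;\nu)$ because $|G_g(\mu_n)|\le e^{|b|h}\|g\|_1$ uniformly in $n$, giving $\nu(\{|n||G_g(\mu_n)|>t\}) \le 2e^{|b|h}\|g\|_1/t$ exactly as in~\eqref{eq:weak-type_estimate}. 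Since $g\mapsto G_g(\l)$ is linear (so $T$ is sublinear, hence quasilinear with parameter $\kappa\le 1$), the Marcinkiewicz interpolation theorem applied between these endpoints gives that $T$ is of type $(p,p)$ for $p\in(1,2]$, which is precisely~\eqref{eq:sum.int.nu.g<g_Fourier}. The only mildly delicate point — and the one I would write out carefully — is the constant tracking through the change of variables $u=bt$ and the Gaussian-type weight $e^{-bys}$, to confirm $C$ depends only on $b,p,h,d$ and not on $\L$ or $g$; everything else is a routine transcription of the already-proved Theorem~\ref{th:p.bessel}, with ``Hausdorff--Young for $[0,1]$'' substituted for ``Carleson--Hunt''. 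I do not expect any serious obstacle here, which is consistent with the paper's remark that this Proposition is ``elementary in character.''
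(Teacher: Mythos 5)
Your proof is correct and follows essentially the same route as the paper's: subharmonic mean-value estimates over the strip combined with the classical Hausdorff--Young theorem yield \eqref{eq:sum.int.g<g_Fourier}, and the linear operator $T:g\mapsto\{n\,G_g(\mu_n)\}_{n\in\bZ}$ with the strong $(2,2)$ bound, the weak-$(1,1)$ bound via $|G_g(\mu_n)|\le e^{|b|h}\|g\|_1$, and Marcinkiewicz interpolation yield \eqref{eq:sum.int.nu.g<g_Fourier}, exactly as in the paper (which also notes, as you do, that the statement follows at once from Theorem~\ref{th:p.bessel}). The only cosmetic difference is in part (i): on each horizontal line you apply Hausdorff--Young to $g(s)e^{-bys}$ directly (mirroring the proof of Lemma~\ref{lem:fourier.coef.Lp}), whereas the paper bounds $\int_{\bR}|G(x+iy)|^{p'}dx$ by the Plancherel--P\'olya-type estimate for entire functions of exponential type in $L^{p'}(\bR)$ following Katsnelson and Levin; both versions are elementary and give the same dependence $C=C(b,p,h,d)$.
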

\begin{proof}
\textbf{(i)} It follows from the definition of $G(\cdot)$ that it is an entire
function of exponential type not exceeding $|b|$. Moreover, since $p \in
(1, 2]$, the Hausdorff-Young inequality for the Fourier transform (see
e.g.~\cite[Theorem 1.2.1.]{BergLof76},~\cite[Chapter 5.1]{SteinWei71}) ensures
that $G(\cdot) \in L^{p'}({\bR})$, and
\begin{equation} \label{eq:Gp'<C1.gp}
 \|G\|_{p'} := \(\int_{-\infty}^{+\infty} |G(\l)|^{p'} d\l\)^{1/p'}
 \le C_1(b,p) \cdot \|g\|_p,\qquad p \in (1, 2],
\end{equation}
where $C_1(b) > 0$ does not depend on $p$ and $g$. Therefore $G \in
L^{p'}_{\sigma}$, with $\sigma := |b|$.

To evaluate the left-hand side of~\eqref{eq:sum.int.g<g_Fourier} we repeat the
reasoning of~\cite[Lemma~2]{Katsn71}(see also~\cite[Setion 20.1]{Lev96}) paying
attention to the dependence of the constant $C$ on parameters involved.
Since $\L$ is an incompressible sequence of the density $d$ lying in the strip
$\Pi_h$, then every point of $\bC$ is covered by at most $d$ closed discs
$\ol{\bD_{1}(\mu_n)}$, $n \in \bZ$. Combining this fact with the subharmonicity
of $|G(z)|^{p'}$ implies
\begin{equation} \label{eq:sum.G.lk<int}
 \sum_{n \in \bZ} |G(\mu_n)|^{p'} \le \frac{1}{\pi}\sum_{n \in \bZ}
 \ \iint\limits_{|z - \l_n| \le 1}|G(x + i y)|^{p'} dx\, dy
 \le \frac{d}{\pi} \
 \iint\limits_{|\Im z| \le h + 1} |G(x + i y)|^{p'} dx\, dy
 \le \frac{2d}{\pi} \int_{0}^{h+1} e^{p'|b| y}\, dy \cdot \|G\|_{p'}^{p'}.
\end{equation}
Combining estimate~\eqref{eq:Gp'<C1.gp} with~\eqref{eq:sum.G.lk<int} and setting
\begin{equation}
 C_2(b, p, h, d) := \frac{2d}{\pi} \int_{0}^{h+1} e^{p' |b| y} dy =
 \frac{2d(e^{p'|b|(h+1)}-1)}{\pi p'|b|} \qquad \text{and} \qquad
 C(b, p, h, d) = C_2(b, p, h, d)\cdot C_1(b,p)^{p'},
\end{equation}
we arrive at~\eqref{eq:sum.int.g<g_Fourier}.

\textbf{(ii)} Now we define the operator $T$ by
formula~\eqref{eq:oper_T_for_cFourier} but with $F$ instead of $\cF$, i.e. we
put
\begin{equation} \label{eq:oper_T_for_ordinary_Fourier}
 T:\ f \to \{n G_f(\mu_n)\}_{n \in \bZ}.
\end{equation}
The rest of reasoning is just a repetition of that in the proof of
Theorem~\ref{th:p.bessel} while instead of~\eqref{eq:sum.int.g<g} we apply
estimate~\eqref{eq:sum.int.g<g_Fourier} with $p=2$.
\end{proof}
Next we also extend inverse statements of the Hausdorff-Young and
Hardy-Littlewood theorems to the case of non-harmonic exponentials series with
exponents $\L = \{\mu_n\}_{n \in \bZ}$ forming the null set of a sine type
entire function instead of $\L = \{2\pi n\}_{n \in \bZ}$.
\begin{proposition} \label{prop:inverse.sine.type}
Let $p \in (1, 2]$ and $p' = p/(p-1)$. Let $F(\cdot)$ be a sine-type function
with the width of indicator diagram $1$. Assume also that a sequence of its
zeros $\L = \{\mu_n\}_{n\in \bZ}$ is separated.
Then there exists $C = C(p, \L) > 0$ such that the following statements hold:

(i) For any sequence $\{a_n\}_{n \in \bZ} \in l^p(\bZ)$ the series
\begin{equation} \label{eq:exponent_series}
 \sum_{n \in \bZ} a_n e^{i \mu_n x} (=: f(x))
\end{equation}
converges in $L^{p'}[0,1]$ to a certain function $f \in L^{p'}[0,1]$ and the
following estimate holds
\begin{equation} \label{eq:Hausd-Yang_type_estimate}
 \|f\|_{L^{p'}[0,1]} \le C \cdot \|a\|_{l^p(\bZ)}.
\end{equation}

(ii) For any sequence $\{a_n\}_{n \in \bZ} \in l^{p'}(\bZ; (1+|n|)^{{p'}-2})$ the
series~\eqref{eq:exponent_series} converges in $L^{p'}[0,1]$ to a certain
function $f \in L^{p'}[0,1]$ and the following estimate holds
\begin{equation} \label{eq:Hausd-Yang_type_estimate_2}
 \|f\|_{L^{p'}[0,1]}^{p'} \le C \cdot \sum_{n \in \bZ} (1+|n|)^{{p'}-2} |a_n|^{p'}.
\end{equation}
\end{proposition}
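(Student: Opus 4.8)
The plan is to reduce both statements to the classical inverse Hausdorff--Young and Hardy--Littlewood theorems for ordinary trigonometric series by exploiting the fact that $\L=\{\mu_n\}_{n\in\bZ}$, being the zero set of a sine-type function of indicator-diagram width $1$ and moreover separated, forms a \emph{Riesz--Fischer} sequence (equivalently, a Bessel sequence from below) for the exponentials in $L^2[0,1]$, and that a separated zero set of a sine-type function has the property that $\{e^{i\mu_n x}\}$ is a uniformly minimal system in $L^q[0,1]$ for every $q\in(1,\infty)$. Concretely, the classical Levin--Golovin / Pavlov results (see \cite{Lev61},~\cite{Lev96},~\cite{Katsn71}) guarantee that such a separated sequence is an \emph{interpolating sequence} of the relevant Paley--Wiener type; in particular, for $q\in(1,\infty)$ the exponential system $\{e^{i\mu_n x}\}_{n\in\bZ}$ forms an unconditional basis of its closed linear span in $L^q[0,1]$, with basis constants depending only on $p$ and $\L$ (through the separation constant $\tau$ and the sine-type parameters).

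First I would establish the mapping property: define the synthesis operator $S:\,\{a_n\}\mapsto \sum_n a_n e^{i\mu_n x}$. For $q=2$ the Riesz--Fischer property gives $\|Sa\|_{L^2[0,1]}\le C\|a\|_{\ell^2}$ with $C$ depending only on $\L$; this is the $L^2$ endpoint. For the other endpoint I would note that by Hölder $\|Sa\|_{L^\infty}$ is controlled, up to $e^{h}$, by $\|a\|_{\ell^1}$ when the partial sums are taken over a region where $\{e^{i\mu_n x}\}$ behaves like a Riesz basis; more cleanly, one pairs $Sa$ against an arbitrary $g\in L^{p}[0,1]$ and invokes Theorem~\ref{th:p.bessel} (estimate~\eqref{eq:sum.int.g<g}, applied to the incompressible sequence $\L$ and using $\sF[g](\mu_n)\ge |F[g](\mu_n)|$), which yields $|\langle Sa,g\rangle|\le \sum_n|a_n|\,|F[g](\mu_n)|\le \|a\|_{\ell^p}\bigl(\sum_n \sF[g]^{p'}(\mu_n)\bigr)^{1/p'}\le C\|a\|_{\ell^p}\|g\|_p$, and hence $\|Sa\|_{L^{p'}}\le C\|a\|_{\ell^p}$ by duality. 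This proves (i) directly, and convergence of the series in $L^{p'}[0,1]$ follows from applying the same estimate to tails of $\{a_n\}$ (Cauchy criterion), since finitely supported sequences are dense in $\ell^p$.

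For part (ii) I would interpolate. The operator $S$ maps $\ell^2(\bZ;(1+|n|)^0)=\ell^2(\bZ)$ boundedly into $L^2[0,1]$ (the $q=2$ case just established), and it maps the weighted space $\ell^{1}_w$-type endpoint appropriately; more precisely I would run the Marcinkiewicz interpolation argument \emph{dual} to the one in the proof of Theorem~\ref{th:p.bessel}, on the operator $T:\,\{a_n\}\mapsto \sum_n n^{-1}a_n e^{i\mu_n x}$ acting between the measure space $(\bZ,\nu)$ with $\nu(n)=(1+|n|)^{-2}$ and $L^{p'}[0,1]$: one has the strong $(2,2)$ bound from the Riesz--Fischer property and a weak-type endpoint estimate coming from the uniform bound $\|e^{i\mu_n\cdot}\|_\infty\le e^h$, and Marcinkiewicz then delivers the $(p,p')$ bound, which is exactly~\eqref{eq:Hausd-Yang_type_estimate_2} after unwinding the weight $(1+|n|)^{p'-2}$. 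Convergence of the series again follows by the Cauchy criterion applied to tails.

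The main obstacle I anticipate is verifying, with constants controlled \emph{uniformly in $\L$} through $p$ and the separation/sine-type data, that $\{e^{i\mu_n x}\}_{n\in\bZ}$ is genuinely a Riesz basis of its span in $L^2[0,1]$ (the $q=2$ endpoint) rather than merely a Bessel or minimal system; this is where the separatedness hypothesis and the sine-type hypothesis on $F$ are both essential, and the cleanest route is to quote the Pavlov / Levin--Golovin characterization of complete interpolating sequences for Paley--Wiener spaces, noting that a separated zero set of a sine-type function of width $1$ satisfies the Muckenhoupt-type condition that makes $\{e^{i\mu_n x}\}$ a Riesz basis in $L^2[0,1]$. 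Once that endpoint and the trivial $L^\infty$/$\ell^1$-type endpoint are in hand, everything else is duality plus the Marcinkiewicz machinery already deployed in Theorem~\ref{th:p.bessel}, so I would present those two endpoint facts carefully and treat the interpolation steps as routine.
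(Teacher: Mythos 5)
Your part (i) is correct, and it takes a genuinely different route from the paper: the paper proves \eqref{eq:Hausd-Yang_type_estimate} by Riesz--Thorin interpolation between the trivial endpoint $l^1(\bZ)\to L^\infty[0,1]$ (using only $\L\subset\Pi_h$) and the endpoint $l^2(\bZ)\to L^2[0,1]$ supplied by the Levin--Golovin theorem, whereas you pair $\sum_n a_n e^{i\mu_n x}$ against $g\in L^p[0,1]$ and use H\"older together with \eqref{eq:sum.int.g<g}; your argument does not even need the Riesz basis property, only incompressibility of $\L$ and the strip condition, and the Cauchy-criterion treatment of convergence is fine. Also, the point you flag as the main obstacle is not one: since the constant is allowed to depend on $\L$, the $L^2$ endpoint is a direct quotation of Levin's theorem (\cite[Theorem 23.2]{Lev96}), with no need for Pavlov/Muckenhoupt machinery.

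Part (ii), however, has a genuine gap: the Marcinkiewicz step has no valid second endpoint. In your normalization the target is a strong type $(q,q)$ bound, $q=p'\in[2,\infty)$, for the synthesis operator $U:\{c_n\}\mapsto\sum_n(1+|n|)^{-1}c_n e^{i\mu_n x}$ from $l^q(\bZ,\nu)$, $\nu(n)=(1+|n|)^{-2}$, into $L^q[0,1]$. Interpolating from the strong $(2,2)$ bound (Riesz basis) one would need a companion endpoint at $(1,\infty)$ or at $(\infty,\infty)$, and both fail: the uniform bound $\|e^{i\mu_n\cdot}\|_\infty\le e^h$ only gives $\|Uc\|_\infty\le e^h\sum_n(1+|n|)^{-1}|c_n|$, which is dominated neither by $\|c\|_{l^1(\bZ,\nu)}=\sum_n(1+|n|)^{-2}|c_n|$ nor by $\sup_n|c_n|$, since $\sum_n(1+|n|)^{-1}=\infty$. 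Nor can you obtain such an endpoint by ``dualizing'' the proof of Theorem~\ref{th:p.bessel}: a weak $(1,1)$ bound does not dualize to any usable estimate for the adjoint. The natural repair is to run in (ii) exactly the duality scheme you used in (i): for finitely supported $a$ write $\int_0^1 f(x)g(x)\,dx=\sum_n a_n F[g](\mu_n)$, apply H\"older with the weights $(1+|n|)^{p'-2}$ and $(1+|n|)^{p-2}$ (note $(p'-2)p/p'=2-p$), and invoke \eqref{eq:sum.int.nu.g<g_Fourier}; applying the resulting bound to tails yields convergence in $L^{p'}[0,1]$. This is essentially the paper's own proof of (ii), which carries out the same pairing via the system biorthogonal to the Riesz basis $\{e^{i\mu_n x}\}_{n\in\bZ}$ and applies Theorem~\ref{th:p.bessel}(ii) to the coefficients of $g$.
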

\begin{proof}
(i) Consider a mapping
\begin{equation}
 a = \{a_n\}_{n \in \bZ} \to f(x) = \sum_{n \in \bZ} a_n e^{i \mu_n x}.
\end{equation}
Since $\L$ is a separated set of zeros of a sine-type function, Levin's theorem
(see~\cite[Theorem 23.2]{Lev96}) ensures that the sequence of exponentials
$\{e^{i\mu_n t}\}_{n\in\bZ}$ forms a Riesz basis in $L^2[0,1]$. It implies, in
particular, that for each sequence $\{a_n\}_{n\in\bZ}\in l^2(\bZ)$ the
series~\eqref{eq:exponent_series} converges in $L^2[0,1]$ and the following
estimate holds
\begin{equation} \label{eq:L2_estimate}
 \|f\|_{L^{2}[0,1]} \le C(2, \L) \(\sum_{n\in\bZ} |a_n|^2\)^{1/2}
 = C(2, \L)\|a\|_{l^2(\bZ)}.
\end{equation}

Further, let $p=1$ and $\{a_n\}\in l^1(\bZ)$. Since $F(\cdot)$ is s sine-type
function, the sequence $\L$ of its zeros lie in a horizontal strip $\Pi_h$ for
some $h \ge 0$. Therefore one gets that the series~\eqref{eq:exponent_series}
converges absolutely (hence uniformly) and determines a bounded (in fact,
continuous) function satisfying
\begin{equation} \label{eq:l1-to-L1_estimate}
|f(x)| \le \exp{(hx)}\cdot \|a\|_{l^1(\bZ)} \le \exp{(h)}\cdot \|a\|_{l^1(\bZ)}, \quad
x\in [0,1].
\end{equation}
Thus estimate~\eqref{eq:Hausd-Yang_type_estimate} holds with $p=1$ and $p=2$. It
remains to apply Riesz-Torin theorem (see~\cite[Theorem XII.1.11]{Zig59_v2},
\cite[Theorem 1.1.1]{BergLof76},~\cite[Theorem V.1.3]{SteinWei71})

(ii)
By Levin-Golovin theorem (\cite[Theorem 23.2]{Lev96}), the system of exponentials
$\{e_n\}_{n\in \bZ} = \{e^{i \mu_n x}\}_{n\in \bZ}$ forms a Riesz basis in
$L^2[0,1]$.
Noting that $\{a_n\}_{n \in \bZ} \in l^{p'}(\bZ; (1+|n|)^{{p'}-2})\subset l^2(\bZ)$
one gets that the series $\sum_{n\in \bZ}a_n e^{i \mu_n x}$
converges in $L^2[0,1]$. Setting $s_m = \sum_{|n|\le m} a_n e_n $ we
show that, in fact, this series converges in $L^q[0,1]$ for $q= p' \in [2, \infty)$.

 Let also $\{\chi_j\}_{j\in \bZ}$ be biorthogonal in $L^2[0,1]$ to the
sequence $\{e^{i \mu_n x}\}_{n\in \bZ}$, i.e.
$$
(e_n,\chi_j) = \int_0^1 e^{i \mu_n t}\chi_j(t)\, dt = F_j(\mu_n) = \delta_{nj}, \qquad
F_j(z):= \frac{F(z)}{F'(\mu_n)(z - \mu_n)}\, .
$$
This sequence forms a Riesz basis in $L^2[0,1]$ alongside with $\{e_n\}$.
Therefore, any $g \in L^2[0,1]$ admits a decomposition $g = \sum_{n \in \bZ}
b_n \chi_n$ with $\{b_n\}_{n \in \bZ}\in l^2(\bZ)$, and applying H\"older's
inequality and Theorem~\ref{th:p.bessel} (ii) one gets
\begin{align}
\nonumber
 \abs{\int_0^1 s_m(t) \ol{g(t)} \,dt}
 & = \abs{\(\sum_{|n|\le m} a_n e_n, \sum_{j \in \bZ} b_j \chi_j \)}
 = \abs{\sum_{|n| \le m} a_n b_n} \\
\nonumber
 & \le \(\sum_{|n|\le m} |a_n|^q (1+ |n|)^{q-2}\)^{1/q} \cdot
 \(\sum_{|n|\le m} |b_n|^p (1+ |n|)^{p-2} \)^{1/p} \\
 & \le C_p||g\|_p \(\sum_{|n|\le m} |a_n|^q (1+|n|)^{q-2}\)^{1/q}.
\end{align}
Since $L^2[0,1] \subset L^p[0,1]$ for $p\in (1,2]$, this inequality is extended to
$g\in L^p[0,1]$. Taking then supremum over $g$ running through the unit ball in
$L^p[0,1]$ one derives $\|s_m\|_q^q \le C_p \sum_{n\le m} |a_n|^q (1+ |n|)^{q-2}$.
It follows with account of the condition $\{a_n\}_{n \in \bZ} \in l^{q}(\bZ; (1+|n|)^{{q}-2})$ that
\begin{equation} \label{eq:Hardi-Littl-d_estimate_s_k}
\|s_k - s_m\|_q^q \le C_p \sum_{k\le |n|\le m+1} |a_n|^q (1+ |n|)^{q-2} \to 0 \quad \text{as} \quad k,m\to \infty,
\end{equation}
i.e. the sequence $\{s_m\}_{n \in \bZ}$ is a Cauchy sequence. Thus, there exists $f\in L^q[0,1]$
such that $\lim_{m\to\infty}\|s_m - f\|_q =0$. Passing to the limit in~\eqref{eq:Hardi-Littl-d_estimate_s_k}
as $m\to\infty$ we arrive at~\eqref{eq:Hausd-Yang_type_estimate_2}.
\end{proof}
\begin{remark}
The proof of Lemma~\ref{lem:fourier.coef.Lp} extends the classical reasoning on
estimates of Hardy space functions and $L^p_\sigma$-classes of entire functions
(see~\cite[Lectures 20-21]{Lev96},\cite[Lemma~2]{Katsn71}) to the case of
maximal Fourier transform.

In the proof of Theorem~\ref{th:p.bessel}(ii)
(inequality~\eqref{eq:sum.int.nu.g<g}) and
Proposition~\ref{prop:inverse.sine.type}(ii) we generalize the reasoning of the
proof of~\cite[Theorem 12.3.19]{Zig59_v2}. When the paper was almost ready we
found out that inequality~\eqref{eq:sum.int.g<g} was proved in the recent
paper~\cite{Sav19}. We have kept the proof for reader's convenience.
\end{remark}
\subsection{Uniform versions of Riemann-Lebesgue Lemma}
The following result will be needed in the sequel and easily follows by
combining Lemma~\ref{lem:fourier.coef.Lp} with Chebyshev's inequality. It can be
treated as a uniform (with respect to $g \in \bU_{p,r}$) version of the
classical Riemann-Lebesgue Lemma.
\begin{lemma} \label{lem:max.fourier.tail.Lp}
Let $g \in \bU_{p,r}$ for some $p \in (1, 2]$ and $r>0$. Let also $b \in \bR
\setminus \{0\}$ and $h \ge 0$. Then for any $\delta > 0$ there exists a set
$\cI_{g,\delta} \subset \bZ$ such that the following inequalities \emph{hold uniformly
with respect to} $g \in \bU_{p,r}$,
\begin{align}
\label{eq:max.card.Ngd}
 \card(\bZ \setminus \cI_{g,\delta}) \le N_{\delta}
 := C \cdot (r/\delta)^{p'}, & \qquad 1/p' + 1/p = 1, \\
\label{eq:sup.int.g.e<delta}
 \sF[g](b\l) = \sup_{x \in [0,1]} \abs{\int_0^x g(t) e^{i b \l t} dt}
 < \delta, & \qquad \l \in \bigcup_{n \in \cI_{g, \delta}}
 [n, n+1] \times [-h, h].
\end{align}
Here $C = C(p, h, b) > 0$ does not depend on $g$, $r$ and $\delta$.
\end{lemma}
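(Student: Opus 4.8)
The plan is to combine the summability estimate from Lemma~\ref{lem:fourier.coef.Lp} with a Chebyshev-type (Markov) argument to locate the ``bad'' indices. First I would introduce, exactly as in~\eqref{eq:gn.def}, the scaled quantities $g_n := \sup\{\sF[g](b\l) : \l \in \Pi_{h,n}\}$ with $\Pi_{h,n} = [n,n+1]\times[-h,h]$. Since $b \ne 0$ is fixed, the change of variable $\l \mapsto b\l$ distorts strips into strips of comparable (bounded) width, so applying Lemma~\ref{lem:fourier.coef.Lp} to the function $t \mapsto g(t)$ with the exponent $b\l$ in place of $\l$ yields a constant $C_{p,h,b} > 0$, independent of $g$, such that
\begin{equation*}
 \sum_{n \in \bZ} g_n^{p'} \le C_{p,h,b} \cdot \|g\|_p^{p'} \le C_{p,h,b} \cdot r^{p'},
 \qquad g \in \bU_{p,r}.
\end{equation*}
(Concretely, one covers $[bn, bn+|b|]$ by at most $\lceil |b| \rceil + 1$ unit intervals and sums the resulting inequalities from Lemma~\ref{lem:fourier.coef.Lp}; the extra factor is absorbed into the constant.)

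Next I would define the bad set $\cI_{g,\delta}^c := \bZ \setminus \cI_{g,\delta} := \{n \in \bZ : g_n \ge \delta\}$ and set $\cI_{g,\delta} := \{n \in \bZ : g_n < \delta\}$. For any $n$ in the bad set we have $\delta^{p'} \le g_n^{p'}$, so
\begin{equation*}
 \delta^{p'} \cdot \card(\bZ \setminus \cI_{g,\delta})
 \le \sum_{n \in \bZ \setminus \cI_{g,\delta}} g_n^{p'}
 \le \sum_{n \in \bZ} g_n^{p'} \le C_{p,h,b} \cdot r^{p'},
\end{equation*}
which gives $\card(\bZ \setminus \cI_{g,\delta}) \le C_{p,h,b} (r/\delta)^{p'} =: N_\delta$, i.e.~\eqref{eq:max.card.Ngd} with $C = C_{p,h,b}$ independent of $g$, $r$, $\delta$. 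For the second assertion, if $n \in \cI_{g,\delta}$ then by the very definition of $g_n$ we have $\sF[g](b\l) \le g_n < \delta$ for every $\l \in \Pi_{h,n} = [n,n+1]\times[-h,h]$; taking the union over $n \in \cI_{g,\delta}$ yields~\eqref{eq:sup.int.g.e<delta}.

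The argument is essentially mechanical once Lemma~\ref{lem:fourier.coef.Lp} is in hand; the only point requiring a small amount of care is the bookkeeping for the scaling by $b$ (making sure the covering of the translated/dilated strips by unit boxes produces a constant depending only on $p$, $h$, $b$ and not on $g$), and the verification that the exponent $p'$ appearing in Lemma~\ref{lem:fourier.coef.Lp} is exactly the one needed so that the Chebyshev step produces the claimed power $(r/\delta)^{p'}$. I do not anticipate a genuine obstacle here; the content of the lemma is entirely carried by the non-trivial estimate~\eqref{eq:sum.gn.p}, which in turn rests on the Carleson--Hunt theorem via Proposition~\ref{prop:CarlHuntHausYoung}.
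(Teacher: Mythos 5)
Your proposal is correct and follows essentially the same route as the paper: redefine $g_n$ with $\sF[g](b\l)$ on $\Pi_{h,n}$ so that Lemma~\ref{lem:fourier.coef.Lp} (with a constant now depending on $b$) gives $\sum_n g_n^{p'} \le C r^{p'}$, set $\cI_{g,\delta} = \{n : g_n < \delta\}$, and conclude by the Chebyshev/Markov count; your covering-by-unit-boxes remark is just an alternative bookkeeping for the $b$-rescaling, which the paper handles by noting the proof of Lemma~\ref{lem:fourier.coef.Lp} goes through verbatim with $b\l$ in the definition of $g_n$.
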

\begin{proof}
Note that the proof of Lemma~\ref{lem:fourier.coef.Lp} remains the same if
we replace $\sF[g](x+iy)$ with $\sF[g](b(x+iy))$ in the
definition~\eqref{eq:gn.def} of $g_n$. The only change is that the constant
$C_{p,h}$ depends also on $b$ now. Below we assume this adjusted definition
of $g_n$.

Let $\delta > 0$. Let us prove that $\cI_{g,\delta} := \{n \in \bZ : g_n < \delta\}$
satisfies~\eqref{eq:max.card.Ngd}--\eqref{eq:sup.int.g.e<delta}.
Inequality~\eqref{eq:sup.int.g.e<delta} immediately follows from the definition of $g_n$,
$\sF[g]$, and $\cI_{g,\delta}$. Applying inequality~\eqref{eq:sum.gn.p} to estimate the
cardinality of $\cI_{g,\delta}$ we derive
\begin{equation} \label{eq:Crp>N.delta}
 C r^{p'} \ge C \|g\|_p^{p'} \ge \sum_{n \in \bZ} g_n^{p'}
 \ge \sum_{n \notin \cI_{g,\delta}} g_n^{p'}
 \ge \sum_{n \notin \cI_{g,\delta}} \delta^{p'}
 = \card\bigl(\bZ \setminus \cI_{g,\delta}\bigr) \cdot \delta^{p'}.
\end{equation}
In turn, this yields~\eqref{eq:max.card.Ngd}.
\end{proof}
Let us emphasize that the term ``uniformity'' in
Lemma~\ref{lem:max.fourier.tail.Lp} does not relate to the set $\cI_{g,\delta}$
that depends on $g$, but only to the ``size'' of its complement
(see~\eqref{eq:max.card.Ngd}).
Note also that Lemma~\ref{lem:max.fourier.tail.Lp} for regular Fourier
transforms can be proved easier without the use of the deep Carleson-Hunt
theorem.
\begin{lemma} \label{lem:fourier.tail.Lp}
Let $g \in \bU_{p,r}$ for some $p \in (1, 2]$ and $r>0$. Let also $b \in \bR
\setminus \{0\}$ and $h \ge 0$. Then for any $\delta > 0$ there exists a set
$\cI_{g,\delta} \subset \bZ$ such that the following inequalities \emph{hold uniformly
with respect to} $g \in \bU_{p,r}$,
\begin{align}
\label{eq:card.Ngd}
 \card(\bZ \setminus \cI_{g,\delta}) \le N_{\delta} := C \cdot (r/\delta)^{p'},
 & \qquad 1/p' + 1/p = 1, \\
\label{eq:int.g.e<delta}
 F[g](b \l) := \abs{\int_0^1 g(t) e^{i b \l t} dt} < \delta,
 & \qquad \l \in \bigcup_{n \in \cI_{g, \delta}} [n, n+1] \times [-h, h].
\end{align}
Here $C = C(p, h, b) > 0$ does not depend on $g$, $r$ and $\delta$.
\end{lemma}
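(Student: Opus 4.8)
Lemma~\ref{lem:fourier.tail.Lp} is the "elementary" counterpart of Lemma~\ref{lem:max.fourier.tail.Lp}, obtained by replacing the maximal Fourier transform $\sF$ with the ordinary Fourier transform $F$. So the plan is to repeat the argument of Lemma~\ref{lem:max.fourier.tail.Lp} verbatim, but to feed it the ordinary-Fourier Bessel estimate~\eqref{eq:sum.int.g<g_Fourier} of Proposition~\ref{th:p.bessel_for_ordinary_Fourier} in place of the Carleson–Hunt-based estimate~\eqref{eq:sum.gn.p} of Lemma~\ref{lem:fourier.coef.Lp}. This is precisely the "direct proof which is elementary in character because it does not involve Carleson–Hunt Theorem~\ref{th:Carleson-Hunt}" that Proposition~\ref{th:p.bessel_for_ordinary_Fourier} was built to supply.

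Here is the structure I would write out. First I would set up discretized suprema. For $n\in\bZ$ put
\[
 G_n := \sup\bigl\{ |F[g](b\l)| : \l \in \Pi_{h,n}\bigr\},
 \qquad \Pi_{h,n} = [n,n+1]\times[-h,h],
\]
the analogue of the $g_n$ from~\eqref{eq:gn.def}. The key point is that $G(\l):=\int_0^1 g(t)e^{ib\l t}\,dt$ is an entire function of exponential type $\le|b|$, so $|G(\cdot)|^{p'}$ is subharmonic; by the sub-mean-value inequality over the unit disc $\bD_1(\l)\subset[n-1,n+2]\times[-h-1,h+1]$ (for $\l\in\Pi_{h,n}$), and then taking the supremum over $\l\in\Pi_{h,n}$ and summing over $n$ while noting that the intervals $[n-1,n+2)$ cover each real point exactly $3$ times, one obtains
\[
 \sum_{n\in\bZ} G_n^{p'}
 \le \frac{3}{\pi}\int_{-h-1}^{h+1}\int_{-\infty}^{\infty} |G(t+iy)|^{p'}\,dt\,dy.
\]
Then one bounds $|G(t+iy)| = |F[\wt g_y](bt)|$ with $\wt g_y(x)=g(x)e^{-byx}$ (the $b$-scaled analogue of~\eqref{eq:sFg.t+iy}), applies the Hausdorff–Young inequality~\eqref{eq:Gp'<C1.gp} to each slice, and uses $\|\wt g_y\|_p\le e^{|b|(h+1)}\|g\|_p$ on $|y|\le h+1$. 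This yields a clean estimate $\sum_{n\in\bZ} G_n^{p'}\le C_{p,h,b}\,\|g\|_p^{p'}$ — in fact this reproves the "coefficient" form of~\eqref{eq:sum.int.g<g_Fourier} with the supremum over strips built in, so one could alternatively just invoke Proposition~\ref{th:p.bessel_for_ordinary_Fourier} applied to an incompressible sequence capturing the strips.

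Next I would run the Chebyshev/pigeonhole step exactly as in Lemma~\ref{lem:max.fourier.tail.Lp}. Given $\delta>0$, set $\cI_{g,\delta} := \{n\in\bZ : G_n < \delta\}$. Inequality~\eqref{eq:int.g.e<delta} is then immediate from the definition of $G_n$, since for $\l$ in $[n,n+1]\times[-h,h]$ with $n\in\cI_{g,\delta}$ one has $F[g](b\l)\le G_n<\delta$. For the cardinality bound~\eqref{eq:card.Ngd}, argue as in~\eqref{eq:Crp>N.delta}:
\[
 C r^{p'} \ge C\|g\|_p^{p'} \ge \sum_{n\in\bZ} G_n^{p'}
 \ge \sum_{n\notin\cI_{g,\delta}} G_n^{p'}
 \ge \card\bigl(\bZ\setminus\cI_{g,\delta}\bigr)\cdot\delta^{p'},
\]
whence $\card(\bZ\setminus\cI_{g,\delta})\le C(r/\delta)^{p'}=N_\delta$, with $C=C(p,h,b)$ independent of $g$, $r$, $\delta$. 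This completes the proof.

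There is no real obstacle here — the statement is a near-verbatim transcription of Lemma~\ref{lem:max.fourier.tail.Lp}, and the only thing that changes is the input inequality, which is already available as Proposition~\ref{th:p.bessel_for_ordinary_Fourier} (so the deep Carleson–Hunt theorem is not needed). The one point requiring a little care is the subharmonicity/sub-mean-value step: one must make sure the enlarged strip $|y|\le h+1$ genuinely contains all discs $\bD_1(\l)$, $\l\in\Pi_{h,n}$, and keep track of the constant $e^{|b|(h+1)}$ coming from the exponential weight on $[0,1]$; but this is exactly the computation already carried out in~\eqref{eq:G.l<int}–\eqref{eq:sum.gnp} and~\eqref{eq:sum.G.lk<int}, so it can be cited rather than redone. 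In short, I would simply write: "The proof repeats that of Lemma~\ref{lem:max.fourier.tail.Lp} word for word, with $\sF[g]$ replaced by $F[g]$ throughout, and with estimate~\eqref{eq:sum.int.g<g_Fourier} of Proposition~\ref{th:p.bessel_for_ordinary_Fourier} used in place of~\eqref{eq:sum.gn.p}."
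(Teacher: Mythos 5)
Your proposal is correct and follows essentially the same route as the paper: the paper's proof likewise says to rerun Lemma~\ref{lem:fourier.coef.Lp} with $F[g]$ in place of $\sF[g]$, replacing the Carleson--Hunt input~\eqref{eq:carl.hunt} by the classical Hausdorff--Young inequality in the step~\eqref{eq:carl.hunt.y}, and then to repeat the Chebyshev argument of Lemma~\ref{lem:max.fourier.tail.Lp}. Your write-up just spells out the subharmonicity/strip computation and the $b$-rescaling that the paper leaves implicit.
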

\begin{proof}
First we can prove the version of Lemma~\ref{lem:fourier.coef.Lp} with
$F[g](t+iy)$ in place of $\sF[g](t+iy)$ and use classical Hausdorff-Young
theorem~\cite[Chapter 5.1]{SteinWei71} for Fourier transforms instead of
the deep inequality~\eqref{eq:carl.hunt} in transition~\eqref{eq:carl.hunt.y}.
After that the proof follows the proof of Lemma~\ref{lem:max.fourier.tail.Lp}.
\end{proof}
Next we investigate the maximal version of Fourier transform defined on the
space $X_{\infty,1}(\Omega)$ by:
\begin{equation} \label{eq:int.Kxt.e<delta.e}
\cF[G](\l) := \sup_{x \in [0,1]} \abs{\int_0^x G(x,t) e^{i b \l t} dt}, \qquad \l\in
\bC.
\end{equation}
First we present the following ``uniform'' version of the Riemann-Lebesgue lemma
for the space $X_{\infty,1}^0(\Omega)$. To this end for any $h\ge 0$ we let
$$
C_0(\Pi_h) := \{\varphi\in C(\Pi_h): \lim_{t\to \pm\infty} \varphi(t\pm iy)=0\
\text{uniformly in}\ \ y\in [-h,h] \}.
$$
\begin{proposition} \label{prop:K.Xinf.Fourier}
Let $h\ge 0$ and let $\cF$ be given by~\eqref{eq:int.Kxt.e<delta.e}. Then:

(i) The nonlinear mapping $\cF: X_{\infty,1}^0(\Omega) \to C(\Pi_h)$ is well
defined and it is Lipshitz, i.e.
\begin{equation} \label{eq:FG-FwtG}
\|\cF[G] - \cF[\wt{G}]\|_{C(\Pi_h)} \le e^{|b|h} \cdot \|G - \wt{G}\|_{X_{\infty,1}(\Omega),}\qquad
G, \wt{G} \in X_{\infty,1}^0(\Omega).
\end{equation}

(ii) For any $h\ge 0$ the mapping $\cF$ continuously maps
$X_{\infty,1}^0(\Omega)$ into $C_0(\Pi_h)$.

(iii) For any compact set $\cX$ in $X_{\infty,1}^0(\Omega)$ the following relation holds
\begin{equation} \label{eq:int.Fourier_of_Kxt_to_0_unif}
 \lim_{\l\to\infty} \cF[G](\l) = 0 \quad \text{uniformly in}\quad G\in \cX\quad
 \text{and}\quad \l\in \Pi_h.
\end{equation}
\end{proposition}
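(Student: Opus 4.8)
The plan is to prove all three statements by reduction to smooth kernels via the density of $C^1(\Omega)$ in $X_{\infty,1}^0(\Omega)$, using throughout the elementary bound $|e^{i b \l t}| = e^{-b t \Im \l} \le e^{|b| h}$ for $t \in [0,1]$, $\l \in \Pi_h$, together with the defining inequality $\int_0^x |f(x,t)|\,dt \le \|f\|_{X_{\infty,1}(\Omega)}$ coming from~\eqref{eq:Xinfp.norm.def}.

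First I would establish the Lipschitz estimate~\eqref{eq:FG-FwtG}, which drives everything. For fixed $\l \in \Pi_h$ and $x \in [0,1]$,
\begin{equation*}
 \abs{\int_0^x \bigl(G(x,t) - \wt{G}(x,t)\bigr) e^{i b \l t}\,dt}
 \le e^{|b| h} \int_0^x \abs{G(x,t) - \wt{G}(x,t)}\,dt
 \le e^{|b| h} \, \|G - \wt{G}\|_{X_{\infty,1}(\Omega)}.
\end{equation*}
Since $\bigl|\sup_x |A_x| - \sup_x |B_x|\bigr| \le \sup_x |A_x - B_x|$, taking the supremum over $x$ and then over $\l \in \Pi_h$ yields~\eqref{eq:FG-FwtG}; in particular $\cF[G](\l) \le e^{|b|h}\|G\|_{X_{\infty,1}(\Omega)} < \infty$, so $\cF[G]$ is a bounded function on $\Pi_h$. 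For continuity in $\l$: if $G \in C(\Omega)$, the function $(x,\l) \mapsto \int_0^x G(x,t) e^{i b \l t}\,dt$ is jointly continuous, so its supremum over the compact set $x \in [0,1]$ depends continuously on $\l$; for general $G \in X_{\infty,1}^0(\Omega)$, approximating in $X_{\infty,1}(\Omega)$ by continuous kernels and using the $\l$-uniform estimate~\eqref{eq:FG-FwtG} exhibits $\cF[G]$ as a uniform limit of elements of $C(\Pi_h)$, hence $\cF[G] \in C(\Pi_h)$. This is (i).

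For (ii) it remains to check that $\cF[G] \in C_0(\Pi_h)$. I would first treat $G \in C^1(\Omega)$: integration by parts in $t$ gives, for $\l \ne 0$,
\begin{equation*}
 \int_0^x G(x,t) e^{i b \l t}\,dt
 = \frac{1}{i b \l}\Bigl( G(x,x) e^{i b \l x} - G(x,0)
 - \int_0^x \partial_t G(x,t) e^{i b \l t}\,dt\Bigr),
\end{equation*}
and the bracket is bounded by $C e^{|b| h}\bigl(\|G\|_{C(\Omega)} + \|\partial_t G\|_{X_{\infty,1}(\Omega)}\bigr)$ uniformly in $x \in [0,1]$ and $\l \in \Pi_h$; hence $\cF[G](\l) = O(1/|\l|)$ as $|\l| \to \infty$ within the strip, i.e. $\cF[G] \in C_0(\Pi_h)$. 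For arbitrary $G \in X_{\infty,1}^0(\Omega)$ and $\eps > 0$, choose $G_0 \in C^1(\Omega)$ with $\|G - G_0\|_{X_{\infty,1}(\Omega)} < \tfrac12 \eps e^{-|b|h}$; by~\eqref{eq:FG-FwtG}, $\cF[G](\l) \le \cF[G_0](\l) + \tfrac12 \eps$, and since $\cF[G_0]$ vanishes at infinity this gives $\cF[G](\l) < \eps$ once $|\Re \l|$ is large, uniformly in $\Im \l \in [-h,h]$. Thus $\cF$ maps $X_{\infty,1}^0(\Omega)$ into the closed subspace $C_0(\Pi_h) \subset \bigl(C(\Pi_h), \|\cdot\|_{C(\Pi_h)}\bigr)$, and~\eqref{eq:FG-FwtG} shows the map is (Lipschitz) continuous, which is (ii).

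Finally, (iii) is a soft consequence: since $\cF : X_{\infty,1}^0(\Omega) \to C_0(\Pi_h)$ is continuous, $\cF(\cX)$ is compact, hence totally bounded, in $C_0(\Pi_h)$, and such a set is equi-vanishing at infinity. Concretely, given $\eps > 0$, cover $\cX$ by finitely many balls $B(G_i, \delta)$, $i = 1, \dots, m$, with $\delta := \tfrac12 \eps e^{-|b|h}$, pick $R$ with $\cF[G_i](\l) < \tfrac12 \eps$ for all $i$ and all $\l \in \Pi_h$ with $|\l| > R$ (possible by (ii)), and then for any $G \in \cX$ choose $i$ with $\|G - G_i\|_{X_{\infty,1}(\Omega)} < \delta$; by~\eqref{eq:FG-FwtG}, $\cF[G](\l) < \eps$ whenever $|\l| > R$, which is~\eqref{eq:int.Fourier_of_Kxt_to_0_unif}. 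There is no genuine obstacle here; the only points requiring care are the joint-continuity/supremum step for $\cF[G] \in C(\Pi_h)$ and the integration-by-parts decay for smooth kernels, both routine.
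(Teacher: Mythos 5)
Your proposal is correct and follows essentially the same route as the paper: the Lipshitz bound~\eqref{eq:FG-FwtG} from the elementary estimate $|e^{ib\l t}|\le e^{|b|h}$ on $\Pi_h$, integration by parts on the dense subspace $C^1(\Omega)$ to get decay, and then approximation (equivalently, compactness of the image in $C_0(\Pi_h)$) to obtain the uniform statement (iii). The only difference is cosmetic: you verify continuity of $\cF[G]$ in $\l$ via joint continuity for continuous kernels plus uniform approximation, while the paper estimates $|\cF[G](\l_2)-\cF[G](\l_1)|$ directly through the smallness of $\max_{t\in[0,1]}|1-e^{ib\mu t}|$; both are routine and valid.
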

\begin{proof}
(i) Let $G \in X_{\infty,1}^0(\Omega)$. By
Proposition~\ref{prop:volterra_oper.Xp}(ii)
clearly valid in the scalar case too,
the operator $f \to \int_0^x G(x,t)f(t)\, dt$ maps $L^{\infty}[0,1]$ into
$C[0,1]$. To prove the continuity of $\cF[G](\cdot)$, given an $\eps > 0$ one
finds $\delta>0$, such that $\max_{t\in [0,1]}|1- \exp(ib\mu t)| < \eps$
whenever $|\mu| < \delta$. Clearly,
\begin{equation}
 \max_{x \in [0,1]}\abs{\int_0^x G(x,t) e^{i b \l_2 t}\, dt} \le
 \max_{x \in [0,1]}\abs{\int_0^x G(x,t) e^{i b \l_1 t}\, dt} +
 \max_{x \in [0,1]}\abs{\int_0^x G(x,t) (e^{i b \l_2 t} - e^{i b \l_1 t})\, dt}.
\end{equation}
Interchanging $\l_1$ and $\l_2$, combining both inequalities, and setting
$C(\l_1, \l_2) := \min\{e^{-b \Im \l_1},e^{-b \Im \l_2}\}$, one arrives at
\begin{equation}
 |\cF[G](\l_2) - \cF[G](\l_1)| \le \max_{x \in [0,1]}
 \abs{\int_0^x G(x,t) (e^{i b \l_2 t} - e^{i b \l_1 t})\, dt} \le
 \eps C(\l_1, \l_2)\cdot \|G\|_{X_{\infty,1}(\Omega)},
\end{equation}
whenever $|\l_2 - \l_1| < \delta$. Thus, the function $\cF[G](\cdot)$ belongs to
$C(\Pi_h)$ and is uniformly continuous.

Next, let us establish the Lipshitz property of $\cF: X_{\infty,1}^0(\Omega) \to
C(\Pi_h)$. Given $G, \wt{G} \in X_{\infty,1}^0(\Omega)$, one gets
\begin{equation} \label{eq:Lipshitz_est_for_X_{1,1},infty}
|\cF[G](\l) - \cF[\wt{G}](\l)| \le \max_{x \in [0,1]}\abs{\int_0^x\( G(x,t) - \wt{G}(x,t) \)e^{i b \l t}\, dt}
\le e^{|b|h} \cdot \|G - \wt{G}\|_{X_{\infty,1}(\Omega)}.
\end{equation}

(ii) Recall, that in accordance with the definition of $X_{\infty,1}^0(\Omega)$, the set $C(\Omega)$,
hence the space $C^1(\Omega)$, is dense in $X_{\infty,1}^0(\Omega)$. Fix $G_1\in C^1(\Omega)$ and
integrating by parts one gets
\begin{align}
\nonumber
 \abs{\int_0^x G_1(x,t) e^{i b \l t} dt}
 & \le \frac{1}{|b \l|} \(|G_1(x,x)| \cdot |e^{i b \l x}| + |G_1(x,0)| +
 \int_0^x \abs{D_t G_1(x, t) e^{i b \l t}} dt\) \\
\nonumber
 & \le \frac{1}{|b| \cdot |\l|} \(|G_1(x,x)| + |G_1(x,0)| +
 \int_0^x |D_t G_1(x, t)| dt\) \cdot \(e^{-b \Im \l} + 1\) \\
\label{eq:int.g.delta.e<delta.e_C_0}
 & \le \frac{3 \|G_1\|_{C^1(\Omega)}}{|b| \cdot |\l|} \cdot \(e^{-b \Im \l} + 1\)
 \le \frac{3 \|G_1\|_{C^1(\Omega)}}{|\Re\l|} \cdot\frac{\(e^{|b|h} + 1\)}{|b|}, \qquad x \in [0,1].
\end{align}
Thus, $\lim_{t\to\infty}\cF[G_1](t + iy) = 0$
uniformly in $y\in [-h,h]$, and $\cF[G_1](\cdot)\in C_0(\Pi_h)$. Combining this relation
with estimate~\eqref{eq:Lipshitz_est_for_X_{1,1},infty} yields similar relation for $\cF[G]$ with any $G \in X_{\infty,1}^0(\Omega)$, i.e.
$\cF[G](\cdot)\in C_0(\Pi_h)$ for any $G \in X_{\infty,1}^0(\Omega)$.

(iii)
By (ii), the mapping $\cF$ continuously maps $X_{\infty,1}^0(\Omega)$ into
$C_0(\Pi_h)$. Therefore the image $\cF(\cX)$ of a compact set $\cX$ is also
compact in $C_0(\Pi_h)$. To derive uniform
relation~\eqref{eq:int.Fourier_of_Kxt_to_0_unif} it remains to apply the
necessary condition of compactness in $C_0(\Pi_h)$ (uniform smallness of ``tails'').
\end{proof}
Proposition~\ref{prop:K.Xinf.Fourier}(iii) contains as a special case the
following ``uniform'' version of the classical Riemann-Lebesgue Lemma. Namely,
for any compact $\cK$ in $L^1[0,1]$ one has:
\begin{equation} \label{eq:riem.leb.uniform}
 \sup_{g \in \cK} \abs{\int_0^1 g(t) e^{i \l t} dt} = o(1)
 \quad\text{as}\quad \l \to \infty \quad\text{uniformly in}\quad
 g\in \cK \quad\text{and}\quad \l \in \Pi_h.
\end{equation}

Next we complete Proposition~\ref{prop:K.Xinf.Fourier} by evaluating the
``maximal'' Fourier transform $\cF[G](\cdot)$ in the plane instead of a strip.
This statement will be useful in Section~\ref{sec:stability.eigenvalue} when
applying the Rouch\'e theorem.
\begin{lemma} \label{lem:fourier.coef}
Let $\cX$ be a compact set in $X_{\infty,1}^0(\Omega)$, $b \in \bR \setminus \{0\}$ and
$\delta > 0$. Then there exists a constant $C = C(\cX, b, \delta) > 0$ such that the
following uniform in $G \in \cX$ estimate takes place
\begin{equation} \label{eq:int.Kxt.e<delta.e_2}
\cF[G](\l)
 \le \delta (e^{-b \Im \l} + 1), \qquad |\l| > C, \quad G \in \cX.
\end{equation}
\end{lemma}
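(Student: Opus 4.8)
The plan is to combine the strip estimate from Proposition~\ref{prop:K.Xinf.Fourier}(iii) with an elementary bound for $|\Im \l|$ large, the two regimes overlapping on a half-strip that is further divided by a compactness argument. First I would split the complex plane into two regions governed by a parameter $h = h(\delta, b)$ to be chosen: the strip $\Pi_h = \{|\Im \l| \le h\}$ and its complement. In the complement, the trivial pointwise bound
\begin{equation*}
 \abs{\int_0^x G(x,t) e^{i b \l t}\,dt}
 \le \int_0^x |G(x,t)|\, e^{-b \Im \l \, t}\,dt
 \le \|G\|_{X_{\infty,1}(\Omega)} \cdot \max\{e^{-b\Im\l}, 1\}
\end{equation*}
already gives something of the right shape, but to squeeze out the factor $\delta$ I would be slightly more careful: writing $e^{-b\Im\l\, t} \le e^{-b\Im\l} + 1$ is wasteful; instead, since $\cX$ is compact hence bounded in $X_{\infty,1}^0(\Omega)$, say $\|G\|_{X_{\infty,1}(\Omega)} \le M$ for all $G \in \cX$, I observe that for $|\Im \l| \ge h$ one has either $e^{-b\Im\l} \ge e^{|b|h}$ (the ``bad'' sign) or $e^{-b\Im\l} \le e^{-|b|h}$ (the ``good'' sign). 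In the good-sign case the integral is $\le M e^{-|b|h} \le M e^{-|b|h}(e^{-b\Im\l}+1)$, which is $\le \delta(e^{-b\Im\l}+1)$ once $h \ge |b|^{-1}\log(M/\delta)$. In the bad-sign case I factor out: the integral is $\le M e^{-b\Im\l} $, and here I use that $G \in X_{\infty,1}^0(\Omega)$, not just $X_{\infty,1}$, to gain decay in $\Re\l$ — but this couples the two halves, so it is cleaner to handle the bad-sign part together with the strip.

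For the strip $\Pi_h$ together with the bad-sign exterior, the key observation is a change of variable already recorded in the excerpt (cf.~\eqref{eq:sFg.t+iy}): for $\l = t + iy$, writing $\wt{G}_y(x,s) := G(x,s)e^{-bys}$ one has $\cF[G](t+iy) = \cF_0[\wt{G}_y](t)$ where $\cF_0$ is the strip-version at $y = 0$. The map $G \mapsto \wt{G}_y$ is a uniformly bounded (for $|y|$ bounded) linear isomorphism of $X_{\infty,1}^0(\Omega)$, so $\{\wt{G}_y : G \in \cX,\ |y| \le h\}$ is a compact subset of $X_{\infty,1}^0(\Omega)$. Now Proposition~\ref{prop:K.Xinf.Fourier}(iii) applied to this enlarged compact set yields $C_1 = C_1(\cX, b, h) = C_1(\cX, b, \delta)$ such that $\cF[G](t+iy) = \cF_0[\wt{G}_y](t) < \delta$ for all $|t| > C_1$, all $|y| \le h$, and all $G \in \cX$. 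For the bad-sign exterior $\{\Im\l < -h\}$ (assuming $b > 0$, say, the other case symmetric) I would instead divide $\cF[G](\l) \le e^{-b\Im\l}\cdot \cF_0[\check{G}_\l](\Re\l)$ where $\check{G}_\l(x,s) := G(x,s)e^{b(-\Im\l)(1-s)}\cdot e^{-b(-\Im\l)}$... — more simply, I note $\cF[G](\l) = \cF_0[G_{\Im\l}](\Re\l)$ with $G_y(x,s) = G(x,s)e^{-bys}$, and for $y = \Im\l < -h$ I estimate $\|G_y\|_{X_{\infty,1}} \le M e^{|b||y|} = M e^{-b\Im\l}$ while still invoking the $X_{\infty,1}^0$-membership to get the tail decay: but the point is that $\cF_0[G_y](t)/\|G_y\|_{X_{\infty,1}} \le \cF_0[G_y/\|G_y\|](t)$ and the normalized family $\{G_y/\|G_y\| : G \in \cX, y \in \bR\}$ need NOT be compact, so this is where care is needed.

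The main obstacle, therefore, is the region where $\Im\l \to -\infty$ (the exponentially growing side): there the naive bound carries a factor $e^{-b\Im\l}$ that I want, but to also extract the smallness $\delta$ and the threshold $|\l| > C$ I need decay as $|\Re\l| \to \infty$ that is \emph{uniform over the whole unbounded family of weights} $e^{-bys}$, $y < -h$. I expect to resolve this by integration by parts exactly as in~\eqref{eq:int.g.delta.e<delta.e_C_0}: first prove \eqref{eq:int.Kxt.e<delta.e_2} for $G_1 \in C^1(\Omega)$ directly, getting a bound $\le 3\|G_1\|_{C^1(\Omega)}|\Re\l|^{-1}(e^{-b\Im\l}+1)$ valid in the \emph{entire} plane, hence $< \delta(e^{-b\Im\l}+1)$ for $|\Re\l| > 3\|G_1\|_{C^1}/\delta$; then for general $G \in \cX$ use compactness of $\cX$ to cover it by finitely many balls of radius $\delta/2$ centered at points $G_1^{(1)},\dots,G_1^{(m)} \in C^1(\Omega)$ (density of $C^1(\Omega)$ in $X_{\infty,1}^0(\Omega)$), and apply the Lipschitz estimate~\eqref{eq:Lipshitz_est_for_X_{1,1},infty}: $\cF[G](\l) \le \cF[G_1^{(i)}](\l) + \|G - G_1^{(i)}\|_{X_{\infty,1}}(e^{-b\Im\l}+1) \le \delta(e^{-b\Im\l}+1)$ once $|\Re\l| > \max_i 6\|G_1^{(i)}\|_{C^1}/\delta =: C$. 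Finally, since on the strip $\Pi_h$ one has $|\Re\l| > C$ whenever $|\l| > \sqrt{C^2 + h^2}$, and off the strip the exterior estimates apply, enlarging the constant $C$ to $\sqrt{C^2+h^2}$ (with $h$ as chosen above) gives~\eqref{eq:int.Kxt.e<delta.e_2} for all $|\l| > C$, uniformly in $G \in \cX$.
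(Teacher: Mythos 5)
Your final paragraph contains the right ingredients — a finite $\delta/2$-net $\{G_1^{(1)},\dots,G_1^{(m)}\}\subset C^1(\Omega)$ for $\cX$ (compactness plus density of $C^1(\Omega)$ in $X_{\infty,1}^0(\Omega)$), the trivial bound $\|G-G_1^{(i)}\|_{X_{\infty,1}(\Omega)}\,(e^{-b\Im\l}+1)$ for the net error, and integration by parts for the smooth centers — and this is essentially the paper's proof. But as assembled, your argument has a hole. Your integration-by-parts bound is recorded with $|\Re\l|^{-1}$, so your smallness condition is $|\Re\l|>C$, and you then cover the rest of the plane by ``the exterior estimates''. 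Those exterior estimates, however, only control the half-plane on which $e^{-b\Im\l}\le e^{-|b|h}$ (your ``good-sign'' case); on the other side you only ever obtained $\cF[G](\l)\le M\,e^{-b\Im\l}$, which is not $\le\delta(e^{-b\Im\l}+1)$ unless $M\le\delta$ — indeed your third paragraph explicitly defers exactly this region to the integration-by-parts step. Consequently the region $\{\l:\ |\Re\l|\le C,\ \Im\l \text{ of the growing sign},\ |\Im\l|>h\}$, which does contain points with $|\l|$ arbitrarily large, is covered by neither of your two final estimates, so the claimed conclusion ``for all $|\l|>C$'' does not follow as written.

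The fix is already implicit in the computation you cite: integrating by parts divides by $ib\l$, so for $G_1\in C^1(\Omega)$ one gets $\abs{\int_0^x G_1(x,t)e^{ib\l t}\,dt}\le 3\|G_1\|_{C^1(\Omega)}\,(|b|\,|\l|)^{-1}\,(e^{-b\Im\l}+1)$ for all $x\in[0,1]$ and all $\l\in\bC\setminus\{0\}$ — decay in $|\l|$, not merely in $|\Re\l|$ (the $|\Re\l|^{-1}$ form in \eqref{eq:int.g.delta.e<delta.e_C_0} is a weakening tailored to the strip). With this, $|\l|>C:=\frac{6}{|b|\delta}\max_i\|G_1^{(i)}\|_{C^1(\Omega)}$ already gives $\delta/2\,(e^{-b\Im\l}+1)$ for every net center simultaneously, including on the ``bad-sign'' side, and adding the net-error bound finishes the proof. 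This is exactly how the paper argues, and it makes your entire strip/complement decomposition — including the application of Proposition~\ref{prop:K.Xinf.Fourier}(iii) to the dilated family $\{\wt{G}_y\}$ and the worry about non-compact normalized families — unnecessary.
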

\begin{proof}
Since $C^1(\Omega)$ is dense in $X_{\infty,1}^0(\Omega)$ and $\cX \subset
X_{\infty,1}^0(\Omega)$ is compact, there exists a finite $\delta/2$-net
$\{G_1, \ldots, G_n\}$ for $\cX$, such that $G_j \in C^1(\Omega)$. Let $G \in
\cX$, then for some $j \in \{1, \ldots, n\}$ we have
$\|G - G_j\|_{X_{\infty,1}(\Omega)} < \delta/2$. It is clear that
\begin{equation} \label{eq:int.g-g.delta.e<delta.e}
 \abs{\int_0^x (G(x,t) - G_j(x,t)) e^{i b \l t} dt}
 \le \|G - G_j\|_{X_{\infty,1}(\Omega)} \max_{t \in [0,1]} \abs{e^{i b \l t}}
 < \frac{\delta}{2} (e^{-b \Im \l} + 1), \qquad x \in [0,1], \quad \l \in \bC.
\end{equation}
Repeating the deduction of estimate~\eqref{eq:int.g.delta.e<delta.e_C_0} one
derives
\begin{equation} \label{eq:int.g.delta.e<delta.e}
 \abs{\int_0^x G_j(x,t) e^{i b \l t} dt}
 \le \frac{3 \|G_j\|_{C^1(\Omega)}}{|b \l|}
 \(e^{-b \Im \l} + 1\) < \frac{\delta}{2}
 \(e^{-b \Im \l} + 1\), \qquad x \in [0,1], \quad |\l| > C,
\end{equation}
with $C = \frac{6}{|b| \delta} \max \left\{\|G_k\|_{C^1(\Omega)} : k \in
\{1, \ldots, n\}\right\}$. Clearly, $C$ only depends on $\cX$, $b$, and
$\delta$. Combining~\eqref{eq:int.g-g.delta.e<delta.e}
with~\eqref{eq:int.g.delta.e<delta.e} we arrive
at~\eqref{eq:int.Kxt.e<delta.e_2}.
\end{proof}
Finally we apply Proposition~\ref{prop:K.Xinf.Fourier} and
Lemma~\ref{lem:fourier.coef} to transformation operators.
\begin{corollary}
 Let $K^{\pm}_Q$ be the kernel of transformation operator from representation~\eqref{eq:e=(I+K)e0}.
Then the composition $Q\to K_Q^{\pm}\to \cF[K_Q^{\pm}]$ continuously maps $\LL{p}$ into
$C_0(\Pi_h;\bC^{2 \times 2})$, $h\ge 0$, and is a Lipshitz mapping on balls in
$\LL{p}$, $p\in [1, \infty)$, i.e.
\begin{equation} \label{eq:FK-FwtK}
\|\cF[K^{\pm}_Q] - \cF[K^{\pm}_{\wt{Q}}]\|_{C(\Pi_h)} \le e^{|b|h} \cdot C(B, p, r) \cdot \|Q - \wt
Q\|_{L^p}, \qquad Q, \wt{Q} \in \bU_{p, r}^{2 \times 2}.
\end{equation}
\end{corollary}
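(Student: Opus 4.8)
The plan is to combine the main Lipschitz estimate~\eqref{eq:K-wtK<Q-wtQ} for the transformation‑operator kernels with the properties of the nonlinear maximal Fourier transform $\cF$ established in Proposition~\ref{prop:K.Xinf.Fourier}. The statement is essentially a composition of two facts: (a) the map $Q\to K^\pm_Q$ is Lipschitz from $L^p$‑balls into $X_{\infty,p}^0(\Omega;\bC^{2\times2})\subset X_{\infty,1}^0(\Omega;\bC^{2\times2})$, and (b) the map $G\to\cF[G]$ is (globally) Lipschitz from $X_{\infty,1}^0(\Omega)$ into $C(\Pi_h)$. So the composition is Lipschitz on balls, with the stated constant.

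First I would reduce to the scalar/entrywise situation: for a $2\times2$ matrix kernel $K(x,t)=(K_{jk}(x,t))$ we set $\cF[K](\l)$ entrywise (or, equivalently, take the norm $|\cdot|_{1\to\infty}$ inside the $\sup_x$), so that $\|\cF[K]-\cF[\wt K]\|_{C(\Pi_h)}$ is controlled by the sum of the scalar quantities $\|\cF[K_{jk}]-\cF[\wt K_{jk}]\|_{C(\Pi_h)}$, $j,k\in\{1,2\}$. For each entry, Proposition~\ref{prop:K.Xinf.Fourier}(i) (estimate~\eqref{eq:FG-FwtG}) gives
\begin{equation*}
 \|\cF[K_{jk}]-\cF[\wt K_{jk}]\|_{C(\Pi_h)}
 \le e^{|b|h}\cdot\|K_{jk}-\wt K_{jk}\|_{X_{\infty,1}(\Omega)}
 \le e^{|b|h}\cdot\|K_{jk}-\wt K_{jk}\|_{X_{\infty,p}(\Omega)},
\end{equation*}
where the last inequality uses the continuous embedding $X_{\infty,p}\subset X_{\infty,1}$ from~\eqref{eq:Xqp_embedding}. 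Then Theorem~\ref{th:K-wtK<Q-wtQ} (estimate~\eqref{eq:K-wtK<Q-wtQ}) bounds the right‑hand side by $e^{|b|h}\cdot C(B,p,r)\cdot\|Q-\wt Q\|_{L^p}$ for $Q,\wt Q\in\bU_{p,r}^{2\times2}$, absorbing the finitely many entrywise constants into $C(B,p,r)$. This yields~\eqref{eq:FK-FwtK}.

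For the continuity of the composition $Q\to K_Q^\pm\to\cF[K_Q^\pm]$ as a map $\LL{p}\to C_0(\Pi_h;\bC^{2\times2})$: Theorem~\ref{th:K-wtK<Q-wtQ} shows $Q\to K_Q^\pm$ is continuous from $\LL{p}$ into $X_{\infty,p}^0(\Omega;\bC^{2\times2})\subset X_{\infty,1}^0(\Omega;\bC^{2\times2})$ (it is Lipschitz on every ball, hence continuous), and Proposition~\ref{prop:K.Xinf.Fourier}(ii) says $\cF$ continuously maps $X_{\infty,1}^0(\Omega)$ into $C_0(\Pi_h)$; composing gives the claim. The only mild point to check is that $C_0(\Pi_h;\bC^{2\times2})$ here just means the matrix‑valued functions whose every entry lies in $C_0(\Pi_h)$, so the entrywise reduction is again harmless. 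I do not expect a genuine obstacle; the one place to be slightly careful is the bookkeeping of which $X$-norm one uses — one must invoke the embedding~\eqref{eq:Xqp_embedding} to pass from the $X_{\infty,p}$-control provided by Theorem~\ref{th:K-wtK<Q-wtQ} to the $X_{\infty,1}$-norm in which Proposition~\ref{prop:K.Xinf.Fourier} is phrased — but this costs nothing beyond a fixed multiplicative constant depending only on $p$.
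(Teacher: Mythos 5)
Your proposal is correct and follows essentially the same route as the paper, which proves the corollary exactly by combining the Lipschitz estimate~\eqref{eq:FG-FwtG} of Proposition~\ref{prop:K.Xinf.Fourier}(i) (together with part (ii) for the $C_0(\Pi_h)$ continuity) with Theorem~\ref{th:K-wtK<Q-wtQ}, via the embedding~\eqref{eq:Xqp_embedding}. Your entrywise reduction and the norm bookkeeping $X_{\infty,p}\subset X_{\infty,1}$ are precisely the implicit details the paper leaves to the reader.
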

\begin{proof}
The proof is immediate by combining Proposition~\ref{prop:K.Xinf.Fourier}(i)
with Theorem~\ref{th:K-wtK<Q-wtQ}.
\end{proof}
\begin{lemma} \label{lem:fourier.coef.alter}
Let $\cK$ be a compact set in $\LL{1}$ and $Q \in \cK$. Let also $K^{\pm} = K^{\pm}_Q$ be the kernel of
the transformation operator from representation~\eqref{eq:e=(I+K)e0}. Then for any
$\delta > 0$ there exists a constant $M = M(\cK, B, \delta) > 0$ such that the following estimate
takes place uniformly in $Q \in \cK$
\begin{equation} \label{eq:int.Kjk.e<delta.e}
 \cF[K_{jk}^{\pm}](\l) = \sup_{x \in [0,1]} \abs{ \int_0^x K_{jk}^{\pm}(x, t) e^{i b_k \l t} dt } \le
 \delta (e^{-b_k \Im \l} + 1),
 \qquad |\l| > M, \quad j,k \in \{1, 2\}.
\end{equation}
In particular, for any $h \ge 0$ one has: $\sup_{Q \in \cK}
\cF[K_{jk}^{\pm}](\l) \to 0$ as $|\l| \to \infty$ and $\l \in \Pi_h$.
\end{lemma}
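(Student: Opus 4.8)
The plan is to reduce the statement to Lemma~\ref{lem:fourier.coef} by producing a compact set in the scalar space $X_{\infty,1}^0(\Omega)$ out of the compact $\cK \subset \LL{1}$. First I would recall that by Theorem~\ref{th:Trans} and Proposition~\ref{prop:K.props} each $Q \in \cK$ gives rise to the matrix kernel $K^{\pm}_Q = (K^{\pm}_{Q,jk})_{j,k=1}^2 \in \XXZ{\infty,1} \cap \XXZ{1,1}$, and by Theorem~\ref{th:K-wtK<Q-wtQ} (applied with $p=1$) the map $Q \mapsto K^{\pm}_Q$ is continuous from $\LL{1}$ into $\XXZ{\infty,1}$; in particular it is Lipschitz on balls, so it maps the compact $\cK$ onto a compact subset $\cX^{\pm} := \{K^{\pm}_Q : Q \in \cK\}$ of $\MatrixSpace{X_{\infty,1}^0}{\Omega}$. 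Extracting the $(j,k)$-entry is a bounded linear (hence continuous) map $\XXZ{\infty,1} \to X_{\infty,1}^0(\Omega)$, so each set $\cX_{jk}^{\pm} := \{K^{\pm}_{Q,jk} : Q \in \cK\}$ is a compact subset of the scalar space $X_{\infty,1}^0(\Omega)$.

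Next I would apply Lemma~\ref{lem:fourier.coef} separately to each of the (finitely many) compacts $\cX_{jk}^{\pm}$ with the given $\delta$ and with $b = b_k$: this produces constants $C_{jk}^{\pm} = C(\cX_{jk}^{\pm}, b_k, \delta) > 0$ such that
\begin{equation*}
 \cF[K^{\pm}_{Q,jk}](\l) = \sup_{x \in [0,1]} \abs{\int_0^x K^{\pm}_{jk}(x,t) e^{i b_k \l t}\,dt}
 \le \delta\bigl(e^{-b_k \Im \l} + 1\bigr), \qquad |\l| > C_{jk}^{\pm},\quad Q \in \cK,
\end{equation*}
where here $\cF$ is understood with the weight $b_k$ as in~\eqref{eq:int.Kxt.e<delta.e}. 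Setting $M := \max\{C_{jk}^{\pm} : j,k \in \{1,2\}\}$, which depends only on $\cK$, $B$ and $\delta$, gives~\eqref{eq:int.Kjk.e<delta.e} uniformly in $Q \in \cK$ for all four index pairs simultaneously.

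Finally, for the last assertion, fix $h \ge 0$; given $\eps > 0$ apply the estimate just proved with $\delta := \eps / (e^{|b_k| h} + 1)$ (taking the worst $k$), obtaining $M_\eps$ such that $\cF[K^{\pm}_{jk}](\l) \le \eps$ whenever $|\l| > M_\eps$ and $\l \in \Pi_h$, uniformly in $Q \in \cK$; this is precisely $\sup_{Q \in \cK} \cF[K^{\pm}_{jk}](\l) \to 0$ as $|\l| \to \infty$ along $\Pi_h$. The only point requiring care — and the step I would regard as the genuine content rather than bookkeeping — is the verification that the image of a compact set under $Q \mapsto K^{\pm}_Q$ is compact in $\XXZ{\infty,1}$; but this is immediate from the continuity of that map established in Theorem~\ref{th:K-wtK<Q-wtQ} together with the continuity of the entrywise projections, so no real obstacle arises. (Alternatively, Proposition~\ref{prop:K.Xinf.Fourier}(iii) could be invoked directly on $\cX_{jk}^{\pm}$ in place of Lemma~\ref{lem:fourier.coef} for the qualitative statement, but Lemma~\ref{lem:fourier.coef} gives the sharper quantitative bound~\eqref{eq:int.Kjk.e<delta.e} with the explicit factor $e^{-b_k \Im \l} + 1$ needed in Section~\ref{sec:stability.eigenvalue}.)
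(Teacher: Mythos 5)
Your argument is exactly the paper's proof: Theorem~\ref{th:K-wtK<Q-wtQ} gives continuity of $Q \mapsto K_Q^{\pm}$ from $\LL{1}$ into $\MatrixSpace{X_{\infty,1}^0}{\Omega}$ (the compact $\cK$ lies in some ball, where the map is Lipschitz), so the image of $\cK$ is compact, and Lemma~\ref{lem:fourier.coef} applied entrywise with $b=b_k$ yields~\eqref{eq:int.Kjk.e<delta.e} and the limit statement. The proposal is correct and coincides with the paper's route, with only harmless extra bookkeeping about projecting onto entries.
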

\begin{proof}
By Theorem~\ref{th:K-wtK<Q-wtQ}, the mapping $T^{\pm}: Q \to K_{Q}^{\pm}(\cdot,
\cdot)$ continuously maps $\LL{1}$ into $X_{\infty,1}^0(\Omega;\bC^{2 \times 2})$.
Hence the image $\{K_{Q}^{\pm}(\cdot, \cdot) : Q \in \cK\}$ is compact in
$X_{\infty,1}^0(\Omega;\bC^{2 \times 2})$.
 Lemma~\ref{lem:fourier.coef} completes the proof.
\end{proof}
\begin{corollary} \label{cor:compact.examples}
Let $\cK$ be a ball either in the Sobolev spaces $W_1^s[0,1]$ with $s \in \bR_+$
or in the Lipshitz space $\L_{\a}[0,1]$, with $\a \in (0,1]$, or in the space
$V[0,1]$ of functions of bounded variation. Then
relations~\eqref{eq:int.Fourier_of_Kxt_to_0_unif}
and~\eqref{eq:int.Kjk.e<delta.e} hold true uniformly in $Q \in \cK$.
\end{corollary}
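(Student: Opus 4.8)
The statement to prove is Corollary~\ref{cor:compact.examples}: that for $\cK$ a ball in $W_1^s[0,1]$ ($s \in \bR_+$), in $\L_\a[0,1]$ ($\a \in (0,1]$), or in the bounded variation space $V[0,1]$, the uniform relations~\eqref{eq:int.Fourier_of_Kxt_to_0_unif} and~\eqref{eq:int.Kjk.e<delta.e} hold uniformly in $Q \in \cK$. The plan is to reduce everything to the two statements already available: Proposition~\ref{prop:K.Xinf.Fourier}(iii) (which gives the uniform vanishing of $\cF[G](\l)$ over $G$ ranging in a \emph{compact} subset $\cX$ of $X_{\infty,1}^0(\Omega)$) and Lemma~\ref{lem:fourier.coef.alter} (which gives~\eqref{eq:int.Kjk.e<delta.e} for $Q$ ranging over a \emph{compact} subset $\cK$ of $\LL{1}$). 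So the whole content of the corollary boils down to one fact: \emph{each of the listed balls is a compact (equivalently, relatively compact and closed, or at least relatively compact) subset of $L^1([0,1];\bC^{2\times 2})$.} Once that is in hand, both conclusions are immediate quotations.

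First I would recall the relevant compact embedding theorems. For a ball in the Sobolev space $W_1^s[0,1]$ with $s > 0$, the embedding $W_1^s[0,1] \hookrightarrow L^1[0,1]$ is compact (Rellich--Kondrachov type; for $s \ge 1$ this is classical, and for fractional $0 < s < 1$ it follows from the compact embedding of Sobolev--Slobodetskii spaces into $L^1$ on a bounded interval, or from interpolation). For a ball in the Lipschitz (Hölder) class $\L_\a[0,1]$ with $\a \in (0,1]$, the Arzelà--Ascoli theorem applies: a bounded set in $\L_\a$ is uniformly bounded and equicontinuous in $C[0,1]$, hence relatively compact in $C[0,1]$, and therefore relatively compact in $L^1[0,1]$ since $C[0,1] \hookrightarrow L^1[0,1]$ continuously. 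For a ball in $V[0,1]$ (functions of bounded variation), Helly's selection theorem gives that a bounded set in $V[0,1]$ is relatively compact in $L^1[0,1]$: every sequence with uniformly bounded variation and uniformly bounded values has a pointwise convergent subsequence, and dominated convergence upgrades this to $L^1$-convergence. In each case one passes to the closure in $L^1$ to get an honest compact set $\cK' \supset \cK$; since the conclusions~\eqref{eq:int.Fourier_of_Kxt_to_0_unif}, \eqref{eq:int.Kjk.e<delta.e} only get stronger on a larger set, it suffices to prove them for $\cK'$.

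Then the argument concludes as follows. For~\eqref{eq:int.Kjk.e<delta.e}: since $\cK$ is relatively compact in $\LL{1}$ (indeed in $\LLV{1}$ after identifying the codiagonal potential with its two entries), Lemma~\ref{lem:fourier.coef.alter} applies verbatim with this $\cK$ and yields, for each $\delta>0$, a constant $M = M(\cK,B,\delta)$ with $\cF[K_{jk}^\pm](\l) \le \delta(e^{-b_k\Im\l}+1)$ for $|\l|>M$, uniformly in $Q\in\cK$; the ``in particular'' clause about $\sup_{Q\in\cK}\cF[K_{jk}^\pm](\l)\to 0$ on strips $\Pi_h$ is part of that lemma's statement. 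For~\eqref{eq:int.Fourier_of_Kxt_to_0_unif}: by Theorem~\ref{th:K-wtK<Q-wtQ} the mapping $Q \to K_Q^{\pm}$ is continuous (in fact Lipschitz on balls) from $\LL{1}$ into $X_{\infty,1}^0(\Omega;\bC^{2\times2})$, so the image $\cX := \{K_Q^\pm : Q \in \cK\}$ of the relatively compact set $\cK$ is relatively compact in $X_{\infty,1}^0(\Omega;\bC^{2\times2})$; Proposition~\ref{prop:K.Xinf.Fourier}(iii) then gives $\lim_{\l\to\infty}\cF[K_Q^\pm](\l)=0$ uniformly for $K_Q^\pm\in\cX$ and $\l\in\Pi_h$, which is exactly~\eqref{eq:int.Fourier_of_Kxt_to_0_unif} restated for these $Q$.

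The only genuine obstacle is bookkeeping rather than mathematics: making sure the compactness input is stated in the right space. Helly's theorem and the Sobolev embedding give relative compactness in $L^1$ but not closedness, so one must either explicitly close up the ball in $L^1$ (harmless, as noted) or observe that all the cited statements only require relative compactness of the index set. A minor subtlety in the fractional Sobolev case is to cite the correct compact embedding $W_1^s[0,1]\hookrightarrow\hookrightarrow L^1[0,1]$ for $0<s<1$; this is standard but worth a one-line reference. No new estimates on the kernels themselves are needed — the corollary is purely a packaging of Theorem~\ref{th:K-wtK<Q-wtQ}, Proposition~\ref{prop:K.Xinf.Fourier}(iii) and Lemma~\ref{lem:fourier.coef.alter} with classical compactness criteria.
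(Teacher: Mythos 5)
Your proposal is correct and follows essentially the same route as the paper: establish relative compactness of the balls in $L^1$ (compact Sobolev embedding, Arzel\`a--Ascoli for the H\"older ball, Helly's selection theorem plus dominated convergence for the variation ball), then quote Proposition~\ref{prop:K.Xinf.Fourier} and Lemma~\ref{lem:fourier.coef}/\ref{lem:fourier.coef.alter} (via the continuity of $Q \to K_Q^{\pm}$ from Theorem~\ref{th:K-wtK<Q-wtQ}). Your extra remark about passing to the $L^1$-closure to turn relative compactness into compactness is a harmless refinement the paper leaves implicit.
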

\begin{proof}
It well known that the balls in $W_1^s[0,1]$ and $\L_{\a}[0,1]$ are relatively
compact in $L^p[0,1]$ because of compact embedding $W_1^s[0,1] \hookrightarrow
L^p[0,1]$ and $\L_{\a}[0,1] \hookrightarrow L^p[0,1]$. Besides, balls in
$V[0,1]$ are relatively compact in $L^p[0,1]$ due to the second Helly's theorem
and Lebesgue's dominated convergence theorem. It remains to apply
Proposition~\ref{prop:K.Xinf.Fourier} and Lemma~\ref{lem:fourier.coef}.
\end{proof}
\begin{remark} \label{rem:fourier.coef.Lp}
\textbf{(i)} Let us present a simple example of the non-compact set in $L^p[0,1]$ for
which uniform relation~\eqref{eq:int.Fourier_of_Kxt_to_0_unif} is violated.
Consider the following set of functions
\begin{equation} \label{eq:cX.def}
 \cG := \{g_{\mu}(x) := g_0(x) e^{-i \mu x}: \ \mu \in \bR\}, \quad g_0 \in
 L^p[0,1], \quad c_0 := \int_0^1 g_0(t) dt > 0.
\end{equation}
It is clear that
\begin{equation}
 \cF[g_{\mu}](\mu) \ge \abs{\int_0^1 g_0(t) e^{-i \mu t} e^{i \mu t} dt}
 = c_0 \ne 0 \qquad \text{and} \qquad
 \lim_{|\l| \to \infty} \cF[g_{\mu}](\l) = 0.
\end{equation}
The last relation is satisfied \emph{not uniformly} on $\cG$. Moreover,
inequality~\eqref{eq:sup.int.g.e<delta} holds on sets $\cI_{g_{\mu}, \delta} =
\bZ \setminus (\mu - N_{\delta}, \mu + N_{\delta})$ that depends on $g_{\mu}$,
and their complements ``tend to infinity'' when $\mu \to \infty$, but have
uniformly bounded ``sizes'', $\card(\bZ \setminus \cI_{g_{\mu}, \delta}) \le
2 N_{\delta}$. We are indebted to V.P. Zastavnyi who has informed us about this
example.

\textbf{(ii)} One can complete the scalar version of
Proposition~\ref{prop:volterra_oper.Xp}(ii) by proving that for any
$G(\cdot,\cdot)\in X_{\infty,1}(\Omega)$ the function $\int_0^x G(x,t)
e^{i b \l t} dt$ is continuous in two variables $(x,\l)$, in particular,
$\cF[G](\cdot) \in C(\Pi_h)$ for any $h>0$.

\end{remark}
\section{Stability property of eigenvalues} \label{sec:stability.eigenvalue}
\subsection{Uniform localization of spectrum}
\label{subsec:stability.compact}
In this subsection we will obtain uniform with respect to $Q \in \cK$ version of
the asymptotic formula~\eqref{eq:l.n=l.n0+o(1)}, where $\cK$ is either a compact
in $\LL{1}$ or $\cK = \bU_{p,r}^{2 \times 2}$, $p \in (1, 2]$.

First, we enhance Proposition~\ref{prop:Delta.regular.basic} to get uniform
estimates for $Q \in \cK$, where $\cK$ is compact in $L^{1}$.
The following result generalizes~\cite[Theorem~3]{Sad16} to the case of
Dirac-type system and regular boundary conditions.
\begin{proposition} \label{prop:incompress.uniform}
Let $\cK$ be compact in $\LL{1}$ and $Q \in \cK$. Let boundary
conditions~\eqref{eq:BC} be regular, let $\Delta(\cdot) := \Delta_Q(\cdot)$ be
the corresponding characteristic determinant, and let $\L := \L_Q :=
\{\l_{Q,n}\}_{n \in \bZ}$, be canonically ordered sequence of its zeros. Let
also $\L_0 = \{\l_n^0\}_{n \in \bZ}$ be the sequence of zeros $\Delta_0$. Then
the following estimates hold:

(i) There exists a constant $M = M(\cK, B, A) > 0$ that does not depend on $Q$
and such that
\begin{equation} \label{eq:ln-ln0<M}
 \sup_{n \in \bZ}\abs{\l_{Q,n} - \l_n^0} \le M, \qquad Q \in \cK.
\end{equation}
In particular, there exist constants $h = h(\cK, B, A) > 0$ and $d =
d(\cK, B, A)$ that do not depend on $Q$ and such that $\L_Q$ is an
incompressible sequence of density $d$ and lying in the strip $\Pi_h$.

(ii) For any $\eps > 0$ there exists a constant $N_{\eps} = N_{\eps}(\cK, B, A)
\in \bN$ such that
\begin{equation} \label{eq:ln-ln0<eps}
 \sup_{|n| > N_{\eps}}\abs{\l_{Q,n} - \l_n^0} \le \eps, \qquad Q \in \cK.
\end{equation}
If in addition boundary conditions~\eqref{eq:BC} are strictly regular,
then there exists $\eps_0 = \eps_0(B, A)$ such that for any $\eps \in (0,
\eps_0]$ the discs $\bD_{2\eps}(\l_{Q,n})$, $|n| > N_{\eps}$, are disjoint, and
there exists a constant $\wt{C}_{\eps} = \wt{C}_{\eps}(B, A) > 0$ such that
\begin{equation} \label{eq:Delta>=C.eps}
 \min_{|\l - \l_{Q,n}| = 2 \eps}|\Delta_Q(\l)| \ge \wt{C}_{\eps},
 \qquad |n| > N_{\eps}, \quad Q \in \cK.
\end{equation}
\end{proposition}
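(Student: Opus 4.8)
The whole statement is a ``uniform in $Q \in \cK$'' version of Proposition~\ref{prop:Delta.regular.basic} and Proposition~\ref{prop:lambdan.ordering}, and the key fresh input is that the quantities appearing in those earlier proofs can be controlled by the \emph{compactness} of $\cK$ in $\LL{1}$ rather than by a fixed $Q$. The plan is to revisit the proof of Proposition~\ref{prop:lambdan.ordering} and replace every ``for this $Q$ there exists $M_\eps$'' by ``for all $Q \in \cK$ there exists $M_\eps$'', using the uniform Riemann--Lebesgue type estimates of Section~\ref{sec:fourier.transform}.

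\textbf{Part (i).} First I would fix $\eps := \eps_0 = \eps_0(B,A) = (2d_0)^{-1}$, where $d_0$ is the incompressibility density of $\L_0$ (which depends only on $B, A$ via Proposition~\ref{prop:sine.type}). By Lemma~\ref{lem:Delta=Delta0+}, $\Delta_Q(\l) - \Delta_0(\l) = \int_0^1 g_{1,Q}(t) e^{ib_1\l t}dt + \int_0^1 g_{2,Q}(t) e^{ib_2\l t}dt$, and by Lemma~\ref{lem:Delta-wt.Delta} (with $\wt{Q} = 0$) the map $Q \to (g_{1,Q}, g_{2,Q})$ is Lipschitz, hence continuous, from $\LL{1}$ to $L^1[0,1]^2$; therefore the image of the compact $\cK$ under this map is a compact set $\cK' \subset L^1[0,1] \times L^1[0,1]$. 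Now apply the uniform Riemann--Lebesgue Lemma --- the scalar special case~\eqref{eq:riem.leb.uniform} of Proposition~\ref{prop:K.Xinf.Fourier}(iii), applied to the compact $\cK'$ --- to produce a single $M_{\eps_0} = M_{\eps_0}(\cK, B, A) > 0$ with
\begin{equation} \label{eq:unif.delta.small}
 \abs{\int_0^1 g_{l,Q}(t) e^{ib_l \l t}dt} < 6^{-1} C_{\eps_0}^0
 \(e^{-b_l \Im \l} + 1\), \qquad |\l| > M_{\eps_0}, \quad Q \in \cK, \quad l \in \{1,2\},
\end{equation}
exactly as in estimate~\eqref{eq:Delta-Delta0}, where $C_{\eps_0}^0 = C_{\eps_0}^0(B,A)$ comes from Proposition~\ref{prop:sine.type}(iii) for $\Delta_0$. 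Combining~\eqref{eq:unif.delta.small} with the lower bound~\eqref{eq:Delta0_estimate} for $\Delta_0$ yields the Rouch\'e inequality~\eqref{eq:Rouche_estimate} and the lower bound~\eqref{eq:Delta_estimate}, now \emph{uniformly} on $\wt{\Omega}_{\eps_0} = \bD_{M_{\eps_0}}(0) \cup \Omega_{\eps_0}^0$ for all $Q \in \cK$. In particular every zero of $\Delta_Q$ lies in $\wt{\Omega}_{\eps_0}$, and by the canonical ordering $\l_{Q,n}$ and $\l_n^0$ lie in the same connected component of $\wt{\Omega}_{\eps_0}$. Since each component of $\Omega_{\eps_0}^0$ has diameter $\le 2\eps_0 d_0 \le 1$ (Lemma~\ref{lem:incompress.connect}), and the component containing $\bD_{M_{\eps_0}}(0)$ is contained in $\bD_{M_{\eps_0}+1}(0)$ (cf.~\eqref{eq:Ceps.in.DM}), we get $|\l_{Q,n} - \l_n^0| \le 2M_{\eps_0} + 2 =: M$, which is~\eqref{eq:ln-ln0<M}. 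The incompressibility of $\L_Q$ with density $d = d_0\ceil{M+1}$ lying in $\Pi_h$ for $h = h_0 + M$ is then immediate from Lemma~\ref{lem:incompress.M} applied to $\L_0$.

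\textbf{Part (ii).} For the refined estimate~\eqref{eq:ln-ln0<eps} I would repeat the argument with an arbitrary $\eps \in (0, \eps_0]$: the same compactness-of-image argument gives $M_\eps = M_\eps(\cK, B, A)$ such that~\eqref{eq:Rouche_estimate}--\eqref{eq:Delta_estimate} hold off $\wt{\Omega}_\eps$ for all $Q \in \cK$, so $\l_{Q,n}$ and $\l_n^0$ share a connected component of $\wt{\Omega}_\eps$. Outside the bounded component $\fC_\eps \subset \bD_{M_\eps+1}(0)$ the components are discs or short chains of diameter $\le 2\eps d_0 \le \eps$, so for $|n|$ large enough that $\l_n^0 \notin \bD_{M_\eps+1}(0)$ --- i.e. for $|n| > N_\eps = N_\eps(\cK,B,A)$, the bound coming from the incompressibility density of $\L_0$ --- we get $|\l_{Q,n} - \l_n^0| \le \eps$. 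For the last claim, assume in addition strict regularity, so that $\L_0$ is asymptotically separated: $|\l_n^0 - \l_m^0| > 2\tau$ for $|n|, |m| > n_0$, $n \ne m$. Shrinking $\eps_0$ if necessary to $\eps_0 \le \tau/4$ (still depending only on $B, A$) and enlarging $N_\eps$ so that also $N_\eps > n_0$, the triangle inequality with~\eqref{eq:ln-ln0<eps} gives $|\l_{Q,n} - \l_{Q,m}| \ge 2\tau - 2\eps > 4\eps$ for $|n|,|m| > N_\eps$, $n \ne m$, so the discs $\bD_{2\eps}(\l_{Q,n})$ are pairwise disjoint. Finally, on each circle $|\l - \l_{Q,n}| = 2\eps$ the point $\l$ lies outside $\bigcup_m \bD_{\eps}(\l_{Q,m})$ (by the just-proved separation), hence by Proposition~\ref{prop:sine.type}(iii) applied to $\Delta_Q$ --- but with the constant made uniform: here one uses~\eqref{eq:Delta_estimate} which is uniform in $Q \in \cK$, valid since $2\eps$-circles around $\l_{Q,n}$ with $|n| > N_\eps$ lie outside $\wt{\Omega}_{\eps}$ except possibly near components of $\Omega_\eps^0$, where one instead bounds below using~\eqref{eq:Delta0_estimate} and~\eqref{eq:unif.delta.small} directly --- we obtain $|\Delta_Q(\l)| \ge \wt{C}_\eps (e^{-b_1\Im\l} + e^{-b_2\Im\l}) \ge \wt{C}_\eps$ with $\wt{C}_\eps = \wt{C}_\eps(B,A) > 0$, which is~\eqref{eq:Delta>=C.eps}.

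\textbf{Main obstacle.} The one genuinely new point, and the step I expect to require the most care, is making the radius $M_\eps$ in the Rouch\'e inequality \emph{uniform} over the whole compact $\cK$: for a single $Q$ this is just the Riemann--Lebesgue lemma, but uniformity forces one to pass through compactness of the image $\{(g_{1,Q}, g_{2,Q}) : Q \in \cK\}$ in $L^1[0,1]^2$ (via the Lipschitz continuity from Lemma~\ref{lem:Delta-wt.Delta}) and then invoke the uniform Riemann--Lebesgue statement~\eqref{eq:riem.leb.uniform}. A secondary subtlety is bookkeeping the dependence of constants: one must check that $\eps_0$, $C_{\eps}^0$, $h_0$, $d_0$, $\tau$, $n_0$ all depend only on the boundary data $B, A$ (which they do, as they are attached to $\Delta_0$), while only $M_\eps$ and $N_\eps$ are allowed to depend on $\cK$ --- and that the final separation-and-lower-bound argument for~\eqref{eq:Delta>=C.eps} never re-introduces a $Q$-dependence.
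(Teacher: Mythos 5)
Your proposal is correct in outline and follows essentially the same route as the paper: uniformity over $\cK$ is obtained by pushing the compact through a Lipschitz map and then invoking a uniform Riemann--Lebesgue type bound (you use $Q \mapsto (g_{1,Q},g_{2,Q})$ via Lemma~\ref{lem:Delta-wt.Delta}; the paper uses $Q \mapsto K^{\pm}_Q$ via Theorem~\ref{th:K-wtK<Q-wtQ} together with Lemma~\ref{lem:fourier.coef.alter} at $x=1$, which is the same thing since $g_{l,Q}$ are linear combinations of the traces $K^{\pm}_{jk}(1,\cdot)$), after which everything reduces to the component-tracking argument of Proposition~\ref{prop:lambdan.ordering} with the canonical ordering. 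However, two of your steps need repair. First, the bound you actually need is the whole-plane estimate with the weight $e^{-b_l \Im\l}+1$ valid for all $|\l|>M_{\eps}$: the Rouch\'e comparison is performed on boundaries of components of $\wt{\Omega}_{\eps}$ that leave any fixed horizontal strip, and one must also exclude zeros of $\Delta_Q$ on the whole complement of $\wt{\Omega}_{\eps}$. The uniform Riemann--Lebesgue statement~\eqref{eq:riem.leb.uniform} that you cite only controls $\l$ in a strip $\Pi_h$, so as written it does not deliver your displayed estimate; the correct uniform tool is Lemma~\ref{lem:fourier.coef} (applied to the compact image, or equivalently Lemma~\ref{lem:fourier.coef.alter}, which is what the paper invokes), whose conclusion has exactly the form $\delta\,(e^{-b\Im\l}+1)$ for $|\l|>C$ uniformly on a compact set.

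Second, in part (ii) the inequality ``$2\eps d_0 \le \eps$'' is false for $d_0\ge 1$: with discs of radius $\eps$ the connected components of $\Omega^0_{\eps}$ only have diameter at most $2\eps d_0\le 1$, so your argument yields accuracy $2\eps d_0$, not $\eps$. To obtain $|\l_{Q,n}-\l_n^0|\le\eps$ you must run the covering argument with the smaller radius $\wt{\eps}=\min\{\eps/(2d_0),\eps_0\}$ and choose $N_{\eps}$ from the condition $|\l_n^0|\ge M_{\wt{\eps}}+1$, exactly as the paper does; this is a trivial fix but as stated the step fails. Relatedly, your justification of~\eqref{eq:Delta>=C.eps} is vague about why the circle $|\l-\l_{Q,n}|=2\eps$ avoids $\wt{\Omega}_{\eps}$ (``except possibly near components of $\Omega^0_{\eps}$\ldots''): the clean argument is to set $\eps_0=\tau_0/3$, use $|\l_{Q,n}-\l_n^0|<\eps$ to get $\bD_{\eps}(\l_n^0)\subset\bD_{2\eps}(\l_{Q,n})\subset\bD_{3\eps}(\l_n^0)$, observe that the $3\eps$-discs are disjoint and, for $N_{\eps}$ chosen so that $|\l_n^0|\ge M_{\eps}+3\eps$, also disjoint from $\bD_{M_{\eps}}(0)$, so that the entire circle lies in $\bC\setminus\wt{\Omega}_{\eps}$ and the uniform lower bound~\eqref{eq:Delta_estimate} (whose constant depends only on $B,A$) applies directly.
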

\begin{proof}
Recall, that the sequence $\L_0 = \{\l_n^0\}_{n \in \bZ}$ of zeros of $\Delta_0$
is incompressible, is of density $d_0$ and lies in the strip $\Pi_{h_0}$ for
some $d_0 = d_0(B, A) \in \bN$ and $h_0 = h_0(B, A) > 0$. Let $0 < \eps \le
\eps_0 := (2 d_0)^{-1} = \eps_0(B, A)$. Note that functions $g_1$ and $g_2$ from
representation~\eqref{eq:Delta=Delta0+} are linear combinations of kernel traces
$K_{jk}^{\pm}(1, \cdot)$, $j,k \in \{1,2\}$. Repeating the proof of
estimate~\eqref{eq:Delta-Delta0} but using Lemma~\ref{lem:fourier.coef.alter}
with $x=1$ instead of~\cite[Lemma 3.5]{LunMal16JMAA}, we see that this estimate
is valid with a constant $M_{\eps} = M_{\eps}(\cK, B, A) > 0$ that does not
depend on $Q$.

\textbf{(i)} For brevity we set $\l_n := \l_{Q,n}$, $n \in \bZ$. Recall that
for each $n \in \bZ$ and $\eps > 0$ numbers $\l_n$ and $\l_n^0$ belong to the
same connected component of $\wt{\Omega}_{\eps}$ defined
in~\eqref{eq:Omega.eps}. Also recall that connected components
of~\eqref{eq:Omega.eps} satisfy
properties~\eqref{eq:diam.Cepsk}--\eqref{eq:Ceps.in.DM}. Hence for
$\l_n, \l_n^0 \in \fC_{\eps}$ we have $|\l_n - \l_n^0| < 2(M_{\eps} + 1)$. All
other components have diameter at most 1 and hence $|\l_n - \l_n^0| < 1$ for
such $n$. By fixing $\eps = \eps_0$ this yields~\eqref{eq:ln-ln0<M} with $M =
2(M_{\eps_0} + 1)$ that does not depend on $Q$. In turn,
Lemma~\ref{lem:incompress.M} implies that $\L_Q$ is an incompressible
sequence of density $d = d_0 \ceil{M+1}$. Since $\L_0 \subset \Pi_{h_0}$,
inequality~\eqref{eq:ln-ln0<M} yields that $\L_Q \subset \Pi_h$ for
$h = h_0 + M$.

\textbf{(ii)} Set $\wt{\eps} := \min\{\eps/(2 d_0), \eps_0\}$ and $N_{\eps} :=
\sup\{|n| : |\l_n^0| < M_{\wt{\eps}} + 1\}$. Since $M_{\wt{\eps}}$ does not
depend on $Q$, then $N_{\eps}$ also does not depend on $Q$. Let
$|n| > N_{\eps}$. In this case, due to the choice of $N_{\eps}$ and
inclusion~\eqref{eq:Ceps.in.DM}, numbers $\l_n$ and $\l_n^0$ do not belong to
$\fC_{\wt{\eps}}$. Recall that the sequence $\L = \{\l_n\}_{n \in \bZ}$ is
canonically ordered. It means that $\l_n, \l_n^0 \in \fC_{\wt{\eps},k}^0$ for
some $k \in \bZ$. Inequality~\eqref{eq:diam.Cepsk} now yields that
$|\l_n - \l_n^0| < 2 \wt{\eps} d_0 \le \eps$, $|n| > N_{\eps}$, and finishes the
proof of this part.

Finally, let us prove estimate~\eqref{eq:Delta>=C.eps}. Since boundary
conditions~\eqref{eq:BC} are strictly regular, the sequence $\L_0$ is
asymptotically separated. Namely, for some $\tau_0 = \tau_0(B, A) > 0$ and
$N_0 = N_0(B, A) \in \bN$ we have
\begin{equation} \label{eq:ln0-lm0>2tau0}
 |\l_n^0 - \l_m^0| > 2 \tau_0, \quad n \ne m, \qquad |n|, |m| > N_0.
\end{equation}
Let $0 < \eps \le \eps_0 := \tau_0/3$ and $N_{\eps} := \max\{N_0, \sup\{|n| :
|\l_n^0| < M_{\eps} + 3\eps\}\}$. Here we redefined values of $\eps_0$ and
$N_{\eps}$ that we set above. From definition~\eqref{eq:Omega.eps} of
$\wt{\Omega}_{\eps}$, inequality~\eqref{eq:ln0-lm0>2tau0} and the choice of
$N_{\eps}$ it follows that the discs $\bD_{3\eps}(\l_n^0)$, $|n| > N_{\eps}$,
are disjoint and do not intersect with $\bD_{M_{\eps}}(0)$. Hence rings
$\bD_{3\eps}(\l_n^0) \setminus \bD_{\eps}(\l_n^0)$ do not intersect with
$\wt{\Omega}_{\eps}$,
\begin{equation} \label{eq:D3e-De}
 \bD_{3\eps}(\l_n^0) \setminus \bD_{\eps}(\l_n^0) \subset
 \bC \setminus \wt{\Omega}_{\eps}, \quad |n| > N_{\eps}.
\end{equation}
In particular, $\bT_{\eps}(\l_n^0) \subset \bC \setminus \wt{\Omega}_{\eps}$,
$|n| > N_{\eps}$. Combining~\eqref{eq:Rouche_estimate} and the Rouch\'e theorem,
implies that each disc $\bD_{\eps}(\l_n^0)$, $|n| > N_{\eps}$, contains exactly
one (simple) zero of $\Delta_Q(\cdot)$, i.e.,
\begin{equation} \label{eq:ln-ln0<tau0}
 |\l_n - \l_n^0| < \eps \le \tau_0/3, \quad |n| > N_{\eps}.
\end{equation}
It follows from~\eqref{eq:ln-ln0<tau0} that
\begin{equation} \label{eq:De.in.D2e.in.D3e}
 \bD_{\eps}(\l_n^0) \subset \bD_{2 \eps}(\l_n) \subset \bD_{3\eps}(\l_n^0),
 \quad |n| > N_{\eps}.
\end{equation}
Since the discs $\bD_{3\eps}(\l_n^0)$, $|n| > N_{\eps}$,
are disjoint, inclusion~\eqref{eq:De.in.D2e.in.D3e} implies that the discs
$\bD_{2\eps}(\l_{Q,n})$, $|n| > N_{\eps}$, are also disjoint.
Inclusions~\eqref{eq:D3e-De} and~\eqref{eq:De.in.D2e.in.D3e} imply that
$\bT_{2 \eps}(\l_n) \subset \bC \setminus \wt{\Omega}_{\eps}$. Hence
combining~\eqref{eq:Delta_estimate} with~\eqref{eq:e1+e2>1}
yields~\eqref{eq:Delta>=C.eps}.
\end{proof}
Next we extend Proposition~\ref{prop:incompress.uniform} to the case
$\cK = \bU_{p,r}^{2 \times 2}$, $p \in (1, 2]$. Part (i) remains valid but we
substantially rely on the fact that $p > 1$. Part (ii) only remains valid if
we relax inequality $|n| > N_{\eps}$ to an inclusion $n \in \cI_{Q, \eps}$,
where complements of the sets $\cI_{Q, \eps}$ have uniformly bounded
cardinalities over $Q \in \bU_{p,r}^{2 \times 2}$.
\begin{proposition} \label{prop:incompress.holes}
Let $Q \in \bU_{p, r}^{2 \times 2}$ for some $p \in (1, 2]$ and $r > 0$.
Let boundary conditions~\eqref{eq:BC} be regular, let $\Delta(\cdot) :=
\Delta_Q(\cdot)$ be the corresponding characteristic determinant, and let $\L :=
\L_Q = \{\l_{Q,n}\}_{n \in \bZ}$ be a canonically ordered sequence of its zeros.
Then the following statements hold true:

(i) There exists a constant $M = M(p, r, B, A) > 0$, not dependent on
$Q$, such that
\begin{equation} \label{eq:ln-ln0<M.Lp}
 \sup_{n \in \bZ}\abs{\l_{Q,n} - \l_n^0} \le M,
 \qquad Q \in \bU_{p, r}^{2 \times 2}.
\end{equation}
In particular, there exist constants $h = h(p, r, B, A) \ge 0$ and $d =
d(p, r, B, A) > 0$, not dependent on $Q$, such that $\L_Q$ is an incompressible
sequence of density $d$ and lying in the strip $\Pi_h$.

(ii) For any $\eps > 0$ there exists $N_{\eps} = N_{\eps}(p, r, B, A) \in \bN$
that do not depend on $Q$, and a set $\cI_{Q, \eps} \subset \bZ$ such that,
\begin{align}
\label{eq:card.IQ}
 \card\(\bZ \setminus \cI_{Q, \eps}\) & \le N_{\eps}, \\
\label{eq:ln-ln0<eps.hole}
 |\l_n - \l_n^0| & < \eps, \quad n \in \cI_{Q,\eps}.
\end{align}

(iii) If boundary conditions~\eqref{eq:BC} are strictly regular then there
exists $\eps_0 = \eps_0(B, A)$ such that for any $\eps \in (0, \eps_0]$ the
discs $\bD_{2\eps}(\l_n)$, $n \in \cI_{Q, \eps}$, are disjoint, and there
exists $\wt{C}_{\eps} = \wt{C}_{\eps}(B, A) > 0$, not dependent on $Q$, $p$ and
$r$, such that
\begin{equation}
\label{eq:Delta>=C.eps.hole}
 \min_{|\l - \l_n| = 2 \eps}|\Delta(\l)| \ge \wt{C}_{\eps},
 \quad n \in \cI_{Q,\eps},
\end{equation}
where $\cI_{Q, \eps} \subset \bZ$
satisfies~\eqref{eq:card.IQ}--\eqref{eq:ln-ln0<eps.hole}.
\end{proposition}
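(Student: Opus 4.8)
The plan is to build on Proposition~\ref{prop:incompress.uniform}, replacing the use of the uniform Riemann-Lebesgue lemma on compacts (Lemma~\ref{lem:fourier.coef.alter}) with the uniform-on-balls version (Lemma~\ref{lem:fourier.tail.Lp}), which is the only place where $p > 1$ is genuinely needed. Throughout, I work with the representation $\Delta_Q(\l) - \Delta_0(\l) = \int_0^1 g_1(t) e^{i b_1 \l t}\,dt + \int_0^1 g_2(t) e^{i b_2 \l t}\,dt$ from Lemma~\ref{lem:Delta=Delta0+}, where by Lemma~\ref{lem:dif-ce_of_determ-s_Four_repres-n} (applied with $\wt{Q} = 0$) one has $\|g_1\|_p + \|g_2\|_p \le \wh{C}(p, r, B, A) \cdot \|Q\|_p \le \wh{C} r$. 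So $g_1, g_2$ run through a ball of fixed radius in $L^p[0,1]$ as $Q$ runs through $\bU_{p,r}^{2\times 2}$.

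For part (i): I would first pick $\eps = \eps_0 = (2 d_0)^{-1}$, as in Proposition~\ref{prop:incompress.uniform}. The only ingredient that must be made uniform on the ball rather than on a compact is the estimate $|\Delta_0(\l) - \Delta_Q(\l)| < 2^{-1} C_{\eps_0}^0 (e^{-b_1 \Im \l} + e^{-b_2 \Im \l})$ for $|\l| > M_{\eps_0}$. Applying Lemma~\ref{lem:fourier.tail.Lp} to $g_1$ and $g_2$ with $\delta$ of order $C_{\eps_0}^0$, $b = b_1$ and $b = b_2$ respectively, and $h$ the strip semi-width of $\L_0$, I get sets $\cI_{g_j, \delta}$ whose complements have cardinality $\le N_\delta := C(r/\delta)^{p'}$, uniformly in $Q$; outside these sets $|F[g_j](b_j \l)| < \delta$. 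However, $N_\delta$ is uniform but the \emph{location} of the exceptional indices depends on $Q$, so one does not directly get $|\l| > M_{\eps_0} \Rightarrow |\Delta_Q - \Delta_0| < \ldots$. Instead, I would argue directly on connected components of $\wt{\Omega}_{\eps_0}$: for each $Q$, outside a bounded region whose size depends only on $N_\delta$, $d_0$, $\eps_0$, $h_0$, the Rouché estimate~\eqref{eq:Rouche_estimate} holds, so each zero $\l_{Q,n}$ lies in a component of $\Omega_{\eps_0}^0$ together with the matching $\l_n^0$; only finitely many zeros (in number bounded uniformly in $Q$) lie in the exceptional bounded region, and since $\L_Q$ lies in a fixed strip (from Lemma~\ref{lem:Delta<Exp} plus the zero-counting in bounded regions) those satisfy $|\l_{Q,n} - \l_n^0| \le M$ with $M$ uniform. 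Then Lemma~\ref{lem:incompress.M} gives the incompressibility of $\L_Q$ with $d = d_0\lceil M+1\rceil$.

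For part (ii): I would repeat the proof of Proposition~\ref{prop:incompress.uniform}(ii) but now keep track of the exceptional index sets from Lemma~\ref{lem:fourier.tail.Lp}. Set $\wt{\eps} := \min\{\eps/(2 d_0), \eps_0\}$, choose $\delta$ of order $C_{\wt{\eps}}^0$, and let $\cI_{Q,\eps}$ be (essentially) the intersection of $\cI_{g_1, \delta}$, $\cI_{g_2, \delta}$ minus the finitely many indices near the origin (those $n$ with $|\l_n^0| \le M_0$ for a uniform $M_0$). By Lemma~\ref{lem:fourier.tail.Lp}, $\card(\bZ \setminus \cI_{Q,\eps}) \le N_\eps$ with $N_\eps$ uniform in $Q$. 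For $n \in \cI_{Q,\eps}$ one has $|F[g_j](b_j \l)| < \delta$ near $\l_n^0$, hence the Rouché estimate holds on $\bT_{\wt\eps}(\l_n^0)$, giving a single simple zero $\l_{Q,n}$ of $\Delta_Q$ there with $|\l_{Q,n} - \l_n^0| < \wt\eps \cdot 2 d_0 \le \eps$, which is~\eqref{eq:ln-ln0<eps.hole}. For part (iii), under strict regularity the sequence $\L_0$ is asymptotically separated (say $|\l_n^0 - \l_m^0| > 2\tau_0$ for $|n|,|m| > N_0$); shrinking $\eps_0$ to $\tau_0/3$ and enlarging $N_\eps$ to also drop indices $|n| \le N_0$, I get that the discs $\bD_{3\eps}(\l_n^0)$, $n \in \cI_{Q,\eps}$, are disjoint and avoid the bad region, so rings $\bD_{3\eps}(\l_n^0) \setminus \bD_{\eps}(\l_n^0)$ avoid $\wt{\Omega}_{\eps}$; then exactly as in~\eqref{eq:D3e-De}--\eqref{eq:De.in.D2e.in.D3e} one gets $\bD_{\eps}(\l_n^0) \subset \bD_{2\eps}(\l_n) \subset \bD_{3\eps}(\l_n^0)$, disjointness of the $\bD_{2\eps}(\l_n)$, and $\bT_{2\eps}(\l_n) \subset \bC \setminus \wt\Omega_\eps$; finally the uniform lower bound~\eqref{eq:Delta_estimate}, combined with $e^{-b_1 \Im \l} + e^{-b_2 \Im \l} > 1$, gives~\eqref{eq:Delta>=C.eps.hole} with $\wt{C}_\eps = \wt{C}_\eps(B,A)$.

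The main obstacle is the bookkeeping in part (i): Lemma~\ref{lem:fourier.tail.Lp} yields a \emph{uniform bound on the number} of bad frequencies but not a uniform \emph{location}, so one cannot simply localize the Rouché estimate to the complement of one fixed disc $\bD_{M_\eps}(0)$ as in the compact case. The fix is to observe that in \emph{any} connected component of $\wt\Omega_\eps$ disjoint from the bad indices, Rouché gives a bijection between zeros of $\Delta_0$ and $\Delta_Q$ there, and the remaining zeros of $\Delta_Q$ are bounded in number (uniformly in $Q$) and, by Lemma~\ref{lem:Delta<Exp} together with a standard argument via~\eqref{eq:Delta_estimate}, confined to a fixed strip; hence every $\l_{Q,n}$ is within a uniform $M$ of $\l_n^0$ after the canonical ordering. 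One should also double-check that the canonical ordering of Definition~\ref{def:canon.order} is compatible with choosing the sets $\cI_{Q,\eps}$ as above — this is where one uses that $\l_{Q,n}$ and $\l_n^0$ always lie in the same component of $\wt\Omega_\eps$ for all $\eps$, which pins down $\cI_{Q,\eps}$ consistently.
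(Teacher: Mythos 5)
Your overall architecture is the same as the paper's (replace the compactness-based Lemma~\ref{lem:fourier.coef.alter} by the ball version Lemma~\ref{lem:fourier.tail.Lp}, then run Rouch\'e on connected components of the enlarged exceptional set), and your parts (ii)--(iii) essentially reproduce the paper's construction. But part (i) has a genuine gap: you never establish that $\L_Q\subset\Pi_h$ with $h=h(p,r,B,A)$ \emph{uniform on the ball}, and your proposed source for this --- Lemma~\ref{lem:Delta<Exp} ``plus zero-counting in bounded regions'' --- cannot give it, since Lemma~\ref{lem:Delta<Exp} is an \emph{upper} bound on $|\Delta_Q|$ and says nothing about where its zeros lie; Proposition~\ref{prop:sine.type}(i) only gives a strip whose width depends on $Q$. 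This is not a formality: Lemma~\ref{lem:fourier.tail.Lp} only controls $F[g_j]$ inside a prescribed strip $\Pi_h$, so without a uniform $h$ you have no lower bound on $|\Delta_Q|$ for $|\Im\l|$ large and cannot exclude zeros of $\Delta_Q$ far from the real axis, nor even set up the rectangles $[n,n+1]\times[-h,h]$. The paper closes exactly this hole by a H\"older estimate (see~\eqref{eq:Delta.Holder}--\eqref{eq:DeltaQ>0.Pih} and the choice of $h$ in~\eqref{eq:h.prBA.def}): since $\|g_j\|_p\le \wh C r$ with $p>1$, one gains the factor $|\Im\l|^{-1/p'}$ over $e^{-b_1\Im\l}+e^{-b_2\Im\l}$, so $|\Delta_Q|>0$ for $|\Im\l|\ge h$ uniformly; note the crude bound $\|g_j\|_1\le \wh C r$ would not suffice, which is a second, independent place where $p>1$ is used.

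Two further points in part (i) need repair. First, the exceptional set is \emph{not} ``a bounded region whose size depends only on $N_\delta,d_0,\eps_0,h_0$'': the bad unit rectangles have uniformly bounded \emph{number} but arbitrary \emph{location}, so, unlike the compact case, there is no analogue of the single disc $\bD_{M_\eps}(0)$. What saves the argument (and what the paper proves) is that every connected component of $\Omega_\eps^0\cup\Pi_{h,Q,\eps}$ has diameter at most $\wh N_\eps+2+2h$, uniformly in $Q$; the bound $|\l_{Q,n}-\l_n^0|\le M$ then follows from $|\l_n-\l_n^0|\le\diam(\fC_{\eps,n})$, not from the zeros lying ``in a fixed strip'' (strip membership bounds only the vertical deviation). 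Second, the matching itself is not merely a compatibility check: the canonical ordering of Definition~\ref{def:canon.order} pairs $\l_n$ with $\l_n^0$ inside components of $\wt\Omega_\eps=\bD_{M_\eps}(0)\cup\Omega_\eps^0$ (with $M_\eps$ depending on $Q$), which is a different decomposition from $\Omega_\eps^0\cup\Pi_{h,Q,\eps}$; the paper has to \emph{enhance} the ordering, using that the sets $\cI_{g_j,C_\eps^0/4}$ increase as $\eps\downarrow 0$ and re-running the component-lifetime argument of Proposition~\ref{prop:lambdan.ordering}(ii), so that $\l_n$ and $\l_n^0$ share a component of $\Omega_{h,Q,\eps}$ for every $\eps$. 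Finally, in (ii) keep in mind that $\cI_{Q,\eps}$ indexes eigenvalues, not frequency rectangles, so passing from $\card(\bZ\setminus\cI_{g_j,\delta})\le N_\delta$ to~\eqref{eq:card.IQ} requires the incompressibility of $\L_0$ to count how many $\l_n^0$ can sit near the bad rectangles --- routine, but it should be said.
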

\begin{proof}
\textbf{(i)} By Lemma~\ref{lem:Delta=Delta0+}, the difference $\Delta_Q(\l) -
\Delta_0(\l)$ admits representation~\eqref{eq:Delta=Delta0+}, and hence
\begin{equation} \label{eq:Delta.dif<g1+g2}
 |\Delta_Q(\l) - \Delta_0(\l)|
 \le \abs{\int_0^1 g_1(t) e^{i b_1 \l t} dt} +
 \abs{\int_0^1 g_2(t) e^{i b_2 \l t} dt}, \qquad \l \in \bC.
\end{equation}
In accordance with~\eqref{eq:whg1+whg2<C.whQ} the uniform estimate
$\|g_1\|_p + \|g_2\|_p \le \wh{C} \|Q\|_p \le \wh{C}r =: R$ \ holds,
where $\wh{C} = \wh{C}(p, r, B, A) > 0$. Since $p > 1$, it follows
from~\eqref{eq:Delta.dif<g1+g2} and H\"older's inequality that
\begin{align} \label{eq:Delta.Holder}
 |\Delta_Q(\l) - \Delta_0(\l)|
 \le \sum_{j=1}^2 \|g_j\|_p \(
 \frac{e^{-b_j p' \Im \l} - 1}{-b_j p' \Im \l}\)^{1/p'}
 \le R \cdot \frac{e^{-b_1 \Im \l} + e^{-b_2 \Im \l}}{|b_0 p' \Im \l|^{1/p'}},
 \qquad \Re \l \ne 0, \quad 1/p + 1/p' = 1,
\end{align}
where $b_0 := \min\{|b_1|, |b_2|\}$. Recall, that the sequence $\L_0 =
\{\l_n^0\}_{n \in \bZ}$ of zeros of $\Delta_0$ is incompressible, has density
$d_0$ and lies in the strip $\Pi_{h_0}$ for some $d_0 = d_0(B, A) \in \bN$ and
$h_0 = h_0(B, A) > 0$.

Let $0 < \eps \le \eps_0 := (2 d_0)^{-1} = \eps_0(B, A)$. By
Proposition~\ref{prop:sine.type}(iii), there exists $C_{\eps}^0 =
C_{\eps}^0(B, A) > 0$ such that the estimate~\eqref{eq:Delta0_estimate} for
$\Delta_0(\cdot)$ holds:
\begin{equation} \label{eq:Delta0>C}
 |\Delta_0(\l)| > C_{\eps}^0 \(e^{-b_1 \Im \l} + e^{-b_2 \Im \l}\)
 > C_{\eps}^0, \qquad \l \notin \Omega_{\eps}^0
 = \bigcup\limits_{n \in \bZ} \bD_{\eps}(\l_n^0).
\end{equation}
We can also assume that $C_{\eps}^0$ is a non-decreasing function
of $\eps$. Combining~\eqref{eq:Delta.Holder} with estimate~\eqref{eq:Delta0>C}
for $\eps = \eps_0$ and setting
\begin{equation} \label{eq:h.prBA.def}
 h = h(p, r, B, A) := \max\Bigl\{h_0 + \eps_0,
 \bigl(2 R / C_{\eps_0}^0\bigr)^{p'}(b_0 p')^{-1}\Bigr\},
\end{equation}
one has
\begin{equation} \label{eq:DeltaQ>0.Pih}
 |\Delta_Q(\l)| \ge |\Delta_0(\l)| - |\Delta_Q(\l) - \Delta_0(\l)|
 > 2^{-1} C_{\eps_0}^0 \(e^{-b_1 \Im \l} + e^{-b_2 \Im \l}\) > 0,
 \qquad |\Im \l| \ge h.
\end{equation}
This implies the inclusion $\L_Q \subset \Pi_h$ for all $Q \in
\bU_{p, r}^{2 \times 2}$.

Set $\delta = C_{\eps}^0/4 (< C_{\eps_0}^0/2)$. As established above,
$g_j \in \bU_{p,R}^{2 \times 2}$, $j \in \{1, 2\}$, $R = r \cdot
\wh{C}(p, r, B, A)$. Hence by Lemma~\ref{lem:fourier.tail.Lp} there exists
$C_j = C(p, h, b_j) > 0$ such that uniform
inequalities~\eqref{eq:card.Ngd}--\eqref{eq:int.g.e<delta} hold for $g := g_j$,
$j \in \{1, 2\}$, i.e.
\begin{align}
\label{eq:card.Ngd.gj}
 \card(\bZ \setminus \cI_{g_{j},\delta}) & \le N_{j,\delta}
 := C_j \cdot (R/\delta)^{p'}, \\
\label{eq:int.gj.e<delta}
 \abs{\int_0^1 g_j(t) e^{i b_j \l t} dt}
 & < \delta = C_{\eps}^0/4, \qquad \l \in \bigcup_{n \in \cI_{g_j, \delta}}
 [n, n+1] \times [-h, h].
\end{align}
Set $\wh{\cI}_{Q,\eps} := \cI_{g_1,\delta} \cap \cI_{g_2,\delta}$.
Combining~\eqref{eq:Delta.dif<g1+g2},~\eqref{eq:Delta0>C},
\eqref{eq:DeltaQ>0.Pih} and~\eqref{eq:int.gj.e<delta} we arrive at
\begin{equation} \label{eq:DeltaQ>C}
 |\Delta_Q(\l)| \ge |\Delta_0(\l)| - |\Delta_Q(\l) - \Delta_0(\l)|
 > C_{\eps}^0/4, \qquad \l \not \in \Omega_{h,Q,\eps} := \Omega_{\eps}^0
 \cup \Pi_{h,Q,\eps}, \quad \Pi_{h,Q,\eps} :=
 \bigcup_{n \notin \wh{\cI}_{Q,\eps}} (n, n+1) \times (-h, h).
\end{equation}
It is clear from~\eqref{eq:card.Ngd.gj} that $\card\bigl(\bZ \setminus
\wh{\cI}_{Q,\eps}\bigr) \le \wh{N}_{\eps} := N_{g_1,\delta} + N_{g_2,\delta}$,
which implies that
\begin{align}
\label{eq:Omega.h.Q.eps}
 & \Pi_{h,Q,\eps} := \bigcup_{k=1}^{m_{\eps}} \Bigl((\a_k, \b_k) \times
 (-h, h)\Bigr) \subset \Pi_h, \qquad m := m_{\eps} \le \wh{N}_{\eps}, \\
\label{eq:ak.bk}
 & \a_k, \b_k \in \bZ, \quad k \in \{1, \ldots, m\}, \qquad
 \a_1 < \b_1 < \a_2 < \b_2 < \ldots < \a_m < \b_m, \qquad
 \sum_{k=1}^m (\b_k-\a_k) \le \wh{N}_{\eps},
\end{align}
i.e. segments $[\a_k, \b_k]$, $k \in \{1, \ldots, m\}$, with integer ends are
disjoint with a total length not exceeding $\wh{N}_{\eps}$, that does not depend
on $Q$.

As in the proof of Proposition~\ref{prop:lambdan.ordering}, let
$\fC_{\eps,j}^0$, $j \in \bZ$, be the sequence of all connected components of
$\Omega_{\eps}^0$. Recall, that since $\eps \le (2 d_0)^{-1}$, then each
such connected component contains at most $d_0$ discs $\bD_{\eps}(\l_n^0)$,
and according to~\eqref{eq:diam.Cepsk} has diameter at most one,
$\diam\bigl(\fC_{\eps,j}^0\bigr) \le 2 d_0 \eps \le 1$, $j \in \bZ$.

Let $k \in \{1, \ldots, m\}$. Consider a connected component $\fC_k$ of
$\Omega_{h,Q,\eps}$ that contains the rectangle $(\a_k, \b_k) \times (-h, h)$.
Clearly, $\fC_k$ is the union of this rectangle and a finite set of connected
components $\fC_{\eps,j}^0$. Since $\diam\bigl(\fC_{\eps,j}^0\bigr) \le 1$,
$j \in \bZ$, it follows that $\fC_k \subset (\a_k - 1, \b_k + 1) \times (-h,h)$.
And, thus,
\begin{equation}
 \diam\(\fC_k\) \le \b_k - \a_k + 2 + 2h \le \wh{N}_{\eps} + 2 + 2h.
\end{equation}
This implies that all connected components of $\Omega_{h,Q,\eps}$ has uniformly
bounded diameters for all $Q \in \bU_{p,r}^{2 \times 2}$.

Due to~\eqref{eq:DeltaQ>C} the Rouch\'e theorem applies and ensures that in
every connected component of $\Omega_{h,Q,\eps}$ the functions $\Delta_0$ and
$\Delta_Q = \Delta_0 + (\Delta_Q - \Delta_0)$ have the same number of zeros
counting multiplicity. At this point, we need to enhance the canonical ordering
to satisfy the following property: for any $\eps > 0$ and $n \in \bZ$ numbers
$\l_n$ and $\l_n^0$ belong to the same connected component of
$\Omega_{h,Q,\eps}$. It is clear from~\eqref{eq:int.gj.e<delta} and from the
fact that $C_{\eps}^0$ is non-decreasing function of $\eps$, that the sets
$\cI_{g_j, C_{\eps}^0/4}$ monotonically increase as $\eps$ tends to 0, i.e.,
$\cI_{g_j, C_{\eps_1}^0/4} \subset \cI_{g_j, C_{\eps_2}^0/4}$ if $\eps_1 >
\eps_2$. Hence, the same is true for $\wh{\cI}_{Q,\eps}$. Now the proof finished
in the same way as in Proposition~\ref{prop:lambdan.ordering}(ii) by tracking
the ``lifetime'' of connected components of $\Omega_{h,Q,\eps}$. This property,
in particular, implies
\begin{equation} \label{eq:ln-ln0<diam}
 |\l_n - \l_n^0| \le \diam(\fC_{\eps,n}), \qquad n \in \bZ, \quad \eps > 0,
\end{equation}
where $\fC_{\eps,n}$ is a connected component of $\Omega_{h,Q,\eps}$ that
contains $\l_n^0$ (and, thus, also contains $\l_n$). Now set $\eps = \eps_0$.
Since connected components have uniformly bounded diameters,
inequality~\eqref{eq:ln-ln0<diam} implies inequality~\eqref{eq:ln-ln0<M.Lp}. In
turn, Lemma~\ref{lem:incompress.M} yields that $\L_Q$ is an impressible sequence
of density $d$, that does not depend on $Q$.

\textbf{(ii)} Set $\wt{\eps} := \min\{\eps/(2 d_0), \eps_0\}$. Let
$\cI_{Q,\eps}$ be the set of integers $n$, for which connected component
$\fC_{\wt{\eps},j_n}^0$ of $\Omega_{\wt{\eps}}$ that contains
$\bD_{\wt{\eps}}(\l_n^0)$ does not intersect with $\Pi_{h,Q,\wt{\eps}}$.
Let $n \in \cI_{Q,\eps}$. In this case, $\fC_{\wt{\eps},j_n}^0$ is also a
connected component of $\Omega_{h,Q,\eps}$. Hence
inequality~\eqref{eq:ln-ln0<diam} yields that
\begin{equation} \label{eq:ln-ln0<diam<eps}
 |\l_n - \l_n^0| < \diam\(\fC_{\wt{\eps},j_n}^0\)
 \le 2 \cdot \wt{\eps} \cdot d_0 \le \eps, \qquad n \in \cI_{Q, \eps},
\end{equation}
and proves inequality~\eqref{eq:ln-ln0<eps.hole}.

Let us estimate $\card\bigl(\bZ \setminus \cI_{Q,\eps}\bigr)$. Let $n \notin
\cI_{Q,\eps}$. Then the connected component $\fC_{\wt{\eps},j_n}^0$ intersects
with $\Pi_{h,Q,\wt{\eps}}$. Since $\diam\bigl(\fC_{\wt{\eps},j_n}^0\bigr)
\le 1$, it follows from~\eqref{eq:Omega.h.Q.eps} that $\fC_{\wt{\eps},j_n}^0
\subset (\a_k - 1, \b_k + 1) \times (-h, h)$ for some
$k \in \{1, \ldots, m_{\wt{\eps}}\}$. Thus,
\begin{equation} \label{eq:ln0.in.PQih}
 \l_n^0 \in \wh{\Pi}_{h,Q,\wt{\eps}} := \bigcup_{k=1}^{m_{\eps}}
 \Bigl((\a_k - 1, \b_k + 1) \times (-h, h)\Bigr),
 \qquad n \in \bZ \setminus \cI_{Q,\eps}.
\end{equation}
Since $\L_0 = \{\l_n^0\}_{n \in \bZ}$ is an incompressible sequence of density
$d_0$, then for each $k \in \{1, \ldots, m_{\wt{\eps}}\}$ the rectangle
$(\a_k - 1, \b_k + 1) \times (-h, h)$ contains at most $d_0 \ceil{(\b_k - \a_k
+ 2)/2}$ entries of $\L_0$. Inclusion~\eqref{eq:ln0.in.PQih} and
inequality~\eqref{eq:ak.bk} now implies
\begin{equation}
 \card\bigl(\bZ \setminus \cI_{Q,\eps}\bigr) \le
 \sum_{k=1}^{m_{\wt{\eps}}} d_0 \ceil{(\b_k - \a_k + 2)/2} \le
 \frac{d_0}{2} \sum_{k=1}^{m_{\wt{\eps}}} (\b_k - \a_k) +
 \frac{3 d_0 m_{\wt{\eps}}}{2} \le 2 d_0 \wh{N}_{\wt{\eps}} =: N_{\eps},
\end{equation}
which finishes the proof since $\wh{N}_{\wt{\eps}}$ does not depend on $Q$.

\textbf{(iii)} Since boundary conditions~\eqref{eq:BC} are strictly regular,
then relation~\eqref{eq:ln0-lm0>2tau0} holds. Let us redefine $\eps_0$ defined
above as $\eps_0 := \tau_0 / 3$. Let $0 < \eps \le \eps_0$. Let us redefine the
set $\cI_{Q,\eps}$ defined in the proof of part (ii) above:
\begin{equation} \label{eq:cIQe.def}
 \cI_{Q,\eps} := \left\{n \in \bZ \ : \ |n| > N_0, \ \
 \bD_{3 \eps}(\l_n^0) \cap \Pi_{h,Q,\eps} = \varnothing \right\},
\end{equation}
where $\Pi_{h,Q,\eps}$ is defined in~\eqref{eq:Omega.h.Q.eps}. Using the same
reasoning as in the proof of part (ii) we can prove relation~\eqref{eq:card.IQ}.

Let $n \in \cI_{Q,\eps}$. Since $|n| > N_0$, then discs $\bD_{\eps}(\l_n^0)$ are
disjoint due to~\eqref{eq:ln0-lm0>2tau0}. Hence each such a disc is a standalone
connected component of $\Omega_{\eps}^0$ that does not intersect with
with $\Pi_{h,Q,\eps}$. Hence the new set $\cI_{Q,\eps}$ is a subset of
previously defined set $\cI_{Q,\eps}$, which implies~\eqref{eq:ln-ln0<eps.hole}
due to the proof of part (ii). Due to the construction~\eqref{eq:cIQe.def} of
$\cI_{Q,\eps}$, it is clear that the inequality~\eqref{eq:DeltaQ>C} holds
for $\l \in \bD_{3\eps}(\l_n^0) \setminus \bD_{\eps}(\l_n^0)$. The proof of the
estimate~\eqref{eq:Delta>=C.eps.hole} is now finished in the same way as
in the proof of Proposition~\ref{prop:incompress.uniform}(ii).
\end{proof}
\begin{proposition} \label{prop:ln-wtln<Delta}
Let $\cK$ be compact in $\LL{1}$ and $Q, \wt{Q} \in \cK$. Let boundary
conditions~\eqref{eq:BC} be strictly regular, and let $\L_Q =
\{\l_{Q,n}\}_{n \in \bZ}$ and $\L_{\wt{Q}} = \{\l_{\wt{Q},n}\}_{n \in \bZ}$
be canonically ordered sequences of zeros of characteristic determinants
$\Delta := \Delta_Q$ and $\wt{\Delta} := \Delta_{\wt{Q}}$ respectively. Then
there exists constants $N \in \bN$, $C \ge 1$ that do not depend on $Q$ and
$\wt{Q}$ and depend on $\cK$, $A$, $B$ only and such that the following uniform
estimate holds
\begin{equation} \label{eq:ln-wtln<C.Delta}
 C^{-1} \cdot |\Delta_{\wt{Q}}(\l_{Q,n})| \le |\l_{Q,n} - \l_{\wt{Q},n}| \le
 C \cdot |\Delta_{\wt{Q}}(\l_{Q,n})|, \qquad |n| > N, \quad Q, \wt{Q} \in \cK.
\end{equation}
\end{proposition}
\begin{proof}
For brevity set $\l_n := \l_{Q,n}$, $\wt{\l}_n := \l_{\wt{Q},n}$, and
$\wt{\Delta} := \Delta_{\wt{Q}}$, $\Delta := \Delta_{Q}$. Let $\L_0 =
\{\l_n^0\}_{n \in \bZ}$ be the sequence of zeros of the characteristic
determinant $\Delta_0$. Since boundary conditions~\eqref{eq:BC} are strictly
regular then the sequence $\L_0$ is asymptotically separated, i.e. the
estimate~\eqref{eq:ln0-lm0>2tau0} holds for some $\tau_0 > 0$ and $N_0 \in \bN$.
Applying Proposition~\ref{prop:incompress.uniform}(ii) with $\eps =
\min\{\eps_0, \tau_0/3\} = \eps(\cK, B, A)$ we see that the discs
$\bD_{2 \eps}(\wt{\l}_n)$, $|n| > N_{\eps}$, are disjoint, and
\begin{equation} \label{eq:wtln-ln0<eps}
 |\l_n - \l_n^0| < \eps, \quad
 |\wt{\l}_n - \l_n^0| < \eps, \qquad |n| > N_{\eps}.
\end{equation}
In particular, $\l_n \in \bD_{2 \eps}(\wt{\l}_n)$, $|n| > N_{\eps}$. Hence for
each $|n| > N_{\eps}$ the function $f(z) := \wt{\Delta}(z)/(z - \wt{\l}_n)$ is
non-zero holomorphic in $\bD_{2 \eps}(\wt{\l}_n)$ with $f(\wt{\l}_n) :=
\wt{\Delta}'(\wt{\l}_n) \ne 0$. If $\l_n = \wt{\l}_n$ then
relation~\eqref{eq:ln-wtln<C.Delta} is trivial as all parts are equal to zero.
If $\l_n \ne \wt{\l}_n$ then combining the Minimum Principle for analytic
functions with~\eqref{eq:Delta>=C.eps} we have
\begin{equation} \label{eq:Delta.wtln/ln-wtln>C}
 \frac{|\wt{\Delta}(\l_n)|}{|\l_n - \wt{\l}_n|} \ge
 \min_{|z - \wt{\l}_n| = 2 \eps}\frac{|\wt{\Delta}(z)|}{|z - \wt{\l}_n|} \ge
 \frac{C_{2 \eps}}{2 \eps}, \qquad |n| > N_{\eps}, \quad Q, \wt{Q} \in \cK.
\end{equation}
Relation~\eqref{eq:Delta.wtln/ln-wtln>C} now yields the second inequality
in~\eqref{eq:ln-wtln<C.Delta} with $C = \frac{2\eps}{C_{2\eps}}$ and $N =
N_{\eps}$.

On the other hand, by Proposition~\ref{prop:incompress.uniform}(i), $\L_Q
\subset \Pi_h$ with $h:= h(\cK, B, A)$, not dependent on $Q \in \cK$, i.e.
$|\Im \l_n| \le h$ for $n \in \bZ$ and $Q\in \cK$. Moreover,
Lemma~\ref{lem:Delta<Exp} (see estimate~\eqref{eq:Delta.l<C.exp}), ensures the
\emph{uniform} estimate $|\wt{\Delta}(\l)| = |\Delta_{\wt{Q}}(\l)| \le C_h$,
$\l \in \Pi_h$, for any ${\wt{Q}} \in \bU_{1, r}^{2 \times 2}$. Applying the
Maximum Principle similarly to~\eqref{eq:Delta.wtln/ln-wtln>C} yields the first
\emph{uniform} inequality in~\eqref{eq:ln-wtln<C.Delta}.
\end{proof}
Recall that notation $x_n \asymp y_n$ as $|n| \to \infty$, means that there
exists $N \in \bN$ and $C_2 > C_1 > 0$ such that two-sided estimate $C_1 |y_n|
\le |x_n| \le C_2 |y_n|$,\ \ $|n| > N$, holds.
\begin{corollary} \label{cor:lambda.vs.Delta}
Let $Q, \wt{Q} \in \LL{1}$ and let boundary conditions~\eqref{eq:BC} be strictly
regular. Let $\{\l_n\}_{n \in \bZ}$ and $\{\wt{\l}_n\}_{n \in \bZ}$ be
canonically ordered sequences of zeros of characteristic determinants $\Delta :=
\Delta_Q$ and $\wt{\Delta} := \Delta_{\wt{Q}}$ respectively. Then the following
estimates hold
\begin{equation} \label{eq:lambda.vs.wtDelta}
 |\l_n - \wt{\l}_n| \asymp
 |\wt{\Delta}(\l_n)|, \quad
 |\l_n - \l_n^0| \asymp
 |\Delta_0(\l_n)| \quad\text{as}\quad |n| \to \infty.
\end{equation}
\end{corollary}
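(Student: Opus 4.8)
## Proof proposal for Corollary~\ref{cor:lambda.vs.Delta}

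The plan is to deduce both asymptotic equivalences in~\eqref{eq:lambda.vs.wtDelta} from Proposition~\ref{prop:ln-wtln<Delta} by specializing its compact set $\cK$ appropriately. The key observation is that Proposition~\ref{prop:ln-wtln<Delta} delivers a two-sided estimate $C^{-1} |\Delta_{\wt{Q}}(\l_{Q,n})| \le |\l_{Q,n} - \l_{\wt{Q},n}| \le C |\Delta_{\wt{Q}}(\l_{Q,n})|$ for $|n| > N$, with $C$ and $N$ depending only on $\cK$, $A$, $B$; once we fix a particular pair $Q, \wt{Q} \in \LL{1}$, the finite set $\cK := \{Q, \wt{Q}\}$ is trivially compact in $\LL{1}$, and the constants become absolute numbers depending on $Q$, $\wt{Q}$, $A$, $B$. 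This is precisely the content of the relation $x_n \asymp y_n$ as $|n| \to \infty$: the existence of $N$ and $C_2 > C_1 > 0$ with $C_1 |y_n| \le |x_n| \le C_2 |y_n|$ for $|n| > N$.

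First I would establish the first equivalence $|\l_n - \wt{\l}_n| \asymp |\wt{\Delta}(\l_n)|$. Apply Proposition~\ref{prop:ln-wtln<Delta} to the compact set $\cK := \{Q, \wt{Q}\}$. Since the boundary conditions are strictly regular by hypothesis, the proposition applies and yields constants $N \in \bN$, $C \ge 1$ such that
\begin{equation*}
 C^{-1} \cdot |\Delta_{\wt{Q}}(\l_n)| \le |\l_n - \wt{\l}_n| \le
 C \cdot |\Delta_{\wt{Q}}(\l_n)|, \qquad |n| > N.
\end{equation*}
With $\wt{\Delta} = \Delta_{\wt{Q}}$, this is exactly $|\l_n - \wt{\l}_n| \asymp |\wt{\Delta}(\l_n)|$ as $|n| \to \infty$, with $C_1 = C^{-1}$ and $C_2 = C$.

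Second, for the relation $|\l_n - \l_n^0| \asymp |\Delta_0(\l_n)|$, I would apply Proposition~\ref{prop:ln-wtln<Delta} to the compact set $\cK := \{Q, 0\}$, taking $\wt{Q} = 0$. The zero potential lies in $\LL{1}$ and the boundary conditions remain strictly regular (this is a property of $\Delta_0$ alone, independent of the perturbation). The proposition then gives, for $|n| > N$,
\begin{equation*}
 C^{-1} \cdot |\Delta_0(\l_n)| \le |\l_n - \l_n^0| \le C \cdot |\Delta_0(\l_n)|,
\end{equation*}
since the zeros of $\Delta_{\wt{Q}} = \Delta_0$ are exactly the $\l_n^0$ (and the canonical ordering of the zeros of $\Delta_0$ is the sequence $\L_0$ itself). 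This is the desired second asymptotic equivalence.

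There is essentially no substantial obstacle here — the corollary is a direct specialization of Proposition~\ref{prop:ln-wtln<Delta} to two-element compacts. The only minor point to verify is that the canonical ordering of $\L_{\wt{Q}}$ when $\wt{Q} = 0$ coincides with the natural indexing of $\L_0 = \{\l_n^0\}_{n \in \bZ}$ used throughout, which is immediate from Definition~\ref{def:canon.order} since $\l_n^0$ and $\l_n^0$ trivially lie in the same connected component of $\wt{\Omega}_\eps$ for every $\eps > 0$. One should also note that compactness of a finite set in $\LL{1}$ is automatic, so the hypotheses of Proposition~\ref{prop:ln-wtln<Delta} are met in both applications.
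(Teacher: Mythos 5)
Your proposal is correct and is essentially the paper's intended argument: the corollary is stated as an immediate consequence of Proposition~\ref{prop:ln-wtln<Delta}, obtained exactly by taking the two-point compacts $\cK=\{Q,\wt{Q}\}$ and $\cK=\{Q,0\}$ in $\LL{1}$, as you do. Your check that the canonically ordered zeros of $\Delta_0$ are just $\{\l_n^0\}_{n\in\bZ}$ is the only point needing mention, and you handle it correctly.
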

\subsection{Stability property of eigenvalues for $Q \in L^p$}
Here we apply abstract results from Section~\ref{sec:fourier.transform} to
establish stability of the mapping $Q\to \L_Q := \{\l_{Q,n}\}_{n \in \bZ}$ in
different norms. Proposition~\ref{prop:ln-wtln<Delta} shows that to this
end it suffices to evaluate the sequences $\{\wt{\Delta}(\l_n)\}_{n \in \bZ} =
\{\Delta_{\wt{Q}}\bigl(\l_{Q,n}\bigr)\}_{n \in \bZ}$ when $Q$ runs through
either the ball $\bU_{p, r}^{2 \times 2}$ or a compact $\cK \subset L^1$. Our
main result in this direction reads as follows.
\begin{lemma} \label{lem:l_p_estimates_ln<Q-wtQ}
Let $Q, \wt{Q} \in \LL{1}^{2 \times 2}$, let boundary
conditions~\eqref{eq:BC} be regular, and let $\L_Q = \{\l_{Q,n}\}_{n \in \bZ}$
be a sequence of zeros of characteristic determinant $\Delta_{Q}$.

\textbf{(i)} Let $p \in (1, 2]$ and $r > 0$. Then there exists
$C = C(p, r, B) > 0$ such that the following inequalities hold:
\begin{align}
\label{eq:l-p.estimate_New}
 \sum_{n \in \bZ}\abs{\Delta_{\wt{Q}}\bigl(\l_{Q,n}\bigr)}^{p'}
 & \le C \cdot \|Q-\wt{Q}\|_p^{p'}, \qquad
 Q, \wt{Q} \in \bU_{p, r}^{2 \times 2}, \\
\label{eq:weight.ln-wtln<Q-wtQ.hole_New}
 \sum_{n \in \bZ} \(1+|n|\)^{p-2}
 \abs{\Delta_{\wt{Q}}\bigl(\l_{Q,n}\bigr)}^p
 & \le C \cdot \|Q - \wt{Q}\|_p^{p}\ ,
 \qquad Q, \wt{Q} \in \bU_{p, r}^{2 \times 2}.
\end{align}

\textbf{(ii)} Let $\cK \subset \LL{1}$ be a compact. Then the following
holds:
\begin{align}
\label{eq:sup.Delta.ln}
 \sup_{n \in \bZ} \abs{\Delta_{\wt{Q}}\bigl(\l_{Q,n}\bigr)}
 & \le C \cdot \|Q - \wt{Q}\|_1, \qquad
 Q, \wt{Q} \in \cK, \\
\label{eq:lim.Delta.ln}
 \sup_{Q, \wt{Q} \in \cK} \abs{\Delta_{\wt{Q}}\bigl(\l_{Q,n}\bigr)}
 & \to 0 \quad\text{as}\quad n \to \infty.
\end{align}
In other words, the set of sequences $\left\{
\{\Delta_{\wt{Q}}\bigl(\l_{Q,n}\bigr)\}_{n \in \bZ}\right\}_{Q, \wt{Q} \in \cK}$
\ forms a compact set in $c_0(\bZ)$.
\end{lemma}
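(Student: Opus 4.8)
The plan is to reduce all four estimates to the Fourier-transform bounds of Section~\ref{sec:fourier.transform} via the representation~\eqref{eq:Delta-wt.Delta} of Lemma~\ref{lem:Delta-wt.Delta}. The starting point is that $\Delta_{\wt{Q}}(\l) = \Delta_Q(\l) - \bigl(\Delta_Q(\l) - \Delta_{\wt{Q}}(\l)\bigr)$, and since $\l_{Q,n}$ is a zero of $\Delta_Q$, we get the clean identity
\begin{equation} \label{eq:key.identity.plan}
 \Delta_{\wt{Q}}(\l_{Q,n}) = -\bigl(\Delta_Q(\l_{Q,n}) - \Delta_{\wt{Q}}(\l_{Q,n})\bigr)
 = -\int_0^1 \wh{g}_1(t) e^{i b_1 \l_{Q,n} t}\,dt - \int_0^1 \wh{g}_2(t) e^{i b_2 \l_{Q,n} t}\,dt,
\end{equation}
where $\wh{g}_l := g_{Q,l} - g_{\wt{Q},l} \in L^p[0,1]$ satisfy the uniform bound $\|\wh{g}_1\|_p + \|\wh{g}_2\|_p \le \wh{C}\|Q-\wt{Q}\|_p$ from~\eqref{eq:whg1+whg2<C.whQ} (for part (ii), $p=1$ and this is just $\|\wh{g}_1\|_1 + \|\wh{g}_2\|_1 \le \wh{C}\|Q-\wt{Q}\|_1$). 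Thus everything hinges on estimating $|\int_0^1 \wh{g}_l(t) e^{i b_l \mu_n t}\,dt|$ where $\mu_n := \l_{Q,n}$.

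For part (i), the enumeration $\L_Q = \{\l_{Q,n}\}_{n\in\bZ}$ is, by Proposition~\ref{prop:incompress.holes}(i), an incompressible sequence of density $d = d(p,r,B,A)$ lying in a fixed strip $\Pi_h$, with $d$ and $h$ not depending on $Q \in \bU_{p,r}^{2\times2}$. Hence $\{b_l \mu_n\}_{n\in\bZ}$ is again incompressible in a (possibly wider) strip, with constants controlled uniformly in $Q$. I would then apply Proposition~\ref{th:p.bessel_for_ordinary_Fourier} (the elementary, Carleson-free version) to each $\wh{g}_l$ with the sequence $\{\mu_n\}$: inequality~\eqref{eq:sum.int.g<g_Fourier} gives $\sum_n |\int_0^1 \wh{g}_l(t) e^{i b_l \mu_n t}dt|^{p'} \le C\|\wh{g}_l\|_p^{p'}$, and~\eqref{eq:sum.int.nu.g<g_Fourier} gives the weighted bound $\sum_n (1+|n|)^{p-2}|\int_0^1 \wh{g}_l(t)e^{i b_l \mu_n t}dt|^p \le C\|\wh{g}_l\|_p^p$. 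Summing the two terms $l=1,2$ in~\eqref{eq:key.identity.plan} using the triangle inequality in $l^{p'}$ (resp. the power-mean inequality $|a+b|^p \le 2^{p-1}(|a|^p+|b|^p)$ for~\eqref{eq:weight.ln-wtln<Q-wtQ.hole_New}), then invoking~\eqref{eq:whg1+whg2<C.whQ}, yields~\eqref{eq:l-p.estimate_New} and~\eqref{eq:weight.ln-wtln<Q-wtQ.hole_New} with a constant depending only on $p$, $r$, $B$ (and $A$, absorbed into $C$).

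For part (ii), the compact case, I would argue differently since we only have $p=1$. Estimate~\eqref{eq:sup.Delta.ln} is immediate from~\eqref{eq:key.identity.plan}: $|\Delta_{\wt{Q}}(\l_{Q,n})| \le (\|\wh{g}_1\|_1 + \|\wh{g}_2\|_1)\sup_{t\in[0,1]}|e^{i b_l \l_{Q,n} t}| \le e^{b h}(\|\wh{g}_1\|_1+\|\wh{g}_2\|_1) \le C\|Q-\wt{Q}\|_1$, using that $\L_Q \subset \Pi_h$ with $h = h(\cK,B,A)$ uniform over $Q\in\cK$ by Proposition~\ref{prop:incompress.uniform}(i). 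For~\eqref{eq:lim.Delta.ln} I would use the uniform Riemann--Lebesgue statement: the functions $\wh{g}_l = g_{Q,l} - g_{\wt{Q},l}$ are linear combinations of kernel traces $K_{jk}^{\pm}(1,\cdot) - \wt{K}_{jk}^{\pm}(1,\cdot)$, and since $Q \to K_Q^{\pm}$ maps $\LL{1}$ continuously into $X_{\infty,1}^0(\Omega;\bC^{2\times2})$ (Theorem~\ref{th:K-wtK<Q-wtQ}), the image $\{K_Q^{\pm} : Q\in\cK\}$ is compact there; then Lemma~\ref{lem:fourier.coef.alter} (or Proposition~\ref{prop:K.Xinf.Fourier}(iii) together with Lemma~\ref{lem:fourier.coef}) gives $\sup_{Q,\wt Q\in\cK}|\int_0^1 \wh{g}_l(t)e^{i b_l \l t}dt| \to 0$ as $|\l|\to\infty$, $\l\in\Pi_h$; combined with $\L_Q \subset \Pi_h$ and $|\l_{Q,n}| \to \infty$ as $|n|\to\infty$ (uniformly in $Q\in\cK$ thanks to~\eqref{eq:ln-ln0<M}), this yields~\eqref{eq:lim.Delta.ln}. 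The last sentence about $c_0(\bZ)$ follows by combining~\eqref{eq:sup.Delta.ln} with~\eqref{eq:lim.Delta.ln}.

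The main obstacle is bookkeeping the uniformity of the incompressibility constants $d$ and $h$ for $\L_Q$ over the ball $\bU_{p,r}^{2\times2}$ in part (i) — but this is exactly what Proposition~\ref{prop:incompress.holes}(i) delivers, so the work reduces to citing it correctly and checking that the extra exponential factors $e^{b h}$ introduced by the strip are absorbed into the constants of Proposition~\ref{th:p.bessel_for_ordinary_Fourier}. A minor subtlety is that $\{\mu_n\}$ need not be injective (multiple eigenvalues), but incompressibility and the cited Proposition only use density, so this causes no trouble.
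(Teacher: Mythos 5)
Your proposal is correct and follows essentially the paper's own argument: rewrite $\Delta_{\wt{Q}}(\l_{Q,n})$ via the representation of Lemma~\ref{lem:Delta-wt.Delta}, use the uniform incompressibility and strip bounds for $\L_Q$, apply the elementary Bessel-type inequalities \eqref{eq:sum.int.g<g_Fourier}--\eqref{eq:sum.int.nu.g<g_Fourier} for part (i), and for part (ii) combine the trivial $e^{|b|h}$-bound with the compactness/uniform Riemann--Lebesgue argument via the trace functions and Lemma~\ref{lem:fourier.coef.alter}. Your citation of Proposition~\ref{prop:incompress.holes}(i) for the ball case and your explicit remark that $|\l_{Q,n}|\to\infty$ uniformly on $\cK$ are, if anything, slightly more careful than the paper's own wording.
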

\begin{proof}
In accordance with Lemma~\ref{lem:Delta-wt.Delta} the difference of determinants
admits representation~\eqref{eq:Delta-wt.Delta}, i.e.
\begin{equation} \label{eq:Delta.Q-Delta.wtQ}
 \Delta_Q(\l) - \Delta_{\wt{Q}}(\l) = \int_0^1 \wh{g}_1(t) e^{i b_1 \l t} \,dt
 + \int_0^1 \wh{g}_2(t) e^{i b_2 \l t} \,dt,
\end{equation}
where $\wh{g}_j = g_{Q,j} - g_{\wt{Q},j} \in L^p[0,1]$, $j\in \{1,2\}$,
and satisfies uniform estimate~\eqref{eq:whg1+whg2<C.whQ}.

Since $\Delta_Q(\cdot)$ is an entire sine-type function, its zero set $\L_Q =
\{\l_{Q,n}\}_{n \in \bZ}$ is incompressible (see
Proposition~\ref{prop:sine.type}(ii)). Moreover, in accordance with
Proposition~\ref{prop:incompress.uniform}(i), all null sequences
$\{\l_{Q,n}\}_{n \in \bZ}$, $Q\in \cK$, are incompressible of density $d =
d(\cK, B, A) \in \bN$ and lie in the strip $\Pi_h$ with $h = h(\cK, B, A) > 0$,
where neither $d$ nor $h$ depends on $Q\in \cK$.

Let us prove estimate~\eqref{eq:l-p.estimate_New}. Applying Bessel type
inequality~\eqref{eq:sum.int.g<g_Fourier} with $g = \wh{g}_j$, $j \in \{1,2\}$,
and $\mu_n =\l_n = \l_{{Q},n}$, $n \in \bZ$, and taking uniform (with respect to
$Q, \wt{Q} \in \bU_{p, r}^{2 \times 2}$) estimate~\eqref{eq:whg1+whg2<C.whQ}
into account, yields
\begin{align}
\nonumber
 \sum_{n \in \bZ} \abs{\Delta_{\wt{Q}}\bigl(\l_{n}\bigr)}^{p'}
 & = \sum_{n \in \bZ} \abs{\Delta_Q \bigl(\l_n\bigr) -
 \Delta_{\wt{Q}}\bigl(\l_n\bigr)}^{p'}
 \le 2^{p'-1} \sum_{n \in \bZ} \(\abs{\int_0^1 \wh{g}_1(t) e^{i b_1 \l_n t}
 \,dt}^{p'} + \abs{\int_0^1 \wh{g}_2(t) e^{i b_2 \l_n t}\,dt}^{p'}\) \\
\label{eq:Delta.Holder_for_balls}
 & \le 2^{p'-1} C_1
 \(\|\wh{g}_1\|_p^{p'} + \|\wh{g}_2\|_p^{p'}\)
 \le 2^{p'-1} C_1 \wh{C}^{p'} \|Q - \wt{Q}\|_p^{p'},
 \qquad Q, \wt{Q} \in \bU_{p, r}^{2 \times 2}.
\end{align}
Here the constants $C_1 = C_1(b, p, h, d) > 0$ and $\wh{C} = \wh{C}(b, p, h, d)
> 0$ are taken from the uniform estimates~\eqref{eq:sum.int.g<g_Fourier}
and~\eqref{eq:whg1+whg2<C.whQ}, respectively. Setting $C = 2^{p'-1} C_1
\wh{C}^{p'}$ we arrive at~\eqref{eq:l-p.estimate_New}.

Weighted estimate~\eqref{eq:weight.ln-wtln<Q-wtQ.hole_New} is proved similarly
but using inequality~\eqref{eq:sum.int.nu.g<g_Fourier}.
Estimate~\eqref{eq:sup.Delta.ln} follows from~\eqref{eq:Delta.Q-Delta.wtQ} and
the estimate
\begin{equation}
 \abs{\int_0^1 \wh{g}_j(t) e^{i b_j \l_n t}\,dt}
 \le e^{|b_j \Im \l_n|} \cdot \|\wh{g}_j\|_1
 \le \wh{C} e^{|b_j| h} \cdot \|Q - \wt{Q}\|_1,
 \qquad j \in \{1, 2\}, \quad \l \in \Pi_h,
 \quad Q, \wt{Q} \in \bU_{1,r}^{2 \times 2}.
\end{equation}
Further, according to~\eqref{eq:gl=J.R} and~\eqref{eq:Kjlk} functions
$\wh{g}_1(t)$ and $\wh{g}_2(t)$ are linear combinations of sixteen well-defined
summable trace functions $K_{Q,jk}^{\pm}(1,t)$ and $K_{\wt{Q},jk}^{\pm}(1,t)$,
$j, k \in \{1, 2\}$. Lemma~\ref{lem:fourier.coef.alter} now implies
relation~\eqref{eq:lim.Delta.ln} due to inclusion $\l_n \in \Pi_h$. The last
statement of the lemma follows from the well-known criteria of compactness in
$c_0(\bZ)$.
\end{proof}
Next we enhance and complete Proposition~\ref{prop:Delta.regular.basic} in the
case of $Q \in \LL{p}$ with $p \in [1, 2]$. Our first result restricts the set
$\cK$ of potentials matrices to be a compact.
\begin{theorem} \label{th:ln-wtln<Q-wtQ}
Let $\cK$ be compact in $\LL{p}$ for some $p \in [1, 2]$, and $Q, \wt{Q} \in
\cK$. Let boundary conditions~\eqref{eq:BC} be strictly regular, and let
$\L_Q := \{\l_{Q,n}\}_{n \in \bZ}$ and $\L_{\wt{Q}} :=
\{\l_{\wt{Q},n}\}_{n \in \bZ}$ be canonically ordered sequences of zeros of
characteristic determinants $\Delta(\cdot) := \Delta_Q(\cdot)$ and
$\wt{\Delta}(\cdot) := \Delta_{\wt{Q}}(\cdot)$, respectively. Then there exist
constants $N = N(\cK, A, B) \in \bN$ and $C = C(p, \cK, A, B) > 0$ that do not
depend on $Q$ and $\wt{Q}$ and such that the following estimates hold:
\begin{align}
\label{eq:sum.ln-wtln}
 \sum_{|n| > N} \abs{\l_{Q,n} - \l_{\wt{Q},n}}^{p'}
 & \le C \cdot \|Q - \wt{Q}\|_p^{p'}, \qquad Q, \wt{Q} \in \cK,
 \quad p \in (1, 2], \quad 1/p' + 1/p = 1, \\
\label{eq:weight.ln-wtln<Q-wtQ}
 \sum_{|n| > N} \(1+|n|\)^{p-2} \abs{\l_{Q,n} - \l_{\wt{Q},n}}^p
 & \le C \cdot \|Q - \wt{Q}\|_p^{p},
 \qquad Q, \wt{Q} \in \cK, \quad p \in (1, 2].
\end{align}
If $p=1$ then the following holds:
\begin{align}
\label{eq:sup.ln-wtln}
 \sup_{|n| > N} \abs{\l_{Q,n} - \l_{\wt{Q},n}}
 & \le C \cdot \|Q - \wt{Q}\|_1, \qquad Q, \wt{Q} \in \cK, \\
\label{eq:lim.ln-wtln}
 \sup_{Q, \wt{Q} \in \cK} \abs{\l_{Q,n} - \l_{\wt{Q},n}}
 & \to 0 \quad\text{as}\quad n \to \infty.
\end{align}
In other words, the set of sequences $\left\{\left\{\abs{\l_{Q,n} -
\l_{\wt{Q},n}}\right\}_{n \in \bZ}\right\}_{Q, \wt{Q} \in \cK}$ \ forms a
compact set in $c_0(\bZ)$.
\end{theorem}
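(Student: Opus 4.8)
The plan is to reduce Theorem~\ref{th:ln-wtln<Q-wtQ} to Lemma~\ref{lem:l_p_estimates_ln<Q-wtQ} via the two-sided estimate of Proposition~\ref{prop:ln-wtln<Delta}. Since $\cK$ is compact in $\LL{p}$ and $p\ge 1$, it is also compact in $\LL{1}$, so both propositions apply. Proposition~\ref{prop:ln-wtln<Delta} gives constants $N\in\bN$ and $C\ge 1$, depending only on $\cK$, $A$, $B$, such that for $|n|>N$ and $Q,\wt{Q}\in\cK$
\begin{equation*}
 |\l_{Q,n} - \l_{\wt{Q},n}| \le C\cdot |\Delta_{\wt{Q}}(\l_{Q,n})|.
\end{equation*}
Raising this to the appropriate power and summing, all four desired inequalities follow \emph{provided} the corresponding bounds hold for the sequence $\{\Delta_{\wt{Q}}(\l_{Q,n})\}_{n\in\bZ}$, which is exactly the content of Lemma~\ref{lem:l_p_estimates_ln<Q-wtQ}.

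First I would handle the cases $p\in(1,2]$. Here $\cK\subset\bU_{p,r}^{2\times2}$ for $r:=\sup_{Q\in\cK}\|Q\|_p$, so Lemma~\ref{lem:l_p_estimates_ln<Q-wtQ}(i) applies with this $r$. Combining the pointwise bound $|\l_{Q,n}-\l_{\wt{Q},n}|^{p'}\le C^{p'}|\Delta_{\wt{Q}}(\l_{Q,n})|^{p'}$ for $|n|>N$ with estimate~\eqref{eq:l-p.estimate_New} yields~\eqref{eq:sum.ln-wtln}. Similarly, multiplying the pointwise bound raised to the power $p$ by $(1+|n|)^{p-2}$ and summing over $|n|>N$, then invoking~\eqref{eq:weight.ln-wtln<Q-wtQ.hole_New}, gives~\eqref{eq:weight.ln-wtln<Q-wtQ}. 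The constant $C$ in the conclusion absorbs $C^{p'}$ (resp. $C^p$) times the constant from the lemma and thus depends only on $p$, $\cK$, $A$, $B$; the index $N$ is the one from Proposition~\ref{prop:ln-wtln<Delta}.

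For the case $p=1$ I would argue analogously using Lemma~\ref{lem:l_p_estimates_ln<Q-wtQ}(ii). The bound $|\l_{Q,n}-\l_{\wt{Q},n}|\le C|\Delta_{\wt{Q}}(\l_{Q,n})|$ for $|n|>N$ together with~\eqref{eq:sup.Delta.ln} immediately gives~\eqref{eq:sup.ln-wtln}. For~\eqref{eq:lim.ln-wtln}, taking supremum over $Q,\wt{Q}\in\cK$ and letting $n\to\infty$, relation~\eqref{eq:lim.Delta.ln} forces $\sup_{Q,\wt{Q}\in\cK}|\l_{Q,n}-\l_{\wt{Q},n}|\to 0$; combined with the uniform bound on all entries with $|n|\le N$ (which follows from Proposition~\ref{prop:incompress.uniform}(i), giving $|\l_{Q,n}-\l_n^0|\le M$ uniformly), this shows the family of sequences is relatively compact in $c_0(\bZ)$ by the standard criterion.

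The only genuine subtlety — and the step I would check most carefully — is the \emph{matching of enumerations}. Proposition~\ref{prop:ln-wtln<Delta} compares $\l_{Q,n}$ with $\l_{\wt{Q},n}$ under the canonical ordering of Definition~\ref{def:canon.order}, while Lemma~\ref{lem:l_p_estimates_ln<Q-wtQ} estimates $\Delta_{\wt{Q}}$ evaluated at the \emph{same} canonically ordered points $\l_{Q,n}$; so the two results are stated with compatible indexing and the composition is legitimate for $|n|>N$. One must simply note that the finitely many terms $|n|\le N$ are harmless: they contribute a bounded quantity (again by Proposition~\ref{prop:incompress.uniform}(i)) which can be absorbed into the constant if one wishes, but the statements as written already restrict the sums to $|n|>N$, so no further work is needed there. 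This makes the proof essentially a two-line deduction once the bookkeeping of constants and indices is laid out.
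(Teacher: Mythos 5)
Your proposal is correct and follows essentially the same route as the paper: reduce the theorem to Lemma~\ref{lem:l_p_estimates_ln<Q-wtQ} via the uniform two-sided estimate~\eqref{eq:ln-wtln<C.Delta} of Proposition~\ref{prop:ln-wtln<Delta}, using compactness of $\cK$ in $\LL{1}$ and the inclusion $\cK\subset\bU_{p,r}^{2\times2}$. The only (harmless) deviation is that you obtain~\eqref{eq:lim.ln-wtln} from~\eqref{eq:lim.Delta.ln} combined with the two-sided bound, whereas the paper reads it off directly from Proposition~\ref{prop:incompress.uniform}(ii); both derivations are valid.
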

\begin{proof}
Clearly, $\cK$ is compact in $\LL{1}$ as well, and $\cK \subset
\bU_{p, r}^{2 \times 2}$ for some $r = r(p, \cK) > 0$. First, note that
relation~\eqref{eq:lim.ln-wtln} was proved in
Proposition~\ref{prop:incompress.uniform}(ii). Further, in accordance with
Proposition~\ref{prop:ln-wtln<Delta} there exist constants $C, N > 0$, not
dependent on $Q$, $\wt{Q}$, such that the uniform
estimate~\eqref{eq:ln-wtln<C.Delta} holds, i.e.
\begin{equation}
 \bigabs{\l_{Q,n} - \l_{\wt{Q},n}} \le C \cdot
 \bigabs{\Delta_{\wt{Q}}(\l_{Q,n})}, \qquad |n| > N, \quad Q, \wt{Q} \in \cK.
\end{equation}
Combining this estimate with all the
statements of Lemma~\ref{lem:l_p_estimates_ln<Q-wtQ} finishes the proof.
\end{proof}
Applying Theorem~\ref{th:ln-wtln<Q-wtQ} with a two-point compact $\cK =
\{Q, 0\}$ we can complete Proposition~\ref{prop:Delta.regular.basic} as follows.
\begin{corollary} \label{prop:lambda.n.in.lp}
Let $Q \in \LL{p}$ for some $p \in (1, 2]$. Let boundary
conditions~\eqref{eq:BC} be strictly regular, and let $\Delta(\cdot)$ be the
corresponding characteristic determinant. Then the sequence $\L =
\{\l_n\}_{n \in \bZ}$ of its zeros can be ordered in such a way that the
following inequalities take place
\begin{align}
\label{eq:l.n-l.n0.in.lp}
 \sum_{n \in \bZ} \abs{\l_n - \l_n^0}^{p'} & < \infty, \qquad 1/p' + 1/p = 1, \\
\label{eq:l.n-l.n0.in_weight_lp}
 \sum_{n \in \bZ} (1+|n|)^{p-2} \abs{\l_n - \l_n^0}^p \, & < \infty.
\end{align}
\end{corollary}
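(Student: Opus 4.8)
The plan is to deduce Corollary~\ref{prop:lambda.n.in.lp} directly from Theorem~\ref{th:ln-wtln<Q-wtQ} by specializing the compact set $\cK$ to the two-point set $\cK = \{Q, 0\}$. Since any finite set in $\LL{p}$ is trivially compact, and since $0 \in \LL{p}$ with characteristic determinant $\Delta_0(\cdot)$ whose zero sequence $\L_0 = \{\l_n^0\}_{n \in \bZ}$ has already been fixed (via a canonical ordering), we are in a position to apply the theorem with $\wt{Q} = 0$.

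First I would invoke Theorem~\ref{th:ln-wtln<Q-wtQ} for the compact $\cK = \{Q, 0\}$ and $p \in (1,2]$. This yields constants $N \in \bN$ and $C > 0$ (now depending on $Q$, which is harmless since the compact is a single pair) such that
\begin{align}
 \sum_{|n| > N} \abs{\l_n - \l_n^0}^{p'} & \le C \cdot \|Q\|_p^{p'}, \\
 \sum_{|n| > N} (1+|n|)^{p-2} \abs{\l_n - \l_n^0}^p & \le C \cdot \|Q\|_p^{p},
\end{align}
where $\{\l_n\}_{n \in \bZ}$ is the canonically ordered zero sequence of $\Delta_Q(\cdot)$. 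The right-hand sides are finite since $Q \in \LL{p}$. The remaining finitely many terms with $|n| \le N$ contribute a finite sum in both cases (each term is finite because all $\l_n$ and $\l_n^0$ are finite complex numbers), so adding them back preserves convergence. This gives~\eqref{eq:l.n-l.n0.in.lp} and~\eqref{eq:l.n-l.n0.in_weight_lp} with the ordering being precisely the canonical one of Definition~\ref{def:canon.order}.

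There is essentially no obstacle here; the corollary is a packaging statement. The only minor point to address is that Theorem~\ref{th:ln-wtln<Q-wtQ} is stated with both $\sum_{|n|>N}$, and one must observe that augmenting by the finite initial block $\{|n| \le N\}$ keeps the series convergent — a triviality. One should also remark that the ordering asserted in Corollary~\ref{prop:lambda.n.in.lp} is the canonical ordering (which exists by Proposition~\ref{prop:lambdan.ordering} since strictly regular boundary conditions are in particular regular), so that the statement is consistent with Proposition~\ref{prop:Delta.regular.basic}, which it refines by upgrading the $o(1)$ asymptotics to $l^{p'}$- and weighted $l^p$-membership of the remainder. Thus the proof is a single sentence: apply Theorem~\ref{th:ln-wtln<Q-wtQ} with $\cK = \{Q, 0\}$ and absorb the finite exceptional set.
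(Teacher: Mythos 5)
Your proposal is correct and coincides with the paper's own derivation: the corollary is introduced there precisely by applying Theorem~\ref{th:ln-wtln<Q-wtQ} to the two-point compact $\cK=\{Q,0\}$ (with $\wt{Q}=0$, so $\l_{\wt{Q},n}=\l_n^0$) and absorbing the finitely many indices $|n|\le N$. No gaps; the remarks about the canonical ordering and the finiteness of the exceptional block are exactly the (trivial) points the paper leaves implicit.
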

Note also that relation~\eqref{eq:l.n-l.n0.in.lp} in the case of $2 \times 2$
Dirac system was obtained firstly in~\cite[Theorems 4.3, 4.5]{SavShk14}.
\begin{remark}
Here we will show that inequalities~\eqref{eq:sum.ln-wtln}
and~\eqref{eq:weight.ln-wtln<Q-wtQ} generally can not be derived from each
other. Let $p = 3/2$ and, thus, $p' = 3$. First we assume that
\begin{equation*}
 \a_n := \l_n - \wt{\l}_n =
 \((1 + |n|)\ln^2 (1 + |n|))\)^{-1/3}.
\end{equation*}
It is clear that
\begin{equation*}
 \{\a_n\}_{n \in \bZ} \in l^{3}(\bZ) \quad \text{while} \quad
 \left\{\(1 + |n|\)^{-1/3} \a_n\right\}_{n \in \bZ}
 \notin l^{3/2}(\bZ),
\end{equation*}
which shows that the inequality~\eqref{eq:sum.ln-wtln} holds
while~\eqref{eq:weight.ln-wtln<Q-wtQ} is not true. Now let
\begin{equation*}
 \a_n = \begin{cases}
 k^{-1/3}, & n = k^2 \ \ \text{for some}\ \ k \in \bN, \\
 0, & n \ne k^2.
 \end{cases}
\end{equation*}
In this case it is clear that the opposite relations hold,
\begin{equation*}
 \{\a_n\}_{n \in \bZ} \notin l^{3}(\bZ) \quad \text{while} \quad
 \left\{\(1 + |n|\)^{-1/3} \a_n\right\}_{n \in \bZ}
 \in l^{3/2}(\bZ),
\end{equation*}
Note also that under rather general condition $\a_n = o(n^{-1/p'})$ as
$n \to \infty$, inequality~\eqref{eq:sum.ln-wtln} does
imply~\eqref{eq:weight.ln-wtln<Q-wtQ}. Indeed, in this case $|\a_n|^{p'}
= \frac{\b_n}{1+|n|}$ and $\b_n<1$ for $|n| > N$. Then since $p' =
p/(p-1) > p$,
\begin{equation}
 (1+|n|)^{p-2} |\a_n|^p = \frac{\b_n^{p/p'}}{1+|n|}
 \le \frac{\b_n}{1+|n|} = |\a_n|^{p'}, \quad |n| > N.
\end{equation}
Hence, if $\{\a_n\}_{n \in \bZ} \in l^{p'}(\bZ)$ then $\left\{
\(1+|n|\)^{1-2/p} \a_n\right\}_{n \in \bZ} \in l^{p}(\bZ)$.
\end{remark}
Next we extend Theorem~\ref{th:ln-wtln<Q-wtQ} to the case $\cK =
\bU_{p,r}^{2 \times 2}$. Similarly to Proposition~\ref{prop:incompress.holes} we
cannot select a universal constant $N$ serving all potentials. Instead, we need
to sum over the sets of integers, the complements of which have uniformly
bounded cardinality.
\begin{theorem} \label{th:ln-wtln<Delta.Q-wtQ.Lp}
Let $Q, \wt{Q} \in \bU_{p, r}^{2 \times 2}$ for some $p \in (1, 2]$ and $r > 0$.
Let boundary conditions~\eqref{eq:BC} be strictly regular, and let $\L_Q =
\{\l_{Q,n}\}_{n \in \bZ}$ and $\L_{\wt{Q}} = \{\l_{\wt{Q},n}\}_{n \in \bZ}$ be
canonically ordered sequences of zeros of characteristic determinants $\Delta :=
\Delta_{Q}$ and $\wt{\Delta} := \Delta_{\wt{Q}}$, respectively. Then the
following holds:

\textbf{(i)} There exists constants $N \in \bN$, $C_1, C_2, C > 0$, not
dependent on $Q$, $\wt{Q}$, and a set $\cI := \cI_{Q, \wt{Q}} \subset \bZ$, such
that the following estimates hold
\begin{align}
\label{eq:card.IQ.wtQ}
 \card\(\bZ \setminus \cI_{Q, \wt{Q}}\) & \le N,
 \qquad Q, \wt{Q} \in \bU_{p, r}^{2 \times 2}, \\
\label{eq:ln-wtln<C.Delta.hole}
 C_1 \cdot \abs{\Delta_{\wt{Q}}\bigl(\l_{Q,n}\bigr)}
 \le |\l_{Q,n} - \l_{\wt{Q},n}|
 & \le C_2 \cdot \abs{\Delta_{\wt{Q}}\bigl(\l_{Q,n}\bigr)},
 \qquad n \in \cI_{Q, \wt{Q}},
 \quad Q, \wt{Q} \in \bU_{p, r}^{2 \times 2}, \\
\label{eq:ln-wtln<Q-wtQ.hole}
 \sum_{n \in \cI_{Q, \wt{Q}}} \abs{\l_{Q,n} - \l_{\wt{Q},n}}^{p'}
 & \le C \cdot \|Q - \wt{Q}\|_p^{p'},
 \qquad Q, \wt{Q} \in \bU_{p, r}^{2 \times 2}, \quad 1/p + 1/p' = 1, \\
\label{eq:weight.ln-wtln<Q-wtQ.hole}
 \sum_{n \in \cI_{Q, \wt{Q}}} \(1+|n|\)^{p-2}
 \abs{\l_{Q,n} - \l_{\wt{Q},n}}^p
 & \le C \cdot \|Q - \wt{Q}\|_p^{p},
 \qquad Q, \wt{Q} \in \bU_{p, r}^{2 \times 2}.
\end{align}

\textbf{(ii)} For any $\eps > 0$ there exist a set $\cI_{\eps} := \cI_{Q,
\wt{Q}, \eps} \subset \bZ$ and a constant $N_{\eps} = N_{\eps}(p, r, A, B) \in
\bN$ that does not depend on $Q$ and $\wt{Q}$, such that the following uniform
estimates hold
\begin{align}
\label{eq:card.IQ.wtQ.eps}
 \card\(\bZ \setminus \cI_{Q, \wt{Q}, \eps}\) & \le N_{\eps},
 \qquad Q, \wt{Q} \in \bU_{p, r}^{2 \times 2}, \\
\label{eq:ln-ln0<eps.whQ}
 \sup_{n \in \cI_{Q, \wt{Q}, \eps}}\abs{\l_{Q,n} - \l_{\wt{Q},n}} & \le
 \eps \|Q - \wt{Q}\|_{p}, \qquad Q, \wt{Q} \in \bU_{p, r}^{2 \times 2}.
\end{align}
\end{theorem}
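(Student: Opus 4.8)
The plan is to combine the two-sided comparison between eigenvalue deviations and the characteristic determinant, provided uniformly on $L^p$-balls in Proposition~\ref{prop:incompress.holes}, with the Fourier-analytic estimates of Lemma~\ref{lem:l_p_estimates_ln<Q-wtQ}. Concretely, for part \textbf{(i)} I would first fix a strictly regular BC and invoke Proposition~\ref{prop:incompress.holes}(iii) with $\eps = \eps_0 = \eps_0(B,A)$ to obtain, for each pair $\{Q,\wt{Q}\}\subset\bU_{p,r}^{2\times2}$, sets $\cI_{Q,\eps_0}$ and $\cI_{\wt{Q},\eps_0}$ whose complements have uniformly bounded cardinality $N_{\eps_0} = N_{\eps_0}(p,r,B,A)$ and on which the discs $\bD_{2\eps_0}(\l_{Q,n})$ (resp. $\bD_{2\eps_0}(\l_{\wt{Q},n})$) are disjoint with $\min_{|\l-\l_{Q,n}|=2\eps_0}|\Delta_Q(\l)|\ge\wt{C}_{\eps_0}$. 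Setting $\cI_{Q,\wt{Q}} := \cI_{Q,\eps_0}\cap\cI_{\wt{Q},\eps_0}$ gives~\eqref{eq:card.IQ.wtQ} with $N = 2 N_{\eps_0}$. For $n\in\cI_{Q,\wt{Q}}$ we have $|\l_{Q,n}-\l_n^0|<\eps_0$ and $|\l_{\wt{Q},n}-\l_n^0|<\eps_0$, hence $\l_{Q,n}\in\bD_{2\eps_0}(\l_{\wt{Q},n})$, so the Minimum Principle applied to $f(z):=\Delta_{\wt{Q}}(z)/(z-\l_{\wt{Q},n})$ on that disc — exactly as in the proof of Proposition~\ref{prop:ln-wtln<Delta} — yields the lower bound $|\Delta_{\wt{Q}}(\l_{Q,n})|/|\l_{Q,n}-\l_{\wt{Q},n}|\ge \wt{C}_{\eps_0}/(2\eps_0)$, while Lemma~\ref{lem:Delta<Exp} together with the uniform strip inclusion $\L_Q\subset\Pi_h$ from Proposition~\ref{prop:incompress.holes}(i) gives the Maximum-Principle upper bound. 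This establishes~\eqref{eq:ln-wtln<C.Delta.hole} with constants $C_1,C_2$ independent of the pair.

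Next, to get~\eqref{eq:ln-wtln<Q-wtQ.hole} and~\eqref{eq:weight.ln-wtln<Q-wtQ.hole} I would simply raise the upper inequality in~\eqref{eq:ln-wtln<C.Delta.hole} to the appropriate power, sum over $n\in\cI_{Q,\wt{Q}}$, and plug in Lemma~\ref{lem:l_p_estimates_ln<Q-wtQ}(i): inequality~\eqref{eq:l-p.estimate_New} gives $\sum_{n\in\bZ}|\Delta_{\wt{Q}}(\l_{Q,n})|^{p'}\le C\|Q-\wt{Q}\|_p^{p'}$ and~\eqref{eq:weight.ln-wtln<Q-wtQ.hole_New} gives the weighted version; restricting the sum to $\cI_{Q,\wt{Q}}$ only shrinks the left side, so the bounds pass through with $C = C_2^{p'}\cdot C(p,r,B)$. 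Note that Lemma~\ref{lem:l_p_estimates_ln<Q-wtQ} is stated for the zeros of $\Delta_Q$ with an arbitrary regular BC and applies verbatim here since the enumerations are canonical.

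For part \textbf{(ii)}, the extra factor $\|Q-\wt{Q}\|_p$ on the right of~\eqref{eq:ln-ln0<eps.whQ} means I need a genuine "smallness" argument rather than just boundedness. The idea is: given $\eps>0$, I want to discard those indices $n$ for which the Fourier integrals $\int_0^1\wh{g}_j(t)e^{ib_j\l_{Q,n}t}dt$ are not controlled by $\eps\|Q-\wt{Q}\|_p$ — but here one must be careful, since $\wh{g}_j$ itself scales with $\|Q-\wt{Q}\|_p$. I would normalize: write $\wh{g}_j = \|Q-\wt{Q}\|_p\cdot\tilde g_j$ with $\tilde g_j$ in a fixed ball $\bU_{p,\wh{C}}$ (using~\eqref{eq:whg1+whg2<C.whQ}), then apply Lemma~\ref{lem:fourier.tail.Lp} to $\tilde g_1$ and $\tilde g_2$ with threshold $\delta = \eps/(2C_2)$ to produce sets $\cI_{\tilde g_j,\delta}$ whose complements have cardinality at most $C(p,h,b_j)(\wh{C}/\delta)^{p'} =: N_{j,\eps}$, independent of the pair. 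On $\cI_\eps := \cI_{Q,\wt{Q}}\cap\cI_{\tilde g_1,\delta}\cap\cI_{\tilde g_2,\delta}$ — whose complement is bounded by $N_{\eps} := N + N_{1,\eps}+N_{2,\eps}$, giving~\eqref{eq:card.IQ.wtQ.eps} — we have $|\Delta_{\wt{Q}}(\l_{Q,n})| = |\Delta_Q(\l_{Q,n})-\Delta_{\wt{Q}}(\l_{Q,n})| \le \|Q-\wt{Q}\|_p(|F[\tilde g_1](b_1\l_{Q,n})|+|F[\tilde g_2](b_2\l_{Q,n})|) < 2\delta\|Q-\wt{Q}\|_p$, hence by~\eqref{eq:ln-wtln<C.Delta.hole} $|\l_{Q,n}-\l_{\wt{Q},n}|\le C_2\cdot 2\delta\|Q-\wt{Q}\|_p = \eps\|Q-\wt{Q}\|_p$, which is~\eqref{eq:ln-ln0<eps.whQ}.

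The main obstacle I anticipate is the bookkeeping in part (ii): Lemma~\ref{lem:fourier.tail.Lp} must be applied on the strip $\Pi_h$ where $h$ is the \emph{uniform} semi-width furnished by Proposition~\ref{prop:incompress.holes}(i) (so that $\l_{Q,n}\in\Pi_{h,n}$ for some $n$), and one has to verify that the density $d$, the strip width $h$, and all the constants $C_1,C_2,\wt{C}_{\eps_0},\wh{C}$ indeed depend only on $(p,r,A,B)$ and not on the individual potentials — this is exactly what the "holes" formulation of Proposition~\ref{prop:incompress.holes} and the uniform-on-balls estimate~\eqref{eq:whg1+whg2<C.whQ} are designed to guarantee, so the difficulty is one of careful assembly rather than of new mathematics. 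A minor secondary point is ensuring the canonical orderings of $\L_Q$ and $\L_{\wt{Q}}$ are genuinely compatible on $\cI_{Q,\wt{Q}}$ in the sense that both $\l_{Q,n}$ and $\l_{\wt{Q},n}$ sit in $\bD_{\eps_0}(\l_n^0)$; this follows because for $n\in\cI_{Q,\eps_0}$ (resp. $\cI_{\wt{Q},\eps_0}$) the corresponding eigenvalue lies in a standalone disc $\bD_{\eps_0}(\l_n^0)$ by the construction~\eqref{eq:cIQe.def}.
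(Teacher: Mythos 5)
Your part (i) follows the paper's own proof essentially verbatim: Proposition~\ref{prop:incompress.holes}(iii) with $\eps=\eps_0$, the intersection $\cI_{Q,\wt{Q}}=\cI_{Q,\eps_0}\cap\cI_{\wt{Q},\eps_0}$, the Minimum/Maximum Principle argument of Proposition~\ref{prop:ln-wtln<Delta} transplanted to the index set $\cI_{Q,\wt{Q}}$, and then Lemma~\ref{lem:l_p_estimates_ln<Q-wtQ}(i) summed over $\cI_{Q,\wt{Q}}$; this is exactly how the paper argues. In part (ii), however, you take a genuinely different route. The paper does not go back to Fourier transforms at all: it applies a Chebyshev-type argument to the estimate \eqref{eq:ln-wtln<Q-wtQ.hole} already proved in part (i), defining $\cI_{Q,\wt{Q},\eps}:=\{n: |\l_{Q,n}-\l_{\wt{Q},n}|\le \eps\|Q-\wt{Q}\|_p\}\cap\cI_{Q,\wt{Q}}$, so that \eqref{eq:ln-ln0<eps.whQ} holds by definition and the complement bound $C\eps^{-p'}+N$ drops out of the $\ell^{p'}$ inequality in one line. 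Your approach instead normalizes $\wh{g}_j=\|Q-\wt{Q}\|_p\,\tilde g_j$ and re-invokes the uniform Riemann--Lebesgue Lemma~\ref{lem:fourier.tail.Lp}, which does work (the constants there depend only on $p,h,b_j$ and the radius $\wh{C}$, so uniformity on the ball survives), but it is longer and needs two repairs: (a) the trivial case $Q=\wt{Q}$ must be split off before dividing by $\|Q-\wt{Q}\|_p$; (b) the sets $\cI_{\tilde g_j,\delta}$ index unit rectangles $[m,m+1]\times[-h,h]$, not eigenvalue indices, so you cannot intersect them directly with $\cI_{Q,\wt{Q}}$ as written --- you must pass through $m_n:=\lfloor\Re\l_{Q,n}\rfloor$ and pay a factor of the uniform density $d$ from Proposition~\ref{prop:incompress.holes}(i) in the cardinality bound, i.e. $N_{\eps}=N+d(N_{1,\eps}+N_{2,\eps})$ rather than $N+N_{1,\eps}+N_{2,\eps}$. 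With those bookkeeping fixes your version is correct; the paper's Chebyshev argument buys brevity and avoids both issues, while yours has the (mild) virtue of identifying the exceptional indices intrinsically through smallness of the Fourier transforms of $\wh{g}_1,\wh{g}_2$ rather than through the conclusion itself.
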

\begin{proof}
As in the proof of Proposition~\ref{prop:ln-wtln<Delta} we set for
brevity $\l_n := \l_{Q,n}$, $\wt{\l}_n := \l_{\wt{Q},n}$, and $\wt{\Delta} :=
\Delta_{\wt{Q}}$, $\Delta := \Delta_{Q}$. Recall, that $\L_0 =
\{\l_n^0\}_{n \in \bZ}$ is a sequence of zeros of the characteristic determinant
$\Delta_0$.

\textbf{(i)} Applying Proposition~\ref{prop:incompress.holes}(iii) with $\eps =
\eps_0 = \eps(\cK, B, A)$ we see that the discs $\bD_{2 \eps}(\wt{\l}_n)$,
$n \in \cI_{\wt{Q}, \eps}$, are disjoint, and
\begin{equation} \label{eq:wtln-ln0<eps.holes}
 |\l_n - \l_n^0| < \eps, \quad
 |\wt{\l}_n - \l_n^0| < \eps, \qquad n \in \cI_{Q, \wt{Q}} :=
 \cI_{Q, \eps} \cap \cI_{\wt{Q}, \eps}.
\end{equation}
In particular, $\l_n \in \bD_{2 \eps}(\wt{\l}_n)$, $n \in \cI_{Q, \wt{Q}}$. The
proof of inequality~\eqref{eq:ln-wtln<C.Delta.hole} is finished in the same way
as in Proposition~\ref{prop:ln-wtln<Delta}. Further, note that
\begin{equation}
 \card\bigl(\bZ \setminus \cI_{Q, \wt{Q}}\bigr) \le
 \card\bigl(\bZ \setminus \cI_{Q, \eps}\bigr) +
 \card\bigl(\bZ \setminus \cI_{\wt{Q}, \eps}\bigr) \le
 2 N_{\eps} =: N(p, r, B, A) =: N,
\end{equation}
which proves~\eqref{eq:card.IQ.wtQ}. Combining
inequality~\eqref{eq:ln-wtln<C.Delta.hole} with
Lemma~\ref{lem:l_p_estimates_ln<Q-wtQ}(i) implies inequalities
\eqref{eq:ln-wtln<Q-wtQ.hole}--\eqref{eq:weight.ln-wtln<Q-wtQ.hole}.

\textbf{(ii)} If $Q = \wt{Q}$ then inequality~\eqref{eq:ln-ln0<eps.whQ} is
trivial. Assume that $Q \ne \wt{Q}$. We will apply Chebyshev's inequality as it
was done in Lemma~\ref{lem:max.fourier.tail.Lp}. Let $\eps > 0$ and let us set
\begin{equation} \label{eq:cI.Q.wtQ.eps.def}
 \cI_{Q, \wt{Q}, \eps} := \{n \in \bZ \ : \
 \abs{\l_{Q,n} - \l_{\wt{Q},n}} \le \eps \|Q - \wt{Q}\|_{p}\}
 \cap \cI_{Q, \wt{Q}}.
\end{equation}
It is clear that~\eqref{eq:ln-ln0<eps.whQ} follows
from~\eqref{eq:cI.Q.wtQ.eps.def}. Further,
inequality~\eqref{eq:ln-wtln<Q-wtQ.hole} implies
\begin{align}
\nonumber
 C \cdot \|Q - \wt{Q}\|_p^{p'}
 & \ge \sum_{n \in \cI_{Q, \wt{Q}}} \abs{\l_{Q,n} - \l_{\wt{Q},n}}^{p'}
 \ge \sum_{n \in \cI_{Q, \wt{Q}} \setminus \cI_{Q, \wt{Q}, \eps}}
 \abs{\l_{Q,n} - \l_{\wt{Q},n}}^{p'} \\
\label{eq:C.whQ>card}
 & \ge \sum_{n \in \cI_{Q, \wt{Q}} \setminus \cI_{Q, \wt{Q}, \eps}}
 \eps^{p'} \|Q - \wt{Q}\|_{p}^{p'}
 = \eps^{p'} \|Q - \wt{Q}\|_{p}^{p'}
 \card\(\cI_{Q, \wt{Q}} \setminus \cI_{Q, \wt{Q}, \eps}\).
\end{align}
Since $Q \ne \wt{Q}$ and $\cI_{Q, \wt{Q}, \eps} \subset \cI_{Q, \wt{Q}}$,
inequalities~\eqref{eq:C.whQ>card} and~\eqref{eq:card.IQ.wtQ} imply
\begin{equation}
 \card\(\bZ \setminus \cI_{Q, \wt{Q}, \eps}\) =
 \card\(\cI_{Q, \wt{Q}} \setminus \cI_{Q, \wt{Q}, \eps}\) +
 \card\(\bZ \setminus \cI_{Q, \wt{Q}}\) \le
 C \eps^{-p'} + N =: N_{\eps},
\end{equation}
which proves~\eqref{eq:card.IQ.wtQ.eps}.
\end{proof}
Building up on the example in Remark~\ref{rem:fourier.coef.Lp}(i) we will show
that Proposition~\ref{prop:incompress.uniform}(ii) is not valid for balls in
$L^p$ and demonstrate significance of introducing subsets $\cI_{Q,\eps}$ in
Proposition~\ref{prop:incompress.holes}(ii). We also show that the constant $C$
in~\eqref{eq:sup.ln-wtln} can not be arbitrary small in the case of compacts in
$L^1$.
\begin{proposition} \label{prop:eigenv.Q12=0}
Let $Q_{12} = 0$. Let boundary conditions~\eqref{eq:BC.new} be regular (and thus
$\{\l_n^0\}_{n \in \bZ} \subset \Pi_h$ for some $h \ge 0$) and let
$\L_Q = \{\l_{Q,n}\}_{n \in \bZ}$ be canonically ordered sequence of zeros of
the characteristic determinant $\Delta := \Delta_{Q}$. Let also
\begin{equation} \label{eq:Kgmux}
 \cK = \left\{G_{\mu} := \begin{pmatrix} 0 & 0 \\ g_{\mu} & 0 \end{pmatrix} :
 \mu \in \Pi_h \right\},
 \qquad g_{\mu}(x) = g_0(x) e^{-i \mu (b_1 - b_2) x},
 \quad x \in [0,1], \quad \mu \in \Pi_h,
\end{equation}
where $g_0 \in \bU_{p,r}$, $c_0 := \int_0^1 g_0(t) dt \ne 0$, for some $p \ge 1$
and $r > 0$.

\textbf{(i)} Let $b \ne 0$. Then relation~\eqref{eq:ln-ln0<eps} is not valid for $\cK$.
More precisely, there exists $\eps_0 > 0$ such that
\begin{equation} \label{eq:lQnn}
|\l_{Q_n,m} - \l_n^0| \ge \eps_0, \quad m,n \in \bZ, \quad\text{where}\quad
Q_n = G_{\l_n^0}.
\end{equation}

\textbf{(ii)} Let again $b \ne 0$. Then for $Q = G_{\mu} \in \cK$
relations~\eqref{eq:card.IQ}--\eqref{eq:ln-ln0<eps.hole} are valid with
$\cI_{Q, \eps} = \{n_\mu + 1, \ldots, n_\mu + N_{\eps}\}$, where $n_{\mu} \in \bZ$ and $N_{\eps} = N_{\eps}(g_0, B, A) \in \bN$ does not depend on $\mu$.

\textbf{(iii)} Let $b=0$. Then $\l_{Q,n} = \l_n^0$ for all $n \in \bZ$ and
$Q \in \LL{1}$ with $Q_{12} = 0$.

\textbf{(iv)} Let $\cX = \left\{G_{\mu}/\mu :
\mu \in \Pi_h, |\mu| \ge 1\right\} \cup \{0\}$. Then $\cX$ is compact in
$\LL{p}$. Let $b \ne 0$. Then there exists $N_0, \eps_0 > 0$ such that
\begin{equation} \label{eq:lQnn-ln0>eps.Qn}
 |\l_{Q_n,n} - \l_n^0| \ge \eps_0 \|Q_n\|_p, \quad |n| > N_0,
 \quad\text{where}\quad Q_n = G_{\l_n^0}/\l_n^0 \in \cX.
\end{equation}
\end{proposition}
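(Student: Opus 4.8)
The starting point is the explicit form of the fundamental matrix when $Q_{12}=0$, given in~\eqref{eq:Phi.Q12=0}: there $\varphi_{11}(\l)=e^{i b_1 \l}$, $\varphi_{22}(\l)=e^{i b_2 \l}$, $\varphi_{12}\equiv 0$ and $\varphi_{21}(\l)=-i b_2 e^{i b_2 \l}\int_0^1 Q_{21}(t)e^{i(b_1-b_2)\l t}\,dt$, so formula~\eqref{eq:Delta.new} for the characteristic determinant collapses to
\begin{equation}\label{eq:Delta.Q12=0.proposal}
 \Delta_Q(\l)=\Delta_0(\l)-i b b_2 e^{i b_2 \l}\int_0^1 Q_{21}(t)e^{i(b_1-b_2)\l t}\,dt, \qquad \l\in\bC.
\end{equation}
This proves \textbf{(iii)} immediately: if $b=0$ then $\Delta_Q\equiv\Delta_0$, the zero multisets coincide, and the canonical ordering of $\L_Q$ is a canonical ordering of $\L_0$, i.e. $\l_{Q,n}=\l_n^0$ for all $n$. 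For the family~\eqref{eq:Kgmux} I would set $\beta:=b_1-b_2<0$ and $H(\nu):=\int_0^1 g_0(t)e^{i\beta\nu t}\,dt$, an entire function with $H(0)=c_0\ne 0$; then~\eqref{eq:Delta.Q12=0.proposal} becomes
\begin{equation}\label{eq:Delta.Gmu.proposal}
 \Delta_{G_\mu}(\l)=\Delta_0(\l)-i b b_2 e^{i b_2 \l}H(\l-\mu), \qquad \l\in\bC,
\end{equation}
so the perturbation of $\Delta_0$ is a single fixed function $H$ evaluated at the shift $\l-\mu$.

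For \textbf{(i)}, where $\mu=\l_n^0$, I would evaluate~\eqref{eq:Delta.Gmu.proposal} at $\l=\l_n^0$: since $\Delta_0(\l_n^0)=0$ and $\L_0\subset\Pi_h$ (Proposition~\ref{prop:sine.type}(i) for $\Delta_0$), we get $|\Delta_{Q_n}(\l_n^0)|=|b b_2 c_0|e^{-b_2\Im\l_n^0}\ge |b b_2 c_0|e^{-b_2 h}=:2\delta_0>0$, a bound independent of $n$. On the disc $\bD_1(\l_n^0)\subset\Pi_{h+1}$ the functions $\Delta_0$, $\Delta_0'$, $e^{i b_2\l}$, $H$ and $H'$ are bounded by constants depending only on $B$, $A$ and $g_0$, which yields a Lipschitz bound $|\Delta_{Q_n}(\l)-\Delta_{Q_n}(\l_n^0)|\le L|\l-\l_n^0|$ on $\bD_1(\l_n^0)$ with $L$ independent of $n$. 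With $\eps_0:=\min\{1,\delta_0/L\}$ one has $|\Delta_{Q_n}(\l)|\ge\delta_0>0$ throughout $\bD_{\eps_0}(\l_n^0)$, so $\Delta_{Q_n}$ has no zeros there; since the zeros of $\Delta_{Q_n}$ are exactly $\{\l_{Q_n,m}\}_{m\in\bZ}$, this is precisely~\eqref{eq:lQnn}.

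For \textbf{(iv)} I would first check that $\cX$ is compact in $\LL{p}$: the map $\mu\mapsto G_\mu/\mu$ extends continuously to the one-point compactification of $\{\mu\in\Pi_h:|\mu|\ge 1\}$ with $\infty\mapsto 0$, because $\|G_\mu/\mu\|_p=\|g_0 e^{-i\mu\beta x}\|_p/|\mu|\le e^{h|\beta|}\|g_0\|_p/|\mu|\to 0$. In particular $0\in\cX$, so Proposition~\ref{prop:incompress.uniform}(i) applied to $\cX$ yields $M=M(\cX,B,A)$ with $|\l_{Q,n}-\l_n^0|\le M$ for all $Q\in\cX$ and $n\in\bZ$; put $h':=h+M$, so all sequences at hand lie in $\Pi_{h'}$. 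Take $Q_n=G_{\l_n^0}/\l_n^0$ (which lies in $\cX$ once $|\l_n^0|\ge 1$, i.e. for $|n|$ large since $|\l_n^0|\to\infty$), and write $\l_n:=\l_{Q_n,n}$, $\nu_n:=\l_n-\l_n^0$. Then $\Delta_{Q_n}(\l_n)=0$ together with~\eqref{eq:Delta.Gmu.proposal} (with the extra factor $1/\l_n^0$) reads
\begin{equation*}
 \Delta_0(\l_n)=\frac{i b b_2}{\l_n^0}e^{i b_2\l_n}H(\nu_n).
\end{equation*}
Arguing by contradiction, assume $|\nu_n|<\eps_0\|Q_n\|_p$ for some large $|n|$. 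Since $\|Q_n\|_p\le e^{h|\beta|}\|g_0\|_p/|\l_n^0|\to 0$, for $|n|$ large $|\nu_n|\le\rho_0$, where $\rho_0=\rho_0(g_0,\beta)$ is fixed so that $|\nu|\le\rho_0$ implies $|H(\nu)|\ge|c_0|/2$; hence, using $|e^{i b_2\l_n}|\ge e^{-b_2 h'}$,
\begin{equation*}
 |\Delta_0(\l_n)|\ge\frac{|b b_2 c_0|e^{-b_2 h'}}{2\,|\l_n^0|}=:\frac{c_*}{|\l_n^0|}.
\end{equation*}
On the other hand $\Delta_0(\l_n^0)=0$, so $|\Delta_0(\l_n)|\le C_*|\nu_n|<C_*\eps_0 e^{h|\beta|}\|g_0\|_p/|\l_n^0|$ with $C_*:=\sup_{\l\in\Pi_{h'}}|\Delta_0'(\l)|$. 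Comparing the two bounds forces $\eps_0\ge c_*/(C_* e^{h|\beta|}\|g_0\|_p)$; choosing $\eps_0$ strictly below this value yields a contradiction and proves~\eqref{eq:lQnn-ln0>eps.Qn} for $|n|>N_0$, with $N_0$ large enough to absorb the finitely many smallness requirements above.

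Finally, \textbf{(ii)} is the delicate part. Here $\mu\in\Pi_h$ is free, and the perturbation $P(\l):=i b b_2 e^{i b_2\l}H(\l-\mu)$ in~\eqref{eq:Delta.Gmu.proposal} is of order one near $\l=\mu$ but tends to $0$ as $|\Re(\l-\mu)|\to\infty$ uniformly for $\l\in\Pi_h$, by the Riemann--Lebesgue lemma for the fixed $L^1$-function $g_0$ (cf. Proposition~\ref{prop:K.Xinf.Fourier}, Lemma~\ref{lem:fourier.tail.Lp}). Combining this decay with the lower bound $|\Delta_0(\l)|\ge C_\eps^0\bigl(e^{-b_1\Im\l}+e^{-b_2\Im\l}\bigr)$ for $\l\notin\Omega_\eps^0=\bigcup_n\bD_\eps(\l_n^0)$ (Proposition~\ref{prop:sine.type}(iii) for $\Delta_0$), together with the fact that $|\Delta_{G_\mu}|>0$ also for $|\Im\l|$ large (as in the proof of Proposition~\ref{prop:incompress.holes}(i)), a Rouché argument would show that there is $L=L(g_0,B,A,\eps)$, independent of $\mu$, such that every disc $\bD_\eps(\l_n^0)$ with $|\Re(\l_n^0-\mu)|>L$ carries exactly one zero $\l_n$ of $\Delta_{G_\mu}$, whence $|\l_n-\l_n^0|<\eps$ there. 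Consequently the exceptional set $\bZ\setminus\cI_{G_\mu,\eps}$ may be taken inside $\{n:|\Re(\l_n^0-\mu)|\le L\}$, which by incompressibility of $\L_0$ has at most $N_\eps=N_\eps(g_0,B,A)$ elements independent of $\mu$ and — relabelling via the canonical ordering — forms the contiguous block $\{n_\mu+1,\dots,n_\mu+N_\eps\}$ around the index $n_\mu$ pinned by $\Re\mu$, while part~(i) shows this block cannot be removed. The main obstacle is precisely this last bookkeeping step: matching the Rouché count to the canonical ordering of $\L_Q$ so as to exhibit the excluded indices as one block, uniformly in $\mu$.
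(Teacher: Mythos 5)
Your parts (i) and (iii) are correct and essentially the paper's argument: (iii) is immediate from the identity $\Delta_Q(\l)=\Delta_0(\l)-ib_2b\,e^{ib_2\l}\int_0^1Q_{21}(t)e^{i(b_1-b_2)\l t}dt$ (see \eqref{eq:Delta.a=d=0}), and for (i) the paper writes $\Delta_{Q_n}(\l)=e^{ib_2\l}\bigl(F_0(\l)-J(\l,Q_n)\bigr)$ with $F_0(\l)=e^{-ib_2\l}\Delta_0(\l)$ and checks $|F_0|<|J|$ on $\bD_{\eps_0}(\l_n^0)$, which is the same quantitative non-vanishing you get from the value at $\l_n^0$ plus a uniform Lipschitz bound on $\bD_1(\l_n^0)\subset\Pi_{h+1}$. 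Part (iv) you prove by a genuinely different (and arguably cleaner) route: the paper applies the two-sided estimate of Proposition~\ref{prop:ln-wtln<Delta} on the compact $\cX$ and then evaluates $\Delta_{Q_n}(\l_n^0)$ exactly, whereas you use only the uniform bound of Proposition~\ref{prop:incompress.uniform}(i) and compare the upper bound $|\Delta_0(\l_n)|\le C_*|\l_n-\l_n^0|$ with the lower bound coming from the zero equation $\Delta_0(\l_n)=\tfrac{ibb_2}{\l_n^0}e^{ib_2\l_n}H(\nu_n)$; your version uses only regular BC, while the paper's Proposition~\ref{prop:ln-wtln<Delta} is stated for strictly regular BC. Both computations are correct.

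The one place you stop short is (ii). Your strategy — Riemann--Lebesgue decay of $H(\l-\mu)$ uniformly in $\mu$ and in the strip, the lower bound \eqref{eq:Delta>=} for $\Delta_0$, and Rouch\'e outside a vertical strip $|\Re\l-\Re\mu|\le M_\eps$ — is exactly the paper's, and your reading that the \emph{exceptional} set $\bZ\setminus\cI_{G_\mu,\eps}$ is the block $\{n_\mu+1,\dots,n_\mu+N_\eps\}$ is the one consistent with part (i) and with the paper's proof. The ``bookkeeping obstacle'' you flag is, however, a short step rather than a real difficulty: since the ordering of $\L_0$ has $\Re\l_n^0$ non-decreasing, the set $\{n:|\Re\l_n^0-\Re\mu|\le M_\eps\}$ is automatically an interval of consecutive integers, and incompressibility of $\L_0$ bounds its cardinality by an $N_\eps$ independent of $\mu$; compatibility with the canonical ordering of $\L_{G_\mu}$ is obtained by rerunning the component-tracking argument of Proposition~\ref{prop:lambdan.ordering} (as in Proposition~\ref{prop:incompress.holes}(ii)) with the vertical strip about $\mu$ playing the role of $\bD_{M_\eps}(0)$, and with $\wt\eps=\eps/(2d_0)$ so that connected components of $\Omega^0_{\wt\eps}$ have diameter at most $\eps$. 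So nothing in your plan fails; it is simply not executed to the end, and the missing piece consists of these two observations, not a new idea.
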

\begin{proof}
Since $Q_{12} = 0$, then explicit formula~\eqref{eq:Phi.Q12=0} holds. Inserting it
into~\eqref{eq:Delta.new} we arrive at
\begin{align} \label{eq:Delta.a=d=0}
 \Delta_Q(\l) = \Delta_0(\l)
 - i b_2 b e^{i b_2 \l} \int_0^1 Q_{21}(t) e^{i (b_1 - b_2) \l t} dt.
\end{align}
Hence $\Delta_Q(\l) = 0$ is equivalent to
\begin{equation} \label{eq:Delta0=Jl}
 F_0(\l) = J(\l, Q), \quad\text{where}\quad
 F_0(\l) := e^{-i b_2 \l} \Delta_0(\l) \quad\text{and}\quad
 J(\l, Q) := i b_2 b \int_0^1 Q_{21}(t) e^{i (b_1 - b_2) \l t} dt.
\end{equation}

\textbf{(i)} Since boundary conditions~\eqref{eq:BC.new} are regular,
Proposition~\ref{prop:sine.type} implies that $\{\l_n^0\}_{n \in \bZ}$ is an
incompressible sequence lying in the strip $\Pi_h$. For $n \in \bZ$ set
$Q_n := G_{\l_n^0}$, i.e. $Q_{21} = Q_{n,21} = g_{\l_n^0}$. Since $b c_0 \ne 0$,
definition~\eqref{eq:Kgmux} of $g_{\mu}$ yields
\begin{equation} \label{eq:Jl=int.mu}
 J(\l, Q_n)
 = i b_2 b \int_0^1 g_0(t) e^{i (b_1 - b_2) (\l - \l_n^0) t} dt,
 \qquad J(\l_n^0, Q_n) = i b_2 b c_0 =: \a \ne 0,
 \qquad n \in \bZ.
\end{equation}
Set $\delta := |\a|/2 > 0$. Since $F_0'(\l)$ is uniformly bounded in the strip
$\Pi_{h+1}$, i.e. $|F_0'(\l)| \le M_0$, $\l \in \Pi_{h+1}$, then
\begin{equation} \label{eq:wt.D0<d}
 |F_0(\l)| \le |F_0(\l_n^0)| + M_0 |\l - \l_n^0| = M_0 |\l - \l_n^0| < \delta,
 \qquad |\l - \l_n^0| < \eps_1, \qquad n \in \bZ.
\end{equation}
with some $\eps_1 \in (0,1)$. At the same time since $I(\mu) := \int_0^1 g_0(t)
e^{i (b_1 - b_2) \mu t} dt$ is continuous at $\mu$, it follows
from~\eqref{eq:Jl=int.mu} that for some $\eps_2 > 0$ we have
\begin{equation} \label{eq:Jl>d}
 |J(\l, Q_n) - \a| < \delta, \qquad |\l - \l_n^0| < \eps_2,
 \qquad n \in \bZ.
\end{equation}
Setting $\eps_0 := \min\{\eps_1, \eps_2\}$, taking into account that $2 \delta =
|\a|$ and combining~\eqref{eq:wt.D0<d}--\eqref{eq:Jl>d} we arrive at
\begin{equation} \label{eq:wtD0<Jl}
 |F_0(\l)| < |J(\l, Q_n)|, \qquad |\l - \l_n^0| < \eps_0, \qquad n \in \bZ.
\end{equation}
Relations~\eqref{eq:Delta.a=d=0},~\eqref{eq:Delta0=Jl} and~\eqref{eq:wtD0<Jl}
now imply that for $n \in \bZ$ determinant $\Delta_{Q_n}(\l)$ has no zeros in
$\bD_{\eps_0}(\l_n^0)$ which implies desired inequality~\eqref{eq:lQnn}.

\textbf{(ii)} Let $\eps > 0$, $\mu \in \bR$ and $Q = G_{\mu}$. For $M>0$ we set
$\cI_{\mu, M} := \{n \in \bZ : |\Re \l_n^0 - \mu| \le M\}$. Similarly to the
proof of Proposition~\ref{prop:lambdan.ordering} we can apply Riemann-Lebesgue
lemma and Rouch\'e theorem to the relation~\eqref{eq:Delta0=Jl} with $Q_{21} =
g_{\mu}$. This yields relations~\eqref{eq:card.IQ}--\eqref{eq:ln-ln0<eps.hole}
with $\cI_{Q, \eps} = \cI_{\mu, M_{\eps}}$ for some $M_{\eps} =
M_{\eps}(g_0, B, A) > 0$ that does not depend on $\mu$. Since
$\{\Re \l_n^0\}_{n \in \bZ}$ is an incompressible non-decreasing sequence, it is
clear that $\cI_{\mu, M_\eps} \subset \{n_\mu + 1, \ldots, n_\mu + N_{\eps}\}$,
for some $n_{\mu} \in \bZ$ and $N_{\eps} = N_{\eps}(g_0, B, A) \in \bN$ that
does not depend on $\mu$, which finishes the proof.

\textbf{(iii)} Since $b=0$, it follows from~\eqref{eq:Delta.a=d=0} that
$\Delta_Q(\l) = \Delta_0(\l)$, $\l \in \bC$, whenever $Q_{12} = 0$. This implies
that spectra of the operators $L(Q)$ and $L(0)$ coincide and finishes the proof.

\textbf{(iv)} First let us show that $\cX$ is compact. Let $\eps > 0$ and let us
build a finite $\eps$-net for $\cX$. Set $R = \|g_0\|_p e^{(b_2-b_1)h} / \eps$.
It is clear that $\cX_R = \{G_{\mu}/\mu : \mu \in \Pi_h, 1 \le |\mu| \le R\}$ is
a compact in $\LL{p}$ since $G_{\mu}/\mu$ is a continuous function of $\mu$ to
$L^p$ and the set of $\mu$ in the definition of $\cX_R$ is a compact in $\bC$.
On the other hand definition~\eqref{eq:Kgmux} of $G_{\mu}$ implies that
$\|G_{\mu} / \mu\|_p \le \|g_0\|_p e^{(b_2-b_1)h} / |\mu| < \eps$, for
$|\mu| > R$. Hence adding zero to the $\eps$-net of $\cX_R$ gives the $\eps$-net
for $\cX$.

Proposition~\ref{prop:ln-wtln<Delta} implies that $|\l_n - \l_n^0| \ge C^{-1}
|\Delta_Q(\l_n^0)|$, $|n| > N$, for some $C, N > 0$, not dependent on $Q$.
Let $|n| > N$ and $Q := Q_n = G_{\l_n^0}/\l_n^0$. Definition~\eqref{eq:Kgmux} of
$G_{\mu}$ and $c_0$, and relation~\eqref{eq:Delta.a=d=0} imply
\begin{equation} \label{eq:ln-ln0>C}
 |\l_n - \l_n^0| \ge C^{-1} \abs{\Delta_{Q_n}(\l_n^0) - \Delta_0(\l_n^0)}
 = C^{-1} \abs{b_2 b e^{i b_2 \l_n^0}} \cdot \abs{\int_0^1
 \frac{g_0(t) e^{-i (b_1 - b_2) \l_n^0 t}}{\l_n^0}
 e^{i (b_1 - b_2) \l_n^0 t} dt}
 = C^{-1} \abs{b_2 b e^{i b_2 \l_n^0} c_0}/|\l_n^0|.
\end{equation}
On the other hand $\|Q_n\|_p \le \|g_0\|_p e^{(b_2-b_1)h} / |\l_n^0|$ and
$\abs{e^{i b_2 \l_n^0}} \ge e^{-b_2 h}$. Hence setting $$\eps_0 = C^{-1}
\abs{b_2 b c_0} \|g_0\|_p^{-1} e^{(b_1-2b_2)h} > 0$$ and
combining~\eqref{eq:ln-ln0>C} with the estimate on $\|Q_n\|_p$, we arrive at the
desired estimate~\eqref{eq:lQnn-ln0>eps.Qn}.
\end{proof}
\begin{remark} \label{rem:history}
\textbf{(i)} Emphasize that the proofs of all results in this Section including
the proofs of Theorems~\ref{th:ln-wtln<Q-wtQ}
and~\ref{th:ln-wtln<Delta.Q-wtQ.Lp} rely on Bessel type
inequalities~\eqref{eq:sum.int.g<g_Fourier},~\eqref{eq:sum.int.nu.g<g_Fourier}
(see Proposition~\ref{th:p.bessel_for_ordinary_Fourier}) for ordinary Fourier
transform, not for its maximal version described in Theorem~\ref{th:p.bessel},
which proof relies on the deep Carleson-Hunt result~\ref{th:Carleson-Hunt}.

\textbf{(ii)} Theorem~\ref{th:ln-wtln<Q-wtQ} remains valid if $\cK$ is compact
in $\LL{1}$ and bounded in $\LL{p}$ which is slightly wider class of sets than
compacts in $\LL{p}$.

\textbf{(iii)} The case of Dirac system $(b_1=-b_2=1)$ and $\wt{Q} = 0$ has been
extensively studied in multiple recent papers by Sadovnichaya, Savchuk and
Shkalikov. In particular, estimate~\eqref{eq:sum.ln-wtln} was established
earlier in~\cite[Theorems 4.3, 4.5]{SavShk14} with the constant $C$ that depends
on $Q$, while estimate~\eqref{eq:ln-wtln<Q-wtQ.hole} of
Theorem~\ref{th:ln-wtln<Q-wtQ} with $\wt{Q}=0$ was established
in~\cite{SavSad18}.

Weighted estimates~\eqref{eq:weight.ln-wtln<Q-wtQ}
and~\eqref{eq:weight.ln-wtln<Q-wtQ.hole}, as well as
estimate~\eqref{eq:sum.ln-wtln}, which establish stability property of the
spectrum under perturbation $Q \to \wt{Q}$, are new even for Dirac system.

\textbf{(iv)} Emphasize that the uniform
estimates~\eqref{eq:ln-ln0<M}--\eqref{eq:ln-ln0<eps} in
Proposition~\ref{prop:incompress.uniform} are valid in the case of regular BC,
which generalizes Theorem 3 in~\cite{Sad16} even in the case of Dirac system
($b_1 = -b_2 = 1$).

\textbf{(v)} In a very recent preprint~\cite{Rzep20} L. Rzepnicki obtained sharp
asymptotic formulas for deviations $\l_n - \l_n^0 = \delta_n + \rho_n$ in the
case of Dirichlet BVP for Dirac system with $Q \in \LL{p}$, $1 \le p < 2$.
Namely, $\delta_n$ is explicitly expressed via Fourier coefficients and Fourier
transforms of $Q_{12}$ and $Q_{21}$, while $\{\rho_n\}_{n \in \bZ} \in
l^{p'/2}(\bZ)$, i.e. has ``twice'' better convergence to zero than what
formula~\eqref{eq:sum.ln-wtln} guarantees for $\l_n - \l_n^0$. Similar
result was obtained for eigenfunctions.

\textbf{(vi)} We mention also the papers~\cite{CGHL04},~\cite{ClaGes06}, and
\cite{BroKlMalMogWo19} where different spectral properties of $j$-selfadjoint
Dirac operators were investigated.
\end{remark}
\section{Stability property of eigenfunctions}
\label{sec:eigenfunction.stabil}
Throughout the section we will use the following notation for the
``maximal'' Fourier transform of the potential matrix $Q$. Namely, let us set
for $x \in [0,1]$, $\l \in \bC$ and $k \in \{1, 2\}$,
\begin{equation} \label{eq:def.sFk}
 \sF_k(x,\l) := \sF_k[Q](x,\l) := \sup_{s \in [0,x]}
 \abs{\int_0^s Q_{jk}(t) e^{i (b_k - b_j) \l t} \,dt}, \qquad j = 2/k.
\end{equation}
Note that this notation is generally valid for any matrix-function $W \in
\LL{1}$.
\subsection{Estimates of Fourier transforms of transformation operators}
\label{subsec:maxim.Fourier}
In this subsection we study ``Fourier'' transforms of the kernels of the
corresponding transformation operators from representation~\eqref{eq:e=(I+K)e0}
of the form $\int_0^x K^{\pm}_{jk}(x, t) e^{i \l b_k t} dt$. The motivation
comes from the formula~\eqref{eq:phi.jkx} for the entries of the fundamental
matrix solution of the system~\eqref{eq:system} where these integrals appear.
We will estimate these integrals with the ``maximal'' Fourier transforms
$\sF_1(x, \l)$, $\sF_2(x, \l)$ of the potential matrix.

In what follows we heavily use notation~\eqref{eq:def.ak.alphak}, in particular,
$a_k = b_k^{-1}$, $k \in \{1, 2\}$. Note also that
\begin{equation} \label{eq:alphaj}
 \a_j = a_j/(a_j-a_k), \quad b_k \a_j^{-1} = b_k - b_j,
 \quad b_k \a_j^{-1} \a_k = -b_j, \qquad k \in \{1,2\}, \quad j = 2/k.
\end{equation}

As a first step we study ``Fourier'' transforms of the auxiliary kernels $R$
from the representation~\eqref{2.51op} for the kernels of the transformation
operators $K^{\pm}$. The first auxiliary result estimates generalized
``Fourier'' transforms with an arbitrary bounded function $f$ instead of the
exponential function in the integral.
\begin{lemma} \label{lem:int.Rjk.f}
Let $Q \in \LL{1}$, and let $R = (R_{jk})_{j,k=1}^2 \in
\(X_{1,1}^0(\Omega) \cap X_{\infty,1}^0(\Omega)\) \otimes \bC^{2 \times 2}$ be a
(unique) solution of the system of integral
equations~\eqref{eq:Rkk=int.Qkj.Rjk}--\eqref{eq:Rjk=Qjk-int.Qjk.Rkk}. Let
$x \in [0,1]$ be fixed and let $f \in L^{\infty}(\bR)$ be such that $f(t) = 0$
for $t \notin [0,x]$. Let us set
\begin{equation} \label{eq:def.Fjk}
 F_{jk}(s; f) := \sup_{\genfrac{}{}{0pt}{2}{u \in [0,s]}{v \in [-u,x]}}
 \abs{\int_0^u R_{jk}(s, t) f(t+v) \,dt},
 \qquad s \in [0,x], \quad j, k \in \{1, 2\}.
\end{equation}
Then the following estimates hold for $s \in [0,x]$, $k \in \{1, 2\}$,
$j = 2/k$:
\begin{align}
\label{eq:Fkk<int.Fjk}
 & F_{kk}(s; f) \le |b_k| \int_0^s |Q_{kj}(t)| \cdot F_{jk}(t; f)
 \,dt, \\
\label{eq:Fjk<int.Fjk}
 & F_{jk}(s; f) \le |b_j| \sup_{\genfrac{}{}{0pt}{2}{u \in [0,s]}{v \in [-u,x]}}
 \abs{\a_j \int_0^u Q_{jk}(\a_k s + \a_j t) \cdot f(t+v) \,dt } + 2 |b_j b_k|
 \cdot \|Q_{jk}\|_{L^1[0,s]} \cdot
 \int_0^s |Q_{kj}(t)| \cdot F_{jk}(t; f) \,dt.
\end{align}
\end{lemma}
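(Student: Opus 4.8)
The plan is to derive the two estimates directly from the integral equations~\eqref{eq:Rkk=int.Qkj.Rjk}--\eqref{eq:Rjk=Qjk-int.Qjk.Rkk}, tracking carefully how the inner shift variable $v$ and the change-of-variables interact with the recursive structure of the system. First I would treat~\eqref{eq:Fkk<int.Fjk}: starting from~\eqref{eq:Rkk=int.Qkj.Rjk}, for fixed $s\in[0,x]$, $u\in[0,s]$, $v\in[-u,x]$ I would write
\begin{equation*}
 \int_0^u R_{kk}(s,t) f(t+v)\,dt
 = -\frac{i}{a_k}\int_0^u f(t+v)\,dt \int_{s-t}^s Q_{kj}(\xi)
 R_{jk}\bigl(\xi,\xi-s+t\bigr)\,d\xi,
\end{equation*}
then interchange the order of integration (Fubini, legitimate since $R_{jk}\in X_{\infty,1}^0$ and $Q_{kj}\in L^1$) so that $\xi$ becomes the outer variable running over $[s-u,s]\subset[0,s]$, and the inner integral becomes $\int$ over a suitable $t$-interval of $Q_{kj}(\xi) R_{jk}(\xi,\xi-s+t) f(t+v)\,dt$. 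After the substitution $t'=\xi-s+t$ the inner integral takes exactly the form $\int_0^{u'} R_{jk}(\xi,t') f(t'+v')\,dt'$ with $u'\le\xi\le s$ and $v'=v+s-\xi$; one must check that $v'\in[-u',x]$, which follows from $v\ge-u$, $u'=u-(s-\xi)$, and $v'+u' = v+u\le x$ together with $v'\ge v\ge -u\ge$ (suitable bound). Bounding the inner integral by $F_{jk}(\xi;f)$ and pulling $|Q_{kj}(\xi)|$ out gives~\eqref{eq:Fkk<int.Fjk} with constant $|a_k|^{-1}=|b_k|$ after taking the supremum over $u,v$.

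For~\eqref{eq:Fjk<int.Fjk} I would apply the same strategy to~\eqref{eq:Rjk=Qjk-int.Qjk.Rkk}, which has two terms. The first term $\frac{i}{a_k-a_j}Q_{jk}(\a_k s+\a_j t)$ contributes, after integrating against $f(t+v)$, precisely $\a_j |b_j|\cdot\bigl|\int_0^u Q_{jk}(\a_k s+\a_j t) f(t+v)\,dt\bigr|$ up to the constant (using $\frac{1}{|a_k-a_j|} = |b_j|\cdot|\a_j|/|a_j|\cdot\ldots$; I would recompute this coefficient cleanly from~\eqref{eq:def.ak.alphak}), which is the first term on the right of~\eqref{eq:Fjk<int.Fjk}. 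For the second term $-\frac{i}{a_j}\int_{\a_k s+\a_j t}^s Q_{jk}(\xi) R_{kk}(\xi,\g_k(\xi-s)+t)\,d\xi$ I would again integrate against $f(t+v)$, apply Fubini to make $\xi$ the outer variable, and perform the substitution $t'=\g_k(\xi-s)+t$ in the inner integral to bring it to the form $\int R_{kk}(\xi,t') f(t'+v')\,dt'$ with $v'=v-\g_k(\xi-s)$ and a shortened $u'$; then bound by $F_{kk}(\xi;f)$, insert the just-proved estimate~\eqref{eq:Fkk<int.Fjk} for $F_{kk}(\xi;f)$ (iterating the system once), and crudely bound $\int_0^s|Q_{jk}(\xi)|\,d\xi \le \|Q_{jk}\|_{L^1[0,s]}$ to extract that factor, leaving $\int_0^s |Q_{kj}(t)| F_{jk}(t;f)\,dt$. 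Collecting the constants $\frac{1}{|a_j|}\cdot|b_k|$ and the factor from~\eqref{eq:Fkk<int.Fjk} yields $2|b_jb_k|$ (the factor $2$ absorbing the doubling from combining the $[0,x]$ endpoint estimates and the $u'$-vs-$u$ discrepancy).

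The main obstacle I anticipate is \emph{bookkeeping of the admissible ranges of the shift variable $v$ under the two changes of variables}: the set $\{v\in[-u,x]\}$ in the definition~\eqref{eq:def.Fjk} of $F_{jk}$ is precisely engineered so that after the substitutions $t\mapsto\xi-s+t$ (in~\eqref{eq:Rkk=int.Qkj.Rjk}) and $t\mapsto\g_k(\xi-s)+t$ (in~\eqref{eq:Rjk=Qjk-int.Qjk.Rkk}) the new shift $v'$ again lies in the corresponding admissible interval $[-u',x]$; verifying this uses $\a_k+\a_j=1$, $\a_k,\a_j\in(0,1)$, $\g_k<0$, and the constraints $0\le u\le s\le x$. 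Getting these inclusions right — rather than the Fubini interchanges or the constant-chasing, which are routine — is where the care must go, and it is exactly the reason the supremum in~\eqref{eq:def.Fjk} is taken over the joint region in $(u,v)$ rather than separately. Once the ranges are checked, both inequalities follow by taking suprema over $(u,v)\in\{0\le u\le s\}\times\{-u\le v\le x\}$ on both sides.
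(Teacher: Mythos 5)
Your overall route---estimating directly from the integral equations \eqref{eq:Rkk=int.Qkj.Rjk}--\eqref{eq:Rjk=Qjk-int.Qjk.Rkk} via Fubini and the substitutions $t\mapsto\xi-s+t$ and $t\mapsto\g_k(\xi-s)+t$, then feeding the first estimate into the second and crudely bounding $\int_0^s|Q_{jk}(\xi)|\,d\xi$ by $\|Q_{jk}\|_{L^1[0,s]}$---is exactly the paper's. But the step you yourself single out as the crux is wrong as stated. After the first substitution the new shift is $v'=v+s-\xi$ with $u'=u-(s-\xi)$, and the inclusion $v'\in[-u',x]$ can fail: your argument rests on the claim ``$v'+u'=v+u\le x$'', but the hypotheses only give $v\le x$, not $v+u\le x$ (take $v=x$, $u>0$ and $\xi<s$; then $v'=x+s-\xi>x$). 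So the admissible window for the shift is \emph{not} preserved by the change of variables, and the definition \eqref{eq:def.Fjk} is not engineered to make it so.

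The correct mechanism---and the one the paper uses---is simpler and is precisely what the support hypothesis on $f$ is for: setting $J_{jk}(s,u,v):=\int_0^u R_{jk}(s,t)f(t+v)\,dt$, one has $J_{jk}(s,u,v)=0$ whenever $v\notin[-u,x]$, because then $t+v\notin[0,x]$ for every $t\in[0,u]$ and $f(t+v)=0$; hence $|J_{jk}(s,u,v)|\le F_{jk}(s;f)$ for \emph{all} $v\in\bR$, with no range condition at all. With this observation your computation goes through verbatim: the transformed inner integrals are bounded by $F_{jk}(\xi;f)$, resp.\ $F_{kk}(\xi;f)$, regardless of where $v'$ lands. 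One further piece of bookkeeping you gloss over: in the second equation, after the substitution $\eta=\g_k(\xi-s)+t$ the inner integral runs over $\bigl[\g_k(\xi-s),\,\min\{u+\g_k(\xi-s),\xi\}\bigr]$, so its lower endpoint is not $0$; it must be split as a difference of two integrals of the form $\int_0^{\cdot}$, each estimated through the already proved \eqref{eq:Fkk<int.Fjk}, and this splitting is the precise source of the factor $2$ in \eqref{eq:Fjk<int.Fjk}, not the vaguer ``endpoint/discrepancy'' accounting you give. With these two repairs your proof coincides with the paper's.
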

\begin{proof}
Let us set
\begin{equation} \label{eq:def.Jjk}
 J_{jk}(s, u, v) := \int_0^u R_{jk}(s, t) f(t+v) \,dt,
 \qquad 0 \le u \le s \le x, \quad v \in \bR, \quad j, k \in \{1,2\}.
\end{equation}
Note that since $f(t) = 0$, $t \notin [0,x]$, then $J_{jk}(s, u, v) = 0$ for
$0 \le u \le s \le x$ and $v \notin [-u, x]$. Hence
definition~\eqref{eq:def.Fjk} of $F_{jk}(s; f)$ implies that
\begin{equation} \label{eq:Jjk<Fjk}
 |J_{jk}(s, u, v)| \le F_{jk}(s; f), \qquad 0 \le u \le s \le x,
 \quad v \in \bR, \quad j, k \in \{1,2\}.
\end{equation}
Let $k \in \{1, 2\}$ and $j = 2/k$. It follows from~\eqref{eq:Rkk=int.Qkj.Rjk}
that
\begin{align}
\nonumber
 J_{kk}(s, u, v) &= \int_0^u R_{kk}(s, t) f(t+v) dt \\
\nonumber
 & = -\frac{i}{a_k} \int_0^u f(t+v) \,dt \int_{s-t}^s Q_{kj}(\xi)R_{jk}
 \bigl(\xi, \xi - s + t\bigr) \,d\xi \\
\nonumber
 &= -i b_k \int_{s-u}^s Q_{kj}(\xi) \,d\xi \int_{s-\xi}^{u} R_{jk}
 \bigl(\xi, \xi - s + t\bigr) f(t+v) \,dt \\
\nonumber
 &= -i b_k \int_{s-u}^s Q_{kj}(\xi) \,d\xi \int_{0}^{u+\xi-s} R_{jk}
 \bigl(\xi, \eta) f(\eta + s - \xi + v) \,d\eta \\
\label{eq:Jkk=int.Qkj.Jjk}
 &= -i b_k \int_{s-u}^s Q_{kj}(\xi)
 J_{jk}(\xi, u+\xi-s, v+s-\xi) \,d\xi, \qquad 0 \le u \le s \le x,
 \quad v \in \bR.
\end{align}
Changing notation and taking into account~\eqref{eq:Jjk<Fjk},
relation~\eqref{eq:Jkk=int.Qkj.Jjk} yields that
\begin{equation} \label{eq:Jkk<C1.int.Q.Jjk}
 |J_{kk}(\xi, u, v)| \le |b_k| \int_0^{\xi} |Q_{kj}(t)| \cdot
 F_{jk}(t; f) \,dt, \quad 0 \le u \le \xi \le x, \quad v \in \bR,
\end{equation}
Taking supremum over $v \in [-u, x]$ and $u \in [0, \xi]$
in~\eqref{eq:Jkk<C1.int.Q.Jjk} now yields~\eqref{eq:Fkk<int.Fjk}.

Taking into account notation~\eqref{eq:def.ak.alphak},
relation~\eqref{eq:Rjk=Qjk-int.Qjk.Rkk} yields for $0 \le u \le s \le x$ and
$v \in \bR$ that
\begin{align}
\label{eq:Jjk=J1+J2}
 J_{jk}(s, u, v) &= \int_0^u R_{jk}(s, t) f(t+v) \,dt
 = -\frac{i}{a_j} \cdot \(J_{1,jk}(s, u, v) + J_{2,jk}(s, u, v)\), \\
\label{eq:J1jk.def}
 J_{1,jk}(s, u, v) &:= \frac{-a_j}{a_k- a_j} \int_0^u Q_{jk}
 \(\frac{a_k s - a_j t}{a_k - a_j}\) f(t+v) \,dt
 = \a_j \int_0^u Q_{jk}(\a_k s + \a_j t) \cdot f(t+v) \,dt, \\
\label{eq:J2jk.def}
 J_{2,jk}(s, u, v) &:= \int_0^u f(t+v) \,dt
 \int_{\a_k s + \a_j t}^s Q_{jk}(\xi) \,
 R_{kk}\bigl(\xi,v_{\xi}+t\bigr) \,d\xi, \quad v_{\xi} := \g_k (\xi-s).
\end{align}
Changing order of integration in~\eqref{eq:J2jk.def} and then making the change
of variable $\eta = t + v_{\xi}$ we arrive at
\begin{align}
 J_{2,jk}(s, u, v) &= \int_{\a_k s}^s Q_{jk}(\xi) \,d\xi
 \int_0^{\min\{u,\, \xi - v_{\xi}\}}
 R_{kk}\bigl(\xi,v_{\xi}+t\bigr) \cdot f(t+v) dt \nonumber \\
\label{eq:J2=int.Q.int.Rkk}
 &= \int_{\a_k s}^s Q_{jk}(\xi) \,d\xi
 \int_{v_{\xi}}^{\min\{u + v_{\xi},\, \xi\}}
 R_{kk}(\xi, \eta) \cdot f(\eta - v_{\xi} + v) \,d\eta.
\end{align}
Combining~\eqref{eq:def.Jjk},~\eqref{eq:def.Fjk},~\eqref{eq:J2=int.Q.int.Rkk}
and~\eqref{eq:Jkk<C1.int.Q.Jjk} we get:
\begin{align}
\nonumber
 |J_{2,jk}(s, u, v)| &\le \int_{\a_k s}^s |Q_{jk}(\xi)|
 \(|J_{kk}\bigl(\xi, v_{\xi}, v - v_{\xi})| +
 |J_{kk}\bigl(\xi, \min\{u + v_{\xi}, \xi\},
 v - v_{\xi})|\) \,d\xi \\
\nonumber
 & \le 2 |b_k| \int_{0}^s |Q_{jk}(\xi)| \,d\xi
 \int_{0}^{\xi} |Q_{kj}(t)| \cdot F_{jk}(t; f) \,dt \\
\nonumber
 & = 2 |b_k| \int_{0}^s |Q_{kj}(t)| \cdot F_{jk}(t; f) \,dt
 \int_t^{s} |Q_{jk}(\xi)| \,d\xi \\
\label{eq:J2<int.Q.int.Q.Jjk}
 & \le 2 |b_k| \cdot \|Q_{jk}\|_{L^1[0,s]}
 \int_{0}^s |Q_{kj}(t)| \cdot F_{jk}(t; f) \,dt,
 \qquad 0 \le u \le s \le x, \quad v \in \bR.
\end{align}
Inserting~\eqref{eq:J1jk.def} and~\eqref{eq:J2<int.Q.int.Q.Jjk}
into~\eqref{eq:Jjk=J1+J2} we arrive at the following inequality for
$0 \le u \le s \le x$, $v \in \bR$
\begin{equation} \label{eq:Jjk<int.Fjk}
 |J_{jk}(s, u, v)|
 \le \abs{b_j \a_j \int_0^u Q_{jk}(\a_k s + \a_j t) \cdot f(t+v) \,dt }
 + 2 |b_j b_k| \cdot \|Q_{jk}\|_{L^1[0,s]}
 \int_0^s |Q_{kj}(t)| \cdot F_{jk}(t; f) \,dt.
\end{equation}
Taking supremum over $v \in [-u,x]$ and $u \in [0,s]$ in~\eqref{eq:Jjk<int.Fjk}
yields~\eqref{eq:Fjk<int.Fjk}.
\end{proof}
\begin{corollary} \label{cor:int.R<int.Q}
Let $Q \in \bU_{1,r}^{2 \times 2}$ for some $r > 0$, and let $R =
(R_{jk})_{j,k=1}^2 \in \(X_{1,1}^0(\Omega) \cap X_{\infty,1}^0(\Omega)\) \otimes
\bC^{2 \times 2}$ be a (unique) solution of the system of integral
equations~\eqref{eq:Rkk=int.Qkj.Rjk}--\eqref{eq:Rjk=Qjk-int.Qjk.Rkk}. Then the
following uniform estimate holds for $k, l \in \{1,2\}$.
\begin{equation} \label{eq:int.R<int.Q}
 \sup_{s \in [0,x]} \abs{\int_0^s R_{lk}(x, t) e^{i \l b_k t} \,dt}
 \le C e^{(b_2 - b_1) |\Im \l| x} \cdot \sF_k(x,\l),
 \quad x \in [0,1], \quad \l \in \bC,
\end{equation}
where $C = C(B, r) > 0$ does not depend on $Q$, $x$ and $\l$, and
$\sF_k(x,\l)$ is defined in~\eqref{eq:def.sFk}.
\end{corollary}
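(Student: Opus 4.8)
The plan is to apply Lemma~\ref{lem:int.Rjk.f} to the bounded function $f=f_{\l,k}$ defined by $f_{\l,k}(t):=e^{i b_k \l t}$ for $t\in[0,x]$ and $f_{\l,k}(t):=0$ for $t\notin[0,x]$, and then to close the recursion given by~\eqref{eq:Fkk<int.Fjk}--\eqref{eq:Fjk<int.Fjk} with the help of Gr\"onwall's inequality. Here $R=(R_{jk})_{j,k=1}^2$ is the unique solution of~\eqref{eq:Rkk=int.Qkj.Rjk}--\eqref{eq:Rjk=Qjk-int.Qjk.Rkk} furnished by Proposition~\ref{prop:K.props}, and since $Q\in\bU_{1,r}^{2 \times 2}$ we have $\|Q_{12}\|_1,\|Q_{21}\|_1\le r$. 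First I would note that, choosing $v=0$ in the definition~\eqref{eq:def.Fjk}, the left-hand side of~\eqref{eq:int.R<int.Q} is dominated by $F_{lk}(x;f_{\l,k})$; hence it suffices to bound $F_{lk}(x;f_{\l,k})$ for $l\in\{1,2\}$.

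The crucial step is the estimate of the ``free term'' $\a_j\int_0^u Q_{jk}(\a_k s+\a_j t)\,f_{\l,k}(t+v)\,dt$ occurring in~\eqref{eq:Fjk<int.Fjk}, for $0\le u\le s\le x$ and $v\in[-u,x]$. Substituting $\xi=\a_k s+\a_j t$ and using the algebraic identities $b_k\a_j^{-1}=b_k-b_j$ and $(b_k-b_j)\a_k=-b_j$ (see~\eqref{eq:def.ak.alphak} and~\eqref{eq:alphaj}), one checks that $b_k(t+v)=(b_k-b_j)\xi+b_j s+b_k v$ along the substitution, so this integral equals $e^{i\l(b_j s+b_k v)}\int_a^b Q_{jk}(\xi)e^{i(b_k-b_j)\l\xi}\,d\xi$, where $[a,b]$ is a subinterval of $[0,x]$ (using $\a_k s+\a_j u\le(\a_k+\a_j)s=s\le x$). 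Splitting $\int_a^b=\int_0^b-\int_0^a$ and recalling the definition~\eqref{eq:def.sFk} of $\sF_k$ bounds the integral factor by $2\sF_k(x,\l)$, while $|e^{i\l(b_j s+b_k v)}|\le e^{|\Im\l|(|b_j|\,s+|b_k|\,|v|)}\le e^{(b_2-b_1)|\Im\l|x}$, since $|b_1|+|b_2|=b_2-b_1$ and $s,|v|\le x$. Thus the first term in~\eqref{eq:Fjk<int.Fjk} is at most $2|b_j|\,e^{(b_2-b_1)|\Im\l|x}\,\sF_k(x,\l)$.

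Inserting this into~\eqref{eq:Fjk<int.Fjk} and using $\|Q_{jk}\|_{L^1[0,s]}\le r$ yields, with $C_1=C_1(B)>0$,
\begin{equation*}
 F_{jk}(s;f_{\l,k})\le C_1 e^{(b_2-b_1)|\Im\l|x}\sF_k(x,\l)
 +2|b_jb_k|\,r\int_0^s|Q_{kj}(t)|\,F_{jk}(t;f_{\l,k})\,dt,\qquad s\in[0,x].
\end{equation*}
Gr\"onwall's inequality, applied with respect to the finite measure $|Q_{kj}(t)|\,dt$ and the bound $\int_0^x|Q_{kj}|\le r$, then gives $F_{jk}(s;f_{\l,k})\le C_2(B,r)e^{(b_2-b_1)|\Im\l|x}\sF_k(x,\l)$ for all $s\in[0,x]$. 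Feeding this estimate back into~\eqref{eq:Fkk<int.Fjk} and again using $\int_0^x|Q_{kj}|\le r$ produces the same bound (with a larger constant) for $F_{kk}(x;f_{\l,k})$. Combining the two cases gives $F_{lk}(x;f_{\l,k})\le C(B,r)e^{(b_2-b_1)|\Im\l|x}\sF_k(x,\l)$ for $l\in\{1,2\}$, which is~\eqref{eq:int.R<int.Q}.

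The change of variables and the Gr\"onwall step are routine; the one point demanding care is the algebraic bookkeeping with $a_k,\a_k,\g_k$, which must simultaneously turn the shifted argument $\a_k s+\a_j t$ of $Q_{jk}$ into the bare integration variable (so that the off-diagonal ``maximal'' Fourier transform $\sF_k$ appears) and produce exactly the phase $e^{i\l(b_j s+b_k v)}$, whose modulus is controlled by the \emph{sharp} factor $e^{(b_2-b_1)|\Im\l|x}$ rather than by a larger multiple of it.
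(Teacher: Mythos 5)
Your argument is correct and follows essentially the same route as the paper's own proof: apply Lemma~\ref{lem:int.Rjk.f} with $f(t)=e^{ib_k\l t}\mathbf{1}_{[0,x]}(t)$, convert the free term in~\eqref{eq:Fjk<int.Fjk} via the substitution $\xi=\a_k s+\a_j t$ into a shifted ``maximal'' Fourier integral of $Q_{jk}$ with phase $e^{i\l(b_js+b_kv)}$, close the off-diagonal bound by Gr\"onwall's inequality, and then feed it back into~\eqref{eq:Fkk<int.Fjk} for the diagonal entries. No gaps.
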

\begin{proof}
Let $\l \in \bC$, $x \in [0,1]$, $k \in \{1,2\}$ and $j = 2/k$ be fixed for the
rest of the proof. We will apply Lemma~\ref{lem:int.Rjk.f} with $f(t) =
e^{i b_k \l t}$, $t \in [0,x]$, and $f(t) = 0$, $t \notin [0,x]$. Let us
estimate the first summand in the r.h.s. of~\eqref{eq:Fjk<int.Fjk} for such $f$.
To this end, for $0 \le u \le s \le x$ and $v \in [-u, x]$ we have
\begin{align}
\nonumber
 \abs{\a_j \int_0^u Q_{jk}(\a_k s + \a_j t) \cdot f(t+v) \,dt }
 & = \abs{\a_j \int_{\max\{0,-v\}}^{\min\{u,x-v\}} Q_{jk}(\a_j t + \a_k s)
 e^{i b_k \l (t+v)} \,dt} \\
\nonumber
 & = \abs{\int_{\a_j \max\{0,-v\} + \a_k s}^{\a_j \min\{u,x-v\} + \a_k s}
 Q_{jk}(\xi) \exp\(i b_k \l (\a_j^{-1} \xi - \a_j^{-1} \a_k s + v)\)
 \,d\xi} \\
\nonumber
 & \le 2 \abs{\exp\(i \l (b_j s + b_k v)\)} \sup_{t \in [0,s]}
 \abs{\int_0^t Q_{jk}(\xi) \exp\(i (b_k - b_j) \l \xi \) \,d\xi} \\
\label{eq:int.Qjkftv}
 & \le 2 e^{|(b_j - b_k) \Im \l| x} \cdot \sF_k(x,\l) =:
 2 \sR_k(x,\l).
\end{align}
Here we used the change of variable $\xi = \a_j t + \a_k s$, iden titties
$b_k \a_j^{-1} \xi = (b_k-b_j) \xi$ and $-b_k \a_j^{-1} \a_k s = b_j s$,
and definition~\eqref{eq:def.sFk} of $\sF_k(x,\l)$.
Inserting~\eqref{eq:int.Qjkftv} into~\eqref{eq:Fjk<int.Fjk} now yields
\begin{equation} \label{eq:Fjk<sE+int.Fjk}
 F_{jk}(s; f) \le |b_j| \cdot \sR_k(x,\l)
 + 2 |b_j b_k| r \int_0^s |Q_{kj}(t)| \cdot F_{jk}(t; f) \,dt,
 \qquad s \in [0,x].
\end{equation}
Applying Gr\"onwall's inequality to~\eqref{eq:Fjk<sE+int.Fjk} and taking into
account that $\int_0^s |Q_{kj}(u)| \,du \le \|Q_{kj}\|_{L^1[0,1]} \le r$,
implies that
\begin{align}
\nonumber
 \abs{\int_0^u R_{jk}(s, t) e^{i \l b_k t} \,dt}
 = \abs{J_{jk}(s,u,0)} & \le F_{jk}(s; f) \le |b_j| \cdot \sR_k(x,\l)
 \cdot \exp\(2 |b_j b_k| r \int_0^s |Q_{kj}(u)| \,du\), \\
\label{eq:int0u.Rjk<E}
 & \le C_j \cdot \sR_k(x,\l),
 \qquad 0 \le u \le s \le x,
\end{align}
where $C_j := |b_j| \exp(2 |b_1 b_2| r^2)$. Setting $s=x$ and taking
supremum over $u \in [0,x]$ in~\eqref{eq:int0u.Rjk<E}
implies~\eqref{eq:int.R<int.Q} for $l \ne k$.

Inserting the estimate $F_{jk}(t; f) \le C_j \cdot \sR_k(x,\l)$,
$t \in [0, x]$, from~\eqref{eq:int0u.Rjk<E} into~\eqref{eq:Fkk<int.Fjk} yields
that
\begin{align}
\nonumber
 \abs{\int_0^u R_{kk}(s, t) e^{i \l b_k t} \,dt}
 & = \abs{J_{kk}(s,u,0)} \le F_{kk}(s; f)
 \le |b_k| \cdot C_j \int_0^s |Q_{kj}(t)| \cdot \sR_k(x,\l) \,dt \\
\label{eq:Fkk<C.Ek}
 & = |b_k| \cdot C_j \cdot \|Q_{kj}\|_{L^1[0,s]} \cdot \sR_k(x,\l)
 \le C \cdot \sR_k(x,\l), \qquad 0 \le u \le s \le x,
\end{align}
where $C = |b_1 b_2| r \cdot \exp(2 |b_1 b_2| r^2)$. Setting $s=x$ and taking
supremum over $u \in [0,x]$ in~\eqref{eq:Fkk<C.Ek}
implies~\eqref{eq:int.R<int.Q} for $l = k$ and completes the proof.
\end{proof}
We also need to estimate Fourier transforms of the auxiliary functions
$P_k^{\pm}$ from the representation~\eqref{eq:P.R.in.X0}--\eqref{2.51op}.
\begin{lemma} \label{lem:P.Fourier.Q}
Let $Q \in \bU_{1,r}^{2 \times 2}$ for some $r > 0$ and let $K^{\pm}$ be the
kernels from the integral representation~\eqref{eq:e=(I+K)e0}. Let
matrix-function $P^{\pm}$ be given by~\eqref{eq:P.R.in.X0}--\eqref{2.51op} for
$K^{\pm}$. Then the following uniform estimate holds for $x \in [0,1]$ and
$\l \in \bC$
\begin{equation} \label{eq:int.Pk<Ek+E1+E1}
 \abs{\int_0^x P^{\pm}_k(t) e^{i b_k \l (x-t)} \,dt} \le C \cdot
 e^{2 (b_2 - b_1) |\Im \l| x} \(\sF_1(x,\l) + \sF_2(x,\l)\),
 \qquad k \in \{1, 2\},
\end{equation}
where $C = C(B, r) > 0$ does not depend on $Q$, $x$ and $\l$, and
$\sF_1(x,\l)$, $\sF_2(x,\l)$ are defined in~\eqref{eq:def.sFk}.
\end{lemma}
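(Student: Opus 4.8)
The plan is to reduce the estimate for $\int_0^x P_k^{\pm}(t) e^{i b_k \l(x-t)}\,dt$ to the already-established Corollary~\ref{cor:int.R<int.Q} by exploiting the explicit system~\eqref{2.52op} that defines $P^{\pm}$ in terms of $R$. Recall that~\eqref{2.52op} can be rewritten (see~\eqref{eq:for-la_for_P^{pm}}) as
\begin{equation*}
 \binom{a_1 P_1^{\pm}}{\pm a_2 P_2^{\pm}} = g^{\pm} - \cR \binom{a_1 P_1^{\pm}}{\pm a_2 P_2^{\pm}},
 \qquad g_1^{\pm}(x) = \mp a_2 R_{12}(x,0), \quad g_2^{\pm}(x) = -a_1 R_{21}(x,0).
\end{equation*}
So $P_k^{\pm}(t)$ is a linear combination of $R_{12}(t,0)$, $R_{21}(t,0)$ and terms $\int_0^t R_{lm}(t,\xi) P_m^{\pm}(\xi)\,d\xi$. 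The goal is to show that each of these pieces, after convolving with $e^{i b_k \l(x-\cdot)}$ and taking the supremum over the upper limit, is controlled by $C e^{2(b_2-b_1)|\Im\l|x}(\sF_1(x,\l)+\sF_2(x,\l))$.

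First I would treat the ``source'' terms $\int_0^x R_{12}(t,0) e^{i b_k \l(x-t)}\,dt$ and similarly with $R_{21}$. Here I would try to use the integral equations~\eqref{eq:Rjk=Qjk-int.Qjk.Rkk} for $R_{jk}$ evaluated at the second argument $0$, namely $R_{jk}(x,0) = \frac{i}{a_k-a_j} Q_{jk}(\a_k x) - \frac{i}{a_j}\int_{\a_k x}^x Q_{jk}(\xi) R_{kk}(\xi, \g_k(\xi-x))\,d\xi$, and then integrate against $e^{i b_k \l(x-t)}$ and manipulate via changes of variables to produce integrals of the form $\int_0^s Q_{jk}(\xi) e^{i(b_k-b_j)\l\xi}\,d\xi$ bounded by $\sF_k(x,\l)$; the inner $R_{kk}$ factor can be absorbed using~\eqref{eq:int.R<int.Q} after changing order of integration, in the spirit of the proof of Corollary~\ref{cor:int.R<int.Q}. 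The exponential weight $e^{2(b_2-b_1)|\Im\l|x}$ (rather than $e^{(b_2-b_1)|\Im\l|x}$) appears precisely because we now pick up one exponential factor from the convolution kernel $e^{i b_k\l(x-t)}$ on top of the one already present in~\eqref{eq:int.R<int.Q}.

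Second, for the iterated term $\int_0^x\big(\int_0^t R_{lm}(t,\xi)P_m^{\pm}(\xi)\,d\xi\big) e^{i b_k\l(x-t)}\,dt$ I would insert $e^{i b_m\l\xi}e^{-i b_m\l\xi}$ to split off a factor $\int_0^t R_{lm}(t,\xi)\big(P_m^{\pm}(\xi)e^{i b_k\l(x-t)-i b_m\l\xi}\big)\,d\xi$, or more cleanly proceed by a bootstrap/Grönwall argument: denote $\Pi_k(x) := \sup_{s\le x}|\int_0^s P_k^{\pm}(t)e^{i b_k\l(x-t)}\,dt|$ (or a suitable variant with an auxiliary shift parameter $v$, analogous to $F_{jk}(s;f)$ in Lemma~\ref{lem:int.Rjk.f}), obtain from the system an inequality $\Pi_k(x) \le (\text{source, already bounded by the RHS}) + C\int_0^x |Q(t)|\,(\cdots)\,dt$, and apply Grönwall using $\|Q\|_1\le r$. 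This mirrors exactly the structure of the proof of Corollary~\ref{cor:int.R<int.Q}, where $F_{jk}$ satisfies a Grönwall-type inequality driven by $\sF_k$. The uniformity of $C$ in $Q$ on the ball $\bU_{1,r}^{2\times2}$ comes from the uniform bound $\|R\|_{X_{\infty,1}},\|R\|_{X_{1,1}}, \|P^{\pm}\|_1 \le C(B,r)$ established in Proposition~\ref{prop2.5} and Theorem~\ref{th:K-wtK<Q-wtQ}.

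The main obstacle I anticipate is the bookkeeping of the changes of variables and shift parameters needed to keep the ``maximal'' (supremum over upper limit) structure intact while peeling off the $R$-factors: one must carefully track that after each change of variable the remaining integral of $Q_{jk}$ still has its upper limit in $[0,x]$ and its exponent equal to $(b_k-b_j)\l$ times the variable, so that it is genuinely dominated by $\sF_k(x,\l)$, and that the accumulated exponential factors never exceed $e^{2(b_2-b_1)|\Im\l|x}$. Getting the shift-parameter version of the supremum right — as in the passage from $J_{jk}(s,u,v)$ to $F_{jk}(s;f)$ in Lemma~\ref{lem:int.Rjk.f} — is the delicate point; once that framework is set up, the Grönwall step and the final assembly from~\eqref{eq:for-la_for_P^{pm}} are routine.
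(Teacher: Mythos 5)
Your overall architecture --- express $P^{\pm}$ through the Volterra system~\eqref{2.52op}, estimate the ``source'' transforms $\int_0^x R_{12}(t,0)e^{i b_k\l(x-t)}\,dt$, $\int_0^x R_{21}(t,0)e^{i b_k\l(x-t)}\,dt$ by substituting the integral equations~\eqref{eq:Rkk=int.Qkj.Rjk}--\eqref{eq:Rjk=Qjk-int.Qjk.Rkk}, changing variables and order of integration, and absorbing the inner $R$-factors via Corollary~\ref{cor:int.R<int.Q} --- is exactly the paper's, including your accounting of where the extra exponential factor $e^{2(b_2-b_1)|\Im\l|x}$ comes from. The one place you deviate, the Gr\"onwall bootstrap for the iterated term, is both unnecessary and, as stated, unlikely to close. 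The inequality you want, $\Pi_k(x)\le(\text{source})+C\int_0^x|Q(t)|\,(\cdots)\,dt$ with the unknown evaluated at the $Q$-integration variable, does come out for the leading (purely $Q$) part of the off-diagonal kernels, but not for the contribution of the diagonal kernels: unwinding $\int_0^x P_k^{\pm}(\xi)\bigl(\int_\xi^x R_{kk}(t,\xi)e^{i b_k\l(x-t)}\,dt\bigr)\,d\xi$ via~\eqref{eq:Rkk=int.Qkj.Rjk} and reordering so that $P_k^{\pm}$ stays paired with its exponential leaves the $\xi$-limits depending on the outer upper limit rather than on the $Q$-variable, so you only obtain a term of size $C(B,r)\cdot\Pi_k(x)$ with a constant that is not small and hence cannot be absorbed; the alternative reordering (bounding the inner $R_{jk}$-transform first) forces $|P_k^{\pm}|$ under the integral, i.e.\ $\|P_k^{\pm}\|_{L^1}$, anyway.

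The fix is already contained in your last remark: since $\|P^{\pm}\|_{L^1}\le C(B,r)$ is known uniformly on the ball (estimate~\eqref{eq:norm.whP.P} with $\wt{Q}=0$, from the proof of Theorem~\ref{th:K-wtK<Q-wtQ}), no Gr\"onwall is needed. Bound the iterated term by $\|P_l^{\pm}\|_{L^1[0,x]}\cdot\sup_{s\in[0,x]}\bigl|\int_s^x R_{kl}(t,s)e^{i b_k\l(x-t)}\,dt\bigr|$ and estimate this kernel transform \emph{uniformly in the shift} $s\in[0,x]$ --- which is literally the same computation you outlined for the source term, only with general second argument $s$ in place of $0$. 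This is precisely what the paper does (its quantities $Z_{kl}(x,s,\l)$, estimated via~\eqref{eq:Rkk=int.Qkj.Rjk}--\eqref{eq:Rjk=Qjk-int.Qjk.Rkk} and Corollary~\ref{cor:int.R<int.Q}, with Lemma~\ref{lem:wh.estim.Xinf} supplying the uniform bound on $\|R\|_{X_{\infty,1}}$); with that replacement your proposal becomes the paper's proof.
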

\begin{proof}
Recall that $\g_1 = a_1 a_2^{-1}$, $\g_2 = a_2 a_1^{-1}$. It follows
from~\eqref{2.52op} that
\begin{align}
\label{eq:P1=R12+int}
 P_1^{\pm}(x) &= \mp \g_2 R_{12}(x,0) - \int_0^x \( R_{11}(x,t)
 P_1^{\pm}(t) \pm \g_2 R_{12}(x, t) P_2^{\pm}(t)\) dt, \\
\label{eq:P2=R21+int}
 P_2^{\pm}(x) &= \mp \g_1 R_{21}(x,0) - \int_0^x \( \mp \g_2 R_{21}(x,t)
 P_1^{\pm}(t) + R_{22}(x, t) P_2^{\pm}(t)\) dt.
\end{align}

Let $k \in \{1, 2\}$ and $j = 2/k$ be fixed for the rest of the proof.

Let us set $\g := \max\{|\g_1|, |\g_2|\}$. It follows
from~\eqref{eq:P1=R12+int}--\eqref{eq:P2=R21+int} that for $x \in [0,1]$ and
$\l \in \bC$ we have
\begin{multline} \label{eq:int.Pk.e}
 \abs{\int_0^x P^{\pm}_k(t) e^{i b_k \l (x-t)} \,dt}
 \le \g \abs{\int_0^x R_{kj}(t,0) e^{i b_k \l (x-t)} \,dt}
 + \g \sum_{l=1}^2 \abs{\int_0^x P_l^{\pm}(s) \,ds \int_s^x R_{kl}(t,s)
 e^{i b_k \l (x-t)} \,dt} \\
 = \g \abs{Z_{kj}(x, 0, \l)}
 + \g \sum_{l=1}^2 \abs{\int_0^x P_l^{\pm}(s) Z_{kl}(x, s, \l) \,ds}, \qquad
 Z_{kl}(x, s, \l) := \int_s^x R_{kl}(t,s) e^{i b_k \l (x-t)} \,dt.
\end{multline}

It follows from~\eqref{eq:Rkk=int.Qkj.Rjk} and Corollary~\ref{cor:int.R<int.Q}
that for $0 \le s \le x \le 1$ and $\l \in \bC$ we have
\begin{align}
\nonumber
 \abs{Z_{kk}(x, s, \l)}
 & = \abs{b_k \int_s^x e^{i b_k \l (x-t)} \,dt \int_{t-s}^t
 Q_{kj}(\xi) R_{jk}(\xi,\xi-t+s) \,d\xi} \\
\nonumber
 & = \abs{b_k \int_0^x Q_{kj}(\xi) \,d\xi \int_{\max\{s,\xi\}}^{\min\{x,s+\xi\}}
 R_{jk}(\xi,\xi-t+s) e^{i b_k \l (x-t)} \,dt} \\
\nonumber
 & = \abs{b_k \int_0^x Q_{kj}(\xi) \,d\xi
 \int_{\max\{\xi+s-x,0\}}^{\min\{s,\xi\}}
 R_{jk}(\xi,\eta) e^{i b_k \l (x+\eta-\xi-s)} \,d\eta} \\
\nonumber
 & \le 2 |b_k| \int_0^x \abs{Q_{kj}(\xi) e^{i b_k \l (x-\xi-s)}}
 \sup_{u \in [0,\xi]}
 \abs{\int_0^u R_{jk}(\xi,\eta) e^{i b_k \l \eta} \,d\eta} \,d\xi \\
\nonumber
 & \le 2 |b_k| \cdot C \int_0^x \abs{Q_{kj}(\xi)} \cdot
 e^{|b_k \Im \l| x} \cdot
 e^{|(b_j - b_k) \Im \l| \xi}
 \cdot \sF_k(\xi, \l) \,d\xi \\
\label{eq:Zkk.est}
 & \le 2 |b_k| \cdot C \cdot \|Q_{kj}\|_1 \cdot
 e^{2 (b_2 - b_1) |\Im \l| x} \cdot \sF_k(x, \l).
\end{align}

Relation~\eqref{eq:Rjk=Qjk-int.Qjk.Rkk} implies for $0 \le s \le x \le 1$ and
$\l \in \bC$
\begin{align}
\label{eq:Zkj=1+2}
 Z_{kj}(x, s, \l) & = \int_s^x R_{kj}(t,s) e^{i b_k \l (x-t)} \,dt
 = \frac{i}{a_j} \(Z_{kj,1}(x, s, \l) - Z_{kj,2}(x, s, \l)\), \\
\label{eq:def.Zkj1}
 Z_{kj,1}(x, s, \l) & := \frac{a_j}{a_j-a_k} \int_s^x Q_{kj}(\a_j t + \a_k s)
 e^{i b_k \l (x-t)} \,dt, \\
\label{eq:def.Zkj2}
 Z_{kj,2}(x, s, \l) & := \frac{a_j}{a_k} \int_s^x e^{i b_k \l (x-t)} \,dt
 \int_{\a_j t + \a_k s}^t
 Q_{kj}(\xi)R_{jj}\bigl(\xi,\g_j(\xi-t)+s\bigr) \,d\xi.
\end{align}
Making a change of variable $\xi = \a_j t + \a_k s$ in~\eqref{eq:def.Zkj1}
combined with identities $\a_j = a_j/(a_j-a_k)$,
$b_k \a_j^{-1} \xi = (b_k-b_j) \xi$ and $b_k \a_j^{-1} \a_k s = -b_j s$,
and definition~\eqref{eq:def.sFk} of $\sF_j(x, \l)$
we get for $0 \le s \le x \le 1$ and $\l \in \bC$
\begin{align}
\nonumber
 \abs{Z_{kj,1}(x, s, \l)}
 & = \abs{\int_{s}^{\a_j x + \a_k s} Q_{kj}(\xi)
 \exp\(i b_k \l \bigl(x + \a_j^{-1} (-\xi + \a_k s)\bigr)\) \,d\xi} \\
\label{eq:Zkj1.est}
 & = \abs{e^{i \l (b_k x - b_j s)} \int_{s}^{\a_j x + \a_k s} Q_{kj}(\xi)
 e^{i (b_j - b_k) \l \xi} \,d\xi}
 \le 2 e^{(b_2 - b_1) |\Im \l| x} \sF_j(x,\l).
\end{align}
Changing order of integration in~\eqref{eq:def.Zkj2}, then making a change of
variable $\eta = \g_j(\xi-t) + s$ combined with the identities
$t = \xi - \g_k \eta + \g_k s$, $\g_j(\xi - \a_j^{-1} \xi - \g_k s) + s = \xi$
and $b_k \g_k = b_j$, and applying Corollary~\ref{cor:int.R<int.Q} yields
\begin{align}
\nonumber
 \abs{Z_{kj,2}(x, s, \l)}
 & = \abs{\g_j \int_s^x Q_{kj}(\xi) \,d\xi
 \int_{\xi}^{\min\{x, \a_j^{-1} \xi + \g_k s\}}
 R_{jj}\bigl(\xi,\g_j(\xi-t)+s\bigr) e^{i b_k \l (x-t)} \,dt} \\
\nonumber
 & = \abs{\int_s^x Q_{kj}(\xi) \,d\xi
 \int_{s}^{\min\{\g_j(\xi-x)+s, \xi\}} R_{jj}(\xi,\eta)
 \exp\bigl(i b_k \l (x - \xi + \g_k \eta - \g_k s)\bigr) \,d\eta} \\
\nonumber
 & \le 2 \int_s^x \abs{Q_{kj}(\xi)}
 \abs{e^{i \l (b_k (x-\xi) - b_j s)}}
 \sup_{u \in [0,\xi]} \abs{\int_0^u R_{jj}(\xi,\eta) e^{i b_j \l \eta} \,d\eta}
 \,d\xi \\
\nonumber
 & \le 2C \int_s^x \abs{Q_{kj}(\xi)} \cdot
 e^{|(b_k - b_j) \Im \l| x} \cdot
 e^{(b_2 - b_1) |\Im \l| \xi} \cdot \sF_j(\xi,\l) \,d\xi \\
\nonumber
 & \le 2C \cdot e^{2 (b_2 - b_1) |\Im \l| x}
 \cdot \sF_j(x,\l) \int_s^x |Q_{kj}(\xi)| \,d\xi \\
\label{eq:Zkj2.est}
 & \le 2Cr \cdot e^{2 (b_2 - b_1) |\Im \l| x} \cdot \sF_j(x,\l),
 \qquad 0 \le s \le x \le 1, \quad \l \in \bC.
\end{align}
Inserting~\eqref{eq:Zkj1.est} and~\eqref{eq:Zkj2.est}
into~\eqref{eq:Zkj=1+2} we arrive at
\begin{equation} \label{eq:Zkj.est}
 |Z_{kj}(x, s, \l)| \le C_j \cdot e^{2 (b_2 - b_1) |\Im \l| x}
 \cdot \sF_j(x,\l), \qquad 0 \le s \le x \le 1, \quad \l \in \bC,
\end{equation}
where $C_j = 2 |b_j| (1+Cr)$. Finally, inserting~\eqref{eq:Zkk.est}
and~\eqref{eq:Zkj.est} into~\eqref{eq:int.Pk.e}, using
estimate~\eqref{eq:norm.whP.P} from the proof of Theorem~\ref{th:K-wtK<Q-wtQ}
for $p=1$, and inequalities $\|Q_{kj}\|_1 \le \|Q\|_1 \le r$, yields
\begin{align}
\nonumber
 \abs{\int_0^x P^{\pm}_k(t) e^{i b_k \l (x-t)} \,dt} & \le \g \cdot
 \|P_k^{\pm}\|_{L^1[0,x]} \sup_{s \in [0,x]} |Z_{kk}(x, s, \l)| + \g \cdot
 (1 + \|P_j^{\pm}\|_{L^1[0,x]}) \sup_{s \in [0,x]} |Z_{kj}(x, s, \l)| \\
\nonumber
 & \le \g \cdot C^{(1)}(B, r) \(\|Q\|_1 \sup_{s \in [0,x]} |Z_{kk}(x, s, \l)| +
 (1 + \|Q\|_1) \sup_{s \in [0,x]} |Z_{kj}(x, s, \l)|\) \\
\label{eq:int.Pk<F}
 & \le C_3 \cdot e^{2 (b_2 - b_1) |\Im \l| x} \cdot
 \(\sF_k(x, \l) + \sF_j(x,\l)\),
\end{align}
where $C_3 = C_3(B,r) > 0$ does not depend on $Q$, $x$ or $\l$, which finishes
the proof.
\end{proof}
Now we are ready to prove the first main result of the section about kernels
$K^{\pm}$ of the transformation operators.
\begin{theorem} \label{th:int.K<int.Q}
Let $Q \in \bU_{1,r}^{2 \times 2}$ for some $r > 0$, and let $K^{\pm}$ be the
kernels of the corresponding transformation operators from
representation~\eqref{eq:e=(I+K)e0}. Then the following uniform estimate holds
\begin{equation} \label{eq:int.Kjk<Ek+E1+E1}
 \abs{\int_0^x K^{\pm}_{jk}(x, t) e^{i b_k \l t} \,dt} \le C \cdot
 e^{2 (b_2-b_1) |\Im \l| x} \(\sF_1(x,\l) + \sF_2(x,\l)\),
 \qquad x \in [0,1], \quad \l \in \bC, \quad j, k \in \{1, 2\},
\end{equation}
where $C = C(B, r) > 0$ does not depend on $Q$, $x$ and $\l$, and
$\sF_k(x,\l)$ is defined in~\eqref{eq:def.sFk}. In other words,
\begin{equation} \label{eq:int.Kjk<sum.Fk}
 \abs{\int_0^x K^{\pm}_{jk}(x, t) e^{i b_k \l t} \,dt} \le C \cdot
 e^{2 (b_2-b_1) |\Im \l| x} \sum_{l \ne m} \sup_{s \in [0,x]}
 \abs{\int_0^s Q_{lm}(t) e^{i (b_m-b_l) \l t} dt},
 \qquad x \in [0,1], \quad \l \in \bC, \quad j, k \in \{1, 2\},
\end{equation}
\end{theorem}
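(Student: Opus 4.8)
The plan is to start from the decomposition~\eqref{2.51op},
$$
K^{\pm}(x,t) = R(x,t) + P^{\pm}(x-t) + \int_t^x R(x,s) P^{\pm}(s-t)\,ds,
$$
and to integrate against $e^{i b_k \l t}$, splitting the resulting integral into three pieces. The first piece, $\int_0^x R_{jk}(x,t) e^{i b_k \l t}\,dt$, is estimated directly by Corollary~\ref{cor:int.R<int.Q}, which already gives the bound $C e^{(b_2-b_1)|\Im\l|x}\sF_k(x,\l)$, hence \emph{a fortiori} the right-hand side of~\eqref{eq:int.Kjk<Ek+E1+E1}. For the second piece I would write $\int_0^x P^{\pm}_k(x-t) e^{i b_k \l t}\,dt = \int_0^x P^{\pm}_k(u) e^{i b_k \l (x-u)}\,du$ (diagonal entries, so the index $j$ plays no role here) and invoke Lemma~\ref{lem:P.Fourier.Q} directly: this yields exactly $C e^{2(b_2-b_1)|\Im\l|x}(\sF_1(x,\l)+\sF_2(x,\l))$. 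Observe that here it is crucial that the kernel dependence of the second term is purely a function of $x-t$, so that the change of variable keeps the exponential in the form handled by Lemma~\ref{lem:P.Fourier.Q}.

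The third piece, $\int_0^x e^{i b_k \l t}\,dt \int_t^x R_{jl}(x,s) P^{\pm}_l(s-t)\,ds$ summed over $l$, is the one requiring a bit of care. I would interchange the order of integration to get $\int_0^x ds\, R_{jl}(x,s) \int_0^s P^{\pm}_l(s-t) e^{i b_k \l t}\,dt$, then inside substitute $t \mapsto s - w$ so that the inner integral becomes $e^{i b_k \l s}\int_0^s P^{\pm}_l(w) e^{-i b_k \l w}\,dw$, whose modulus is bounded — uniformly in $s \in [0,x]$ — by the quantity controlled in~\eqref{eq:int.Pk<Ek+E1+E1} of Lemma~\ref{lem:P.Fourier.Q} (applied with $s$ in place of $x$, after noting $\sF_m(s,\l) \le \sF_m(x,\l)$ since the suprema in~\eqref{eq:def.sFk} are nondecreasing in $x$). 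This leaves an outer integral $\int_0^x |R_{jl}(x,s)| e^{|b_k\Im\l|s}\,ds$ times $C e^{2(b_2-b_1)|\Im\l|x}(\sF_1(x,\l)+\sF_2(x,\l))$, and the outer integral is at most $e^{(b_2-b_1)|\Im\l|x}\int_0^x |R_{jl}(x,s)|\,ds \le e^{(b_2-b_1)|\Im\l|x}\,\|R\|_{X_{\infty,1}^2}$, which is uniformly bounded on $\bU_{1,r}^{2\times 2}$ by Proposition~\ref{prop2.5}(iii). Collecting the three contributions and enlarging the constant gives~\eqref{eq:int.Kjk<Ek+E1+E1}; relation~\eqref{eq:int.Kjk<sum.Fk} is then just the definition~\eqref{eq:def.sFk} written out with the pair $(l,m)$ ranging over $\{(1,2),(2,1)\}$.

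The only genuine obstacle is the third term: one must make sure that after the Fubini interchange the inner integral is again a ``$P$ against a pure exponential in the difference variable'' object so that Lemma~\ref{lem:P.Fourier.Q} applies verbatim, and that the exponential weights accumulated from the three nested integrations do not exceed the stated factor $e^{2(b_2-b_1)|\Im\l|x}$. A careful bookkeeping of the $|\Im\l|$-exponents — each of $R$, $P^{\pm}$, and the composition contributing at most $e^{(b_2-b_1)|\Im\l|x}$ — shows the total stays within $e^{2(b_2-b_1)|\Im\l|x}$ (indeed the second and third terms already carry this weight from Lemma~\ref{lem:P.Fourier.Q}, and the first only $e^{(b_2-b_1)|\Im\l|x}$), so the bound closes with room to spare. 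All constants depend only on $B$ and $r$, as each auxiliary estimate invoked has that property.
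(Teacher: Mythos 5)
Your proposal follows essentially the same route as the paper's own proof: decompose $K^{\pm}_{jk}$ via \eqref{2.51op}, estimate the $R$-term by Corollary~\ref{cor:int.R<int.Q}, the $P$-term by Lemma~\ref{lem:P.Fourier.Q}, and handle the convolution term by Fubini plus Lemma~\ref{lem:P.Fourier.Q} applied at the intermediate point, with the uniform $X_{\infty,1}$-bound on $R$ (Lemma~\ref{lem:wh.estim.Xinf}, equivalently Proposition~\ref{prop2.5}(iii)) absorbing the outer integral. Two small points of bookkeeping. First, since $P^{\pm}$ is diagonal, the $(j,k)$ entry of $R(x,s)P^{\pm}(s-t)$ is just $R_{jk}(x,s)P^{\pm}_k(s-t)$, so your sum over $l$ contains only the term $l=k$; this is fortunate, because for $l\ne k$ the exponent $e^{i b_k\l(s-w)}$ would not match the index of $P^{\pm}_l$ in Lemma~\ref{lem:P.Fourier.Q}. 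Second, the extra weight $e^{|b_k\Im\l|s}$ you carry into the outer integral is spurious: Lemma~\ref{lem:P.Fourier.Q} (with $s$ in place of $x$) already bounds the full inner integral $\int_0^s P^{\pm}_k(w)e^{i b_k\l(s-w)}\,dw$, prefactor included, so the leftover outer factor is simply $\int_0^x|R_{jk}(x,s)|\,ds\le\|R_{jk}\|_{X_{\infty,1}(\Omega)}$. If you actually keep the extra $e^{|b_k\Im\l|s}\le e^{(b_2-b_1)|\Im\l|x}$, your third term comes out with exponent $3(b_2-b_1)|\Im\l|x$ rather than the claimed $2(b_2-b_1)|\Im\l|x$; dropping the double-counted factor, as your closing paragraph implicitly does, recovers exactly the stated estimate, and this is precisely how the paper argues.
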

\begin{proof}
It follows from~\eqref{2.51op} that
\begin{equation} \label{eq:Kjk=Rjk+int}
 K_{jk}^{\pm}(x,t) = R_{jk}(x,t) + \delta_{jk} P_k^{\pm}(x-t) +
 \int_t^x R_{jk}(x,s)P_k^{\pm}(s-t)\,ds,
 \qquad 0 \le t \le x \le 1, \quad j,k \in \{1,2\},
\end{equation}
where $P_k^{\pm} \in L^1[0,1]$, $k \in \{1, 2\}$. This in turn yields for
$x \in [0,1]$, $\l \in \bC$ and $j, k \in \{1,2\}$,
\begin{equation} \label{eq:int.K.e=int.R.f}
 \abs{\int_0^x K_{jk}^{\pm}(x,t) e^{i b_k \l t} \,dt} \le
 \abs{\int_0^x R_{jk}(x,t) e^{i b_k \l t} \,dt} +
 \abs{\int_0^x P_k^{\pm}(t) e^{i \l (x-t)} \,dt} +
 \abs{\int_0^x R_{jk}(x,t) \int_0^t P_k^{\pm}(s) e^{i b_k \l (t-s)} \,ds \,dt}.
\end{equation}
Lemmas~\ref{lem:P.Fourier.Q} and~\ref{lem:wh.estim.Xinf} imply for
$x \in [0,1]$, $\l \in \bC$ and $j, k \in \{1,2\}$ with $l = 2/k$
\begin{align}
\nonumber
 \abs{\int_0^x R_{jk}(x,t) \int_0^t P_k^{\pm}(s) e^{i b_k \l (t-s)} \,ds \,dt}
 & \le \sup_{t \in [0, x]} \abs{\int_0^t P_k^{\pm}(s) e^{i b_k \l (t-s)} \,ds}
 \int_0^x \abs{R_{jk}(x,t)} \,dt \\
\nonumber
 & \le C \sup_{t \in [0, x]} \( e^{2 (b_2-b_1) |\Im \l| t}
 \(\sF_1(t,\l) + \sF_2(t,\l)\)\) \|R_{jk}\|_{X_{\infty,1}(\Omega)} \\
\label{eq:int.Rjk.int.Pk<E}
 & \le C_1 e^{2 (b_2-b_1) |\Im \l| x} \(\sF_1(x,\l) + \sF_2(x,\l)\),
\end{align}
where $C_1 = C_1(B, r)$ does not depend on $Q$, $x$ and $\l$.
Putting~\eqref{eq:int.R<int.Q},~\eqref{eq:int.Pk<Ek+E1+E1}
and~\eqref{eq:int.Rjk.int.Pk<E} in~\eqref{eq:int.K.e=int.R.f} we arrive
at~\eqref{eq:int.Kjk<Ek+E1+E1}.
\end{proof}
\subsection{Stability of Fourier transforms of transformation operators}
\label{subsec:Fourier.stabil}
Alongside equation~\eqref{eq:system} we consider similar Dirac type
equation with the same matrix $B$ but with a different potential matrix $\wt{Q}
\in \LL{1}$. In this subsection we apply results of the previous subsection to
study stability of ``Fourier'' transforms of the kernels of the corresponding
transformation operators from representation~\eqref{eq:e=(I+K)e0}. Namely, we
establish analogue of Lipshitz property for the deviation
\begin{equation} \label{eq:int.whK}
\int_0^x (K^{\pm}_Q - K^{\pm}_{\wt{Q}})_{jk}(x, t) e^{i \l b_k t} dt.
\end{equation}
Theorem~\ref{th:int.wh.K<int.wh.Q} will play crucial role in the study of
deviations of the root functions of operators $L(Q)$ and $L(\wt{Q})$.

Below we systematically use notation~\eqref{eq:wh.Q.K.R.P.def}. Let us recall
it:
\begin{align}
\label{eq:wh.Q.K}
 \wh{Q} &:= Q - \wt{Q}, \qquad
 K^{\pm} := K^{\pm}_Q, \quad \wt{K}^{\pm} := K^{\pm}_{\wt{Q}}, \quad
 \wh{K}^{\pm} := K^{\pm} - \wt{K}^{\pm}, \\
\label{eq:wh.R.P}
 R &:= R_Q, \quad \wt{R} := R_{\wt{Q}}, \quad \wh{R} := R - \wt{R}, \qquad
 P^{\pm} := P^{\pm}_Q, \quad \wt{P}^{\pm} := P^{\pm}_{\wt{Q}}, \quad
 \wh{P}^{\pm} := P^{\pm} - \wt{P}^{\pm}.
\end{align}

To estimate the deviation~\eqref{eq:int.whK} we first need to extend auxiliary
results of the previous subsection about $R$ and $P^{\pm}$ to $\wh{R}$ and
$\wh{P}^{\pm}$.
\begin{lemma} \label{lem:int.wh.Rjk.f}
Let $Q, \wt{Q} \in \LL{1}$ and let $R, \wt{R} \in \(X_{1,1}^0(\Omega) \cap
X_{\infty,1}^0(\Omega)\) \otimes \bC^{2 \times 2}$ be (unique) solutions of the
system of integral
equations~\eqref{eq:Rkk=int.Qkj.Rjk}--\eqref{eq:Rjk=Qjk-int.Qjk.Rkk} for $Q$ and
$\wt{Q}$ respectively. Let $x \in [0,1]$ be fixed and let $f \in
L^{\infty}(\bR)$ be such that $f(t) = 0$ for $t \notin [0,x]$. Let us set
\begin{equation} \label{eq:def.wh.Jjk.Fjk}
 \wh{F}_{jk}(s; f) := \sup_{\genfrac{}{}{0pt}{2}{u \in [0,s]}{v \in [-u,x]}}
 \abs{\int_0^u (R - \wt{R})_{jk}(s, t) f(t+v) \,dt},
 \qquad s \in [0,x], \quad j, k \in \{1, 2\}.
\end{equation}
Then the following estimates hold for $s \in [0,x]$, $k \in \{1, 2\}$,
$j = 2/k$:
\begin{align}
\label{eq:wh.Fkk<int.wh.Fjk}
 \wh{F}_{kk}(s; f) & \le |b_k| \int_0^s \(|Q_{kj}(t)| \cdot \wh{F}_{jk}(t; f) +
 \bigabs{\wh{Q}_{kj}(t)} \cdot \wt{F}_{jk}(t; f)\) \,dt, \\
\label{eq:wh.Fjk<int.wh.Fjk}
 \wh{F}_{jk}(s; f) & \le \Theta_{jk}(s;f) + 2|b_jb_k| \cdot
 \|Q_{jk}\|_{L^1[0,s]} \int_0^s |Q_{kj}(t)| \cdot \wh{F}_{jk}(t; f) \,dt, \\
\nonumber
 \Theta_{jk}(s; f)
 & := |b_j| \sup_{\genfrac{}{}{0pt}{2}{u \in [0,s]}{v \in [-u,x]}}
 \abs{\a_j \int_0^u \wh{Q}_{jk}(\a_k s + \a_j t) \cdot f(t+v) \,dt} \\
\label{eq:Theta.jk}
 & + 2 |b_j| \int_0^s \bigabs{\wh{Q}_{jk}(t)} \cdot \wt{F}_{kk}(t; f) \,dt
 + 2|b_jb_k| \cdot \|Q_{jk}\|_{L^1[0,s]} \int_0^s
 \bigabs{\wh{Q}_{kj}(t)} \cdot \wt{F}_{jk}(t; f) \,dt.
\end{align}
Here $\wh{Q} = Q - \wt{Q}$ and $\wt{F}_{jk}(s; f)$, $j, k \in \{1, 2\}$, is
defined in~\eqref{eq:def.Fjk} with $\wt{R}_{jk}$ in place of $R_{jk}$.
\end{lemma}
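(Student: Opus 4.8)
The plan is to mirror the proof of Lemma~\ref{lem:int.Rjk.f} step by step, but now working with differences. First I would subtract the integral equations~\eqref{eq:Rkk=int.Qkj.Rjk}--\eqref{eq:Rjk=Qjk-int.Qjk.Rkk} written for $Q$ from the same equations written for $\wt{Q}$, obtaining an identity of the form $\wh{R}_{kk} = (\text{term with } Q_{kj}, \wh{R}_{jk}) + (\text{term with } \wh{Q}_{kj}, \wt{R}_{jk})$, and similarly for $\wh{R}_{jk}$, where the splitting is the standard ``$AB - \wt A\wt B = A(B-\wt B) + (A - \wt A)\wt B$'' trick applied inside the integrals. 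Because the kernel $R_{kk}$ depends on $Q_{kj}$ and $R_{jk}$, and $R_{jk}$ depends on $Q_{jk}$ and $R_{kk}$, each difference produces exactly two types of summands: one carrying an unperturbed factor ($Q$ or $R$) times a ``hat'' factor ($\wh{Q}$ or $\wh{R}$), and one carrying a ``hat'' factor times a $\wt{\cdot}$ factor.

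Next I would introduce, in parallel to~\eqref{eq:def.Jjk}, the quantities $\wh{J}_{jk}(s,u,v) := \int_0^u \wh{R}_{jk}(s,t) f(t+v)\,dt$ and note $|\wh{J}_{jk}(s,u,v)| \le \wh{F}_{jk}(s;f)$ for $v \in [-u,x]$, with $\wh{J}_{jk} = 0$ for $v \notin [-u,x]$ since $f$ vanishes off $[0,x]$. Then I would repeat verbatim the change-of-variable computation from~\eqref{eq:Jkk=int.Qkj.Jjk}: for the $\wh{R}_{kk}$ equation, after the substitutions $\xi = $ outer variable, $\eta = \xi - s + t$, the term with $Q_{kj}$ reproduces $-ib_k\int_{s-u}^s Q_{kj}(\xi) \wh{J}_{jk}(\xi, u+\xi-s, v+s-\xi)\,d\xi$, while the $\wh{Q}_{kj}$ term gives $-ib_k \int_{s-u}^s \wh{Q}_{kj}(\xi) \wt{J}_{jk}(\xi, u+\xi-s, v+s-\xi)\,d\xi$, where $\wt J_{jk}$ is the analogue for $\wt R$. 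Bounding $|\wh J_{jk}| \le \wh F_{jk}$ and $|\wt J_{jk}| \le \wt F_{jk}$, enlarging the domain of integration, and taking the supremum over $u,v$ yields~\eqref{eq:wh.Fkk<int.wh.Fjk}. For the $\wh{R}_{jk}$ equation I would split as in~\eqref{eq:Jjk=J1+J2}: the ``local'' part involving $Q_{jk}(\a_k s + \a_j t)$ becomes, after difference, a part with $\wh{Q}_{jk}(\a_k s + \a_j t)$, giving the first term of $\Theta_{jk}$; the nonlocal part $J_{2,jk}$ involves the product $Q_{jk}(\xi) R_{kk}(\xi, v_\xi + t)$, so its difference splits into a $\wh{Q}_{jk}\cdot\wt{R}_{kk}$ term (second term of $\Theta_{jk}$) and a $Q_{jk}\cdot\wh{R}_{kk}$ term; further, $\wh{R}_{kk}$ itself satisfies~\eqref{eq:wh.Fkk<int.wh.Fjk}, so inserting that estimate produces both the $\wh{Q}_{kj}\cdot\wt{F}_{jk}$ contribution (third term of $\Theta_{jk}$, after changing order of integration and using $\|Q_{jk}\|_{L^1[0,s]}$) and the Gronwall-type term $2|b_jb_k|\|Q_{jk}\|_{L^1[0,s]}\int_0^s |Q_{kj}(t)| \wh F_{jk}(t;f)\,dt$. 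This reproduces~\eqref{eq:wh.Fjk<int.wh.Fjk}--\eqref{eq:Theta.jk}.

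The main obstacle, as in the original lemma, is the careful bookkeeping of the changes of variables (the maps $\eta = \xi - s + t$ for the $kk$-equation and $\eta = \g_k(\xi - s) + t$ plus $\xi = \a_k s + \a_j t$ for the $jk$-equation) together with the resulting shifts in the argument of $f$, so that every term that appears is genuinely of the form $\int_0^u \wh{R}(s,t) f(t + v')\,dt$ or $\int_0^u \wt{R}(s,t) f(t+v')\,dt$ with $v'$ again in an admissible range $[-u,x]$ — this is exactly what lets one close the recursion using $\wh F$ and $\wt F$ rather than introducing new suprema. Since all of this is mechanically parallel to Lemma~\ref{lem:int.Rjk.f} (which we may invoke for the structure), the only genuinely new point is that the algebraic identity $AB - \wt A\wt B = A\wh B + \wh A \wt B$ doubles the number of terms, and one must verify that the ``$\wt{R}$ side'' terms are controlled by $\wt{F}_{jk}$, which requires nothing beyond the definition~\eqref{eq:def.Fjk} applied to $\wt{R}$. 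I would therefore present the proof compactly: state the two subtracted integral identities, carry out the substitutions in one representative case ($\wh{F}_{kk}$), and indicate that the $\wh{F}_{jk}$ estimate follows by the identical argument used for~\eqref{eq:Fjk<int.Fjk} with each product differenced, collecting terms into $\Theta_{jk}$.
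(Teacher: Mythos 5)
Your proposal is correct and follows essentially the same route as the paper: subtract the integral equations, apply the splitting $XY-\wt X\wt Y = X\wh Y+\wh X\wt Y$ inside each integral, introduce $\wh J_{jk}(s,u,v)$ with the same changes of variables as in Lemma~\ref{lem:int.Rjk.f}, bound the hatted terms by $\wh F_{jk}$ and the tilded ones by $\wt F_{jk}$, and feed the $\wh F_{kk}$ bound back into the nonlocal part of the $jk$-equation to collect $\Theta_{jk}$ and the Gr\"onwall-type term. No gaps; the observation that $\wh J_{jk}$ vanishes for $v\notin[-u,x]$, which closes the recursion, is exactly the point the paper also relies on.
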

\begin{proof}
We will follow the schema of the proof of Lemma~\ref{lem:int.Rjk.f} step by
step and will use the following identity
\begin{equation} \label{eq:XY-wt.XY}
 X \cdot Y - \wt{X} \cdot \wt{Y} =
 X \cdot (Y - \wt{Y}) + (X - \wt{X}) \cdot \wt{Y} =
 X \cdot \wh{Y} + \wh{X} \cdot \wt{Y},
\end{equation}
every time we encounter a product difference of the form $X \cdot Y - \wt{X}
\cdot \wt{Y}$, where $X$, $Y$ are some objects associated with
equation~\eqref{eq:systemIntro} for $Q$ and $\wt{X}$, $\wt{Y}$ are the same
objects associated with equation~\eqref{eq:systemIntro} for $\wt{Q}$.

Recall that $\wh{R} = R - \wt{R}$. Let us set
\begin{equation} \label{eq:def.wh.Jjk}
 \wh{J}_{jk}(s, u, v) := \int_0^u \wh{R}_{jk}(s, t) f(t+v) \,dt,
 \qquad 0 \le u \le s \le x, \quad v \in \bR, \quad j, k \in \{1,2\}.
\end{equation}
Let $k \in \{1, 2\}$ and $j = 2/k$. Similarly to~\eqref{eq:Jkk=int.Qkj.Jjk}
with account of~\eqref{eq:XY-wt.XY} we have for $0 \le u \le s \le x$ and
$v \in \bR$,
\begin{align}
\nonumber
 \wh{J}_{kk}(s, u, v) &= \int_0^u \wh{R}_{kk}(s, t) f(t+v) dt \\
\nonumber
 & = -i b_k \int_0^u f(t+v) \,dt \int_{s-t}^s
 \(Q_{kj}(\xi) \, R_{jk}(\xi, \xi -s + t) -
 \wt{Q}_{kj}(\xi) \, \wt{R}_{jk}(\xi, \xi -s + t)\) \,d\xi \\
\nonumber
 & = -i b_k \int_0^u f(t+v) \,dt \int_{s-t}^s
 \(Q_{kj}(\xi) \, \wh{R}_{jk}(\xi, \xi -s + t) +
 \wh{Q}_{kj}(\xi) \, \wt{R}_{jk}(\xi, \xi -s + t)\) \,d\xi \\
\label{eq:wh.Jkk=int.Qkj.wh.Jjk}
 & = -i b_k \int_{s-u}^s
 \(Q_{kj}(\xi) \, \wh{J}_{jk}(\xi, u+\xi-s, v+s-\xi) +
 \wh{Q}_{kj}(\xi) \, \wt{J}_{jk}(\xi, u+\xi-s, v+s-\xi) \) \,d\xi.
\end{align}
Similarly to~\eqref{eq:Jkk<C1.int.Q.Jjk} we get
\begin{equation} \label{eq:wh.Jkk<C1.int.Q.wh.Jjk}
 |\wh{J}_{kk}(\xi, \eta, v)| \le |b_k| \int_0^{\xi} \(\bigabs{Q_{kj}(t)} \cdot
 \wh{F}_{jk}(t; f) + \bigabs{\wh{Q}_{kj}(t)} \cdot \wt{F}_{jk}(t; f) \) \,dt,
 \qquad 0 \le \eta \le \xi \le x, \quad v \in \bR,
\end{equation}
which yields~\eqref{eq:wh.Fkk<int.wh.Fjk}.

Taking into account~\eqref{eq:def.ak.alphak} and~\eqref{eq:XY-wt.XY},
relation~\eqref{eq:Rjk=Qjk-int.Qjk.Rkk} yields for $0 \le u \le s \le x$ and
$v \in \bR$,
\begin{align}
\label{eq:wh.Jjk=J1+J2}
 \wh{J}_{jk}(s, u, v) & = \int_0^u \wh{R}_{jk}(s, t) f(t+v) \,dt
 = -\frac{i}{a_j} \cdot \(\wh{J}_{1,jk}(s, u, v) + \wh{J}_{2,jk}(s, u, v)\), \\
\label{eq:wh.J1jk.def}
 \wh{J}_{1,jk}(s, u, v) & :=
 \a_j \int_0^u \wh{Q}_{jk}(\a_k s + \a_j t) \cdot f(t+v) \,dt, \\
\nonumber
 \wh{J}_{2,jk}(s, u, v) & := \int_0^u f(t+v) \,dt
 \int^s_{\a_k s + \a_j t}
 \(Q_{jk}(\xi) \, R_{kk}(\xi, \g_k (\xi-s) + t)
 - \wt{Q}_{jk}(\xi) \, \wt{R}_{kk}(\xi, \g_k (\xi-s) + t)\)\,d\xi \\
\label{eq:wh.J2jk.def}
 & = \int_0^u f(t+v) \,dt
 \int^s_{\a_k s + \a_j t}
 \(Q_{jk}(\xi) \, \wh{R}_{kk}(\xi, \g_k (\xi-s) + t)
 + \wh{Q}_{jk}(\xi) \, \wt{R}_{kk}(\xi, \g_k (\xi-s) + t)\)\,d\xi.
\end{align}
For brevity we denote,
\begin{equation}
 \xi_1 := \g_k(\xi - s), \quad
 \xi_2 := \min\{u + \g_k (\xi-s), \xi\}, \quad
 v_{\xi} = v - \g_k (\xi - s).
\end{equation}
Similarly to~\eqref{eq:J2=int.Q.int.Rkk} and~\eqref{eq:J2<int.Q.int.Q.Jjk}, it
follows from~\eqref{eq:wh.J2jk.def},~\eqref{eq:wh.Jkk<C1.int.Q.wh.Jjk}
and~\eqref{eq:Jjk<Fjk}, that for $0 \le u \le s \le x$ and $v \in \bR$,
\begin{align}
\nonumber
 |\wh{J}_{2,jk}(s, u, v)| &\le \int_{\a_k s}^s \sum_{l=1}^2 \(
 \bigabs{Q_{jk}(\xi)} \cdot \bigabs{\wh{J}_{kk}\bigl(\xi, \xi_l, v_{\xi})}
 + \bigabs{\wh{Q}_{jk}(\xi)} \cdot
 \bigabs{\wt{J}_{kk}\bigl(\xi, \xi_l, v_{\xi})}\) \,d\xi \\
\nonumber
 & \le 2 |b_k| \int_{0}^s |Q_{jk}(\xi)| \,d\xi
 \int_{0}^{\xi} \(\bigabs{Q_{kj}(t)} \cdot \wh{F}_{jk}(t; f)
 + \bigabs{\wh{Q}_{kj}(t)} \cdot \wt{F}_{jk}(t; f) \) \,dt
 + 2 \int_{0}^s \bigabs{\wh{Q}_{jk}(\xi)} \cdot \wt{F}_{kk}(\xi; f) \,d\xi \\
\label{eq:wh.J2<int.Q.int.Q.Jjk}
 & \le 2 |b_k| \cdot \|Q_{jk}\|_{L^1[0,s]}
 \int_{0}^s \(\bigabs{Q_{kj}(t)} \cdot \wh{F}_{jk}(t; f)
 + \bigabs{\wh{Q}_{kj}(t)} \cdot \wt{F}_{jk}(t; f) \) \,dt
 + 2 \int_{0}^s \bigabs{\wh{Q}_{jk}(t)} \cdot \wt{F}_{kk}(t; f) \,dt.
\end{align}
Inserting~\eqref{eq:wh.J1jk.def} and~\eqref{eq:wh.J2<int.Q.int.Q.Jjk}
into~\eqref{eq:wh.Jjk=J1+J2} we arrive
at~\eqref{eq:wh.Fjk<int.wh.Fjk}--\eqref{eq:Theta.jk}.
\end{proof}
\begin{corollary} \label{cor:int.whR<int.whQ}
Let $Q, \wt{Q} \in \bU_{1,r}^{2 \times 2}$ for some $r > 0$, and let $R, \wt{R}
\in \(X_{1,1}^0(\Omega) \cap X_{\infty,1}^0(\Omega)\) \otimes \bC^{2 \times 2}$
be (unique) solutions of the system of integral
equations~\eqref{eq:Rkk=int.Qkj.Rjk}--\eqref{eq:Rjk=Qjk-int.Qjk.Rkk} for $Q$ and
$\wt{Q}$ respectively. Then the following uniform estimate holds for
$x \in [0,1]$, $\l \in \bC$ and $j, k \in \{1, 2\}$,
\begin{equation} \label{eq:int.whR<int.whQ}
 \sup_{s \in [0,x]} \abs{\int_0^s (R - \wt{R})_{jk}(x, t) e^{i \l b_k t}
 \,dt} \le C \cdot e^{(b_2-b_1) |\Im \l| x} \(\sF_k[Q - \wt{Q}](x,\l) +
 \|Q - \wt{Q}\|_1 \sF_k[\wt{Q}](x,\l)\),
\end{equation}
where $C = C(B, r) > 0$ does not depend on $Q$, $\wt{Q}$, $x$ and $\l$, and
$\sF_k[W](x,\l)$ is defined in~\eqref{eq:def.sFk} for $W \in \LL{1}$.
\end{corollary}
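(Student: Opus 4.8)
The plan is to mirror the proof of Corollary~\ref{cor:int.R<int.Q}, but starting from Lemma~\ref{lem:int.wh.Rjk.f} instead of Lemma~\ref{lem:int.Rjk.f}. Fix $\l \in \bC$, $x \in [0,1]$, $k \in \{1,2\}$, $j = 2/k$, and apply Lemma~\ref{lem:int.wh.Rjk.f} with the test function $f(t) = e^{i b_k \l t}$ for $t \in [0,x]$ and $f(t) = 0$ otherwise. The quantities $\wh{F}_{jk}(s;f)$ and $\wh{F}_{kk}(s;f)$ from~\eqref{eq:def.wh.Jjk.Fjk} then dominate the left-hand side of~\eqref{eq:int.whR<int.whQ} after setting $s = x$ and taking the supremum over $u \in [0,x]$. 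The first summand of $\Theta_{jk}(s;f)$ in~\eqref{eq:Theta.jk} is estimated exactly as in~\eqref{eq:int.Qjkftv} but with $Q_{jk}$ replaced by $\wh{Q}_{jk} = (Q - \wt{Q})_{jk}$; this produces the term $2|b_j| e^{(b_2-b_1)|\Im\l|x} \sF_k[Q-\wt{Q}](x,\l)$. For the remaining two summands of $\Theta_{jk}$, I would first apply Corollary~\ref{cor:int.R<int.Q} to $\wt{Q}$ to get the uniform bounds $\wt{F}_{kk}(t;f), \wt{F}_{jk}(t;f) \le C e^{(b_2-b_1)|\Im\l|x}\sF_k[\wt{Q}](x,\l)$, then pull these out of the $t$-integrals and use $\|\wh{Q}_{kj}\|_{L^1[0,s]} \le \|Q-\wt{Q}\|_1$ and $\|Q_{jk}\|_{L^1[0,s]} \le r$; this contributes the term $C \|Q-\wt{Q}\|_1 e^{(b_2-b_1)|\Im\l|x}\sF_k[\wt{Q}](x,\l)$.

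After these substitutions, inequality~\eqref{eq:wh.Fjk<int.wh.Fjk} becomes a Gr\"onwall-type inequality for $s \mapsto \wh{F}_{jk}(s;f)$ of the form
\begin{equation*}
 \wh{F}_{jk}(s;f) \le \Xi_k(x,\l) + 2|b_jb_k| r \int_0^s |Q_{kj}(t)| \cdot \wh{F}_{jk}(t;f)\,dt, \qquad s \in [0,x],
\end{equation*}
where $\Xi_k(x,\l) := C e^{(b_2-b_1)|\Im\l|x}\bigl(\sF_k[Q-\wt{Q}](x,\l) + \|Q-\wt{Q}\|_1 \sF_k[\wt{Q}](x,\l)\bigr)$. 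Applying Gr\"onwall's inequality together with $\int_0^s |Q_{kj}(u)|\,du \le \|Q_{kj}\|_{L^1[0,1]} \le r$ gives the bound $\wh{F}_{jk}(s;f) \le C_j \Xi_k(x,\l)$ uniformly in $s \in [0,x]$, which establishes~\eqref{eq:int.whR<int.whQ} in the case $j \ne k$. For $j = k$, insert this bound into~\eqref{eq:wh.Fkk<int.wh.Fjk}: the first integral is controlled by $\int_0^s |Q_{kj}(t)|\,dt \cdot C_j\Xi_k(x,\l) \le r C_j \Xi_k(x,\l)$, and the second by $\int_0^s |\wh{Q}_{kj}(t)|\,dt \cdot \sup_t \wt{F}_{jk}(t;f) \le \|Q-\wt{Q}\|_1 \cdot C e^{(b_2-b_1)|\Im\l|x}\sF_k[\wt{Q}](x,\l)$, which is again absorbed into $\Xi_k(x,\l)$. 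Collecting constants finishes the proof.

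The bookkeeping is the only real difficulty: one must verify that each of the three summands of $\Theta_{jk}(s;f)$, and each of the two summands in~\eqref{eq:wh.Fkk<int.wh.Fjk}, contributes either a pure $\sF_k[Q-\wt{Q}]$ term or a $\|Q-\wt{Q}\|_1 \sF_k[\wt{Q}]$ term, never a mixed or worse term. This is guaranteed by the structure of the product-difference identity~\eqref{eq:XY-wt.XY}: wherever a factor $Q$ (or $R$) survives undifferenced, it is bounded uniformly by $r$ (or, for $R$, converted into $\sF_k[\wt{Q}]$ via Corollary~\ref{cor:int.R<int.Q} applied to $\wt{Q}$), and wherever a "hat" factor appears it carries either a genuine $\sF_k[Q-\wt{Q}]$ oscillatory estimate or an $L^1$-smallness $\|Q-\wt{Q}\|_1$. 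I expect the main obstacle to be precisely keeping track of which exponential prefactor $e^{c(b_2-b_1)|\Im\l|x}$ arises at each stage — the nested structure of Lemma~\ref{lem:int.wh.Rjk.f} and the use of Corollary~\ref{cor:int.R<int.Q} could in principle inflate the exponent — so care is needed to confirm the final prefactor is exactly $e^{(b_2-b_1)|\Im\l|x}$ as claimed, which works out because the innermost $\wt{F}$-estimates are evaluated at $x$ rather than recursively.
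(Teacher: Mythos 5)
Your proposal is correct and follows essentially the same route as the paper's proof: apply Lemma~\ref{lem:int.wh.Rjk.f} with $f(t)=e^{ib_k\l t}$ on $[0,x]$ (zero outside), bound the hatted oscillatory term of $\Theta_{jk}$ as in~\eqref{eq:int.Qjkftv} with $\wh{Q}_{jk}$, bound the undifferenced terms by $\|Q-\wt{Q}\|_1$ times the uniform estimate on $\wt{F}_{lk}(t;f)$, and close with Gr\"onwall for $\wh{F}_{jk}$ before feeding the result into~\eqref{eq:wh.Fkk<int.wh.Fjk}. The only nuance is that the bound $\wt{F}_{lk}(t;f)\le C e^{(b_2-b_1)|\Im\l|x}\sF_k[\wt{Q}](x,\l)$ — which involves the shift parameter $v$ and outer variable $t$ — does not follow literally from the statement of Corollary~\ref{cor:int.R<int.Q} but from the intermediate estimates~\eqref{eq:int0u.Rjk<E}--\eqref{eq:Fkk<C.Ek} in its proof applied to $\wt{Q}$, which is exactly what the paper cites.
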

\begin{proof}
The proof will follow the proof of Corollary~\ref{cor:int.R<int.Q}. Let
$x \in [0,1]$, $k \in \{1,2\}$ and $j = 2/k$ be fixed for the rest of the proof.
We will apply Lemma~\ref{lem:int.Rjk.f} with $f(t) = e^{i b_k \l t}$,
$t \in [0,x]$, and $f(t) = 0$, $t \notin [0,x]$. Let us estimate
$\Theta_{lk}(s; f)$ in~\eqref{eq:wh.Fjk<int.wh.Fjk}--\eqref{eq:Theta.jk} for
such $f$. Similarly to~\eqref{eq:int.Qjkftv} we have
$0 \le u \le s \le x$, $v \in [-u, x]$ and $\l \in \bC$
\begin{equation} \label{eq:int.whQjkftv}
 \abs{\a_j \int_0^u \wh{Q}_{jk}(\a_k s + \a_j t) \cdot f(t+v) \,dt }
 \le 2 e^{(b_2-b_1) |\Im \l| x} \cdot \sF_k[\wh{Q}](x,\l) =:
 2 \wh{\sR}_k(x, \l).
\end{equation}
Further, estimates~\eqref{eq:int0u.Rjk<E}--\eqref{eq:Fkk<C.Ek} on
$\wt{F}_{lk}(t; f)$ in place of $F_{lk}(t; f)$ yield for $l \in \{1, 2\}$ and
$m = 2/l$
\begin{equation} \label{eq:int.whQ.wtF}
 \int_0^s \bigabs{\wh{Q}_{ml}(t)} \cdot \wt{F}_{lk}(t; f) \,dt \le
 C_1 \cdot \|\wh{Q}\|_1 \cdot \wt{\sR}_k(x, \l), \qquad
 \wt{\sR}_k(x, \l) := e^{(b_2-b_1) |\Im \l| x} \cdot \sF_k[\wt{Q}](x,\l).
\end{equation}
Here and below constants $C_1$, $C_2$, $C_3$, only depend on $B$, $r$ and do not
depend on $Q$, $\wt{Q}$, $x$ and $\l$. Taking into account that
$\max\{\|Q_{jk}\|_{L_1[0,1]}, \|Q_{kj}\|_{L_1[0,1]}\} \le r$,
inserting~\eqref{eq:int.whQjkftv}--\eqref{eq:int.whQ.wtF}
in~\eqref{eq:wh.Fkk<int.wh.Fjk}--\eqref{eq:Theta.jk} implies
\begin{align}
\label{eq:whFkk<sE+int.whFjk}
 \wh{F}_{kk}(s; f) & \le C_2 \( \|\wh{Q}\|_1 \cdot \wt{\sR}_k(x,\l)
 + \int_0^s |Q_{kj}(t)| \cdot \wh{F}_{jk}(t; f) \,dt\),
 \qquad s \in [0,x], \quad \l \in \bC, \\
\label{eq:whFjk<sE+int.whFjk}
 \wh{F}_{jk}(s; f) & \le C_3 \(\wh{\sR}_k(x,\l) + \|\wh{Q}\|_1 \cdot
 \wt{\sR}_k(x,\l) + \int_0^s |Q_{kj}(t)| \cdot
 \wh{F}_{jk}(t; f) \,dt \), \qquad s \in [0,x], \quad \l \in \bC.
\end{align}
The proof is finished in the same way as in Corollary~\ref{cor:int.R<int.Q}
using Gr\"onwall's inequality.
\end{proof}
Following the proof of Lemma~\ref{lem:P.Fourier.Q} and using
Corollary~\ref{cor:int.whR<int.whQ} and identity~\eqref{eq:XY-wt.XY} in the same
way it was done in the proof Lemma~\ref{lem:int.wh.Rjk.f}, we can obtain the
following result on the auxiliary matrix-function $\wh{P}^{\pm} =
P^{\pm} - \wt{P}^{\pm}$.
\begin{lemma} \label{lem:whP.Fourier.Q}
Let $Q, \wt{Q} \in \bU_{1,r}^{2 \times 2}$ for some $r > 0$, and let $K^{\pm}$,
$\wt{K}^{\pm}$ be the kernels of the corresponding transformation operators from
representation~\eqref{eq:e=(I+K)e0} for $Q$ and $\wt{Q}$ respectively. Let
matrix-functions $P^{\pm}$, $\wt{P}^{\pm}$, $k \in \{1, 2\}$, be given
by~\eqref{eq:P.R.in.X0}--\eqref{2.51op} for $K^{\pm}$ and $\wt{K}^{\pm}$
respectively. Then the following uniform estimate holds for $x \in [0,1]$ and
$\l \in \bC$
\begin{equation} \label{eq:int.whPk<Ek+E1+E1}
 \abs{\int_0^x \bigl(P^{\pm}_k(t) - \wt{P}^{\pm}_k(t)\bigr)
 e^{i b_k \l (x-t)} \,dt} \le C \cdot e^{2 (b_2 - b_1) |\Im \l| x} \sum_{j=1}^2
 \(\sF_j[Q - \wt{Q}](x,\l) + \|Q - \wt{Q}\|_1 \sF_j[\wt{Q}](x,\l)\),
 \qquad k \in \{1, 2\},
\end{equation}
where $C = C(B, r) > 0$ does not depend on $Q$, $\wt{Q}$, $x$ and $\l$, and
$\sF_j[W](x,\l)$ is defined in~\eqref{eq:def.sFk} for $W \in \LL{1}$.
\end{lemma}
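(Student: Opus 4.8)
The plan is to mirror, line by line, the proof of Lemma~\ref{lem:P.Fourier.Q}, systematically replacing every product difference by the two-term decomposition~\eqref{eq:XY-wt.XY}, exactly as was done in passing from Lemma~\ref{lem:int.Rjk.f} to Lemma~\ref{lem:int.wh.Rjk.f}. Concretely, I would start from the defining system~\eqref{2.52op} for $P^{\pm}$ and the analogous system for $\wt{P}^{\pm}$. Subtracting them and applying~\eqref{eq:XY-wt.XY} to each product $R_{kl}P_l^{\pm} - \wt{R}_{kl}\wt{P}_l^{\pm} = R_{kl}\wh{P}_l^{\pm} + \wh{R}_{kl}\wt{P}_l^{\pm}$ yields, in analogy with~\eqref{eq:P1=R12+int}--\eqref{eq:P2=R21+int}, the integral representations
\begin{align*}
 \wh{P}_1^{\pm}(x) &= \mp \g_2 \wh{R}_{12}(x,0)
 - \int_0^x \bigl( R_{11}(x,t) \wh{P}_1^{\pm}(t) \pm \g_2 R_{12}(x,t) \wh{P}_2^{\pm}(t)
 + \wh{R}_{11}(x,t) \wt{P}_1^{\pm}(t) \pm \g_2 \wh{R}_{12}(x,t) \wt{P}_2^{\pm}(t)\bigr)\,dt, \\
 \wh{P}_2^{\pm}(x) &= \mp \g_1 \wh{R}_{21}(x,0)
 - \int_0^x \bigl( \mp \g_2 R_{21}(x,t) \wh{P}_1^{\pm}(t) + R_{22}(x,t) \wh{P}_2^{\pm}(t)
 \mp \g_2 \wh{R}_{21}(x,t) \wt{P}_1^{\pm}(t) + \wh{R}_{22}(x,t) \wt{P}_2^{\pm}(t)\bigr)\,dt,
\end{align*}
and then, multiplying by $e^{ib_k\l(x-t)}$ and integrating, one obtains the analogue of~\eqref{eq:int.Pk.e} with extra ``hat on $R$, tilde on $P$'' terms of the form $\int_0^x \wt{P}_l^{\pm}(s)\,\wh{Z}_{kl}(x,s,\l)\,ds$, where $\wh{Z}_{kl}(x,s,\l):=\int_s^x \wh{R}_{kl}(t,s) e^{ib_k\l(x-t)}\,dt$.

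Next I would estimate the quantities $\wh{Z}_{kl}(x,s,\l)$. These are precisely the objects controlled by Corollary~\ref{cor:int.whR<int.whQ}, except that there one has the outer integration variable fixed; the same computations that produced~\eqref{eq:Zkk.est}, \eqref{eq:Zkj1.est} and~\eqref{eq:Zkj2.est} go through verbatim with $R\to\wh{R}$, $Q\to Q$ in the "kernel" spots and the decomposition~\eqref{eq:XY-wt.XY} applied whenever a $\wt Q\wt R$ product is encountered, using~\eqref{eq:Rkk=int.Qkj.Rjk}--\eqref{eq:Rjk=Qjk-int.Qjk.Rkk} for $\wh R$ in the form appearing in Lemma~\ref{lem:int.wh.Rjk.f}. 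The outcome is a bound
\[
 |\wh{Z}_{kl}(x,s,\l)| \le C\, e^{2(b_2-b_1)|\Im\l|x}\sum_{j=1}^2\bigl(\sF_j[\wh{Q}](x,\l) + \|\wh{Q}\|_1\,\sF_j[\wt{Q}](x,\l)\bigr),
\]
i.e. exactly the structure of the right-hand side of~\eqref{eq:int.whPk<Ek+E1+E1}. The plain terms $\int_0^x \wh{P}_l^{\pm}(s) Z_{kl}(x,s,\l)\,ds$ with the already-estimated $Z_{kl}$ (from~\eqref{eq:Zkk.est}, \eqref{eq:Zkj.est}) are handled by pulling $\sup_s|Z_{kl}|$ out and using $\|\wh{P}_l^{\pm}\|_1 \le C\|\wh{Q}\|_p$ with $p=1$ from estimate~\eqref{eq:norm.whP.P} in the proof of Theorem~\ref{th:K-wtK<Q-wtQ}; but these contribute a term $\|\wh{Q}\|_1\cdot(\text{something bounded in terms of }\|\wt{Q}\|_1)\cdot\sup_s|Z_{kl}|$, and since $\sup_s|Z_{kl}(x,s,\l)|\le C e^{2(b_2-b_1)|\Im\l|x}\sum_j\sF_j[\wt Q](x,\l)$ (from Lemma~\ref{lem:P.Fourier.Q}'s internals, noting there $Q$ plays the role of $\wt Q$), this again fits the claimed bound. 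Finally one also needs the term $\|\wt{P}_l^{\pm}\|_1\cdot\sup_s|\wh Z_{kl}|$, and $\|\wt{P}_l^{\pm}\|_1\le C\|\wt Q\|_1\le Cr$ by~\eqref{eq:norm.whP.P}.

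The main obstacle I anticipate is bookkeeping rather than any genuinely new idea: one must carefully track which $Q$'s stay as $Q$ (playing the role of the "large" coefficient subject to Grönwall) versus which become $\wh Q$ or $\wt Q$, so that every Grönwall step still closes with a coefficient bounded by a function of $r$ alone, and so that the final sum of error terms collapses to the two families $\sF_j[\wh Q]$ and $\|\wh Q\|_1\sF_j[\wt Q]$ with no leftover cross-terms like $\|\wh Q\|_1\sF_j[\wh Q]$ (which would be inadmissible). The decomposition~\eqref{eq:XY-wt.XY} applied at each of the three product spots in~\eqref{eq:Rkk=int.Qkj.Rjk}--\eqref{eq:Rjk=Qjk-int.Qjk.Rkk} and at each product spot in~\eqref{2.52op} generates a controlled (finite) number of such terms, and a short induction/Grönwall argument — identical in structure to~\eqref{eq:whFkk<sE+int.whFjk}--\eqref{eq:whFjk<sE+int.whFjk} — absorbs the $\wh F_{jk}$ terms into the left-hand side. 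Assembling everything, inserting the estimates for $|\wh Z_{kk}|$, $|\wh Z_{kj}|$, and the plain $Z$-terms into the analogue of~\eqref{eq:int.Pk.e}, and using $\|\wt P^{\pm}_l\|_1, \|P^{\pm}_l\|_1 \le Cr$ and $\|\wh P^{\pm}_l\|_1 \le C\|\wh Q\|_p$ from~\eqref{eq:norm.whP.P}, yields~\eqref{eq:int.whPk<Ek+E1+E1}.
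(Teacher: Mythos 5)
Your proposal follows exactly the route the paper intends for this lemma: subtract the two systems~\eqref{2.52op}, apply the splitting~\eqref{eq:XY-wt.XY}, estimate the hatted quantities $\wh{Z}_{kl}$ by rerunning~\eqref{eq:Zkk.est}--\eqref{eq:Zkj2.est} with Corollary~\ref{cor:int.whR<int.whQ}, and control the remaining terms via $\|\wh{P}^{\pm}\|_1\le C\|\wh{Q}\|_1$ and $\|\wt{P}^{\pm}\|_1\le Cr$ from~\eqref{eq:norm.whP.P}, which is precisely the argument the paper sketches. One cosmetic remark: with your chosen decomposition the plain term carries $Z_{kl}$ built from $R_Q$, so its supremum is bounded by $\sF_j[Q]$ rather than $\sF_j[\wt{Q}]$; but since $\sF_j[Q]\le\sF_j[\wh{Q}]+\sF_j[\wt{Q}]$ and $\|\wh{Q}\|_1\le 2r$ on the ball, the cross-term $\|\wh{Q}\|_1\,\sF_j[\wh{Q}]$ you feared is in fact admissible and the estimate still collapses to the claimed right-hand side.
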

Now we are ready to prove Theorem~\ref{th:int.wh.K<int.wh.Q.intro}, our main
result on stability of Fourier transforms of kernels $K_Q^{\pm}$ of the
transformation operators. We will formulate it again using compact
notation~\eqref{eq:def.sFk}.
\begin{theorem} \label{th:int.wh.K<int.wh.Q}
Let $Q, \wt{Q} \in \bU_{1,r}^{2 \times 2}$ for some $r > 0$, and let $K^{\pm} :=
K^{\pm}_Q$, $\wt{K}^{\pm} := K^{\pm}_{\wt{Q}}$ be the kernels of the
corresponding transformation operators from
representation~\eqref{eq:e=(I+K)e0} for $Q$ and $\wt{Q}$ respectively. Then the
following uniform estimate holds for $x \in [0,1]$ and $\l \in \bC$
\begin{equation} \label{eq:int.whK<F1+F2}
 \sum_{j,k=1}^2 \abs{\int_0^x \bigl(K^{\pm}_Q - K^{\pm}_{\wt{Q}}\bigr)_{jk}(x,t)
 e^{i b_k \l t} \,dt} \le C \cdot e^{2 (b_2 - b_1) |\Im \l| x} \sum_{j=1}^2
 \(\sF_j[Q - \wt{Q}](x,\l) + \|Q - \wt{Q}\|_1 \sF_j[\wt{Q}](x,\l)\),
\end{equation}
where $C = C(B, r) > 0$ does not depend on $Q$, $\wt{Q}$, $x$ and $\l$, and
$\sF_j[W](x,\l)$ is defined in~\eqref{eq:def.sFk} for $W \in \LL{1}$.
\end{theorem}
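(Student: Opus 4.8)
The plan is to combine the three decomposition estimates already established in this subsection, namely Corollary~\ref{cor:int.whR<int.whQ} for $\wh{R} = R - \wt{R}$, Lemma~\ref{lem:whP.Fourier.Q} for $\wh{P}^{\pm} = P^{\pm} - \wt{P}^{\pm}$, and the auxiliary bounds on $R$ and $\wt{R}$ in the spaces $X_{\infty,1}$ (Lemma~\ref{lem:wh.estim.Xinf} together with Proposition~\ref{prop2.5}(iii)), exactly in the way Theorem~\ref{th:int.K<int.Q} was deduced from Corollary~\ref{cor:int.R<int.Q} and Lemma~\ref{lem:P.Fourier.Q}. The starting point is the representation~\eqref{eq:Kjk=Rjk+int} for $K_{jk}^{\pm}$, which upon subtracting the analogous representation for $\wt{K}_{jk}^{\pm}$ and using the product-difference identity~\eqref{eq:XY-wt.XY} gives
\begin{equation*}
 \wh{K}_{jk}^{\pm}(x,t) = \wh{R}_{jk}(x,t) + \delta_{jk} \wh{P}_k^{\pm}(x-t)
 + \int_t^x \Bigl(R_{jk}(x,s) \wh{P}_k^{\pm}(s-t)
 + \wh{R}_{jk}(x,s) \wt{P}_k^{\pm}(s-t)\Bigr) ds.
\end{equation*}

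From this, integrating against $e^{i b_k \l t}$ over $[0,x]$ and splitting, I would bound the deviation $\bigl|\int_0^x \wh{K}_{jk}^{\pm}(x,t) e^{i b_k \l t}\,dt\bigr|$ by four terms: (1) $\bigl|\int_0^x \wh{R}_{jk}(x,t) e^{i b_k \l t}\,dt\bigr|$, handled directly by Corollary~\ref{cor:int.whR<int.whQ}; (2) $\bigl|\int_0^x \wh{P}_k^{\pm}(t) e^{i b_k \l (x-t)}\,dt\bigr|$, handled by Lemma~\ref{lem:whP.Fourier.Q}; (3) the term involving $R_{jk}(x,s)\wh{P}_k^{\pm}(s-t)$, which after changing the order of integration factors as $\int_0^x R_{jk}(x,s)\bigl(\int_0^s \wh{P}_k^{\pm}(s-t) e^{i b_k \l t}\,dt\bigr)\,ds$ multiplied by an exponential; using that the inner integral is uniformly controlled by $e^{2(b_2-b_1)|\Im\l|s}\sum_j(\cdots)$ via Lemma~\ref{lem:whP.Fourier.Q} and that $\|R_{jk}\|_{X_{\infty,1}} \le C r$ by Lemma~\ref{lem:wh.estim.Xinf}; and (4) the symmetric term with $\wh{R}_{jk}(x,s)\wt{P}_k^{\pm}(s-t)$, where now the inner $s$-integral of $\wt{P}_k^{\pm}$ is bounded by Lemma~\ref{lem:P.Fourier.Q} applied to $\wt{Q}$ (its output already carries the factor $\sF_j[\wt{Q}]$, and the extra $\|\wh{Q}\|_1$ weight comes from the $X_{\infty,1}$-norm bound $\|\wh{R}_{jk}\|_{X_{\infty,1}} \le C^{(\infty)}\|\wh{Q}\|_1$ of Lemma~\ref{lem:wh.estim.Xinf}). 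Each contribution is of the form $C e^{2(b_2-b_1)|\Im\l|x}\bigl(\sF_j[Q-\wt{Q}](x,\l) + \|Q-\wt{Q}\|_1 \sF_j[\wt{Q}](x,\l)\bigr)$, and summing over $j,k \in \{1,2\}$ yields~\eqref{eq:int.whK<F1+F2}.

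The bookkeeping that needs care is matching the exponential weights: the decomposition-estimate lemmas produce factors $e^{(b_2-b_1)|\Im\l|x}$ (Corollary~\ref{cor:int.whR<int.whQ}) and $e^{2(b_2-b_1)|\Im\l|x}$ (Lemma~\ref{lem:whP.Fourier.Q}), and in the convolution terms (3) and (4) an additional $e^{(b_k)|\Im\l|(x-s)}$-type factor appears from the kernel $e^{i b_k\l(s-t)}$ versus $e^{i b_k\l t}$; since $|b_k| \le b_2 - b_1$ and $s \le x$, everything is absorbed into $e^{2(b_2-b_1)|\Im\l|x}$, so no genuine difficulty arises there. The one place to be slightly attentive is ensuring that the $\sF_j[Q-\wt{Q}]$ and $\sF_j[\wt{Q}]$ appearing in the four terms involve the correct index $j$ (not just $j=2/k$): because the right-hand sides of Corollary~\ref{cor:int.whR<int.whQ} and Lemma~\ref{lem:whP.Fourier.Q} already contain the full sum $\sum_{j=1}^2$ (or can be trivially majorized by it), this is automatic. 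In short, I expect this theorem to be essentially a corollary of the auxiliary results built up in Subsections~\ref{subsec:maxim.Fourier} and~\ref{subsec:Fourier.stabil}, with the only real work being the careful reassembly of the four pieces; the main obstacle — proving the underlying estimates on $\wh{R}$ and $\wh{P}^{\pm}$ — has already been overcome in the preceding lemmas.
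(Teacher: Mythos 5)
Your proposal is correct and follows essentially the same route as the paper: subtract the representation~\eqref{eq:Kjk=Rjk+int} for $\wt{Q}$ from that for $Q$, apply the identity~\eqref{eq:XY-wt.XY} to the convolution term, and estimate the resulting four pieces exactly as you indicate — Corollary~\ref{cor:int.whR<int.whQ} for $\wh{R}_{jk}$, Lemma~\ref{lem:whP.Fourier.Q} for $\wh{P}_k^{\pm}$, and for the two mixed convolution terms Lemma~\ref{lem:whP.Fourier.Q} (resp.\ Lemma~\ref{lem:P.Fourier.Q}) combined with the $X_{\infty,1}$-bounds on $R_{jk}$ (resp.\ $\wh{R}_{jk}$) from Lemma~\ref{lem:wh.estim.Xinf}. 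Your remarks on absorbing the exponential factors into $e^{2(b_2-b_1)|\Im\l|x}$ match the paper's bookkeeping as well.
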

\begin{proof}
The proof is similar to the proof of Theorem~\ref{th:int.K<int.Q}.
First we subtract formula~\eqref{eq:Kjk=Rjk+int} for $\wt{Q}$ from the same
formula for $Q$ and apply identity~\eqref{eq:XY-wt.XY} to the product of
differences $R_{jk}(x, s) P_k^{\pm}(s-t) - \wt{R}_{jk}(x, s)
\wt{P}_k^{\pm}(s-t)$ in the integral. This yields the following inequality with
account of notations~\eqref{eq:wh.Q.K}--\eqref{eq:wh.R.P},
\begin{multline} \label{eq:int.whK.exp}
 \abs{\int_0^x \wh{K}_{jk}^{\pm}(x,t) e^{i b_k \l t} \,dt}
 \le \abs{\int_0^x \wh{R}_{jk}(x,t) e^{i b_k \l t} \,dt}
 + \abs{\int_0^x \wh{P}_k^{\pm}(t) e^{i \l (x-t)} \,dt} \\
 + \abs{\int_0^x R_{jk}(x,t) \int_0^t \wh{P}_k^{\pm}(s)
 e^{i b_k \l (t-s)} \,ds \,dt}
 + \abs{\int_0^x \wh{R}_{jk}(x,t) \int_0^t \wt{P}_k^{\pm}(s)
 e^{i b_k \l (t-s)} \,ds \,dt}.
\end{multline}
The first summand in r.h.s of~\eqref{eq:int.whK.exp} involving Fourier
transform of $\wh{R}_{jk}(x,t)$ is estimated using
Corollary~\ref{cor:int.whR<int.whQ}, while the second summand is estimated using
Lemma~\ref{lem:whP.Fourier.Q}. The last two summands are estimated similarly
to inequality~\eqref{eq:int.Rjk.int.Pk<E}. More specifically, for the third
summand we apply Lemma~\ref{lem:whP.Fourier.Q} again to estimate ``maximal''
Fourier transform of $\wh{P}_k^{\pm}$ and Lemma~\ref{lem:wh.estim.Xinf} to
estimate $\|R_{jk}\|_{X_{\infty,1}(\Omega)}$, while for the fourth summand
we apply Lemma~\ref{lem:P.Fourier.Q} to estimate ``maximal''
Fourier transform of $\wt{P}_k^{\pm}$ and again Lemma~\ref{lem:wh.estim.Xinf} to
estimate $\|\wh{R}_{jk}\|_{X_{\infty,1}(\Omega)}$.
\end{proof}
\subsection{Stability property of the fundamental matrix}
Throughout the section $\Phi_Q(x, \l)$ and $\Phi_{\wt{Q}}(x, \l)$ will denote
the fundamental matrix solutions of the system~\eqref{eq:system} for $Q$ and
$\wt{Q}$ respectively and notations~\eqref{eq:Phi.def},~\eqref{eq:Phi0.def} will
be used. For reader's convenience we recall them here,
\begin{align}
\label{eq:PhiQ}
 \Phi_Q(\cdot, \l) &= \begin{pmatrix}
 \varphi_{11}(\cdot, \l) & \varphi_{12}(\cdot, \l)\\
 \varphi_{21}(\cdot, \l) & \varphi_{22}(\cdot, \l)
 \end{pmatrix}
 = \begin{pmatrix} \Phi_1(\cdot, \l) & \Phi_2(\cdot, \l) \end{pmatrix},
 \qquad \l \in \bC, \\
\label{eq:PhiwtQ}
 \Phi_{\wt{Q}}(\cdot, \l) &= \begin{pmatrix}
 \wt{\varphi}_{11}(\cdot, \l) & \wt{\varphi}_{12}(\cdot, \l) \\
 \wt{\varphi}_{21}(\cdot, \l) & \wt{\varphi}_{22}(\cdot, \l)
 \end{pmatrix} = \begin{pmatrix}
 \wt{\Phi}_1(\cdot, \l) & \wt{\Phi}_2(\cdot, \l) \end{pmatrix},
 \qquad \l \in \bC, \\
\label{eq:Phi0}
 \Phi^0(x, \l) &= \begin{pmatrix}
 e^{i b_1 x \l} & 0 \\ 0 & e^{i b_2 x \l} \end{pmatrix}
 =: \begin{pmatrix}
 \varphi_{11}^0(x, \l) & \varphi_{12}^0(x, \l)\\
 \varphi_{21}^0(x,\l) & \varphi_{22}^0(x,\l)
 \end{pmatrix} =: \begin{pmatrix}
 \Phi_1^0(x, \l) & \Phi_2^0(x, \l) \end{pmatrix},
 \qquad x \in [0,1], \quad \l \in \bC.
\end{align}
The following uniform estimate of the deviation of fundamental matrix will play
important role in studying deviations of root vectors.
\begin{theorem} \label{th:int.phi<int.Q}
Let $Q, \wt{Q} \in \bU_{1, r}^{2 \times 2}$ for some $r > 0$. Then the following
uniform estimate holds
\begin{multline} \label{eq:PhiQ-PhiwtQ}
 \abs{\Phi_Q(x,\l) - \Phi_{\wt{Q}}(x,\l)}
 \le C \cdot e^{2 (b_2 - b_1) |\Im \l| x} \sum_{k=1}^2
 \(\sF_k[Q - \wt{Q}](x,\l) + \|Q - \wt{Q}\|_1 \sF_k[\wt{Q}](x,\l)\) \\
 = C \cdot e^{2 (b_2 - b_1) |\Im \l| x} \sum_{j \ne k}
 \(\sup_{s \in [0,x]} \abs{\int_0^s (Q_{jk}(t) - \wt{Q}_{jk}(t))
 e^{i (b_k-b_j) \l t} dt} \right. \\
 \left. + \|Q - \wt{Q}\|_1 \sup_{s \in [0,x]}
 \abs{\int_0^s \wt{Q}_{jk}(t) e^{i (b_k-b_j) \l t} dt}\),
 \qquad x \in [0,1], \quad \l \in \bC,
\end{multline}
where $C = C(B, r) > 0$ does not depend on $Q$, $\wt{Q}$, $x$ and $\l$. In
particular,
\begin{equation} \label{eq:PhiQ-Phi0}
 \abs{\Phi_Q(x,\l) - \Phi_0(x,\l)}
 \le C \cdot e^{2 (b_2 - b_1) |\Im \l| x} \sum_{j \ne k}
 \sup_{s \in [0,x]} \abs{\int_0^s Q_{jk}(t) e^{i (b_k-b_j) \l t} dt},
 \qquad x \in [0,1], \quad \l \in \bC.
\end{equation}
\end{theorem}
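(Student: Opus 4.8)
The plan is to deduce Theorem~\ref{th:int.phi<int.Q} directly from Theorem~\ref{th:int.wh.K<int.wh.Q} by means of the integral representation of the entries of the fundamental matrix. First I would invoke Lemma~\ref{lem:phi-wt.phi} (its hypotheses hold since $Q,\wt Q\in\bU_{1,r}^{2\times 2}$), which gives, for all $j,k\in\{1,2\}$,
\[
 \varphi_{jk}(x,\l)-\wt\varphi_{jk}(x,\l)
 =\sum_{l=1}^2\int_0^x\wh K_{jl,k}(x,t)\,e^{ib_l\l t}\,dt,
 \qquad \wh K_{jl,k}=K_{jl,k}-\wt K_{jl,k}.
\]
By the definition~\eqref{eq:Kjlk}, $\wh K_{jl,k}=\tfrac12\bigl((K^+_Q-K^+_{\wt Q})_{jl}+(-1)^{l+k}(K^-_Q-K^-_{\wt Q})_{jl}\bigr)$, hence, uniformly in $k$,
\[
 \abs{\int_0^x\wh K_{jl,k}(x,t)e^{ib_l\l t}\,dt}
 \le\tfrac12\abs{\int_0^x(K^+_Q-K^+_{\wt Q})_{jl}(x,t)e^{ib_l\l t}\,dt}
 +\tfrac12\abs{\int_0^x(K^-_Q-K^-_{\wt Q})_{jl}(x,t)e^{ib_l\l t}\,dt}.
\]

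Next I would sum the triangle inequality over $j,k,l\in\{1,2\}$. Since the right-hand side above does not depend on $k$, the summation over $k$ merely contributes a factor $2$, which cancels the factors $\tfrac12$, and so
\[
 \sum_{j,k=1}^2\bigabs{\varphi_{jk}(x,\l)-\wt\varphi_{jk}(x,\l)}
 \le\sum_{j,l=1}^2\abs{\int_0^x(K^+_Q-K^+_{\wt Q})_{jl}(x,t)e^{ib_l\l t}\,dt}
 +\sum_{j,l=1}^2\abs{\int_0^x(K^-_Q-K^-_{\wt Q})_{jl}(x,t)e^{ib_l\l t}\,dt}.
\]
Each of the two sums on the right is precisely the left-hand side of estimate~\eqref{eq:int.whK<F1+F2} of Theorem~\ref{th:int.wh.K<int.wh.Q} (after renaming the dummy index $k\mapsto l$), applied once to the kernels $K^+$ and once to $K^-$. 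Adding the two resulting bounds gives
\[
 \sum_{j,k=1}^2\bigabs{\varphi_{jk}(x,\l)-\wt\varphi_{jk}(x,\l)}
 \le 2C\,e^{2(b_2-b_1)|\Im\l|x}\sum_{k=1}^2\Bigl(\sF_k[Q-\wt Q](x,\l)
 +\norm{Q-\wt Q}_1\,\sF_k[\wt Q](x,\l)\Bigr),
\]
with $C=C(B,r)>0$ the constant from Theorem~\ref{th:int.wh.K<int.wh.Q}.

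It remains to pass from this entrywise sum to the matrix norm $\abs{\Phi_Q(x,\l)-\Phi_{\wt Q}(x,\l)}$: on $\bC^{2\times 2}$ the operator norm used in the paper is equivalent, with an absolute constant, to the $\ell^1$-sum of the moduli of the entries, so estimate~\eqref{eq:PhiQ-PhiwtQ} follows with $C$ multiplied by that constant. The second, expanded form of~\eqref{eq:PhiQ-PhiwtQ} is just the definition~\eqref{eq:def.sFk} of $\sF_k[W](x,\l)$ written out with $j=2/k$, and~\eqref{eq:PhiQ-Phi0} is the special case $\wt Q=0$, for which $\sF_k[0]\equiv 0$ and $\Phi_0=\Phi^0$. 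I do not expect a serious obstacle: all the analytic content sits in Theorem~\ref{th:int.wh.K<int.wh.Q}, which may be assumed, and what is left is bookkeeping — keeping track of the factors $\tfrac12$ and of the $\pm$-splitting in~\eqref{eq:Kjlk}, matching the summation indices with the statement of Theorem~\ref{th:int.wh.K<int.wh.Q}, and the elementary two-dimensional norm comparison. Because $b_2-b_1>0$ and $|\Im\l|\ge 0$, the exponential weights cause no sign difficulties.
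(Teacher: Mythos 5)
Your proposal is correct and follows essentially the same route as the paper: the paper's own proof also combines the representation of $\varphi_{jk}-\wt{\varphi}_{jk}$ from Lemma~\ref{lem:phi-wt.phi} with formula~\eqref{eq:Kjlk} to write each entry deviation as a linear combination of the eight deviations~\eqref{eq:int.whK}, and then applies Theorem~\ref{th:int.wh.K<int.wh.Q}. Your version merely spells out the bookkeeping (the factors $\tfrac12$, the index matching, and the elementary $2\times 2$ norm comparison) that the paper leaves implicit.
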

\begin{proof}
It follows from
representations~\eqref{eq:phi-wt.phi.jkx}--\eqref{eq:whK.def} in
Lemma~\ref{lem:phi-wt.phi} with account of formula~\eqref{eq:Kjlk}, that
each deviation $\varphi_{jk}(x,\l) - \wt{\varphi}_{jk}(x,\l)$,
$j,k \in \{1, 2\}$, is a linear combinations of all eight
deviations~\eqref{eq:int.whK}. Applying Theorem~\ref{th:int.wh.K<int.wh.Q} with
account of notation~\eqref{eq:def.sFk} completes the proof.
\end{proof}
Recall that for $f \in L^s([0,1]; \bC^n)$, $s \in (0, \infty]$, $n \in \bN$, we
denote $\|f\|_s := \|f\|_{L^s([0,1]; \bC^n)}$.
\begin{proposition} \label{eq:Phix.compact}
Let $\cK$ be compact in $\LL{1}$ and $h \ge 0$. Then the following uniform
estimates hold:

\textbf{(i)} For any $\delta > 0$ there exists $M_{\delta} =
M_{\delta}(\cK, B, h) > 0$ such that
\begin{equation} \label{eq:phi-phi0<delta}
 \norm{\varphi_{jk}(\cdot, \l)}_s + \norm{\varphi_{kk}(\cdot, \l)
 -\varphi_{kk}^0(\cdot, \l)}_s < \delta, \qquad
 s \in (0, \infty], \quad |\l| > M_{\delta}, \quad |\Im \l| \le h,
 \quad k \in \{1,2\}, \quad j = 2/k, \quad Q \in \cK.
\end{equation}

\textbf{(ii)} There exists $\delta_0 = \delta_0(B, h) > 0$ such that for $M_{\delta_0}$ defined in part (i) we have
\begin{equation} \label{eq:Phik.norm}
 \norm{\varphi_{kk}(\cdot, \l)}_s \ge \delta_0, \qquad s \in (0, \infty],
 \quad |\l| > M_{\delta_0}, \quad |\Im \l| \le h, \quad k \in \{1,2\},
 \quad Q \in \cK,
\end{equation}

\textbf{(iii)} Let $r > 0$ then there exists $C_2 = C_2(r, B, h) > 0$ such that
\begin{equation} \label{eq:phijk<C2}
 \abs{\varphi_{jk}(x, \l)} + \abs{\frac{d}{d\l}\varphi_{jk}(x, \l)} \le C_2,
 \qquad x \in [0,1], \quad |\Im \l| \le h, \quad j, k \in \{1, 2\},
 \quad Q \in \bU_{1,r}^{2 \times 2}.
\end{equation}
\end{proposition}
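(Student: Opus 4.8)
The statement is Proposition~\ref{eq:Phix.compact}, consisting of three uniform estimates for the entries $\varphi_{jk}(x,\l)$ of the fundamental matrix $\Phi_Q(x,\l)$. My plan is to derive all three parts from the stability estimate~\eqref{eq:PhiQ-Phi0} of Theorem~\ref{th:int.phi<int.Q} (applied with $\wt{Q}=0$) together with the uniform Riemann--Lebesgue type results already established for the ``maximal'' Fourier transform of transformation operator kernels, namely Proposition~\ref{prop:K.Xinf.Fourier}(iii) and Lemma~\ref{lem:fourier.coef.alter}. Recall that by Lemma~\ref{lem:phi.jk=e+int} each $\varphi_{jk}(x,\l)$ equals $\delta_{jk}e^{ib_k\l x}$ plus a sum of two integrals $\int_0^x K_{jl,k}(x,t)e^{ib_l\l t}\,dt$, where the $K_{jl,k}$ are linear combinations of the kernels $K^{\pm}_{jl}$; hence $\varphi_{jk}(\cdot,\l)-\delta_{jk}\varphi_{jk}^0(\cdot,\l)$ is controlled pointwise in $x$ by $\sum_{l}\cF[K^{\pm}_{jl}](\l)$-type quantities. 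The whole argument reduces to propagating the pointwise-in-$x$ smallness of these quantities to $L^s[0,1]$-norm smallness, uniformly over the strip and over $Q\in\cK$.

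\textbf{Part (i).} First I would fix $\delta>0$. Since $\cK$ is compact in $\LL{1}$, Lemma~\ref{lem:fourier.coef.alter} (and, more directly, the uniform relation~\eqref{eq:int.Fourier_of_Kxt_to_0_unif} of Proposition~\ref{prop:K.Xinf.Fourier}(iii) applied to the compact image of $\cK$ under $Q\mapsto K^{\pm}_Q$, which is compact in $X_{\infty,1}^0(\Omega;\bC^{2\times2})$ by Theorem~\ref{th:K-wtK<Q-wtQ}) yields $M_\delta=M_\delta(\cK,B,h)>0$ such that $\cF[K^{\pm}_{jk}](\l)<\delta'$ for $|\l|>M_\delta$, $\l\in\Pi_h$, uniformly in $Q\in\cK$, where $\delta'$ is a small multiple of $\delta$. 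For the off-diagonal entries $\varphi_{jk}$ ($j=2/k$), formula~\eqref{eq:phi.jkx} gives $|\varphi_{jk}(x,\l)|\le\sum_{l}|\int_0^x K_{jl,k}(x,t)e^{ib_l\l t}\,dt|$ pointwise in $x$, since $\delta_{jk}=0$; bounding the right side uniformly in $x\in[0,1]$ by $\cF[K^{\pm}_{jl}](\l)$-quantities and then taking the $\esssup$ over $x$ gives the $L^\infty$ bound, whence also the $L^s$ bound for all $s\in(0,\infty]$ (since the domain has length $1$). For the diagonal difference $\varphi_{kk}(x,\l)-\varphi_{kk}^0(x,\l)=\int_0^x K_{k1,k}(x,t)e^{ib_1\l t}\,dt+\int_0^x K_{k2,k}(x,t)e^{ib_2\l t}\,dt$, the same pointwise bound by maximal Fourier transforms applies. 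Combining and choosing $\delta'$ appropriately small yields~\eqref{eq:phi-phi0<delta}.

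\textbf{Parts (ii) and (iii).} Part (ii) follows from part (i) by a triangle inequality: $\|\varphi_{kk}(\cdot,\l)\|_s\ge\|\varphi_{kk}^0(\cdot,\l)\|_s-\|\varphi_{kk}(\cdot,\l)-\varphi_{kk}^0(\cdot,\l)\|_s$, and $\|\varphi_{kk}^0(\cdot,\l)\|_s=\|e^{ib_k\l\cdot}\|_s$ is bounded below by a positive constant $c(B,h)$ uniformly for $|\Im\l|\le h$ (for $s<\infty$ compute $\int_0^1 e^{-sb_k\Im\l\, x}\,dx$, which is $\ge$ a positive constant depending only on $b_k,h$; for $s=\infty$ it is $\ge e^{-|b_k|h}$); so choosing $\delta_0:=\tfrac12 c(B,h)$ and invoking~\eqref{eq:phi-phi0<delta} with $\delta=\delta_0$ gives~\eqref{eq:Phik.norm}. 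Part (iii) is the easiest: the uniform bound $|\varphi_{jk}(x,\l)|\le C$ on $\Pi_h$ over the ball $\bU_{1,r}^{2\times2}$ is immediate from~\eqref{eq:phi.jkx}, the triangle inequality, $|e^{ib_k\l x}|\le e^{|b_k|h}$, and the uniform bound $\|K^{\pm}_Q\|_{X_{\infty,1}}\le C(B,r)$ supplied by Theorem~\ref{th:K-wtK<Q-wtQ} (with $p=1$), since $|\int_0^x K_{jl,k}(x,t)e^{ib_l\l t}\,dt|\le e^{|b_l|h}\|K^{\pm}_Q\|_{X_{\infty,1}}$; the derivative bound follows the same way after differentiating under the integral sign, which introduces an extra factor $|b_l\l t|$ — to handle this I would instead differentiate the integral representation and absorb the polynomial factor by noting it is dominated near the strip by an exponential, or more cleanly use a Cauchy estimate on a disc of radius $1$ around $\l$ combined with the bound on $|\varphi_{jk}|$ on the slightly larger strip $\Pi_{h+1}$.

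\textbf{Main obstacle.} The only subtle point is the passage from pointwise-in-$x$ control of $\int_0^x K^{\pm}_{jk}(x,t)e^{ib_k\l t}\,dt$ to a genuine $L^s[0,1]$-norm bound that is uniform in $s$; this is where one must use that the maximal Fourier transform $\cF[K^{\pm}_{jk}]$ controls the $\sup_{x}$, not merely the value at $x$, so that the $\esssup_x$ defining the $X_{\infty,1}$-trace is already bounded. This is exactly why the paper built the machinery around $\cF$ rather than the plain Fourier transform, and invoking Proposition~\ref{prop:K.Xinf.Fourier}(iii) / Lemma~\ref{lem:fourier.coef.alter} circumvents it entirely. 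The derivative estimate in part (iii) is the only place requiring a small extra trick (the Cauchy estimate on $\Pi_{h+1}$), but it is routine.
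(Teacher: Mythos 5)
Your proposal follows essentially the same route as the paper: part (i) from the representations~\eqref{eq:phi.jkx}--\eqref{eq:Kjlk} together with the uniform Riemann--Lebesgue statement of Lemma~\ref{lem:fourier.coef.alter} and $\|f\|_s\le\|f\|_\infty$; part (ii) from the lower bound $\|\varphi_{kk}^0(\cdot,\l)\|_s\ge e^{-\max\{|b_1|,|b_2|\}h}$ and a triangle inequality with $\delta_0$ half that constant; part (iii) from the representations and the uniform $X_{\infty,1}$-bound on $K^{\pm}_Q$ supplied by Theorem~\ref{th:K-wtK<Q-wtQ}. The only deviation is your treatment of the derivative in (iii): differentiating $e^{ib_l\l t}$ in $\l$ produces the factor $ib_l t$ (not $b_l\l t$), which is bounded by $|b_l|$ since $t\in[0,1]$, so the paper simply differentiates under the integral sign; your Cauchy-estimate fallback on $\Pi_{h+1}$ is also correct but unnecessary.
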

\begin{proof}
\textbf{(i)} The proof is immediate from
represantations~\eqref{eq:phi.jkx}--\eqref{eq:Kjlk},
Lemma~\ref{lem:fourier.coef.alter} and inequality $\|f\|_s \le \|f\|_{\infty}$
valid for $f \in L^{\infty}[0,1]$ and $s \in (0, \infty]$.

\textbf{(ii)} It is clear that
\begin{equation} \label{eq:exp.Ch}
 \abs{\varphi_{kk}^0(x, \l)} = \abs{e^{i b_k \l x}} \ge C, \qquad x \in [0,1],
 \quad |\Im \l| \le h, \quad k \in \{1,2\},
\end{equation}
with $C := \exp\(-\max\{|b_1|, |b_2|\} \cdot h\)$. Inequality~\eqref{eq:exp.Ch}
implies that
\begin{equation} \label{eq:norm.Phik0}
 \norm{\varphi_{kk}^0(\cdot, \l)}_s \ge C,
 \qquad s \in (0, \infty], \quad |\Im \l| \le h, \quad k \in \{1,2\}.
\end{equation}
Setting $\delta_0 = C/2$ and combining inequalities~\eqref{eq:norm.Phik0}
and~\eqref{eq:phi-phi0<delta} we arrive at~\eqref{eq:Phik.norm}, since
$C - \delta_0 = \delta_0$.

\textbf{(iii)} The proof is immediate from
represantations~\eqref{eq:phi.jkx}--\eqref{eq:Kjlk} in
Lemma~\ref{lem:phi.jk=e+int} and Theorem~\ref{th:K-wtK<Q-wtQ}. Here is how it
looks like for the derivative $\frac{d}{d\l}\varphi_{jk}(x, \l)$. We have for
$x \in [0,1]$, $|\Im \l| \le h$, $j, k \in \{1, 2\}$ and $Q \in
\bU_{1,r}^{2 \times 2}$:
\begin{multline}
 \abs{\frac{d}{d\l}\varphi_{jk}(x, \l)}
 \le \abs{i b_k x \delta_{jk} e^{i \l b_k x}}
 + \sum_{l=1}^2 \int_0^x \abs{i b_l t K_{jl,k}(x,t) e^{i \l b_l x}} dt \\
 \le |b_k| e^{|b_k| h} + \sum_{l=1}^2 |b_l| e^{|b_k| h}
 \|K_{jl,k}\|_{X_{\infty,1}(\Omega; \bC^{2 \times 2})}
 \le b_0 e^{b_0 h} (1 + 2 C \|Q\|_1) \le b_0 e^{b_0 h} (1 + 2 C r),
\end{multline}
where $b_0 := \max\{|b_1|, |b_2|\}$.
\end{proof}
We need similar result for balls $\bU_{p,r}^{2 \times 2}$, $p \in (1, 2]$.
\begin{proposition} \label{eq:Phix.holes}
Let $Q \in \bU_{p,r}^{2 \times 2}$ for some $p \in (1, 2]$, $r > 0$, and let
$h \ge 0$. Then the following uniform estimates hold:

\textbf{(i)} For any $\delta > 0$ there exists a set $\cJ_{Q, \delta} \subset
\bZ$ such that for $s \in (0, \infty]$, $k \in \{1, 2\}$ and $j = 2/k$ we have
\begin{align}
\label{eq:card.cJQdelta}
 \card(\bZ \setminus \cJ_{Q,\delta}) \le N_{\delta}
 := C_0 \cdot (r/\delta)^{p'},& \qquad 1/p' + 1/p = 1, \\
\label{eq:phijk.s<delta}
 \norm{\varphi_{jk}(\cdot, \l)}_s + \norm{\varphi_{kk}(\cdot, \l)
 -\varphi_{kk}^0(\cdot, \l)}_s < \delta, &
 \qquad \l \in \Pi_{h,Q,\delta}, \quad
 \Pi_{h,Q,\delta} := \bigcup_{n \in \cJ_{Q,\delta}} [n, n+1] \times [-h, h].
\end{align}
Here $C_0 = C_0(p, h, b) > 0$ does not depend on $Q$, $r$ and $\delta$.

\textbf{(ii)} There exists a constant $\delta_0 = \delta_0(B, h) > 0$ such that
\begin{align} \label{eq:Phik.norm.holes}
 \norm{\varphi_{kk}(\cdot, \l)}_s \ge \delta_0,
 \qquad \l \in \Pi_{h,Q,\delta}, \quad \delta \in (0, \delta_0],
 \quad s \in (0, \infty], \quad k \in \{1, 2\},
\end{align}
where the set $\cJ_{Q, \delta}$ also satisfies
inequalities~\eqref{eq:card.cJQdelta}--\eqref{eq:phijk.s<delta}.
\end{proposition}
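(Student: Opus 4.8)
The plan is to deduce Proposition~\ref{eq:Phix.holes} from the analogous compact-set result Proposition~\ref{eq:Phix.compact} by replacing the uniform cutoff $|\l|>M_\delta$ with a cutoff over a set $\cJ_{Q,\delta}$ of integers whose complement has controlled cardinality, exactly in the spirit of the passage from Proposition~\ref{prop:incompress.uniform} to Proposition~\ref{prop:incompress.holes}, and from Lemma~\ref{lem:fourier.coef.alter} to Lemma~\ref{lem:fourier.tail.Lp}/\ref{lem:max.fourier.tail.Lp}. First I would recall from Lemma~\ref{lem:phi.jk=e+int} (representation~\eqref{eq:phi.jkx}--\eqref{eq:Kjlk}) that
\begin{equation*}
 \varphi_{jk}(x,\l) - \delta_{jk} e^{i b_k \l x}
 = \int_0^x K_{j1,k}(x,t) e^{i b_1 \l t}\,dt + \int_0^x K_{j2,k}(x,t) e^{i b_2 \l t}\,dt,
\end{equation*}
where $K_{jl,k} = 2^{-1}(K^+_{jl} + (-1)^{l+k} K^-_{jl}) \in X^0_{1,p}(\Omega) \cap X^0_{\infty,p}(\Omega)$ by Theorem~\ref{th:K-wtK<Q-wtQ}. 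Since $\|f\|_s \le \|f\|_\infty$ for all $s \in (0,\infty]$, it suffices to bound $\esssup_{x\in[0,1]}$ of the right-hand side, i.e.\ to bound the ``maximal'' Fourier transform $\cF[K_{jl,k}](\l) = \sup_{x\in[0,1]}|\int_0^x K_{jl,k}(x,t) e^{i b_l \l t}\,dt|$. So the whole statement reduces to a tail estimate, uniform on the ball $\bU_{p,r}^{2\times 2}$, for $\cF[K_{jl,k}^\pm]$ evaluated along integers in a horizontal strip.

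The key step is then to obtain, for each of the (finitely many) kernels $K^\pm_{jl,k}$, a set $\cJ_{Q,\delta}$ with $\card(\bZ\setminus\cJ_{Q,\delta}) \le C(r/\delta)^{p'}$ on which $\cF[K^\pm_{jl,k}](\l) < \delta$ for $\l$ in the indicated strip; intersecting the eight resulting sets yields a single $\cJ_{Q,\delta}$ with the same type of cardinality bound (up to a constant). To get each individual set, I would apply Theorem~\ref{th:int.K<int.Q}, which bounds $|\int_0^x K^\pm_{jk}(x,t) e^{i b_k \l t}\,dt|$ by $C e^{2(b_2-b_1)|\Im\l|x}(\sF_1(x,\l) + \sF_2(x,\l))$ with $\sF_k(x,\l) = \sup_{s\in[0,x]}|\int_0^s Q_{jk}(t) e^{i(b_k-b_j)\l t}\,dt|$; since $x \le 1$ and $|\Im\l| \le h$ on the strip, the exponential factor is bounded by $e^{2(b_2-b_1)h}$, and $\sF_k(x,\l) \le \sF[Q_{jk}]((b_k-b_j)\l)$ in the notation of Lemma~\ref{lem:max.fourier.tail.Lp}. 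Since the off-diagonal entries $Q_{jk}$ lie in $\bU_{p,r}$ (componentwise, with $\|Q_{jk}\|_p \le \|Q\|_p \le r$), Lemma~\ref{lem:max.fourier.tail.Lp} applied with $b = b_k - b_j \ne 0$ and $\delta$ replaced by a suitable multiple of $\delta$ produces a set $\cI_{Q_{jk}, c\delta}$ with $\card(\bZ\setminus\cI_{Q_{jk},c\delta}) \le C(r/\delta)^{p'}$ on which $\sF[Q_{jk}]((b_k-b_j)\l) < c\delta$ for $\l$ in the strip; chaining these bounds gives $\cF[K^\pm_{jl,k}](\l) < \delta$ there. Taking $\cJ_{Q,\delta}$ to be the intersection over all $(j,k,l)$ and both signs, and noting the relation between $K_{jl,k}$ and the $K^\pm_{jl}$, yields~\eqref{eq:card.cJQdelta}--\eqref{eq:phijk.s<delta}.

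Part (ii) is then an easy consequence of part (i), exactly as in Proposition~\ref{eq:Phix.compact}(ii): on the strip $\Pi_h$ one has $|\varphi^0_{kk}(x,\l)| = |e^{i b_k \l x}| \ge e^{-b_0 h} =: C$ with $b_0 = \max\{|b_1|,|b_2|\}$, hence $\|\varphi^0_{kk}(\cdot,\l)\|_s \ge C$ for all $s \in (0,\infty]$; choosing $\delta_0 = C/2$ and combining with the bound $\|\varphi_{kk}(\cdot,\l) - \varphi^0_{kk}(\cdot,\l)\|_s < \delta_0$ from~\eqref{eq:phijk.s<delta} gives $\|\varphi_{kk}(\cdot,\l)\|_s \ge C - \delta_0 = \delta_0$ for $\l \in \Pi_{h,Q,\delta}$, $\delta \le \delta_0$. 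The main obstacle I anticipate is bookkeeping rather than conceptual: one must be careful that the constant $C_0$ in~\eqref{eq:card.cJQdelta} genuinely does not depend on $Q$ (only on $p$, $h$, and $B$), which is exactly what the uniform estimate~\eqref{eq:int.Kjk<Ek+E1+E1} of Theorem~\ref{th:int.K<int.Q} together with the uniform $N_\delta = C(r/\delta)^{p'}$ from Lemma~\ref{lem:max.fourier.tail.Lp} provide; and one must track the innocuous exponential constants $e^{2(b_2-b_1)h}$ and $e^{-b_0 h}$ when passing between $\sF_k$ and $\sF[Q_{jk}]$ and when absorbing them into the threshold $\delta$.
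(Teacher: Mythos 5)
Your proposal is correct and follows essentially the same route as the paper: the paper simply cites estimate~\eqref{eq:PhiQ-Phi0} of Theorem~\ref{th:int.phi<int.Q} (which is itself obtained exactly as you do, from the representation~\eqref{eq:phi.jkx} and the kernel estimate of Theorem~\ref{th:int.K<int.Q}), applies Lemma~\ref{lem:max.fourier.tail.Lp} to $Q_{12}$ and $Q_{21}$ with the rescaled threshold $\delta' = \delta C^{-1} e^{2(b_1-b_2)h}/2$, and sets $\cJ_{Q,\delta} = \cI_{Q_{12},\delta'} \cap \cI_{Q_{21},\delta'}$, finishing with $\|f\|_s \le \|f\|_{\infty}$. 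Part (ii) is handled in the paper exactly as you propose, via~\eqref{eq:norm.Phik0} and the choice $\delta_0 = \exp\(-\max\{|b_1|,|b_2|\}h\)/2$.
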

\begin{proof}
\textbf{(i)} The proof for $s = \infty$ is immediate from
the estimate~\eqref{eq:PhiQ-Phi0} of Theorem~\ref{th:int.phi<int.Q} and
Lemma~\ref{lem:max.fourier.tail.Lp} applied with $g = Q_{jk}$,
$b = b_k - b_k$ and $\delta' = \delta C^{-1} e^{2 (b_1-b_2) h} / 2$, where
$C$ is taken from the inequality~\eqref{eq:PhiQ-Phi0}. Namely, we need to set
$\cJ_{Q, \delta} = \cI_{Q_{12}, \delta'} \cap \cI_{Q_{21}, \delta'}$. Inequality
$\|f\|_s \le \|f\|_{\infty}$ valid for $f \in L^{\infty}[0,1]$ and $s \in (0, \infty]$, finishes the proof.

\textbf{(ii)} The proof is immediate by combining part (i) and
inequality~\eqref{eq:norm.Phik0} as it was done in the proof of
Proposition~\eqref{eq:Phix.compact}(ii) with the same $\delta_0 =
\exp\(-\max\{|b_1|, |b_2|\} \cdot h\)/2$.
\end{proof}
Recall that for $F \in L^s([0,1]; \bC^{2 \times 2})$, $s \in (0, \infty]$, we
denote $\|F\|_s := \|F\|_{L^s([0,1]; \bC^{2 \times 2})}$. Combining
Theorems~\ref{th:int.phi<int.Q} and~\ref{th:p.bessel} we obtain an important
stability property of the fundamental matrix.
\begin{proposition} \label{prop:sum.phi-wt.phi.x.p}
Let $Q, \wt{Q} \in \bU_{p, r}^{2 \times 2}$ for some $p \in (1, 2]$ and $r > 0$.
Let $\L = \{\mu_n\}_{n \in \bZ}$ be an incompressible sequence of density $d$
lying in the strip $\Pi_h$. Then for some $C = C(p, r, B, h, d) > 0$ that does
not depend on $Q$, $\wt{Q}$ and $\L$ the following uniform estimates hold
\begin{align}
\label{eq:sum.wh.phi.p'}
 \sum_{n \in \bZ} \norm{\Phi_Q(\cdot, \mu_n) -
 \Phi_{\wt{Q}}(\cdot, \mu_n)}_{\infty}^{p'}
 & \le C \cdot \|Q - \wt{Q}\|_p^{p'},
 \qquad Q, \wt{Q} \in \bU_{p, r}^{2 \times 2}, \quad 1/p + 1/p' = 1, \\
\label{eq:sum.wh.phi.p.vu}
 \sum_{n \in \bZ} (1+|n|)^{p-2}\norm{\Phi_Q(\cdot, \mu_n) -
 \Phi_{\wt{Q}}(\cdot, \mu_n)}_{\infty}^{p}
 & \le C \cdot \|Q - \wt{Q}\|_p^{p},
 \qquad Q, \wt{Q} \in \bU_{p, r}^{2 \times 2}.
\end{align}
\end{proposition}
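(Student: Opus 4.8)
The plan is to combine the pointwise deviation estimate for the fundamental matrix from Theorem~\ref{th:int.phi<int.Q} with the abstract Bessel-type inequalities of Theorem~\ref{th:p.bessel}, applied to the incompressible sequence $\L = \{\mu_n\}_{n \in \bZ}$. The starting point is estimate~\eqref{eq:PhiQ-PhiwtQ}, which bounds $\abs{\Phi_Q(x,\l) - \Phi_{\wt{Q}}(x,\l)}$ uniformly in $x \in [0,1]$ by $C e^{2(b_2-b_1)|\Im \l| x}$ times the sum over $j \ne k$ of the ``maximal'' Fourier transform quantities $\sF_k[Q - \wt{Q}](x,\l)$ and $\|Q - \wt{Q}\|_1 \sF_k[\wt{Q}](x,\l)$. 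Since $x$ ranges over $[0,1]$ and $\L \subset \Pi_h$, the exponential factor is bounded by $e^{2(b_2-b_1)h}$, a constant depending only on $B$ and $h$. Taking the supremum over $x \in [0,1]$ and then recalling definition~\eqref{eq:def.sFk}, I would first observe that $\sup_{x \in [0,1]} \sF_k[W](x,\l) = \sup_{s \in [0,1]} \abs{\int_0^s W_{jk}(t) e^{i(b_k-b_j)\l t}\,dt} = \sF[W_{jk}]\bigl((b_k-b_j)\l\bigr)$ in the notation~\eqref{eq:sF.f.def}, so that
\begin{equation*}
 \norm{\Phi_Q(\cdot, \mu_n) - \Phi_{\wt{Q}}(\cdot, \mu_n)}_{\infty}
 \le C_1 \sum_{j \ne k} \Bigl(\sF[\wh{Q}_{jk}]\bigl((b_k-b_j)\mu_n\bigr)
 + \|\wh{Q}\|_1 \,\sF[\wt{Q}_{jk}]\bigl((b_k-b_j)\mu_n\bigr)\Bigr),
\end{equation*}
where $\wh{Q} = Q - \wt{Q}$ and $C_1 = C_1(B, h)$.

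Next I would handle the two terms separately. For the first term, I note that if $\L = \{\mu_n\}$ is incompressible of density $d$ lying in $\Pi_h$, then the scaled sequence $\{(b_k-b_j)\mu_n\}$ is incompressible of some density $d' = d'(d, B)$ lying in $\Pi_{h'}$ with $h' = |b_k - b_j| h$; this is an elementary rescaling observation. Applying Theorem~\ref{th:p.bessel} (inequality~\eqref{eq:sum.int.g<g}) with $g = \wh{Q}_{jk} \in L^p[0,1]$ and this rescaled sequence yields $\sum_{n \in \bZ} \sF[\wh{Q}_{jk}]^{p'}\bigl((b_k-b_j)\mu_n\bigr) \le C_2 \|\wh{Q}_{jk}\|_p^{p'} \le C_2 \|\wh{Q}\|_p^{p'}$, and similarly~\eqref{eq:sum.int.nu.g<g} gives the weighted version with the factor $(1+|n|)^{p-2}$ and exponent $p$. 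For the second term, which carries the prefactor $\|\wh{Q}\|_1 \le \|\wh{Q}\|_p$ (using $\|\cdot\|_1 \le \|\cdot\|_p$ on $[0,1]$), I would apply the same two inequalities of Theorem~\ref{th:p.bessel} but now with $g = \wt{Q}_{jk} \in \bU_{p,r}^{2\times 2}$, obtaining $\sum_{n} \sF[\wt{Q}_{jk}]^{p'}\bigl((b_k-b_j)\mu_n\bigr) \le C_2 \|\wt{Q}_{jk}\|_p^{p'} \le C_2 r^{p'}$ and the analogous weighted bound; multiplying by $\|\wh{Q}\|_p^{p'}$ keeps the right-hand side of the form $C \|Q - \wt{Q}\|_p^{p'}$.

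Finally I would assemble the pieces: raising the displayed pointwise bound to the $p'$-th power, using the elementary inequality $(\sum_{i=1}^m x_i)^{p'} \le m^{p'-1} \sum x_i^{p'}$ to split the finitely many summands ($j \ne k$ gives four terms, doubled by the two contributions), summing over $n \in \bZ$, and invoking the four Bessel estimates just described, produces~\eqref{eq:sum.wh.phi.p'}. The weighted estimate~\eqref{eq:sum.wh.phi.p.vu} follows in exactly the same manner using the exponent $p$ and the weight $(1+|n|)^{p-2}$ throughout, together with~\eqref{eq:sum.int.nu.g<g}. I do not expect a genuine obstacle here: the only points requiring care are the interchange of the supremum over $x$ with the structure of $\sF_k$ (which is immediate once one recognizes $\sF_k[W](1,\l)$ as the maximal Fourier transform of $W_{jk}$ at the scaled argument) and the verification that rescaling by $b_k - b_j$ preserves incompressibility and strip-confinement with controlled constants; the remainder is a bookkeeping combination of already-established inequalities.
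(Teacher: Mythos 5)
Your proposal is correct and follows essentially the same route as the paper: both start from the pointwise bound of Theorem~\ref{th:int.phi<int.Q}, use the identity $\sup_{x\in[0,1]}\sF_k[W](x,\l)=\sF[W_{jk}]\bigl((b_k-b_j)\l\bigr)$ together with the boundedness of the exponential factor on $\Pi_h$, observe that the rescaled sequence $\{(b_k-b_j)\mu_n\}$ remains incompressible in a strip with controlled constants, and then apply Theorem~\ref{th:p.bessel} (both \eqref{eq:sum.int.g<g} and \eqref{eq:sum.int.nu.g<g}) to $Q_{jk}-\wt{Q}_{jk}$ and to $\wt{Q}_{jk}$, assembling the finitely many terms via the power-mean inequality and $\|\cdot\|_1\le\|\cdot\|_p$. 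No gaps.
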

\begin{proof}
First note that notations~\eqref{eq:def.sFk} and~\eqref{eq:sF.f.def} imply for
any $W \in \LL{1}$ and $\l \in \bC$
\begin{equation} \label{eq:sFk=sF}
 \sup_{x \in [0,1]} \sF_k[W](x, \l) = \sF_k[W](1, \l)
 = \sF[W_{jk}]((b_k-b_j)\l), \qquad k \in \{1, 2\}, \quad j = 2/k.
\end{equation}
It follows from Theorem~\ref{th:int.phi<int.Q}, inequalities $|a+b|^{p'} \le
2^{p'-1}(|a|^{p'} + |b|^{p'})$ and $|\Im \mu_n| \le h$, and
relation~\eqref{eq:sFk=sF} that
\begin{multline} \label{eq:sum.phijk<sum.sF}
 \sum_{n \in \bZ} \norm{\Phi_Q(\cdot, \mu_n) -
 \Phi_{\wt{Q}}(\cdot, \mu_n)}_{\infty}^{p'}
 = \sum_{n \in \bZ} \max_{j,k \in \{1, 2\}} \sup_{x \in [0,1]}
 \abs{\varphi_{jk}(x, \mu_n) - \wt{\varphi}_{jk}(x, \mu_n)}^{p'} \\
 \le 2^{p'-1} C^{p'} e^{2 (b_2 - b_1) |\Im \mu_n| p'}
 \sum_{n \in \bZ} \sum_{k=1}^2 \(\Bigl(\sF_k[Q - \wt{Q}](1,\mu_n)\Bigr)^{p'} +
 \|Q - \wt{Q}\|_1^{p'} \(\sF_k[\wt{Q}](1,\mu_n)\)^{p'}\) \\
 \le 2^{p'-1} C^{p'} e^{2 (b_2 - b_1) h p'}
 \sum_{j \ne k} \sum_{n \in \bZ}
 \(\sF[Q_{jk} - \wt{Q}_{jk}]^{p'}\bigl((b_k-b_j) \mu_n\bigr) +
 \|Q - \wt{Q}\|_1^{p'} \sF[\wt{Q}_{jk}]^{p'}\bigl((b_k-b_j) \mu_n\bigr)\),
\end{multline}
where $\sF[g]^{p'}(\l) := \(\sF[g](\l)\)^{p'}$ and $C = C(B, r) > 0$ does not
depend on $Q$, $\wt{Q}$ and the sequence $\L$.

Let $k \in \{1, 2\}$, $j = 2/k$ and $\tau := |b_k - b_j| = |b_1 - b_2|$. Since
$\L = \{\mu_n\}_{n\in \bZ}$ is an incompressible sequence of density $d$ lying
in the strip $\Pi_h$, then $\L_k = \{(b_k - b_j) \mu_n\}_{n\in \bZ}$ is an
incompressible sequence of density $d/\tau$ lying in the strip $\Pi_{h\tau}$.
Hence, applying Theorem~\ref{th:p.bessel} for the sequence $\L_k$, first with
$g = Q_{jk} - \wt{Q}_{jk} \in L^p[0,1]$, $p \in (1,2]$, and then with
$g = \wt{Q}_{jk} \in L^p[0,1]$, we arrive at
\begin{align}
\label{eq:sum.sF.whQ}
 \sum_{n \in \bZ} \sF[Q_{jk} - \wt{Q}_{jk}]^{p'}\bigl((b_k - b_j)\mu_n\bigr)
 & \le C\bigl(p, h\tau, d/\tau\bigr) \cdot \|Q_{jk} - \wt{Q}_{jk}\|_p^{p'}
 \le C_1 \|Q - \wt{Q}\|_p^{p'}, \\
\label{eq:sum.sF.wtQ}
 \sum_{n \in \bZ} \|Q - \wt{Q}\|_1^{p'} \sF[\wt{Q}_{jk}]^{p'}((b_k - b_j)\mu_n)
 & \le C\bigl(p, h\tau, d/\tau\bigr) \cdot \|Q - \wt{Q}\|_1^{p'}
 \|\wt{Q}_{jk}\|_p^{p'} \le C_1 r^{p'} \|Q - \wt{Q}\|_p^{p'},
\end{align}
where $C_1 = C_1(p, B, h, d) = C(p, h \tau, d/\tau) > 0$.
Inserting~\eqref{eq:sum.sF.whQ}--\eqref{eq:sum.sF.wtQ}
into~\eqref{eq:sum.phijk<sum.sF} we arrive at the desired
estimate~\eqref{eq:sum.wh.phi.p'}. Estimate~\eqref{eq:sum.wh.phi.p.vu} is
obtained similarly by using inequality~\eqref{eq:sum.int.nu.g<g}.
\end{proof}
The following auxiliary result of mostly algebraic nature will be needed to
study root functions stability in the next subsection.
\begin{lemma} \label{lem:Fx-wtFx}
Let $Q, \wt{Q} \in \bU_{1,r}^{2 \times 2}$ for some $r > 0$. Let also
\begin{equation}
 F(x, \l) := \sum_{j=1}^2 \(\a_j + \sum_{k,l=1}^2 \b_{jkl}
 \varphi_{kl}(1, \l)\) \Phi_j(x, \l), \quad\text{and}\quad
 \wt{F}(x, \l) := \sum_{j=1}^2 \(\a_j + \sum_{k,l=1}^2 \b_{jkl}
 \wt{\varphi}_{kl}(1, \l)\) \wt{\Phi}_j(x, \l),
\end{equation}
where $\a := \col(\a_1, \a_2) \in \bC^2$ and $\b :=
(\b_{jkl})_{j,k,l=1}^2 \in \bC^{2 \times 2 \times 2}$. Let $h \ge 0$. Then
there exists $C = C(r, B, \a, \b, h)$ that does not depend on $Q$ and
$\wt{Q}$ such that
\begin{equation} \label{eq:Fl-wtF.wtl<C}
 \norm{F(\cdot, \l) - \wt{F}(\cdot, \wt{\l})}_{\infty}
 \le C \cdot \(|\l - \wt{\l}| +
 \norm{\Phi_Q(\cdot, \l) - \Phi_{\wt{Q}}(\cdot, \l)}_{\infty}\),
 \qquad \l, \wt{\l} \in \Pi_h.
\end{equation}
\end{lemma}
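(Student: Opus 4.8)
The plan is to prove \eqref{eq:Fl-wtF.wtl<C} by the standard ``add and subtract'' technique, splitting the deviation $F(\cdot,\l)-\wt F(\cdot,\wt\l)$ into a part controlled by $|\l-\wt\l|$ and a part controlled by $\|\Phi_Q(\cdot,\l)-\Phi_{\wt Q}(\cdot,\l)\|_\infty$. First I would insert $\wt F(\cdot,\l)$ as an intermediate term:
\begin{equation*}
 \norm{F(\cdot,\l)-\wt F(\cdot,\wt\l)}_\infty
 \le \norm{F(\cdot,\l)-\wt F(\cdot,\l)}_\infty
 + \norm{\wt F(\cdot,\l)-\wt F(\cdot,\wt\l)}_\infty .
\end{equation*}
For the second term one uses that $\wt F(\cdot,\l)$ is, for fixed potential $\wt Q$, a smooth function of $\l$ with a $\l$-derivative that is uniformly bounded on $\Pi_h$ over $\wt Q\in\bU_{1,r}^{2\times2}$; this is exactly what Proposition~\ref{eq:Phix.compact}(iii) provides for the entries $\wt\varphi_{kl}$ and their derivatives, and the columns $\wt\Phi_j(x,\l)$ obey the same kind of bound since they are built from the same $\varphi$'s. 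Hence $\norm{\wt F(\cdot,\l)-\wt F(\cdot,\wt\l)}_\infty\le C|\l-\wt\l|$ by the mean value theorem along the segment joining $\l$ and $\wt\l$ (which stays in a slightly larger strip, so the uniform derivative bound still applies).

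For the first term, expand both $F$ and $\wt F$ at the same spectral parameter $\l$ and use the algebraic identity \eqref{eq:XY-wt.XY}, i.e. $XY-\wt X\wt Y = X(Y-\wt Y)+(X-\wt X)\wt Y$, to each product $\bigl(\a_j+\sum_{k,l}\b_{jkl}\varphi_{kl}(1,\l)\bigr)\Phi_j(x,\l)$. This writes $F(\cdot,\l)-\wt F(\cdot,\l)$ as a finite linear combination, with coefficients depending only on $\a,\b$, of terms of two shapes: $\bigl(\varphi_{kl}(1,\l)-\wt\varphi_{kl}(1,\l)\bigr)\cdot(\text{bounded})$ and $(\text{bounded})\cdot\bigl(\Phi_j(x,\l)-\wt\Phi_j(x,\l)\bigr)$. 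The ``bounded'' factors — namely $\a_j$, $\wt\varphi_{kl}(1,\l)$, $\Phi_j(x,\l)$ — are all uniformly bounded on $\Pi_h$ by Proposition~\ref{eq:Phix.compact}(iii) (with $x=1$ where needed). It remains to bound $|\varphi_{kl}(1,\l)-\wt\varphi_{kl}(1,\l)|$ and $|\Phi_j(x,\l)-\wt\Phi_j(x,\l)|$; but both are entries (or columns) of $\Phi_Q(\cdot,\l)-\Phi_{\wt Q}(\cdot,\l)$, so each is $\le \norm{\Phi_Q(\cdot,\l)-\Phi_{\wt Q}(\cdot,\l)}_\infty$ in the $\ell^\infty$-matrix norm being used. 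Collecting the finitely many terms gives $\norm{F(\cdot,\l)-\wt F(\cdot,\l)}_\infty\le C(r,B,\a,\b,h)\cdot\norm{\Phi_Q(\cdot,\l)-\Phi_{\wt Q}(\cdot,\l)}_\infty$.

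Combining the two estimates yields \eqref{eq:Fl-wtF.wtl<C} with a constant depending only on $r,B,\a,\b,h$. I do not expect a genuine obstacle here: the only point requiring a little care is making sure the derivative bound of Proposition~\ref{eq:Phix.compact}(iii) is applied on a strip slightly wider than $\Pi_h$ so that the whole segment $[\l,\wt\l]$ (and the discs used implicitly in Cauchy-type estimates of $\tfrac{d}{d\l}\varphi_{jk}$) lies inside it — this is harmless because $h$ is arbitrary and one may simply replace $h$ by $h+1$ throughout, absorbing the change into the constant. The bookkeeping of which factor is ``bounded'' versus ``small'' in the expansion via \eqref{eq:XY-wt.XY} is routine and is the most labor-intensive but least delicate part.
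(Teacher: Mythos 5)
Your proposal is correct and follows essentially the same route as the paper: the same splitting via the intermediate term $\wt{F}(\cdot,\l)$, the mean value (integral of the $\l$-derivative) bound from Proposition~\ref{eq:Phix.compact}(iii) for the second summand, and the product-splitting identity~\eqref{eq:XY-wt.XY} together with the uniform bounds of Proposition~\ref{eq:Phix.compact}(iii) for the first. The only superfluous precaution is the widening of the strip to $h+1$: since $\Pi_h$ is convex the segment $[\l,\wt{\l}]$ already lies in $\Pi_h$, and the derivative bound of Proposition~\ref{eq:Phix.compact}(iii) is stated directly on $\Pi_h$, so no enlargement is needed.
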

\begin{proof}
We will split the desired difference into two parts:
\begin{equation} \label{eq:Fl-wtF.wtl}
 \norm{F(\cdot, \l) - \wt{F}(\cdot, \wt{\l})}_{\infty}
 \le \norm{F(\cdot, \l) - \wt{F}(\cdot, \l)}_{\infty}
 + \norm{\wt{F}(\cdot, \l) - \wt{F}(\cdot, \wt{\l})}_{\infty},
 \qquad \l, \wt{\l} \in \Pi_h.
\end{equation}
The second summand is trivially estimated using
Proposition~\eqref{eq:Phix.compact}(iii). Indeed, estimate~\eqref{eq:phijk<C2}
is valid for $\wt{F}(x, \l)$ but with $C_2$ that also depends on matrices $\a$
and $\b$. Hence
\begin{equation} \label{eq:Fxl-Fxwtl}
 \abs{\wt{F}(x, \l) - \wt{F}(x, \wt{\l})} = \abs{\int_{\l}^{\wt{\l}}
 \frac{d}{dz}\wt{F}(x, z) dz} \le \max_{z \in [\l, \wt{\l}]}
 \abs{\frac{d}{dz}F(x, z)} \cdot |\l - \wt{\l}| \le C_2|\l - \wt{\l}|,
 \qquad x \in [0,1], \quad \l, \wt{\l} \in \Pi_h.
\end{equation}

Recall that $\|f\|_{\bC^2} = \max\{|f_1|, |f_2|\}$ for $f = \col(f_1, f_2) \in
\bC^2$. For the first summand the part of the estimate~\eqref{eq:phijk<C2}
regarding $|\varphi_{jk}(x,\l)|$ implies
\begin{align}
\nonumber
 \abs{F(x, \l) - F(x, \wt{\l})}
 &\le \sum_{j=1}^2 \(|\a_j| \cdot
 \abs{\Phi_j(x, \l) - \wt{\Phi}_j(x, \l)}
 + \sum_{k,l=1}^2 |\b_{jkl}| \cdot \abs{\varphi_{kl}(1, \l) \Phi_j(x, \l)
 - \wt{\varphi}_{kl}(1, \l) \wt{\Phi}_j(x, \l)}\) \\
\nonumber
 & \le \sum_{j=1}^2 \(|\a_j| \cdot
 \abs{\Phi_j(x, \l) - \wt{\Phi}_j(x, \l)} \right .\\
\nonumber
 & \qquad \left. + \sum_{k,l=1}^2 |\b_{jkl}|
 \(\abs{\varphi_{kl}(1, \l) - \wt{\varphi}_{kl}(1, \l)} \cdot
 \abs{\Phi_j(x, \l)} + \abs{\wt{\varphi}_{kl}(1, \l)} \cdot
 \abs{\Phi_j(x, \l) - \wt{\Phi}_j(x, \l)}\)\) \\
\nonumber
 & \le C_1 \sum_{j=1}^2 \(
 \abs{\Phi_j(x, \l) - \wt{\Phi}_j(x, \l)} +
 \abs{\Phi_j(1, \l) - \wt{\Phi}_j(1, \l)}\) \\
\label{eq:Fxl-wtFxl}
 & \le 4 C_1 \norm{\Phi_Q(\cdot, \l) - \Phi_{\wt{Q}}(\cdot, \l)}_{\infty},
 \qquad x \in [0,1], \quad \l, \wt{\l} \in \Pi_h,
\end{align}
where $C_1 > 0$ only depends on $\a$, $\b$ and $C_2$ from~\eqref{eq:phijk<C2}.
Inserting~\eqref{eq:Fxl-Fxwtl} and~\eqref{eq:Fxl-wtFxl}
into~\eqref{eq:Fl-wtF.wtl} we arrive at~\eqref{eq:Fl-wtF.wtl<C} with
$C = \max\{4 C_1, C_2\}$, which finishes the proof.
\end{proof}
\subsection{Stability property of the eigenfunctions}
Now we are ready to formulate and prove main results of this section. The
following result in the case of $p=1$ generalizes~\cite[Theorem~4]{Sad16} for
the case of Dirac-type system and extends it with the new results for
$p \in (1,2]$.
\begin{theorem} \label{th:eigenfunc.compact}
Let $\cK$ be compact in $\LL{p}$ for some $p \in [1, 2]$ and $Q, \wt{Q} \in
\cK$. Let boundary conditions~\eqref{eq:BC} be strictly regular. Let also
$s \in (0, \infty]$. Then there exist systems $\{f_{Q,n}\}_{n \in \bZ}$ and
$\{f_{\wt{Q},n}\}_{n \in \bZ}$ of root vectors of the operators $L(Q)$ and
$L(\wt{Q})$ such that the following uniform relations hold
\begin{align}
\label{eq:fn.norm}
 \|f_{Q,n}\|_s = \|f_{\wt{Q},n}\|_s &= 1,
 \qquad |n| > N, \qquad Q, \wt{Q} \in \cK, \\
\label{eq:fn-wt.fn<Q.L1}
 \sup_{|n| > N} \norm{f_{Q,n} - f_{\wt{Q},n}}_{\infty}
 & \le C \cdot \|Q - \wt{Q}\|_1, \qquad Q, \wt{Q} \in \cK, \quad p = 1, \\
\label{eq:fn-wt.fn<Q.Lp}
 \sum_{|n| > N} \norm{f_{Q,n} - f_{\wt{Q},n}}_{\infty}^{p'} & \le C \cdot
 \|Q - \wt{Q}\|_p^{p'}, \qquad Q, \wt{Q} \in \cK, \quad p \in (1, 2],
 \quad 1/p' + 1/p = 1, \\
\label{eq:fn-wt.fn<Q.Hardy}
 \sum_{|n| > N} (1+|n|)^{p-2} \norm{f_{Q,n} - f_{\wt{Q},n}}_{\infty}^{p}
 & \le C \cdot \|Q - \wt{Q}\|_p^{p},
 \qquad Q, \wt{Q} \in \cK, \quad p \in (1, 2].
\end{align}
Here constants $N \in \bN$ and $C > 0$ do not depend on $Q$, $\wt{Q}$ and $s$.
If $p=1$ then also
\begin{align} \label{eq:fn-wt.fn.to.0}
 \sup_{Q, \wt{Q} \in \cK} \norm{f_{Q,n} - f_{\wt{Q},n}}_{\infty} \to 0
 \quad\text{as}\quad |n| \to \infty.
\end{align}
\end{theorem}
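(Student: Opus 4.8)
The plan is to reduce the stability of the eigenfunctions to the already established stability of the fundamental matrix (Theorem~\ref{th:int.phi<int.Q}, i.e. estimate~\eqref{eq:PhiQ-PhiwtQ}) together with the stability of the eigenvalues (Theorem~\ref{th:ln-wtln<Q-wtQ}). First I would recall that, since boundary conditions~\eqref{eq:BC.new} are strictly regular, for $|n|>N$ the eigenvalue $\l_{Q,n}$ is simple, and a corresponding eigenfunction of $L(Q)$ is $\wt{f}_{Q,n}(x):=c_{1,n}\Phi_1(x,\l_{Q,n})+c_{2,n}\Phi_2(x,\l_{Q,n})$, where the coefficients $c_{j,n}$ are determined (up to a scalar) by the single relation $\wh U_1\bigl(c_{1,n}\Phi_1(\cdot,\l_{Q,n})+c_{2,n}\Phi_2(\cdot,\l_{Q,n})\bigr)=0$; explicitly one may take, say, $c_{1,n}=U_1(\Phi_2(\cdot,\l_{Q,n}))$, $c_{2,n}=-U_1(\Phi_1(\cdot,\l_{Q,n}))$. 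Because $U_1$ is a fixed linear functional in the boundary values, these coefficients are linear combinations of the entries $\varphi_{kl}(1,\l_{Q,n})$ (and the exponentials $e^{ib_k\l_{Q,n}}$), so $\wt{f}_{Q,n}$ has exactly the algebraic form $F(x,\l_{Q,n})$ of Lemma~\ref{lem:Fx-wtFx} with coefficient arrays $\a,\b$ depending only on the BC. The normalized eigenfunction is then $f_{Q,n}:=\wt f_{Q,n}/\|\wt f_{Q,n}\|_s$.

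Next I would apply Lemma~\ref{lem:Fx-wtFx}: taking $\l=\l_{Q,n}$ and $\wt\l=\l_{\wt Q,n}$, which by Proposition~\ref{prop:incompress.uniform}(i) both lie in a common strip $\Pi_h$ (with $h$ independent of $Q,\wt Q\in\cK$), we get
\begin{equation*}
 \bignorm{\wt f_{Q,n}-\wt f_{\wt Q,n}}_{\infty}
 \le C\(|\l_{Q,n}-\l_{\wt Q,n}|
 +\bignorm{\Phi_Q(\cdot,\l_{Q,n})-\Phi_{\wt Q}(\cdot,\l_{Q,n})}_{\infty}\),
 \qquad |n|>N.
\end{equation*}
A standard argument then passes from the unnormalized $\wt f_{Q,n}$ to the normalized $f_{Q,n}$: by Proposition~\ref{eq:Phix.compact}(ii), $\|\varphi_{kk}(\cdot,\l_{Q,n})\|_s\ge\delta_0>0$ for $|n|>N$, which together with the form of $\wt f_{Q,n}$ and the uniform boundedness~\eqref{eq:phijk<C2} gives $c_1\le\|\wt f_{Q,n}\|_s\le c_2$ uniformly; hence
\begin{equation*}
 \bignorm{f_{Q,n}-f_{\wt Q,n}}_{\infty}
 \le C\bigl(\bignorm{\wt f_{Q,n}-\wt f_{\wt Q,n}}_{\infty}
 +\bigl|\,\|\wt f_{Q,n}\|_s-\|\wt f_{\wt Q,n}\|_s\,\bigr|\bigr)
 \le C'\bignorm{\wt f_{Q,n}-\wt f_{\wt Q,n}}_{\infty},
\end{equation*}
using the reverse triangle inequality for the $L^s$-norms and the embedding $\|\cdot\|_s\le C\|\cdot\|_\infty$. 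So everything is controlled by $|\l_{Q,n}-\l_{\wt Q,n}|$ plus $\|\Phi_Q(\cdot,\l_{Q,n})-\Phi_{\wt Q}(\cdot,\l_{Q,n})\|_\infty$.

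Now I would finish each of the four estimates separately. For~\eqref{eq:fn-wt.fn<Q.L1} ($p=1$): the eigenvalue term is bounded by $\|Q-\wt Q\|_1$ via~\eqref{eq:sup.ln-wtln}, while the fundamental-matrix term is bounded using~\eqref{eq:PhiQ-PhiwtQ} at $\l=\l_{Q,n}\in\Pi_h$, where $e^{2(b_2-b_1)|\Im\l_{Q,n}|}\le e^{2(b_2-b_1)h}$ and $\sF_k[\cdot](1,\l)\le e^{(b_2-b_1)h}\|\cdot\|_1$, giving $\le C\|Q-\wt Q\|_1$. For~\eqref{eq:fn-wt.fn<Q.Lp} and~\eqref{eq:fn-wt.fn<Q.Hardy} ($p\in(1,2]$): I sum the inequality over $|n|>N$ after raising to the appropriate power, use $|a+b|^{p'}\le 2^{p'-1}(|a|^{p'}+|b|^{p'})$, then control $\sum|\l_{Q,n}-\l_{\wt Q,n}|^{p'}$ by~\eqref{eq:sum.ln-wtln} and $\sum\|\Phi_Q(\cdot,\l_{Q,n})-\Phi_{\wt Q}(\cdot,\l_{Q,n})\|_\infty^{p'}$ by Proposition~\ref{prop:sum.phi-wt.phi.x.p} (estimate~\eqref{eq:sum.wh.phi.p'}), applied with the incompressible sequence $\L=\{\l_{Q,n}\}$, which by Proposition~\ref{prop:incompress.uniform}(i) has density and strip-width independent of $Q,\wt Q\in\cK$; the weighted case uses~\eqref{eq:weight.ln-wtln<Q-wtQ} and~\eqref{eq:sum.wh.phi.p.vu} in the same way. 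Finally,~\eqref{eq:fn-wt.fn.to.0} follows from~\eqref{eq:lim.ln-wtln} together with the fact that $\sup_{Q,\wt Q\in\cK}\|\Phi_Q(\cdot,\l_{Q,n})-\Phi_{\wt Q}(\cdot,\l_{Q,n})\|_\infty\to0$ as $|n|\to\infty$ — this last point reduces, via~\eqref{eq:PhiQ-Phi0} and~\eqref{eq:phi-wt.phi.jkx}, to the uniform Riemann--Lebesgue statement of Lemma~\ref{lem:fourier.coef.alter} applied to the compact image of $\cK$ under $Q\mapsto K_Q^{\pm}$, since $\l_{Q,n}\in\Pi_h$ and $\Re\l_{Q,n}\to\pm\infty$ uniformly in $Q\in\cK$.

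The main obstacle I expect is not any single estimate but the bookkeeping around the choice and uniformity of the eigenfunction representation: one must check that for $|n|>N$ (with $N$ depending only on $\cK$, $A$, $B$, via Proposition~\ref{prop:incompress.uniform}) the coefficients $c_{j,n}$ are genuinely nonzero and uniformly bounded above and below, so that $f_{Q,n}$ is well defined, spans the (one-dimensional) eigenspace, and the normalization does not blow up; this is exactly where strict regularity and the lower bound~\eqref{eq:Phik.norm} for $\|\varphi_{kk}\|_s$ are used, and it is the step where the enumeration of the two spectra must be matched via the canonical ordering so that $\l_{Q,n}$ and $\l_{\wt Q,n}$ sit in the same disc $\bD_\eps(\l_n^0)$.
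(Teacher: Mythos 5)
Your overall strategy is the paper's: represent the eigenfunction as $F(x,\l_{Q,n})$ with coefficients built from the boundary form, control $\|F(\cdot,\l_{Q,n})-\wt F(\cdot,\l_{\wt Q,n})\|_\infty$ via Lemma~\ref{lem:Fx-wtFx}, bound $|\l_{Q,n}-\l_{\wt Q,n}|$ and $\|\Phi_Q(\cdot,\l_{Q,n})-\Phi_{\wt Q}(\cdot,\l_{Q,n})\|_\infty$, and then sum using Proposition~\ref{prop:sum.phi-wt.phi.x.p} (the paper actually bounds $|\l_{Q,n}-\l_{\wt Q,n}|\le C\,|\Delta_{\wt Q}(\l_{Q,n})|\le C'\|\Phi_Q(\cdot,\l_{Q,n})-\Phi_{\wt Q}(\cdot,\l_{Q,n})\|_\infty$ directly from~\eqref{eq:Delta} and Proposition~\ref{prop:ln-wtln<Delta}, rather than invoking Theorem~\ref{th:ln-wtln<Q-wtQ}; your variant is also fine). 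The genuine gap is precisely the point you defer to ``bookkeeping'': your claim that the choice $c_{1,n}=\wh U_1(\Phi_2(\cdot,\l_{Q,n}))=b+a\varphi_{12}(\l_{Q,n})$, $c_{2,n}=-\wh U_1(\Phi_1(\cdot,\l_{Q,n}))=-(1+a\varphi_{11}(\l_{Q,n}))$ always yields $c_1\le\|\wt f_{Q,n}\|_s\le c_2$ uniformly is false. The lower bound via~\eqref{eq:Phik.norm} works only when one of the coefficients is bounded away from zero; this holds when $b\ne0$ (then $|b+a\varphi_{12}(\l_{Q,n})|\ge|b|-|a|\delta$), but fails when $b=c=0$ (quasi-periodic type BC, which can be strictly regular for $b_1\ne-b_2$, cf.\ Remark~\ref{rem:cond.examples}): on the branch of the spectrum coming from the factor $1+ae^{ib_1\l}$ of $\Delta_0(\l)=(d+e^{ib_2\l})(1+ae^{ib_1\l})$ one has $1+a\varphi_{11}(\l_{Q,n})\to0$ while $b+a\varphi_{12}(\l_{Q,n})=a\varphi_{12}(\l_{Q,n})$ is also $o(1)$, so $\wt f_{Q,n}$ may be arbitrarily small or even identically zero; the normalization then blows up and the Lipschitz bound is destroyed.

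The paper closes this by a case analysis that your proposal is missing. For $|b|+|c|\ne0$ (say $b\ne0$) the single formula $F$ suffices, with the lower bound~\eqref{eq:fns>C} obtained after fixing $\delta$ small in terms of $C_2,a,b$. For $b=c=0$ the spectrum splits into the two separated arithmetic-progression branches $\cI_1,\cI_2$ of $\L_0$ (Lemma~\ref{lem:ln0.exp.asymp}); on $\cI_1$ one keeps $F$ and proves $|1+a\varphi_{11}(\l_{Q,n})|\ge\tau/2$ using $\abs{1+ae^{ib_1\l_n^0}}\ge\tau$, the uniform localization $|\l_{Q,n}-\l_n^0|<\eps$ and $\|\varphi_{11}(\cdot,\l_{Q,n})-e^{ib_1\l_{Q,n}\,\cdot}\|<\delta$, while on $\cI_2$ one must switch to the eigenfunction built from the second boundary form, $G(x,\l)=(d+\varphi_{22}(\l))\Phi_1(x,\l)-\varphi_{21}(\l)\Phi_2(x,\l)$, with the lower bound $|d+\varphi_{22}(\l_{Q,n})|\ge\tau/2$; Lemma~\ref{lem:Fx-wtFx} is then applied separately to $F$ and to $G$. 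Without this branch-dependent choice (or some equivalent device guaranteeing a uniform two-sided bound on the pre-normalized eigenvectors), estimates~\eqref{eq:fn-wt.fn<Q.L1}--\eqref{eq:fn-wt.fn<Q.Hardy} do not follow in the case $b=c=0$; the rest of your argument, including the $p=1$ statement~\eqref{eq:fn-wt.fn.to.0} via the uniform Riemann--Lebesgue lemma, matches the paper.
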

\begin{proof}
Let $\L_0 = \{\l_n^0\}_{n \in \bZ}$ be the sequence of zeros of the
characteristic determinant $\Delta_0$. Let $\L = \{\l_n\}_{n \in \bZ}$ and
$\wt{\L} = \{\wt{\l}_n\}_{n \in \bZ}$ be canonically ordered sequences of zeros
of characteristic determinants $\Delta := \Delta_Q$ and $\wt{\Delta} :=
\Delta_{\wt{Q}}$ (i.e. eigenvalues of the operators $L(Q)$ and $L(\wt{Q})$)
respectively (see definition~\ref{def:canon.order} that involves sequences $\L$
and $\L_0$). Proposition~\ref{prop:incompress.uniform}(i) ensures that there
exist constants $h = h(\cK, B, A) > 0$ and $d = d(\cK, B, A) > 0$, not dependent
on $Q$ and $\wt{Q}$, such that $\L_Q$ and $\L_{\wt{Q}}$ an incompressible
sequences of density $d$ lying in the strip $\Pi_h$.
Proposition~\ref{prop:ln-wtln<Delta} implies that for some constants
$N_0 \in \bN$ and $C_0 > 0$, not dependent on $Q$ and $\wt{Q}$, uniform
inequality~\eqref{eq:ln-wtln<C.Delta} holds,
\begin{multline} \label{eq:ln-wtln<C3}
 |\l_n - \wt{\l}_n| \le C_0 |\wt{\Delta}(\l_n)| \le
 C_0 |\Delta(\l_n) - \wt{\Delta}(\l_n)| \\
 \le C_0 \cdot J \cdot \max_{j,k \in \{1, 2\}}
 \abs{\varphi_{jk}(1, \l_n) - \wt{\varphi}_{jk}(1, \l_n)}
 \le C_1 \norm{\Phi_Q(\cdot, \l_n) - \Phi_{\wt{Q}}(\cdot, \l_n)}_{\infty},
 \qquad |n| > N_0.
\end{multline}
Here we applied formula~\eqref{eq:Delta} for $\Delta$ and $\wt{\Delta}$, and set
$J := |J_{32}| + |J_{13}| + |J_{42}| + |J_{14}|$ and $C_1 = C_0 \cdot J > 0$.

Let $\eps > 0$ be fixed (we will choose it later). By
Proposition~\ref{prop:ln-wtln<Delta} we can choose $N_{\eps} \ge N_0$ that only
depends on $\cK$, $A$ and $B$ that guarantees uniform
inequality~\eqref{eq:ln-ln0<eps}, i.e.
\begin{equation} \label{eq:ln.wtln-ln0<eps}
 |\l_n - \l_n^0| < \eps, \quad |\wt{\l}_n - \l_n^0| < \eps,
 \qquad |n| > N_{\eps}.
\end{equation}

Let $\delta \in (0, \delta_0]$ be fixed (we will choose it later) and
$M_{\delta}$ be chosen to satisfy uniform inequality~\eqref{eq:phi-phi0<delta}
of Proposition~\ref{eq:Phix.compact}(i) on $\varphi_{jk}(\cdot, \l)$. Since
$\delta \le \delta_0$ we can assume that $M_{\delta} > M_{\delta_0}$. Hence
inequality~\eqref{eq:Phik.norm} is also satisfied. Since $\l_n^0 \to \infty$ as
$n \to \infty$ we can choose $N_{\delta,\eps} \ge N_{\eps}$ such that $|\l_n^0|
> M_{\delta} - \eps$ for $|n| > N_{\delta,\eps}$.
Combining this with inequality~\eqref{eq:ln.wtln-ln0<eps} ensures that
$|\l_n| > M_{\delta}$ and $|\wt{\l}_n| > M_{\delta}$ for $|n| > N_{\delta,\eps}$. Due
to the choice of $M_{\delta}$ and inequalities $|\Im \l_n| \le h$,
$|\Im \wt{\l}_n| \le h$, Proposition~\ref{eq:Phix.compact} implies that for
$s \in (0, \infty]$, $k \in \{1,2\}$, $j = 2/k$, we have with account of
notations~\eqref{eq:PhiQ}--\eqref{eq:Phi0},
\begin{align}
\label{eq:phi.ln-phi0.ln<delta}
 \norm{\varphi_{kk}(\cdot, \l_n) - \varphi_{kk}^0(\cdot, \l_n)}_s
 & < \delta, \qquad \norm{\varphi_{jk}(\cdot, \l_n)}_s < \delta,
 \qquad C_2 \le \norm{\varphi_{kk}(\cdot, \l_n)}_s \le C_3,
 \qquad |n| > N_{\delta,\eps}, \\
\label{eq:wt.phi.ln-wt.phi0.ln<delta}
 \norm{\wt{\varphi}_{kk}(\cdot, \l_n) - \varphi_{kk}^0(\cdot, \l_n)}_s
 & < \delta, \qquad \norm{\wt{\varphi}_{jk}(\cdot, \l_n)}_s < \delta,
 \qquad C_2 \le \norm{\wt{\varphi}_{kk}(\cdot, \l_n)}_s \le C_3,
 \qquad |n| > N_{\delta,\eps},
\end{align}
where $C_2 = \delta_0, C_3 > 0$ do not depend on $Q$, $\wt{Q}$ and $n$.

Since boundary conditions are regular, one can transform them to the
form~\eqref{eq:BC.new} with coefficients $a, b, c, d$ satisfying relation
$ad \ne bc$.

\textbf{(i)} In this step we assume that $|b| + |c| \ne 0$. Without loss of
generality it suffices to consider the case $b \ne 0$. It is easy to verify that
the vector-functions
\begin{align}
\label{eq:fnx}
 f_n(\cdot) &:= F(\cdot, \l_n), \quad F(x, \l) := (b + a \varphi_{12}(\l))
 \Phi_1(x, \l) - (1 + a \varphi_{11}(\l)) \Phi_2(x, \l), \\
\label{eq:wtfnx}
 \wt{f}_n(\cdot) &:= \wt{F}(\cdot, \wt{\l}_n), \quad
 \wt{F}(x, \l) := (b + a \wt{\varphi}_{12}(\l))
 \wt{\Phi}_1(x, \l) - (1 + a \wt{\varphi}_{11}(\l)) \wt{\Phi}_2(x, \l),
\end{align}
are (possibly zero) eigenfunctions of the operators $L(Q)$ and $L(\wt{Q})$
respectively, corresponding to the eigenvalues $\l_n$ and $\wt{\l}_n$ (see also
the proof of Theorem 1.1 in~\cite{LunMal16JMAA}). Let us show that with
appropriate choice of $\delta > 0$ the following inequalities hold:
\begin{equation} \label{eq:C4<fns<C5}
 C_4 \le \|f_n\|_s \le C_5, \qquad C_4 \le \|\wt{f}_n\|_s \le C_5,
 \qquad |n| > N_{\delta,\eps}, \quad s \in (0, \infty],
\end{equation}
where $C_4, C_5 > 0$ do not depend on $Q$ and $\wt{Q}$. The estimate from above
trivially follows from Proposition~\ref{eq:Phix.compact}(iii), and is valid
uniformly for all $n \in \bZ$, $Q, \wt{Q} \in \cK$ and $s \in (0, \infty]$. Let
$F(x,\l) = \col(F_1(x,\l), F_2(x,\l))$. Since $b \ne 0$,
inequalities~\eqref{eq:phi.ln-phi0.ln<delta} imply that
\begin{multline} \label{eq:fns>C}
 \|f_n\|_s = \|F(\cdot, \l_n)\|_s \ge \|F_1(\cdot, \l_n)\|_s \ge
 |b + a \varphi_{12}(\l_n)| \cdot \|\varphi_{11}(\cdot, \l_n)\|_s -
 |1 + a \varphi_{11}(\l_n)| \cdot \|\varphi_{12}(\cdot, \l_n)\|_s \\
 \ge (|b| - |a| \delta) C_2 - (1 + |a| \delta) \delta
 = |b| C_2 - \delta(1 + |a| (C_2 + \delta))
 \ge |b| C_2/2 =: C_4, \qquad |n| > N_{\delta,\eps},
\end{multline}
with appropriate $\delta = \delta(C_2(\cK, B), a, b) = \delta(\cK, B, A) > 0$
that does not depend on $Q$ and $\wt{Q}$. Inequality~\eqref{eq:C4<fns<C5} for
$\|\wt{f}_n\|_s$ is proved in the same way.

Now inequalities~\eqref{eq:C4<fns<C5} allow us to set $f_{Q,n} :=
f_n / \|f_n\|_s$ and $f_{\wt{Q},n} := \wt{f}_n / \|\wt{f}_n\|_s$ for
$|n| > N_{\delta,\eps}$. Vectors $f_{Q,n}$ and $f_{\wt{Q},n}$ are proper,
normalized eigenfunctions of the operators $L(Q)$ and $L(\wt{Q})$ respectively,
corresponding to simple eigenvalues $\l_n$ and $\wt{\l}_n$, and
satisfy~\eqref{eq:fn.norm} with $N = N_{\delta,\eps}$. To estimate
$\norm{f_{Q,n} - f_{\wt{Q},n}}_{\infty}$ we first observe that for any
$u, v \in L^{\infty}([0,1]; \bC^2)$ and $s \in (0, \infty]$ we have,
\begin{equation} \label{eq:u-v.norm.s}
 \norm{\frac{u}{\|u\|_s} - \frac{v}{\|v\|_s}}_{\infty} =
 \frac{\norm{\, \|v\|_s \cdot u - \|u\|_s \cdot v\,}_{\infty}}{\|u\|_s
 \cdot \|v\|_s} \le
 \frac{\|v\|_s \cdot \|u-v\|_{\infty} + \bigl|\,\|u\|_s - \|v\|_s\,\bigr|
 \cdot \|v\|_{\infty}}{\|u\|_s \cdot \|v\|_s}
 \le \frac{2 \|v\|_{\infty} \cdot \|u-v\|_{\infty}}{\|u\|_s \cdot \|v\|_s}.
\end{equation}
Here in the last step we applied inequalities $\|v\|_s \le \|v\|_{\infty}$
and $\bigl|\,\|u\|_s - \|v\|_s\,\bigr| \le \|u - v\|_s \le \|u - v\|_{\infty}$.
Setting $u = f_n$ and $v = \wt{f}_n$ in~\eqref{eq:u-v.norm.s}, taking into
account inequalities~\eqref{eq:C4<fns<C5} and then applying
Lemma~\ref{lem:Fx-wtFx} to functions $F(\cdot, \l)$ and $\wt{F}(\cdot, \wt{\l})$
from~\eqref{eq:fnx}--\eqref{eq:wtfnx} with account of~\eqref{eq:ln-wtln<C3} we
arrive at
\begin{multline} \label{eq:fn-wtfn}
 \norm{f_{Q,n} - f_{\wt{Q},n}}_{\infty}
 = \norm{\frac{f_n}{\|f_n\|_s} - \frac{\wt{f}_n}{\|\wt{f}_n\|_s}}_{\infty}
 \le 2 C_5 C_4^{-2} \norm{f_n - \wt{f}_n}_{\infty}
 = C_6 \norm{F(\cdot, \l_n) - \wt{F}(\cdot, \wt{\l}_n)}_{\infty} \\
 \le C_7 |\l_n - \wt{\l}_n|
 + C_7 \norm{\Phi_Q(\cdot, \l_n) - \Phi_{\wt{Q}}(\cdot, \l_n)}_{\infty}
 \le C_8 \norm{\Phi_Q(\cdot, \l_n) - \Phi_{\wt{Q}}(\cdot, \l_n)}_{\infty},
 \qquad |n| > N_{\delta,\eps},
\end{multline}
where $C_6 = 2 C_5 C_4^{-2} > 0$, $C_7 = C_6 C(r, B, \a, \b, h) =
C_6 C(\cK, B, A) > 0$ and $C_8 = C_7 (C_1 + 1) > 0$. Here $C(r, B, \a, \b, h)$
is from Lemma~\ref{lem:Fx-wtFx}, where $r = \sup_{Q \in \cK} \|Q\|_1 < \infty$
and $\a \in \bC^2$ and $\b \in \bC^{2 \times 2 \times 2}$ are derived from
formula~\eqref{eq:fnx} and only depend on $a$ and $b$.

Now inequality~\eqref{eq:fn-wtfn} and Proposition~\ref{prop:sum.phi-wt.phi.x.p}
imply estimates~\eqref{eq:fn-wt.fn<Q.Lp}--\eqref{eq:fn-wt.fn<Q.Hardy} since
$\L = \{\l_n\}_{n \in \bZ}$ is an incompressible sequence of density $d$ lying
in the strip $\Pi_h$, while Proposition~\ref{eq:Phix.compact}(i) implies
relation~\eqref{eq:fn-wt.fn.to.0} since $\norm{f_{Q,n} - f_{\wt{Q},n}}_{\infty}
\le \norm{f_{Q,n} - f_{0,n}}_{\infty} + \norm{f_{\wt{Q},n} - f_{0,n}}_{\infty}$.
Finally, estimate~\eqref{eq:fn-wt.fn<Q.L1} is immediate from
inequality~\eqref{eq:fn-wtfn} combined with Lemma~\ref{lem:phi.jk=e+int} and
Theorem~\ref{th:K-wtK<Q-wtQ}, which finishes the proof in the case $|b|+|c|>0$.

\textbf{(ii)} Now assume that $b=c=0$. In this case $a d \ne 0$ and
$\Delta_0(\l) = (d + e^{i b_2 \l})(1 + a e^{i b_1 \l})$. Let $\L_0^1 =
\{\l_n^0\}_{n \in \cI_1}$ and $\L_0^2 = \{\l_n^0\}_{n \in \cI_2}$ be the
sequences of zeros of the first and second factor, respectively, where
$\cI_1 \sqcup \cI_2 = \bZ$, i.e. $\L_0 = \L_0^1 \sqcup \L_0^2$. Clearly, these
sequences constitute arithmetic progressions lying on the lines parallel to the
real axis. Lemma~\ref{lem:ln0.exp.asymp} implies that the arithmetic
progressions $\L_0^1$ and $\L_0^2$ are separated,
i.e., $|\l_n^0 - \l_m^0| > 2 \kappa$, $n \in \cI_1$, $m \in \cI_2$ for some
$\kappa > 0$. This implies the following estimates,
\begin{equation} \label{eq:1+ae=1}
 \abs{1 + a e^{i b_1 \l_n^0}} \ge \tau, \quad n \in \cI_1, \qquad
 \abs{d + e^{i b_2 \l_m^0}} \ge \tau, \quad m \in \cI_2.
\end{equation}
for some $\tau = \tau(A,B) > 0$. It follows
from inequalities~\eqref{eq:1+ae=1},~\eqref{eq:phi.ln-phi0.ln<delta}
and~\eqref{eq:ln.wtln-ln0<eps} that are all valid for $n \in \cI_1$ and
$|n| > N_{\delta,\eps}$, that
\begin{multline} \label{eq:1+a.phi11}
 |1 + a \varphi_{11}(\l_n)| \ge \abs{1 + a e^{i b_1 \l_n^0}}
 - |a| \cdot \abs{e^{i b_1 \l_n^0} - e^{i b_1 \l_n}}
 - |a| \cdot \abs{e^{i b_1 \l_n} - \varphi_{11}(\l_n)} \\
 \ge \tau - |a| \abs{e^{i b_1 \l_n^0}} \(e^{|b_1 (\l_n - \l_n^0)|} - 1\)
 - |a| \delta
 \ge \tau - |a| \(e^{|b_1| h} \(e^{|b_1| \eps} - 1\) + \delta\) = \tau/2,
 \qquad n \in \cI_1, \quad |n| > N_{\delta,\eps},
\end{multline}
if we set $\delta = \frac{\tau}{4|a|} > 0$ and $\eps = \log(1 +
\delta e^{-|b_1| h}) / |b_1| > 0$. Making $\delta$ and $\eps$ smaller
if needed, we can similarly guarantee the inequality
\begin{equation} \label{eq:d+phi22}
 |d + \varphi_{22}(\l_n)| \ge \tau/2, \qquad n \in \cI_2, \quad
 |n| > N_{\delta,\eps}.
\end{equation}

As per the step (i) of the proof the vector-functions
\begin{equation} \label{eq:fn1.fn01}
 f_n(\cdot) := F(\cdot, \l_n), \qquad
 \wt{f}_n(\cdot) := \wt{F}(\cdot, \wt{\l}_n), \qquad n \in \cI_1,
\end{equation}
are (possibly zero) eigenfunctions of the operators $L(Q)$ and $L(\wt{Q})$
respectively, corresponding to the eigenvalues $\l_n$ and $\wt{\l}_n$,
$n \in \cI_1$. Here $F(\cdot, \l)$ and $\wt{F}(\cdot, \l)$ are defined
in~\eqref{eq:fnx}--\eqref{eq:wtfnx}. Similarly to
inequality~\eqref{eq:C4<fns<C5} in the step~(i) let us show that with
appropriate choice of $\delta > 0$ the following inequalities hold:
\begin{equation} \label{eq:C4<fn1s<C5}
 C_9 \le \|f_n\|_s \le C_{10}, \qquad C_9 \le \|\wt{f}_{1,n}\|_s \le C_{10},
 \qquad n \in \cI_1, \quad |n| > N_{\delta,\eps}, \quad s \in (0, \infty],
\end{equation}
where $C_9, C_{10} > 0$ do not depend on $Q$, $\wt{Q}$ and $s$. As in the
step~(i) we only need to focus on the estimate from below. To this end
inequalities~\eqref{eq:1+a.phi11} and~\eqref{eq:phi.ln-phi0.ln<delta} imply
\begin{multline} \label{eq:fns>C9}
 \|f_n\|_s = \|F(\cdot, \l_n)\|_s \ge \|F_2(\cdot, \l_n)\|_s \ge
 |1 + a \varphi_{11}(\l_n)| \cdot \|\varphi_{22}(\cdot, \l_n)\|_s
 - |b + a \varphi_{12}(\l_n)| \cdot \|\varphi_{21}(\cdot, \l_n)\|_s \\
 \ge \tau C_2 / 2 - (|b| + |a| \delta) \delta
 \ge \tau C_2 / 4 =: C_9, \qquad n \in \cI_1, \quad |n| > N_{\delta,\eps},
\end{multline}
with appropriate adjustment to $\delta$ if needed.
Inequality~\eqref{eq:C4<fn1s<C5} on $\|\wt{f}_n\|_s$ is established similarly.
As in the step~(i) we set $f_{Q,n} := f_n / \|f_n\|_s$, $f_{\wt{Q},n} :=
\wt{f}_n / \|\wt{f}_n\|_s$ for $n \in \cI_1$ and $|n| > N_{\delta,\eps}$.
Inequalities~\eqref{eq:C4<fn1s<C5} imply inequality~\eqref{eq:fn-wtfn}
for $n \in \cI_1$ and $|n| > N_{\delta,\eps}$.

Going over to the second branches $\{\l_n\}_{n \in \cI_2}$ and
$\{\wt{\l}_n\}_{n \in \cI_2}$ of eigenvalues, we note that the vector-functions
\begin{align}
\label{eq:fn2}
 f_n(\cdot) &:= G(\cdot, \l_n), \qquad G(x,\l) := (d + \varphi_{22}(\l))
 \Phi_1(x, \l) - \varphi_{21}(\l)\Phi_2(x, \l), \qquad n \in \cI_2, \\
\label{eq:wtfn2}
 \wt{f}_n(\cdot) &:= \wt{G}(\cdot, \wt{\l}_n), \qquad \wt{G}(x,\l) :=
 (d + \wt{\varphi}_{22}(\l)) \wt{\Phi}_1(x, \l) -
 \wt{\varphi}_{21}(\l) \wt{\Phi}_2(x, \l), \qquad n \in \cI_2,
\end{align}
are (possibly zero) eigenfunctions of the operators $L(Q)$ and $L(0)$
respectively, corresponding to the eigenvalues $\l_n$ and $\wt{\l}_n$,
$n \in \cI_2$. Using inequality~\eqref{eq:d+phi22} instead
of~\eqref{eq:1+a.phi11} and adjusting $\delta$ and $\eps$ if needed,
we can obtain inequalities~\eqref{eq:C4<fn1s<C5} but for $n \in \cI_2$
instead of $n \in \cI_1$. Applying Lemma~\ref{lem:Fx-wtFx} to functions
$G(\cdot, \l)$ and $\wt{G}(\cdot, \l)$ we similarly arrive at~\eqref{eq:fn-wtfn}
for $n \in \cI_2$ and $|n| > N_{\delta,\eps}$.

Now with inequality~\eqref{eq:fn-wtfn} being valid for all $|n| >
N_{\delta,\eps}$, the proof is finished in the same way as in the step~(i).
\end{proof}
Next we extend Theorem~\ref{th:eigenfunc.compact} to the case $\cK =
\bU_{p,r}^{2 \times 2}$. Similarly to Theorem~\ref{th:ln-wtln<Delta.Q-wtQ.Lp} we
cannot select a universal constant $N$ serving all potentials. Instead, we need
to sum over the sets of integers, the complements of which have uniformly
bounded cardinality.
\begin{theorem} \label{th:eigenfunc.holes}
Let $Q, \wt{Q} \in \bU_{p,r}^{2 \times 2}$ for some $p \in (1,2]$ and $r > 0$.
Let boundary conditions~\eqref{eq:BC} be strictly regular. Let also
$s \in (0, \infty]$. Then there exist systems $\{f_{Q,n}\}_{n \in \bZ}$ and
$\{f_{\wt{Q},n}\}_{n \in \bZ}$ of root vectors of the operators $L(Q)$ and
$L(\wt{Q})$ and the set $\cI_{Q,\wt{Q}} \subset \bZ$ such that the following
uniform relations hold
\begin{align}
\label{eq:card.IQ.wtQ.func}
 \card\(\bZ \setminus \cI_{Q, \wt{Q}}\) & \le N,
 \qquad Q, \wt{Q} \in \bU_{p, r}^{2 \times 2}, \\
\label{eq:fn.norm.holes}
 \|f_{Q,n}\|_s = \|f_{\wt{Q},n}\|_s &= 1,
 \qquad n \in \cI_{Q, \wt{Q}}, \quad Q, \wt{Q} \in \bU_{p, r}^{2 \times 2}, \\
\label{eq:fn-wt.fn<Q.Lp.holes}
 \sum_{n \in \cI_{Q, \wt{Q}}} \norm{f_{Q,n} - f_{\wt{Q},n}}_{\infty}^{p'}
 & \le C \cdot \|Q - \wt{Q}\|_p^{p'},
 \qquad Q, \wt{Q} \in \bU_{p, r}^{2 \times 2}, \qquad 1/p' + 1/p = 1, \\
\label{eq:fn-wt.fn<Q.Hardy.holes}
 \sum_{n \in \cI_{Q, \wt{Q}}} (1+|n|)^{p-2}
 \norm{f_{Q,n} - f_{\wt{Q},n}}_{\infty}^{p}
 & \le C \cdot \|Q - \wt{Q}\|_p^{p},
 \qquad Q, \wt{Q} \in \bU_{p, r}^{2 \times 2}.
\end{align}
Here constants $N \in \bN$ and $C > 0$ do not depend on $Q$, $\wt{Q}$ and $s$.

Moreover, for any $\eps > 0$ there exist a set $\cI_{\eps} := \cI_{Q,
\wt{Q}, \eps} \subset \bZ$ and a constant $N_{\eps} = N_{\eps}(p, r, A, B) \in
\bN$ that does not depend on $Q$ and $\wt{Q}$, such that the following uniform
estimates hold
\begin{align}
\label{eq:card.IQ.wtQ.eps.func}
 \card\(\bZ \setminus \cI_{Q, \wt{Q}, \eps}\) & \le N_{\eps},
 \qquad Q, \wt{Q} \in \bU_{p, r}^{2 \times 2}, \\
\label{eq:fn-wtfn<eps.whQ}
 \sup_{n \in \cI_{Q, \wt{Q}, \eps}} \norm{f_{Q,n} - f_{\wt{Q},n}}_{\infty}
 & \le \eps \|Q - \wt{Q}\|_{p}, \qquad Q, \wt{Q} \in \bU_{p, r}^{2 \times 2}.
\end{align}
\end{theorem}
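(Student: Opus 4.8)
The plan is to mirror the proof of Theorem~\ref{th:ln-wtln<Delta.Q-wtQ.Lp} (the ball version for eigenvalues) but now carrying the eigenfunction estimates through, just as Theorem~\ref{th:eigenfunc.compact} mirrors Theorem~\ref{th:ln-wtln<Q-wtQ} on compacts. First I would invoke Proposition~\ref{prop:incompress.holes}(i) to get constants $h$ and $d$, not depending on $Q,\wt Q\in\bU_{p,r}^{2\times2}$, so that $\L_Q$ and $\L_{\wt Q}$ are incompressible sequences of density $d$ lying in $\Pi_h$. Then I would apply Theorem~\ref{th:ln-wtln<Delta.Q-wtQ.Lp}(i) to obtain the set $\cI_{Q,\wt Q}$ satisfying~\eqref{eq:card.IQ.wtQ} and the two-sided bound~\eqref{eq:ln-wtln<C.Delta.hole}; in particular, for $n\in\cI_{Q,\wt Q}$ the eigenvalues $\l_{Q,n}$ and $\l_{\wt Q,n}$ both lie in a common small disc around $\l_n^0$, so that we have the analogue of~\eqref{eq:ln.wtln-ln0<eps}. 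This gives~\eqref{eq:card.IQ.wtQ.func}.

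Next I would carry over the machinery of Proposition~\ref{eq:Phix.holes}: for $\delta\in(0,\delta_0]$ there is a set $\cJ_{Q,\delta}$ with $\card(\bZ\setminus\cJ_{Q,\delta})\le N_\delta$, on which $\varphi_{jk}(\cdot,\l)$ are $\delta$-small and $\varphi_{kk}(\cdot,\l)$ are bounded below by $\delta_0$, for $\l$ in the corresponding horizontal boxes. Intersecting $\cI_{Q,\wt Q}$ with $\cJ_{Q,\delta}\cap\cJ_{\wt Q,\delta}$ (and fixing $\delta=\delta(\cK,B,A)$ and $\eps$ small, exactly as in the compact case) keeps the cardinality of the complement uniformly bounded and puts us in position to define the normalized eigenfunctions. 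I would then reuse verbatim the two algebraic branches of the proof of Theorem~\ref{th:eigenfunc.compact}: in case $|b|+|c|\ne0$ set $f_{Q,n}:=F(\cdot,\l_{Q,n})/\|F(\cdot,\l_{Q,n})\|_s$ with $F$ as in~\eqref{eq:fnx}, and in case $b=c=0$ split $\bZ=\cI_1\sqcup\cI_2$ using Lemma~\ref{lem:ln0.exp.asymp} and use~\eqref{eq:fn1.fn01},~\eqref{eq:fn2}. In both cases the lower bounds~\eqref{eq:C4<fns<C5} follow from Proposition~\ref{eq:Phix.holes}(ii) (which replaces Proposition~\ref{eq:Phix.compact}(ii)), giving~\eqref{eq:fn.norm.holes}. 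Applying Lemma~\ref{lem:Fx-wtFx} together with~\eqref{eq:ln-wtln<C.Delta.hole} yields, for $n\in\cI_{Q,\wt Q}$,
\begin{equation*}
 \norm{f_{Q,n}-f_{\wt Q,n}}_\infty
 \le C\norm{\Phi_Q(\cdot,\l_{Q,n})-\Phi_{\wt Q}(\cdot,\l_{Q,n})}_\infty,
\end{equation*}
the exact analogue of~\eqref{eq:fn-wtfn}.

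With this pointwise inequality in hand, the summed estimates~\eqref{eq:fn-wt.fn<Q.Lp.holes} and~\eqref{eq:fn-wt.fn<Q.Hardy.holes} follow by summing over $n\in\cI_{Q,\wt Q}$ and invoking Proposition~\ref{prop:sum.phi-wt.phi.x.p} applied to the incompressible sequence $\L=\L_Q$ of density $d$ in $\Pi_h$ — which is precisely where the deep Theorem~\ref{th:p.bessel} (Carleson--Hunt) enters via~\eqref{eq:sum.wh.phi.p'} and~\eqref{eq:sum.wh.phi.p.vu}. Finally, to get~\eqref{eq:card.IQ.wtQ.eps.func} and~\eqref{eq:fn-wtfn<eps.whQ} I would apply Chebyshev's inequality to~\eqref{eq:fn-wt.fn<Q.Lp.holes} exactly as in the proof of Theorem~\ref{th:ln-wtln<Delta.Q-wtQ.Lp}(ii): define
\begin{equation*}
 \cI_{Q,\wt Q,\eps}:=\{n\in\bZ:\ \norm{f_{Q,n}-f_{\wt Q,n}}_\infty\le\eps\|Q-\wt Q\|_p\}\cap\cI_{Q,\wt Q},
\end{equation*}
and bound the cardinality of the complement by $C\eps^{-p'}+N$. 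The main obstacle, as in the compact case, is not any single step but the bookkeeping: one must make the choices of $\delta$, $\eps$ and the various index sets consistently so that all the lower bounds on $\|\varphi_{kk}\|_s$ and on $|1+a\varphi_{11}|$, $|d+\varphi_{22}|$ hold simultaneously on $\cI_{Q,\wt Q}$, while keeping $\card(\bZ\setminus\cI_{Q,\wt Q})$ uniformly bounded over the whole ball rather than over a compact; this forces the ``holes'' formulation of Proposition~\ref{eq:Phix.holes} in place of the uniform Proposition~\ref{eq:Phix.compact}. Once those sets are set up correctly, the estimates are a transcription of the compact-case argument.
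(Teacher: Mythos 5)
Your proposal follows essentially the same route as the paper's proof: Proposition~\ref{prop:incompress.holes}, the set $\cI_{Q,\wt{Q}}$ and two-sided bound from Theorem~\ref{th:ln-wtln<Delta.Q-wtQ.Lp}, Proposition~\ref{eq:Phix.holes} in place of Proposition~\ref{eq:Phix.compact}, a rerun of the two algebraic branches of Theorem~\ref{th:eigenfunc.compact} via Lemma~\ref{lem:Fx-wtFx}, summation through Proposition~\ref{prop:sum.phi-wt.phi.x.p}, and Chebyshev for the final claim. The only cosmetic difference is in the bookkeeping you flag yourself: the paper intersects with the conditions $\l_{Q,n}\in\Pi_{h,Q,\delta}$, $\l_{\wt{Q},n}\in\Pi_{h,\wt{Q},\delta}$ (rather than $n\in\cJ_{Q,\delta}$) and with $\cI_{Q,\eps}\cap\cI_{\wt{Q},\eps}$ for the later-chosen $\eps$, using incompressibility of $\L_Q$, $\L_{\wt{Q}}$ to keep the complement uniformly bounded — which is exactly the adjustment your plan anticipates.
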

\begin{proof}
Let $\L_0 = \{\l_n^0\}_{n \in \bZ}$, $\L = \L_Q = \{\l_n\}_{n \in \bZ}$ and
$\wt{\L} = \L_{\wt{Q}} = \{\wt{\l}_n\}_{n \in \bZ}$ be the same as in the proof
of Theorem~\ref{th:eigenfunc.compact}.
Proposition~\ref{prop:incompress.holes}(i) ensures that there exist constants
$h = h(p, r, B, A) > 0$ and $d = d(p, r, B, A) > 0$, not dependent on $Q$ and
$\wt{Q}$, such that $\L_Q$ and $\L_{\wt{Q}}$ an incompressible sequences of
density $d$ lying in the strip $\Pi_h$.
Theorem~\ref{th:ln-wtln<Delta.Q-wtQ.Lp}(ii) implies that for some constants
$N_0 \in \bN$ and $C_0 > 0$, not dependent on $Q$ and $\wt{Q}$, uniform
inequalities~\eqref{eq:card.IQ.wtQ}--\eqref{eq:ln-wtln<C.Delta.hole} hold,
\begin{align}
\label{eq:card.cI.Q.wtQ.eigenf}
 \card\(\bZ \setminus \cI_{Q, \wt{Q}}\) & \le N_0, \\
\label{eq:ln-wtln<C1.wh.Phi}
 |\l_n - \wt{\l}_n| \le C_0 |\wt{\Delta}(\l_n)|
 & \le C_1 \norm{\Phi_Q(\cdot, \l_n) - \Phi_{\wt{Q}}(\cdot, \l_n)}_{\infty},
 \qquad n \in \cI_{Q, \wt{Q}},
\end{align}
Here we applied trivial uniform estimate on $|\wt{\Delta}(\l_n)|$
from~\eqref{eq:ln-wtln<C3}, which is valid for $Q, \wt{Q} \in
\bU_{1,r}^{2 \times 2}$ and $n \in \bZ$.

Let $\eps > 0$ be fixed (we will choose it later). By
Proposition~\ref{prop:incompress.holes}(ii) there exists
$N_{\eps} = N_{\eps}(p, r, B, A)$ that do not depend on $Q$ and $\wt{Q}$, and
the sets $\cI_{Q, \eps}, \cI_{\wt{Q}, \eps} \subset \bZ$, such that,
\begin{align}
\label{eq:card.cIQ.cIwtQ}
 \card\(\bZ \setminus \cI_{Q, \eps}\) +
 \card\(\bZ \setminus \cI_{\wt{Q}, \eps}\) & \le N_{\eps}, \\
\label{eq:ln-ln0<eps.wtln-ln0<eps}
 |\l_n - \l_n^0| < \eps, \quad |\wt{\l}_n - \l_n^0| & < \eps,
 \qquad n \in \cI_{Q,\eps} \cap \cI_{\wt{Q}, \eps}.
\end{align}

Let $\delta \in (0, \delta_0]$ be fixed (we will choose it later) and
$\cJ_{Q,\delta}$ be chosen to satisfy uniform
inequalities~\eqref{eq:card.cJQdelta}--\eqref{eq:Phik.norm.holes} of
Proposition~\ref{eq:Phix.holes} on $\varphi_{jk}(\cdot, \l)$. Now set,
\begin{equation}
 \cI_{Q, \wt{Q}, \delta, \eps} := \left\{
 n \in \cI_{Q, \wt{Q}} \cap \cI_{Q, \eps} \cap \cI_{\wt{Q}, \eps} \ : \
 \l_n \in \Pi_{h, Q, \delta}, \ \wt{\l}_n \in \Pi_{h, \wt{Q}, \delta}
 \right\}.
\end{equation}
It follows from~\eqref{eq:card.cI.Q.wtQ.eigenf},~\eqref{eq:card.cIQ.cIwtQ},
\eqref{eq:card.cJQdelta} and the fact that $\Lambda_Q$ and $\Lambda_{\wt{Q}}$
are incomressible sequences of density $d$, not dependent on $Q$ and $\wt{Q}$,
that $\card\(\bZ \setminus \cI_{Q, \wt{Q}, \delta, \eps}\) \le N_{\delta,\eps}$,
with some $N_{\delta,\eps}$ that does not depend $Q$ and $\wt{Q}$.

Combining inequalities~\eqref{eq:phijk.s<delta}--\eqref{eq:Phik.norm.holes}
with Proposition~\ref{eq:Phix.compact}(iii), implies that for
$s \in (0, \infty]$, $k \in \{1,2\}$, $j = 2/k$, we have with account of
notations~\eqref{eq:PhiQ}--\eqref{eq:Phi0},
\begin{align}
\label{eq:phi.ln-phi0.ln<delta.holes}
 \norm{\varphi_{kk}(\cdot, \l_n) - \varphi_{kk}^0(\cdot, \l_n)}_s
 & < \delta, \qquad \norm{\varphi_{jk}(\cdot, \l_n)}_s < \delta,
 \qquad \delta_0 \le \norm{\varphi_{kk}(\cdot, \l_n)}_s \le C_2,
 \qquad n \in \cI_{Q, \wt{Q}, \delta, \eps}, \\
\label{eq:wt.phi.ln-wt.phi0.ln<delta.holes}
 \norm{\wt{\varphi}_{kk}(\cdot, \l_n) - \varphi_{kk}^0(\cdot, \l_n)}_s
 & < \delta, \qquad \norm{\wt{\varphi}_{jk}(\cdot, \l_n)}_s < \delta,
 \qquad \delta_0 \le \norm{\wt{\varphi}_{kk}(\cdot, \l_n)}_s \le C_2,
 \qquad n \in \cI_{Q, \wt{Q}, \delta, \eps},
\end{align}
where $C_2 > 0$ does not depend on $Q$, $\wt{Q}$ and $n$. Clearly,
inequalities~\eqref{eq:ln-wtln<C1.wh.Phi},~\eqref{eq:ln-ln0<eps.wtln-ln0<eps}
are valid for $n \in \cI_{Q, \wt{Q}, \delta, \eps}$ since
$\cI_{Q, \wt{Q}, \delta, \eps}$ is a subset of each of the three sets
$\cI_{Q, \wt{Q}}$, $\cI_{Q, \eps}$, $\cI_{\wt{Q}, \eps}$.

From this point, the proof of relations
\eqref{eq:card.IQ.wtQ.func}--\eqref{eq:fn-wt.fn<Q.Hardy.holes} is carried out in
the same way as in Theorem~\ref{th:eigenfunc.compact} with all
inequalities $|n| > N_{\delta, \eps}$ replaced by inclusion $n \in
\cI_{Q,\wt{Q},\eps,\delta}$.

Inequality~\eqref{eq:fn-wtfn<eps.whQ} easily follows
from~\eqref{eq:fn-wt.fn<Q.Lp.holes} by applying Chebyshev's inequality technique
used in the proof of Theorem~\ref{th:ln-wtln<Delta.Q-wtQ.Lp}(ii).
\end{proof}
In the sequel we need the following definition.
\begin{definition} \label{def:asymp.norm}
Let $\fF = \{f_n\}_{n \in \bZ}$ be a sequence of elements of a Banach space $X$.

\textbf{(i)} The sequence $\fF$ is called \textbf{almost normalized in $X$} if
$\|f_n\|_X \asymp 1$, $n \in \bZ$.

\textbf{(ii)} The sequence $\fF$ is called \textbf{asymptotically normalized in
$X$} if for some $N \in \bN$ we have $\|f_n\|_X = 1$, $|n| > N$.
\end{definition}
The following example shows that in some cases we can relax compactness
condition and even boundedness of $\cK$ and sum over all $n \in \bZ$
in~\eqref{eq:fn-wt.fn<Q.Lp}--\eqref{eq:fn-wt.fn<Q.Hardy}. Though due to
relation~\eqref{eq:u-v.norm.s} to relax boundedness of $\cK$ we can only
normalize eigenfunctions in $L^{\infty}([0,1]; \bC^2)$. Normalizing in
$L^s([0,1]; \bC^2)$ for any $s \in (0, \infty)$ would need, e.g., inclusion
$\cK \subset \bU_{1,r}^{2 \times 2}$ for some $r > 0$.
\begin{proposition} \label{prop:root.Q12=0}
Let $Q_{12} = \wt{Q}_{12} = 0$ and $Q_{21}, \wt{Q}_{21} \in L^p[0,1]$ for some
$p \in (1, 2]$. Let boundary conditions~\eqref{eq:BC.new} be strictly regular
with the parameter $b$ in them being zero. Then eigenvalues of operators $L(Q)$
and $L(\wt{Q})$ are simple and separated and for some systems
$\{f_n\}_{n \in \bZ}$ and $\{\wt{f}_n\}_{n \in \bZ}$ of their eigenfunctions the
following uniform relations hold
\begin{align}
\label{eq:fn.norm.easy}
 \|f_n\|_{\infty} = \|\wt{f}_n\|_{\infty} & = 1, \qquad n \in \bZ, \\
\label{eq:fn-wt.fn<Q.Lp.easy}
 \sum_{n \in \bZ} \norm{f_n - \wt{f}_n}_{\infty}^{p'} & \le C \cdot
 \|Q - \wt{Q}\|_p^{p'}, \qquad 1/p' + 1/p = 1, \\
\label{eq:fn-wt.fn<Q.Hardy.easy}
 \sum_{n \in \bZ} (1+|n|)^{p-2} \norm{f_n - \wt{f}_n}_{\infty}^{p} &
 \le C \cdot \|Q - \wt{Q}\|_p^{p},
\end{align}
where $C = C(p, B, A) > 0$ does not depend on $Q$ and $\wt{Q}$.
\end{proposition}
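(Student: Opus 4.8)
The plan is to reduce Proposition~\ref{prop:root.Q12=0} to the explicit model computation carried out in the Remark following Lemma~\ref{lem:phi-wt.phi}, combined with the Fourier-transform estimates of Section~\ref{sec:fourier.transform}. Since $Q_{12} = \wt{Q}_{12} = 0$, formula~\eqref{eq:Phi.Q12=0} gives the fundamental matrices $\Phi_Q(\cdot,\l)$ and $\Phi_{\wt{Q}}(\cdot,\l)$ explicitly: all entries are elementary exponentials except $\varphi_{21}$, which is a single ``maximal'' Fourier transform of $Q_{21}$. With $b = 0$ in the boundary conditions~\eqref{eq:BC.new}, the characteristic determinant does not feel $\varphi_{21}$ at all; indeed by~\eqref{eq:Delta.a=d=0} with $b=0$ we get $\Delta_Q(\l) = \Delta_0(\l)$, so the spectra of $L(Q)$, $L(\wt{Q})$ and $L(0)$ all coincide with $\L_0 = \{\l_n^0\}_{n \in \bZ}$. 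Strict regularity of~\eqref{eq:BC.new} together with $bc = 0$ puts us in the setting of Lemma~\ref{lem:ln0.exp.asymp}, so $\L_0$ is \emph{separated} (not merely asymptotically); this gives simplicity and separation of all eigenvalues for free, and also the lower bounds $\abs{1 + a e^{i b_1 \l_n^0}} \ge \tau$, $\abs{d + e^{i b_2 \l_n^0}} \ge \tau$ on the two branches as in~\eqref{eq:1+ae=1}.

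Next I would write down the eigenfunctions explicitly rather than through the general recipe~\eqref{eq:fnx}--\eqref{eq:wtfn2}. On the branch $\cI_1$ (zeros of $1 + a e^{i b_1 \l}$) one checks directly from~\eqref{eq:Phi.Q12=0} that $F(x,\l) := (1 + a \varphi_{11}(\l))\Phi_2(x,\l)$ — the first column drops out since $b=0$ and $\varphi_{12} \equiv 0$ — is an eigenfunction, and here $\Phi_2(x,\l) = \col(0, e^{i b_2 \l x})$, so $\|F(\cdot, \l_n^0)\|_\infty = \abs{1 + a e^{i b_1 \l_n^0}}\cdot e^{-b_2 \Im \l_n^0 \cdot \text{(sign)}} \asymp 1$ uniformly in $n$ and in $Q$. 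On the branch $\cI_2$ (zeros of $d + e^{i b_2 \l}$) the eigenfunction is $G(x,\l) := (d + \varphi_{22}(\l))\Phi_1(x,\l) - \varphi_{21}(\l)\Phi_2(x,\l)$, and now $\varphi_{21}$ \emph{does} enter: $\Phi_1(x,\l) = \col(e^{i b_1 \l x},\, \varphi_{21}(x,\l))$, so $G(x,\l) = \col\bigl((d+\varphi_{22}(\l)) e^{i b_1 \l x},\ (d+\varphi_{22}(\l))\varphi_{21}(x,\l) - \varphi_{21}(\l) e^{i b_2 \l x}\bigr)$; the first component has modulus $\asymp 1$ by the branch lower bound, so again $\|G(\cdot,\l_n^0)\|_\infty \asymp 1$ uniformly, which yields~\eqref{eq:fn.norm.easy} after normalization. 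Since here $\l_n = \wt{\l}_n = \l_n^0$ exactly, the deviation $f_n - \wt{f}_n$ reduces via~\eqref{eq:u-v.norm.s} to a bounded multiple of $\|F(\cdot,\l_n^0) - \wt{F}(\cdot,\l_n^0)\|_\infty$ (resp. $G$ vs $\wt{G}$), and on $\cI_1$ this difference actually vanishes (both equal a multiple of $\Phi_2^0$), while on $\cI_2$ it is controlled by $\abs{\varphi_{22}(\l_n^0) - \wt{\varphi}_{22}(\l_n^0)} + \sup_x \abs{\varphi_{21}(x,\l_n^0) - \wt{\varphi}_{21}(x,\l_n^0)}$ plus $\abs{\varphi_{21}(\l_n^0) - \wt{\varphi}_{21}(\l_n^0)}$. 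The first of these is zero (since $\varphi_{22} \equiv e^{i b_2 \l x}$ when $Q_{12}=0$), and the remaining terms are, by~\eqref{eq:phi21x} and the changes of variable in the Remark, exactly ``maximal'' Fourier transforms $\sF[Q_{21} - \wt{Q}_{21}]\bigl((b_1 - b_2)\l_n^0\bigr)$ up to fixed constants.

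At that point I would invoke Theorem~\ref{th:p.bessel}: the sequence $\{(b_1 - b_2)\l_n^0\}_{n \in \bZ}$ is incompressible and lies in a strip (Proposition~\ref{prop:sine.type}(ii), rescaled), so~\eqref{eq:sum.int.g<g} and~\eqref{eq:sum.int.nu.g<g} applied with $g = Q_{21} - \wt{Q}_{21}$ give precisely the $l^{p'}$ bound~\eqref{eq:fn-wt.fn<Q.Lp.easy} and the weighted $l^p$ bound~\eqref{eq:fn-wt.fn<Q.Hardy.easy}, summed over \emph{all} $n \in \bZ$, with a constant depending only on $p$, $B$, $A$ (through $\tau$, the density and the strip width). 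The main obstacle — really the only delicate point, since there are no ``holes'' and no need for the machinery of Proposition~\ref{prop:incompress.holes} — is the uniform two-sided bound $\|f_n\|_\infty \asymp 1$ for \emph{all} $n$ and all $Q$: one must be sure that the normalization constants do not degenerate as $|n| \to \infty$. This is handled by the separation of the two branches (Lemma~\ref{lem:ln0.exp.asymp}), which gives the uniform lower bounds on $\abs{1+ae^{ib_1\l_n^0}}$ and $\abs{d+e^{ib_2\l_n^0}}$ \emph{without} any ``asymptotically'' caveat, so that we genuinely get $n \in \bZ$ and not just $|n| > N$. One minor subtlety, worth a sentence, is that to normalize in $L^\infty$ rather than some $L^s$ we do not need $\cK$ bounded in $L^1$, whereas normalizing in $L^s$, $s < \infty$, would; the statement is correctly phrased with $\|\cdot\|_\infty$. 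Everything else is a direct specialization of the arguments in Theorems~\ref{th:eigenfunc.compact} and~\ref{th:eigenfunc.holes}, now trivialized because $\Delta_Q = \Delta_0$ makes all the Rouch\'e-type localization steps unnecessary.
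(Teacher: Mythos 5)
Your overall route is the paper's: with $Q_{12}=\wt{Q}_{12}=0$ and $b=0$ one uses the explicit fundamental matrix \eqref{eq:Phi.Q12=0}, observes that the spectrum is $Q$-independent and consists of two separated arithmetic progressions, treats the two branches separately, reduces the eigenfunction deviation to a maximal Fourier transform of $Q_{21}-\wt{Q}_{21}$ at the eigenvalues, and closes with Theorem~\ref{th:p.bessel}. However, your execution attaches the eigenfunction formulas to the wrong branches, and this is a genuine gap rather than a labeling slip. Since $Q_{12}=0$ gives $\varphi_{11}(1,\l)=e^{ib_1\l}$ and $\varphi_{22}(1,\l)=e^{ib_2\l}$ \emph{exactly}, on your branch $\cI_1$ (zeros of $1+ae^{ib_1\l}$) the proposed eigenfunction $(1+a\varphi_{11}(\l_n^0))\Phi_2(\cdot,\l_n^0)$ is identically zero, so it is not an eigenfunction and cannot be normalized; and on your branch $\cI_2$ (zeros of $d+e^{ib_2\l}$) the coefficient $d+\varphi_{22}(\l_n^0)=d+e^{ib_2\l_n^0}$ of $\Phi_1$ in your $G$ vanishes, so the first component of $G(\cdot,\l_n^0)$ is identically zero — your claimed uniform bound ``first component $\asymp 1$'' is false there — and what survives, $-\varphi_{21}(1,\l_n^0)\Phi_2(\cdot,\l_n^0)$, carries a $Q$-dependent scalar which can vanish (for instance $\wt{G}\equiv 0$ when $\wt{Q}=0$), so the normalization \eqref{eq:fn.norm.easy} and the passage to normalized differences break down. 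The correct assignment is the opposite one, as in the paper's proof: on the zeros of $d+e^{ib_2\l}$ the eigenfunction is the $Q$-independent $\col\(0,e^{ib_2\l_{2,n}x}\)$ as in \eqref{eq:g2n.def}, and on the zeros of $1+ae^{ib_1\l}$ it is $d_n\Phi_1(\cdot,\l_{1,n})-\bigl(ce^{ib_1\l_{1,n}}+\varphi_{21}(1,\l_{1,n})\bigr)\Phi_2(\cdot,\l_{1,n})$ with $d_n=d+e^{ib_2\l_{1,n}}$ and $|d_n|\asymp 1$ by strict regularity (Lemma~\ref{lem:ln0.exp.asymp}), as in \eqref{eq:gn1.def}.

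A second, related defect: your $G$ omits the term $ce^{ib_1\l}$. The proposition assumes only $b=0$, so $c$ may be nonzero, and checking $\wh{U}_2$ against the system \eqref{eq:U.C1.C2} shows that $(d+\varphi_{22}(\l))\Phi_1-\varphi_{21}(1,\l)\Phi_2$ satisfies the boundary conditions only when $c=0$; the correction $-ce^{ib_1\l}\Phi_2$ is required in general (it is harmless for the estimates, being $Q$-independent). Once the branches are swapped and this term restored, your remaining steps — the uniform lower bound on $\|g_{1,n}\|_{\infty}$ from the first component, the inequality \eqref{eq:u-v.norm}, the identification of $g_{1,n}-\wt{g}_{1,n}$ with $\sF[Q_{21}-\wt{Q}_{21}]\bigl((b_1-b_2)\l_{1,n}\bigr)$ via \eqref{eq:phi21.Q12=0}, and the application of Theorem~\ref{th:p.bessel} to the rescaled incompressible sequence — coincide with the paper's proof.
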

\begin{proof}
Since $Q_{12} = 0$ then explicit formula~\eqref{eq:Phi.Q12=0} holds. In
particular
\begin{equation} \label{eq:phi21.Q12=0}
 \varphi_{21}(x, \l) = -i b_2 e^{i b_2 \l x} \int_0^x Q_{21}(t)
 e^{i (b_1 - b_2) \l t} dt, \qquad x \in [0,1], \quad \l \in \bC.
\end{equation}
Since $b=0$, relation~\eqref{eq:Phi.Q12=0} implies that vector function
$\a_1 \Phi_1(\cdot, \l) + \a_2 \Phi_2(\cdot, \l)$ satisfy boundary
conditions~\eqref{eq:BC.new} if and only if
\begin{equation} \label{eq:U.C1.C2}
 \begin{pmatrix} 1 + a e^{i b_1 \l} & 0 \\ c e^{i b_1 \l} + \varphi_{21}(\l) &
 d + e^{i b_2 \l} \end{pmatrix} \begin{pmatrix} \a_1 \\ \a_2 \end{pmatrix} = 0.
\end{equation}
Since boundary conditions~\eqref{eq:BC.new} are strictly regular
then $ad \ne 0$ and~\eqref{eq:U.C1.C2} implies that eigenvalues of $L(Q)$ are
simple and separated, do not depend on $Q$ and form a union of two arithmetic
progressions lying on two lines parallel to the real axis,
\begin{equation} \label{eq:l1n.l2n.b=0}
 \l_{1,n} = \wt{\l}_{1,n} = \frac{\arg(-a^{-1}) + 2 \pi n}{b_1}
 + i\frac{\ln|a|}{b_1}, \qquad
 \l_{2,n} = \wt{\l}_{2,n} = \frac{\arg(-d) + 2 \pi n}{b_2}
 - i\frac{\ln|d|}{b_2}, \qquad n \in \bZ.
\end{equation}
For eigenvalues $\l_{2,n}$ we have $d + e^{i b_2 \l_{2,n}} = 0$ and hence it
follows from~\eqref{eq:U.C1.C2} that vector functions
\begin{equation} \label{eq:g2n.def}
 g_{2,n}(x) := \Phi_2(x, \l_{2,n}) = \Phi_2^0(x, \l_{2,n})
 = \col\(0, e^{i b_2 \l_{2,n} x}\)
\end{equation}
are the eigenfunctions of the operator $L(Q)$. Hence
\begin{equation} \label{eq:gn2}
 \sum_{n \in \bZ} \norm{g_{2,n} - \wt{g}_{2,n}}_{\infty}^{p'} = 0
 \quad\text{and}\quad
 \|g_{2,n}\|_{\infty} = \|\,\wt{g}_{2,n}\|_{\infty}
 = \sup_{x \in [0,1]}\abs{e^{i b_2 \l_{2,n} x}}
 = \max\{1, |d|\} =: C_2 > 0, \qquad n \in \bZ.
\end{equation}
For eigenvalues $\l_{1,n}$ we have $1 + a e^{i b_1 \l_{1,n}} = 0$. Since
boundary conditions are strictly regular, then for $d_n := d +
e^{i b_2 \l_{1,n}}$ we have $D^{-1} \le |d_n| \le D$, $n \in \bZ$, for some
$D > 1$ that only depends on $a, d, b_1, b_2$. Hence~\eqref{eq:U.C1.C2} implies
that vector functions
\begin{align}
\nonumber
 g_{1,n}(x) & := d_n \Phi_1(x, \l_{1,n}) - (c e^{i b_1 \l_{1,n}} +
 \varphi_{21}(\l_{1,n})) \Phi_2(x, \l_{1,n}) \\
\label{eq:gn1.def}
 & = \begin{pmatrix}
 d_n e^{i b_1 \l_{1,n} x} \\
 d_n \varphi_{21}(x, \l_{1,n}) + (c e^{i b_1 \l_{1,n}} +
 \varphi_{21}(\l_{1,n})) e^{i b_2 \l_{1,n} x}
 \end{pmatrix}, \qquad n \in \bZ,
\end{align}
are non-zero eigenfunctions of the operator $L(Q)$ corresponding to the
eigenvalues $\l_{1,n}$. It follows from~\eqref{eq:Phi.Q12=0} that
\begin{equation} \label{eq:gn1.norm}
 \norm{g_{1,n}}_{\infty} \ge |d_n| \sup_{x \in [0,1]}\abs{e^{i b_1 \l_{1,n} x}}
 \ge D^{-1} \max\{1, |a|^{-1}\} =: C_1 > 0, \qquad n \in \bZ,
\end{equation}
It is also clear that $\|g_{1,n}\|_{\infty} \le C(A, B) (\|Q_{21}\|_1 + 1)$.
Therefore, relations~\eqref{eq:gn2} and~\eqref{eq:gn1.norm} imply that
\begin{equation} \label{eq:gQn.def}
 \{g_{Q,n}\}_{n \in \bZ} := \{g_n\}_{n \in \bZ} := \{g_{1,n}\}_{n \in \bZ} \cup
 \{g_{2,n}\}_{n \in \bZ}
\end{equation}
is almost normalized sequence of eigenfunctions of the operator $L(Q)$.

Since $\l_{1,n} = \wt{\l}_{1,n}$ it follows from~\eqref{eq:gn1.def}
and~\eqref{eq:Phi.Q12=0} that
\begin{equation} \label{eq:gn1-wt.x}
 g_{1,n}(x) - \wt{g}_{1,n}(x) := d_n \(\varphi_{21}(x, \l_{1,n})
 - \wt{\varphi}_{21}(x, \l_{1,n})\) - e^{i b_2 \l_{1,n} x}
 \(\varphi_{21}(1, \l_{1,n}) - \wt{\varphi}_{21}(1, \l_{1,n})\)
\end{equation}
Hence~\eqref{eq:phi21.Q12=0}, definition~\eqref{eq:sF.f.def} of $\sF[f]$
and~\eqref{eq:l1n.l2n.b=0} imply for $n \in \bZ$,
\begin{align} \label{eq:gn1-wt}
 \norm{g_{1,n} - \wt{g}_{1,n}}_{\infty} &\le |b_2| \cdot
 \sup_{x \in [0,1]}\abs{e^{i b_2 \l_{1,n} x}} \cdot
 \(|d_n| + \abs{e^{i b_2 \l_{1,n}}}\) \cdot
 \sF[Q_{21} - \wt{Q}_{21}]((b_1 - b_2)\l_{1,n}) \\
 & \le |b_2| \cdot \max\{1, |a|^{-b_2/b_1}\} \cdot (D + |a|^{-b_2/b_1}) \cdot
 \sF[Q_{21} - \wt{Q}_{21}]((b_1 - b_2)\l_{1,n})
\end{align}
Combining Theorem~\ref{th:p.bessel} and~\eqref{eq:gn1-wt} we arrive at
\begin{align} \label{eq:sum.gn1}
 \sum_{n \in \bZ} \norm{g_{1,n} - \wt{g}_{1,n}}_{\infty}^{p'}
 \le C(p, b_1, b_2, a, d) \cdot \|Q_{21} - \wt{Q}_{21}\|_p^{p'}.
\end{align}
Now set
\begin{equation} \label{eq:fnj.def}
 f_{j,n} := \frac{g_{j,n}}{\|g_{j,n}\|_{\infty}}, \qquad
 \wt{f}_{j,n} := \frac{\wt{g}_{j,n}}{\|\wt{g}_{j,n}\|_{\infty}},
 \qquad n \in \bZ, \quad j \in \{1,2\}.
\end{equation}
Clearly $f_{j,n}$ and $\wt{f}_{j,n}$ are eigenfunctions of the operators $L(Q)$
and $L(\wt{Q})$ that satisfy~\eqref{eq:fn.norm.easy}. Further,
similarly to~\eqref{eq:u-v.norm.s} for any non-zero elements $u, v$ of some Banach
space we have
\begin{equation} \label{eq:u-v.norm}
 \norm{\frac{u}{\|u\|} - \frac{v}{\|v\|}}
 \le \frac{2\|u-v\|}{\|u\|}.
\end{equation}
Combining~\eqref{eq:u-v.norm},~\eqref{eq:gn2} and~\eqref{eq:gn1.norm} we arrive
at
\begin{equation} \label{eq:fn-wtfn.norm}
 \|f_{j,n} - \wt{f}_{j,n}\| \le 2C_j^{-1} \|g_{j,n} - \wt{g}_{j,n}\|,
 \qquad n \in \bZ, \quad j \in \{1,2\}.
\end{equation}
Inequality~\eqref{eq:fn-wt.fn<Q.Lp.easy} now immediately follows
from~\eqref{eq:fn-wtfn.norm},~\eqref{eq:gn2} and~\eqref{eq:sum.gn1}.
Inequality~\eqref{eq:fn-wt.fn<Q.Hardy.easy} is derived similarly.
\end{proof}
The following example shows that in a special case $Q_{12}=0$, $b=0$,
$a=1$, stability property~\eqref{eq:fn-wt.fn<Q.Lp.easy} of the eigenfunctions of
the operator $L(Q)$ is equivalent to the abstract
inequality~\eqref{eq:sum.int.g<g} from Theorem~\ref{th:p.bessel} with
a sequence $\{\mu_n\}_{n \in \bZ}$ being an arithmetic progression.
\begin{example} \label{ex:stabil.equiv}
Let $p > 0$, $Q_{12} = 0$, $Q_{21} \in L^1[0,1]$, and let boundary
conditions~\eqref{eq:BC.new} be strictly regular with $b=0$ and $a=1$.
Set $\mu_n = 2 (1 - b_2/b_1) \pi n$, $n \in \bZ$. Assume that
\begin{equation} \label{eq:sum.int.Q21.01}
 \sum_{n \in \bZ} \abs{\int_0^1 Q_{21}(t) e^{i \mu_n t} dt}^{p} < \infty,
\end{equation}
which holds whenever $Q_{21} \in L^p[0,1]$, $p \in (1,2]$, due to the classical
Hausdorff-Young theorem for Fourier coefficients
(see~\cite[Theorem XII.2.3]{Zig59_v2}). Formula~\eqref{eq:phi21.Q12=0} for
$\varphi_{21}(x, \l)$ easily implies equivalences
\begin{align}
\label{eq:sup.int.Q21.equiv.phi}
 \sum_{n \in \bZ} \sup_{x \in [0,1]}
 \abs{\int_0^x Q_{21}(t) e^{i \mu_n t} dt}^{p} < \infty
 & \quad \Leftrightarrow \quad \sum_{n \in \bZ} \sup_{x \in [0,1]}
 \abs{\varphi_{21}(x, \mu_n)}^{p} < \infty, \\
\label{eq:int.Q21.equiv.phi}
 \sum_{n \in \bZ} \abs{\int_0^1 Q_{21}(t) e^{i \mu_n t} dt}^{p} < \infty
 & \quad \Leftrightarrow \quad \sum_{n \in \bZ}
 \abs{\varphi_{21}(1, \mu_n)}^{p} < \infty.
\end{align}
Recall that the sequence of eigenvalues of the operator $L(Q)$
(of BVP~\eqref{eq:system},~\eqref{eq:BC.new}) in this special case is the union
of arithmetic progressions~\eqref{eq:l1n.l2n.b=0}. In particular, they are
simple and separated. Furthemore, the sequence $\{g_{Q,n}\}_{n \in \bZ}$ defined
in~\eqref{eq:g2n.def},~\eqref{eq:gn1.def},~\eqref{eq:gQn.def} is almost
normalized sequence of eigenfunctions of the operator $L(Q)$. Combining
relation~\eqref{eq:gn1-wt.x} with the pre-condition~\eqref{eq:sum.int.Q21.01}
now implies the following equivalence
\begin{equation} \label{eq:sum.int.Q21.equiv}
 \sum_{n \in \bZ} \sup_{x \in [0,1]}
 \abs{\int_0^x Q_{21}(t) e^{i \mu_n t} dt}^{p} < \infty
 \quad \Leftrightarrow \quad
 \sum_{n \in \bZ} \|g_{Q,n} - g_{0,n}\|_{\infty}^p < \infty.
\end{equation}
The first condition in~\eqref{eq:sum.int.Q21.equiv} is equivalent to the
abstract inequality~\eqref{eq:sum.int.g<g} from Theorem~\ref{th:p.bessel} for
the function $g = Q_{21}$ and the sequence $\mu_n = 2 (1 - b_2/b_1) \pi n$.
\end{example}
\begin{corollary} \label{th:root.Lp.stabil.compact}
Let $Q \in \LL{p}$, $p \in (1, 2]$, $p'=p/(p-1)$, and let boundary
conditions~\eqref{eq:BC} be strictly regular. Then the systems
$\{f_n\}_{n \in \bZ}$ and $\{f_n^0\}_{n \in \bZ}$ of root vectors of the
operators $L(Q)$ and $L(0)$ can be chosen asymptotically normalized in
$L^{p'}([0,1]; \bC^2)$ and satisfying the following uniform estimates
\begin{align}
\label{eq:sum.fn-fn0}
 \sum_{n \in \bZ} \norm{f_n - f_n^0}_{\infty}^{p'} & \le \infty, \\
\label{eq:sum.fn-fn0.weight}
 \sum_{n \in \bZ} (1+|n|)^{p-2} \norm{f_n - f_n^0}_{\infty}^{p} & \le \infty.
\end{align}
\end{corollary}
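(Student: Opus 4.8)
The plan is to derive Corollary~\ref{th:root.Lp.stabil.compact} as an almost immediate consequence of Theorem~\ref{th:eigenfunc.compact} applied to the two-point compact $\cK = \{Q, 0\}$. Since $Q \in \LL{p}$ with $p \in (1,2]$, the set $\cK = \{Q, 0\}$ is trivially compact in $\LL{p}$, and the boundary conditions are strictly regular by assumption. Thus Theorem~\ref{th:eigenfunc.compact} provides systems $\{f_{Q,n}\}_{n \in \bZ}$ and $\{f_{0,n}\}_{n \in \bZ}$ of root vectors of $L(Q)$ and $L(0)$, a constant $N \in \bN$, and a constant $C > 0$ (depending on $p$, $Q$, $A$, $B$), such that $\|f_{Q,n}\|_s = \|f_{0,n}\|_s = 1$ for $|n| > N$ and all $s \in (0,\infty]$, and such that estimates~\eqref{eq:fn-wt.fn<Q.Lp} and~\eqref{eq:fn-wt.fn<Q.Hardy} hold with $\wt{Q} = 0$. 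Specifying $s = p'$ gives the required asymptotic normalization in $L^{p'}([0,1]; \bC^2)$.

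First I would set $f_n := f_{Q,n}$ and $f_n^0 := f_{0,n}$, where for $|n| > N$ these are the normalized root vectors supplied by Theorem~\ref{th:eigenfunc.compact}, and for $|n| \le N$ one chooses any root vectors of $L(Q)$ and $L(0)$ (there are finitely many such indices, so whatever finite contribution they make to the sums in~\eqref{eq:sum.fn-fn0} and~\eqref{eq:sum.fn-fn0.weight} is harmless). Then from~\eqref{eq:fn-wt.fn<Q.Lp} with $\wt{Q}=0$ we immediately get $\sum_{|n|>N} \|f_n - f_n^0\|_{\infty}^{p'} \le C \|Q\|_p^{p'} < \infty$, and adding the finitely many terms with $|n| \le N$ yields~\eqref{eq:sum.fn-fn0}. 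Likewise~\eqref{eq:fn-wt.fn<Q.Hardy} with $\wt{Q}=0$ gives $\sum_{|n|>N} (1+|n|)^{p-2}\|f_n - f_n^0\|_{\infty}^p \le C\|Q\|_p^p < \infty$, and again the finitely many low-index terms contribute a finite amount, establishing~\eqref{eq:sum.fn-fn0.weight}. (Strictly, the right-hand sides in~\eqref{eq:sum.fn-fn0}--\eqref{eq:sum.fn-fn0.weight} should read ``$< \infty$''; this is the intended meaning.)

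There is essentially no obstacle here beyond bookkeeping: the whole content of the statement has already been packaged in Theorem~\ref{th:eigenfunc.compact}, whose proof in turn rested on Proposition~\ref{prop:sum.phi-wt.phi.x.p} (the $l^{p'}$- and weighted $l^p$-stability of the fundamental matrix), Proposition~\ref{prop:ln-wtln<Delta} (comparison of $|\l_n - \wt{\l}_n|$ with $|\wt{\Delta}(\l_n)|$), and the generalized Hausdorff--Young / Hardy--Littlewood Theorem~\ref{th:p.bessel}. The only mild point to check is that passing from a general compact $\cK$ to the particular two-point set $\{Q,0\}$ is legitimate, which it is since any finite set is compact in $\LL{p}$; and that the normalization can be taken in $L^{p'}$, which follows because the estimates in Theorem~\ref{th:eigenfunc.compact} are uniform in $s \in (0,\infty]$. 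Hence the corollary follows by simply invoking Theorem~\ref{th:eigenfunc.compact} with $\cK = \{Q, 0\}$ and $s = p'$.
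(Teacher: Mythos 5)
Your proposal is correct and matches the paper's intended argument: the corollary is exactly Theorem~\ref{th:eigenfunc.compact} applied to the two-point compact $\cK=\{Q,0\}$ with $s=p'$ (the same device the paper uses for Corollary~\ref{prop:lambda.n.in.lp}), with the finitely many indices $|n|\le N$ handled by an arbitrary choice of root vectors, whose continuous (hence bounded) contributions do not affect convergence.
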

\begin{definition} \label{def:p-close}
Two sequences $\{f_n\}_{n \in \bZ}$ and $\{g_n\}_{n \in \bZ}$ in a Banach space
$X$ are called $\theta$-close in $X$ if $\{\|f_n-g_n\|_X\}_{n \in \bZ} \in
l^{\theta}(\bZ)$, i.e.
\begin{equation} \label{eq:fn-gn.X.p}
 \sum_{n \in \bZ} \|f_n-g_n\|_X^{\theta} < \infty.
\end{equation}
If $X$ is a Hilbert space and $\theta=2$ it is a classic definition of
quadratically close sequences in a Hilbert space (see~\cite[Subsection IV.2.4]
{GohKre65}
\end{definition}
\begin{corollary} \label{cor:p-close}
Assume conditions of Theorem~\ref{th:root.Lp.stabil.compact}. Then systems
$\{f_n\}_{n \in \bZ}$ and $\{f_n^0\}_{n \in \bZ}$ of root vectors of the
operators $L(Q)$ and $L(0)$ can be chosen to be asymptotically normalized and
$p'$-close in $L^{p'}([0,1]; \bC^2)$.
\end{corollary}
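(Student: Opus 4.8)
The final statement is Corollary~\ref{cor:p-close}, which asserts that the root vector systems $\{f_n\}_{n \in \bZ}$ and $\{f_n^0\}_{n \in \bZ}$ of $L(Q)$ and $L(0)$ can be chosen asymptotically normalized and $p'$-close in $L^{p'}([0,1]; \bC^2)$. The plan is to observe that this is essentially a restatement of Corollary~\ref{th:root.Lp.stabil.compact} once we unwind the definition of $p'$-closeness.

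First I would recall Definition~\ref{def:p-close}: two sequences $\{f_n\}_{n \in \bZ}$ and $\{g_n\}_{n \in \bZ}$ in a Banach space $X$ are $\theta$-close in $X$ precisely when $\sum_{n \in \bZ} \|f_n - g_n\|_X^{\theta} < \infty$. Taking $X = L^{p'}([0,1]; \bC^2)$ and $\theta = p'$, the claim that $\{f_n\}_{n \in \bZ}$ and $\{f_n^0\}_{n \in \bZ}$ are $p'$-close in $L^{p'}([0,1]; \bC^2)$ is exactly the statement $\sum_{n \in \bZ} \|f_n - f_n^0\|_{p'}^{p'} < \infty$.

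Next I would invoke Corollary~\ref{th:root.Lp.stabil.compact}, which under the present hypotheses ($Q \in \LL{p}$, $p \in (1,2]$, boundary conditions strictly regular) provides systems $\{f_n\}_{n \in \bZ}$ and $\{f_n^0\}_{n \in \bZ}$ of root vectors of $L(Q)$ and $L(0)$ that are asymptotically normalized in $L^{p'}([0,1]; \bC^2)$ and satisfy the uniform estimate~\eqref{eq:sum.fn-fn0}, namely $\sum_{n \in \bZ} \|f_n - f_n^0\|_{\infty}^{p'} < \infty$. Since the underlying interval is $[0,1]$ of finite measure, one has the elementary bound $\|g\|_{p'} \le \|g\|_{\infty}$ for every $g \in L^{\infty}([0,1]; \bC^2)$ (indeed $\|g\|_{p'}^{p'} = \int_0^1 |g(x)|^{p'}\, dx \le \|g\|_{\infty}^{p'}$). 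Applying this with $g = f_n - f_n^0$ and summing yields
\begin{equation*}
 \sum_{n \in \bZ} \|f_n - f_n^0\|_{p'}^{p'}
 \le \sum_{n \in \bZ} \|f_n - f_n^0\|_{\infty}^{p'} < \infty,
\end{equation*}
which is precisely the $p'$-closeness in $L^{p'}([0,1]; \bC^2)$ required by Definition~\ref{def:p-close}. The asymptotic normalization in $L^{p'}([0,1]; \bC^2)$ is inherited directly from the corresponding clause of Corollary~\ref{th:root.Lp.stabil.compact}.

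There is essentially no obstacle here: the proof is a two-line deduction combining the $L^{\infty}$-estimate~\eqref{eq:sum.fn-fn0} with the trivial embedding $L^{\infty}[0,1] \hookrightarrow L^{p'}[0,1]$ and the definition of $p'$-closeness. The only point deserving a word of care is that the \emph{same} choice of systems $\{f_n\}_{n \in \bZ}$, $\{f_n^0\}_{n \in \bZ}$ provided by Corollary~\ref{th:root.Lp.stabil.compact} simultaneously realizes both the asymptotic normalization and the summability estimate, so no re-selection or re-normalization of eigenfunctions is needed.
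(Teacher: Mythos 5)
Your proposal is correct and matches the paper's own proof: both deduce $p'$-closeness by combining the estimate $\sum_{n \in \bZ} \|f_n - f_n^0\|_{\infty}^{p'} < \infty$ from Corollary~\ref{th:root.Lp.stabil.compact} with the elementary inequality $\|f\|_{p'} \le \|f\|_{\infty}$ on $[0,1]$, the asymptotic normalization being carried over from the same corollary. No gaps.
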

\begin{proof}
Since $\|f\|_{p'} \le \|f\|_{\infty}$ for $f \in L^{\infty}([0,1]; \bC^2)$,
inequality~\eqref{eq:sum.fn-fn0} yields estimate~\eqref{eq:fn-gn.X.p} with
$\theta=p'$ and $X = L^{p'}([0,1]; \bC^2)$.
\end{proof}
\section{Criterion for Bari basis property} \label{sec:bari}
The Bari-Markus property of quadratic closeness of systems of root vectors of
the operators $L(Q)$ and $L(0)$ when $Q \in \LL{2}$ was
studied in numerous papers (Mityagin, Baskakov references). But the question
whether system of root vectors of the operator $L(Q)$ forms a proper Bari
basis was never investigated to the best of our knowledge.
\begin{definition} \label{def:bari}
A sequence of vectors in a Hilbert space $\fH$ forms a \textbf{Bari basis} if it
is quadratically close to an orthonormal complete sequence of vectors.
\end{definition}
Our considerations are largely based on the following abstract criterion
for Bari basis property.
\begin{proposition}~\cite[Theorem VI.3.2]{GohKre65} \label{prop:crit.bari}
A complete system $\fF = \{f_n\}_{n \in \bZ}$ of unit vectors in a Hilbert space
$\fH$ forms a Bari basis if and only if there exists a sequence $\{g_n\}_{n \in
\bZ}$ biorthogonal to $\fF$ that is quadratically close to $\fF$.
\end{proposition}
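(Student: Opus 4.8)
The statement to be proved is the abstract criterion \cite[Theorem VI.3.2]{GohKre65}, so the plan is to reduce it to the standard facts on Riesz bases and quadratic closeness that are already available in the Gohberg--Krein machinery, rather than to reprove those facts. First I would recall the basic dichotomy: a complete system $\fF = \{f_n\}_{n \in \bZ}$ of unit vectors in $\fH$ which is quadratically close to an orthonormal complete system $\fE = \{e_n\}_{n \in \bZ}$ is, in particular, a Riesz basis (this is the Bari--Markus / Paley--Wiener stability theorem: a system quadratically close --- hence ``close up to finitely many terms'' in the compact-perturbation sense --- to an orthonormal basis is a Riesz basis, once completeness is assumed). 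Therefore $\fF$ possesses a unique biorthogonal system $\{g_n\}_{n \in \bZ}$, and the whole content of the proposition is the equivalence
\[
 \{f_n\} \text{ is a Bari basis} \quad\Longleftrightarrow\quad
 \sum_{n \in \bZ} \|f_n - g_n\|_{\fH}^2 < \infty .
\]

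For the forward implication I would argue as follows. Assume $\fF$ is a Bari basis, so there is a complete orthonormal $\fE = \{e_n\}$ with $\sum_n \|f_n - e_n\|^2 < \infty$. Writing the (bounded, boundedly invertible) operator $T$ determined by $T e_n = f_n$, one has $T = I + S$ with $S$ Hilbert--Schmidt, since $S e_n = f_n - e_n$ and $\sum_n \|f_n - e_n\|^2 < \infty$. The biorthogonal system to $\fF = \{T e_n\}$ is exactly $g_n = (T^*)^{-1} e_n$, and $(T^*)^{-1} = I + S_1$ with $S_1$ again Hilbert--Schmidt (the Hilbert--Schmidt ideal is closed under the operations $A \mapsto A^*$ and under $T \mapsto T^{-1} - I$ for invertible $T = I + (\text{HS})$, because $T^{-1} - I = -T^{-1}S$). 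Hence $g_n - e_n = S_1 e_n$ and $\sum_n \|g_n - e_n\|^2 < \infty$, so by the triangle inequality in $\ell^2$, $\sum_n \|f_n - g_n\|^2 \le 2\sum_n \|f_n - e_n\|^2 + 2\sum_n \|g_n - e_n\|^2 < \infty$. This gives the ``only if'' direction, and along the way shows that $\{g_n\}$ is itself a Riesz (in fact Bari) basis.

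For the converse, suppose $\fF$ is a Riesz basis (which, being a complete minimal system with a biorthogonal system, needs to be established --- here is where I expect the main work, see below) with biorthogonal system $\{g_n\}$ satisfying $\sum_n \|f_n - g_n\|^2 < \infty$. The idea is to manufacture an orthonormal system quadratically close to $\fF$. Let $T$ be the bounded invertible operator with $T e_n = f_n$ for some auxiliary orthonormal basis $\{e_n\}$; then $g_n = (T^*)^{-1} e_n$ and $f_n - g_n = (T - (T^*)^{-1}) e_n = (T^* T - I)(T^*)^{-1} e_n$. Since $(T^*)^{-1}$ is bounded and bounded below, $\sum_n \|f_n - g_n\|^2 < \infty$ forces $(T^* T - I)$ to be Hilbert--Schmidt. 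Writing the polar-type decomposition $T = U |T|$ with $|T| = (T^*T)^{1/2}$ unitary-times-positive, the operator $|T| = (I + (\text{HS}))^{1/2} = I + (\text{HS})$ (square root preserves the property that the operator is identity plus Hilbert--Schmidt, by the holomorphic functional calculus / series expansion of $(I+K)^{1/2}$), so $T = U(I + S_2)$ with $U$ unitary and $S_2$ Hilbert--Schmidt. Setting $\tilde e_n := U e_n$ gives a complete orthonormal system, and $f_n - \tilde e_n = U S_2 e_n$, whence $\sum_n \|f_n - \tilde e_n\|^2 = \sum_n \|S_2 e_n\|^2 < \infty$. Thus $\fF$ is quadratically close to the orthonormal complete system $\{\tilde e_n\}$, i.e. a Bari basis.

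The step I expect to be the main obstacle is the very first reduction in the converse direction: from ``complete minimal system with biorthogonal $\{g_n\}$ and $\sum_n \|f_n - g_n\|^2 < \infty$'' to ``$\fF$ is a Riesz basis.'' One does not get this for free from mere minimality; the standard route (\cite[Ch.~VI]{GohKre65}) is to observe that $\sum_n \|f_n - g_n\|^2 < \infty$ together with $\|f_n\| = 1$ and biorthogonality $(f_n, g_m) = \delta_{nm}$ forces the Gram operator of $\fF$ to be a Hilbert--Schmidt perturbation of a positive invertible operator, after discarding finitely many indices, and then invoke the Bari--Markus theorem that such a system --- once completeness is assumed --- is a Riesz basis. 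I would carry out exactly this Gram-operator estimate: expand $(f_n, f_m)$ using $f_m = g_m + (f_m - g_m)$ to get $(f_n, f_m) = \delta_{nm} + (f_n, f_m - g_m)$, so the off-identity part has matrix entries $a_{nm} = (f_n, f_m - g_m)$ with $\sum_{n,m} |a_{nm}|^2 \le \big(\sup_n \|f_n\|^2\big) \sum_m \|f_m - g_m\|^2 < \infty$ --- provided $\sup_n\|f_n\| < \infty$, which holds since they are unit vectors. This exhibits the Gram operator as $I + (\text{HS})$, hence essentially positive; combined with completeness this yields the Riesz basis property, closing the argument.
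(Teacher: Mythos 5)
First, note that the paper offers no proof of this proposition at all: it is imported verbatim from Gohberg--Krein \cite[Theorem VI.3.2]{GohKre65}, so your argument has to stand on its own. Your necessity direction is correct (completeness plus quadratic closeness to an orthonormal basis gives $T=I+S$ with $S$ Hilbert--Schmidt and, by the Fredholm index argument, invertible; then $g_n=(T^*)^{-1}e_n$ together with the Hilbert--Schmidt ideal calculus gives quadratic closeness of $\{g_n\}_{n\in\bZ}$ to $\fF$). The genuine gap is in the sufficiency direction, exactly at the step you yourself flagged as the main work: the inequality $\sum_{n,m}|a_{nm}|^2\le\bigl(\sup_n\|f_n\|^2\bigr)\sum_m\|f_m-g_m\|^2$ for $a_{nm}=(f_n,f_m-g_m)$ is false in general. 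For fixed $m$ it amounts to the Bessel inequality $\sum_n|(f_n,h)|^2\le\|h\|^2$ for the system $\{f_n\}_{n\in\bZ}$, which a complete minimal system of unit vectors need not satisfy; Cauchy--Schwarz only yields $|a_{nm}|\le\|f_m-g_m\|$, which gives nothing after summation over $n$. Worse, the Bessel property of $\fF$ is essentially a consequence of the Riesz-basis property you are trying to establish, so using it there is circular; and the second half of your converse already presupposes the bounded invertible operator $T$ with $Te_n=f_n$, i.e. the Riesz-basis property, so as written the converse has no independent foundation.

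The repair is elementary and uses biorthogonality quadratically rather than entrywise. Put $d_n:=f_n-g_n$ and $\delta:=\bigl(\sum_n\|d_n\|^2\bigr)^{1/2}$. For a finite linear combination $h=\sum_n c_nf_n$ one has $(h,g_m)=c_m$ and $(f_n,f_m)=\delta_{nm}+(f_n,d_m)$, hence
\begin{equation*}
\|h\|^2=\sum_n|c_n|^2+\Bigl(h,\sum_m c_m d_m\Bigr),\qquad
\Bigl\|\sum_m c_m d_m\Bigr\|\le\delta\Bigl(\sum_m|c_m|^2\Bigr)^{1/2},
\end{equation*}
so that $\bigl|\,\|h\|^2-\|c\|_{\ell^2}^2\bigr|\le\delta\,\|c\|_{\ell^2}\|h\|$, which yields two-sided bounds $C^{-1}\|c\|_{\ell^2}\le\|h\|\le C\|c\|_{\ell^2}$ with $C=C(\delta)$ independent of the finite index set. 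Thus $\fF$ is a Riesz sequence, and completeness upgrades it to a Riesz basis of $\fH$; from that point on your argument ($T-(T^*)^{-1}$ Hilbert--Schmidt, hence $T^*T-I$ Hilbert--Schmidt, $|T|=I+{}$Hilbert--Schmidt, and $\wt{e}_n:=Ue_n$ from the polar decomposition) is correct and completes the proof.
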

In the sequel we will need the following slightly more practical version of this
criterion. Throughout the section for any vector $f \ne 0$ in a Hilbert space we
will denote as $\wh{f}$ the normalization of $f$, $\wh{f} := f / \|f\|$.
\begin{lemma} \label{lem:crit.bari.not.norm}
Let $\fF = \{f_n\}_{n \in \bZ}$ be a complete system of vectors in a Hilbert
space $\fH$. Let also $\{g_n\}_{n \in \bZ}$ be ``almost biorthogonal'' to $\fF$.
Namely, $(f_n, g_m) = 0$, $n \ne m$, $(f_n, g_n) \ne 0$, $n, m \in \bZ$. Then
normalization of $\fF$, $\wh{\fF} := \{\wh{f}_n\}_{n \in \bZ}$, forms a Bari
basis in $\fH$ if and only if
\begin{equation} \label{eq:sum.fn.gn-1}
 \sum_{n \in \bZ} \( \frac{\|f_n\|^2 \cdot \|g_n\|^2}{|(f_n, g_n)|^2} - 1\)
 < \infty.
\end{equation}
\end{lemma}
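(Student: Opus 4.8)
The plan is to deduce Lemma~\ref{lem:crit.bari.not.norm} from the abstract criterion in Proposition~\ref{prop:crit.bari} by converting the given ``almost biorthogonal'' system $\{g_n\}$ into a genuine biorthogonal system for the normalized family $\wh{\fF}$ and then checking that quadratic closeness of the two is equivalent to condition~\eqref{eq:sum.fn.gn-1}. First I would set $\wh{f}_n := f_n/\|f_n\|$ and define the candidate biorthogonal vectors by rescaling $g_n$: put $h_n := \overline{(f_n,g_n)}^{-1}\,\|f_n\|\, g_n$ (the normalizing constant is chosen precisely so that $(\wh{f}_n,h_n)=1$). Since $(f_n,g_m)=0$ for $n\ne m$, we also get $(\wh{f}_n,h_m)=0$ for $n\ne m$, so $\{h_n\}$ is biorthogonal to $\wh{\fF}$ in the exact sense required by Proposition~\ref{prop:crit.bari}. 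Note also that completeness of $\fF$ is equivalent to completeness of $\wh{\fF}$ (scaling by nonzero constants preserves the closed span), so the completeness hypothesis of Proposition~\ref{prop:crit.bari} is met.

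Next I would compute $\|\wh{f}_n - h_n\|^2$ and relate it to the summand in~\eqref{eq:sum.fn.gn-1}. Using $(\wh{f}_n,h_n)=1=(h_n,\wh f_n)$ we have
\begin{equation*}
 \|\wh{f}_n - h_n\|^2 = \|\wh f_n\|^2 - 2\Re(\wh f_n,h_n) + \|h_n\|^2
 = 1 - 2 + \|h_n\|^2 = \|h_n\|^2 - 1.
\end{equation*}
On the other hand $\|h_n\|^2 = \dfrac{\|f_n\|^2\,\|g_n\|^2}{|(f_n,g_n)|^2}$, so $\|\wh f_n - h_n\|^2$ equals exactly the $n$-th term of the series in~\eqref{eq:sum.fn.gn-1}. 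Hence $\{\wh f_n\}$ and $\{h_n\}$ are quadratically close in $\fH$ if and only if~\eqref{eq:sum.fn.gn-1} holds. Observe that the quantity under the sum is automatically nonnegative by the Cauchy--Schwarz inequality $|(f_n,g_n)|^2\le \|f_n\|^2\|g_n\|^2$, which is consistent with it being a sum of squared norms.

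Combining these two observations with Proposition~\ref{prop:crit.bari}: if~\eqref{eq:sum.fn.gn-1} holds, then $\{h_n\}$ is a biorthogonal sequence to the complete unit system $\wh{\fF}$ that is quadratically close to it, so $\wh{\fF}$ is a Bari basis; conversely, if $\wh{\fF}$ is a Bari basis, then by Proposition~\ref{prop:crit.bari} there is \emph{some} biorthogonal sequence quadratically close to it, and since a complete system admitting a Riesz/Bari basis structure has a unique biorthogonal sequence, that sequence must coincide with our $\{h_n\}$, giving $\sum_n\|\wh f_n - h_n\|^2<\infty$, i.e.~\eqref{eq:sum.fn.gn-1}. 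I expect the main subtlety to be precisely this uniqueness point in the converse direction: one must argue that a Bari (hence Riesz, hence unconditional) basis has a unique biorthogonal system, so that the "almost biorthogonal" $\{g_n\}$, after the canonical rescaling, is forced to be the true biorthogonal family and therefore inherits the quadratic-closeness estimate. This is standard (uniqueness of biorthogonal systems for complete minimal sequences, and completeness of $\{h_n\}$ follows because a Bari basis is in particular an unconditional basis), but it is the step that needs to be stated carefully rather than the routine norm computation.
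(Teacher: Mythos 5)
Your proof is correct and follows essentially the same route as the paper: rescale $g_n$ to obtain a genuinely biorthogonal system for the normalized family, compute $\|\wh{f}_n - h_n\|^2 = \|h_n\|^2 - 1 = \frac{\|f_n\|^2\|g_n\|^2}{|(f_n,g_n)|^2} - 1$, and apply Proposition~\ref{prop:crit.bari}. The only remark is that the uniqueness step you flag in the converse direction is even simpler than you suggest: since $\fF$ is complete, any two sequences biorthogonal to it differ by vectors orthogonal to a complete set and hence coincide, so no appeal to the Riesz/unconditional structure of a Bari basis (nor to completeness of $\{h_n\}$) is needed.
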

\begin{proof}
Put $f_n' = \wh{f}_n = f_n / \|f_n\|$ and $g_n' = \|f_n\| / (f_n, g_n) \cdot
g_n$. It is clear that $\fF' := \{f_n'\}_{n \in \bZ}$ is complete system of unit
vectors in $\fH$ and $\fG' := \{g_n'\}_{n \in \bZ}$ is biorthogonal to $\fF'$,
$(f_n', g_m') = \delta_{n,m}$, $n,m \in \bZ$. Further, we have
\begin{equation} \label{eq:|fn'-gn'|}
 \|f_n' - g_n'\|^2 = \|f_n'\|^2 - (f_n', g_n') - \ol{(f_n', g_n')} + \|g_n'\|^2
 = \|g_n'\|^2 - 1 = \frac{\|f_n\|^2 \cdot \|g_n\|^2}{|(f_n, g_n)|^2} - 1.
\end{equation}
Hence, condition~\eqref{eq:sum.fn.gn-1} holds if and only if systems $\fF'$ and
$\fG'$ are quadratically close. Proposition~\ref{prop:crit.bari} now finishes
the proof.
\end{proof}
First we reduce the Bari basis property of the perturbed operator $L(Q)$ to that
of the unperturbed operator $L(0)$.
\begin{lemma} \label{lem:bari.LQ=L0}
Let $Q \in \LL{2}$ and let boundary conditions~\eqref{eq:BC} be strictly
regular. Then normalized systems of root vectors of the operators $L(Q)$ and
$L(0)$ form Bari basis in $\LLV{2}$ only simultaneously.
\end{lemma}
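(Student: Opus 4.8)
The goal is to show that the normalized system of root vectors of $L(Q)$ forms a Bari basis in $\LLV{2}$ if and only if the same holds for $L(0)$. The plan is to exploit the stability machinery developed in Section~\ref{sec:eigenfunction.stabil}, specialized to $p=2$ and $\wt{Q}=0$, together with the abstract criterion recorded in Proposition~\ref{prop:crit.bari} (and its convenient reformulation Lemma~\ref{lem:crit.bari.not.norm}). The key observation is that since $p=2$, we have $p'=2$ as well, so Theorem~\ref{th:eigenfunc.compact} applied to the two-point compact $\cK=\{Q,0\}$ gives, for suitably chosen and suitably normalized systems of root vectors $\{f_{Q,n}\}$ and $\{f_{0,n}\}$,
\begin{equation} \label{eq:bari.plan.close}
 \sum_{|n|>N} \norm{f_{Q,n} - f_{0,n}}_{L^2([0,1];\bC^2)}^{2}
 \le \sum_{|n|>N} \norm{f_{Q,n} - f_{0,n}}_{\infty}^{2}
 \le C \cdot \|Q\|_2^{2} < \infty ,
\end{equation}
i.e. the two systems of eigenfunctions are quadratically close in $\LLV{2}$. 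This is precisely the bridge that lets one transfer the Bari basis property from $L(0)$ to $L(Q)$ and back.

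\textbf{Key steps.} First I would invoke the Riesz basis property: since the boundary conditions are strictly regular and $Q\in\LL{2}\subset\LL{1}$, the results of~\cite{LunMal14Dokl,LunMal16JMAA,SavShk14} (and similarly for $L(0)$) guarantee that the root vector systems of both $L(Q)$ and $L(0)$ form Riesz bases in $\LLV{2}$; in particular each is complete and $\omega$-linearly independent, and the finitely many ``exceptional'' root vectors with $|n|\le N$ can be handled separately without affecting the Bari property, which is an asymptotic notion. Second, I would make sure the enumerations match: by Definition~\ref{def:canon.order} the eigenvalues $\l_{Q,n}$ and $\l_{0,n}=\l_n^0$ are canonically ordered so that $\l_{Q,n}\to\l_n^0$, and the eigenfunctions in Theorem~\ref{th:eigenfunc.compact} are attached to this same ordering, so~\eqref{eq:bari.plan.close} holds term-by-term in a matched enumeration. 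Third, I would apply the standard perturbation principle for Bari bases: if $\{e_n\}$ is an orthonormal complete system, $\{\wh f_{0,n}\}$ is quadratically close to $\{e_n\}$ (definition of Bari basis for $L(0)$), and $\{\wh f_{Q,n}\}$ is quadratically close to $\{\wh f_{0,n}\}$ by~\eqref{eq:bari.plan.close}, then by the triangle inequality in $\ell^2$ the system $\{\wh f_{Q,n}\}$ is quadratically close to $\{e_n\}$, hence is itself a Bari basis; the converse direction is symmetric. One subtlety is that~\eqref{eq:bari.plan.close} controls $\|f_{Q,n}-f_{0,n}\|$ with a \emph{specific} normalization produced in the proof of Theorem~\ref{th:eigenfunc.compact} (namely $\|f_{Q,n}\|_s=\|f_{0,n}\|_s=1$ for a chosen $s$); to pass to the $L^2$-normalization needed for the Bari definition I would use inequality~\eqref{eq:u-v.norm.s} from that proof (valid for $s=2$, since $\|\cdot\|_2\le\|\cdot\|_\infty$ on $[0,1]$ and the norms $\|f_{Q,n}\|_2$, $\|f_{0,n}\|_2$ are uniformly bounded below by $\delta_0$ via Proposition~\ref{eq:Phix.compact}(ii)), which shows the $L^2$-normalized systems remain quadratically close.

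\textbf{Main obstacle.} The routine parts are the $\ell^2$ triangle inequality and the bookkeeping of finitely many exceptional indices. The part that needs the most care is verifying that the system $\{f_{Q,n}\}$ supplied by Theorem~\ref{th:eigenfunc.compact}, after discarding or adjoining finitely many vectors, is actually a \emph{complete} system of root vectors of $L(Q)$ (so that the abstract criterion Proposition~\ref{prop:crit.bari} applies, which requires completeness) and that its biorthogonal system behaves correctly under the perturbation --- equivalently, that quadratic closeness of $\{\wh f_{Q,n}\}$ to an orthonormal complete system genuinely yields a Bari basis rather than merely a quadratically close sequence. This is where the Riesz basis property of $L(Q)$ from~\cite{LunMal16JMAA,SavShk14} enters essentially: a quadratically close perturbation of an orthonormal basis is automatically a Riesz basis, and a \emph{complete} such system is a Bari basis, but one must know a priori that the perturbed system spans $\LLV{2}$ and contains no ``extra'' root vectors. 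I would therefore state explicitly, at the start of the argument, that strict regularity plus $Q\in\LL{2}$ gives the Riesz basis property of $L(Q)$, and use it to pin down completeness and the correct cardinality of the root vector system before invoking~\eqref{eq:bari.plan.close} and Proposition~\ref{prop:crit.bari}.
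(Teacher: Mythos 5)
Your proposal is correct and follows essentially the same route as the paper: the paper's proof simply invokes Corollary~\ref{cor:p-close} (the $p=p'=2$ case of Theorem~\ref{th:eigenfunc.compact}/Corollary~\ref{th:root.Lp.stabil.compact} applied to the pair $\{Q,0\}$) to get almost normalized, quadratically close root vector systems, renormalizes via inequality~\eqref{eq:u-v.norm}, and transfers the Bari property by the $\ell^2$ triangle inequality, exactly as you outline. Your additional worry about completeness/Riesz basis property is harmless but not needed under the paper's Definition~\ref{def:bari}, which only requires quadratic closeness to an orthonormal complete sequence.
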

\begin{proof}
In accordance with Corollary~\ref{cor:p-close} we can choose almost normalized
systems of root vectors $\{f_n\}_{n \in \bZ}$ and $\{f_n^0\}_{n \in \bZ}$ of the
operators $L(Q)$ and $L(0)$, respectively, that are quadratically close, i.e.
\begin{equation} \label{eq:|fn-fn0|.in.l2}
 \bigl\{\|f_n - f_n^0\|\bigr\}_{n \in \bZ} \in l^2(\bZ).
\end{equation}
Since vector systems $\{f_n\}_{n \in \bZ}$ and $\{f_n^0\}_{n \in \bZ}$ are
almost normalized, it follows from inclusion~\eqref{eq:|fn-fn0|.in.l2} and
inequality~\eqref{eq:u-v.norm} that $\bigl\{\|\wh{f}_n -
\wh{f}_n^0\|\bigr\}_{n \in \bZ} \in l^2(\bZ)$. This inclusion in turn implies
that normalized root vector systems $\{\wh{f}_n\}_{n \in \bZ}$ and
$\{\wh{f}_n^0\}_{n \in \bZ}$ form Bari basses in $\LLV{2}$ only
simultaneously.
\end{proof}
Lemma~\ref{lem:bari.LQ=L0} immediately implies that the system of root
vectors of the operator $L(Q)$ forms a Bari basis if boundary
conditions~\eqref{eq:BC} are self-adjoint. In this case the system of root
vectors of the unperturbed operator $L(0)$ forms an orthonormal basis in
$\LLV{2}$. Below we will show that the opposite is true for Dirac operators
($-b_1 = b_2 = 1$) and a wide class of Dirac-type operators. In particular,
it is valid in the special case of quasi-periodic boundary conditions. To treat
this case we need the following simple estimate.
\begin{lemma} \label{lem:ejx.ej.Ej}
Let $h \ge 0$ and $\l \in \Pi_h$. Denote
\begin{equation} \label{eq:ejx.ej.Ej}
 e_j(x) := e_{j,\l}(x) := e^{i b_j \l x}, \quad
 e_j := e_{j, \l} := e^{i b_j \l}, \quad
 E_j^{\pm} := E_{j,\l}^{\pm} := \int_0^1 |e_j(x)|^{\pm 2} dx
 = \int_0^1 e^{\mp 2 b_j \Im \l x} dx.
\end{equation}
Then the following estimates hold:
\begin{align}
\label{eq:Ej+Ej->1}
 & E_j^+ E_j^- \ge 1 + \frac{(b_j \Im \l)^2}{3}, \qquad
 \text{in particular} \quad
 E_j^+ E_j^- > 1, \quad \text{if} \quad \Im \l \ne 0, \\
\label{eq:E1+E2-E1-E2+}
 & \frac{\sqrt{E_1^+ E_2^-}}{\sqrt{E_1^- E_2^+}}
 = |e_2^{-1} e_1| + O(|\Im \l|^2)
 = 1 + (b_2 - b_1) \cdot \Im \l + O(|\Im \l|^2).
\end{align}
\end{lemma}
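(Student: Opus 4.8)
The statement to prove is Lemma~\ref{lem:ejx.ej.Ej}, consisting of the two estimates~\eqref{eq:Ej+Ej->1} and~\eqref{eq:E1+E2-E1-E2+}. Both are elementary computations about the quantities $E_j^{\pm} = \int_0^1 e^{\mp 2 b_j \Im\l\, x}\,dx$, so the whole proof is a matter of organizing Taylor expansions cleanly.

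\medskip

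The plan is as follows. First I would introduce the shorthand $s := b_j \Im\l$ so that $E_j^+ = \int_0^1 e^{-2sx}\,dx$ and $E_j^- = \int_0^1 e^{2sx}\,dx$. For $s \ne 0$ these evaluate to $E_j^+ = \frac{1 - e^{-2s}}{2s}$ and $E_j^- = \frac{e^{2s}-1}{2s}$, and their product is $E_j^+ E_j^- = \frac{(e^{2s}-1)(1-e^{-2s})}{4s^2} = \frac{(e^s - e^{-s})^2}{4s^2} = \left(\frac{\sinh s}{s}\right)^2$. For $s = 0$ both integrals equal $1$ and the product is $1 = (\sinh 0 / 0)^2$ by continuity. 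Then~\eqref{eq:Ej+Ej->1} reduces to the classical inequality $\left(\frac{\sinh s}{s}\right)^2 \ge 1 + \frac{s^2}{3}$, which follows from the power series $\frac{\sinh s}{s} = 1 + \frac{s^2}{6} + \frac{s^4}{120} + \cdots$ (all coefficients nonnegative), whence $\left(\frac{\sinh s}{s}\right)^2 = 1 + \frac{s^2}{3} + (\text{nonnegative terms})$. In particular the product exceeds $1$ strictly whenever $s \ne 0$, i.e.\ whenever $\Im\l \ne 0$ (recall $b_j \ne 0$). This disposes of the first estimate.

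\medskip

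For~\eqref{eq:E1+E2-E1-E2+}, using the evaluations above with $s_j := b_j \Im\l$ gives
$\frac{E_1^+ E_2^-}{E_1^- E_2^+} = \frac{(1 - e^{-2s_1})}{(1 - e^{-2s_2})} \cdot \frac{(e^{2s_2}-1)}{(e^{2s_1}-1)}\cdot\frac{s_2^2}{s_1^2}$; a cleaner route is to write $E_j^+ = e^{-s_j}\frac{\sinh s_j}{s_j}$ and $E_j^- = e^{s_j}\frac{\sinh s_j}{s_j}$, so that $\frac{E_1^+ E_2^-}{E_1^- E_2^+} = \frac{e^{-s_1} e^{s_2}}{e^{s_1} e^{-s_2}} \cdot \frac{(\sinh s_1 / s_1)(\sinh s_2 / s_2)}{(\sinh s_1 / s_1)(\sinh s_2 / s_2)} = e^{2(s_2 - s_1)} = e^{2(b_2 - b_1)\Im\l}$. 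Taking the square root, $\sqrt{E_1^+ E_2^-}/\sqrt{E_1^- E_2^+} = e^{(b_2 - b_1)\Im\l}$. Since $e_2^{-1} e_1 = e^{-i b_2 \l} e^{i b_1 \l} = e^{i(b_1 - b_2)\l}$ has modulus $|e_2^{-1} e_1| = e^{-(b_1 - b_2)\Im\l} = e^{(b_2 - b_1)\Im\l}$, we get the exact identity $\sqrt{E_1^+ E_2^-}/\sqrt{E_1^- E_2^+} = |e_2^{-1} e_1|$, and then the two claimed asymptotic expansions follow by expanding $e^{(b_2-b_1)\Im\l} = 1 + (b_2 - b_1)\Im\l + O(|\Im\l|^2)$, noting that on the strip $\Pi_h$ the variable $\Im\l$ is bounded so the $O$-constant is uniform.

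\medskip

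I do not expect any genuine obstacle here; the only mild care needed is handling the $\Im\l = 0$ degeneracy (where the denominators $s_j$ vanish) by continuity/removable-singularity arguments, and making sure the $O(|\Im\l|^2)$ terms are interpreted uniformly over $\l \in \Pi_h$, which is automatic since $|\Im\l|\le h$. The one place I would double-check is the sign bookkeeping in $|e_2^{-1} e_1|$ versus $e^{(b_2-b_1)\Im\l}$, using $b_1 < 0 < b_2$; but this is just arithmetic with $\Re$ and $\Im$ of the exponent.
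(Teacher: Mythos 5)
Your proof is correct. For the first estimate your argument coincides in substance with the paper's: the paper writes $E_j^{\pm} = f(\mp 2 b_j \Im \l)$ with $f(x) = (e^x-1)/x$ and bounds $f(x)f(-x) = \frac{e^x+e^{-x}-2}{x^2} = 2\sum_{k\ge 1}\frac{x^{2k-2}}{(2k)!} \ge 1 + \frac{x^2}{12}$, which with $x = 2 b_j \Im\l$ is exactly your identity $E_j^+E_j^- = \bigl(\frac{\sinh s}{s}\bigr)^2 \ge 1 + \frac{s^2}{3}$, $s = b_j\Im\l$. For the second estimate, however, your route is genuinely different and in fact sharper: the factorization $E_j^{\pm} = e^{\mp s_j}\,\frac{\sinh s_j}{s_j}$ makes the $\sinh$ factors cancel in the ratio, giving the \emph{exact} identity $\sqrt{E_1^+E_2^-}\big/\sqrt{E_1^-E_2^+} = e^{(b_2-b_1)\Im\l} = |e_2^{-1}e_1|$, so the first equality in~\eqref{eq:E1+E2-E1-E2+} holds with zero error and only the final expansion of $e^{(b_2-b_1)\Im\l}$ requires a Taylor step. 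The paper instead expands $E_j^{\pm} = 1 \mp b_j y + O(y^2)$ (with $y = \Im\l$) and $|e_2^{-1}e_1| = 1 + (b_2-b_1)y + O(y^2)$ and matches the two sides only up to $O(y^2)$; your version is cleaner and yields a stronger conclusion, at the trivial cost of handling the removable singularity at $\Im\l = 0$, which you do. One minor slip: in the first (discarded) formula for $\frac{E_1^+E_2^-}{E_1^-E_2^+}$ the factor $\frac{s_2^2}{s_1^2}$ should not be there, since the denominators $2s_1$, $2s_2$ cancel completely between numerator and denominator; this does not affect your actual argument, which uses the exponential factorization instead.
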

\begin{proof}
It is clear that
\begin{equation} \label{eq:Ej=f}
 E_j^{\pm} = f(\mp 2 b_j \Im \l), \quad\text{where}\quad
 f(x) := \frac{e^x - 1}{x} = 1 + \frac{x}{2} + O(x^2), \quad |x| < h.
\end{equation}
It follows from Taylor expansion of $e^x$ that
\begin{equation} \label{eq:fx.f-x>1}
 f(x)f(-x) = \frac{e^x - 1}{x} \cdot \frac{e^{-x} - 1}{-x}
 = \frac{e^x + e^{-x} - 2}{x^2}
 = 2 \sum_{k=1}^{\infty} \frac{x^{2k-2}}{(2k)!}
 \ge 1 + \frac{x^2}{12}, \quad x \in \bR.
\end{equation}
Estimate~\eqref{eq:Ej+Ej->1} now immediately follows from~\eqref{eq:Ej=f}
and~\eqref{eq:fx.f-x>1}.

Further, it follows from~\eqref{eq:Ej=f} that $E_j^{\pm} = 1 \mp b_j y +
O(y^2)$, where we set for brevity $y := \Im \l$. Hence
\begin{equation} \label{eq:E1pm.E2mp}
 \sqrt{E_1^{\pm} E_2^{\mp}}
 = \sqrt{\(1 \mp b_1 y + O(y^2)\) \(1 \pm b_2 y + O(y^2)\)}
 = \sqrt{1 \pm (b_2 - b_1) y + O(y^2)} = 1 \pm \frac{b_2 - b_1}{2} y + O(y^2).
\end{equation}
Further note that
\begin{equation} \label{eq:e2-1e1}
 |e_2^{-1} e_1| = \abs{e^{i(b_1-b_2)\l}} = e^{(b_2-b_1) \Im\l}
 = 1 + (b_2 - b_1) \cdot \Im \l + O(|\Im \l|^2).
\end{equation}
Relation~\eqref{eq:E1+E2-E1-E2+} now immediately follows from~\eqref{eq:e2-1e1}
and~\eqref{eq:E1pm.E2mp}.
\end{proof}
Notation~\eqref{eq:ejx.ej.Ej} will be used throughout this section.
\begin{proposition} \label{prop:crit.bari.period}
Let $Q \in \LL{2}$ and let boundary conditions~\eqref{eq:BC}
be of the form
\begin{equation} \label{eq:quasi.per.bc}
 y_1(0) + a y_1(1) = d y_2(0) + y_2(1) = 0, \quad a d \ne 0,
\end{equation}
and are strictly regular. Then the normalized system of root vectors of the
operator $L(Q)$ forms a Bari basis in $\LLV{2}$ if and only if $|a| =
|d| = 1$.
\end{proposition}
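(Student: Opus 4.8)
The plan is to apply the abstract criterion of Lemma~\ref{lem:crit.bari.not.norm} to the unperturbed operator $L(0)$, invoking Lemma~\ref{lem:bari.LQ=L0} to reduce the case of general $Q\in\LL{2}$ to the case $Q=0$. So first I would describe the root vectors of $L(0)$ explicitly: with boundary conditions~\eqref{eq:quasi.per.bc} (i.e. $b=c=0$, $a,d\neq 0$ in the notation~\eqref{eq:BC.new}), the characteristic determinant factors as $\Delta_0(\l)=(1+ae^{ib_1\l})(d+e^{ib_2\l})$, so the spectrum splits into two arithmetic progressions $\L_0^1=\{\l_{1,n}^0\}$ and $\L_0^2=\{\l_{2,n}^0\}$ lying on horizontal lines (cf. Lemma~\ref{lem:ln0.exp.asymp} and formula~\eqref{eq:l1n.l2n.b=0} with $Q=0$). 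Since $b=0$, the relevant system~\eqref{eq:U.C1.C2} with $Q=0$ shows the eigenfunctions are, up to scalars, $f_{1,n}=\col\!\big(d_n e^{ib_1\l_{1,n}^0 x},\, c_n e^{ib_2\l_{1,n}^0 x}\big)$ (with $d_n=d+e^{ib_2\l_{1,n}^0}$ and $c_n$ the appropriate coefficient, which vanishes here since $c=0$, giving simply $f_{1,n}=\col(e^{ib_1\l_{1,n}^0 x},0)$) and $f_{2,n}=\col(0,e^{ib_2\l_{2,n}^0 x})$, using notation~\eqref{eq:ejx.ej.Ej}. Similarly I would identify the root vectors $\{g_n\}$ of the adjoint $L(0)^*$, which are of the same exponential form but with the conjugate exponents, so that the "almost biorthogonality" relation $(f_n,g_m)=0$ for $n\neq m$ holds by the near-orthogonality of distinct exponentials $e^{ib_j\l_n^0 x}$ on $[0,1]$ (distinct because the two progressions are separated, by strict regularity and Lemma~\ref{lem:ln0.exp.asymp}).

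Next I would compute the quantity $\dfrac{\|f_n\|^2\|g_n\|^2}{|(f_n,g_n)|^2}$ appearing in criterion~\eqref{eq:sum.fn.gn-1}. For each branch this reduces to a product of elementary integrals of the form $E_j^{\pm}=\int_0^1 e^{\mp 2b_j\Im\l_n^0 x}\,dx$ from~\eqref{eq:ejx.ej.Ej}: schematically $\|f_{j,n}\|^2\asymp E_j^+$, $\|g_{j,n}\|^2\asymp E_j^-$, and $(f_{j,n},g_{j,n})$ is bounded away from $0$, so the ratio minus one is comparable to $E_j^+E_j^- - 1$. By Lemma~\ref{lem:ejx.ej.Ej}, $E_j^+E_j^- = 1 + \tfrac{(b_j\Im\l_n^0)^2}{3}+O((\Im\l_n^0)^4)$. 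Since the two progressions lie on fixed horizontal lines, $\Im\l_{1,n}^0 = \tfrac{\ln|a|}{b_1}$ and $\Im\l_{2,n}^0 = -\tfrac{\ln|d|}{b_2}$ are \emph{constants independent of $n$}; hence each summand in~\eqref{eq:sum.fn.gn-1} is a constant, and the series converges if and only if that constant is zero, i.e. if and only if $\Im\l_{1,n}^0=\Im\l_{2,n}^0=0$, which by the formulas above is equivalent to $|a|=|d|=1$. When $|a|=|d|=1$ the spectrum is real and the eigenfunction system is already orthonormal (boundary conditions self-adjoint), so the Bari basis property is immediate; when $|a|\neq 1$ or $|d|\neq 1$, the series diverges and by Lemma~\ref{lem:crit.bari.not.norm} the normalized root system of $L(0)$ — hence by Lemma~\ref{lem:bari.LQ=L0} that of $L(Q)$ — fails to be a Bari basis.

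The main obstacle I anticipate is bookkeeping rather than conceptual. First, I must be careful that the eigenvalues $\l_n^0$ may fail to be \emph{exactly} an arithmetic progression only for finitely many indices (the finitely many "exceptional" low eigenvalues allowed in the definition of strict regularity); but since Lemma~\ref{lem:crit.bari.not.norm}'s criterion~\eqref{eq:sum.fn.gn-1} is insensitive to any finite set of indices, I can freely discard them. Second, I need the "almost biorthogonality" $(f_n,g_m)=0$ to hold \emph{exactly}, not just asymptotically; this is where I rely on the fact that $b=c=0$ makes the eigenfunctions pure exponentials $\col(e^{ib_1\l x},0)$ or $\col(0,e^{ib_2\l x})$, and that two exponentials $e^{i\mu x}$, $e^{i\nu x}$ on $[0,1]$ with $\mu-\nu\in 2\pi\bZ\setminus\{0\}$ are genuinely orthogonal — so I must check that within each branch the exponents differ by $2\pi\bZ$ (true, arithmetic progression with step $2\pi/b_j$) and that cross-branch the relevant pairing vanishes because the first component of $f_{2,n}$ and the second of $f_{1,n}$ are identically zero. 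Third, the constant $(f_{j,n},g_{j,n})$ must be shown uniformly bounded away from zero; this follows from strict regularity ($|d_n|\asymp 1$) together with the explicit integral $\int_0^1 e^{i(b_1-b_2)\l_n^0 x}\cdot\overline{(\cdots)}\,dx$ being non-degenerate. Modulo these routine verifications the argument is short, the heart being the constancy of $\Im\l_n^0$ along each progression combined with the sharp lower bound $E_j^+E_j^-\ge 1+\tfrac{(b_j\Im\l)^2}{3}$ of Lemma~\ref{lem:ejx.ej.Ej}.
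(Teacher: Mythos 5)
Your proposal is correct and follows essentially the same route as the paper: reduce to $Q=0$ via Lemma~\ref{lem:bari.LQ=L0}, note self-adjointness when $|a|=|d|=1$, and for necessity use the pure-exponential eigenvectors on the two separated progressions, their exactly biorthogonal exponential system, the criterion of Lemma~\ref{lem:crit.bari.not.norm}, and the bound $E_j^+E_j^- \ge 1+(b_j\Im\l)^2/3$ of Lemma~\ref{lem:ejx.ej.Ej} together with the constancy of $\Im\l_n^0$ along each progression. Only a cosmetic nit: the vanishing of the second component of the branch-one eigenvector comes from $d+e^{ib_2\l_{1,n}^0}\ne 0$ (separation of the two progressions), not from $c=0$ as such, but you in effect invoke exactly this separation later, so the argument stands.
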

\begin{proof}
Due to Lemma~\ref{lem:bari.LQ=L0} it is sufficient to consider the case $Q=0$.
If $|a|=|d|=1$ then it's easy to verify that the operator $L(0)$ with boundary
conditions~\eqref{eq:quasi.per.bc} is self-adjoint. Hence its normalized system
of root vectors forms an orthonormal basis in $\LLV{2}$ and Bari basis in
particular.

Now assume that the normalized system of root vectors of the operator $L(0)$
forms a Bari basis in $\LLV{2}$. According to the proof of
Lemma~\ref{lem:ln0.exp.asymp} the eigenvalues of the operator $L(0)$ are simple
and split into two separated arithmetic progressions $\L_0^1$ and $\L_0^2$,
where
\begin{equation} \label{eq:Lam1.Lam2}
 e^{i b_2 \l} = -d, \quad \l \in \L_0^1 = \{\l_{1,n}^0\}_{n \in \bZ},
 \qquad\text{and}\qquad
 e^{i b_1 \l} = -a^{-1}, \quad \l \in \L_0^2 = \{\l_{2,n}^0\}_{n \in \bZ}.
\end{equation}
It is easy to verify that vectors
\begin{equation}
 f_{1,n}^0(x) = \col\(0, \ e^{i b_2 \l_{1,n}^0 x}\), \quad
 \ol{g_{1,n}^0(x)} = \col\(0, \ e^{-i b_2 \l_{1,n}^0 x}\), \qquad n \in \bZ,
\end{equation}
are the eigenvectors of the operators $L(0)$ and $L^*(0)$ respectively
corresponding to the eigenvalues $\l_{1,n}^0$ and $\ol{\l_{1,n}^0}$, and the
vectors
\begin{equation}
 f_{2,n}^0(x) = \col\(e^{i b_1 \l_{2,n}^0 x}, \ 0\), \quad
 \ol{g_{2,n}^0(x)} = \col\(e^{-i b_1 \l_{2,n}^0 x}, \ 0\), \qquad n \in \bZ,
\end{equation}
are the eigenvectors of the operators $L(0)$ and $L^*(0)$ respectively
corresponding to the eigenvalues $\l_{2,n}^0$ and $\ol{\l_{2,n}^0}$. It is clear
that
\begin{equation} \label{eq:fjn.gkm=delta}
 \(f_{j,n}^0, g_{k,m}^0\) = \delta_{j,n}^{k,m}, \qquad j,k \in \{1,2\},
 \quad n,m \in \bZ.
\end{equation}
Thus the union system $\fF := \{f_{1,n}^0\}_{n \in \bZ} \cup \{f_{2,n}^0\}_{n
\in \bZ}$ is the system of root vectors of the operator $L(0)$ and $\fG :=
\{g_{1,n}^0\}_{n \in \bZ} \cup \{g_{2,n}^0\}_{n \in \bZ}$ is biorthogonal to it.

Since normalization of the system $\fF$ forms a Bari basis in $L^2([0,1];
\bC^2)$ then according to Lemma~\ref{lem:crit.bari.not.norm} we have
\begin{equation} \label{eq:sumj.sumn.alp}
 \sum_{j=1,2} \sum_{n \in \bZ} \a_{j,n} < \infty, \qquad
 \a_{j,n} := \frac{\bigl\|f_{j,n}^0\bigr\|^2 \cdot
 \bigl\|g_{j,n}^0\bigr\|^2}{\bigl(f_{j,n}^0, g_{j,n}^0\bigr)} - 1,
 \quad j \in \{1,2\}, \ \ n \in \bZ.
\end{equation}
Let $j=1$, $n \in \bZ$ be fixed and $\l = \l_{1,n}^0$. Then taking into account
Lemma~\ref{lem:ejx.ej.Ej} and formula~\eqref{eq:fjn.gkm=delta} we have
\begin{equation} \label{eq:alp1n>Im}
 \a_{1,n} = \bigl\|f_{1,n}^0\bigr\|^2 \cdot \bigl\|g_{1,n}^0\bigr\|^2 - 1
 = E_2^+ E_2^- - 1 \ge \frac{(b_2 \Im \l_{1,n}^0)^2}{3}.
\end{equation}
It follows from~\eqref{eq:l1n.l2n.bc=0} that $b_2 \Im \l_{1,n}^0 = -\ln|d|$.
Since the series in~\eqref{eq:sumj.sumn.alp} converges
formula~\eqref{eq:alp1n>Im} implies that $\ln|d|=0$, which means that $|d|=1$.

Similarly considering the case $j=2$ we conclude that $|a|=1$, which finishes
the proof.
\end{proof}
The following intermediate result plays the crucial role in proving
Theorem~\ref{th:crit.bari.dirac}.
\begin{proposition} \label{prop:crit.bari.b.ne.0}
Let boundary conditions~\eqref{eq:BC} be of the form~\eqref{eq:BC.new} and be
strictly regular. Let $\{\l_n^0\}_{n \in \bZ}$ be the sequence of
the eigenvalues of the operator $L(0)$. Then the normalized system of root
vectors of the operator $L(0)$ forms a Bari basis in $\LLV{2}$ if and
only if the following conditions hold
\begin{equation} \label{eq:sum1.sum2<inf}
 b_1 |c| + b_2 |b| = 0, \quad \sum_{n \in \bZ} \abs{\Im \l_n^0}^2 < \infty,
 \quad \sum_{n \in \bZ} (|z_n| - \Re z_n) < \infty,
 \quad z_n := \(1 + d e^{- i b_2 \l_n^0}\)\ol{\(1 + a e^{i b_1 \l_n^0}\)}.
\end{equation}
\end{proposition}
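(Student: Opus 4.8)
The plan is to reduce to the unperturbed operator $L(0)$ (which we may do by Lemma~\ref{lem:bari.LQ=L0}, since $Q\in\LL2$ and the BC are strictly regular), and then apply the ``almost biorthogonal'' criterion of Lemma~\ref{lem:crit.bari.not.norm} to an explicit pair of root-vector systems for $L(0)$ and $L^*(0)$. The first step is to write down the eigenvectors of $L(0)$ in the form used in the proof of Theorem~\ref{th:eigenfunc.compact}(i): for the BC~\eqref{eq:BC.new} the vector
\[
 f_n^0(x) = (b + a\varphi_{12}^0(\l_n^0))\,\Phi_1^0(x,\l_n^0) - (1 + a\varphi_{11}^0(\l_n^0))\,\Phi_2^0(x,\l_n^0),
\]
i.e. $f_n^0(x) = \col\bigl(b\,e_1(x),\ -(1+a e_1)\,e_2(x)\bigr)$ in the notation $e_j(x)=e^{ib_j\l_n^0 x}$, $e_j = e^{ib_j\l_n^0}$ of Lemma~\ref{lem:ejx.ej.Ej} (with the second-branch modification $G(x,\l)$ of~\eqref{eq:fn2} used when $b=0$, exactly as in the two cases of Theorem~\ref{th:eigenfunc.compact}). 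Dually, since $L^*(0)$ is the Dirac-type operator with potential $0$ and the adjoint boundary conditions, its eigenvectors at $\ol{\l_n^0}$ are obtained by replacing $\l_n^0$ by $\ol{\l_n^0}$ and $(a,b,c,d)$ by the coefficients of the adjoint BC; writing $\ol{g_n^0}$ for the conjugate one gets $g_n^0(x)=\col\bigl(\ol{c}\,\ol{e_1(x)} + \cdots,\ \cdots\bigr)$ — I would record the two coordinates explicitly. One then checks the ``almost biorthogonality'' $(f_n^0,g_m^0)=0$ for $n\ne m$ (this is automatic because $f_n^0,g_m^0$ are root vectors of $L(0),L^*(0)$ at distinct eigenvalues), and $(f_n^0,g_n^0)\ne 0$ for all but finitely many $n$.

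Next I would compute the three scalars $\|f_n^0\|^2$, $\|g_n^0\|^2$, $(f_n^0,g_n^0)$ in closed form in terms of $E_j^{\pm}=E_{j,\l_n^0}^{\pm}$ and of $e_1,e_2$ evaluated at $\l_n^0$, using $\int_0^1 e_j(x)\ol{e_k(x)}\,dx$-type integrals; note that the cross integrals $\int_0^1 e_1(x)\ol{e_2(x)}\,dx$ are $O(1)$ but nonzero, so the bookkeeping requires some care. The quantity controlling the Bari property is
\[
 \a_n := \frac{\|f_n^0\|^2\,\|g_n^0\|^2}{|(f_n^0,g_n^0)|^2} - 1,
\]
and by Lemma~\ref{lem:crit.bari.not.norm} the normalized root system of $L(0)$ is a Bari basis iff $\sum_{n}\a_n<\infty$. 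The heart of the argument is to expand $\a_n$ asymptotically as $|n|\to\infty$ using Lemma~\ref{lem:ejx.ej.Ej}: the factor $E_j^+E_j^- - 1$ contributes a term of size $\asymp (\Im\l_n^0)^2$, which forces $\sum|\Im\l_n^0|^2<\infty$; the ``angular'' part of $\a_n$ reduces, after the elementary identity $\|u'\|^2-1 = |(u',g')|^{-2}\|u\|^2\|g\|^2 - 1$ and the normalization $(f_n^0,g_n^0)$, to $|z_n| - \Re z_n$ with $z_n = (1+de^{-ib_2\l_n^0})\ol{(1+ae^{ib_1\l_n^0})}$ by virtue of the relation $\Delta_0(\l_n^0)=0$ rewritten as in~\eqref{eq:Delta_0_in_roots}; and the mismatch of the exponential factors $\sqrt{E_1^+E_2^-/E_1^-E_2^+}$ versus $|e_2^{-1}e_1|$ from~\eqref{eq:E1+E2-E1-E2+} produces, via the ratio $bc$-term, exactly the condition $b_1|c|+b_2|b|=0$ (this is the place where the constraint on the BC coefficients must appear, and I would verify it by matching the $O(|\Im\l_n^0|)$ terms in the expansion of $\a_n$).

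The main obstacle I anticipate is not any single estimate but the algebraic expansion of $\a_n$ to the correct order: one must keep track of (i) the diagonal $L^2$-norm terms $E_j^\pm$, which give the curvature $(\Im\l_n^0)^2$; (ii) the off-diagonal integrals $\int_0^1 e_1\ol{e_2}$, which are bounded but contribute to the cross terms in $\|f_n^0\|^2$ and $(f_n^0,g_n^0)$; and (iii) the precise way the defining relation $\Delta_0(\l_n^0)=0$ collapses the coefficient combinations into the single product $z_n$. A clean way to organize this is to first prove, directly from Lemma~\ref{lem:ejx.ej.Ej} and~\eqref{eq:|1+de|+|1+ae|.asymp.1}, that $\a_n \asymp |\Im\l_n^0|^2 + (|z_n|-\Re z_n) + \bigl|\,b_1|c|+b_2|b|\,\bigr|\cdot|\Im\l_n^0| + O(|\Im\l_n^0|^2)$ up to bounded factors (for $|n|$ large), and then observe that $|z_n|-\Re z_n\ge 0$ and $|\Im\l_n^0|^2\ge 0$, so the series $\sum\a_n$ converges iff either $b_1|c|+b_2|b|=0$ and both remaining series converge, or — if $b_1|c|+b_2|b|\ne 0$ — the series $\sum|\Im\l_n^0|$ converges, which combined with incompressibility (Proposition~\ref{prop:sine.type}(ii)) and the asymptotic shape of $\{\l_n^0\}$ forces $\Im\l_n^0\to 0$ fast enough that again $b_1|c|+b_2|b|$ must vanish; either way one lands on~\eqref{eq:sum1.sum2<inf}. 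The converse is the same expansion read backwards. I would present the expansion of $\a_n$ as a lemma, then deduce both implications in a short paragraph.
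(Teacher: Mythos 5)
Your overall plan coincides with the paper's: reduce to $Q=0$ via Lemma~\ref{lem:bari.LQ=L0}, apply Lemma~\ref{lem:crit.bari.not.norm} to explicit eigenvectors of $L(0)$ and $L^*(0)$, and expand $\a_n$ using Lemma~\ref{lem:ejx.ej.Ej} and the root relation~\eqref{eq:Delta_0_in_roots}. Two remarks on the set-up before the main point. First, your worry about cross integrals $\int_0^1 e_1\ol{e_2}\,dx$ is unfounded once the biorthogonal vectors are written correctly: with $f_n=\col\bigl(b\,e_1(x),\,-(1+ae_1)e_2(x)\bigr)$ and $\ol{g_n}=\col\bigl((1+de_2^{-1})e_1(x)^{-1},\,-kb\,e_2(x)^{-1}\bigr)$, $k=-b_2/b_1$, every inner product reduces to the four quantities $E_j^{\pm}$ and constants; in particular $(f_n,g_n)=b\bigl((1+de_2^{-1})+k(1+ae_1)\bigr)$ contains no integrals at all, and $(f_n,g_n)\asymp\Delta_0'(\l_n^0)\asymp1$. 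Second, the case $b=c=0$ cannot be absorbed into the same formulas (the first component of $f_n$ vanishes and on one branch the whole vector degenerates); the paper treats it separately through Proposition~\ref{prop:crit.bari.period}.

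The genuine gap is in the claimed expansion of $\a_n$, i.e. in the mechanism that produces the algebraic condition $b_1|c|+b_2|b|=0$. You place the coefficient mismatch in a term of size $\bigl|b_1|c|+b_2|b|\bigr|\cdot|\Im\l_n^0|$, which decays with $\Im\l_n^0$ and therefore cannot force the mismatch to vanish; your fallback ("$\sum|\Im\l_n^0|<\infty$ \dots forces $b_1|c|+b_2|b|$ to vanish") is a non sequitur, since the imaginary parts of $\L_0$ and the quantity $b_1|c|+b_2|b|$ are independent. A concrete counterexample to your expansion: take $a=d=0$, $|bc|=1$ and $|b|^2\ne -b_1/b_2$. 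Then $\l_n^0=\frac{\mu+2\pi n}{b_2-b_1}$ with $\mu\in\bR$, so $\Im\l_n^0=0$ for all $n$, and $z_n=1$, so all terms in your asymptotics vanish and you would conclude Bari basis; but by the Proposition (and by the explicit computation in the proof of Theorem~\ref{th:crit.bari.dirac}, case $a=d=0$) the Bari property fails precisely because $k|b|\ne|c|$. The correct structure is: $\|f_n\|^2\|g_n\|^2-|(f_n,g_n)|^2=\tau_{1,n}+\tau_{2,n}+\tau_{3,n}$ with each $\tau_{j,n}\ge0$, where $\tau_{1,n}+\tau_{2,n}\asymp|\Im\l_n^0|^2$ (this uses~\eqref{eq:|1+de|+|1+ae|.asymp.1}), and, modulo $O(|\Im\l_n^0|^2)$ corrections, $\tau_{3,n}$ is comparable to $\bigl|k|b|^2|e_1e_2^{-1}|-z_n\bigr|^2$, which by $|z_n|=|bc|\,|e_1e_2^{-1}|$ (a consequence of~\eqref{eq:Delta_0_in_roots}) splits into a constant multiple of $(k|b|-|c|)^2$ plus a constant multiple of $|z_n|-\Re z_n$, both nonnegative and neither multiplied by any decaying factor. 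Summability over $n\in\bZ$ then forces $k|b|=|c|$ (equivalently $b_1|c|+b_2|b|=0$) outright, together with $\sum(|z_n|-\Re z_n)<\infty$. Without this nonnegative, non-decaying isolation of $(k|b|-|c|)^2$, both directions of the equivalence fail, so the expansion lemma you propose must be corrected before the rest of your argument can go through.
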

\begin{proof}
\textbf{(i)} First let $b=c=0$. Then it follows from~\eqref{eq:Delta_0_in_roots}
that $z_n = 0$. It also follows from~\eqref{eq:l1n.l2n.bc=0} that $\Im \l_n^0
\to 0$ if and only if $|a|=|d|=1$. Hence condition~\eqref{eq:sum1.sum2<inf} is
equivalent to $|a|=|d|=1$, i.e. that the operator $L(0)$ is selfadjoint. This in
turn is equivalent to Bari basis property of the system of root vectors of the
operator $L(0)$ due to Proposition~\ref{prop:crit.bari.period}.

\textbf{(ii)} Now let $|b|+|c|>0$. Without loss of generality we can assume that
$b \ne 0$. By definition of strictly regular boundary conditions there exists
$n_0 \in \bN$ such that eigenvalues $\l_n^0$ of $L(0)$ for $|n| > n_0$ are
geometrically and algebraically simple and separated from each other. Let
$\fF := \{f_n\}_{n \in \bZ}$ be a system of root vectors of the operator $L(0)$
and $\fG := \{g_n\}_{n \in \bZ}$ be the corresponding system for the adjoint
operator $L^*(0)$ (zero superscript is omitted for convenience). Clearly, $\fG$
is almost biorthogonal to $\fF$. Hence Lemma~\ref{lem:crit.bari.not.norm}
implies that normalization of $\fF$ forms a Bari basis in $\LLV{2}$ if
and only if condition~\eqref{eq:sum.fn.gn-1} holds.

According to the proof of Theorem~1.1 in~\cite{LunMal16JMAA} vector-functions
$f_n(\cdot)$ and $g_n(\cdot)$ for $|n| > n_0$ are of the following form,
\begin{align}
\label{eq:fn0x.bari}
 f_n(x) &:= \col\(b e^{i b_1 \l_n^0 x},
 - (1 + a e^{i b_1 \l_n^0}) e^{i b_2 \l_n^0 x}\), \\
\label{eq:gn0x.bari}
 \ol{g_n(x)} &:= \col\((1 + d e^{-i b_2 \l_n^0}) e^{-i b_1 \l_n^0 x},
 - k b e^{-i b_2 \l_n^0 x}\), \qquad k := -b_2 b_1^{-1} > 0.
\end{align}
Let $|n| > n_0$ be fixed and set $\l = \l_n^0$. Taking into account
notation~\eqref{eq:ejx.ej.Ej} and performing straightforward calculations we
see that
\begin{align}
\label{eq:|fn|2}
 \|f_n\|^2 &= |b|^2 E_1^+ + |1 + a e_1|^2 E_2^+, \\
\label{eq:|gn|2}
 \|g_n\|^2 &= \abs{1 + d e_2^{-1}}^2 E_1^- + k^2 |b|^2 E_2^-, \\
\label{eq:fn.gn}
 (f_n, g_n) &= b\((1 + d e_2^{-1}) + k (1 + a e_1)\).
\end{align}
Since boundary conditions~\eqref{eq:BC.new} are strictly regular, it follows
from the proof of Theorem~1.1 in~\cite{LunMal16JMAA} that the following estimate
holds
\begin{equation}
 (f_n, g_n) \asymp \Delta'(\l_n^0) \asymp 1, \quad |n| > n_0.
\end{equation}
Here for $a_n, b_n \in \bC$, $n \in S \subset \bZ$, notation $a_n
\asymp b_n$, $n \in S$, means that $C_1 |b_n| \le |a_n| \le C_2 |b_n|$,
$n \in S$, for some $C_2 > C_1 > 0$. Hence condition~\eqref{eq:sum.fn.gn-1} is
equivalent to
\begin{equation} \label{eq:sum.fn.gn-fngn}
 \sum_{|n| > n_0} \( \|f_n\|^2 \cdot \|g_n\|^2 - |(f_n, g_n)|^2 \)
 < \infty.
\end{equation}
With account of~\eqref{eq:|fn|2}--\eqref{eq:fn.gn} we get
\begin{multline} \label{eq:fn2.gn2-fn.gn2=tau.sum}
 \|f_n\|^2 \cdot \|g_n\|^2 - |(f_n, g_n)|^2
 = \(|b|^2 \cdot E_1^+ + |1 + a e_1|^2 \cdot E_2^+\) \cdot
 \(\abs{1 + d e_2^{-1}}^2 E_1^- + k^2 |b|^2 E_2^-\) \\
 - |b|^2 \abs{(1 + d e_2^{-1}) + k (1 + a e_1)}^2
 = \tau_{1,n} + \tau_{2,n} + \tau_{3,n}, \qquad |n| > n_0,
\end{multline}
where
\begin{align}
\label{eq:tau1.tau2}
 \tau_{1,n} &:= |b|^2 \cdot \abs{1 + d e_2^{-1}}^2 \cdot
 (E_1^+ E_1^- - 1), \qquad
 \tau_{2,n} := k^2 |b|^2 \cdot |1 + a e_1|^2 \cdot (E_2^+ E_2^- - 1), \\
\label{eq:tau3}
 \tau_{3,n} &:= k^2 |b|^4 E_1^+ E_2^- +
 \abs{1 + d e_2^{-1}}^2 \cdot |1 + a e_1|^2 \cdot E_2^+ E_1^- -
 2 k |b|^2 \cdot \Re z_n,
\end{align}
where $z_n$ is defined in~\eqref{eq:sum1.sum2<inf}.

According to Proposition~\ref{prop:sine.type}(i), $\l_n^0 \in \Pi_h$, $n \in
\bZ$, for some $h \ge 0$. Hence it follows from~\eqref{eq:Ej+Ej->1} that
\begin{equation} \label{eq:Ej.Ej-1.asymp}
 0 \le E_j^+ E_j^- - 1 \asymp |\Im \l_n^0|^2, \quad n \in \bZ.
\end{equation}
Since $b \ne 0$ and $k > 0$ relations~\eqref{eq:tau1.tau2}
and~\eqref{eq:Ej.Ej-1.asymp} now imply that
\begin{equation} \label{eq:tau1.tau2.asymp}
 0 \le \tau_{1,n} \asymp \abs{1 + d e_2^{-1}}^2 \cdot |\Im \l_n^0|^2,
 \qquad 0 \le \tau_{2,n} \asymp |1 + a e_1|^2 \cdot |\Im \l_n^0|^2,
 \qquad |n| > n_0.
\end{equation}
Combining~\eqref{eq:tau1.tau2.asymp} with Lemma~\ref{lem:ln0.exp.asymp} we get
\begin{equation} \label{eq:tau1+tau2.asymp}
 0 \le \tau_{1,n} + \tau_{2,n} \asymp |\Im \l_n^0|^2, \quad |n| > n_0.
\end{equation}
With account of~\eqref{eq:Ej.Ej-1.asymp} we get
\begin{align} \label{eq:tau3.estim}
 k^2 |b|^4 E_1^+ E_2^- + \abs{1 + d e_2^{-1}}^2 \cdot |1 + a e_1|^2
 \cdot E_2^+ E_1^-
 & \ge 2 k |b|^2 \cdot\abs{\(1 + d e_2^{-1}\) \ol{(1 + a e_1)}}
 \sqrt{E_1^+ E_1^- \cdot E_2^- E_2^+} \nonumber \\
 & \ge 2 k |b|^2 \cdot \Re z_n,
 \quad n \in \bZ.
\end{align}
Hence $\tau_{3,n} \ge 0$, $n \in \bZ$. Combining this
with~\eqref{eq:tau1+tau2.asymp} and~\eqref{eq:fn2.gn2-fn.gn2=tau.sum} we see
that condition~\eqref{eq:sum.fn.gn-fngn} holds if and only if
\begin{equation} \label{eq:sum.Im.sum.tau3}
 \sum_{|n| > n_0} \abs{\Im \l_n^0}^2 < \infty, \qquad
 \sum_{|n| > n_0} \tau_{3,n} < \infty.
\end{equation}
Similar to calculations done in~\eqref{eq:tau3.estim} one can verify that
\begin{align}
\label{eq:tau4}
 \tau_{3,n} &= |\tau_{4,n}|^2 + \tau_{5,n}, \qquad
 \tau_{4,n} := \sqrt{E_1^+ E_2^-} \cdot k |b|^2
 - \sqrt{E_2^+ E_1^-} \cdot z_n, \\
\label{eq:tau5}
 \tau_{5,n} &:= 2 k |b|^2 \cdot \(\sqrt{E_1^+ E_1^- \cdot E_2^- E_2^+} - 1\)
 \cdot \Re z_n.
\end{align}
It follows from~\eqref{eq:Ej.Ej-1.asymp} that for some $C > 0$,
\begin{equation}
 0 \le \sqrt{E_1^+ E_1^- \cdot E_2^- E_2^+} - 1 \le
 C \cdot |\Im \l_n^0|^2, \quad n \in \bZ.
\end{equation}
Combining the last relation with~\eqref{eq:e.bj.ln.asymp.1} we see that for
some $\wt{C} > 0$,
\begin{equation} \label{eq:tau5.estim}
 |\tau_{5,n}| \le \wt{C} \cdot |\Im \l_n^0|^2, \quad |n| > n_0.
\end{equation}
Hence if the first series in~\eqref{eq:sum.Im.sum.tau3} converges then so is the
series $\sum_{|n| > n_0} |\tau_{5,n}|$. Hence in view of~\eqref{eq:tau4}
condition~\eqref{eq:sum.Im.sum.tau3} is equivalent to
\begin{equation} \label{eq:sum.Im.sum.tau4}
 \sum_{|n| > n_0} \abs{\Im \l_n^0}^2 < \infty, \qquad
 \sum_{|n| > n_0} |\tau_{4,n}|^2 < \infty.
\end{equation}
With account of~\eqref{eq:E1+E2-E1-E2+} we get from~\eqref{eq:tau4} that
\begin{equation}
 \tau_{4,n} = \sqrt{E_2^+ E_1^-} \(\(|e_2^{-1} e_1| + O(|\Im \l_n^0|^2)\)
 \cdot k |b|^2 - z_n\), \quad |n| > n_0.
\end{equation}
Since $\sqrt{E_2^+ E_1^-} \asymp 1$, $n \in \bZ$, it is clear now that
condition~\eqref{eq:sum.Im.sum.tau4} is equivalent to
\begin{equation} \label{eq:sum.Im.sum.tau6}
 \sum_{|n| > n_0} \abs{\Im \l_n^0}^2 < \infty, \qquad
 \sum_{|n| > n_0} |\tau_{6,n}|^2 < \infty, \qquad
 \tau_{6,n} := k |b|^2 |e_2^{-1} e_1| - z_n.
\end{equation}
It follows from~\eqref{eq:Delta_0_in_roots} that
\begin{align}
\nonumber
 |\tau_{6,n}|^2
 & = k^2 |b|^4 |e_2^{-1} e_1|^2 + |z_n|^2
 - 2 k |b|^2 |e_2^{-1} e_1| \cdot \Re z_n \\
\nonumber
 & = k^2 |b|^4 |e_2^{-1} e_1|^2 + |bc|^2 |e_2^{-1} e_1|^2
 - 2 k |b|^2 |e_2^{-1} e_1| \cdot \Re z_n \\
\nonumber
 & = |b|^2 |e_2^{-1} e_1| \cdot \Bigl(|e_2^{-1} e_1| \cdot
 \bigl(k^2 |b|^2 + |c|^2\bigr) - 2 k \Re z_n\Bigr) \\
\label{eq:tau6}
 & = |b|^2 |e_2^{-1} e_1| \cdot \Bigl(|e_2^{-1} e_1| \cdot
 \bigl(k |b| - |c|\bigr)^2 + 2 k \(|z_n| - \Re z_n\)\Bigr).
\end{align}
Since $b \ne 0$ and $|e_2^{-1} e_1| \asymp 1$ it follows from~\eqref{eq:tau6}
that~\eqref{eq:sum.Im.sum.tau6} is equivalent to~\eqref{eq:sum1.sum2<inf}
which finishes the proof.
\end{proof}
\begin{corollary} \label{cor:fn=gn.crit}
Assume conditions of Proposition~\ref{prop:crit.bari.b.ne.0}. Let $|n| > n_0$ be
fixed and let $f_n$, $g_n$ be eigenvectors defined
via~\eqref{eq:fn0x.bari}--\eqref{eq:gn0x.bari}. Then
\begin{equation} \label{eq:fn2.gn2=fngn2}
 \|f_n\|^2 \cdot \|g_n\|^2 = |(f_n, g_n)|^2,
\end{equation}
if and only if
\begin{equation} \label{eq:Im=0.kb=ad}
 \Im \l_n^0 = 0 \quad\text{and}\quad k |b|^2 =
 \(1 + d e^{- i b_2 \l_n^0}\)\ol{\(1 + a e^{i b_1 \l_n^0}\)}.
\end{equation}
\end{corollary}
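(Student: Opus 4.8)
The plan is to read off the equality case from the chain of inequalities already assembled in the proof of Proposition~\ref{prop:crit.bari.b.ne.0} (more precisely, in the setting $b \ne 0$ in which the eigenvectors~\eqref{eq:fn0x.bari}--\eqref{eq:gn0x.bari} are genuine nonzero eigenvectors). For $|n| > n_0$ and $\l = \l_n^0$ we recorded in~\eqref{eq:fn2.gn2-fn.gn2=tau.sum} the identity
\[
 \|f_n\|^2 \cdot \|g_n\|^2 - |(f_n, g_n)|^2 = \tau_{1,n} + \tau_{2,n} + \tau_{3,n},
\]
with $\tau_{1,n}, \tau_{2,n}$ as in~\eqref{eq:tau1.tau2} and $\tau_{3,n}$ as in~\eqref{eq:tau3}. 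The key point is that all three terms are nonnegative: $\tau_{1,n}, \tau_{2,n} \ge 0$ because $E_j^+ E_j^- \ge 1$ by~\eqref{eq:Ej+Ej->1}, while $\tau_{3,n} \ge 0$ by the Cauchy--Schwarz estimate~\eqref{eq:tau3.estim}. Hence~\eqref{eq:fn2.gn2=fngn2} holds if and only if $\tau_{1,n} = \tau_{2,n} = \tau_{3,n} = 0$, and the whole task reduces to characterizing the vanishing of each summand.

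First I would handle $\tau_{1,n}$ and $\tau_{2,n}$. Writing $e_1 = e^{i b_1 \l_n^0}$, $e_2^{-1} = e^{-i b_2 \l_n^0}$, one has $\tau_{1,n} = |b|^2 \, |1 + d e_2^{-1}|^2 (E_1^+ E_1^- - 1)$ and $\tau_{2,n} = k^2 |b|^2 \, |1 + a e_1|^2 (E_2^+ E_2^- - 1)$. Since $b_j \ne 0$, the quantitative bound in~\eqref{eq:Ej+Ej->1} gives $E_j^+ E_j^- > 1$ whenever $\Im \l_n^0 \ne 0$, and $E_j^+ E_j^- = 1$ exactly when $\Im \l_n^0 = 0$. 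Thus, were $\Im \l_n^0 \ne 0$ and $\tau_{1,n} = \tau_{2,n} = 0$, we would need $1 + d e_2^{-1} = 0$ and $1 + a e_1 = 0$ simultaneously, contradicting~\eqref{eq:|1+de|+|1+ae|.asymp.1}; conversely $\Im \l_n^0 = 0$ forces $E_j^{\pm} = 1$ and both terms to vanish. So $\tau_{1,n} = \tau_{2,n} = 0$ is equivalent to $\Im \l_n^0 = 0$, the first condition in~\eqref{eq:Im=0.kb=ad}.

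Next, assuming $\Im \l_n^0 = 0$ (hence $E_1^{\pm} = E_2^{\pm} = 1$), the expression~\eqref{eq:tau3} collapses, using $|z_n|^2 = |1 + d e_2^{-1}|^2 |1 + a e_1|^2$, to
\[
 \tau_{3,n} = k^2 |b|^4 + |z_n|^2 - 2 k |b|^2 \Re z_n = \bigl(k|b|^2 - |z_n|\bigr)^2 + 2 k |b|^2 \bigl(|z_n| - \Re z_n\bigr),
\]
a sum of two nonnegative quantities; it vanishes precisely when $k|b|^2 = |z_n|$ and $\Re z_n = |z_n|$, i.e. when $z_n$ is a nonnegative real equal to $k|b|^2$, which is exactly the condition $k|b|^2 = (1 + d e^{-i b_2 \l_n^0})\ol{(1 + a e^{i b_1 \l_n^0})}$ in~\eqref{eq:Im=0.kb=ad}. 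Combining the two steps gives both implications; the converse direction is immediate, since the two conditions in~\eqref{eq:Im=0.kb=ad} make $\tau_{1,n}$, $\tau_{2,n}$, $\tau_{3,n}$ all vanish directly. I do not anticipate any real obstacle: the argument is pure bookkeeping on top of Proposition~\ref{prop:crit.bari.b.ne.0}, and the only point requiring a little care is to use the \emph{strict} lower bounds~\eqref{eq:Ej+Ej->1} and~\eqref{eq:|1+de|+|1+ae|.asymp.1} to pin down $\Im \l_n^0 = 0$ before one is allowed to simplify $\tau_{3,n}$.
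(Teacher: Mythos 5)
Your argument is correct and follows essentially the same route as the paper: the decomposition~\eqref{eq:fn2.gn2-fn.gn2=tau.sum} into the nonnegative terms $\tau_{1,n},\tau_{2,n},\tau_{3,n}$, the use of~\eqref{eq:Ej+Ej->1} together with~\eqref{eq:|1+de|+|1+ae|.asymp.1} to force $\Im\l_n^0=0$, and then the vanishing of $\tau_{3,n}$ pinning down $z_n=k|b|^2$. The only cosmetic difference is that the paper reuses the splitting $\tau_{3,n}=|\tau_{4,n}|^2+\tau_{5,n}$ from~\eqref{eq:tau4}--\eqref{eq:tau5}, whereas you re-expand $\tau_{3,n}$ at $\Im\l_n^0=0$ as $\bigl(k|b|^2-|z_n|\bigr)^2+2k|b|^2\bigl(|z_n|-\Re z_n\bigr)$, which amounts to the same identity $\tau_{3,n}=\bigl|k|b|^2-z_n\bigr|^2$.
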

\begin{proof}
\textbf{(i)} First let condition~\eqref{eq:Im=0.kb=ad} hold. Since $\Im \l_n^0 =
0$ then $E_1^{\pm} = E_2^{\pm} = 1$. Hence it follows from~\eqref{eq:tau1.tau2},
\eqref{eq:tau4} and~\eqref{eq:tau5} that $\tau_{1,n} = \tau_{2,n} = \tau_{5,n} =
0$, while
\begin{equation} \label{eq:tau4.simple}
 \tau_{4,n} := k |b|^2 - \(1 + d e_2^{-1}\)\ol{(1 + a e_1)}.
\end{equation}
Second condition in~\eqref{eq:Im=0.kb=ad} now implies that $\tau_{4,n} = 0$.
Hence $\tau_{3,n} = 0$ and formula~\eqref{eq:fn2.gn2-fn.gn2=tau.sum}
implies~\eqref{eq:fn2.gn2=fngn2}.

\textbf{(ii)} Now let condition~\eqref{eq:fn2.gn2=fngn2} hold. Since $\tau_{j,n}
\ge 0$, $j \in \{1, 2, 3\}$, formula~\eqref{eq:fn2.gn2-fn.gn2=tau.sum} and
condition~\eqref{eq:fn2.gn2=fngn2} implies that $\tau_{j,n} = 0$, $j \in \{1, 2,
3\}$. If $\Im \l_n^0 \ne 0$ then by Lemma~\ref{lem:ejx.ej.Ej}, $E_j^+ E_j^- - 1
> 0$, $j \in \{1, 2\}$. Since $\tau_{1,n} = \tau_{2,n} = 0$, $b \ne 0$, $k > 0$
it follows from~\eqref{eq:tau1.tau2} that $1 + d e_2^{-1} = 1 + a e_1 = 0$. This
contradicts Lemma~\ref{lem:ln0.exp.asymp} since boundary
conditions~\eqref{eq:BC.new} are strictly regular. Hence $\Im \l_n^0 = 0$, which
again implies that $E_1^{\pm} = E_2^{\pm} = 1$. This in turn implies that
$\tau_{5,n} = 0$ and formula~\eqref{eq:tau4.simple} for $\tau_{4,n}$. Since
$\tau_{3,n} = \tau_{5,n} = 0$, then $\tau_{4,n} = 0$.
Formula~\eqref{eq:tau4.simple} now implies second condition
in~\eqref{eq:Im=0.kb=ad} which finishes the proof.
\end{proof}
Now we are ready to formulate the main result of this section.
\begin{theorem} \label{th:crit.bari.dirac}
Let $Q \in \LL{2}$ and let boundary conditions~\eqref{eq:BC.new} be strictly
regular. Let either $b_1/b_2 \in \bQ$ or $abcd=0$. Then the normalized system
of root vectors of the operators $L(Q)$ forms a Bari basis in $\LLV{2}$ if and
only if the operator $L(0)$ is self-adjoint. The latter holds if and only if
the matrix $\begin{pmatrix} a & \mu b \\ \mu^{-1} c & d \end{pmatrix}$ with
$\mu = \sqrt{-b_2/b_1}$ is unitary.
\end{theorem}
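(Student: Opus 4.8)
The plan is to reduce Theorem~\ref{th:crit.bari.dirac} to the concrete criterion of Proposition~\ref{prop:crit.bari.b.ne.0}, in which the Bari basis property of the root system of $L(0)$ is equivalent to the three conditions in~\eqref{eq:sum1.sum2<inf}: $b_1|c| + b_2|b| = 0$, $\sum_n |\Im\l_n^0|^2 < \infty$, and $\sum_n(|z_n| - \Re z_n) < \infty$ with $z_n = (1 + d e^{-i b_2\l_n^0})\ol{(1 + a e^{i b_1\l_n^0})}$. First I would invoke Lemma~\ref{lem:bari.LQ=L0}, which already says the root systems of $L(Q)$ and $L(0)$ are Bari bases only simultaneously, so it suffices to treat $Q = 0$. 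Then the whole argument is: under the extra hypothesis $b_1/b_2 \in \bQ$ \emph{or} $abcd = 0$, the three conditions of~\eqref{eq:sum1.sum2<inf} force $\Im\l_n^0 \equiv 0$ for all large $n$, and then the second condition in Corollary~\ref{cor:fn=gn.crit}, namely $k|b|^2 = (1 + d e^{-i b_2\l_n^0})\ol{(1 + a e^{i b_1\l_n^0})}$, together with $b_1|c| + b_2|b| = 0$, is precisely self-adjointness of $L(0)$.

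The key steps, in order. (1) Case $abcd = 0$: if $bc = 0$ then by Lemma~\ref{lem:ln0.exp.asymp}, case~(i) of Proposition~\ref{prop:crit.bari.b.ne.0} applies verbatim — the conditions reduce to $|a| = |d| = 1$, which is self-adjointness. If $b = 0$ but $c \ne 0$ (or symmetrically $c = 0$, $b \ne 0$), the condition $b_1|c| + b_2|b| = 0$ with $b_1 < 0 < b_2$ forces $c = 0$ too, contradiction; so in fact $abcd = 0$ with the first condition of~\eqref{eq:sum1.sum2<inf} already implies $bc = 0$, reducing to the previous subcase. (2) Case $b_1/b_2 \in \bQ$: here $\L_0$ is a periodic sequence (Lemma~\ref{lem:ln0.exp.asymp} / Remark~\ref{rem:cond.examples}, case~3), so $\{\Im\l_n^0\}$ takes only finitely many values, each repeated infinitely often; the finiteness $\sum_n|\Im\l_n^0|^2 < \infty$ then forces $\Im\l_n^0 = 0$ for all $n$ (all of them, by periodicity, not just large $n$). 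Likewise $|z_n| - \Re z_n$ is periodic and nonnegative, so $\sum_n(|z_n| - \Re z_n) < \infty$ forces $z_n \in [0,\infty)$, i.e. $z_n = |z_n|$, for all $n$. (3) Translate these pointwise identities into self-adjointness. With $\Im\l_n^0 = 0$ one has $|e^{ib_j\l_n^0}| = 1$, and then $z_n = (1 + d e^{-ib_2\l_n^0})\ol{(1 + a e^{ib_1\l_n^0})}$; combining $z_n$ real and nonnegative with the relation~\eqref{eq:Delta_0_in_roots}, $z_n = bc\, e^{i(b_1-b_2)\l_n^0}\cdot$(conjugate factor arrangement) — more precisely using $(1 + d e^{-ib_2\l})(1 + a e^{ib_1\l}) = bc\,e^{i(b_1-b_2)\l}$ at $\l = \l_n^0$ — one extracts that $k|b|^2 = z_n$ exactly, which is the second half of~\eqref{eq:Im=0.kb=ad}; so by Corollary~\ref{cor:fn=gn.crit} each eigenvector pair satisfies $\|f_n\|^2\|g_n\|^2 = |(f_n,g_n)|^2$, i.e. the normalized $\wh f_n$ and $\wh g_n$ coincide, which is self-adjointness of $L(0)$. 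Conversely, if $L(0)$ is self-adjoint its root system is an orthonormal basis, hence trivially a Bari basis (already noted after Lemma~\ref{lem:bari.LQ=L0}).

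Finally I would dispatch the last sentence of the theorem — the unitarity of $\left(\begin{smallmatrix} a & \mu b \\ \mu^{-1} c & d\end{smallmatrix}\right)$ with $\mu = \sqrt{-b_2/b_1}$. This is a direct computation: writing the differential expression $-iB^{-1}y' + Q y$ with $B = \diag(b_1, b_2)$ and forming the Lagrange identity, the boundary form of $L(0)$ on $\dom(L(0))$ with conditions~\eqref{eq:BC.new} vanishes identically on $\dom(L(0)) \times \dom(L^*(0))$ exactly when the indicated matrix preserves the indefinite sesquilinear pairing that the boundary form induces; after the rescaling by $\mu$ this pairing becomes the standard one on $\bC^2$, so the condition is literal unitarity. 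I would verify this by computing $\langle L(0)y, z\rangle - \langle y, L(0)z\rangle = -i\big[(B^{-1}y)(1)\ol{z(1)} - (B^{-1}y)(0)\ol{z(0)}\big]$ (up to the obvious conjugate-transpose bookkeeping) and substituting $y_1(0) = -a y_1(1) - b y_2(0)$, etc.

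\textbf{Main obstacle.} The genuinely delicate point is step~(2)–(3) in the case $b_1/b_2 \in \bQ$: one must be sure that periodicity of $\L_0$ really does upgrade the summability conditions to \emph{exact} pointwise identities for \emph{every} $n$ (not merely asymptotically), and then that $z_n$ real-nonnegative plus the algebraic relation~\eqref{eq:Delta_0_in_roots} pins down $k|b|^2 = z_n$ rather than merely $|k|b|^2 - z_n| = $ something small — i.e. the algebra computed inside~\eqref{eq:tau6} in the proof of Proposition~\ref{prop:crit.bari.b.ne.0} must be re-run to see that $\tau_{6,n} = 0$ for all $n$ forces both $k|b| = |c|$ (which is $b_1|c| + b_2|b| = 0$ since $k = -b_2/b_1$) and $z_n = |z_n|$, and conversely these give self-adjointness. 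Keeping track of which conclusions are ``for all $n$'' versus ``for large $n$'', and checking that self-adjointness of $L(0)$ is insensitive to finitely many exceptional eigenvalues (it is, since self-adjointness is a statement about the whole operator, equivalently about all the $a,b,c,d$), is where care is required.
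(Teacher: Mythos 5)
Your overall architecture (reduction to $Q=0$ via Lemma~\ref{lem:bari.LQ=L0}, then the criterion of Proposition~\ref{prop:crit.bari.b.ne.0} and Corollary~\ref{cor:fn=gn.crit}, plus the boundary-form computation for the unitarity statement) coincides with the paper's, and your treatment of the rational case $b_1/b_2\in\bQ$ is sound: periodicity of $\Lambda_0$ upgrades the two summability conditions to $\Im\l_n^0=0$ and $z_n=|z_n|$ for all $n$, and then $|z_n|=|bc|$ from~\eqref{eq:Delta_0_in_roots} together with $b_2|b|=|b_1|\,|c|$ gives $z_n=k|b|^2$, hence collinearity of $f_n$ and $g_n$ and self-adjointness.

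The gap is in your case $abcd=0$. You claim that $abcd=0$ together with the first condition $b_1|c|+b_2|b|=0$ of~\eqref{eq:sum1.sum2<inf} forces $bc=0$. That is false: this condition only says $b_2|b|=|b_1|\,|c|$, i.e. $b=0\Leftrightarrow c=0$; the product $abcd$ may vanish because $a=0$ or $d=0$ while $b,c$ are both nonzero (with $|c|=-b_2b_1^{-1}|b|$). So when $b_1/b_2\notin\bQ$ your argument covers neither the subcase $a=d=0$, $bc\ne0$, nor the mixed subcase $a=0$, $bcd\ne0$ (resp. $d=0$, $abc\ne0$), and your step~(2) cannot be invoked there because it relies on periodicity of $\Lambda_0$. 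These subcases require genuinely different arguments, which is where the paper does extra work: for $a=d=0$ the zeros form a single arithmetic progression, so $\sum_n|\Im\l_n^0|^2<\infty$ still forces $\Im\l_n^0=0$, and the relevant quantity $k|b|^2-z_n$ is constant in $n$, so its square-summability forces it to vanish; for $a=0$, $bcd\ne0$ with irrational ratio one has $z_n=1+d e^{-ib_2\l_n^0}=bc\,e^{ib_1\l_n^0}$, and the paper shows via Weyl's equidistribution theorem that $\Im z_n\to0$ is impossible, i.e. the conditions~\eqref{eq:sum1.sum2<inf} never hold (so the Bari property fails, consistently with $L(0)$ not being self-adjoint there). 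Without a substitute for this equidistribution step the ``only if'' direction is unproven on exactly the part of the hypothesis $abcd=0$ not subsumed by the rational case. A smaller point: in your rational case the application of Corollary~\ref{cor:fn=gn.crit} presupposes $b\ne0$ (the eigenvectors it concerns are built from $b$); the subcase $b=c=0$ should be routed through Proposition~\ref{prop:crit.bari.period}, as you do in your case~(1).
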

\begin{proof}
Due to Lemma~\ref{lem:bari.LQ=L0} it is sufficient to consider the case $Q=0$.
Since boundary conditions~\eqref{eq:BC} are regular we can transform them to the
form~\eqref{eq:BC.new}.

\textbf{(i)} If the operator $L(0)$ with boundary conditions~\eqref{eq:BC.new}
is self-adjoint then its normalized system of root vectors forms an orthonormal
basis in $\LLV{2}$ and Bari basis in particular. Further, note that
boundary conditions~\eqref{eq:BC.new} are self-adjoint if and only if
$A_{12} B A_{12}^* = A_{34} B A_{34}^*$, where $A_{12} = \(\begin{smallmatrix}
1 & b \\ 0 & d \end{smallmatrix}\)$ and $A_{34} = \(\begin{smallmatrix}
a & 0 \\ c & 1 \end{smallmatrix}\)$. Straightforward calculations show that it
is equivalent to unitarity property of the matrix
$\begin{pmatrix} a & \mu b \\ \mu^{-1} c & d \end{pmatrix}$

\textbf{(ii)} Now assume that the normalized system of root vectors of the
operator $L(0)$ forms a Bari basis in $\LLV{2}$.

If $b=c=0$ then Proposition~\ref{prop:crit.bari.period} yields that $|a|=|d|=1$,
in which case operator $L(0)$ is self-adjoint. This finishes the proof in this
case.

Now let $|b|+|c| \ne 0$. Proposition~\ref{prop:crit.bari.b.ne.0} implies that
relations~\eqref{eq:sum1.sum2<inf} take place. Since $|b|+|c| \ne 0$, first
condition in~\eqref{eq:sum1.sum2<inf} implies that $b \ne 0$ and $c \ne 0$.

First, let $b_1 / b_2 \in \bQ$. In this case $b_1 = -m_1 \b$, $b_2 = m_2
\b$, where $\b > 0$, $m_1, m_2 \in \bN$. Set $m = m_1 + m_2$. Since $ad
\ne bc$, $\Delta_0(\cdot)$ is a polynomial at $e^{i \b \l}$ of degree $m$
with non-zero roots $e^{i\mu_k}$, $\mu_k \in \bC$, $k \in \{1, \ldots, m\}$.
Hence, zeros $\{\l_n^0\}_{n \in \bZ}$ of $\Delta_0(\cdot)$ form a union of
arithmetic progressions $\left\{\frac{\mu_k + 2 \pi n}{\b}\right\}_{n \in
\bZ}$, $k \in \{1, \ldots, m\}$. If $\Im \mu_k \ne 0$, for some $k \in \{1,
\ldots, m\}$, then
\begin{equation}
 \sum_{n \in \bZ} \abs{\Im \frac{\mu_k + 2 \pi n}{\b}}^2 =
 \sum_{n \in \bZ} \abs{\Im \frac{\mu_k}{\b}}^2 = \infty,
\end{equation}
which contradicts the first
relation in~\eqref{eq:sum1.sum2<inf}. Hence $\Im \l_n^0 = 0$, $n \in \bZ$. This
implies that $E_j^{\pm} = \int_0^1 |e^{i b_j \l_n^0 x}|^{\pm 2} dx = 1$ and
\begin{equation}
 \tau_{4,n} = k |b|^2 - z_n
 = k |b|^2 - \(1 + d e^{- i b_2 \l_n^0}\)\ol{\(1 + a e^{i b_1 \l_n^0}\)}.
\end{equation}
According to the proof of Proposition~\ref{prop:crit.bari.b.ne.0},
condition~\eqref{eq:sum1.sum2<inf} is equivalent to~\eqref{eq:sum.Im.sum.tau4},
i.e. $\sum_{n \in \bZ} |\tau_{4,n}|^2 < \infty$.
It is clear that $e^{-i b_2 \l_n^0} = (e^{-i \mu_k})^{m_2}$ for
some $k = k_n$. Hence $e^{-i b_2 \l_n^0}$ attains a finite set of values when
$n \in \bZ$. Similarly $e^{i b_1 \l_n^0}$ attains a finite set of values when
$n \in \bZ$. Hence, $\tau_{4,n}$ attains a finite set of values when $n \in \bZ$
and each value is attained infinite times. Now
condition~\eqref{eq:sum.Im.sum.tau4} implies that $\tau_{4,n} = 0$, $n \in \bZ$.
Corollary~\ref{cor:fn=gn.crit} yields condition~\eqref{eq:fn2.gn2=fngn2} for
eigenvectors $f_n$ and $g_n$ introduced in the proof of
Proposition~\ref{prop:crit.bari.b.ne.0}. Taking into account
formula~\eqref{eq:|fn'-gn'|} we see that normalized eigenvectors $f_n'$ and
$g_n'$ of the operators $L(0)$ and $L^*(0)$ corresponding to the common
eigenvalue $\l_n^0 = \ol{\l_n^0}$ are equal for all $n \in \bZ$. It follows
easily from this that $L(0) = L^*(0)$.

Next, let's assume that $a=d=0$ (without extra condition on $b_1 / b_2$). In
this case $\Delta_0(\l) = e^{i b_2 \l} - bc e^{i b_1 \l}$. Hence $\l_n^0 =
\frac{\mu + 2 \pi n}{b_2-b_1}$, $n \in \bZ$, where $bc = e^{i \mu}$. Relation
$\sum_{n \in \bZ} |\Im \l_n^0|^2 < \infty$ implies that $\Im \l_n^0 = 0$, $n \in
\bZ$. Further, since $a=d=0$ then $\tau_{4,n} = k |b|^2 - 1 = \const$. Since
$\sum_{n \in \bZ} |\tau_{4,n}|^2 < \infty$ then $\tau_{4,n} = 0$, $n \in \bZ$.
Now the same reasoning as above implies that the operator $L(0)$ is
self-adjoint. Along the way, we see that this is the case if and only if
$|b|^2=k^{-1}$ and $|c|=|b|^{-1}$.

Finally, let $b_1 / b_2 \notin \bQ$, $a=0$, $bcd \ne 0$ (the case $d=0$,
$abc \ne 0$ can be treated similarly). Clearly, $\{z_n\}_{n \in \bZ}$ is a
bounded sequence, hence $\sum_{n \in \bZ} (|z_n| - \Re z_n) < \infty$ implies
that $\Im z_n \to 0$ as $n \to \infty$. Since $a = 0$ then $\Delta_0(\l) =
1 + d e^{-i b_2 \l} - bc e^{i b_1 \l}$. Hence, $z_n =
1 + d e^{-i b_2 \l_n^0} = bc e^{i b_1 \l_n^0}$. Let $\l_n^0 = \a_n + i \b_n$,
$d = r_2 e^{i \psi_2}$, $bc = r_1 e^{i \psi_1}$, where $\a_n, \b_n \in \bR$,
$r_1, r_2 > 0$ and $\psi_1, \psi_2 \in [-\pi, \pi)$. Then
\begin{equation}
 \Im z_n = r_2 e^{b_2 \b_n} \sin(\psi_2 - b_2 \a_n) =
 r_1 e^{-b_1 \b_n} \sin(\psi_1 + b_1 \a_n) \to 0
 \quad\text{as}\quad n \to \infty.
\end{equation}
This implies that $\sin(\psi_2 - b_2 \a_n) \to 0$ and $\sin(\psi_1 + b_1 \a_n)
\to 0$ as $n \to \infty$. Since $b_1/b_2 \notin \bQ$ and $\a_n =
\frac{2 \pi n}{b_2-b_1}(1 + o(1))$, this contradicts Weyl's equidistribution
theorem and implies that in this case condition~\eqref{eq:sum1.sum2<inf} never
holds.
\end{proof}
\textbf{Acknowledgement.} The publication has been prepared with the support of
the ``RUDN University Program 5-100''.

\end{document}